\newtheorem{thm}{Theorem}[section]
\newtheorem*{ThmNoNum}{Theorem}
\newtheorem*{definition}{Definition}
\newtheorem{prop}[thm]{Proposition}
\newtheorem{lem}[thm]{Lemma} 
\newtheorem{cor}[thm]{Corollary}
\newtheorem*{CorNoNum}{Corollary}
\newtheorem{conjecture}[thm]{Conjecture}
\theoremstyle{definition}
\theoremstyle{definition}
\newtheorem{example}[thm]{Example}
\theoremstyle{definition}
\newtheorem{remark}[thm]{Remark}
\newcommand{\supp}[0]{\text{\rm supp}}
\newcommand{\obj}[0]{\textup{obj}}
\newcommand{\rank}[0]{\text{\rm rank}}
\newcommand{\im}[0]{\text{\rm im}}
\newcommand{\coker}[0]{\text{\rm coker}}
\newcommand{\Z}[0]{\mathbb Z}
\newcommand{\R}[0]{\mathbb R}
\newcommand{\NN}[0]{\mathbb N}
\newcommand{\Q}[0]{\mathbb Q}
\newcommand{\A}[0]{\mathcal A}
\newcommand{\B}[0]{\mathcal B}
\newcommand{\D}[0]{\mathcal D}
\newcommand{\F}[0]{\mathcal F} 
\newcommand{\G}[0]{\mathcal G}
\renewcommand{\H}[0]{\mathcal H}
\renewcommand{\L}[0]{\mathcal L}
\renewcommand{\P}[0]{\mathcal P}
\newcommand{\M}[0]{\mathbf M}
\newcommand{\N}[0]{\mathbf N}
\renewcommand{\Im}[0]{\mathrm{Im}}
\newcommand{\T}[0]{\mathcal T}
\newcommand{\W}[0]{\mathcal W}
\newcommand{\Y}[0]{\mathcal Y}
\newcommand{\J}[0]{\mathcal J}
\newcommand{\V}[0]{\mathcal V}
\newcommand{\NNN}[0]{\mathcal N}
\newcommand{\Rips}[0]{\textup{Rips}}
\newcommand{\e}[0]{\mathbf e}
\newcommand{\bC}[0]{\mathbf C}
\newcommand{\newspan}[0]{{\rm span}}
\newcommand{\Cech}[0]{\textup{\v{C}ech }}
\newcommand{\SCe}[0]{S\check{C}e}
\newcommand{\Ce}[0]{\check{C}e}
\newcommand{\Top}[0]{\mathbf{Top}}
\newcommand{\filt}[0]{\textup{filt}}
\newcommand{\RR}[0]{\mathcal{R}}
\renewcommand{\mod}[0]{\textup{mod}}
\newcommand{\Sub}[0]{{\rm Sub}}
\newcommand{\colim}[0]{{\rm colim}}
\newcommand{\id}[0]{\rm id}
\newcommand{\glazcitationhack}[0]{\cite[Theorem 2.3.3]{glaz1989commutative}}
\begin{document}
   \blankthesis
    \title{Multidimensional Interleavings \\
            and Applications to Topological Inference}
    \author{Michael Phillip Lesnick}
    \dept{Institute for Computational and Mathematical Engineering}
    \principaladviser{Gunnar Carlsson}
    \firstreader{Leonidas Guibas}
    \secondreader{Dmitriy Morozov}
 
    \beforepreface
    \prefacesection{Abstract}

       This thesis concerns the theoretical foundations of persistence-based topological data analysis.  The primary focus of the work is on the development of theory of topological inference in the multidimensional persistence setting, where the set of available theoretical and algorithmic tools has remained comparatively underdeveloped, relative to the 1-D persistence setting.  The thesis establishes a number of theoretical results centered around this theme, some of which are new and interesting even for 1-D persistent homology.  In addition, this work presents theory of topological inference formulated directly on the (topological) level of filtrations rather than on the (algebraic) level of persistent homology modules.

The main mathematical objects of study in this work are {\bf interleavings}.  These are tools for quantifying the similarity between multidimensional filtrations and persistence modules.  They were introduced for 1-D filtrations and persistence modules by Chazal et al. \cite{chazal2009proximity}, where they were used to prove a strong and very useful generalization of the stability of persistence result of \cite{cohen2007stability}; we generalize the definition of interleavings appearing in \cite{chazal2009proximity} in several directions and use these generalizations to define pseudometrics on multidimensional filtrations and multidimensional persistence modules called {\bf interleaving distances}.

The first part of this thesis, adapted from the preprint \cite{lesnick2011optimality}, studies in detail the theory of interleavings and interleaving distances on multidimensional persistence modules.  We present six main results about the interleaving distance. 

First, we show that in the case of 1-D persistence, the interleaving distance is equal to the bottleneck distance on tame persistence modules.  

Second, we prove a theorem which implies that the restriction of the interleaving distance to finitely presented multidimensional persistence modules is a metric.  The same theorem, together with our first result, also yields a converse to the algebraic stability theorem of \cite{chazal2009proximity}; this answers a question posed in that paper.  

Third, we present an ``extrinsic" characterization of interleaved pairs of multidimensional persistence modules which makes transparent the sense in which interleaved modules are algebraically similar.  This characterization turns out to hold for a definition of interleavings of multidimensional persistence modules rather more general than that which we need to define the interleaving distance; the more general form of our result is an important ingredient in inferential theory we develop in the second part of this thesis.

Fourth, we observe that the interleaving distance is stable in four senses analogous to those in which the bottleneck distance is known to be stable. 

Fifth, we introduce several notions of optimality of metrics on persistence modules and show that when the underlying field is $\Q$ or a field of prime order, the interleaving distance is optimal with respect to one of these notions. This optimality result, which is new even for 1-D persistence, is the central result of the first part of this thesis. We also prove that a version of this result holds for ordinary persistence modules over any field, provided we restrict attention to a class of well behaved ordinary persistence modules containing the finitely presented ones.  

Sixth, we show that the computation of the interleaving distance between two finitely presented multidimensional persistence modules M and N reduces to deciding the solvability of $O(\log m)$ systems of multivariate quadratic equations, each with $O(m^2)$ variables and $O(m^2)$ equations, where m is the total number of generators and relations in a minimal presentation for $M$ and a minimal presentation for $N$.

In the second part of the thesis, we define interleavings and interleaving distances on multidimensional filtrations, and present theoretical results for these.  We then use interleavings and interleaving distances on multidimensional filtrations to formulate and prove several analogues of a topological inference theorem of \cite{chazal2009persistence} in the multidimensional setting, and directly on the level of filtrations.  In particular, we employ {\it localization of categories}, a standard construction in homotopy theory, to define and study homotopy theoretic versions of interleavings and the interleaving distance on multidimensional filtrations, which we call {\bf weak interleavings} and the {\bf weak interleaving distance}.  We formulate our main inference results using weak interleavings and the weak interleaving distance.
    
To describe these results in detail, let $\gamma: \R^m\to \R$ be a probability density function on $\R^m$, and for $z\in \Z_{\geq 0}$ let $T_z$ be an i.i.d. sample of size $z$ of a probability distribution with density $\gamma$.  Let $F^{\SCe}_z$ be the random \Cech bifiltration with vertices $T_z$, filtered by the superlevelsets of $E(T_z)$, where $E$ is a density estimator, and by the usual scale parameter for \Cech complexes.

Our first main inference result is that under mild conditions on $\gamma$ and $E$, $F^{\SCe}_z$ converges in probability (with respect to the weak interleaving distance, and as $z\to \infty$) to a bifiltration constructed directly from $\gamma$, which we call the {\bf superlevel-offset bifiltration}. 

Our second main inference result is an analogue of the first result for Vietoris-Rips bifiltrations, filtered by the superlevelsets of $E(T_z)$ and by the usual scale parameter for Vietoris-Rips complexes.  

We also present analogues for each of these results for probability density functions defined on Riemannian manifolds.  

These inference results on the level of filtrations yield as corollaries analogous results on the level of persistent homology, formulated in terms of the interleaving distance and interleavings on persistence modules.  Our extrinsic characterization of interleavings from the first part of our thesis yields concrete interpretations of these corollaries as statements about the similarity between presentations of persistent homology modules.

\newpage

    \prefacesection{Acknowledgements}
          My discussions with my adviser Gunnar Carlsson have catalyzed the research presented here in several key ways.  In our early conversations about multidimensional persistent homology, Gunnar impressed on me the value of bringing the machinery of commutative algebra to bear on the study of multidimensional persistence.  This idea has very much informed my work, and is manifest in the content of the first part of this thesis.  Gunnar's input has also shaped the content of this thesis in more concrete---and important---ways: There were several instances in the course of my doing this research where Gunnar's feedback was instrumental in my finding ways to formulate my ideas that were clean and consonant with the broader mathematical culture of topology and commutative algebra.  Specifically, I would like to acknowledge Gunnar's input on three key places in this work.
\begin{enumerate*}
\item Gunnar suggested the use of the theory of coherent rings in this thesis.  They are used in the proof of Theorem~\ref{MinimalPresentationTheorem}.
\item Gunnar offered valuable input on how to cleanly explain the functorial relationship between $B_n$-persistence modules and ${\mathbf A}_n$-persistence modules (as defined in Section~\ref{Sec:MultidimensionalPersistenceModules}); his feedback was helpful in my finding the right way to exposit the material in Section~\ref{WellBehavedStructThmSection}.
\item Gunnar suggested that the definition of the homotopy category of $u$-filtrations appearing in this thesis be formulated using localization of categories.  This suggestion shaped the mathematics of Chapters 3 and 4 of this thesis in a significant way.    
\end{enumerate*}

Patrizio Frosini, Steve Oudot, Fred Chazal, and Henry Adams also offered helpful feedback on parts of this work.

This research was supported by an NDSEG fellowship and a research assistantship funded though ONR grant number N00014-09-1-0783.

    \afterpreface

 \chapter{Introduction}

 This thesis concerns the theoretical foundations of persistence-based topological inference.  It focuses in particular on topological inference in the multidimensional setting and at the level of filtrations.  

In this introduction, we offer some context and motivation for the theory developed in this thesis and present an overview of our results.  The chapter is divided into six sections: Section~\ref{Sec:StatisticalFoundationsOfTDA} introduces the broader context for the mathematics of this thesis and discusses the need for further development of the statistical foundations of topological data analysis; Section~\ref{Sec:OverviewInferentialTheory} introduces the problem of developing inferential theory for multidimensional filtrations, and presents background and motivation for the problem; Sections~\ref{Sec:Chapter2Overview}-\ref{Sec:Chapter4Overview} present detailed overviews of the results of chapters 2-4 of this thesis; and Section~\ref{Sec:InferenceAtLevelOfFiltrations} closes this introduction with discussion of our motivation for considering in this thesis persistence based inference directly at the level of filtrations.

This introduction is not intended as a tutorial; we will assume that the reader has some familiarity with some of the basic terminology and ideas of applied topology and topological data analysis.  See the reviews \cite{edelsbrunner2008persistent,ghrist2008barcodes,carlsson2009topology} and the textbook \cite{edelsbrunner2010computational} for treatment of the basics, and Sections~\ref{SectionAlgebraicPreliminaries},~\ref{Sec:FiltrationsRevisited}, and~\ref{Sec:InferencePreliminaries} for foundational definitions.

\section{On The Need for Firm Statistical Foundations of Topological Data Analysis}\label{Sec:StatisticalFoundationsOfTDA}
To explain the mathematical context of this thesis, it will be useful for us to begin by formulating definitions of topological inference and topological data analysis.  

Recall first that in statistics, we distinguish between {\it descriptive statistics} and {\it statistical inference}.  Descriptive statistics, as the name suggests, is that part of statistics concerned with defining and studying descriptors of data.  It involves no probability theory and aims simply to offer tools for describing, summarizing, and visualizing data.  Statistical inference, on the other hand, concerns the more sophisticated enterprise of estimating descriptors of an unknown probability distribution from random samples of the distribution.  The theory and methods of statistical inference are built on the tools of descriptive statistics: The estimators considered in statistical inference are of course, when stripped of their inferential interpretation, merely descriptors of data.

We define {\bf descriptive topological data analysis (descriptive TDA)} to be the branch of descriptive statistics which uses topology to define and study qualitative descriptors of data sets.

We define {\bf topological inference} to be the branch of statistical inference which
\begin{enumerate*}
\item uses topology to define qualitative descriptors of probability distributions.
\item develops and studies estimators for inferring such descriptors from finite samples of the distributions.
\end{enumerate*}
We define {\bf topological data analysis (TDA)} to refer collectively to descriptive TDA, topological inference, and the applications of these to science and engineering.

In the last 10 years the TDA community has introduced a number of novel tools for descriptive TDA \cite{zomorodian2005computing,carlsson2009theory,carlsson2009zigzag,singh2007topological,de2009persistent,adams2011morse}, and has begun developing applications of these in science and engineering.  There is great interest in applying these tools to the study of random data, and a good deal of work has already been done in this direction \cite{carlsson2008local,singh2008topological,nicolau2011topology,chazal2009persistence}.  

In the last few years, there also has been some important progress in topological inference \cite{niyogi2009finding,chazal2009persistence,chazal2011geometric}.  However, the development of tools for descriptive TDA has, as a rule, outpaced the development of the theory of topological inference supporting the use of these tools in the study of random data.  Indeed, statistical foundations for many of the most discussed tools in TDA---for example, Mapper \cite{singh2007topological}, circle valued coordinatization \cite{de2009persistent}, and persistent homology of Vietoris-Rips (multi-)filtraitons \cite{carlsson2009theory}---have either not been laid out at all, or have only partially (and recently) been laid out.  In many cases, there is no theoretical framework in place for interpreting these descriptors of data as appropriately behaved estimators of descriptors of probability distributions.  (An important exception is the work of Chazal et al. \cite{chazal2009persistence} on estimating the superlevelset persistent homology of density functions using Rips complexes; we'll discuss this work in detail in Section~\ref{Sec:ResultOfChazal}.)   

Additionally there is, to date, little to no discussion in the TDA literature on how to compute theoretically sound measures of confidence for any estimator of a topological descriptor.  This last gap is an especially critical one in the statistical foundations of TDA: It is a basic principle of statistical inference that an estimate of some descriptor of a probability distribution is only meaningful insofar as we also have a good measure of confidence for that estimate.

In these senses, the statistical foundations of TDA remain quite underdeveloped; much further progress on the theory of topological inference is needed before the tools of TDA can sit comfortably amongst the more conventional tools in a statistician's toolbox.

Carrying out this work is central to the program of fully realizing TDA as a data analysis methodology of value in science and engineering.  Those of us working in TDA agree that, broadly speaking, TDA offers a very natural and powerful set of tools for qualitative statistical inference, even if the mathematical foundations of these tools are still in development.  We share a common goal of seeing the machinery of TDA mature to the point that it can make an impact on science and engineering commensurate with what we believe its potential to be.   For that to happen, a great deal of work needs to be done in a number of directions, but one of the most direct ways we as researchers in TDA can hasten the integration of TDA into the modern data analysis pipeline is by properly fleshing out the statistical foundations of TDA.  After all, if the statistical foundations of topological data analysis were firmer, the output of the tools of topological data analysis on random input would be more meaningful and more useful, and presumably statisticians, scientists, and engineers would be more receptive to the tools and more inclined to invest the time to develop new applications of them.  

This perspective motivates the work of this thesis.  Indeed, this work is an effort to contribute to the program of fleshing out the statistical foundations of TDA.  The program is a broad one, however, and the results of this thesis amount only to a narrow slice of what is needed to really put TDA on firm statistical footing.  Nevertheless, it is my hope that the results presented here can offer clarity on some basic issues in topological inference and TDA and open the door for further progress in the development of the theory.

\section{Overview: Inferential Theory for Multidimensional Filtrations}\label{Sec:OverviewInferentialTheory}

We focus here on the theoretical foundations of {\it persistence-based topological data analysis}, that central (though not all-encompassing) branch of topological data analysis that considers topological descriptors and random variables defined using filtrations and persistent homology.  Our aims in particular are, first, to develop an inferential theory for multidimensional persistence, and, second, to develop inferential theory at the level of filtrations, rather than merely at the level of persistent homology modules.

\subsection{Context and Motivation for Our Inference Results}

The history of the problem of developing an inferential theory for multidimensional persistent homology dates back several years, to the original paper on multidimensional persistent homology \cite{carlsson2009theory}.  Motivated by needs arising in their study of natural scene statistics \cite{carlsson2008local}, in \cite{carlsson2009theory} Carlsson and Zomorodian proposed the use of {\it Vietoris-Rips bifiltrations} to probe the qualitative structure of point cloud data of nonuniform density in exploratory data analysis applications.  This proposal was motivated by the idea that when the point cloud data set $T_z$ is obtained as an i.i.d. sample of size $z$ of some probability distribution with density function $\gamma:\R^m\to \R$, the random Vietoris-Rips  bifiltration with vertices $T_z$, filtered by superlevelsets of a density estimator $E$ and by the usual scale parameter for a Vietoris-Rips complex, should encode topological information about $\gamma$.  

A central aim of this thesis is to put this idea on firm mathematical footing.

Our approach builds in an essential way on recent work on persistence-based topological inference by Chazal, Guibas, Oudot, and Skraba \cite{chazal2009persistence}.  In that work, the authors build on theory developed in their earlier papers \cite{chazal2009proximity,chazal2009analysis} to prove a result \cite[Theorem 5.1]{chazal2009persistence} on the inference of the 1-D (superlevelset) persistent homology of a probability density function $\gamma$ from a pair of filtered Vietoris-Rips complexes built on i.i.d. samples of a probability distribution with density $\gamma$.  The primary aim of the paper \cite{chazal2009persistence} is to leverage persistent homology to introduce a clustering algorithm with good theoretical properties; the exposition there is such that the inference result \cite[Theorem 5.1]{chazal2009persistence} plays a supporting role, serving as the engine for the development of theoretical guarantees on the clustering algorithm presented in that paper\footnote{Presumably, the authors make this expository choice because the inference result \cite[Theorem 5.1]{chazal2009persistence} is, modulo some details, a reasonably easy corollary of a deterministic result that is the centerpiece of the earlier paper~\cite{chazal2009analysis}.}.  Nevertheless, the result is a very significant one in of itself: It proves (implicitly, at least), for the first time, the consistency of an estimator of the superlevelset persistent homology of a density function on a Euclidean space.  Moreover, because that estimator is defined using only a pair of Vietoris-Rips complexes, it is simple and quite computable.  One disadvantage of the estimator introduced in \cite{chazal2009persistence}, however, is that its construction depends on a choice of scale parameter; as noted in \cite{chazal2009persistence}, a correct value for this parameter can be difficult to choose in practice.

It is natural to ask if and how the topological inference result \cite[Theorem 5.1]{chazal2009persistence} might adapt to the multidimensional setting, and whether it might also adapt to yield inference results directly on the level of filtrations rather than on the level of persistent homology.  In Chapter 4 of this thesis, we will show that such adaptations are indeed possible.  In fact, we will see that the multidimensional setting has the advantage that in constructing estimators there analogous to that considered in \cite{chazal2009persistence}, we do not need to choose a scale parameter.\footnote{For both the estimator considered in \cite{chazal2009persistence} and the estimators we consider here, it is necessary to chose a bandwidth parameter for a density estimator.  In the Euclidean case, the need to select a bandwidth parameter is not an unmanageable difficulty, at least in low dimensions: For density functions on Euclidean spaces, the problem of optimally selecting a bandwidth parameter for a kernel density estimator has been well studied---see \cite{scott1992multivariate}.
}

\subsection[A Result of Chazal et al. on the Inference of Sublevelset Persistent Homology Using Filtered Rips Complexes]{The Result of Chazal et al. on the Inference of Sublevelset Persistent Homology Using Rips Complexes}\label{Sec:ResultOfChazal}

To understand the context of our inference results, it is important to understand the result \cite[Theorem 5.1]{chazal2009persistence}.  We now present an asymptotic corollary of \cite[Theorem 5.1]{chazal2009persistence} which will serve as our mathematical point of departure in our pursuit of our own inference results.

To prepare for the result, we review some basics about density estimation.

\subsubsection{Density Estimation Preliminaries}

Let ${\mathcal D}(\R^m)$ denote the set of probability density functions on $\R^m$.  Define a density estimator $E$ on a $\R^m$ to be a sequence of functions $\{E_z:(\R^m)^z\to {\mathcal D}(\R^m)\}_{z\in \NN}$  such that for each $z\in \NN$, the restriction of  $E_z$ to any point in $\R^m$ is a measurable function from $(\R^m)^z$ to $\R$.  In formulating our results, we will consider pairs $(\gamma,E)$, where $\gamma$ is a density function and $E$ is a density estimator, for which one of the following two assumptions holds:

\begin{enumerate}
\item[\bf{A1}:] $E$ converges uniformly in probability to $\gamma$.
\item[\bf{A2}:] $E$ converges uniformly in probability to the convolution of $\gamma$ with some kernel function $K$.
\end{enumerate}

A1 is known to hold for kernel density estimators, for a wide class of kernels and density functions $\gamma$, provided the kernel width tends to $0$ at an appropriate rate as $z$ tends to infinity.  A2 is known to hold for the kernel density estimator with kernel $K$ (with fixed width as the number of samples $z$ varies) for a wide class of kernels $K$ and density functions $\gamma$.  See Section~\ref{Sec:DensityEstimators} for more details and references.

\subsubsection{Our Asymptotic Corollary of the Inference Result \cite[Theorem 5.1]{chazal2009persistence}}

The form of the result we quote here is different in several ways than that of \cite[Theorem 5.1]{chazal2009persistence}.  First, as we noted above, rather than recall the original form of \cite[Theorem 5.1]{chazal2009persistence}, we will present a tidier asymptotic corollary of it whose form is closer to that of our main inference results.  The proof of the asymptotic corollary, given the original form of the result, follows from an $\epsilon$-$\delta$ argument very similar to that used in the proof of our Theorem~\ref{Thm:CechFixedScaleConsistency}.  Second, because of the emphasis on clustering in \cite{chazal2009persistence}, the result \cite[Theorem 5.1]{chazal2009persistence} is stated only for $0^{th}$ persistent homology, but as the authors note, the result adapts immediately to higher persistent homology via the results of \cite{chazal2009analysis}.  We will present here a version of the asymptotic corollary which holds for $i^{th}$ persistent homology, $i\in \Z_{\geq 0}$.  Third, to minimize the technicalities in this introduction, we present the result for density functions defined on Euclidean space; the result adapts readily to Riemannian manifolds with sectional curvature bounded above and below.    

Let $\gamma:\R^m\to \R$ be a $c$-Lipchitz density function such that for $i\in \Z_{\geq 0}$, the superlevelset persistent homology module $H_i(-\gamma)$ is tame\footnote{See Section~\ref{Sec:MultidimensionalPersistenceModules} for the definition of a tame persistence module.  The tameness condition ensures that the bottleneck distance in the statement of  \cite[Theorem 5.1]{chazal2009persistence} is well defined.}; let $T_z$ be an i.i.d. random sample of a probability distribution with density $\gamma$ of size $z$; let $d^p$ be the restriction of some $L^p$ metric to $T_z$, for $1\leq p\leq \infty$; and let $E$ be a density estimator.

For $\delta>0$, let $F^{SR}(T_z,d^p,-E(T_z),\delta)$ be a Rips filtration on $(T_z,d^p)$ with fixed scale parameter $\delta$, filtered by sublevelsets of $-E(T_z)$.  For $i\in \Z_{\geq 0}$, let $H_i$ denote the $i^{th}$ persistent homology functor.  The inclusion \[F^{SR}(T_z,d^p,-E(T_z),\delta)\hookrightarrow F^{SR}(T_z,d^p,-E(T_z),2\delta)\] induces a homomorphism \[j_{z,\delta}:H_i(F^{SR}(T_z,d^p,-E(T_z),\delta))\to H_i(F^{SR}(T_z,d^p,-E(T_z),2\delta))\] of persistence modules. $\Im(j_{z,\delta_z})$ is then itself a persistence module.

For a 1-D persistence module $M$, let $M^+$ denote the submodule of $M$ generated by homogeneous summands $M_a$ of $M$ with $a\geq 0$ and let $R_0(M)=M/M^+$.

Let $d_B$ denote the bottleneck distance on tame persistence modules \cite{chazal2009proximity}.  If $\{X_z\}_{z\in \NN}$ is a sequence of random variables, and $X$ is a random variable such that $X_z$ converges in probability to $X$, we write $X_z\xrightarrow{\P} X$.
 
\begin{thm}[Inference result of \cite{chazal2009persistence} (asymptotic form)]\label{Thm:ChazalInferenceResult}
\mbox{}
\begin{enumerate*}
\item[(i)] If $(\gamma,E)$ satisfies A1 then $\exists$ a sequence $\{\delta_z\}_{z\in \NN}$ such that \[d_B(R_0(\Im(j_{\delta_z,z})),R_0(H_i(-\gamma)))\xrightarrow{\mathcal P} 0\] (as $z\to \infty$).

\item[(ii)] If $(\gamma,E)$ satisfies A2 for some kernel $K$ then $\exists$ a sequence $\{\delta_z\}_{z\in \NN}$ such that 
\[d_B(R_0(\Im(j_{\delta_z,z})),R_0(H_i(-\gamma*K)))\xrightarrow{\mathcal P} 0.\]
\end{enumerate*}
\end{thm}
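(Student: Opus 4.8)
The plan is to deduce this as a routine asymptotic corollary of the non-asymptotic inference theorem \cite[Theorem 5.1]{chazal2009persistence}, by an $\epsilon$-$\delta$ argument of the same shape as the one used to prove Theorem~\ref{Thm:CechFixedScaleConsistency}. The first step is to package \cite[Theorem 5.1]{chazal2009persistence} in the following schematic form: there is a function $\phi$, with $\phi(\delta,c)\to 0$ as $\delta\to 0$ for fixed Lipschitz constant $c$, and for each $\delta>0$ a ``good sampling'' event $\mathcal G_{z,\delta}$ --- on which $T_z$ is dense enough, at scale comparable to $\delta$, in the relevant superlevelsets of $\gamma$ --- with $\P(\mathcal G_{z,\delta})\to 1$ as $z\to\infty$, such that on $\mathcal G_{z,\delta}$ one has
\[
d_B\bigl(R_0(\Im(j_{\delta,z})),\,R_0(H_i(-\gamma))\bigr)\ \le\ \phi(\delta,c)\ +\ 2\,\|E(T_z)-\gamma\|_\infty ,
\]
and the analogous inequality with $\gamma$ replaced throughout by $\gamma*K$ when $(\gamma,E)$ satisfies A2 (assuming, as the A2 case requires, that $H_i(-\gamma*K)$ is again tame). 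The only place the estimator enters this bound is the term $2\|E(T_z)-\gamma\|_\infty$; this is exactly the role played in \cite[Theorem 5.1]{chazal2009persistence} by the sup-norm distance between the noisy function estimate and the true function, here applied with $-E(T_z)$ and $-\gamma$. One could also see this dependence directly: $\|g-h\|_\infty\le\epsilon$ makes the sublevelset filtrations of $g$ and $h$ $\epsilon$-interleaved, and applying $H_i$, then $\Im$, then $R_0$ --- each functorial --- does not expand the interleaving, after which the algebraic stability theorem of \cite{chazal2009proximity} (a $\delta$-interleaving of 1-D tame modules has bottleneck distance at most $\delta$) gives the estimate.

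Given this packaging, I would argue as follows. Since $\P(\mathcal G_{z,\delta})\to 1$ for each fixed $\delta>0$ while $\phi(\delta,c)\to 0$ as $\delta\to 0$, a standard diagonalization produces a scale sequence with $\delta_z\to 0$ and $\P(\mathcal G_{z,\delta_z})\to 1$ simultaneously: for instance take $\delta_z=1/k$, where $k=k(z)$ is the largest integer in $\{1,\dots,\lfloor\log z\rfloor\}$ with $\P(\mathcal G_{z,1/k})\ge 1-1/k$, so that $k(z)\to\infty$, whence $\delta_z\to 0$ and $\P(\mathcal G_{z,\delta_z})\ge 1-1/k(z)\to 1$. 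Crucially this $\delta_z$ depends only on $\gamma$ and $z$, not on $E$, which is what lets the argument survive the fact that A1 and A2 come with no rates. Now fix $\epsilon>0$ and $\eta>0$. There is $Z_1$ with $\phi(\delta_z,c)<\epsilon/2$ for all $z\ge Z_1$; there is $Z_2$ with $\P(\mathcal G_{z,\delta_z})>1-\eta/2$ for all $z\ge Z_2$; and by A1 there is $Z_3$ with $\P\bigl(\|E(T_z)-\gamma\|_\infty\ge\epsilon/4\bigr)<\eta/2$ for all $z\ge Z_3$. For $z\ge\max(Z_1,Z_2,Z_3)$, on the event $\mathcal G_{z,\delta_z}\cap\{\|E(T_z)-\gamma\|_\infty<\epsilon/4\}$ --- which has probability greater than $1-\eta$ --- the displayed inequality bounds $d_B\bigl(R_0(\Im(j_{\delta_z,z})),R_0(H_i(-\gamma))\bigr)$ by $\epsilon/2+2(\epsilon/4)=\epsilon$. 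Hence $\P\bigl(d_B(R_0(\Im(j_{\delta_z,z})),R_0(H_i(-\gamma)))\ge\epsilon\bigr)<\eta$ for all large $z$, which is precisely the asserted convergence $\xrightarrow{\P}0$, proving (i). Part (ii) is the same argument with $\gamma*K$ in place of $\gamma$, the same $\phi(\cdot,c)$ (valid since $\gamma*K$ is again $c$-Lipschitz), and A2 in place of A1.

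The main obstacle is bookkeeping rather than conceptual: one must unwind \cite[Theorem 5.1]{chazal2009persistence} into exactly the schematic inequality above, with the error cleanly split into a geometric-bias term $\phi(\delta,c)$ governed only by $c$ and the scale, a sampling-failure term absorbed into $\P(\mathcal G_{z,\delta})$, and a density-estimation term $2\|E(T_z)-\gamma\|_\infty$; and one must check that the events $\mathcal G_{z,\delta}$ can be arranged so that the diagonal sequence $\delta_z$ is well defined (only $\P(\mathcal G_{z,\delta})\to 1$ for each fixed $\delta$ is actually used). Secondary care is needed at the two functoriality steps --- that passing through $H_i$, then $\Im$, then $R_0$ never expands an $\epsilon$-interleaving --- since $\Im$ and $R_0$ are less standard in this respect than $H_i$; here the stability results and the extrinsic characterization of interleavings from the first part of the thesis (or the corresponding lemmas of \cite{chazal2009analysis,chazal2009persistence}) supply what is needed.
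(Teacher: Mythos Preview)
Your proposal is correct and follows the same route the paper indicates: the paper does not give a standalone proof of this theorem but says explicitly that it follows from \cite[Theorem~5.1]{chazal2009persistence} via an $\epsilon$--$\delta$ argument ``very similar to that used in the proof of Theorem~\ref{Thm:CechFixedScaleConsistency},'' which is exactly what you have sketched. The one organizational difference worth noting is that in the paper's proof of Theorem~\ref{Thm:CechFixedScaleConsistency} the scale sequence $\delta_z$ is built from a single nested construction that already incorporates the density-estimator rate (the sequence $l$ there dominates the sequence $j$ coming from Lemma~\ref{BoundsForEstimatorsSatisfyingA1orA2}), so that $\delta_z$ ends up depending on $E$; your diagonalization instead chooses $\delta_z$ from the sampling event alone and handles the estimator error separately via A1/A2, which is a clean decoupling and, as you observe, makes $\delta_z$ independent of $E$. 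Both organizations are valid and yield the same conclusion.
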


\subsection{Distances and Other Notions of Proximity on Filtrations and Persistence Modules}

One of the main goals of this work is to adapt the consistency result Theorem~\ref{Thm:ChazalInferenceResult} to the multidimensional setting and to the level of filtrations.  Since Theorem~\ref{Thm:ChazalInferenceResult} is formulated in terms of the bottleneck distance, to carry out this adaptation we require analogues of this distance on multidimensional persistence modules and filtrations.  The main obstacle to adapting Theorem~\ref{Thm:ChazalInferenceResult} to the multidimensional setting is that the bottleneck distance of ordinary persistent homology does not admit a naive extension to the multidimensional setting.  Similarly, the main obstacle to adapting the theorem to the level of filtrations is that the bottleneck distance does not admit a naive adaptation to a distance on filtrations, even in 1-D.

Most of the effort of this thesis is put not directly towards proving inference results, but rather towards introducing and studying generalizations of the bottleneck distance to multidimensional filtrations and persistence modules which we need to formulate our inference results.  Indeed, one of the main themes of this work is that developing the theory of topological inference largely boils down to the problem of selecting and understanding distances (or more general notions of proximity, which strictly speaking, are not distances in any reasonable mathematical sense) on appropriate objects: Once we develop the mathematical vocabulary needed to properly formulate our inference results in this thesis, the proofs of the results turn out to be reasonably straightforward, given existing ideas in the literature (and particularly those put forth \cite{chazal2009persistence}).  The main mathematical challenge then is to make sense of the distances and notions of proximity between filtrations and between persistence modules with which we formulate the inference results, and to show that these notions are, in suitable senses, the right ones to use.
  
We formulate our inference results using {\bf interleavings} and distances defined in terms of interleavings called {\bf interleaving distances}.   As noted in the abstract, interleavings are tools for quantifying the similarity between multidimensional filtrations and persistence modules.   Interleaving distances are generalizations of the bottleneck distance to pseudometrics on multidimensional filtrations and on multidimensional persistence modules.  The most basic type of interleavings, called {\bf $\epsilon$-interleavings}, were introduced for 1-D persistence filtrations and persistence modules by Chazal et al. \cite{chazal2009proximity}.  This thesis introduces generalizations of the definition of $\epsilon$-interleavings given in \cite{chazal2009proximity} to definitions of $\epsilon$-interleavings on multidimensional filtrations and multidimensional persistence modules. We define interleaving distances in terms of these generalized $\epsilon$-interleavings.

In fact, in this thesis we generalize the notion of $\epsilon$-interleavings yet further to arrive at the definition of {\bf $(J_1,J_1)$-interleavings}, which we also sometimes call {\bf generalized interleavings}.  We apply interleaving distances and $(J_1,J_2)$-interleavings to formulate results on multidimensional inference at the level of filtrations.  $(J_1,J_2)$-interleavings allow us to quantify anisotropic and asymmetric similarities between filtrations and between persistence modules that cannot be completely described using interleaving distances.  They turn out to be the right mathematical tools for interpreting random Vietoris-Rips bifiltrations as inferential objects.  

\section{Chapter 2: Interleavings and Interleaving Distances for Multidimensional Persistence Modules}\label{Sec:Chapter2Overview}
In Chapter 2 of this thesis, we introduce interleavings and the interleaving distance $d_I$ on multidimensional persistence modules, and treat the theory of these in detail.  Chapter 2 of this thesis is adapted from the preprint \cite{lesnick2011optimality}; the content of Chapter 2 is very close to that of \cite{lesnick2011optimality}.  The most important difference is that we define interleavings in greater generality and extend our extrinsic characterization of $\epsilon$-interleavings in \cite{lesnick2011optimality} to an extrinsic characterization of generalized interleavings.  There are some other minor differences between Chapter 2 of the thesis and \cite{lesnick2011optimality}, but none of any great significance. 

\subsection{Results}

In Chapter 2, we present six main results on interleavings and the interleaving distance.  The first result, Theorem~\ref{InterleavingEqualsBottleneck}, shows that in the case of ordinary persistence, the interleaving distance is in fact equal to the bottleneck distance on tame persistence modules.  Our proof relies on a generalization of the structure theorem \cite{zomorodian2005computing} for finitely generated ordinary persistence modules to (discrete) tame persistence modules.  This generalization is proven e.g. in \cite{webb1985decomposition}.

Our second main result is Theorem~\ref{InterleavingThm}, which tells us that if $M$ and $N$ are two finitely presented persistence modules and $d_I(M,N)=\epsilon$ then $M$ and $N$ are $\epsilon$-interleaved.  As an immediate consequence of this theorem, we have Corollary~\ref{MetricCorollary}, which says that the interleaving distance restricts to a metric on finitely presented persistence modules.  Theorems~\ref{InterleavingEqualsBottleneck} and~\ref{InterleavingThm} together also yield Corollary~\ref{Cor:Converse}, a converse to the algebraic stability theorem of \cite{chazal2009proximity}.  The converse says, firstly, that if two tame 1-D persistence modules $M$ and $N$ are are distance $\epsilon$ apart in the bottleneck distance, then they are $\epsilon+\delta$-interelaved for any $\delta>0$.  Secondly, the converse says that if in addition $M$ and $N$ are each finitely presented (which is stronger than tameness), then $M$ and $N$ are in fact $\epsilon$-interleaved.  This result answers a question posed in \cite{chazal2009proximity}.

Our third main result is Theorem~\ref{GeneralAlgebraicRealization}, an ``extrinsic" characterization of $(J_1,J_2)$-interleaved pairs of persistence modules; it expresses transparently the sense in which $(J_1,J_2)$-interleaved persistence modules are algebraically similar.  Since $\epsilon$-interleavings are a special type of $(J_1,J_2)$-interleaving, and since the interleaving distance is defined in terms of $\epsilon$-interleavings, the result also yields an extrinsic characterization of the interleaving distance.  The result is reminiscent of the extrinsic characterization of the Gromov-Hausdorff distance, which expresses the Gromov-Hausdorff distance between two compact metric spaces in terms of the Hausdorff distance between embeddings of two metric spaces into a third metric space.  Roughly speaking, Theorem~\ref{GeneralAlgebraicRealization} characterizes $\epsilon$-interleaved pairs of persistence modules in terms of a distance between embeddings of presentations of such modules into a free persistence module.

As noted above, Theorem~\ref{GeneralAlgebraicRealization} was presented in \cite{lesnick2011optimality} only for the special case of $\epsilon$-interleavings.  The more general form of our result will be an important ingredient of the inferential theory we develop in the second part of this thesis.

Our fourth result is the observation that the interleaving distance is stable in four senses analogous to those in which the bottleneck distance is known to be stable.  These stability results, while notable, require very little mathematical work; two of the stability results turn out to be trivial, the third follows from a minor modification of an argument given in \cite{chazal2009gromov}, and the fourth admits a straightforward proof.

Our fifth main result, Corollary~\ref{CorOptimality}, is an optimality result for the interleaving distance.  It tells us that when the underlying field is ${\mathbb Q}$ or a field of prime order, the interleaving distance is stable in a sense analogous to that which the bottleneck distance is shown to be stable in \cite{cohen2007stability,chazal2009proximity}, and further, that the interleaving distance is, in a uniform sense, the most sensitive of all stable pseudometrics.  This ``maximum sensitivity" property of the interleaving distance is equivalent to the property that, with respect to the interleaving distance, multidimensional persistent homology preserves the metric on source objects as faithfully as is possible for any choice of stable pseudometric on multidimensional persistence modules; see Remark~\ref{InterpretationRemark} for a precise statement.  Our optimality result is new even for 1-D persistence.  In that case, it offers some mathematical justification, complementary to that of \cite{cohen2007stability,chazal2009proximity}, for the use of the bottleneck distance.  

In fact, provided we restrict attention to a class of well behaved ordinary persistence modules containing the finitely presented ones, the assumption that the underlying field is ${\mathbb Q}$ or a field of prime order is unnecessary; our Theorem~\ref{WellBehavedOptimality} gives an analogue of Corollary~\ref{CorOptimality} for this class of modules, over arbitrary fields. 

The main ingredient in the proof of Corollary~\ref{CorOptimality} is our characterization result Theorem~\ref{GeneralAlgebraicRealization}.  Using Theorem~\ref{GeneralAlgebraicRealization}, we present a constructive argument which shows that when the underlying field is ${\mathbb Q}$ or a field of prime order, $\epsilon$-interleavings can, in a suitable sense, be lifted to a category of $\R^n$-valued functions.  This is Proposition~\ref{RealizationProp}.  From this proposition, our optimality result follows readily.

Given our first five main results, it is natural to ask if and how the interleaving distance can be computed.  Our sixth main result  speaks to this question.  The result, which follows from Theorem~\ref{InterleavingAndQuadratics} and Proposition~\ref{PossibilitiesForInterleavingDistance}, is that the computation of the interleaving distance between two finitely presented multidimensional persistence modules $M$ and $N$ reduces to deciding the solvability of $O(\log m)$ systems of multivariate quadratic equations, each with $O(m^2)$ variables and $O(m^2)$ equations, where $m$ is the total number of generators and relations in a minimal presentation for $M$ and a minimal presentation for $N$.  This result is just a first step towards understanding the problem of computing the interleaving distance; we hope to address the problem more fully in future work.  

\subsubsection{A Note On Prior Work}
After making the results of the first part of this thesis publicly available (in form of the preprint \cite{lesnick2011optimality}), it was brought to our attention that in \cite{d2010natural}, d'Amico et al.  proved an optimality result for the bottleneck distance similar to the optimality results given here, for the special case of 0-dimensional ordinary persistent homology.  Our Theorem~\ref{WellBehavedOptimality} generalizes a slight weakening of \cite[Theorem 32]{d2010natural}; see Remark~\ref{FrosiniRemark}.

\section{Chapter 3: Strong and Weak Interleavings and Interleaving Distances for Multidimensional Filtrations}\label{Sec:Chapter3Overview}

In Chapter 3 we introduce and study interleavings and interleaving distances on multidimensional filtrations.  The first goal of Chapter 3 is to present theory for interleavings on filtrations analogous to that which Chapter 2 presents for interleavings on multidimensional persistence modules; the second goal of the chapter is to establish the technical foundations needed to formulate and prove topological inference results directly at the level of filtrations.    

We in fact introduce two types of interleavings on multidimensional filtrations, {\bf strong interleavings} and {\bf weak interleavings}; each induces a distance on multidimensional filtrations, the {\bf strong interleaving distance} $d_{SI}$ and the {\bf weak interleaving distance} $d_{WI}$, respectively.  We define {strong interleavings} in a way closely analogous to the way in which we define interleavings on persistence modules, and they share some of the favorable theoretical properties of interleavings on persistence modules.

However, strong interleavings turn out to be too sensitive for the purpose of proving inference results analogous to Theorem~\ref{Thm:ChazalInferenceResult} at the level of filtrations.  The reason, put somewhat coarsely, is that strong interleavings are sensitive to the topology of the spaces in filtrations up to homeomorphism; in developing theory of topological inference, it turns out to be necessary to work with a notion of interleaving that is sensitive only to the homotopy type of the spaces in the filtrations.  We thus introduce a homotopy theoretic variant of strong interleavings called {\bf weak interleavings}, which we define using {\it localization of categories}, a standard construction in homotopy theory.  We use weak interleavings and the weak interleaving distance to formulate our main inference results on the level of filtrations.

Because our inference results at the level of filtrations are formulated using weak interleavings rather than strong interleavings, we are less interested in understanding strong interleavings and the strong interleaving distance than their weak counterparts.  Nevertheless, we believe the theory of strong interleavings is worth developing, if only as a bridge to developing the theory of weak interleavings.

\subsection{Strong Interleavings and the Strong Interleaving Distance}

Section~\ref{Sec:StrongInterleavings} presents the definition and basic theory of strong interleavings.  We present three main results on the theory of strong interleavings.

Our first main result, Theorem~\ref{Thm:CharacterizationOfStrongInterleavings}, is a characterization of strongly interleaved pairs of {\bf filtrations of nested type}---filtrations of nested type are simply filtrations, each of whose transition maps is an injection; all filtrations we have occasion to consider in the development of our inferential theory in Chapter 4 are of nested type.  

Our characterization of strongly interleaved pairs of {\bf filtrations of nested type} is loosely analogous to our characterization Theorem~\ref{GeneralAlgebraicRealization} of interleaved pairs of persistence modules in Chapter 2.  Whereas our characterization of interleaved pairs of persistence modules in Chapter 2 is given in terms of {\it free covers} of persistence modules (i.e. the $0^{th}$ modules in free resolutions of persistence modules) our characterization of strongly interleaved pairs of {filtrations of nested type} is given in terms of {\it colimits} of filtrations.  

Theorems~\ref{Thm:FiltrationOptimality1} and~\ref{Thm:FiltrationOptimality2}, our second and third main results on the theory of strong interleavings, are optimality results  for $d_{SI}$, each analogous to our optimality result Corollary~\ref{CorOptimality} for $d_I$.  We prove these using Proposition~\ref{RealizationProp} (which, as noted above, is the main step in our proof of Corollary~\ref{CorOptimality}, our optimality result for multidimensional persistence modules) and our characterization result Theorem~\ref{Thm:CharacterizationOfStrongInterleavings}.    

\subsection{Weak Interleavings and the Weak Interleaving Distance}\label{Sec:WeakInterleavingsIntro}
In Section~\ref{Sec:WeakInterleavings}, we define and study weak interleavings and the weak interleaving distance.  Our definition of weak $(J_1,J_2)$-interleavings, via which we formulate our second main inference result, Theorem~\ref{Thm:RipsAsymptotics}, is perhaps the most interesting object of study of this thesis.  

We'll now discuss at a high level the motivation for the definition of weak interleavings, the use of localization of categories in formulating the definition, and the theory of weak interleavings.

\subsubsection{Motivation For Considering Weak Interleavings}

As we have noted above, strong interleavings are too sensitive for our use in formulating our inference results.  We begin by explaining the sense in which this is so: To adapt Theorem~\ref{Thm:ChazalInferenceResult} to the level of filtrations, we want a notion of distance $d$ between interleavings between filtrations such that if $f:X\to Y$ is a morphism of filtrations\footnote{See Section~\ref{Sec:FiltrationsDef1} for the definition of a morphism of filtrations.} which is a {\it levelwise homotopy equivalence}, meaning that each component map $f_a:X_a\to Y_a$ is a homotopy equivalence, then $d(X,Y)=0$.  The reason we want a distance on filtrations with this property is simple: To adapt the proof of Theorem~\ref{Thm:ChazalInferenceResult} to proofs of inference results at the level of filtrations, we need an adaptation of the persistent nerve lemma of \cite{chazal2008towards} which holds on the level of filtrations rather than only on the level of persistent homology; such an adaptation can be formulated in terms of a distance $d$ on filtrations with the above property.  

When we give the definition of $d_{SI}(X,Y)$ in Section~\ref{Sec:StrongInterleavings}, it will be easy to see that it is not true that $d_{SI}(X,Y)=0$ whenever there is a levelwise homotopy equivalence between $X$ and $Y$; see Remark~\ref{Rem:InterleavingDefinition} for a counterexample.  On the other hand, our Proposition~\ref{prop:ZeroInterleavingOfSOandCech} shows that $d_{WI}$ does have this property.  In fact, we formulate the definitions of weak interleavings and $d_{WI}$ so as to explicitly enforce the property.  

\subsubsection{Localization of Categories and Weak Interleavings}

To define weak interleavings, we first modify the category of multidimensional filtrations by formally adjoining inverses of levelwise homotopy equivalences.  The mathematical tool for this is {\it localization of categories}.  Localization of categories is analogous to the localization of rings and modules in commutative algebra.  In analogy with those versions of localization, it is characterized by a simple universal property.  Localization of categories is intimately connected to homotopy theory and in particular to closed model categories and the axiomatic homotopy theory of Quillen \cite{quillen1967homotopical}.  We discuss this connection in Section~\ref{Sec:LocalizationAndClosedModelCategories}.

We observe that we can define interleavings between filtrations in a localized category of multidimensional filtrations in much the same way that we define strong interleavings in the ordinary category of multidimensional filtrations.  We define weak interleavings to be the interleavings in the localized category, and then define $d_{WI}$ in terms of weak interleavings.

\subsubsection{Properties of Weak Interleavings and the Weak Interleaving Distance}

Our definitions are such that $d_{WI} \leq d_{SI}$.  On the other hand, $d_{WI}$ is sensitive enough that persistent homology functors defined over arbitrary commutative coefficient rings are stable, with respect to the weak interleaving distance on $n$-filtrations and the interleaving distance on persistence modules.  This is the content of our Theorem~\ref{Thm:StabilityWRTInterleavings}.  As we will see, this theorem is quite useful in passing from inference results on the level of filtrations to inference results on the level of persistent homology.  

In this thesis, we do not prove an optimality result for $d_{WI}$ analogous to those which we prove for $d_I$ and $d_{SI}$.  Such a result, if we had it, would offer a fuller picture of the sensitivity properties of $d_{WI}$.  Also, we do not offer a geometrically transparent characterization of $d_{WI}$ analogous either to the algebraic characterization of $d_I$ given by Theorem~\ref{GeneralAlgebraicRealization} or to the geometric characterization of $d_{SI}$ given by Theorem~\ref{Thm:CharacterizationOfStrongInterleavings}.  Thus, in spite of our Theorem~\ref{Thm:StabilityWRTInterleavings} and the close analogy between $d_{WI}$ and both $d_{SI}$ and $d_I$, for which this thesis presents transparent characterizations, at the conclusion of this work $d_{WI}$ still remains somewhat of a mysterious object.  An optimality result for $d_{WI}$ and a geometrically transparent characterization of $d_{WI}$ would do much to remove the shroud of mystery around $d_{WI}$.  In Section~\ref{Sec:WeakInterleavingsQuestions}, we discuss these problems further and offer a conjectural optimality result for $d_{WI}$. 

\section{Chapter 4: The Inferential Interpretation of Random \Cech and Rips Multifiltrations}\label{Sec:Chapter4Overview}
In Chapter 4 we present our main inference results for multidimensional persistence, Theorems~\ref{Thm:CechConsistency} and~\ref{Thm:RipsAsymptotics}.  These adapt Theorem~\ref{Thm:ChazalInferenceResult} to the multidimensional setting and to the level of filtrations.  We present two main inference results: The first concerns inference using \Cech bifiltrations and the second concerns inference using Rips bifiltrations.

We also present two additional results, one deterministic and one probabilistic, which are similar in spirit to our main results.  These are our Theorems~\ref{Thm:RipsFixedScaleApproximation} and~\ref{Thm:CechFixedScaleConsistency}.

\subsection{Definitions of Filtrations}\label{Sec:FirstDefinitionsOfFiltrations}
To state our first main inference result, we need to define a few types of filtrations.  All definitions we present here also appear later in the thesis in fuller generality; see, in particular, Sections~\ref{GeometricPreliminariesSection} and~\ref{Sec:FiltrationsRevisited}.

For $u,v\in (-\infty,\infty]$, a $(u,v)$-filtration $X$ (of nested type) is a collection of topological spaces $\{X_{(a,b)}\}_{a<u,b<v}$ such that if $(a,b)\leq (a',b')$ (w.r.t. the usual partial ordering on $\R^2$) then $X_{(a,b)}\subset X_{(a',b')}$.  We can define the (multidimensional) persistent homology functor of a $(u,v)$-filtration for any $(u,v)\in (-\infty,\infty]^2$---see Section~\ref{Sec:GeneralizedPersistentHomologyDefinition}.

For an $(\infty,\infty)$-filtration $X$ and $u,v\in (-\infty,\infty]$, let $R_{(u,v)}(X)$ denote the $(u,v)$-filtration such that for $(a,b)<(u,v)$, $R_{(u,v)}(X)_{(a,b)}=X_{a,b}$.

\subsubsection{Superlevelset-Offset Filtrations}

We now introduce sublevelset-offset filtrations and superlevelset-offset filtrations.
Informally, whereas the topology of a superlevelset filtration encodes only the height of topographical features of the graph of an $\R$-valued function, the topology of a superlevelset-offset filtration simultaneously encodes both the height and the width of those topographical features.  

Superlevelset-offset filtrations are a natural common extension of superlevelset filtrations and {\it offset filtrations} \cite{chazal2008towards}, two types of 1-D filtrations which are standard objects of study in the computational topology literature.  

If $(Y,d)$ is a metric space, $X\subset Y$, and $\gamma:X \to \R$ is a function, $F^{SO}(X,Y,d,\gamma)$, the {\bf sublevelset-offset filtration of $\gamma$ (w.r.t the metric $d$)}, is the $(\infty,\infty)$-filtration for which \[F^{SO}(X,Y,d,\gamma)_{a,b}= \{y\in Y|d(y,\gamma^{-1}((\infty,a]))\leq b\}.\]  We write $F^{SO}(Y,Y,d,\gamma)$ simply as $F^{SO}(Y,d,\gamma)$.

Informally, $F^{SO}(X,Y,d,\gamma)_{a,b}$ is the thickening of the $a$-sublevelset of $\gamma$ by offset parameter $b$.  We call $F^{SO}(X,Y,d,-\gamma)$. the {\bf superlevelset-offset filtration} of $\gamma$.

\subsubsection{Superleveset-\Cech Bifiltrations}
Given $X,Y,$ and $d$ as above and $b \in \R$, let $\Cech(X,Y,d,b)$, the {\it (closed) \Cech complex} of $(X,d)$ with parameter $b$, be the abstract simplicial complex with vertex set $X$ such that for $l\geq 2$ and $x_1,x_2,..,x_l\in X$, $\Cech(X,Y,d,b)$ contains the $(l-1)$-simplex $[x_1,...,x_l]$ iff there is a point $y\in Y$ such that $d(y,x_i)\leq b$ for $1\leq i \leq l$.

For any $\gamma:X\to \R$, let $F^{\SCe}(X,Y,d,\gamma)$, the {\bf sublevelset-\Cech filtration} of $\gamma$ (w.r.t the metric $d$), be the $(\infty,\infty)$-filtration for which \[F^{\SCe}(X,Y,d,\gamma)_{a,b}=\Cech(\gamma^{-1}((\infty,a]),Y,d,b).\]
We call $F^{\SCe}(X,Y,d,-\gamma)$ the {\bf superlevelset-\Cech filtration} of $\gamma$.

\subsubsection{Superlevelset-Rips Bifiltrations}
Given a metric space $(X,d)$ and $b \in \R$, let $\Rips(X,d,b)$, the {\it Rips complex} of $(X,d)$ with parameter $b$, be the maximal abstract simplicial complex with vertex set $X$ such that for $x_1,x_2\in X$, the 1-skeleton of $R(X,d,b)$ contains the edge $[x_1,x_2]$ iff $d(x_1,x_2)\leq 2b$. 

For any $\gamma:X\to \R$, let $F^{SR}(X,d,\gamma)$, the {\bf sublevelset-Rips filtration} of $\gamma$ (w.r.t the metric $d$), be the $(\infty,\infty)$-filtration for which \[F^{SR}(X,d,\gamma)_{a,b}=\Rips(\gamma^{-1}((\infty,a]),d,b).\]
We call $F^{\SCe}(X,d,-\gamma)$ the {\bf superlevelset-Rips filtration} of $\gamma$.

\subsection{An Inferential Interpretation of Superlevelset-\Cech Bifiltrations}
We now describe our first main inference result.

For simplicity, we state the result here only for density functions on Euclidean space; it adapts readily to the case of density functions on Riemannian manifolds.

Let $\gamma:\R^m \to \R$ be a $c$-Lipchitz density function for some $c>0$; for $z\in \NN$, let $T_z$ be an i.i.d. sample of size $z$ of a random variable with density $\gamma$; fix $p\in [1,\infty]$, let $d^p$ denote both the $L^p$-metric on $\R^m$ and (by slight abuse of notation) the restriction of the $d^p$ to any subset of $\R^m$; let $E$ be a density estimator on $\R^m$.

Our first main result, Theorem~\ref{Thm:CechConsistency}, is the following.

\begin{ThmNoNum}\mbox{}
\begin{enumerate*}
\item[(i)]If $(\gamma,E)$ satisfies A1 then 
\[d_{WI}(R_{(0, \infty)}(F^{\SCe}(T_z,\R^m,d^p,-E(T_z))),R_{(0, \infty)}(F^{SO}(\R^m,d^p,-\gamma)))\xrightarrow{\mathcal P}0.\]
\item[(ii)]If $(\gamma,E)$ satisfies A2 for a kernel K then 
\[d_{WI}(R_{(0, \infty)}(F^{\SCe}(T_z,\R^m,d^p,-E(T_z))),R_{(0, \infty)}(F^{SO}(\R^m,d^p,-\gamma*K)))\xrightarrow{\mathcal P}0.\]  
\end{enumerate*}
\end{ThmNoNum}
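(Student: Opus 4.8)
The plan is to prove the two convergence statements by combining three ingredients: (1) a deterministic comparison between a \Cech-type bifiltration and a superlevelset-offset bifiltration built from the same density data, provided on the level of filtrations and measured in $d_{WI}$; (2) the uniform-in-probability convergence of $E(T_z)$ to $\gamma$ (under A1) or to $\gamma * K$ (under A2); and (3) the stability of $d_{WI}$ under perturbations of the data, together with the fact that $d_{WI}$ vanishes on levelwise homotopy equivalences (Proposition~\ref{prop:ZeroInterleavingOfSOandCech}). Concretely, for (i) I would first fix a sample $T_z$ and compare $F^{\SCe}(T_z,\R^m,d^p,-E(T_z))$ with $F^{SO}(\R^m,d^p,-E(T_z))$ — i.e. the superlevelset-offset filtration of the \emph{estimator}, not of $\gamma$. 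A persistent-nerve-lemma argument at the level of filtrations (the level-of-filtrations adaptation of \cite{chazal2008towards} alluded to in the introduction) should show these two are weakly interleaved with interleaving parameter controlled by the covering radius of $T_z$ in $\R^m$ near the relevant sublevelsets; since $E(T_z)$ is (with high probability) close to $\gamma$ and $\gamma$ is compactly-supported-ish enough that sublevelsets behave, that covering radius tends to $0$ in probability as $z\to\infty$.

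The second step is to compare $F^{SO}(\R^m,d^p,-E(T_z))$ with $F^{SO}(\R^m,d^p,-\gamma)$. Here the key point is that if $\|E(T_z)-\gamma\|_\infty \le \eta$, then the superlevelsets of $-E(T_z)$ and $-\gamma$ are nested within each other after an $\eta$-shift in the density direction, and thickening by the metric offset is $1$-Lipschitz in the Hausdorff sense, so the two superlevelset-offset bifiltrations are $(\eta,0)$-interleaved as honest filtrations (hence strongly interleaved, hence weakly interleaved with $d_{WI}\le\eta$ by the inequality $d_{WI}\le d_{SI}$). Combining this with Step 1 via the triangle inequality for $d_{WI}$ gives a bound $d_{WI}\big(R_{(0,\infty)}F^{\SCe}(T_z,\dots),R_{(0,\infty)}F^{SO}(\R^m,d^p,-\gamma)\big) \le \phi(\text{covering radius}) + \|E(T_z)-\gamma\|_\infty$ for an explicit modulus $\phi$ with $\phi(0)=0$. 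Under A1 both terms on the right converge to $0$ in probability (the covering-radius term by a standard ball-covering / VC argument for i.i.d. samples of a distribution with a $c$-Lipschitz density, the estimator term by hypothesis A1), and an $\epsilon$-$\delta$ argument of exactly the type used in the proof of Theorem~\ref{Thm:CechFixedScaleConsistency} upgrades this to the asserted $\xrightarrow{\mathcal P}0$. Part (ii) is identical except that A2 gives $\|E(T_z)-\gamma*K\|_\infty \xrightarrow{\mathcal P}0$, and since $\gamma*K$ is again a bounded (indeed Lipschitz, if $K$ is nice) density, the same deterministic comparisons apply with $\gamma$ replaced by $\gamma*K$ throughout.

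The main obstacle I expect is Step 1 — the deterministic weak interleaving between the superlevelset-\Cech bifiltration on the sample and the superlevelset-offset bifiltration on the estimator. This is where one genuinely needs the filtration-level persistent nerve lemma: one must produce, for suitable shifts in \emph{both} the density and the offset coordinates, morphisms of filtrations that become mutually inverse in the localized (homotopy) category, and check the interleaving (commuting-triangle) identities hold after localization. The subtlety is that the \Cech complex of a sublevelset of $E(T_z)$ must be compared to a \emph{metric thickening} of a sublevelset of $E(T_z)$, and the usual nerve lemma gives a homotopy equivalence only when the cover (by balls around sample points) is good and actually covers the thickened sublevelset — which forces the interleaving parameter to depend on the sample's covering radius within the region where $E(T_z)\approx\gamma$. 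Getting this dependence clean, uniformly over the two filtration parameters, and verifying that passing to the localized category legitimately kills the discrepancy (so that $d_{WI}$, not merely some homotopy-commutativity-up-to-error, is bounded), is the technical heart of the argument; everything else is routine probability and triangle inequalities.
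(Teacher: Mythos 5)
Your overall strategy---a deterministic approximation result, a finite-sample probabilistic bound, and an $\epsilon$-$\delta$ passage to convergence in probability---matches the paper's (Theorems~\ref{Thm:CechApproximation}, \ref{Thm:CechInference}, and the proof of Theorem~\ref{Thm:CechConsistency} respectively). Where you genuinely diverge is in how you factor the deterministic comparison. You propose
\[
F^{\SCe}(T_z,\R^m,d^p,-E(T_z)) \;\leftrightsquigarrow\; F^{SO}(\R^m,d^p,-E(T_z)) \;\leftrightsquigarrow\; F^{SO}(\R^m,d^p,-\gamma),
\]
where the first comparison bundles the nerve lemma with the sampling step, both applied to the \emph{estimator}, and the second handles the estimator error via $\|E(T_z)-\gamma\|_\infty$. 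The paper instead factors as
\[
F^{SO}(\R^m,\gamma) \leftrightsquigarrow F^{SO}(T,\gamma_T) \leftrightsquigarrow F^{\SCe}(T,\gamma_T) \leftrightsquigarrow F^{\SCe}(T,E(T_z)),
\]
so that: (a) the sampling step (Lemma~\ref{Lem:FirstStep}) compares the full-space and sample $F^{SO}$ filtrations both built from the \emph{true} density $\gamma$, so the covering bound needed (Lemma~\ref{Lem:LPCoveringProbability}) is a direct coverage statement about superlevelsets of $\gamma$, which is exactly what i.i.d.\ sampling from $\P_\gamma$ controls; (b) the nerve-lemma step (Proposition~\ref{prop:ZeroInterleavingOfSOandCech}, via the open variants) compares $F^{SO}$ and $F^{\SCe}$ both on the fixed finite set $T$, with the \emph{same} filtering function $\gamma_T$, so no sampling error appears there; and (c) the estimator error enters only at the very end, as a trivial $C$-interleaving of two \Cech filtrations on the same finite vertex set $T$, filtered by two functions $\gamma_T$ and $E(T_z)|_T$---hence the paper needs to control $E(T_z)-\gamma$ only at the sample points, via the $(q,C)$-density-estimator notion.

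The concrete benefit of the paper's ordering is precisely the one your sketch has to paper over. In your Step A you need $T_z$ to cover the superlevelsets of $E(T_z)$, but $T_z$ is i.i.d.\ from $\P_\gamma$; the coverage bound you'd cite (the analogue of Lemma~\ref{Lem:CoveringProbability}) controls coverage of superlevelsets of $\gamma$. To transfer, you would first need to use A1 to sandwich a superlevelset of $E(T_z)$ between superlevelsets of $\gamma$ at nearby levels, and then run the coverage bound on the $\gamma$-superlevelset. That is exactly the kind of bookkeeping the paper's decomposition avoids, by never asking $T_z$ to cover anything defined in terms of $E(T_z)$. Your approach can be completed, but the paper's factoring isolates the nerve lemma as a pure ``$0$-interleaving'' (Proposition~\ref{prop:ZeroInterleavingOfSOandCech}) unentangled with either the covering radius or the estimator error, keeps the sampling argument as a statement about $\gamma$ alone, and relegates the estimator to a one-line last step; I would recommend rewriting your argument in that order. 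One further housekeeping point: the restriction to $R_{(0,\infty)}$ in the theorem statement is not decorative---the paper first proves the bound on $R_{(-\alpha,\infty)}$ for $\alpha>c\epsilon/c'$ (so the coverage bound has positive density to work with) and then uses Lemma~\ref{lem:InterleavingsUnderExpansion} to push back to $R_{(0,\infty)}$ at the cost of an extra $\alpha$; your phrase about $\gamma$ being ``compactly-supported-ish'' is gesturing at the same issue, but the $\alpha$-shift-then-expand device is the concrete mechanism you would need.
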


Note that unlike Theorem~\ref{Thm:ChazalInferenceResult}, the statement of this result does not involve a sequence $\{\delta_z\}_{z\in \Z}$ of scale parameters.

Let $H_i$ denote the $i^{th}$ persistent homology functor (the definition is given in Section~\ref{Sec:GeneralizedPersistentHomologyDefinition}).  From our stability result for the weak interleaving distance, Theorem~\ref{Thm:StabilityWRTInterleavings}, we immediately obtain the following corollary, which is Corollary~\ref{Cor:CechHomologyConsistency}.

\begin{CorNoNum}\mbox{}
\begin{enumerate*}
\item[(i)]If $(\gamma,E)$ satisfies A1 then 
\[d_I(H_i(R_{(0, \infty)}(F^{\SCe}(T_z,\R^m,d^p,-E(T_z)))),H_i(R_{(0, \infty)}(F^{SO}(\R^m,d^p,-\gamma))))\xrightarrow{\mathcal P}0.\]
\item[(ii)]If $(\gamma,E)$ satisfies A2 for a kernel K then 
\[d_I(H_i(R_{(0, \infty)}(F^{\SCe}(T_z,\R^m,d^p,-E(T_z)))),H_i(R_{(0, \infty)}(F^{SO}(\R^m,d^p,-\gamma*K))))\xrightarrow{\mathcal P}0.\]
\end{enumerate*}
\end{CorNoNum}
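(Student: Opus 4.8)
The plan is to obtain the corollary as an immediate consequence of the main inference theorem (the unnumbered theorem above, Theorem~\ref{Thm:CechConsistency}) together with the stability of persistent homology with respect to the weak interleaving distance (Theorem~\ref{Thm:StabilityWRTInterleavings}). First I would set $X_z = R_{(0,\infty)}(F^{\SCe}(T_z,\R^m,d^p,-E(T_z)))$ and, in case (i), $Y=R_{(0,\infty)}(F^{SO}(\R^m,d^p,-\gamma))$, while in case (ii), $Y=R_{(0,\infty)}(F^{SO}(\R^m,d^p,-\gamma*K))$; note that $X_z$ and $Y$ are $(0,\infty)$-filtrations, so that both $d_{WI}$ and the persistent homology functor $H_i$ (as recalled in Section~\ref{Sec:GeneralizedPersistentHomologyDefinition}) are applied in compatible registers, and $d_I(H_i(X_z),H_i(Y))$ makes sense since $d_I$ is a pseudometric on \emph{all} multidimensional persistence modules.

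Next I would invoke Theorem~\ref{Thm:StabilityWRTInterleavings}, which tells us that $H_i$ is stable with respect to $d_{WI}$ over an arbitrary commutative coefficient ring, hence in particular over the field used to define $d_I$; this gives, pointwise on the sample space, the bound
\[
0 \;\leq\; d_I\big(H_i(X_z),\,H_i(Y)\big)\;\leq\; d_{WI}(X_z,Y).
\]
By Theorem~\ref{Thm:CechConsistency} we have $d_{WI}(X_z,Y)\xrightarrow{\P}0$ as $z\to\infty$ --- in case (i) under assumption A1, in case (ii) under assumption A2 with kernel $K$. A standard squeeze argument then finishes the proof: for every $\epsilon>0$ the event $\{d_I(H_i(X_z),H_i(Y))>\epsilon\}$ is contained in the event $\{d_{WI}(X_z,Y)>\epsilon\}$, and the probability of the latter tends to $0$, so $d_I(H_i(X_z),H_i(Y))\xrightarrow{\P}0$, which is precisely the assertion of part (i) (resp.\ part (ii)).

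Since the argument is just one application of the stability theorem followed by a monotone comparison of events, there is no genuinely hard step; the content of the result lies entirely in Theorem~\ref{Thm:CechConsistency} and Theorem~\ref{Thm:StabilityWRTInterleavings}. The only point that warrants a word of care is the measurability of $z\mapsto d_I(H_i(X_z),H_i(Y))$; since this quantity is dominated by the measurable quantity $d_{WI}(X_z,Y)$ and we only ever bound probabilities of events $\{\,\cdot\,>\epsilon\}$ from above, it suffices to work with the same measurability conventions (or outer probabilities) already used in the statement and proof of Theorem~\ref{Thm:CechConsistency}, so no additional difficulty arises.
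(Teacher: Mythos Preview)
Your proposal is correct and matches the paper's own proof essentially verbatim: the paper simply says the corollary follows immediately from Theorem~\ref{Thm:CechConsistency} together with the stability result Theorem~\ref{Thm:StabilityWRTInterleavings}, which is precisely the bound $d_I(H_i(X_z),H_i(Y))\leq d_{WI}(X_z,Y)$ plus convergence in probability that you spell out.
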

Theorem~\ref{GeneralAlgebraicRealization}, our extrinsic characterization of $(J_1,J_2)$-interleaved pairs of modules, offers a concrete interpretation of Corollary~\ref{Cor:CechHomologyConsistency} as a statement about the similarity between presentations of persistent homology modules.

Theorem~\ref{Thm:CechConsistency} and Corollary~\ref{Cor:CechHomologyConsistency} also immediately give us corresponding results about Rips filtrations, in two special cases.  First, it is easy to check that for any $T\subset \R^m$, and any scale parameter $a\geq 0$, $\Rips(T,d^\infty,a)=\Cech(T,\R^m,d^\infty,a)$.  Thus \[F^{\SCe}(T_z,\R^m,d^\infty,-E(T_z))=F^{SR}(T_z,d^\infty,-E(T_z))\] and so when $p=\infty$, we may replace the superlevelset-\Cech filtrations $F^{\SCe}(T_z,\R^m,d^p,-E(T_z))$ with the superlevelset-Rips filtations $F^{SR}(T_z,d^p,-E(T_z))$ everywhere in the statements of the above theorems.  

Second, since the superlevelset-Rips filtrations and superlevelset-\Cech filtrations always have equal 1-skeletons, \[H_0(R_{(0, \infty)}(F^{\SCe}(T_z,d^p_z,-E(T_z))))=H_0(R_{(0, \infty)}(F^{SR}(T_z,d^p_z,-E(T_z))))\] in the the statement of Corollary~\ref{Cor:CechHomologyConsistency}, and so the corollary gives us in particular a consistency result for the the $0^{th}$ persistent homology of superlevelset-Rips filtrations.

\subsection{An Inferential Interpretation of Superlevelset-Rips Bifiltrations}\label{Sec:RipsInferenceIntro}
For $\gamma,T_z$, and $E$ as in the previous section, write $F^{SR}_z=F^{SR}(T_z,d^p,-E(T_z))$ and $F^{\SCe}_z=F^{\SCe}(T_z,\R^m,d^p,-E(T_z))$, and $F^{SO}=F^{SO}(\R^m,\R^m,d^p,-\gamma)))$.

Our second main inference result, Theorem~\ref{Thm:RipsAsymptotics}, is an analogue of our first main inference result for the random superlevelset-Rips bifiltrations $F^{SR}_z$.  The result is formulated directly in terms of weak interleavings, rather than in terms of $d_{WI}$.  Because of this, the precise statement of the result is technical and not readily presented before giving a careful treatment of weak $(J_1,J_2)$-interleavings.  Thus we will describe the result here but defer its statement to Section~\ref{Thm:RipsAsymptotics}.

Theorem~\ref{Thm:RipsAsymptotics} quantifies the sense in which $F^{SR}_z$ is topologically similar to $F^{SO}$ in the asymptotic limit as $z\to \infty$, under the same assumptions as in our first main inference result.  Roughly, the result says that in the limit as $z\to \infty$, $F^{SR}_z$ and $F^{SO}$ satisfy in the weak sense the same interleaving relationship that $F^{SR}_z$ and $F^{\SCe}_z$ satisfy in the strong sense for all $z$ (with probability 1).

The result is not, in any reasonable sense, a consistency result.  Indeed, $F^{\SCe}_z$ and $F^{SR}_z$ exhibit topological differences that do not become negligible with high probability as $z\to \infty$.  Thus, given that under the assumptions of our first main inference result, the bifiltrations $F^{\SCe}_z$ are (in a topological sense) consistent estimators of $F^{SO}$, we do not expect that under the same assumptions the bifiltrations $F^{SR}_z$ would also be consistent estimators of $F^{SO}$.  On the other hand, the topological differences between $F^{\SCe}_z$ and $F^{SR}_z$ are controlled by the well known inclusion relationships between \Cech complexes and Rips complexes: Recall that for any metric space $(Y,d)$, $X\subset Y$ and any scale parameter $r\geq 0$, we have that  
\begin{align}
\Cech(X,Y,d,r)\subset \Rips(X,d,r)\subset \Cech(X,Y,d,2r).
\label{Eq:Rips_And_Cech}
\end{align}
Using the language of weak  $(J_1,J_2)$-interleavings, we can encode the relationship between $F^{\SCe}_z$ and $F^{SR}_z$ induced by these inclusions.  Then, via a simple triangle inequality type lemma for interleavings, we can quantify how that relationship, taken together with our first main inference result, controls the topological differences between $F^{SR}_z$ and $F^{SO}$ in the asymtoptic limit.  Theorem~\ref{Thm:RipsAsymptotics} does exactly this.

As in the case of our first main result, via the stability Theorem~\ref{Thm:StabilityWRTInterleavings} we obtain an analogue of Theorem~\ref{Thm:RipsAsymptotics} for persistent homology modules, Corollary~\ref{Cor:RipsHomologyAsymptotics}.  Corollary~\ref{Cor:RipsHomologyAsymptotics} is formulated in terms of $(J_1,J_2)$-interleavings of persistence modules.  As is the case for Corollary~\ref{Cor:CechHomologyConsistency}, Theorem~\ref{GeneralAlgebraicRealization} offers a concrete interpretation of Corollary~\ref{Cor:RipsHomologyAsymptotics} as a statement about the similarity between presentations of persistent homology modules.

\subsection{Deterministic Approximation Results}

In proving our inference results, we follow a strategy analogous to that used by Chazal et. al to prove the inference result \cite[Theorem 5.1]{chazal2009persistence}--namely we first prove deterministic approximation results which assume that the domain of the functions we consider is well sampled, and then use these deterministic results to obtain probabilistic results.

In specific, we first prove Theorem~\ref{Thm:CechApproximation}, a bound on the weak interleaving distance between the sublevelset-offset filtration of a function $\gamma$ and the sublevelset-\Cech filtration of an approximation of $\gamma$ defined on a finite subset of the domain of $\gamma$.  This bound implies an analogous result, Theorem~\ref{Thm:RipsApproximation}, for sublevelset-Rips filtrations, formulated in terms of weak $(J_1,J_2)$-interleavings.  These results are the content of Sections~\ref{Sec:CechApproximation} and~\ref{Sec:RipsApproximation}; they are analogues of \cite[Theorem 3.1]{chazal2009analysis}, and its extension \cite[Theorem 4.5]{chazal2009persistence}, but in the multidimensional setting and for filtrations rather than for persistent homology modules.  In fact, our results treat the general situation that $\gamma$ is an $\R^n$-valued function, $n\geq 1$, and that only a sublevelset of the domain of $\gamma$ is well sampled.  

The result \cite[Theorem 4.5]{chazal2009persistence} for ordinary persistence is interesting and useful, independent of its application to statistical inference---see, for example, the application presented in \cite{skraba2010persistence}.  We imagine that the deterministic bounds we present here may similarly be of independent interest.

\subsection{Deterministic Approximation of Multidimensional Sublevelset Persistent Homology}
Using the interleaving distance on multidimensional persistence modules, it also is possible to adapt the deterministic result \cite[Theorem 4.5]{chazal2009persistence} to the multidimensional setting in a different---and more straightforward---way than that of Theorems~\ref{Thm:CechApproximation} and~\ref{Thm:RipsApproximation}: Our Theorem~\ref{Thm:RipsFixedScaleApproximation} generalizes the result \cite[Theorem 4.5]{chazal2009persistence} to $\R^n$-valued functions, using sublevelset multifiltrations (as defined e.g. in Section~\ref{GeoFunctorsSection}), and sublevelset-Rips multifiltrations with a fixed scale parameter.  The proof is essentially same as that of \cite[Theorem 4.5]{chazal2009persistence} in the case of $\R$-valued functions.

This result does not lift to the level of filtrations, but an analogue of it for sublevelset-\Cech filtrations rather than sublevelset-Rips filtrations does in fact lift to the level of filtrations.  This is our Theorem~\ref{Thm:CechFixedScaleApproximation}.

\subsection{An Inference Result for 1-D Filtrations}
In addition to our main inference results, described above, in Section~\ref{Sec:CechFixedScaleInference} we apply the weak interleaving distance and Theorem~\ref{Thm:CechFixedScaleApproximation} to obtain an inference result, Theorem~\ref{Thm:CechFixedScaleConsistency}, for the superlevelset filtration of a probability density function.  This is a variant of Theorem~\ref{Thm:ChazalInferenceResult}, holding at the level of filtrations. In general, the result holds only for filtered \Cech complexes, not for filtered Rips complexes---Theorem~\ref{Thm:ChazalInferenceResult}, as stated above for filtered Rips complexes, does not lift to the level of filtrations.  However, as for Theorem~\ref{Thm:CechConsistency}, Theorem~\ref{Thm:CechFixedScaleConsistency} can of course be interpreted as a result about Rips filtrations in the special case that we construct our filtrations using $L^{\infty}$ metrics on $\R^m$. 

As we have already noted, our proof of Theorem~\ref{Thm:CechFixedScaleConsistency} adapts to give a proof of Theoem~\ref{Thm:ChazalInferenceResult}, which we stated but did not prove.

\section{Why Formulate Theory of Topological Inference Directly at the Level of Filtrations?}\label{Sec:InferenceAtLevelOfFiltrations}
To close this introduction, we explain our choice to develop inferential theory directly at the level of filtrations rather than only at the level of persistent homology modules.

Our aim is to understand as deeply as possible the connection between the topological structure of discrete filtrations built on randomly sampled point cloud data and that of filtrations built directly from the probability distribution generating the random data; as we'll now argue, to do this, it is more natural to develop the inferential theory (and, in particular, the requisite notions of similarity of filtrations) directly at the level of filtrations than at the homology level.

We note first the trivial fact that singular homology induces an equivalence relation on topological spaces---namely, we can say that two topological spaces $X$ an $Y$ are {\it homology equivalent}\footnote{Note: This not standard terminology; we are introducing it here only in the service of the present discussion and only for use in this subsection.} if $H_i(X)$ and $H_i(Y)$ are isomorphic for all $i$.  However, such a notion of equivalence between topological spaces is not often adopted as a fundamental object of interest in algebraic topology.  The reason is that there are stronger, more easily defined, and more geometrically transparent notions of equivalence of topological spaces, such as those of homotopy equivalence and weak homotopy equivalence, which exhibit the kinds of invariance properties one wants a notion of equivalence on spaces to have in algebraic topology and homotopy theory.\footnote{When we say that homotopy equivalence is a geometrically transparent notion of equivalence of topological spaces we are alluding specifically to the theorem which says that two spaces are homotopy equivalent if and only if they are each deformation retracts of some third space \cite[Corollary 0.21]{hatcher2002algebraic}.}  Thus homotopy equivalence and weak homotopy equivalence are generally regarded as the fundamental notions of equivalence of topological spaces in algebraic topology, and homology serves primarily as a computational tool to understand spaces up to these notions of equivalence; homology is usually not taken as means of defining a notion of equivalence of topological spaces in of itself.

Now, for filtrations the analogue of homology is persistent homology, and we can define a notion of persistent homology equivalence of filtrations in a way analogous to the way in which we just defined homology equivalence of topological spaces.  In fact, for ordinary persistence the bottleneck distance $d_B$ on persistence modules affords us an {\it approximate} notion of persistent homology equivalence between two 1-D filtrations: we can interpret the statement that
\[\forall \, i\in \Z_{\geq 0},\, d_B(H_i(X),H_i(Y))\leq \epsilon\] 

for two filtrations $X$ and $Y$ as saying that $X$ and $Y$ are {\it approximately persistent homology equivalent}, up to an error of $\epsilon$.  More generally, the interleaving distance introduced in this thesis affords us in the same way an approximate notion of persistent homology equivalence between two multidimensional filtrations.  

Given that for topological spaces it turns out to be most natural to regard homotopy equivalence, rather than homology equivalence, as the fundamental notion of equivalence between topological spaces, by analogy it is reasonable for us to ask for a notion of {\it approximate homotopy equivalence} of filtrations which has good invariance properties and which enjoys the same advantages over approximate homology equivalence that homotopy equivalence enjoys over homology equivalence for topological spaces.  Further, if we were to have such a notion, then in light of its good properties and by analogy to classical algebraic topology, it would be natural for us to formulate persistence theory and in particular theory of topological inference directly at the level of filtrations in terms of that notion.

The weak interleaving distance affords us precisely such a well behaved notion of approximate homotopy equivalence of multidimensional filtrations, except that, as we noted in Section~\ref{Sec:WeakInterleavingsIntro}, it is not yet clear to what extent the {weak interleaving distance} is in fact geometrically transparent. 

Thus, the present unavailability of a geometric characterization of the weak interleaving distance aside, it is natural to formulate our main inference results directly at the level of filtrations, in terms of the weak interleaving distance and the closely related notion of weak interleavings.

 \chapter[Interleavings on Multi-D Persistence Modules]{Interleavings on Multidimensional Persistence Modules}

. 
  In this chapter we introduce and study interleavings and interleaving distance on multidimensional persistence modules.  See Section~\ref{Sec:Chapter2Overview} for an overview of the chapter.

\section{Algebraic Preliminaries}\label{SectionAlgebraicPreliminaries}
In this section we define persistence modules and review some (primarily) algebraic facts and definitions which we will need throughout the thesis.    

\subsection{First Definitions and Notation}\label{Sec:FirstDefs}
 
\subsubsection{Basic Notation}  Let $k$ be a field.  Let ${\mathbb N}$ denote the natural numbers and let ${\Z}_{\geq 0}$ denote the non-negative integers.  Let $\hat \R=(-\infty,\infty]$.  

We view $\R^n$ as a partially ordered set, with $(a_1,...,a_n)\leq (b_1,...,b_n)$ iff $a_i\leq b_i$ for all $i$.  Let $\e_i$ denote the $i^{th}$ standard basis vector in $\R^n$.  

For $A\subset \R$ any subset, let ${\bar A}=A\cup \{-\infty,\infty\}$.   

For $a=(a_1,...,a_n)\in \hat \R^n$ and $b\in \hat \R$, let $(a,b)=(a_1,...,a_n,b)\in \hat \R^{n+1}$.  

For $f:X \to {\R}^n$, $a\in \R^n$, let $f_a=\{x\in X|f(x)\leq a\}$.  We call $f_a$ the {\it a-sublevelset of $f$} and we call $(-f)_a$ the {\it a-superlevelset of $f$}.

For $\epsilon\in \hat \R$, let ${\vec \epsilon}_n\in \R^n$ denote the vector whose components are each $\epsilon$.  We'll also often write ${\vec \epsilon}_n$ simply as ${\vec \epsilon}$ when $n$ is understood.  As a notational convenience, for $u\in \R^n$ and $\epsilon \in \R$, let $u+\epsilon$ denote $u+{\vec \epsilon}$.  

\subsubsection{Notation Related to Categories}
 
For a category $C$, let $\obj(C)$ denote the objects of $C$ and let $\obj^*(C)$ denote the isomorphism classes of objects of $C$.  Let $\hom(C)$ denote the morphisms in $C$, and for $X,Y\in \obj(C)$ let $\hom_C(X,Y)$ denote the morphisms from $X$ to $Y$.  When $C$ is understood, we'll often write $\hom_C(X,Y)$ simply as $\hom(X,Y)$.

\subsubsection{Metrics, Pseudometrics, and Semi-pseudometrics}
Recall that a {\it pseudometric} on a set $X$ is a function $d:X\times X\to [0,\infty]$ with the following three properties:
\begin{enumerate*}
\item $d(x,x)=0$ for all $x\in X$.
\item $d(x,y)=d(y,x)$ for all $x,y\in X$.
\item $d(x,z)\leq d(x,y)+d(y,z)$ for all $x,y,z\in X$.
\end{enumerate*}
We'll often use the term {\it distance} in this thesis as a synonym for pseudometric.   

A {\it metric} is a pseudometric $d$ with the additional property that $d(x,y)\ne 0$ whenever $x\ne y$.  We define a {\it semi-pseudometric} to be a function $d:X\times X\to [0,\infty]$ satisfying properties 1 and 2 above.
\subsubsection{Metrics on Categories}

In this thesis we'll often have the occasion to define a pseudometric on $\obj^*(C)$, for $C$ some category.  For $d$ such a pseudometric, $M,N\in \obj(C)$, and $[M],[N]$ the isomorphism classes of $M$ and $N$, we'll always write $d(M,N)$ as shorthand for $d([M],[N])$.   


\subsection{Commutative Monoids and Commutative Monoid Rings}\label{MonoidRings}
Monoid rings are generalizations of polynomial rings.   

A {\it commutative monoid} is a pair $(G,+_G)$, where $G$ is a set and $+_G$ is an associative, commutative binary operation on $G$ with an identity element.  Abelian groups are by definition commutative monoids with the additional property that each element has an inverse.  We'll often denote the monoid $(G,+_G)$ simply as $G$.  A submonoid of a monoid is defined in the obvious way, as is an isomorphism between two monoids.  

Given a set $S$, let $k[S]$ denote the vector space of formal linear combinations of elements of $S$.  If ${\bar G}=(G,+_G)$ is a monoid, then the operation $+_G$ induces a ring structure on $k[G]$, where multiplication is characterized by the property $(k_1 g_1)(k_2 g_2)=k_1k_2(g_1 +_G g_2)$ for $k_1,k_2\in k$, $g_1,g_2\in G$.  We call the resulting ring the {\it monoid ring} generated by ${\bar G}$, and we denote it $k[{\bar G}]$.

Let ${\bf A_n}$ denote $k[x_1,...,x_n]$, the polynomial ring in $n$ variables with coefficients in $k$.  For $n>0$, $\Z_{\geq 0}^n$ is a monoid under the usual addition of vectors.  It's easy to see that $k[\Z_{\geq 0}^n]$ is isomorphic to ${\bf A_n}$.     

Similarly, $\R_{\geq 0}^n$ is a monoid under the usual addition of vectors.  Let $B_n$ denote the monoid ring $k[\R_{\geq 0}^n]$.  We may think of $B_n$ as an analogue of the usual polynomial ring in $n$-variables where exponents of the indeterminates are allowed to take on arbitrary non-negative real values rather than only non-negative integer values.  With this interpretation in mind, we'll often write $(r_1,...,r_n)$ as $x_1^{r_1}x_2^{r_2}\cdots x_n^{r_n}$, for $(r_1,...,r_n)\in \R_{\geq 0}^n$.  

\subsection{Multidimensional Persistence Modules}\label{Sec:MultidimensionalPersistenceModules}

We first review the definition of a multidimensional persistence module given in \cite{carlsson2009theory}.  We then define analogues of these over the ring $B_n$.  

In what follows, we'll often refer to multidimensional persistence modules simply as ``persistence modules."

\subsubsection{${\mathbf A}_n$-Persistence Modules}

Fix $n\in \NN$.  An {\bf ${\mathbf A}_n$-persistence module} is an ${\mathbf A}_n$-module $\M$ with a direct sum decomposition as a $k$-vector space $\M \cong \bigoplus_{a \in \Z^n} \M_a$ such that the action of ${\mathbf A}_n$ on $\M$ satisfies $x_i(\M_a) \subset \M_{a+\e_i}$ for all $a\in \Z^n.$  In other words, an ${\mathbf A}_n$-persistence module is simply an ${\mathbf A}_n$-module endowed with an $n$-graded structure.  

For $\M$ and $\N$ ${\mathbf A}_n$-persistence modules, we define $\hom(\M,\N)$ to be the set of module homomorphisms $f:\M\to \N$ such that $f(\M_a)\subset \N_a$ for all $a \in \Z^n$.  This defines a category whose objects are the ${\mathbf A}_n$-persistence modules.  Let ${\mathbf A}_n$-mod denote this category.

\subsubsection{$B_n$-persistence modules}
In close analogy with the definition of an $n$-graded ${\mathbf A}_n$-module, we define a {\bf $B_n$-persistence module} to be a $B_n$-module $M$ with a direct sum decomposition as a $k$-vector space $M \cong \bigoplus_{a \in {\R}^n} M_a$ such that the action of $B_n$ on $M$ satisfies $x_i^{\alpha}(M_a) \subset M_{a+\alpha\e_i}$ for all $a\in \R^n,$ $\alpha\geq 0$.

For $M$ and $N$ $B_n$-persistence modules, we define $\hom(M,N)$ to consist of module homomorphisms $f:M\to N$ such that $f(M_a)\subset N_a$ for all $a \in \R^n$.  This defines a category whose objects are the $B_n$-persistence modules.  Let $B_n$-mod denote this category.

Our notational convention will be to use boldface to denote ${\mathbf A}_n$-persistence modules and italics to denote $B_n$-persistence modules.  We'll often refer to ${\mathbf A}_1$-persistence modules and $B_1$-persistence modules as {\it ordinary} persistence modules.

\subsubsection{On the Relationship Between ${\mathbf A}_n$-persistence Modules and $B_n$-persistence Modules}\label{TensorUpRemarks} 

Since ${\mathbf A}_n$ is a subring of $B_n$, we can view $B_n$ as an ${\mathbf A}_n$-module.  If $\M$ is an ${\mathbf A}_n$-persistence module then $\M\otimes_{{\mathbf A}_n} B_n$ is a $B_n$-module.  Further, $\M\otimes_{{\mathbf A}_n} B_n$ inherits an $n$-grading from those on $B_n$ and $\M$ which gives $\M\otimes_{{\mathbf A}_n} B_n$ the structure of a $B_n$-persistence module.  

In fact, $(\cdot)\otimes_{{\mathbf A}_n} B_n$ defines a functor from ${\mathbf A}_n$-mod to $B_n$-mod.  It can be checked that this functor is fully faithful and descends to an injection on isomorphism classes of objects.  Thus the functor induces an identification of ${\mathbf A}_n$-mod with a subcategory of $B_n$-mod.

In light of this, we can think of $B_n$-persistence modules as generalizations of ${\mathbf A}_n$-persistence modules.  Finitely presented $B_n$-persistence modules arise naturally in applications, as discussed in Section~\ref{GeoFunctorsSection}.  In a sense that can be made precise using machinery mentioned in Remark~\ref{MultidimensionExtension}, it is possible to view them as ${\mathbf A}_n$-persistence modules endowed with some additional data.  However, this is awkward from the standpoint of constructing pseudometrics between $B_n$-persistence modules.  We thus regard $B_n$-persistence modules as the fundamental objects of interest here, and use ${\mathbf A_n}$-persistence modules in this thesis only in the case $n=1$ to translate results about ${\mathbf A_1}$-persistence modules into analogous results about $B_1$-persistence modules.

In the remainder of Section~\ref{Sec:MultidimensionalPersistenceModules}, we present some basic definitions related to $B_n$-persistence modules.  All of these definitions have obvious analogues for ${\mathbf A}_n$-persistence modules; we'll use these analogues where needed without further comment.

\subsubsection{Homogeneity}
Let $M$ be a $B_n$-persistence module.  For $u\in \R^n$, we say that $M_u$ is a {\it homogeneous summand} of $M$.  We refer to an element $v\in M_u$ as a {\it homogeneous element} of grade $u$, and write $gr(v)=u$.  A {\it homogeneous submodule} of a $B_n$-persistence module is a submodule generated by a set of homogeneous elements.  The quotient of a $B_n$-persistence module $M$ by a homogeneous submodule of $M$ is itself a $B_n$-persistence module; the $n$-graded structure on the quotient is induced by that of $M$.



\subsubsection{Tameness}
Following \cite{chazal2009proximity} we'll call 
a $B_n$-persistence module {\it tame} if each homogeneous summand of the module is finite dimensional.  Note that this is a more general notion of tameness than that which appears in the original paper on the stability of persistence \cite{cohen2007stability}. 

\subsubsection{Transition Maps}  
For $M$ a $B_n$-persistence module and any $u\leq v\in \R^n$, the restriction to $M_u$ of the action on $M$ of the monomial $x_1^{v_1-u_1}x_2^{v_2-u_2} \cdots x_n^{v_n-u_n}$  defines a linear map with codomain $M_v$, which we call a {\it transition map}.  Denote this map by $\varphi_M(u,v)$.  


\subsection{Shift Functors and Transition Morphisms}\label{ShiftsOfModules}

\subsubsection{Shift Functors and Related Notation}
For $v\in \R^n$, we define the {\bf shift functor} $(\cdot)(v)$ as follows: For $M$ a $B_n$-persistence module we let $M(v)$ be the $B_n$-persistence module such that for all $a\in \R^n$, $M(v)_a=M_{a+v}$.  For $a\leq b\in \R^n$, we take $\varphi_{M(v)}(a,b)=\varphi_M(a+v,b+v)$.
For $f\in \hom(B_n$-mod) we let $f(v)_a=f_{a+v}$.  

For $v\in \R^n$ and $f\in \hom(B_n$-mod), we'll sometimes abuse notation and write $f(v)$ simply as $f$.  

For $\epsilon\in \R$, let $M(\epsilon)$ denote $M({\vec \epsilon})$.  More generally, for any subset $Q\subset M$, let $Q(\epsilon)\subset M(\epsilon)$ denote the image of $Q$ under the bijection between $M$ and $M(\epsilon)$ induced by the identification of each summand $M(\epsilon)_u$ with $M_{u+\epsilon}$.  

\subsubsection{Transition Homomorphisms}

For a $B_n$-persistence module $M$ and $\epsilon\in \R_{\geq 0}$ let $S(M,\epsilon):M\to M(\epsilon)$, the {\bf (diagonal) $\epsilon$-transition homomorphism}, be the homomorphism whose restriction to $M_u$ is the linear map $\varphi_M(u,u+\epsilon)$ for all $u\in \R^n$.  

\subsection{$\epsilon$-interleavings and the Interleaving Distance}\label{InterleavingsOfModules} 

For $\epsilon\geq 0$, we say that two $B_n$-persistence modules $M$ and $N$ are {\bf $\epsilon$-interleaved} if there exist homomorphisms $f:M\to N(\epsilon)$ and $g:N\to M(\epsilon)$ such that 
\begin{align*}
g(\epsilon) \circ f&=S(M,2\epsilon)\textup{ and }\\
f(\epsilon) \circ g&=S(N,2\epsilon); 
\end{align*}
we refer to such $f$ and $g$ as {\bf $\epsilon$-interleaving} homomorphisms. 

The definition of $\epsilon$-interleaving homomorphisms was introduced for $B_1$-persistence modules in \cite{chazal2009proximity}.  

\begin{remark}\label{EasyInterleavingRemark}
It's easy to see that if $0\leq\epsilon_1 \leq \epsilon_2$ and $M$ and $N$ are $\epsilon_1$-interleaved, then $M$ and $N$ are $\epsilon_2$-interleaved. 
\end{remark}

In Section~\ref{Sec:J1J2Interleavings} we will observe that the definition of $\epsilon$-interleavings generalizes considerably, but for now we will not concern ourselves with the generalized form of the definition.

\subsubsection{The Interleaving Distance on $B_n$-persistence modules}

We define $d_I:\obj^*(B_n$-mod$)\times \obj^*(B_n$-mod$)\to [0,\infty]$, the {\bf interleaving distance}, by taking 
\[d_I(M,N)=\inf \{\epsilon\in \R_{\geq 0}|M\textup{ and  }N\textup{ are }\epsilon\textup{-interleaved}\}.\]

Note that $d_I$ is pseudometric.  However, the following example shows that $d_I$ is not a metric.  

\begin{example}\label{NonIsoSameDiagram}  Let $M$ be the $B_1$-persistence module with $M_0=k$ and $M_{a}=0$ if $a\ne 0$.  Let $N$ be the trivial $B_1$-persistence module.  Then $M$ and $N$ are not isomorphic, and so are not $0$-interleaved, but it is easy to check that $M$ and $N$ are $\epsilon$-interleaved for any $\epsilon>0$.  Thus $d_I(M,N)=0$. 
\end{example}

\subsection{Free $B_n$-persistence Modules, Presentations, and Related Algebraic Basics}

In our study of $B_n$-persistence modules, we'll make substantial use of free $B_n$-persistence modules and presentations of $B_n$-persistence modules; we define these objects here and present some basic results about them.  We begin with some foundational definitions.

\subsubsection{$n$-graded Sets}

Define an {\it $n$-graded set} to be a pair $G=({\bar G},\iota_G)$ where ${\bar G}$ is a set and $\iota_G:G\to\R^n$ is any function.  When $\iota_G$ is clear from context, as it will usually be, we'll write $\iota_G(y)$ as $gr(y)$ for $y\in {\bar G}$.  We'll sometimes abuse notation and write $G$ to mean the  set ${\bar G}$ when no confusion is likely.  The union of disjoint graded sets is defined in the obvious way.  For $\epsilon\geq 0$ and $G=({\bar G},\iota_G)$ an $n$-graded set, let $G(\epsilon)$ be the $n$-graded set $({\bar G},\iota'_G)$, where $\iota'_G(y)=\iota(y)-\epsilon$.  


For $G$ an $n$-graded set, define $gr(G):\R^n\to \Z_{\geq 0}\cup \{\infty\}$ by taking $gr(G)(u)$ to be the number of elements $y\in G$ such that $gr(y)=u$.  Note that for any $B_n$-persistence module $M$, a set $Y\subset M$ of homogeneous elements inherits the structure of an $n$-graded set from the graded structure on $M$, so that $gr(Y)$ is well defined.   

\subsubsection{Free $B_n$-persistence modules}
The usual notion of a free module extends to the setting of $B_n$-persistence modules as follows: For $G$ an $n$-graded set, let $\langle G\rangle=\oplus_{y\in {\bar G}}B_n(-gr(y))$.  A {\it free $B_n$-persistence module} $F$ is a $B_n$-persistence module such that for some $n$-graded set $G$, $F\cong \langle G \rangle$.  

Equivalently, we can define a free $B_n$-persistence module as a $B_n$-persistence module which satisfies a certain universal property.  Free ${\mathbf A}_n$-persistence modules are defined via a universal property e.g. in \cite[Section 4.2]{carlsson2009theory}.  The definition for $B_n$-persistence modules is analogous; we refer the reader to \cite{carlsson2009theory} for details.    

A {\it basis} for a free module $F$ is a minimal set of homogeneous generators for $F$.  For $G$ any graded set, identifying $y\in G$ with the copy of $1(-gr(y))$ in the summand $B_n(-gr(y))$ of $\langle G\rangle$ corresponding to $y$ gives an identification of $G$ with a basis for $\langle G \rangle$.  It can be checked that if $B$ and $B'$ are two bases for a free $B_n$-persistence module $F$ then $gr(B)=gr(B')$.  Clearly then, $gr(B)$ of an arbitrarily chosen basis $B$ for $F$ is an isomorphism invariant of $F$ and determines $F$ up to isomorphism.  



For $R$ a homogeneous subset of a free $B_n$-persistence module $F$, $\langle R \rangle$ will always denote the submodule of $F$ generated by $R$.  Since, as noted above, $R$ can be viewed as an $n$-graded set, we emphasize that for such $R$, $\langle R \rangle$ {\bf does not} denote $\oplus_{y\in R}B_n(-gr(y))$.  

\subsubsection{Free Covers and Lifts}
For $M$ a $B_n$-persistence module, define a {\it free cover} of $M$ be a pair $(F_M,\rho_M)$, where $F_M$ is a free $B_n$-persistence module and $\rho_M:F_M \to M$ a surjective morphism of $B_n$-persistence modules.  

For $M,N$ $B_n$-persistence modules, $(F_M,\rho_M)$ and $(F_N,\rho_N)$ free covers of $M$ and $N$, and $f:M \to N$ a morphism, define a {\it lift} of $f$ to be a morphism ${\tilde f}:F_M \to F_N$ such that the following diagram commutes.
 \begin{equation*}
 		\begin{CD}
 		F_M  @>{\tilde f}>> F_N  \\
 		 @VV{\rho_M}V       @VV{\rho_N}V \\    
 	    M    @>{f}>>        N 
 		\end{CD}
 	\end{equation*}

\begin{lem}[Existence and Uniqueness up to Homotopy of Lifts]\label{ExistenceAndHomotopyUniquenessOfLifts} For $B_n$-persistence modules $M$ and $N$, free covers $(F_M,\rho_M),(F_N,\rho_N)$  of $M,N$, and a morphism $f:M \to N$, there exists a lift ${\tilde f}:F_M\to F_N$ of $f$.  If ${\tilde f'}:F_M\to F_N$ is another lift of $f$, then $\im({\tilde f}-{\tilde f'})\subset \ker(\rho_N)$.  \end{lem}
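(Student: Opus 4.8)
The plan is to reduce the statement to the corresponding classical fact about free modules over a ring, using the fact that a free $B_n$-persistence module is a direct sum of shifted copies of $B_n$ and hence is projective in the appropriate sense. For existence: given $f \colon M \to N$, we must produce $\tilde f \colon F_M \to F_N$ with $\rho_N \circ \tilde f = f \circ \rho_M$. First I would fix a basis $B$ of $F_M$, identified (as in the text) with homogeneous elements $y \in F_M$ of grade $gr(y)$, so that $F_M \cong \langle B \rangle = \bigoplus_{y \in B} B_n(-gr(y))$. For each $y \in B$, the element $f(\rho_M(y)) \in N$ is homogeneous of grade $gr(y)$ (because $\rho_M$ and $f$ are morphisms of persistence modules, hence grade-preserving); since $\rho_N \colon F_N \to N$ is surjective and restricts to a surjection $(F_N)_{gr(y)} \twoheadrightarrow N_{gr(y)}$ on each homogeneous summand, I can choose a homogeneous $w_y \in F_N$ of grade $gr(y)$ with $\rho_N(w_y) = f(\rho_M(y))$. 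The universal property of the free persistence module $\langle B \rangle$ (the analogue for $B_n$-modules of the universal property cited from \cite[Section 4.2]{carlsson2009theory}) then yields a unique morphism $\tilde f \colon F_M \to F_N$ with $\tilde f(y) = w_y$ for all $y \in B$. Since $\rho_N \circ \tilde f$ and $f \circ \rho_M$ are morphisms out of $F_M$ agreeing on the generating set $B$, they are equal; this gives the required lift.

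For uniqueness up to homotopy: suppose $\tilde f, \tilde f' \colon F_M \to F_N$ are both lifts of $f$. Then $\rho_N \circ (\tilde f - \tilde f') = f \circ \rho_M - f \circ \rho_M = 0$, so for every $v \in F_M$ we have $(\tilde f - \tilde f')(v) \in \ker(\rho_N)$, i.e. $\im(\tilde f - \tilde f') \subset \ker(\rho_N)$. This direction is essentially immediate once the morphisms are known to be well defined; the only thing to check is that $\tilde f - \tilde f'$ is again a morphism of $B_n$-persistence modules, which is clear since $\hom(F_M, F_N)$ is an abelian group under pointwise operations and the persistence-module structure maps are $B_n$-linear.

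The main obstacle — really the only nontrivial point — is verifying that the universal property of free $B_n$-persistence modules applies here exactly as in the $\mathbf{A}_n$ case: namely, that a grade-preserving assignment $B \to F_N$ sending each basis element $y$ to a homogeneous element of grade $gr(y)$ extends uniquely to a morphism of $B_n$-persistence modules. This is where the real-exponent nature of $B_n$ (as opposed to the $\Z_{\geq 0}$-graded $\mathbf{A}_n$) must be handled, but since $\langle B \rangle = \bigoplus_{y} B_n(-gr(y))$ is literally a direct sum of rank-one free modules with the grading shifted, the extension is forced componentwise: on the summand $B_n(-gr(y))$, the morphism must send $x_1^{r_1}\cdots x_n^{r_n} \cdot y$ to $x_1^{r_1}\cdots x_n^{r_n} \cdot w_y$, and one checks this is well defined and grade-preserving. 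I would state this as a direct appeal to the $B_n$-analogue of the universal property already invoked in the text, rather than re-deriving it. The rest is bookkeeping: confirming that grades are preserved along $\rho_M$, $\rho_N$, and $f$ (true by definition of morphisms in $B_n$-mod), and that the homogeneous summand surjectivity of $\rho_N$ follows from surjectivity of $\rho_N$ together with the direct-sum-of-homogeneous-pieces decomposition.
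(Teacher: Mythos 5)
Your proof is correct and takes the same approach as the paper, which simply cites the standard homological-algebra fact on existence and homotopy uniqueness of lifts to free resolutions (Eisenbud A3.13) and notes the proof is straightforward. You have essentially written out that standard argument in the graded $B_n$-setting, and all the steps (surjectivity of $\rho_N$ on homogeneous summands, the universal property of $\langle B\rangle$, and the one-line uniqueness computation) check out.
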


\begin{proof} This is just a specialization of the standard result on the existence and homotopy uniqueness of free resolutions \cite[Eisenbud A3.13]{eisenbud1995commutative} to the $0^{th}$ modules in free resolutions for $M$ and $N$.  The proof is straightforward. \end{proof} 

\subsubsection{Presentations of $B_n$-persistence Modules}
A {\bf presentation} of a $B_n$-persistence module $M$ is a pair $(G,R)$ where $G$ is an $n$-graded set and $R \subset \langle G \rangle$ is a set of homogeneous elements such that $M\cong \langle G \rangle /\langle R \rangle$.  We denote the presentation $(G,R)$ as $\langle G|R \rangle$.  For n-graded sets $G_1,...,G_l$ and sets $R_1,...,R_m\subset \langle G_1 \cup ...\cup G_l\rangle$, we'll let $\langle G_1,...,G_l|R_1,...,R_m \rangle$ denote $\langle G_1 \cup ...\cup G_l|R_1 \cup ...\cup R_m \rangle$.   

If $M$ is a $B_n$-persistence module such that there exists a presentation $\langle G|R \rangle$ for $M$ with $G$ and $R$ finite, then we say $M$ is {\it finitely presented}.     

\subsubsection{Minimal Presentations of $B_n$-persistence Modules}
Let $M$ be a $B_n$-persistence module.  Define a presentation $\langle G|R \rangle$ of $M$ to be {\it minimal} if 
\begin{enumerate*}
\item the quotient $\langle G\rangle \to \langle G\rangle/\langle R\rangle$ maps $G$ to a minimal set of generators for $\langle G\rangle/\langle R\rangle$.
\item $R$ is a minimal set of generators for $\langle R \rangle$.
\end{enumerate*}

It's clear that a minimal presentation for $M$ exists.

\begin{thm}\label{MinimalPresentationTheorem}
If $M$ is a finitely presented $B_n$-persistence module and $\langle G|R \rangle$ is a minimal presentation of $M$, then for any other presentation $\langle G'|R' \rangle$ of $M$, $gr(G)\leq gr(G')$ and $gr(R)\leq gr(R')$.   
\end{thm}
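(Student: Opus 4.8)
The plan is to reduce the theorem to a graded Nakayama lemma for $B_n$-persistence modules together with Schanuel's lemma, with the coherence of $B_n$ entering only to guarantee that a certain submodule is finitely generated. First I would set up the Nakayama machinery. Let $\mathbf{m}\subset B_n$ be the ideal generated by all monomials $x_1^{a_1}\cdots x_n^{a_n}$ with $(a_1,\dots,a_n)\in\R_{\geq 0}^n\setminus\{0\}$. For any $B_n$-persistence module $N$, $\mathbf{m}N$ is a homogeneous submodule with $(\mathbf{m}N)_u=\sum_{v<u}\im\varphi_N(v,u)$; write $\overline{N}=N/\mathbf{m}N$, so that $\overline{N}_u=N_u\big/\sum_{v<u}\im\varphi_N(v,u)$, that $B_n/\mathbf{m}\cong k$ is concentrated in grade $0$, and that $\overline{(\cdot)}$ commutes with direct sums. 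The lemma I would prove is: if $N$ is finitely generated, then a set $S$ of homogeneous elements of $N$ generates $N$ if and only if its image spans $\overline{N}_u$ over $k$ for every $u\in\R^n$, and $S$ is moreover minimal if and only if its image is a $k$-basis of $\overline{N}_u$ for every $u$; consequently $gr(S)(u)\geq\dim_k\overline{N}_u$ for every homogeneous generating set $S$, with equality for every minimal one (in particular $\dim_k\overline{N}_u$ is an isomorphism invariant of $N$, finite because it is at most the size of a finite generating set). The forward implications and the claimed consequences are routine linear algebra; the one nontrivial point is that a set $S$ whose image spans $\overline{N}$ actually generates $N$, or equivalently, taking $Q=N/\langle S\rangle$, that a finitely generated $Q$ with $Q=\mathbf{m}Q$ is zero. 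This is graded Nakayama: if $Q\neq 0$, pick among the nonzero elements of a finite homogeneous generating set of $Q$ one, say $q$, whose grade is minimal in the product partial order; since $\mathbf{m}$ is supported on $\R_{\geq 0}^n\setminus\{0\}$ and no nonzero generator has grade strictly below that of $q$, we get $q\in(\mathbf{m}Q)_{gr(q)}=0$, a contradiction.

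Granting the lemma, the statement about generators is immediate: $M$ is finitely generated since it is finitely presented, so if $\langle G|R\rangle$ is a minimal presentation then $G$ maps to a minimal generating set of $M$ and $gr(G)(u)=\dim_k\overline{M}_u$, while for any presentation $\langle G'|R'\rangle$ the set $G'$ maps to a (not necessarily minimal) generating set, whence $gr(G')(u)\geq\dim_k\overline{M}_u=gr(G)(u)$. For the statement about relations, write $\rho\colon\langle G\rangle\to M$ and $\rho'\colon\langle G'\rangle\to M$ for the free covers determined by the two presentations and set $K=\langle R\rangle=\ker\rho$ and $K'=\langle R'\rangle=\ker\rho'$. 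The key foundational input here is that $K$ is finitely generated: since $M$ is finitely presented, one can exhibit $K$ as a quotient of the kernel of a map between finitely generated free $B_n$-modules, and the coherence of $B_n=k[\R_{\geq 0}^n]$ (\glazcitationhack) says exactly that such kernels are finitely generated. Because $\langle G|R\rangle$ is minimal, $R$ is a minimal generating set of $K$, so the lemma gives $gr(R)(u)=\dim_k\overline{K}_u$, while $R'$ is some generating set of $K'$, so $gr(R')(u)\geq\dim_k\overline{K'}_u$.

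It remains to compare $\dim_k\overline{K'}_u$ with $\dim_k\overline{K}_u$, and for this I would use Schanuel's lemma, which holds verbatim in the category of $B_n$-persistence modules: the relevant pullback $\langle G\rangle\times_M\langle G'\rangle$ is the homogeneous submodule $\{(f,f')\mid\rho(f)=\rho'(f')\}$ of $\langle G\rangle\oplus\langle G'\rangle$, and its two projections are split surjections (as $\langle G\rangle$ and $\langle G'\rangle$ are free) with kernels $K'$ and $K$ respectively, whence $K'\oplus\langle G\rangle\cong K\oplus\langle G'\rangle$. Applying $\overline{(\cdot)}$, passing to dimensions in grade $u$, and using $\dim_k\overline{\langle G\rangle}_u=gr(G)(u)$ and likewise for $G'$, we obtain $\dim_k\overline{K'}_u+gr(G)(u)=\dim_k\overline{K}_u+gr(G')(u)$, hence $\dim_k\overline{K'}_u=\dim_k\overline{K}_u+\big(gr(G')(u)-gr(G)(u)\big)\geq\dim_k\overline{K}_u$ by the inequality $gr(G')\geq gr(G)$ just established. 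Combining, $gr(R')(u)\geq\dim_k\overline{K'}_u\geq\dim_k\overline{K}_u=gr(R)(u)$ for all $u$, which is the remaining assertion.

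The step I expect to be the main obstacle is not the per-grade linear algebra or the Schanuel bookkeeping but getting the foundational commutative algebra right in this non-Noetherian, $\R^n$-graded setting: verifying that $\mathbf{m}$ genuinely behaves as the maximal graded ideal and that graded Nakayama survives even though $B_n$ is neither Noetherian nor local in the classical sense, and invoking the coherence of $B_n$ correctly so as to know that $K=\ker\rho$ is finitely generated --- without that last point, $R$ need not be finite and the Nakayama lemma cannot be applied to $K$.
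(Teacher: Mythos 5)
Your proof is correct, and it takes a genuinely different route from the paper's. The paper reduces the theorem to an adaptation of Eisenbud's Theorem 20.2 to the $B_n$-graded setting: it shows that a free hull $(F_M,\rho_M)$ is always a direct summand of any free cover $(F'_M,\rho'_M)$, with $F'_M\cong F_M\oplus F''_M$ and $\ker\rho'_M = \ker\rho_M\oplus F''_M$. The generator bound then falls out of the summand inclusion $\langle G\rangle\hookrightarrow\langle G'\rangle$, and the relation bound by projecting the generators $R'$ of $\langle R\rangle\oplus F''_M$ onto the first summand. Your argument replaces that structural theorem with Schanuel's lemma plus per-grade dimension counting in $\overline{N}=N/\mathbf{m}N$; both approaches share the graded Nakayama lemma and the coherence of $B_n$ as foundational ingredients (indeed your $\dim_k\overline{N}_u$ invariant is exactly what makes the paper's Corollary on uniqueness of grades of minimal generating sets true). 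The tradeoff: Schanuel's lemma is abstract nonsense that holds verbatim in $B_n$-mod, so the relation inequality drops out with essentially no work beyond the Nakayama lemma, whereas the paper's route requires carefully re-proving Eisenbud 20.2 in a non-Noetherian, non-local, $\R^n$-graded setting. On the other hand, the paper's Theorem on free hulls as direct summands is genuinely stronger --- it yields uniqueness of free hulls as an independently useful corollary, which your Schanuel bookkeeping does not directly give (you only get the weaker isomorphism $K'\oplus\langle G\rangle\cong K\oplus\langle G'\rangle$, not a splitting exhibiting $\langle G\rangle$ inside $\langle G'\rangle$). One minor imprecision in your write-up: you describe $K=\ker\rho$ as ``a quotient of the kernel of a map between finitely generated free $B_n$-modules''; the cleaner statement is that $K$ is the kernel of a map from a finitely generated free module to a finitely presented module, which coherence (the paper's Corollary on kernels, images, and cokernels) directly shows is finitely presented. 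This does not affect the validity of the argument.
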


Note that the theorem implies in particular that if $\langle G|R \rangle$ and $\langle G'|R' \rangle$ are two minimal presentations of $M$ then $gr(G)=gr(G')$ and $gr(R)=gr(R')$.

We defer the proof of the theorem to Appendix~\ref{MinimalPresentationAppendix}.  The proof is an adaptation to our setting of a standard result \cite[Theorem 20.2]{eisenbud1995commutative} about free resolutions of modules over a local ring.  The main effort required in carrying out the adaptation is to prove that the ring $B_n$ has a property known as {\it coherence}; we define coherence and prove that $B_n$ is coherent in Appendix~\ref{CoherenceSection}.

\section{Algebraic Preliminaries for 1-D Persistence}\label{1DPreliminaries}
In this section, we review algebraic preliminaries and establish notation specific to 1-D persistent homology.  This material will be used in Sections~\ref{WellBehavedStructThmSection} and~\ref{InterleavingMetricSection} to develop the machinery needed to prove Theorem~\ref{InterleavingEqualsBottleneck}.

\subsubsection{Basic Notation}
For $S$ any subset of ${\bar \R}^2$, let $S_+=\{(a,b)\in S|a<b\}$.  For $S$ a set and $f:S\to\R$ a function, let $\supp(f)=\{s\in S|f(s)\ne 0\}$.  

\subsection{Structure Theorems For Tame ${\mathbf A}_1$-Persistence Modules}
The structure theorem for finitely generated ${\mathbf A}_1$ persistence modules \cite{zomorodian2005computing} is well known in the applied topology community.  In fact, this theorem generalizes to tame ${\mathbf A}_1$-modules.  The existence portion of the generalized theorem is given e.g. in \cite{webb1985decomposition}; the uniqueness is not mentioned there but is very easy to show; we do so below.  To our knowledge, this generalization has not previously been discussed in the computational topology literature.  We will use the more general theorem to show that the bottleneck distance is equal to the interleaving distance for ordinary persistence.

Before stating the results, we establish some notation.  For $a<b\in \Z$, Let $\bC(a,b)$ denote the module $(k[x]/(x^{b-a}))(-a)$.  Let $\bC(a,\infty)=k[x](-a)$.  Note that for fixed $b$ (possibly infinite), the set of modules $\{\bC(a,b)\}_{a\in (-\infty,b)}$ has a natural directed system structure; let $\bC(-\infty,b)$ denote the colimit of this directed system.  

For $M$ a module and $m \in {\Z_{\geq 0}}$, let $M^m$ denote the direct sum of $m$ copies of $M$.

\begin{thm}[Structure Theorem for finitely generated ${\mathbf A}_1$-persistence modules \cite{zomorodian2005computing}]\label{FinitePersistenceStructureThm}  Let $\M$ be a finitely generated ${\mathbf A}_1$-module.  Then there is a unique function ${\mathcal D}_\M:(\Z\times {\bar \Z})_+\to \Z_{\geq 0}$ with finite support such that \[\M\cong \oplus_{(a,b)\in \supp({\mathcal D}_\M)} \bC(a,b)^{{\mathcal D}_\M(a,b)}.\]
\end{thm}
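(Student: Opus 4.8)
The plan is to recognize this structure theorem as the graded refinement of the classical structure theorem for finitely generated modules over a principal ideal domain. Since $k$ is a field, ${\mathbf A}_1 = k[x]$ is a PID whose only homogeneous ideals are $(0)$ and the powers $(x^m)$, $m \geq 0$, and I would obtain both existence and uniqueness of the claimed decomposition by running the usual PID argument ``gradedly'', via a graded Smith normal form computation on a presentation of $\M$.

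For existence I would first use that ${\mathbf A}_1$ is Noetherian to fix a graded surjection $\rho : F_0 \to \M$ with $F_0 = \bigoplus_j {\mathbf A}_1(-d_j)$ graded free of finite rank, and observe that its kernel $K$, being a finitely generated graded submodule of a graded free module over ${\mathbf A}_1$, is again graded free of finite rank, $K \cong \bigoplus_i {\mathbf A}_1(-c_i)$. With respect to homogeneous bases, the inclusion $K \hookrightarrow F_0$ is represented by a matrix whose $(j,i)$ entry is homogeneous of degree $c_i - d_j$, hence a scalar multiple of $x^{c_i - d_j}$ when $c_i \geq d_j$ and $0$ otherwise. Row and column operations correspond to graded automorphisms of $F_0$ and of $K$, the ``add a multiple of one row (column) to another'' moves being available precisely when the power of $x$ they introduce has nonnegative exponent. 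I would then carry out the standard Smith normal form reduction: bring a nonzero entry of minimal degree into the pivot position; note that every remaining nonzero entry then has degree at least that of the pivot, so the pivot divides them all and the operations clearing the first row and column are all graded; and recurse on the complementary block. This terminates in a diagonal matrix whose nonzero entries are powers $x^{m}$, $m \geq 0$, and reading off $\coker(K \hookrightarrow F_0) \cong \M$ one summand at a time --- each summand ${\mathbf A}_1(-d_j)$ of $F_0$ either survives (as $\bC(d_j,\infty)$) or is killed by a pivot $x^{m}$ (becoming ${\mathbf A}_1(-d_j)/(x^m) = \bC(d_j,d_j+m)$, and vanishing when $m=0$) --- exhibits $\M$ as a finite direct sum of modules $\bC(a,b)$ with $a<b$. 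Setting ${\mathcal D}_\M(a,b)$ equal to the number of summands isomorphic to $\bC(a,b)$, which necessarily has finite support, completes existence. (Alternatively one could split off the free part as a graded summand and decompose the torsion submodule --- a finitely generated graded module over some ${\mathbf A}_1/(x^N)$ --- into graded cyclics, but the presentation-matrix route treats both pieces at once.)

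For uniqueness I would reconstruct ${\mathcal D}_\M$ from manifest isomorphism invariants of $\M$. Put $r(u,v) = \rank\,\varphi_{\M}(u,v)$ for $u \leq v$ in $\Z$ and $r(u,u) = \dim_k \M_u$; a one-line check on each generator shows $\bC(a,b)$ contributes $1$ to $r(u,v)$ exactly when $a \leq u$ and $v < b$, so $r(u,v) = \sum_{a \leq u,\, v < b}{\mathcal D}_\M(a,b)$, and since $\M$ is finitely generated $r(a,v)$ stabilizes as $v \to \infty$ to $\sum_{a' \leq a}{\mathcal D}_\M(a',\infty)$. A double difference then recovers every value:
\begin{gather*}
{\mathcal D}_\M(a,b) = r(a,b-1) - r(a-1,b-1) - r(a,b) + r(a-1,b) \qquad (a < b < \infty),\\
{\mathcal D}_\M(a,\infty) = \lim_{v\to\infty}\bigl(r(a,v) - r(a-1,v)\bigr),
\end{gather*}
and as $r$ is built only from dimensions of homogeneous summands and ranks of transition maps it, hence ${\mathcal D}_\M$, is an isomorphism invariant. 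I expect the only genuinely delicate point to be the bookkeeping in the graded Smith normal form reduction --- verifying that the row and column operations required at each stage really do stay within the graded category and that termination is not obstructed by the pivot failing to divide the surviving entries. Both follow from the (pleasantly automatic) fact that once a minimal-degree entry occupies the pivot position every surviving nonzero entry has at least that degree, which makes the graded reduction, if anything, cleaner than its ungraded counterpart.
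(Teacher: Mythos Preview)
Your proposal is correct. Note, however, that the paper does not actually prove this theorem: it is stated with a citation to Zomorodian--Carlsson and no proof is given in the text. Your existence argument via graded Smith normal form over the PID $k[x]$ is precisely the standard route (and is essentially the argument in the cited reference), and your uniqueness argument via the rank double-difference coincides with the paper's Lemma~\ref{MultiplicityFormula} specialized to the finitely generated case, where the summands $\bC(-\infty,b)$ and $\bC(-\infty,\infty)$ cannot occur.
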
 

\begin{thm}[Structure Theorem for tame ${\mathbf A}_1$-persistence modules \cite{webb1985decomposition}]\label{TamePersistenceStructureThm}  Let $\M$ be a tame ${\mathbf A}_1$-module.  Then there is a unique function ${\mathcal D}_\M:{\bar \Z}^2_+\to \Z_{\geq 0}$ such that \[\M\cong \oplus_{(a,b)\in \supp({\mathcal D}_\M)} \bC(a,b)^{{\mathcal D}_\M(a,b)}.\]
\end{thm}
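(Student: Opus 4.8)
The plan is to prove the two halves of the statement separately. \emph{Existence} is the decomposition theorem of \cite{webb1985decomposition}: every tame ${\mathbf A}_1$-module is isomorphic to a direct sum of interval modules. Since the nonempty intervals of $\Z$ are in bijection with ${\bar \Z}^2_+$ via $(a,b)\mapsto\{c\in\Z:a\le c<b\}$, and the persistence module supported on $\{c\in\Z:a\le c<b\}$ with all transition maps equal to the identity on $k$ is precisely $\bC(a,b)$ — for $b$ finite this is $(k[x]/(x^{b-a}))(-a)$, for $a$ finite and $b=\infty$ it is $k[x](-a)$, and for $a=-\infty$ it is the colimit defining $\bC(-\infty,b)$ — this already yields a decomposition $\M\cong\bigoplus_{(a,b)}\bC(a,b)^{{\mathcal D}_\M(a,b)}$ of the asserted form. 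So the remaining content is \emph{uniqueness}: that such a ${\mathcal D}_\M$ is determined by the isomorphism class of $\M$.

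The idea for uniqueness is to recover each multiplicity from ranks of transition maps of $\M$, which are manifestly isomorphism invariants. First I would record the elementary computation that for the interval module $\bC(c,d)$ and $u\le v$ in $\Z$ one has $\dim_k\im\varphi_{\bC(c,d)}(u,v)=1$ when $c\le u$ and $v<d$, and $\dim_k\im\varphi_{\bC(c,d)}(u,v)=0$ otherwise. Setting $r_\M(u,v):=\dim_k\im\varphi_\M(u,v)$, which is finite by tameness, and using that the transition maps of a direct sum are the direct sums of the transition maps, this gives
\[ r_\M(u,v)=\sum_{\substack{a\le u\\ v<b}}{\mathcal D}_\M(a,b)\qquad(u\le v\text{ in }\Z). \]
Next I would note that $r_\M(u,v)$ is non-increasing in $v$ and non-decreasing in $u$ (postcomposing with a further transition map can only shrink the image, enlarging the source can only grow it), so the limits $r_\M(u,\infty):=\lim_{v\to\infty}r_\M(u,v)$, $r_\M(-\infty,v):=\lim_{u\to-\infty}r_\M(u,v)$ and $r_\M(-\infty,\infty):=\lim_{u\to-\infty}\lim_{v\to\infty}r_\M(u,v)$ all exist in $\Z_{\geq 0}$ — finiteness again by tameness — and that the displayed formula persists for these extended values under the natural reading of the (in)equalities at $\pm\infty$, so that, for instance, only summands with $b=\infty$ contribute to $r_\M(u,\infty)$ and only summands with $a=-\infty$ contribute to $r_\M(-\infty,v)$.

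Finally, a short telescoping computation recovers all the multiplicities. For finite $a,b$,
\[ {\mathcal D}_\M(a,b)=r_\M(a,b-1)-r_\M(a-1,b-1)-r_\M(a,b)+r_\M(a-1,b); \]
for finite $a$ the first difference $r_\M(a,\infty)-r_\M(a-1,\infty)$ equals ${\mathcal D}_\M(a,\infty)$; for finite $b$ the first difference $r_\M(-\infty,b-1)-r_\M(-\infty,b)$ equals ${\mathcal D}_\M(-\infty,b)$; and $r_\M(-\infty,\infty)$ equals ${\mathcal D}_\M(-\infty,\infty)$. All of these right-hand sides depend only on $\M$ up to isomorphism, hence so does ${\mathcal D}_\M$, which is the uniqueness claim. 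I expect the main obstacle to be the bookkeeping around the non-finite endpoints: verifying that the colimit module $\bC(-\infty,b)$ has exactly the transition-map ranks the formula predicts, that the limits defining $r_\M(u,\infty)$ and $r_\M(-\infty,v)$ are finite (the one place tameness is genuinely used beyond the finite-index setting), and that the stated differences really do isolate the multiplicities of the half-infinite and bi-infinite summands; the finite-endpoint case is the classical argument behind the uniqueness in Theorem~\ref{FinitePersistenceStructureThm}, so no new ideas are needed there.
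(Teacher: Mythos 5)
Your proposal is correct and follows exactly the paper's route: existence is delegated to Webb's decomposition into interval modules, and uniqueness is obtained from the rank formulas that the paper records as Lemma~\ref{MultiplicityFormula} (your four telescoping identities coincide term for term with items (i)--(iv) there, and your preliminary computation of $\dim_k\im\varphi_{\bC(c,d)}(u,v)$ together with additivity of rank over direct summands is precisely what the paper waves away as ``trivial''). The only thing you add beyond the paper's exposition is the monotonicity observation that justifies existence of the limits at $\pm\infty$, which is a sound and welcome bit of bookkeeping rather than a different argument.
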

The uniqueness part of Theorem~\ref{TamePersistenceStructureThm} is an immediate consequence of the following lemma, upon noting that the right hand sides of the equations in the statement of the lemma do not depend on ${\mathcal D}_\M$.  

\begin{lem}\label{MultiplicityFormula} Let $\M$ be a tame ${\mathbf A}_1$-module, and let ${\mathcal D}_\M:{\bar \Z}^2_+\to \Z_{\geq 0}$ be a function such that $\M\cong \oplus_{(a,b)\in \supp({\mathcal D}_\M)} \bC(a,b)^{{\mathcal D}_\M(a,b)}$.  Then   
\begin{enumerate*}
\item[(i)] For $(a,b)\in \Z^2_+$, \[{\mathcal D}_\M(a,b)=\rank(\varphi_\M(a,b-1))-\rank(\varphi_\M(a,b))-\rank(\varphi_\M(a-1,b-1))+\rank(\varphi_\M(a-1,b)).\]  
\item[(ii)] For $b\in \Z$, ${\mathcal D}_\M(-\infty,b)=\lim_{a\to -\infty} \rank(\varphi_\M(a,b-1))-\lim_{a\to -\infty} \rank(\varphi_\M(a,b)).$
\item[(iii)] For $a\in \Z$, ${\mathcal D}_\M(a,\infty)=\lim_{b\to \infty} \rank(\varphi_\M(a,b))-\lim_{b\to \infty} \rank(\varphi_\M(a-1,b)).$
\item[(iv)] ${\mathcal D}_\M(-\infty,\infty)=\lim_{a\to -\infty} \lim_{b\to \infty} \rank(\varphi_\M(a,b)).$
\end{enumerate*}
\end{lem}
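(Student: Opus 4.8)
The plan is to reduce all four identities to a single formula expressing $\rank(\varphi_\M(c,d))$ in terms of $\mathcal{D}_\M$, and then to recover $\mathcal{D}_\M(a,b)$ from that formula by a finite-difference (inclusion--exclusion) computation, passing to monotone limits in whichever coordinate is infinite.

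First I would compute the transition maps of the interval modules directly from the definitions of $\bC(\cdot,\cdot)$ and of the shift functor: for $c\le d$ in $\Z$, one checks that $\rank(\varphi_{\bC(a,b)}(c,d))$ equals $1$ when $a\le c$ and $d<b$, and $0$ otherwise (with the conventions $-\infty\le c$ and $d<\infty$). Since rank is additive over finite direct sums and $\M_c$ is finite dimensional for each $c$ by tameness, the decomposition $\M\cong\bigoplus_{(a,b)\in\supp(\mathcal{D}_\M)}\bC(a,b)^{\mathcal{D}_\M(a,b)}$ yields, for all $c\le d$ in $\Z$,
\[\rank(\varphi_\M(c,d)) \;=\; \sum_{(a,b)\in\supp(\mathcal{D}_\M)} \mathcal{D}_\M(a,b)\,\mathbf{1}[a\le c]\,\mathbf{1}[d<b],\]
a finite sum, being a subsum of $\dim\M_c=\sum_{a\le c<b}\mathcal{D}_\M(a,b)<\infty$.

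For (i) I would substitute the four pairs $(c,d)\in\{a_0,a_0-1\}\times\{b_0-1,b_0\}$ into this formula and form the alternating sum; since $\mathbf{1}[a\le a_0]-\mathbf{1}[a\le a_0-1]=\mathbf{1}[a=a_0]$ and $\mathbf{1}[b_0-1<b]-\mathbf{1}[b_0<b]=\mathbf{1}[b=b_0]$, every summand except the one with $(a,b)=(a_0,b_0)$ cancels, leaving $\mathcal{D}_\M(a_0,b_0)$. For (ii)--(iv) I would first observe that $\rank(\varphi_\M(c,d))$ is non-increasing as $c$ decreases and as $d$ increases, because transition maps compose and hence images only shrink; thus the limits in the statement are limits of non-increasing sequences of non-negative integers, so they exist and are in fact eventually attained. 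This legitimizes passing each limit through the sum above: $\mathbf{1}[a\le c]\to\mathbf{1}[a=-\infty]$ as $c\to-\infty$, and $\mathbf{1}[d<b]\to\mathbf{1}[b=\infty]$ as $d\to\infty$. Applying the same finite-difference bookkeeping as in (i), now only in the remaining finite coordinate, gives (ii) and (iii); composing the two limits gives (iv).

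The one place where the argument is more than formal manipulation is the infinite cases: I need the limits to exist and to commute with the sum over intervals, which could a priori be infinitely supported. The monotonicity of rank under composition of transition maps, together with tameness, is exactly what makes this go through cleanly, so I expect no serious obstacle. Finally, as noted just before the lemma in the excerpt, the right-hand sides of (i)--(iv) do not involve the function $\mathcal{D}_\M$ itself, so the lemma immediately yields the uniqueness assertion of Theorem~\ref{TamePersistenceStructureThm}.
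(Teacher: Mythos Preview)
Your argument is correct and is precisely the standard inclusion--exclusion computation that the paper has in mind; the paper's own proof consists of the single sentence ``This is trivial.'' Your write-up simply unpacks that triviality, and the care you take with the monotone limits in (ii)--(iv) is appropriate even if the paper does not bother to spell it out.
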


\begin{proof} This is trivial. \end{proof}

We call ${\mathcal D}_\M$ the {\bf (discrete) persistence diagram} of $\M$.

In Section~\ref{WellBehavedStructThmSection}, we prove a structure theorem analogous to Theorem~\ref{TamePersistenceStructureThm} for a subset of the tame $B_1$-persistence modules which contains the finitely presented $B_1$-persistence modules.  We do not address the problem of generalizing this structure theorem to the full set of tame $B_1$-persistence modules, but to echo a sentiment expressed in \cite{chazal2009proximity}, it would be nice to have such a result.  


\subsection{Discrete Persistence Modules.}
In order to define persistence diagrams of $B_1$-persistence modules, we need a mild generalization of ${\mathbf A}_1$-persistence modules.  

Let $S\subset \R$ be a countably infinite set with no accumulation point.  The authors of \cite{chazal2009proximity} define a {\bf discrete persistence module} $M_S$ to be a collection of vector spaces $\{M_s\}_{s\in S}$ indexed by $S$ together with linear maps $\{\varphi_{M_S}(s_1,s_2)\}_{s_1\leq s_2\in S}$.  

Define a {\bf grid function} $t:\Z\to \R$ to be a strictly increasing function with no accumulation point.

\begin{remark}Discrete persistence modules are of course closely related to ${\mathbf A}_1$-persistence modules.  A countably infinite subset of $S\subset \R$ with no accumulation point can be indexed by a grid function $t$ with image $S$, and such a grid function is uniquely determined by the value of $t(0)$.  Thus, pairs $(M_S,s)$, where $M_S$ is a discrete persistence module and $s$ is an element of $S$, are equivalent to pairs $(\M',t)$, where $\M'$ is an ${\mathbf A}_1$-persistence module and $t$ is a grid function; there is an equivalence sending each pair $(M_S,s)$ to the pair $(\M',t)$, where $t$ is a grid function with $\im(t)=S$, $t(0)=s$, and $\M'$ is the ${\mathbf A}_1$-persistence module such that for $z\in \Z, \M'_z=M_{t(z)}$ and $\varphi_{\M'}(z_1,z_2)=\varphi_{M_S}(t(z_1),t(z_2))$.  \end{remark}

As a matter of expository convenience, from now on we'll define discrete modules to be pairs $(\M,t)$ where $\M$ is an ${\mathbf A}_1$-persistence module and $t$ is a grid function.  This in effect means we are carrying around the extra data of an element of $S$ in our discrete persistent modules relative to those defined in \cite{chazal2009proximity}, but this won't present a problem--in particular, the definition of the persistence diagram of a discrete persistence module that we present below is independent of the choice of this element, and is equivalent to that of \cite{chazal2009proximity}.

\subsection{Persistence Diagrams and the Bottleneck Distance}
The definition of a persistence diagram that we present here differs in some cosmetic respects from that in \cite{chazal2009proximity}.  Our choice in this regard is a matter of notational convenience; the reader may check that our definition of the bottleneck distance between tame $B_1$-persistence modules is equivalent to that of \cite{chazal2009proximity}.  

For a grid function $t$, define ${\bar t}:{\bar \Z}\to {\bar \R}$ as 
\begin{equation*}
{\bar t}(z)=
\begin{cases} t(z) &\text{if }z\in \Z,
\\
-\infty &\text{if } z=-\infty,\\
\infty &\text{if }z=\infty.
\end{cases}
\end{equation*}

Let ${\bar t}\times {\bar t}:{\bar \Z}^2_+\to {\bar \R}^2_+$ be defined by ${\bar t}\times {\bar t}(a,b)=({\bar t}(a),{\bar t}(b))$.   

We define a {\bf persistence diagram} to be a function ${\mathcal D}:{\bar \R^2}_+\to {\Z_{\geq 0}}$. 

For $(\M,t)$ a discrete persistence module, define ${\mathcal D}_{(\M,t)}$, the {\bf persistence diagram of $(\M,t)$}, to be the persistence diagram for which $\supp({\mathcal D}_{(\M,t)})={\bar t}\times {\bar t}(\supp({\mathcal D}_\M))$ and so that ${\mathcal D}_{(\M,t)}({\bar t}(a),{\bar t}(b))={\mathcal D}_\M(a,b)$ for all $(a,b)\in {\bar \Z}^2_+$.

\subsubsection{Bottleneck Metric}
For $x\in \R$, define $x+\infty=\infty$ and $x-\infty=-\infty$. 
Then the usual definition of $l^\infty$ norm on the plane extends to ${\bar \R}^2$; we denote it by $\|\cdot\|_{\infty}$.  
  
Now define a {\bf multibijection} between two persistence diagrams ${\mathcal D_1},{\mathcal D_2}$ to be a function $\gamma:\supp({\mathcal D_1})\times \supp({\mathcal D_2}) \to \Z_{\geq 0}$ such that 
\begin{enumerate*}
\item For each $x\in \supp({\mathcal D_1})$, the set $\{y\in \supp({\mathcal D_2})|(x,y)\in \supp(\gamma)\}$ is finite and   
 \[{\mathcal D_1}(x)=\sum_{y\in \supp({\mathcal D_2})} \gamma(x,y),\]
\item For each $y\in \supp({\mathcal D_2})$, the set $\{x\in \supp({\mathcal D_1})|(x,y)\in \supp(\gamma)\}$ is finite and 
 \[{\mathcal D_2}(y)=\sum_{x\in \supp({\mathcal D_1})} \gamma(x,y).\]
\end{enumerate*}

For persistence diagrams ${\mathcal D_1},{\mathcal D_2}$, let $\L({\mathcal D_1},{\mathcal D_2})$ denote the triples $({\mathcal D'_1},{\mathcal D'_2},\gamma)$, where ${\mathcal D'_1}$, and ${\mathcal D'_2}$ are persistence diagrams with ${\mathcal D'_1}\leq {\mathcal D_1}, {\mathcal D'_2}\leq{\mathcal D_2}$, and $\gamma$ is a multibijection between ${\mathcal D'_1}$ and ${\mathcal D'_2}$.

We define the {\bf bottleneck metric} $d_B$ between two persistence diagrams ${\mathcal D_1},{\mathcal D_2}$ as \[d_B({\mathcal D_1},{\mathcal D_2})=\inf_{\substack {({\mathcal D'_1},{\mathcal D'_2},\gamma)\\ \in \L({\mathcal D_1},{\mathcal D_2})}} \max \left( \sup_{\substack{(a,b)\in \supp({\mathcal D_1}-{\mathcal D'_1})\\ \cup \ \supp({\mathcal D_2}-{\mathcal D'_2})}} \frac{1}{2}(b-a),\sup_{(x,y)\in \supp(\gamma)} \|y-x\|_{\infty}\right ).\]
   
\subsubsection{Discretizations of $B_1$-modules}\label{DiscretizationsOfModules}  
Let $t$ be a grid function.  For $M$ a $B_1$-persistence module, we define the $t$-discretization of $M$ to be the discrete persistence module $({\mathbf P}_t(M),t)$ with ${\mathbf P}_t(M)$ defined as follows:
\begin{enumerate*}
\item For $z\in \Z$, ${\mathbf P}_t(M)_z=M_{t(z)}$; let ${\mathcal I}_{M,t,z}:{\mathbf P}_t(M)_z\to M_{t(z)}$ denote this identification.
\item For $y,z\in \Z$, $y\leq z$, $\varphi_{{\mathbf P}_t(M)}(y,z)={\mathcal I}_{M,t,z}^{-1} \circ \varphi_M(t(y),t(z)) \circ{\mathcal I}_{M,t,y}$. 
\end{enumerate*}  
  
\subsubsection{Persistence diagrams of $B_1$-persistence modules}\label{SectionDiagramsOfEuclideanGradedModules}
We'll say a grid function $t$ is an {\bf $\epsilon$-cover} if for any $a\in \R$, there exists $b\in \im(t)$ with $|a-b|\leq \epsilon$. 
Now fix $\alpha\in \R$ and let $\{t_i\}_{i=1}^\infty$ be a sequence of grid functions with $t_i$ a $1/2^i$-cover.     

It is asserted in \cite{chazal2009proximity} that for any tame $B_1$-persistence module $M$ the persistence diagrams $\D_{({\mathbf P}_{t_i}(M),t_i)}$ converge in the bottleneck metric to a limiting persistence diagram $\D_M$ and that $\D_M$ is independent of the choice of the sequence $\{t_i\}$.  We call ${\mathcal D}_M$ the persistence diagram of $M$.  For $M$ and $N$ tame $B_1$-persistence modules, we define $d_B(M,N)=d_B(\D_M,\D_N)$.

\begin{remark}
Two non-isomorphic tame $B_1$-persistence modules can have identical persistence diagrams.  For example, take $M$ and $N$ to be the $B_1$-persistence modules of Example~\ref{NonIsoSameDiagram}.  $M$ and $N$ are not isomorphic but it is easy to check that they have the same persistence diagram.  Thus $d_B$ defines a pseudometric (but not a metric) on isomorphism classes of tame $B_1$-persistence modules.
\end{remark}

  \section{A Structure Theorem for Well Behaved $B_1$-persistence modules}\label{WellBehavedStructThmSection}
In this section we prove an analogue of Theorem~\ref{TamePersistenceStructureThm} for a certain subset of the tame $B_1$-persistence modules which we call the {\it well behaved} persistence modules.  The set of well behaved persistence modules contains the set of finitely presented $B_1$-persistence modules.  These modules are in a sense ``essentially discrete."  Indeed, they are exactly the $B_1$-persistence modules that are the images of tame ${\mathbf A}_1$-persistence modules under a certain family of functors from ${\mathbf A}_1$-mod to $B_1$-mod.

Our strategy for proving the structure theorem for well behaved persistence modules is to exploit Theorem~\ref{TamePersistenceStructureThm}, taking advantage of the functorial relationship between ${\mathbf A}_1$-persistence modules and well behaved $B_1$-persistence modules.

\subsection{Well Behaved Persistence Modules}

A {\it critical value} of a $B_1$-persistence module $M$ is a point $a\in \R$ such that for no $\epsilon\in \R_{\geq 0}$ is it true that for all $u\leq v\in [a-\epsilon,a+\epsilon]$, $\varphi_M(u,v)$ is an isomorphism.

We'll say a tame $B_1$-persistence module $M$ is {\bf well behaved} if 
\begin{enumerate*}
\item The critical values of $M$ are countable and have no accumulation point.
\item For each critical point $a$ of $M$, there exists $\epsilon>0$ such that $\varphi_M(a,y)$ is an isomorphism for all $y\in [a,a+\epsilon]$.    
\end{enumerate*}

\begin{prop}\label{FinitelyPresentedIsWellBehaved} A finitely presented $B_1$-persistence module is well behaved. \end{prop}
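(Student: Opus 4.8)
The plan is to show that a finitely presented $B_1$-persistence module $M$ satisfies the two defining conditions of well-behavedness, by reading them off from a finite presentation $\langle G \mid R \rangle$. The key observation is that all the "action" of $M$ is concentrated at finitely many real parameters: namely at the grades $gr(y)$ for $y \in G$ (where new generators are born) and at the grades $gr(r)$ for $r \in R$ (where relations kick in). Let $S = \{gr(y) : y \in G\} \cup \{gr(r) : r \in R\} \subset \R$, a finite set. I claim that every critical value of $M$ lies in $S$; this immediately gives condition 1, since a finite set is certainly countable with no accumulation point.

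First I would establish the claim. Suppose $a \in \R$ is \emph{not} in $S$. I want to show $a$ is not a critical value, i.e. there is $\epsilon \geq 0$ such that $\varphi_M(u,v)$ is an isomorphism for all $u \leq v$ in $[a-\epsilon, a+\epsilon]$. Since $S$ is finite and $a \notin S$, choose $\epsilon > 0$ small enough that $[a-\epsilon, a+\epsilon] \cap S = \emptyset$. Now one analyzes $M \cong \langle G \rangle / \langle R \rangle$ directly. For a free module $\langle G \rangle = \oplus_{y} B_n(-gr(y))$, the transition map $\varphi_{\langle G\rangle}(u,v)$ on the summand $B_1(-gr(y))$ sends the grade-$u$ part to the grade-$v$ part; when no generator grade $gr(y)$ lies in the open-ish interval $(u,v]$, this map is an isomorphism (multiplication by $x^{v-u}$ is injective in $B_1$, and surjective onto $\langle G\rangle_v$ exactly when no new generator has appeared between grades $u$ and $v$). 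Since also no relation grade lies in $[a-\epsilon,a+\epsilon]$, the submodule $\langle R\rangle$ contributes nothing new in this window: $\langle R \rangle_u \to \langle R\rangle_v$ is likewise an isomorphism for $u \leq v$ in the window (the grade-$u$ part of $\langle R\rangle$ is spanned by $x^{u - gr(r)} r$ for those $r$ with $gr(r) \leq u$, and multiplying by $x^{v-u}$ biject this onto the grade-$v$ part when no $gr(r)$ enters). By the snake lemma / exactness applied gradewise to $0 \to \langle R\rangle \to \langle G\rangle \to M \to 0$, the induced map $\varphi_M(u,v): M_u \to M_v$ is an isomorphism for all $u \leq v$ in $[a-\epsilon,a+\epsilon]$. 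Hence $a$ is not critical, proving the claim and condition 1.

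For condition 2, let $a$ be a critical value of $M$; then $a \in S$ by the claim. I need $\epsilon > 0$ with $\varphi_M(a,y)$ an isomorphism for all $y \in [a, a+\epsilon]$. Since $S$ is finite, pick $\epsilon > 0$ so that $(a, a+\epsilon] \cap S = \emptyset$. Then for $a \leq u \leq v \leq a+\epsilon$, the same gradewise argument as above shows $\langle G\rangle_u \to \langle G\rangle_v$ and $\langle R\rangle_u \to \langle R\rangle_v$ are isomorphisms — the only grade in $S$ in the closed interval $[a, a+\epsilon]$ is $a$ itself, which is the left endpoint, so no \emph{new} generator or relation grade is crossed strictly after $a$. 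In particular, taking $u = a$, $\varphi_M(a,v)$ is an isomorphism for all $v \in [a,a+\epsilon]$, which is exactly condition 2. Finally, tameness: each homogeneous summand $M_u$ is a subquotient of $\langle G\rangle_u = \oplus_y (B_1)_{u - gr(y)}$, and each $(B_1)_w = k$ is one-dimensional (or zero if $w<0$); since $G$ is finite, $\langle G\rangle_u$ is finite-dimensional, hence so is its quotient $M_u$. Thus $M$ is tame, and together with conditions 1 and 2, well behaved.

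The main obstacle I anticipate is making the gradewise isomorphism statement for $\langle R\rangle$ fully rigorous: $\langle R \rangle \subset \langle G\rangle$ is the \emph{submodule generated by} $R$ (not the free module on $R$), so its grade-$w$ component is the $k$-span of $\{x^{w - gr(r)} r : gr(r) \leq w\}$ inside $\langle G\rangle_w$, and one must verify that multiplication by $x^{v-u}$ carries $\langle R\rangle_u$ \emph{onto} $\langle R\rangle_v$ when no relation grade lies in $(u,v]$ — which is true because every spanning element of $\langle R\rangle_v$ is of the form $x^{v - gr(r)} r$ with $gr(r) \leq u$, hence equals $x^{v-u}(x^{u-gr(r)} r)$. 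Once this bookkeeping is pinned down, the snake lemma argument is routine and the two conditions follow with essentially no further work.
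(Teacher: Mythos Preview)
Your proposal is correct and follows essentially the same approach as the paper: both identify the finite set of grades of generators and relations in a presentation and argue that $\varphi_M(u,v)$ is an isomorphism whenever $(u,v]$ misses this set, from which both well-behavedness conditions follow. The paper packages the isomorphism statement as a forward reference to Lemma~\ref{FirstIsomorphismLemma} (whose proof it omits), whereas you supply the argument directly via the short exact sequence $0 \to \langle R\rangle \to \langle G\rangle \to M \to 0$; you also explicitly verify tameness, which the paper leaves implicit, and you avoid the unnecessary appeal to a \emph{minimal} presentation.
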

\begin{proof}  Let $M$ be a finitely presented $B_1$-persistence module and let $U\subset \R$ be the set of grades of the generators and relations in a minimal presentation for $M$.  (It follows from Theorem~\ref{MinimalPresentationTheorem} that $U$ is well defined).  Lemma~\ref{FirstIsomorphismLemma} below tells us that for any $a\leq b\in \R$ such that $(a,b]\cap U=\emptyset$, $\varphi_M(a,b)$ is an isomorphism.  Since $U$ is finite, the result follows immediately.  \end{proof}

Let $t$ be a grid function.  Define $t^{-1}:\R\to \Z$ by $t^{-1}(y)=\max\{z\in \Z|t(z)\leq y\}$.  

Define ${\bar t}^{-1}:{\bar \R}\to {\bar \Z}$ by 
\begin{equation*}
{\bar t}^{-1}(u)=
\begin{cases} t^{-1}(u) &\text{if }u\in \R,\\
-\infty &\text{if } u=-\infty,\\
\infty &\text{if }u=\infty.
\end{cases}
\end{equation*}

We'll now define a functor $E_t:{\mathbf A}_1$-mod$\to B_1$-mod as follows:  
\begin{enumerate*}
\item Action of $E_t$ on objects: For $\M$ an ${\mathbf A}_1$-persistence module and $u\in \R$, $E_t(\M)_u=\M_{t^{-1}(u)}$; let ${\mathcal J}_{\M,t,u}:E_t(\M)_u \to \M_{t^{-1}(u)}$ denote this identification.  For $u,v\in\R$, $u\leq v$, let $\varphi_{E_t(\M)}(u,v)={\mathcal J}_{\M,t,v}^{-1} \circ \varphi_\M(t^{-1}(u),t^{-1}(v)) \circ{\mathcal J}_{\M,t,u}$.  

\item Action of $E_t$ on morphisms: For $\M$ and $\N$ ${\mathbf A}_1$-persistence modules and $f\in \hom(\M,\N)$, define $E_t(f):E_t(\M)\to E_t(\N)$ by letting $E_t(f)_u={\mathcal J}_{\N,t,u}^{-1}\circ f_{t^{-1}(u)} \circ {\mathcal J}_{\M,t,u}$ for all $u\in \R$.
\end{enumerate*}
We leave to the reader the easy verification that $E_t$ is in fact a functor with target $B_1$-mod.  

It's clear that if $\M$ is a tame ${\mathbf A}_1$-persistence module, then for any grid function $t$, $E_t(\M)$ is tame.  Moreover, it's easy to check that for any grid function $t$ and any tame ${\mathbf A}_1$-persistence module $\M$, $E_t(\M)$ is well behaved.  

Conversely, we have the following:

\begin{prop}\label{WellBehavedModsAreImages} If $M$ is a well behaved $B_1$-persistence module, then there is some tame ${\mathbf A}_1$-persistence-module $\M$ and some grid function $t$ such that $M\cong E_t(\M)$. \end{prop}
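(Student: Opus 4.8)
The plan is to take the obvious candidate for $\M$, namely a discretization of $M$ along a grid that records all the jumps of $M$, and then verify the resulting $E_t(\M)$ is isomorphic to $M$. First I would let $C\subset\R$ be the set of critical values of $M$; by well-behavedness (property 1), $C$ is countable with no accumulation point, so it is locally finite, and I can choose a grid function $t:\Z\to\R$ with $C\subseteq\im(t)$ — routinely, by enumerating $C$ compatibly with the order on $\R$ and, if $C$ is bounded above or below or finite, padding with a sequence of extra values tending to $\pm\infty$ so that $\im(t)$ is unbounded in both directions and still locally finite. I would then set $\M={\mathbf P}_t(M)$, the $t$-discretization of $M$ from Section~\ref{DiscretizationsOfModules}; this is an ${\mathbf A}_1$-persistence module, and it is tame since each $\M_z=M_{t(z)}$ is finite-dimensional because $M$ is tame.

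Next I would build an isomorphism $E_t(\M)\to M$ levelwise. Unwinding the definitions of $E_t$ and ${\mathbf P}_t$, the summand $E_t(\M)_u=\M_{t^{-1}(u)}$ is canonically identified with $M_{t(t^{-1}(u))}$, and since $t(t^{-1}(u))\le u$ I can compose this identification with the transition map $\varphi_M(t(t^{-1}(u)),u)$ to obtain a linear map $\Phi_u:E_t(\M)_u\to M_u$. That the collection $\Phi=\{\Phi_u\}$ is a morphism of $B_1$-persistence modules — i.e. commutes with transition maps, and hence with the $B_1$-action — is a direct diagram chase from the cocycle identity $\varphi_M(a,c)=\varphi_M(b,c)\circ\varphi_M(a,b)$ together with the definitions of $\varphi_{E_t(\M)}$ and $\varphi_{{\mathbf P}_t(M)}$. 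So the only substantive point is to show each $\Phi_u$ is an isomorphism, which amounts to the following claim.

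Key claim: for every $u\in\R$, writing $z=t^{-1}(u)$ (so $t(z)\le u<t(z+1)$), the transition map $\varphi_M(t(z),u)$ is an isomorphism. I would prove this by a compactness-and-chaining argument on the compact interval $[t(z),u]$. The interval $(t(z),t(z+1))$ contains no critical value of $M$: such a value would lie in $C\subseteq\im(t)$ and hence equal $t(z')$ for some integer $z'$ with $z<z'<z+1$, which is impossible. Therefore every point of $(t(z),u]$ is a non-critical value and has a symmetric open neighborhood on which all transition maps of $M$ are isomorphisms. At the left endpoint $t(z)$, if it is non-critical the same symmetric statement applies, and if it is critical then well-behavedness property 2 supplies an $\epsilon>0$ with $\varphi_M(t(z),y)$ an isomorphism for all $y\in[t(z),t(z)+\epsilon]$, whence the cocycle identity gives that $\varphi_M(u',v')$ is an isomorphism for all $t(z)\le u'\le v'\le t(z)+\epsilon$. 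Covering $[t(z),u]$ by finitely many such neighborhoods, choosing $t(z)=p_0<p_1<\cdots<p_k=u$ with each consecutive pair lying in a common neighborhood, and composing the resulting chain $\varphi_M(t(z),u)=\varphi_M(p_{k-1},p_k)\circ\cdots\circ\varphi_M(p_0,p_1)$ of isomorphisms proves the claim. It follows that $\Phi$ is a levelwise isomorphism of $B_1$-persistence modules, so $E_t(\M)\cong M$.

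I expect this key claim to be the main obstacle — not for depth, but because it is precisely the place where both defining properties of well-behavedness must be used correctly: property 1 to guarantee the grid can be chosen so that no critical value falls strictly between consecutive grid points, and property 2 to control a grid point that happens to be a critical value (without it, $M_{t(z)}$ could fail to map isomorphically to $M_u$ for $u$ slightly larger than $t(z)$, and the claim would be false). Everything else — constructing $t$, checking tameness of $\M$, and checking that $\Phi$ respects transition maps — is bookkeeping.
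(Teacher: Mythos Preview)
Your proposal is correct and follows essentially the same approach as the paper: choose a grid function $t$ whose image contains the critical values, set $\M={\mathbf P}_t(M)$, define the levelwise map $E_t(\M)_u\to M_u$ via the identification with $M_{t(t^{-1}(u))}$ composed with $\varphi_M(t(t^{-1}(u)),u)$, and verify it is a module morphism and a levelwise isomorphism. The paper writes out the commutativity diagram chase explicitly but summarizes your ``key claim'' as ``a simple compactness argument,'' whereas you do the reverse; otherwise the arguments coincide.
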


\begin{proof} Let $t:\Z\to \R$ be a grid function whose image contains the critical points of $M$.  Let $(\M,t)$ denote the $t$-discretization of $M$, as defined in Section~\ref{DiscretizationsOfModules}.  $\M$ clearly is tame.  We'll show that $M\cong E_t(\M)$.     

For $u\in \R$, define $\sigma_u:E_t(\M)_u \to M_u$ by $\sigma_u=\varphi_{M}(t\circ t^{-1}(u),u) \circ {\mathcal I}_{M,t,t^{-1}(u)}\circ{\mathcal J}_{\M,t,u}$.  By definition, ${\mathcal J}_{\M,t,u}$ and ${\mathcal I}_{M,t,t^{-1}(u)}$ are  isomorphisms.  Moreover, a simple compactness argument shows that since $M$ is well behaved, $\varphi_{M}(t\circ t^{-1}(u),u)$ is an isomorphism.  Thus $\sigma_u$ is an isomorphism.

We claim that the collection of maps $\{\sigma_u\}_{u\in \R}$ defines an isomorphism of modules.  To see this, we need to show that for all $u,v\in \R, u\leq v$, $\sigma_v \circ \varphi_{E_t(\M)}(u,v)=\varphi_{M}(u,v) \circ \sigma_u$:
\begin{align*}
&\sigma_v \circ \varphi_{E_t(\M)}(u,v)\\
&=\sigma_v \circ {\mathcal J}_{\M,t,v}^{-1} \circ \varphi_{\M}(t^{-1}(u),t^{-1}(v)) \circ{\mathcal J}_{\M,t,u}\\
&=\varphi_{M}(t\circ t^{-1}(v),v) \circ {\mathcal I}_{M,t,t^{-1}(v)}\circ{\mathcal J}_{\M,t,v}\circ {\mathcal J}_{\M,t,v}^{-1}\circ \varphi_{\M}(t^{-1}(u),t^{-1}(v)) \circ{\mathcal J}_{\M,t,u}\\
&=\varphi_{M}(t\circ t^{-1}(v),v) \circ {\mathcal I}_{M,t,t^{-1}(v)}\circ \varphi_{\M}(t^{-1}(u),t^{-1}(v)) \circ{\mathcal J}_{\M,t,u}\\&=\varphi_{M}(t\circ t^{-1}(v),v) \circ {\mathcal I}_{M,t,t^{-1}(v)}\circ {\mathcal I}_{M,t,t^{-1}(v)}^{-1} \\ 
&\quad \circ \varphi_{M}(t\circ t^{-1}(u),t\circ t^{-1}(v)) \circ{\mathcal I}_{M,t,t^{-1}(u)} \circ{\mathcal J}_{\M,t,u}\\
&=\varphi_{M}(t\circ t^{-1}(v),v) \circ \varphi_{M}(t\circ t^{-1}(u),t\circ t^{-1}(v)) \circ{\mathcal I}_{M,t,t^{-1}(u)} \circ{\mathcal J}_{\M,t,u}\\
&=\varphi_{M}(t\circ t^{-1}(u),v) \circ{\mathcal I}_{M,t,t^{-1}(u)} \circ{\mathcal J}_{\M,t,u}\\
&=\varphi_{M}(u,v)\circ \varphi_{M}(t\circ t^{-1}(u),u) \circ {\mathcal I}_{M,t,t^{-1}(u)}\circ{\mathcal J}_{\M,t,u}\\
&=\varphi_{M}(u,v) \circ \sigma_u.  \qedhere
\end{align*} \end{proof}


\begin{remark}\label{MultidimensionExtension} The material above can be adapted with only minor changes to the setting of $B_n$-persistence modules, where it sheds some light on the relationship between ${\mathbf A}_n$-persistence modules and $B_n$-persistence modules.  Namely, the definitions of a well behaved persistence module, grid function, and the functors $E_t$ generalize to the multidimensional setting, and analogues of Propositions~\ref{FinitelyPresentedIsWellBehaved} and~\ref{WellBehavedModsAreImages} hold in that setting.  It can be shown that the functor $(\cdot)\otimes_{{\mathbf A}_n} B_n$ mentioned in Section~\ref{TensorUpRemarks} is naturally isomorphic to a generalized functor $E_t$.  The generalization of the above material also can be used to translate algebraic results about ${\mathbf A}_n$-persistence modules into analogous results about $B_n$-persistence modules.  For example, it can be used to show that any finitely presented $B_n$-persistence module has a free resolution of length at most $n$--that is, an analogue of the Hilbert syzygy theorem holds for $B_n$-persistence modules.  

However, as we have no immediate need for the generalization or its consequences in this thesis, we omit it. \end{remark}

\subsection{The Structure Theorem}
First, note that for any $a\in\R_{\geq 0}$, $k[[a,\infty)]$ (as defined in Section~\ref{MonoidRings}) is an ideal of $B_1$.     

For $a<b\in \R$, let $C(a,b)$ denote $(B_1/k[[b-a,\infty)])(-a)$; let $C(a,\infty)$ denote $B_1(-a)$.  In analogy to the discrete case, for fixed $b$ (possibly infinite), the set of modules $\{C(a,b)|a\in \R, a<b\}$ has a natural directed system structure; let $C(-\infty,b)$ denote the colimit of this directed system.




\begin{lem}\label{DirectSumDecompositionHasFixedForm} Let $M$ be a well-behaved persistence module and let ${\mathcal D}$ be a persistence diagram such that $M\cong \oplus_{(a,b)\in \supp({\mathcal D})} C(a,b)^{{\mathcal D}(a,b)}$.  Then ${\mathcal D}_M={\mathcal D}$.
\end{lem}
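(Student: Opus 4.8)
The plan is to reduce the claim to the uniqueness statement in the discrete structure theorem (Theorem~\ref{TamePersistenceStructureThm}), by first showing that a well-behaved module $M$ with a direct-sum decomposition $M \cong \oplus_{(a,b)} C(a,b)^{\D(a,b)}$ is, up to isomorphism, of the form $E_t(\M)$ for a grid function $t$ whose image contains all the critical values occurring in $\D$, together with an ${\mathbf A}_1$-persistence module $\M$ whose discrete persistence diagram ${\mathcal D}_\M$ corresponds to $\D$ under the obvious reindexing by $t$. First I would choose a grid function $t$ whose image contains the countable, accumulation-point-free set of critical values of $M$; by Proposition~\ref{WellBehavedModsAreImages} (more precisely, by its proof, which takes $t$ with image containing the critical values and produces $\M$ as the $t$-discretization) we get $M \cong E_t(\M)$ with $\M$ tame. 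The key compatibility to check is that the functor $E_t$ commutes with direct sums and sends $\bC(a',b') \mapsto C(t(a'),t(b'))$ (with the evident conventions for infinite endpoints and colimits), so that the hypothesised decomposition of $M$ pulls back to a decomposition $\M \cong \oplus \bC(a',b')^{\D'(a',b')}$ where $\D'$ is $\D$ reindexed through $\bar t \times \bar t$. This requires a bit of care: I must verify that the modules $C(a,b)$ for $a,b$ in $\im(\bar t)$ really do arise as $E_t$ of the corresponding interval modules $\bC$, including that $E_t(\bC(-\infty,b')) \cong C(-\infty, t(b'))$, which follows from $E_t$ being compatible with the relevant colimits (filtered colimits in $B_1$-mod and ${\mathbf A}_1$-mod are computed summand-wise, and $E_t$ is defined summand-wise).

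Once $\M \cong \oplus \bC(a',b')^{\D'(a',b')}$ is established, Theorem~\ref{TamePersistenceStructureThm} (or directly Lemma~\ref{MultiplicityFormula}, whose right-hand sides depend only on the ranks of transition maps of $\M$) gives ${\mathcal D}_\M = \D'$. It then remains to identify $\D_M$, the persistence diagram of the $B_1$-module $M$ as defined in Section~\ref{SectionDiagramsOfEuclideanGradedModules}, with the pushforward $\bar t \times \bar t$ of ${\mathcal D}_\M$, i.e. with $\D$. Here I would argue that for a well-behaved module $M$ and any grid function $s$ that is fine enough (an $\epsilon$-cover with $\im(s) \supseteq \im(t)$, say $\im(s)$ refining $\im(t)$), the persistence diagram $\D_{({\mathbf P}_s(M), s)}$ of the $s$-discretization is already equal to $\bar t \times \bar t ({\mathcal D}_\M)$ as soon as $\im(s)$ contains the critical values of $M$: inserting extra grid points between consecutive critical values only creates intervals on which all transition maps are isomorphisms, which contribute nothing to the discrete persistence diagram by the multiplicity formula. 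Hence the sequence $\D_{({\mathbf P}_{t_i}(M), t_i)}$ is eventually constant equal to $\bar t \times \bar t({\mathcal D}_\M)$ (taking the $t_i$ to eventually refine $t$, which is legitimate since the limiting diagram is independent of the chosen sequence of covers), so its bottleneck limit $\D_M$ equals $\bar t \times \bar t({\mathcal D}_\M) = \D$.

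The main obstacle I anticipate is the bookkeeping around infinite endpoints and colimit modules: making sure that $E_t$ genuinely intertwines the directed systems $\{\bC(a',b')\}_{a'}$ and $\{C(a,b)\}_{a}$ and their colimits, and that the conventions for ${\mathcal D}_{(\M,t)}$ in Section~\ref{SectionDiagramsOfEuclideanGradedModules} match the conventions for $C(-\infty,b)$, $C(a,\infty)$, $C(-\infty,\infty)$ used in defining the modules $C(a,b)$ here. A secondary technical point is the claim that refining a grid that already resolves all critical values does not change the discrete persistence diagram; this is where well-behavedness (condition 2, the right-continuity of isomorphism blocks) is used, and it should follow from a direct application of Lemma~\ref{MultiplicityFormula} together with the observation that $\varphi_M(u,v)$ is an isomorphism whenever $(u,v]$ misses all critical values — essentially the statement already invoked as Lemma~\ref{FirstIsomorphismLemma} in the proof of Proposition~\ref{FinitelyPresentedIsWellBehaved}. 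Everything else is routine.
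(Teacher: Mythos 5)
Your proposal is correct and reaches the conclusion by a route that is closely related to, but not identical with, the paper's. The paper argues directly at the level of the $B_1$-module $M$: it proves an analogue (Lemma~\ref{WellBehavedMultiplicityFormula}) of the discrete multiplicity formula giving the hypothesized $\D$ in terms of ranks $\rank(\varphi_M(t(y),t(z)))$, notes that these are the same ranks that appear in Lemma~\ref{MultiplicityFormula} applied to $\mathbf P_t(M)$, and concludes $\D_{(\mathbf P_t(M),t)}=\D$ before taking the limit. You instead transport the hypothesized decomposition of $M$ through the $E_t/\mathbf P_t$ machinery into ${\mathbf A}_1$-mod, obtaining a decomposition of $\M=\mathbf P_t(M)$ into interval modules $\bC$, and then invoke the uniqueness clause of Theorem~\ref{TamePersistenceStructureThm} to read off ${\mathcal D}_\M$. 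Your route has the appeal of leaning on the already-proved discrete structure theorem rather than reproving a continuous multiplicity formula, but at the cost of having to establish the compatibility of $E_t$ (or $\mathbf P_t$) with direct sums and with the interval modules — facts that the paper does establish, but only later, in the proof of Theorem~\ref{WellBehavedStructureThm}. Two things you should make explicit to close the argument as written: (1) the set $A$ of finite coordinates of points in $\supp(\D)$ must actually coincide with (or at least be contained in) the set of critical values of $M$, so that your choice of grid $t$ resolves $\supp(\D)$ — this follows readily from the decomposition hypothesis, but it is a needed observation; and (2) the step from $E_t(\M)\cong E_t(\oplus\bC(a',b')^{\D'(a',b')})$ to $\M\cong\oplus\bC(a',b')^{\D'(a',b')}$ requires that $E_t$ reflect isomorphisms, which holds because $\mathbf P_t\circ E_t$ is naturally isomorphic to the identity; cleaner still is to apply $\mathbf P_t$ directly to $M\cong\oplus C(a,b)^{\D(a,b)}$, since $\mathbf P_t$ visibly commutes with direct sums and carries $C(a,b)$ to the appropriate $\bC$. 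Your final step, that the defining limit of $\D_M$ can be taken along $\epsilon$-covers refining $t$ and is then constant, is essentially identical to what the paper does.
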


\begin{proof} Let \[A=\{a\in \R|(a,b)\in \supp({\mathcal D})\textup{ for some }b\in {\bar \R}\}\cup \{b\in \R|(a,b)\in \supp({\mathcal D})\textup{ for some }a\in {\bar \R}\}.\]
Let $t$ be a grid function such that $A\subset \im(t)$.  We claim that $\D_{({\mathbf P}_t(M),t)}={\mathcal D}$.  Since $\supp({\mathcal D})\in \im({\bar t}\times {\bar t})$, this is true if and only if ${\mathcal D}_{{\mathbf P}_t(M)}(y,z)={\mathcal D}({\bar t}(y),{\bar t}(z))$ for all $(y,z)\in {\bar \Z}^2_+$.

To show that ${\mathcal D}_{{\mathbf P}_t(M)}(y,z)={\mathcal D}({\bar t}(y),{\bar t}(z))$ for all $(y,z)\in {\bar \Z}^2_+$,
we'll need the following analogue of Lemma~\ref{MultiplicityFormula}.  

\begin{lem}\label{WellBehavedMultiplicityFormula} Let $M, {\mathcal D}$, and $t$ be as above.  
\begin{enumerate*}
\item[(i)]For $(y,z)\in \Z^2_+$,
\begin{align*}
{\mathcal D}(t(y),t(z))&=\rank(\varphi_M(t(y),t(z-1)))-\rank(\varphi_M(t(y),t(z)))\\
&\quad-\rank(\varphi_M(t(y-1),t(z-1)))+\rank(\varphi_M(t(y-1),t(z))).  
\end{align*}
\item[(ii)]For $z\in \Z$, \[{\mathcal D}(-\infty,t(z))=\lim_{y\to -\infty} \rank(\varphi_M(t(y),t(z-1)))-\lim_{y\to -\infty} \rank(\varphi_M(t(y),t(z))).\]
\item[(iii)]For $y\in \Z$, \[{\mathcal D}(t(y),\infty)=\lim_{z\to \infty} \rank(\varphi_M(t(y),t(z)))-\lim_{b\to \infty} \rank(\varphi_M(t(y-1),t(z))).\]
\item[(iv)]\[{\mathcal D}(-\infty,\infty)=\lim_{y\to -\infty} \lim_{z\to \infty} \rank(\varphi_M(t(y),t(z))).\]
\end{enumerate*}
\end{lem}

\begin{proof} The proof is straightforward; we omit it.\end{proof}

For $(y,z)\in {\Z}^2_+$ we have 
\begin{align*}
{\mathcal D}_{{\mathbf P}_t(M)}(y,z)
&=\rank(\varphi_{{\mathbf P}_t(M)}(y,z-1))-\rank(\varphi_{{\mathbf P}_t(M)}(y,z))\\
&\quad -\rank(\varphi_{{\mathbf P}_t(M)}(y-1,z-1))+\rank(\varphi_{{\mathbf P}_t(M)}(y-1,z))\\
&=\rank(\varphi_M(t(y),t(z-1)))-\rank(\varphi_M(t(y),t(z)))\\
&\quad-\rank(\varphi_M(t(y-1),t(z-1)))+\rank(\varphi_M(t(y-1),t(z)))\\
&={\mathcal D}(t(y),t(z)),
\end{align*}

where the first equality follows from Lemma~\ref{MultiplicityFormula}(i), and the last equality follows from Lemma~\ref{WellBehavedMultiplicityFormula}(i). 

Thus we have ${\mathcal D}_{{\mathbf P}_t(M)}(y,z)={\mathcal D}({\bar t}(y),{\bar t}(z))$ for all $(y,z)\in \Z^2_+$.  Similar arguments using Lemma~\ref{MultiplicityFormula}(ii)-(iv) and Lemma~\ref{WellBehavedMultiplicityFormula}(ii)-(iv) in the cases where $y=-\infty$ or $z=\infty$ show that in fact this holds for $(y,z) \in {\Z}^2_+$.  This proves the claim. 

It follows easily from the fact that $M$ is well behaved that $A$ is equal to the set of critical values of $M$.  There thus exists a sequence of grid functions $\{t_i\}_{i\in \NN}$ such that $t_i$ is a $1/2^i$ cover and $A\subset \im(t_i)$ for each $i$.
The lemma follows by writing ${\mathcal D}_M$ as the limit of the persistence diagrams ${\mathcal D}_{({\mathbf P}_{t_i}(M),t_i)}$.  \end{proof}

\begin{thm}\label{WellBehavedStructureThm} Let $M$ be a well behaved $B_1$-persistence module.  Let ${\mathcal D}_M$ be the persistence diagram of $M$.  Then 
\[M\cong \oplus_{(a,b)\in \supp({\mathcal D}_M)} C(a,b)^{{\mathcal D}_M(a,b)}.\]  This decomposition of $M$ is unique in the sense that if ${\mathcal D}$ is another persistence diagram such that  
$M\cong \oplus_{(a,b)\in \supp({\mathcal D})} C(a,b)^{{\mathcal D}(a,b)}$, then ${\mathcal D}={\mathcal D}_M$.\end{thm}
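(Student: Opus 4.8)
The plan is to transport the structure theorem for tame ${\mathbf A}_1$-persistence modules (Theorem~\ref{TamePersistenceStructureThm}) across the functor $E_t$, using Proposition~\ref{WellBehavedModsAreImages} to realize $M$ as $E_t(\M)$ for a suitable tame ${\mathbf A}_1$-module $\M$, and then invoking Lemma~\ref{DirectSumDecompositionHasFixedForm} to pin down the persistence diagram attached to the resulting decomposition. Note first that the uniqueness assertion of the theorem is precisely Lemma~\ref{DirectSumDecompositionHasFixedForm}, so only the existence of a decomposition indexed by ${\mathcal D}_M$ itself requires argument.

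For existence, I would first apply Proposition~\ref{WellBehavedModsAreImages} to obtain a tame ${\mathbf A}_1$-persistence module $\M$ and a grid function $t$ with $M\cong E_t(\M)$, and then apply Theorem~\ref{TamePersistenceStructureThm} to write $\M\cong\bigoplus_{(a,b)\in\supp({\mathcal D}_\M)}\bC(a,b)^{{\mathcal D}_\M(a,b)}$. The key structural observation is that $E_t$ is, up to the bookkeeping of gradings, precomposition with the monotone map $t^{-1}\colon\R\to\Z$, so that it is computed levelwise by $E_t(\M)_u=\M_{t^{-1}(u)}$; consequently it preserves arbitrary direct sums and filtered colimits, since these are pointwise in the relevant functor categories. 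Hence $M\cong\bigoplus_{(a,b)\in\supp({\mathcal D}_\M)}E_t(\bC(a,b))^{{\mathcal D}_\M(a,b)}$.

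It then remains to identify $E_t(\bC(a,b))\cong C({\bar t}(a),{\bar t}(b))$ for each $(a,b)\in{\bar \Z}^2_+$. For finite $a<b$ this is a direct levelwise check: $E_t(\bC(a,b))_u$ is nonzero precisely when $a\le t^{-1}(u)<b$, i.e.\ $t(a)\le u<t(b)$, with transition maps equal to identities wherever both sides are nonzero, which is exactly $C(t(a),t(b))$; the case $b=\infty$ is the same. For $a=-\infty$ one writes $\bC(-\infty,b)$ as the filtered colimit of the $\bC(a,b)$, uses that $E_t$ preserves such colimits, and observes that $\{t(a)\}$ is cofinal among the reals less than ${\bar t}(b)$ as $a\to-\infty$ (because $t$ is strictly increasing without accumulation point), so that $E_t(\bC(-\infty,b))\cong C(-\infty,{\bar t}(b))$. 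Combining, $M\cong\bigoplus_{(a,b)\in\supp({\mathcal D}_\M)}C({\bar t}(a),{\bar t}(b))^{{\mathcal D}_\M(a,b)}$; since ${\bar t}\times{\bar t}$ is injective, this is a decomposition of the form appearing in the statement, namely $M\cong\bigoplus_{(c,d)\in\supp({\mathcal D})}C(c,d)^{{\mathcal D}(c,d)}$ for the persistence diagram ${\mathcal D}$ supported on ${\bar t}\times{\bar t}(\supp({\mathcal D}_\M))$ with ${\mathcal D}({\bar t}(a),{\bar t}(b))={\mathcal D}_\M(a,b)$. Lemma~\ref{DirectSumDecompositionHasFixedForm} then forces ${\mathcal D}={\mathcal D}_M$, which yields the claimed decomposition.

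I do not expect a genuine obstacle here; the proof is essentially a transport-of-structure argument. The only points demanding care are verifying that $E_t$ really does commute with the (possibly infinite) direct sums produced by Theorem~\ref{TamePersistenceStructureThm} and with the colimits defining the $\bC(-\infty,b)$ --- both handled by the levelwise description of $E_t$ --- and correctly matching the reindexing of persistence diagrams under ${\bar t}\times{\bar t}$ against the definition of ${\mathcal D}_M$ from Section~\ref{SectionDiagramsOfEuclideanGradedModules}, which is exactly the role played by Lemma~\ref{DirectSumDecompositionHasFixedForm}.
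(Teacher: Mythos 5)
Your proof takes essentially the same route as the paper's: reduce uniqueness to Lemma~\ref{DirectSumDecompositionHasFixedForm}, realize $M$ as $E_t(\M)$ via Proposition~\ref{WellBehavedModsAreImages}, apply Theorem~\ref{TamePersistenceStructureThm} to $\M$, argue that $E_t$ commutes with direct sums, and identify $E_t(\bC(a,b))\cong C(\bar t(a),\bar t(b))$. The only difference is one of emphasis: where you assert that $E_t$ preserves direct sums because it is computed levelwise and coproducts in these categories are pointwise, the paper spells out the same fact by verifying the universal property of the coproduct directly on the maps $E_t(i^\alpha)$; conversely, you are slightly more explicit than the paper about the $a=-\infty$ case via filtered colimits. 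These are minor presentational variations on an identical argument.
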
 

\begin{proof} By Lemma~\ref{DirectSumDecompositionHasFixedForm}, it's enough show that there exists some persistence diagram $\D$ such that $M\cong \oplus_{(a,b)\in \supp(\D)} C(a,b)^{\D(a,b)}$.

By Proposition~\ref{WellBehavedModsAreImages}, there exists a grid function $t$ and a tame ${\mathbf A}_1$-persistence module $\M$ such that $E_t(\M)\cong M$.  The structure theorem for tame ${\mathbf A}_1$-persistence modules gives us that there's a persistence diagram ${\mathcal D}_{\M}$ supported in ${\bar \Z}^2_+$ such that we may take \[\M=\oplus_{(a,b)\in \supp({\mathcal D}_{\M})} \bC(a,b)^{{\mathcal D}_{\M}(a,b)}.\]  We'll show that \[M\cong \oplus_{(a,b)\in \supp({\mathcal D}_{\M})} E_t(\bC(a,b))^{{\mathcal D}_{\M}(a,b)}.\]  We have that $E_t(\bC(a,b))\cong C({\bar t}(a),{\bar t}(b))$ for any $(a,b)\in {\bar \Z}^2_+$, so this gives the result.

To show that $M\cong \oplus_{(a,b)\in \supp({\mathcal D}_{\M})} E_t(\bC(a,b))^{{\mathcal D}_{\M}(a,b)}$, we'll use the category theoretic characterization of direct sums of modules as coproducts \cite{mac1998categories}.  Let $A$ be a set.  Recall that in an arbitrary category, an object $X$ is a {\it coproduct} of objects $\{X^\alpha\}_{\alpha \in A}$ iff there exist morphisms $\{i^\alpha:X^\alpha\to X\}_{\alpha\in A}$, called canonical injections, with the following universal property: for any object $Y$ and morphisms $\{f^\alpha:X^\alpha\to Y\}_{\alpha \in A}$, there exists a unique morphism $f:X \to Y$ such that $f \circ i^\alpha=f^\alpha$ for each $\alpha\in A$.  In a category of modules over a ring $R$, The coproduct of modules $X^\alpha$ is $\oplus_\alpha X^\alpha$; the canonical injections are just the usual inclusions $X^\alpha \hookrightarrow \oplus_\alpha X^\alpha$.  The same is thus true for the module subcategories ${\mathbf A}_n$-mod and $B_n$-mod. 

Now let $\{\M^\alpha\}$ denote the indecomposable summands of $\M$ in the direct sum decomposition $\M=\oplus_{(a,b)\in \supp({\mathcal D}_{\M})} \bC(a,b)^{{\mathcal D}_{\M}(a,b)}$, so that each $\M^\alpha=\bC(a,b)$ for some $(a,b)\in {\bar \Z}^2_+$.
Let $\{i^\alpha:\M^\alpha\to \M\}$ denote the canonical injections.

We'll show that the maps $E_t(i^\alpha):E_t(\M^\alpha)\to E_t(\M)$ satisfy the universal property of a coproduct, so that $M\cong E_t(\M)\cong \oplus_\alpha E_t(\M^\alpha)$ as desired.
  
To show that the maps $E_t(i^\alpha):E_t(\M^\alpha)\to E_t(\M)$ satisfy the universal property of a coproduct, let $Y$ be an arbitrary $B_1$-persistence module and $\{f^\alpha:E_t(\M^\alpha)\to Y\}$ be homomorphisms.  

For any $z\in \Z$, $\M_z\cong \oplus_\alpha \M^\alpha_z$.  
It follows from the definition of $E_t$ that for any $r\in \R$, \[E_t(\M)_r\cong \oplus_\alpha E_t(\M^\alpha)_r\]
with the maps $E_t(i^\alpha)_r$ the canonical inclusions.  

For each $r\in \R$, define $f_r:E_t(\M)_r\to Y_r$ as $\oplus_\alpha f^\alpha_r$ (i.e. $f_r$ is the map guaranteed to exist by the universal property of direct sums for vector spaces.)  It's easy to check that the maps $f_r$ commute with the transition maps in $E_t(\M)$ and $Y$, 
so that they define a morphism $f:E_t(\M)\to Y$.  We also have that $f \circ E_t(i^\alpha)=f^\alpha$ for each $\alpha$.  By the universal property of direct sums of vector spaces, for each $r$ $f_r$ is the unique linear transformation from $E_t(\M)_r$ to $Y_r$ such that for each $\alpha$, $f_r\circ E_t(i^\alpha)_r=f^\alpha_r$.  Therefore $f$ must itself satisfy the desired uniqueness property.
This completes the proof.  \end{proof}

  \section{The Equality of the Interleaving and Bottleneck distances on Tame $B_1$-persistence Modules}\label{InterleavingMetricSection}
We show in this section that the restriction of the interleaving distance to tame $B_1$-persistence modules is equal to the bottleneck distance.  This shows that the interleaving distance is in fact a generalization of the bottleneck distance, as we want.  The result is also instrumental in proving Corollary~\ref{Cor:Converse}, our converse to the algebraic stability result of \cite{chazal2009proximity}.  

\subsubsection{The Algebraic Stability of Persistence}
The main result of \cite{chazal2009proximity}, generalizing considerably the earlier result of \cite{cohen2007stability}, is the following:

\begin{thm}[Algebraic Stability of Persistence]\label{AlgebraicStability} Let $\epsilon>0$, and let $M$ and $N$ be two tame $B_1$-persistence modules. If $M$ and $N$ are $\epsilon$-interleaved, then $d_B(M,N)\leq \epsilon$.\end{thm}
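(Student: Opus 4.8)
\emph{Proof proposal.} The plan is to reduce to the case of tame ${\mathbf A}_1$-persistence modules, where the statement becomes combinatorial, and then pass to the limit. Take $\{t_i\}$ to be the uniform grid functions of mesh $2^{-i}$; each is a $1/2^i$-cover, so these are permissible in Section~\ref{SectionDiagramsOfEuclideanGradedModules}. Discretizing as in Section~\ref{DiscretizationsOfModules} carries the $\epsilon$-interleaving homomorphisms $f\colon M\to N(\epsilon)$ and $g\colon N\to M(\epsilon)$ (post-composed with the appropriate transition maps of $M$ and $N$) to homomorphisms exhibiting the tame ${\mathbf A}_1$-modules underlying ${\mathbf P}_{t_i}(M)$ and ${\mathbf P}_{t_i}(N)$ as $\ell_i$-interleaved, where $\ell_i=\lceil\epsilon 2^i\rceil$, so $\ell_i 2^{-i}<\epsilon+2^{-i}$. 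Granting the ${\mathbf A}_1$-case, namely that $\ell$-interleaved tame ${\mathbf A}_1$-modules have diagrams within integer bottleneck distance $\ell$, we obtain $d_B\big(\D_{({\mathbf P}_{t_i}(M),t_i)},\D_{({\mathbf P}_{t_i}(N),t_i)}\big)\le\ell_i 2^{-i}<\epsilon+2^{-i}$; letting $i\to\infty$ and using the bottleneck-convergence of these diagrams to $\D_M$ and $\D_N$ asserted in Section~\ref{SectionDiagramsOfEuclideanGradedModules}, together with the triangle inequality for $d_B$, yields $d_B(M,N)\le\epsilon$. This reduction is technical rather than deep; its one genuine subtlety is that a tame module need not be well behaved (cf.\ Proposition~\ref{FinitelyPresentedIsWellBehaved}), so one argues by mesh refinement rather than by putting all critical values on a single grid.

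Now let $\M$, $\N$ be tame ${\mathbf A}_1$-modules that are $\ell$-interleaved, with diagrams $\D_\M$, $\D_\N$ supported in ${\bar\Z}^2_+$ by Theorem~\ref{TamePersistenceStructureThm}. Two ingredients are needed. First, a box lemma: summing the identity of Lemma~\ref{MultiplicityFormula}(i) over a rectangle telescopes, so that for $a<b\le c<d$ the number of points of $\D_\M$ in $(a,b]\times(c,d]$ equals $r_\M(b,c)-r_\M(b,d)-r_\M(a,c)+r_\M(a,d)$, where $r_\M(x,y):=\rank\varphi_\M(x,y)$; parts (ii)--(iv) of Lemma~\ref{MultiplicityFormula} supply the analogous identities at $\pm\infty$, so that the count of points of $\D_\M$ in any upper-left quadrant $\{(p,q):p\le a,\ q\ge b\}$ is likewise an expression in ranks. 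Second, rank inequalities: writing $\varphi_\M(a,b)=\varphi_\M(a+2\ell,b)\circ g_{a+\ell}\circ f_a$ and combining it with the commuting square $\varphi_\M(a+2\ell,b)\circ g_{a+\ell}=g_{b-\ell}\circ\varphi_\N(a+\ell,b-\ell)$ gives $\varphi_\M(a,b)=g_{b-\ell}\circ\varphi_\N(a+\ell,b-\ell)\circ f_a$ for $b\ge a+2\ell$, whence $r_\M(a,b)\le r_\N(a+\ell,b-\ell)$; symmetrically $r_\N(a,b)\le r_\M(a+\ell,b-\ell)$.

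With these in hand I would build the multibijection realizing $d_B(\D_\M,\D_\N)\le\ell$ directly. Phrase the bottleneck matching as a bipartite problem whose left vertices are the points of $\D_\M$ together with diagonal surrogates for the points of $\D_\N$, whose right vertices are the points of $\D_\N$ together with diagonal surrogates for the points of $\D_\M$, with an edge $x\sim y$ for $x\in\D_\M$, $y\in\D_\N$ iff $\|x-y\|_\infty\le\ell$, an edge from a point $(p,q)$ to its own surrogate iff $\tfrac12(q-p)\le\ell$, and every surrogate--surrogate edge present. A perfect matching here is exactly a triple in $\L(\D_\M,\D_\N)$ of cost at most $\ell$, and by Hall's theorem one exists provided a family of counting inequalities holds; the standard reduction is that it suffices to test Hall's condition against upper-left quadrants (the points in such a quadrant $\{p\le a,\ q\ge b\}$ with $b-a>2\ell$ being too far from the diagonal to use their surrogates), where the required inequality $|\D_\M\cap\{p\le a,\ q\ge b\}|\le|\D_\N\cap\{p\le a+\ell,\ q\ge b-\ell\}|$ is precisely the box-lemma restatement of $r_\M(a,b)\le r_\N(a+\ell,b-\ell)$, and symmetrically. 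The diagonal is infinitely available, hence never an obstruction to Hall's condition, and the points within $\ell$ of the diagonal---exactly those the rank inequalities do not see---are precisely the ones sent there. This proves the ${\mathbf A}_1$-case, and with the first paragraph the theorem.

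I expect the matching argument of the last paragraph to be the main obstacle: extracting a single, globally consistent partial matching from the one-sided rank inequalities requires setting up the right optimization problem, doing the bookkeeping at $\pm\infty$, and---the crux---showing that a Hall-type obstruction can be taken to be (a staircase, and then) a quadrant. An alternative that avoids part of this is the interpolation route of Chazal et al.: first build a family $\{P^u\}_{u\in[0,\epsilon]}$ of $B_1$-persistence modules with $P^0\cong M$ and $P^\epsilon\cong N$ up to shift, and $P^u$, $P^v$ being $|u-v|$-interleaved, and then use the box lemma to show the points of $\D_{P^u}$ move continuously with total displacement at most $\epsilon$. Either way the box lemma is the workhorse.
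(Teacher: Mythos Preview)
The paper does not prove this theorem: it is quoted verbatim as ``the main result of \cite{chazal2009proximity}'' and used as a black box thereafter. So there is no in-paper argument to compare against; the relevant comparison is with the proof in the cited reference.

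Your route---discretize to the ${\mathbf A}_1$-setting, use the box lemma to convert rank inequalities into multiplicity inequalities on quadrants, and then invoke Hall's theorem---is essentially the original Cohen--Steiner--Edelsbrunner--Harer strategy, not the Chazal et al.\ one. The interpolation approach you sketch at the very end \emph{is} the Chazal et al.\ proof: they build the one-parameter family $P^u$ from the interleaving data (this is exactly the construction of Lemma~\ref{ChazalInterpolationLemma} in this paper, which credits it to \cite[Lemma~4.6]{chazal2009proximity}) and then track diagram points continuously. That route sidesteps the Hall reduction entirely, trading it for a local continuity argument powered by the box lemma.

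On correctness: your discretization step, the box lemma, and the rank inequality $r_\M(a,b)\le r_\N(a+\ell,b-\ell)$ are all fine. The gap is precisely the one you flag yourself---``showing that a Hall-type obstruction can be taken to be (a staircase, and then) a quadrant.'' A generic Hall violator is an arbitrary finite subset of $\D_\M$, and reducing it to a single quadrant is not automatic: a minimal violator is an upper set (staircase) in the $\|\cdot\|_\infty$-order, and going from staircases to quadrants needs an additional inclusion--exclusion argument over the staircase's corners, which is exactly where the CEH proof becomes delicate (and where infinite-multiplicity issues at $\pm\infty$ must be handled). Your writeup states this reduction as ``standard'' but does not carry it out; until you do, the argument is incomplete. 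If you want to finish along these lines, the cleanest modern packaging is the induced-matchings argument of Bauer--Lesnick, which replaces the Hall step with a canonical matching constructed directly from the interleaving morphisms.
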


\subsubsection{A Converse to the Algebraic Stability of Persistence?}
In the conclusion of \cite{chazal2009proximity}, the authors ask whether it's true that if $M$ and $N$ are tame $B_1$-persistence modules with $d_B(M,N)=\epsilon$ then $M$ and $N$ are $\epsilon$-interleaved.  Example~\ref{NonIsoSameDiagram} shows that this is not true.  However, Corollary~\ref{Cor:Converse} below, which follows immediately from Theorems~\ref{InterleavingEqualsBottleneck} and~\ref{InterleavingThm}, asserts that the result is true provided $M$ and $N$ are finitely presented.  More generally, Corollary~\ref{Cor:Converse} tells us that if $M$ and $N$ are tame modules with $d_B(M,N)=\epsilon$ then $M$ and $N$ are $(\epsilon+\delta)$-interleaved for any $\delta>0$.  In other words, the converse of Theorem~\ref{AlgebraicStability} holds for tame modules to arbitrarily small error.

\begin{thm}\label{InterleavingEqualsBottleneck} $d_B(M,N)=d_I(M,N)$ for any tame $B_1$-persistence modules $M$ and $N$.\end{thm}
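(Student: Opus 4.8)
The plan is to prove the two inequalities $d_B(M,N)\le d_I(M,N)$ and $d_I(M,N)\le d_B(M,N)$ separately. The first is immediate from the algebraic stability theorem: for any $\epsilon>0$ with $M,N$ $\epsilon$-interleaved, Theorem~\ref{AlgebraicStability} gives $d_B(M,N)\le\epsilon$, and since by Remark~\ref{EasyInterleavingRemark} the set of such $\epsilon$ is an up-set whose infimum over $\epsilon>0$ is $d_I(M,N)$, we get $d_B(M,N)\le d_I(M,N)$. All the work is in the reverse inequality; as the first inequality already covers the case $d_B(M,N)=\infty$, I may assume $d_B(M,N)=\epsilon<\infty$ and must show that for every $\delta>0$ the modules $M,N$ are $(\epsilon+\delta)$-interleaved.

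The reverse inequality I would prove by reducing to the case of direct sums of interval modules, a reduction that is needed precisely because a general tame $B_1$-persistence module need not itself decompose. The vehicle is discretization. For a grid function $t$ write $M^t=E_t(\mathbf{P}_t(M))$ and $N^t=E_t(\mathbf{P}_t(N))$. Since $\mathbf{P}_t(M)$ and $\mathbf{P}_t(N)$ are tame $\mathbf{A}_1$-persistence modules, Theorem~\ref{TamePersistenceStructureThm} writes each as a direct sum of modules $\bC(a,b)$; because $E_t$ commutes with direct sums (shown inside the proof of Theorem~\ref{WellBehavedStructureThm}) and $E_t(\bC(a,b))\cong C(\bar t(a),\bar t(b))$, the modules $M^t,N^t$ are direct sums of interval modules $C(\cdot,\cdot)$, with persistence diagrams $\mathcal{D}_{(\mathbf{P}_t(M),t)}$ and $\mathcal{D}_{(\mathbf{P}_t(N),t)}$ by Lemma~\ref{DirectSumDecompositionHasFixedForm}. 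Two estimates then control the errors introduced by discretizing. First, if the gaps of $t$ are at most $\eta$ then $M$ is $\eta$-interleaved with $M^t$: the interleaving morphisms are built from the transition maps of $M$, the map $M_u\to M^t(\eta)_u$ being the transition map into the grade $t(t^{-1}(u+\eta))$, which lies in $[u,u+\eta]$ because some grid point does; similarly for $N$. Second, taking $t=t_i$ from the sequence of Section~\ref{SectionDiagramsOfEuclideanGradedModules} with $i$ large, the diagrams $\mathcal{D}_{(\mathbf{P}_{t_i}(M),t_i)}$ and $\mathcal{D}_{(\mathbf{P}_{t_i}(N),t_i)}$ are within $\delta/8$ of $\mathcal{D}_M$ and $\mathcal{D}_N$ in the bottleneck metric while the gaps of $t_i$ are at most $\delta/4$; hence $d_I(M,M^t)\le\delta/4$, $d_I(N,N^t)\le\delta/4$, and by the triangle inequality for $d_B$ on diagrams $d_B(\mathcal{D}_{M^t},\mathcal{D}_{N^t})\le\epsilon+\delta/4$.

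It remains to bound the interleaving distance between direct sums of interval modules by the bottleneck distance between their diagrams. Granting this, a matching realizing cost at most $\epsilon+3\delta/8$ exists, so $d_I(M^t,N^t)\le\epsilon+3\delta/8$, and the triangle inequality for $d_I$ yields $d_I(M,N)\le\delta/4+(\epsilon+3\delta/8)+\delta/4<\epsilon+\delta$; letting $\delta\to0$ gives $d_I(M,N)\le\epsilon=d_B(M,N)$, and combined with the first inequality, equality. The interval-module statement in turn reduces to two elementary claims about single intervals: (a) if $b-a\le 2c$ then the $2c$-transition homomorphism of $C(a,b)$ is zero, so $C(a,b)$ and the zero module are $c$-interleaved via the zero morphisms; and (b) if $\max(|a-a'|,|b-b'|)\le c$ then $C(a,b)$ and $C(a',b')$ are $c$-interleaved, using the obvious morphisms induced on generators when the shifted supports overlap, and falling back on (a) in the degenerate case, where the norm bound forces both interval lengths to be at most $2c$. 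A matching in the bottleneck distance pairs each diagram point with a point of the other diagram or with the diagonal; since interleavings are additive over direct sums, (b) handles the paired intervals and (a) the diagonal-matched ones, and intervals with an infinite endpoint are automatically paired with each other since matching them to the diagonal costs infinity. The main obstacle is organizational rather than computational: the absence of a structure theorem for arbitrary tame $B_1$-persistence modules forces the passage through discretizations, and one must arrange the bottleneck matching, the diagram-convergence estimate, and the discretization-closeness estimate to be simultaneously within $\delta$; the interval-level claims (a) and (b) are routine but need care at the half-open boundary cases and at infinite endpoints.
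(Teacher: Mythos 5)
Your proposal is correct and takes essentially the same route as the paper: one inequality from algebraic stability, the other by discretizing $M$ and $N$ to $E_t(\mathbf{P}_t(M))$ and $E_t(\mathbf{P}_t(N))$ (these are exactly what the paper calls ``well behaved'' modules), decomposing the discretizations into interval summands, building interleavings from a near-optimal bottleneck matching via the interval-level claims (your (a) and (b) are the paper's Lemma~\ref{CyclicConverse} plus the diagonal case), and chaining triangle inequalities. The only differences are bookkeeping of the $\delta$'s and that the paper packages the approximation step as Lemma~\ref{WellBehavedApproximation} and invokes algebraic stability again (rather than the diagram-convergence assertion of Section~\ref{SectionDiagramsOfEuclideanGradedModules}) to control $d_B(\mathcal{D}_{M^t},\mathcal{D}_M)$.
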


\begin{proof} Theorem~\ref{AlgebraicStability} tells us that $d_B(M,N)\leq d_I(M,N)$, so we just need to show that $d_B(M,N)\geq d_I(M,N)$.  It will follow from the structure theorem for well behaved persistence modules (Theorem~\ref{WellBehavedStructureThm}) that $d_B(M',N')\geq d_I(M',N')$ for well behaved persistence modules $M'$ and $N'$ (Lemma~\ref{InterleavingEqualsBottleneckForWellBehaved} below).  To extend this result to arbitrary tame modules, we will approximate the modules $M$ and $N$ up to arbitrarily small error in the interleaving distance by well behaved persistence modules (Lemma~\ref{WellBehavedApproximation} below).  The full result will follow readily from this this approximation.

\begin{lem}\label{CyclicConverse} If $(a,b),(a',b')\in {\bar \R}^2_+$ with $\|(a,b)-(a',b')\|_{\infty}\leq \epsilon$, then $C(a,b)$ and $C(a',b')$ are $\epsilon$-interleaved. \end{lem}

\begin{proof} This is easy to prove; we leave the details to the reader. \end{proof} 

\begin{lem}\label{InterleavingEqualsBottleneckForWellBehaved} If $M$ and $N$ are two well behaved persistence modules and $d_B(M,N)=\epsilon$ then $d_I(M,N)=\epsilon$. \end{lem}

\begin{proof} By stability we just need to show that $d_I(M,N)\leq\epsilon$.  By the structure theorem for well behaved persistence modules (Theorem~\ref{TamePersistenceStructureThm}), we have that 
\begin{align*}
M&\cong\oplus_{(a,b)\in \supp({\mathcal D}_M)} C(a,b)^{{\mathcal D}_{M}(a,b)},\\ 
N&\cong\oplus_{(a,b)\in \supp({\mathcal D}_N)} C(a,b)^{{\mathcal D}_N(a,b)}.
\end{align*}
Since $d_B(M',N')=\epsilon$, for any $\delta>0$ there exist persistence diagrams $D'_M$ and $D'_N$ with $D'_M\leq D_M$, $D'_N\leq D_N$, and a multibijection $\gamma$ between $D'_M$ and $D'_N$ such that
\begin{enumerate*}
\item For any $(a,b)\in \supp(D_M-D'_M) \cup \supp(D_N-D'_N)$, $(b-a)/2\leq \epsilon+\delta$, 
\item For any $(x,y)\in \supp(\gamma)$, $\|x-y\|_{\infty}\leq \epsilon+\delta$. 
\end{enumerate*}
Fix such $D'_M$, $D'_N$, and $\gamma$.  Now we can choose well behaved modules $M',M''\subset M$ and $N',N''\subset N$ such that $M=M'\oplus M''$, $N=N'\oplus N''$, $D_{M'}=D'_M$, $D_{N'}=D'_N$, $D_{M''}=D_M-D'_M$, and $D_{N''}=D_N-D'_N$.  

If follows from Lemma~\ref{CyclicConverse} that for each $(a,b),(a',b')\in \supp(\gamma)$, $C(a,b)$ and $C(a',b')$ are $(\epsilon+\delta)$-interleaved.   We may write 
\begin{align*}
M'&\cong \oplus_{(a,b),(a',b')\in \supp(\gamma)}C(a,b)^{\gamma((a,b),(a',b'))},\\ 
N'&\cong \oplus_{(a,b),(a',b')\in \supp(\gamma)}C(a',b')^{\gamma((a,b),(a',b'))}.
\end{align*}
It's clear from the form of these decompositions for $M'$ and $N'$ that a choice of a pair of $(\epsilon+\delta)$-interleaving homomorphisms between $C(a,b)$ and $C(a',b')$ for each pair $(a,b),(a',b')\in \supp(\gamma)$ induces a pair of $(\epsilon+\delta)$-interleaving homomorphisms ${\hat f}:M'\to N'(\epsilon+\delta)$ and ${\hat g}:N'\to M'(\epsilon+\delta)$.  

Now we extend this pair to a pair of homomorphisms $f:M\to N(\epsilon+\delta)$, $g:N\to M(\epsilon+\delta)$ by defining $f(y)={\hat f}(y)$ for $y\in M'$, $f(M'')=0$, $g(y)={\hat g}(y)$ for $y\in N'$, and $g(M'')=0$.  Obviously, $f$ and $g$ restrict to $(\epsilon+\delta)$-interleaving homomorphisms between $M'$ and $N'$.  Moreover, we have that $S(M'',2\epsilon+\delta)=0$ and $S(N'',2\epsilon+\delta)=0$, so $f$ and $g$ restrict to $(\epsilon+\delta)$-interleaving homomorphisms between $M''$ and $N''$.  Thus by linearity, $f$ and $g$ are $(\epsilon+\delta)$-interleaving homomorphisms between $M$ and $N$.  Since $\delta$ may be taken to be arbitrarily small, we must have $d_I(M,N)\leq\epsilon$, as we wanted to show. \end{proof}

\begin{lem}\label{WellBehavedApproximation} For any tame $B_1$-persistence module $M$ and $\delta>0$, there exists a well behaved persistence module $M'$ with $d_I(M,M')\leq \delta$.  \end{lem}

\begin{proof} Let $t$ be an $\delta/2$-cover of $\R$, as defined in Section~\ref{SectionDiagramsOfEuclideanGradedModules}.  For any $r\in \R$, there exists $r'\in im(t)$, with $0\leq r'-r\leq \delta$.  Define a function $\lambda:\R\to \im(t)$ such that $\lambda(r)=\min \{r'\geq r|r'\in im(t)\}$.  Then $0\leq \lambda(r)-r\leq \delta$ for all $r\in \R$.  

Let $\M={\mathbf P}_t(M)$ (as defined in Section~\ref{DiscretizationsOfModules}) and let $M'=E_t(\M)$.  Then $M'$ is well-behaved.  We now show that $M$ and $M'$ are $\delta$-interleaved, which implies that $d_I(M,M')\leq \delta$.  

Define $f:M\to M'(\delta)$ to be the morphism for which \[f_u:M_u\to M'_{u+\delta}=\varphi_{M'}(\lambda(u),u+\delta)\circ {\mathcal J}^{-1}_{\M,t,\lambda(u)}\circ {\mathcal I}^{-1}_{M,t,t^{-1}(\lambda(u))} \circ \varphi_M(u,\lambda(u)).\]

Define $g:M'\to M(\delta)$ to be the morphism for which \[g_u:M'_u\to M_{u+\delta}=\varphi_{M}(\lambda(u),u+\delta)\circ {\mathcal I}_{M,t,t^{-1}(\lambda(u))}\circ {\mathcal J}_{\M,t,\lambda(u)} \circ \varphi_{M'}(u,\lambda(u)).\] 

We need to check that $f$ and $g$ thus defined are in fact morphisms.  We verify this for $f$; the verification for $g$ is similar; we omit it.  

If $u\leq v\in \R$, we have 
\begin{align}
f_v \circ \varphi_{M}(u,v)=\varphi_{M'}(\lambda(v),v+\delta)\circ {\mathcal J}^{-1}_{\M,t,\lambda(v)}\circ {\mathcal I}^{-1}_{M,t,t^{-1}(\lambda(v))} \circ \varphi_M(v,\lambda(v))\circ \varphi_{M}(u,v).\label{firsteq}
\end{align} 
By definition, for any $u\leq v\in \R$, we have 
\begin{align}
\varphi_{M'}(u,v)&={\mathcal J}_{\M,t,v}^{-1} \circ \varphi_{\M}(t^{-1}(u),t^{-1}(v)) \circ{\mathcal J}_{\M,t,u}\notag\\&=
{\mathcal J}_{\M,t,v}^{-1} \circ {\mathcal I}_{M,t,t^{-1}(v)}^{-1} \circ \varphi_M(u,v) \circ{\mathcal I}_{M,t,t^{-1}(u)} \circ{\mathcal J}_{\M,t,u}.\label{funeq} 
\end{align}
Using (\ref{funeq}) to substitute for $\varphi_{M'}(\lambda(v),v+\delta)$ in (\ref{firsteq}) and simplifying gives us:
\begin{align*}
f_v \circ \varphi_{M}(u,v)&={\mathcal J}_{\M,t,v+\delta}^{-1} \circ {\mathcal I}_{M,t,t^{-1}(v+\delta)}^{-1} \circ \varphi_M(\lambda(v),v+\delta)\circ \varphi_M(v,\lambda(v))\circ \varphi_{M}(u,v)\\
&={\mathcal J}_{\M,t,v+\delta}^{-1} \circ {\mathcal I}_{M,t,t^{-1}(v+\delta)}^{-1} \circ \varphi_M(u,v+\delta).
\end{align*}
On the other hand we have, using (\ref{funeq}) again,
\begin{align*}
&\varphi_{M'}(u+\delta,v+\delta)\circ f_u\\
&=\varphi_{M'}(u+\delta,v+\delta)\circ\varphi_{M'}(\lambda(u),u+\delta)\circ {\mathcal J}^{-1}_{\M,t,\lambda(u)}\circ {\mathcal I}^{-1}_{M,t,t^{-1}(\lambda(u))} \circ \varphi_M(u,\lambda(u))\\
&=\varphi_{M'}(\lambda(u),v+\delta) \circ {\mathcal J}^{-1}_{\M,t,\lambda(u)}\circ {\mathcal I}^{-1}_{M,t,t^{-1}(\lambda(u))} \circ \varphi_M(u,\lambda(u))\\
&={\mathcal J}_{\M,t,v+\delta}^{-1} \circ {\mathcal I}_{M,t,t^{-1}(v+\delta)}^{-1} \circ \varphi_M(\lambda(u),v+\delta) \circ{\mathcal I}_{M,t,t^{-1}(\lambda(u))}\\ 
& \quad \circ {\mathcal J}_{\M,t,\lambda(u)}\circ {\mathcal J}^{-1}_{\M,t,\lambda(u)}\circ {\mathcal I}^{-1}_{M,t,t^{-1}(\lambda(u))} \circ \varphi_M(u,\lambda(u))\\
&={\mathcal J}_{\M,t,v+\delta}^{-1} \circ {\mathcal I}_{M,t,t^{-1}(v+\delta)}^{-1} \circ \varphi_M(\lambda(u),v+\delta)\circ  \varphi_M(u,\lambda(u))\\
&={\mathcal J}_{\M,t,v+\delta}^{-1} \circ {\mathcal I}_{M,t,t^{-1}(v+\delta)}^{-1} \circ \varphi_M(u,v+\delta).
\end{align*}
Thus $f_v \circ \varphi_{M}(u,v)=\varphi_{M'}(u+\delta,v+\delta)\circ f_u$, as we wanted to show.

Finally, we need to check that $g\circ f=S(M,2\delta)$ and $f\circ g=S(M',2 \delta)$.  We perform the first verification and omit the second, since the verifications are similar.  

For $u\in \R$,
\begin{align*}
&g_{u+\delta} \circ f_u\\
&=\varphi_{M}(\lambda(u+\delta),u+2\delta)\circ {\mathcal I}_{M,t,t^{-1}(\lambda(u+\delta))}\circ {\mathcal J}_{\M,t,\lambda(u+\delta)} \\ 
&\quad \circ \varphi_{M'}(u+\delta,\lambda(u+\delta))\circ \varphi_{M'}(\lambda(u),u+\delta)\circ {\mathcal J}^{-1}_{\M,t,\lambda(u)}\circ {\mathcal I}^{-1}_{M,t,t^{-1}(\lambda(u))} \circ \varphi_M(u,\lambda(u))\\
&=\varphi_{M}(\lambda(u+\delta),u+2\delta)\circ {\mathcal I}_{M,t,t^{-1}(\lambda(u+\delta))}\circ {\mathcal J}_{\M,t,\lambda(u+\delta)} \\ 
&\quad \circ\varphi_{M'}(\lambda(u),\lambda(u+\delta))\circ {\mathcal J}^{-1}_{\M,t,\lambda(u)}\circ {\mathcal I}^{-1}_{M,t,t^{-1}(\lambda(u))} \circ \varphi_M(u,\lambda(u)).
\end{align*}
Using (\ref{funeq}) once again, we have that this last expression is equal to
\begin{align*}
&\varphi_{M}(\lambda(u+\delta),u+2\delta)\circ {\mathcal I}_{M,t,t^{-1}(\lambda(u+\delta))}\circ {\mathcal J}_{\M,t,\lambda(u+\delta)} \circ {\mathcal J}_{\M,t,\lambda(u+\delta)}^{-1} \circ {\mathcal I}_{M,t,t^{-1}(\lambda(u+\delta))}^{-1}  \\
&\quad\circ \varphi_M(\lambda(u),\lambda(u+\delta)) \circ{\mathcal I}_{M,t,t^{-1}(\lambda(u))} \circ{\mathcal J}_{\M,t,\lambda(u)}\circ {\mathcal J}^{-1}_{\M,t,\lambda(u)}\circ {\mathcal I}^{-1}_{M,t,t^{-1}(\lambda(u))} \circ \varphi_M(u,\lambda(u))\\
&=\varphi_{M}(\lambda(u+\delta),u+2\delta)\circ \varphi_M(\lambda(u),\lambda(u+\delta))\circ \varphi_M(u,\lambda(u))\\
&=\varphi_{M}(u,u+2\delta).\qedhere
\end{align*}\end{proof} 

Now we can complete the proof of Theorem~\ref{InterleavingEqualsBottleneck}.  As mentioned above, by Theorem~\ref{AlgebraicStability} it suffices to show $d_I(M,N)\leq d_B(M,N)$.  Say $d_B(M,N)=\epsilon$.  Choose $\delta>0$.  By Lemma~\ref{WellBehavedApproximation}, there exist well behaved modules $M'$, $N'$ with $d_I(M,M')\leq \delta$, $d_I(N,N')\leq \delta$.  Then by Theorem~\ref{AlgebraicStability}, $d_B(M,M')\leq \delta$, $d_B(N,N')\leq \delta$, so by the triangle inequality, $d_B(M',N')\leq \epsilon+2\delta$.  By Lemma~\ref{InterleavingEqualsBottleneckForWellBehaved},  $d_I(M',N')\leq \epsilon+2\delta$.  Applying the triangle inequality again, we get that $d_I(M,N)\leq \epsilon+4\delta$.  As $\delta$ may be taken to be arbitrarily small, we have $d_I(M,N)\leq \epsilon$, which completes the proof. 
\end{proof}

\section{If $d_I(M,N)=\epsilon$ and $M,N$ are Finitely Presented then $M$ and $N$ are $\epsilon$-interleaved}\label{SectionInterleavingThm}
We now show that for finitely presented $B_n$-modules $M$ and $N$, if $d_I(M,N)=\epsilon$ then $M$ and $N$ are $\epsilon$-interleaved.  This implies that the restriction of $d_I$ to finitely presented persistence modules is a metric and, as noted in Section~\ref{InterleavingMetricSection}, yields a converse to the algebraic stability of persistence for finitely presented $B_1$-persistence modules.  Theorem~\ref{InterleavingThm} will also be of some use to us in Section~\ref{ComputationSection}.

\begin{thm}\label{InterleavingThm}If $M$ and $N$ are finitely presented $B_n$-modules and $d_I(M,N)=\epsilon$ then $M$ and $N$ are $\epsilon$-interleaved. \end{thm}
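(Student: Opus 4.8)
The plan is to show that $\Sigma:=\{\epsilon'\geq 0 : M\text{ and }N\text{ are }\epsilon'\text{-interleaved}\}$ is closed. By Remark~\ref{EasyInterleavingRemark} the set $\Sigma$ is upward closed, so it has the form $[\epsilon,\infty)$ or $(\epsilon,\infty)$ with $\epsilon=d_I(M,N)$; moreover, by the definition of the infimum together with upward closure, $(\epsilon,\infty)\subseteq\Sigma$. Thus it suffices to prove $\epsilon\in\Sigma$, i.e.\ that this one boundary value is attained.

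First I would fix finite presentations $M\cong\langle G_M|R_M\rangle$ and $N\cong\langle G_N|R_N\rangle$ (these exist since $M$ and $N$ are finitely presented), so that $(\langle G_M\rangle,\rho_M)$ and $(\langle G_N\rangle,\rho_N)$ are free covers with $\ker\rho_M=\langle R_M\rangle$ and $\ker\rho_N=\langle R_N\rangle$, and reformulate ``$M$ and $N$ are $\epsilon'$-interleaved'' as the solvability over $k$ of an explicit finite polynomial system $P_{\epsilon'}$. Since a homomorphism between free $B_n$-persistence modules is determined by the images of basis elements, a lift $\hat f:\langle G_M\rangle\to\langle G_N\rangle(\epsilon')$ of a candidate $f:M\to N(\epsilon')$ is encoded by scalars $\lambda=(\lambda_{yz})$, where $\hat f(y)=\sum_{z}\lambda_{yz}\,x^{gr(y)+\epsilon'-gr(z)}e_z$ with the sum over $z\in G_N$ such that $gr(z)\leq gr(y)+\vec{\epsilon'}$; likewise $\hat g:\langle G_N\rangle\to\langle G_M\rangle(\epsilon')$ is encoded by scalars $\mu$. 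Appealing to Lemma~\ref{ExistenceAndHomotopyUniquenessOfLifts}: (a) $\hat f$ descends to a morphism $\bar f:M\to N(\epsilon')$ iff $\hat f(\langle R_M\rangle)\subseteq\langle R_N\rangle(\epsilon')$; (b) similarly for $\hat g$; (c) the descended maps satisfy $\bar g(\epsilon')\circ\bar f=S(M,2\epsilon')$ iff $\im\bigl(\hat g(\epsilon')\circ\hat f-S(\langle G_M\rangle,2\epsilon')\bigr)\subseteq\langle R_M\rangle(2\epsilon')$ (both $\hat g(\epsilon')\circ\hat f$ and $S(\langle G_M\rangle,2\epsilon')$ are lifts of $S(M,2\epsilon')$, so their difference lands in $\ker\rho_M(2\epsilon')$ iff it induces the zero map, using surjectivity of $\rho_M$); and (d) symmetrically. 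Because membership of a homogeneous element in the graded piece $\langle R_N\rangle(\epsilon')_c$ (resp.\ $\langle R_M\rangle(2\epsilon')_c$) is a linear condition on its coordinates in the natural basis of the ambient free summand, conditions (a)--(d) become a system $P_{\epsilon'}$ that is linear in the equations from (a)--(b) and quadratic in those from (c)--(d); thus $M$ and $N$ are $\epsilon'$-interleaved iff $P_{\epsilon'}$ has a solution.

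The crux is that $P_{\epsilon'}$ does not actually depend on $\epsilon'$ for $\epsilon'$ in some right-neighborhood $[\epsilon,\epsilon+\eta)$ of $\epsilon$. Each combinatorial ingredient of $P_{\epsilon'}$---the index set $\{(y,z): gr(z)\leq gr(y)+\vec{\epsilon'}\}$ of unknowns $\lambda$ (and its analogue for $\mu$), the index sets labeling the relevant summands $\langle G_N\rangle_{gr(r)+\vec{\epsilon'}}$ and $\langle G_M\rangle_{gr(y)+\vec{2\epsilon'}}$, and the coordinate subspaces representing $\langle R_N\rangle_{gr(r)+\vec{\epsilon'}}$ and $\langle R_M\rangle_{gr(y)+\vec{2\epsilon'}}$ (each determined by which generators of $R_N$, resp.\ $R_M$, have grade $\leq$ the relevant grade)---is a set-valued function of $\epsilon'$ that is monotone increasing, has only finitely many jumps (at the finitely many $\epsilon'$ where some coordinate of a grade difference equals $\epsilon'$ or $2\epsilon'$), and is \emph{right-continuous}, since a condition of the form $p\leq q+\vec{\epsilon'}$ for all $\epsilon'>\epsilon_0$ is equivalent to $p\leq q+\vec{\epsilon_0}$. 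A right-continuous monotone step function is constant on $[\epsilon,\epsilon+\eta)$ for some $\eta>0$, so, choosing $\eta$ below all finitely many jump locations lying above $\epsilon$, the system $P_{\epsilon'}$ is literally identical to $P_\epsilon$ for every $\epsilon'\in[\epsilon,\epsilon+\eta)$ (the scalar coefficients of $P_{\epsilon'}$ are the coordinates of the generators of $R_M$ and $R_N$ and involve no $\epsilon'$). Since $(\epsilon,\epsilon+\eta)\subseteq(\epsilon,\infty)\subseteq\Sigma$, for any $\epsilon'$ in this interval $P_{\epsilon'}=P_\epsilon$ is solvable, hence $P_\epsilon$ is solvable, hence $M$ and $N$ are $\epsilon$-interleaved.

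The main obstacle I anticipate is the bookkeeping in the two reduction steps: expressing ``$\epsilon'$-interleaved'' as solvability of $P_{\epsilon'}$ in both directions (carefully tracking lifts and their homotopy-uniqueness via Lemma~\ref{ExistenceAndHomotopyUniquenessOfLifts} and identifying $\ker\rho_M$ with $\langle R_M\rangle$), and then verifying \emph{right}-continuity---rather than mere piecewise constancy---of all the combinatorial data, since it is exactly right-continuity that forces the value of $P_{\epsilon'}$ at $\epsilon$ itself to agree with its values just above $\epsilon$. This is the step where finite presentability of $M$ and $N$ is used essentially; Example~\ref{NonIsoSameDiagram} shows the statement is false without it.
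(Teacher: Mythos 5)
Your proposal is correct, but it takes a genuinely different route from the paper's. The paper proves the theorem constructively: it observes (Lemma~\ref{FirstIsomorphismLemma}) that for a finitely presented module the transition maps $\varphi_M(a,b)$ are isomorphisms whenever $(a_i,b_i]$ avoids the coordinates of the grades $U_M$ of generators and relations; it then picks $\delta>0$ so small that certain transition maps at shifts of $U_M$ and $U_N$ are isomorphisms, takes a pair of $(\epsilon+\delta)$-interleaving morphisms $f,g$, and explicitly ``rescales'' them to $\epsilon$-interleaving morphisms $\tilde f,\tilde g$ by composing with inverses of those transition isomorphisms and a floor function $fl_M$ (Lemma~\ref{SmallerInterleavingLemma}). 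You instead encode ``$\epsilon'$-interleaved'' as solvability of a finite polynomial system $P_{\epsilon'}$ over $k$ and argue that $P_{\epsilon'}$, viewed as a function of $\epsilon'$, is locally constant from the right because every combinatorial ingredient (the support patterns of the matrices, the subsets of $R_M,R_N$ spanning the relevant graded pieces) is governed by finitely many closed conditions of the form $gr(z)\le gr(y)+\vec{\epsilon'}$, hence is a right-continuous monotone step function; solvability at all $\epsilon'\in(\epsilon,\epsilon+\eta)$ then forces solvability at $\epsilon$. Both arguments live or die on finiteness of the presentations (Example~\ref{NonIsoSameDiagram} shows why), and both isolate the same phenomenon---the grades of generators and relations are the only places where the combinatorics can jump. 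What the paper's proof buys is explicit $\epsilon$-interleaving morphisms, which it then reuses verbatim to establish Proposition~\ref{PossibilitiesForInterleavingDistance} (both results are corollaries of the same Lemma~\ref{SmallerInterleavingLemma}). What your proof buys is conceptual economy: existence via closedness of a semialgebraic solvability locus, without having to manufacture morphisms by hand. Notably, the reduction to $P_{\epsilon'}$ that you sketch is essentially Theorem~\ref{InterleavingAndQuadratics}, which the paper develops later in Section~\ref{ComputationSection} for computational purposes but proves independently of Theorem~\ref{InterleavingThm}; you have, in effect, observed that the computational machinery can be run backwards to give a clean proof of the metric statement. Your bookkeeping---the equivalence of conditions (a)--(d) with the existence of interleaving morphisms via Lemma~\ref{ExistenceAndHomotopyUniquenessOfLifts}, and the verification that the scalar coefficients of $P_{\epsilon'}$ (namely the entries of $[w,G_M]$ and $[r,G_N]$) carry no $\epsilon'$-dependence---is correct as sketched.
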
    

\begin{cor}\label{MetricCorollary} $d_I$ is a metric on finitely presented $B_n$-modules. \end{cor}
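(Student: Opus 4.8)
The plan is to obtain Corollary~\ref{MetricCorollary} as a purely formal consequence of Theorem~\ref{InterleavingThm}. We already know from Section~\ref{InterleavingsOfModules} that $d_I$ is a pseudometric on isomorphism classes of $B_n$-persistence modules, hence so is its restriction to the finitely presented ones; the only thing left to establish is that this restriction separates isomorphism classes, i.e.\ that $d_I(M,N)=0$ forces $M\cong N$ when $M$ and $N$ are finitely presented.

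So suppose $M$ and $N$ are finitely presented and $\epsilon:=d_I(M,N)=0$. By Theorem~\ref{InterleavingThm} the infimum defining $d_I(M,N)$ is attained, so $M$ and $N$ are $0$-interleaved: there exist $f:M\to N(0)$ and $g:N\to M(0)$ with $g(0)\circ f=S(M,0)$ and $f(0)\circ g=S(N,0)$. Now the shift by $\vec 0$ is the identity functor, so $N(0)=N$, $M(0)=M$, $f(0)=f$, $g(0)=g$; and $S(M,0)$ restricts on each summand $M_u$ to $\varphi_M(u,u)$, which is the action of the monomial $x_1^0\cdots x_n^0=1$ and hence the identity, so $S(M,0)=\mathrm{id}_M$ and likewise $S(N,0)=\mathrm{id}_N$. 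Therefore $g\circ f=\mathrm{id}_M$ and $f\circ g=\mathrm{id}_N$, so $f$ is an isomorphism and $M\cong N$. Combined with the pseudometric property this gives the corollary, and no genuine difficulty arises at this step.

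All the real content sits in Theorem~\ref{InterleavingThm}, which I am assuming; for completeness, I would expect to prove it along the following lines. Fix minimal presentations $\langle G_M|R_M\rangle$ and $\langle G_N|R_N\rangle$ of $M$ and $N$ (available, with well-defined grade data, by Theorem~\ref{MinimalPresentationTheorem}). For each $\epsilon'>\epsilon$ choose $\epsilon'$-interleaving homomorphisms and lift them, via Lemma~\ref{ExistenceAndHomotopyUniquenessOfLifts}, to maps $\langle G_M\rangle\to\langle G_N\rangle(\epsilon')$ and $\langle G_N\rangle\to\langle G_M\rangle(\epsilon')$ of the free covers; such lifts are described by finitely many homogeneous coefficients in $B_n$, and the requirements that the lifts descend to the quotients and that the two composites agree with the transition homomorphisms (modulo the lift ambiguity) cut out a finite system of algebraic constraints on these coefficients, with $\epsilon'$ entering only through the grades the coefficients are allowed to occupy. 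One then argues that, as $\epsilon'\downarrow\epsilon$, the relevant coefficient data ranges over a bounded finite-dimensional family, so a subsequence converges and the limit defines an honest $\epsilon$-interleaving. The main obstacle is precisely this passage to the limit --- showing the constraint set is closed at $\epsilon$ and that the coefficient data cannot degenerate as $\epsilon'\to\epsilon$ --- and this is exactly where finite presentability is essential; it genuinely fails for general tame modules, as Example~\ref{NonIsoSameDiagram} shows.
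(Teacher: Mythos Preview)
Your derivation of the corollary from Theorem~\ref{InterleavingThm} is correct and is exactly what the paper intends; the paper states the corollary without proof, treating it as immediate, and your unpacking of why $0$-interleaved implies isomorphic is the right one.

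Your speculative outline for Theorem~\ref{InterleavingThm}, however, departs from the paper's argument and has a real problem: you propose a compactness/limit argument in which interleaving coefficients converge along a subsequence as $\epsilon'\downarrow\epsilon$. But the ground field $k$ is arbitrary throughout the paper, so there is no topology in which ``coefficients converge'' makes sense. The paper's proof is purely algebraic and avoids any limiting process. It exploits the fact that a finitely presented module has only finitely many grades of generators and relations, so away from those grades the transition maps $\varphi_M(a,b)$ are isomorphisms (Lemma~\ref{FirstIsomorphismLemma}). One then chooses a single $\delta>0$ small enough that the relevant transition maps $\varphi_N(z+\epsilon,z+\epsilon+\delta)$ and $\varphi_M(z+2\epsilon,z+2\epsilon+2\delta)$ (for $z$ ranging over the finite set of generator/relation grades) are isomorphisms, takes an $(\epsilon+\delta)$-interleaving $f,g$, and explicitly constructs an $\epsilon$-interleaving $\tilde f,\tilde g$ by precomposing and postcomposing $f,g$ with those isomorphisms and their inverses (Lemma~\ref{SmallerInterleavingLemma}). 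This is a direct construction from one pair $(f,g)$, not a limit over a family.
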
 

\begin{cor}[Converse to Algebraic Stability]\label{Cor:Converse} \mbox{}
\begin{itemize}
\item[(i)]If $M$ and $N$ are finitely presented $B_1$-persistence modules and $d_B(M,N)=\epsilon$ then $M$ and $N$ are $\epsilon$-interleaved.  
\item[(ii)]If $M$ and $N$ are tame $B_1$-persistence modules and $d_B(M,N)=\epsilon$ then $M$ and $N$ are $(\epsilon+\delta)$-interleaved for any $\delta>0$.
\end{itemize}
\end{cor}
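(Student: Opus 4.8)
The plan is to derive both statements as immediate consequences of the two results established above: Theorem~\ref{InterleavingEqualsBottleneck}, which gives $d_B(M,N)=d_I(M,N)$ for all tame $B_1$-persistence modules, and Theorem~\ref{InterleavingThm}, which says that for finitely presented modules the infimum defining $d_I$ is attained. No new construction is needed; the corollary is pure bookkeeping once those theorems are in hand.

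For part (i) I would argue as follows. If $M$ and $N$ are finitely presented $B_1$-persistence modules, then by Proposition~\ref{FinitelyPresentedIsWellBehaved} they are well behaved, hence in particular tame, so Theorem~\ref{InterleavingEqualsBottleneck} applies and yields $d_I(M,N)=d_B(M,N)=\epsilon$. Now invoke Theorem~\ref{InterleavingThm}: since $M$ and $N$ are finitely presented and $d_I(M,N)=\epsilon$, they are $\epsilon$-interleaved, which is the assertion.

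For part (ii): if $M$ and $N$ are tame $B_1$-persistence modules with $d_B(M,N)=\epsilon$, then Theorem~\ref{InterleavingEqualsBottleneck} again gives $d_I(M,N)=\epsilon$. By the definition of $d_I$ as the infimum of the set of $\epsilon'$ for which $M$ and $N$ are $\epsilon'$-interleaved, for each $\delta>0$ there is some $\epsilon'$ with $\epsilon\le\epsilon'<\epsilon+\delta$ and an $\epsilon'$-interleaving of $M$ and $N$. Since $\epsilon'\le\epsilon+\delta$, Remark~\ref{EasyInterleavingRemark} promotes this to an $(\epsilon+\delta)$-interleaving, as desired.

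I do not expect any genuine obstacle: the only point requiring (minor) care is the verification in part (i) that finite presentation entails tameness, so that Theorem~\ref{InterleavingEqualsBottleneck} is applicable; this is exactly Proposition~\ref{FinitelyPresentedIsWellBehaved} combined with the fact that well behaved modules are by definition tame. The substantive mathematics lives entirely in the proofs of Theorems~\ref{InterleavingEqualsBottleneck} and~\ref{InterleavingThm}, so the corollary itself should occupy only a few lines.
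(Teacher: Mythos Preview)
Your argument is correct and matches the paper's proof exactly: the paper simply says that (ii) follows directly from Theorem~\ref{InterleavingEqualsBottleneck} and that (i) is immediate from that theorem together with Theorem~\ref{InterleavingThm}, which is precisely the route you take. The extra care you give to the tameness of finitely presented modules (via Proposition~\ref{FinitelyPresentedIsWellBehaved}) and the unpacking of the infimum in (ii) are just the details the paper leaves implicit.
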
 

\begin{proof} (ii) follows directly from Theorem~\ref{InterleavingEqualsBottleneck}. (i) is immediate from that theorem and Theorem~\ref{InterleavingThm}. \end{proof}

For a finitely presented $B_n$-persistence module $M$, let $U_M\subset \R^n$ be the set of grades of the generators and relations in a minimal presentation for $M$.  Let $U_M^i\subset \R$ be the set of $i^{th}$ coordinates of the elements of $U_M$.  


\begin{proof}[Proof of Theorem~\ref{InterleavingThm}]  

\begin{lem}\label{FirstIsomorphismLemma} If $M$ is a finitely presented $B_n$-persistence module then for any $a\leq b\in \R^n$ such that $(a_i,b_i]\cap U_M^i=\emptyset$ for all $i$, $\varphi_M(a,b)$ is an isomorphism. \end{lem}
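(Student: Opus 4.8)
The plan is to argue through a minimal presentation. Write $M\cong F/\langle R\rangle$ where $F=\langle G\rangle$ is the free cover coming from a minimal presentation $\langle G|R\rangle$ of $M$ (which exists since $M$ is finitely presented); by definition every element of $gr(G)$ and every element of $gr(R)$ has all of its coordinates in the sets $U_M^i$. The whole lemma then rests on one elementary order-theoretic observation: if $c\in\R^n$ satisfies $c_i\notin(a_i,b_i]$ for every $i$, then $c\leq b$ implies $c\leq a$. Indeed, fixing $i$, the hypothesis $c_i\notin(a_i,b_i]$ forces either $c_i\leq a_i$ or $c_i>b_i$, and the latter is ruled out by $c_i\leq b_i$; hence $c_i\leq a_i$ for all $i$. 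Note this uses the \emph{coordinate-wise} form of the hypothesis of the lemma in an essential way.

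First I would describe transition maps in $F$. For $y\in G$ with $gr(y)=c$, the summand $B_n(-c)$ has $B_n(-c)_u=k$ when $u\geq c$ and $0$ otherwise, and $\varphi_{B_n(-c)}(a,b)$ is multiplication by $x^{b-a}$; this fails to be an isomorphism exactly when $b\geq c$ but $a\not\geq c$. By the observation above, applied to $c=gr(y)$, that bad case cannot happen, so each $\varphi_{B_n(-gr(y))}(a,b)$ is an isomorphism. Since $\varphi_F(a,b)=\bigoplus_{y\in G}\varphi_{B_n(-gr(y))}(a,b)$, the map $\varphi_F(a,b)\colon F_a\to F_b$ is an isomorphism.

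Next I would check that $\varphi_{\langle R\rangle}(a,b)$ is surjective. The submodule $\langle R\rangle$ is generated by the homogeneous set $R$, so $\langle R\rangle_b$ is spanned by the elements $x^{b-gr(r)}\cdot r$ for those $r\in R$ with $gr(r)\leq b$. For each such $r$, the observation applied to $c=gr(r)$ gives $gr(r)\leq a$, so $x^{a-gr(r)}\cdot r\in\langle R\rangle_a$ and $\varphi_{\langle R\rangle}(a,b)$ carries it to $x^{b-a}x^{a-gr(r)}\cdot r=x^{b-gr(r)}\cdot r$; hence $\varphi_{\langle R\rangle}(a,b)$ is onto. Because $\langle R\rangle\subset F$ and $\varphi_F(a,b)$ is injective, $\varphi_F(a,b)$ restricts to an injection $\langle R\rangle_a\to\langle R\rangle_b$, which by the previous sentence is also surjective; so $\varphi_F(a,b)$ is an isomorphism $F_a\to F_b$ carrying the submodule $\langle R\rangle_a$ onto $\langle R\rangle_b$. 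Such a map induces an isomorphism on the quotients $M_a=F_a/\langle R\rangle_a\to F_b/\langle R\rangle_b=M_b$, and since $\varphi_M(a,b)$ is precisely this induced map (transition maps of a quotient being induced from those of $F$), $\varphi_M(a,b)$ is an isomorphism.

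I do not expect a real obstacle: the content is just the order-theoretic observation plus tracking which grades of the minimal presentation matter, after which the conclusion follows either by the quotient argument above or, equivalently, by applying the snake lemma to the two short exact sequences $0\to\langle R\rangle_{a}\to F_{a}\to M_{a}\to 0$ and $0\to\langle R\rangle_{b}\to F_{b}\to M_{b}\to 0$ together with their transition maps, using that $\varphi_F(a,b)$ is an isomorphism and $\varphi_{\langle R\rangle}(a,b)$ is surjective. The only point that requires care in the write-up is the precise bookkeeping that the hypothesis $(a_i,b_i]\cap U_M^i=\emptyset$ is coordinate-wise, which is exactly what the observation needs.
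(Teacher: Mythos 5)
Your proof is correct. The paper omits its own argument for this lemma (the stated proof is simply ``This is straightforward; we omit the details''), and your write-up supplies exactly the natural details: the coordinate-wise order-theoretic observation that $c_i\notin(a_i,b_i]$ together with $c\leq b$ forces $c\leq a$, applied to the grades in $U_M$, shows $\varphi_F(a,b)$ is an isomorphism and $\varphi_{\langle R\rangle}(a,b)$ is surjective (hence, being a restriction of an injection, an isomorphism), so the induced map on the quotient $M$ is an isomorphism. No gaps.
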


\begin{proof} This is straightforward; we omit the details.\end{proof}

\begin{lem}\label{FirstConsequence}If $M$ is a finitely presented $B_n$-persistence module then for any $y\in \R^n$, there exists $r\in \R_{>0}$ such that $\varphi_M(y,y+r')$ is an isomorphism for all $0\leq r'\leq r$. \end{lem}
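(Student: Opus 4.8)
The plan is to derive Lemma~\ref{FirstConsequence} directly from Lemma~\ref{FirstIsomorphismLemma} together with the finiteness of $U_M$. Since $M$ is finitely presented, it admits a minimal presentation with finitely many generators and relations, so by Theorem~\ref{MinimalPresentationTheorem} the set $U_M\subset\R^n$ is well defined and finite; consequently each $U_M^i\subset\R$ is finite.

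Fix $y=(y_1,\dots,y_n)\in\R^n$. For each $i$ I would look at the finite set $\{u\in U_M^i : u>y_i\}$: if it is empty, put $\delta_i=1$; otherwise put $\delta_i=\min\{u-y_i : u\in U_M^i,\ u>y_i\}>0$. Then set $r=\tfrac12\min_{1\le i\le n}\delta_i$, which is strictly positive.

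It remains to check that $\varphi_M(y,y+r')$ is an isomorphism for every $0\le r'\le r$. For $r'=0$ this map is the identity. For $0<r'\le r$, I claim that $(y_i,y_i+r']\cap U_M^i=\emptyset$ for all $i$: a point of $U_M^i$ lying in this interval would be strictly greater than $y_i$ (the interval is open at its left endpoint), hence at least $y_i+\delta_i>y_i+r'$, which is impossible. So the hypothesis of Lemma~\ref{FirstIsomorphismLemma} holds with $a=y$ and $b=y+r'$, and that lemma gives the conclusion.

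There is no real obstacle here; the statement is a routine corollary of Lemma~\ref{FirstIsomorphismLemma} and the finiteness afforded by Theorem~\ref{MinimalPresentationTheorem}. The only point deserving a moment's attention is that the intervals appearing in the hypothesis of Lemma~\ref{FirstIsomorphismLemma} are half-open on the left, so one must note that elements of $U_M^i$ that are $\le y_i$ cause no interference, which is immediate from the openness of $(y_i,y_i+r']$ at its left endpoint.
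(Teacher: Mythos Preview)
Your proof is correct and follows exactly the approach the paper takes: the paper simply states that this is an immediate consequence of Lemma~\ref{FirstIsomorphismLemma}, and you have spelled out the routine details of that deduction using the finiteness of the sets $U_M^i$.
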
  

\begin{proof} This is an immediate consequence of Lemma~\ref{FirstIsomorphismLemma}. \end{proof}   

For a finitely presented $B_n$-persistence module $M$, let $fl_M:\R^n\to \Pi_{i=1}^n {\bar U_M^i}$ be defined by $fl_M(a_1,...,a_n)=(a'_1,...,a'_n)$, where $a'_i$ is the largest element of $U_M^i$ such that $a'_i\leq a_i$, if such an element exists, and $a'_i=-\infty$ otherwise. 

\begin{lem}\label{SecondConsequence}For any finitely presented $B_n$-module $M$ and any $y\in \R^n$ with $fl_M(y)\in \R^n$, we have that $\varphi_M(fl_M(y),y)$ is an isomorphism. \end{lem}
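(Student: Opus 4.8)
The plan is to reduce the statement directly to Lemma~\ref{FirstIsomorphismLemma} by taking $a = fl_M(y)$ and $b = y$. First I would dispose of the well-definedness issue: the hypothesis $fl_M(y) \in \R^n$ says precisely that no coordinate of $fl_M(y)$ falls into the ``$-\infty$'' case of its definition, so $fl_M(y) \in \R^n$ genuinely, and since each coordinate $a_i$ is by construction an element of $U_M^i$ satisfying $a_i \le y_i$, we have $a \le b$ in the product order on $\R^n$. Hence $\varphi_M(a,b)$ is an honest transition map and it makes sense to ask whether it is an isomorphism.

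Next I would check that the pair $(a,b)$ satisfies the hypothesis of Lemma~\ref{FirstIsomorphismLemma}, namely that $(a_i, b_i] \cap U_M^i = \emptyset$ for every $i$. This is immediate from the defining property of $fl_M$: $a_i$ is the \emph{largest} element of $U_M^i$ with $a_i \le y_i = b_i$, so any $u \in U_M^i$ with $u \le b_i$ automatically satisfies $u \le a_i$ and therefore lies outside the interval $(a_i, b_i]$. Thus the intersection is empty in each coordinate.

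Applying Lemma~\ref{FirstIsomorphismLemma} to $a = fl_M(y)$ and $b = y$ then gives that $\varphi_M(fl_M(y), y)$ is an isomorphism, which is the assertion. I do not expect any real obstacle here: the lemma is essentially a repackaging of Lemma~\ref{FirstIsomorphismLemma} specialized to the canonical ``floor'' pair $(fl_M(y), y)$, and the only point requiring a moment's care is the bookkeeping around the $-\infty$ convention in the definition of $fl_M$, which the hypothesis $fl_M(y) \in \R^n$ handles.
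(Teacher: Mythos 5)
Your proof is correct and takes precisely the route the paper intends: the paper's own proof consists of the single sentence that the claim is an immediate consequence of Lemma~\ref{FirstIsomorphismLemma}, and your argument supplies exactly the two small checks (that $fl_M(y) \le y$ so the transition map is defined, and that $(fl_M(y)_i, y_i] \cap U_M^i = \emptyset$ by maximality of $fl_M(y)_i$) that make ``immediate'' true.
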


\begin{proof} This too is an immediate consequence of Lemma~\ref{FirstIsomorphismLemma}. \end{proof}
Having stated these preliminary results, we proceed with the proof of Theorem~\ref{InterleavingThm}.  By Lemma~\ref{FirstConsequence} and the finiteness of $U_M$ and $U_N$, there exists $\delta>0$ such that for all $z\in U_M$, $\varphi_N(z+\epsilon ,z+\epsilon+\delta)$ and $\varphi_M(z+2\epsilon ,z+2\epsilon+2\delta)$ are isomorphisms, and for all $z\in U_N$, $\varphi_M(z+\epsilon,z+\epsilon+\delta)$ and $\varphi_N(z+2\epsilon,z+2\epsilon+2\delta)$ are isomorphisms.     

By Remark~\ref{EasyInterleavingRemark}, since $d_I(M,N)=\epsilon$, $M$ and $N$ are $(\epsilon+\delta)$-interleaved.  

Theorem~\ref{InterleavingThm} then follows from the following lemma, which will also be the key ingredient in the proof of Proposition~\ref{PossibilitiesForInterleavingDistance}. 

\begin{lem}\label{SmallerInterleavingLemma}
Let $M$ and $N$ be finitely presented $B_n$-persistence modules and let $\epsilon\geq 0$ and $\delta>0$ be such that
\begin{enumerate*}
\item $M$ and $N$ are $\epsilon+\delta$-interleaved,
\item for all $z\in U_M$, $\varphi_N(z+\epsilon,z+\epsilon+\delta)$ and $\varphi_M(z+2\epsilon,z+2\epsilon+2\delta)$ are isomorphisms,
\item for all $z\in U_N$, $\varphi_M(z+\epsilon,z+\epsilon+\delta)$ and $\varphi_N(z+2\epsilon,z+2\epsilon+2\delta)$ are isomorphisms.
\end{enumerate*}
Then $M$ and $N$ are $\epsilon$-interleaved.
\end{lem}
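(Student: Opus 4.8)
The plan is to build $\epsilon$-interleaving homomorphisms for $M$ and $N$ by ``dividing'' a given pair of $(\epsilon+\delta)$-interleaving homomorphisms by transition maps that hypotheses (2) and (3) make invertible. So fix $(\epsilon+\delta)$-interleaving homomorphisms $f\colon M\to N(\epsilon+\delta)$ and $g\colon N\to M(\epsilon+\delta)$, with $g(\epsilon+\delta)\circ f = S(M,2\epsilon+2\delta)$ and $f(\epsilon+\delta)\circ g = S(N,2\epsilon+2\delta)$.

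First I would define a homomorphism $f'\colon M\to N(\epsilon)$ satisfying $S(N(\epsilon),\delta)\circ f' = f$. Pick a minimal presentation $\langle G\mid R\rangle$ of $M$, so that the grades of all elements of $G$ and of $R$ lie in $U_M$. For a generator $y\in G$ of grade $c := gr(y)\in U_M$, hypothesis (2) says $\varphi_N(c+\epsilon,c+\epsilon+\delta)$ is an isomorphism, so there is a unique $f'(y)\in N(\epsilon)_c = N_{c+\epsilon}$ with $\varphi_N(c+\epsilon,c+\epsilon+\delta)\bigl(f'(y)\bigr) = f(\overline{y})$, where $\overline{y}$ denotes the image of $y$ in $M$. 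By the universal property of the free module $\langle G\rangle$ this extends to a morphism $\widetilde{f'}\colon\langle G\rangle\to N(\epsilon)$. Writing $q\colon\langle G\rangle\to M$ for the quotient, the morphisms $S(N(\epsilon),\delta)\circ\widetilde{f'}$ and $f\circ q$ agree on $G$ by construction, hence everywhere. In particular, for a relation $r\in R$ of grade $c'\in U_M$ we get $S(N(\epsilon),\delta)\bigl(\widetilde{f'}(r)\bigr) = f(q(r)) = 0$; since the grade-$c'$ component of $S(N(\epsilon),\delta)$ is $\varphi_N(c'+\epsilon,c'+\epsilon+\delta)$, an isomorphism by (2), we conclude $\widetilde{f'}(r)=0$. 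Thus $\widetilde{f'}$ descends to the desired $f'\colon M\to N(\epsilon)$. Symmetrically, using hypothesis (3) and a minimal presentation of $N$, I would construct $g'\colon N\to M(\epsilon)$ with $S(M(\epsilon),\delta)\circ g' = g$.

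It then remains to verify that $f'$ and $g'$ are $\epsilon$-interleaving, i.e.\ $g'(\epsilon)\circ f' = S(M,2\epsilon)$ and $f'(\epsilon)\circ g' = S(N,2\epsilon)$; I check the first, the second being symmetric. Both sides are morphisms $M\to M(2\epsilon)$, so it is enough to check them on a generator $y\in G$ of grade $c\in U_M$, where both take values in $M_{c+2\epsilon}$. Applying the isomorphism $\varphi_M(c+2\epsilon,c+2\epsilon+2\delta)$ (hypothesis (2)) — which is the grade-$c$ component of $S(M(2\epsilon),2\delta)$ — and using $\varphi_M(c+2\epsilon,c+2\epsilon+2\delta)\circ\varphi_M(c,c+2\epsilon)=\varphi_M(c,c+2\epsilon+2\delta)$, reduces the claim to showing that $S(M(2\epsilon),2\delta)\circ g'(\epsilon)\circ f'$ and $S(M,2\epsilon+2\delta)$ agree on $y$. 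To get this I would chain together: the factorization $S(M(2\epsilon),2\delta) = S(M(2\epsilon+\delta),\delta)\circ S(M(2\epsilon),\delta)$; the relation $S(M(2\epsilon),\delta)\circ g'(\epsilon) = g(\epsilon)$, obtained by applying the shift functor $(\cdot)(\epsilon)$ to $S(M(\epsilon),\delta)\circ g' = g$; naturality of the transition homomorphisms as a natural transformation $\mathrm{id}\Rightarrow(\cdot)(\delta)$, which gives $S(M(2\epsilon+\delta),\delta)\circ g(\epsilon) = g(\epsilon+\delta)\circ S(N(\epsilon),\delta)$; and finally $S(N(\epsilon),\delta)\circ f' = f$ together with $g(\epsilon+\delta)\circ f = S(M,2\epsilon+2\delta)$. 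Composing these yields $S(M(2\epsilon),2\delta)\circ g'(\epsilon)\circ f' = S(M,2\epsilon+2\delta)$, so $g'(\epsilon)\circ f' = S(M,2\epsilon)$, and $M$ and $N$ are $\epsilon$-interleaved.

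The routine but fiddly part is the shift-functor bookkeeping in the last paragraph — keeping straight the reindexings $P(v)(w)=P(v+w)$ and the naturality squares. The conceptual crux, though, is the descent step in the second paragraph: it is precisely the requirement that $\widetilde{f'}$ annihilate the relations of $M$ that forces the hypothesis that the relevant transition maps be isomorphisms not merely at generator grades but at all grades in $U_M$ (and, symmetrically, $U_N$) — which is exactly how the hypotheses of the lemma are calibrated.
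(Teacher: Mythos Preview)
Your proof is correct and takes a genuinely different route from the paper's. The paper defines the candidate $\epsilon$-interleaving morphism $\tilde f$ directly on every homogeneous summand: for $z\in U_M$ it sets $\tilde f_z=\varphi_N^{-1}(z+\epsilon,z+\epsilon+\delta)\circ f_z$, and for general $z$ it extends via a ``floor'' function $fl_M(z)$ sending $z$ to the nearest element of $\prod_i U_M^i$ below it, using Lemma~\ref{SecondConsequence} to invert $\varphi_M(fl_M(z),z)$. It then carries out explicit componentwise computations to check that $\tilde f$ really is a morphism and that the interleaving relations hold, first at points of $U_M$ and then at arbitrary $z$. Your approach instead works with a minimal presentation $\langle G\mid R\rangle$: you define $f'$ only on generators, extend by freeness, and use hypothesis~(2) at the grades of the relations to show the map descends to $M$; the interleaving identity is then established globally via naturality of transition morphisms and checked on generators. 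What your route buys is that the morphism property comes for free from the universal property of $\langle G\rangle$, and the shift-functor bookkeeping replaces the paper's lengthy index-chasing with $fl_M$ and $fl_N$; what the paper's route buys is that it is entirely self-contained at the level of vector-space components and makes no appeal to presentations or freeness.
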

\begin{proof} Let $f:M\to N(\epsilon+\delta)$ and $g:N\to M(\epsilon+\delta)$ be interleaving homomorphisms.  

We define $\epsilon$-interleaving homomorphisms ${\tilde f}:M\to N(\epsilon)$ and ${\tilde g}:N\to M(\epsilon)$ via their action on homogeneous summands.  First, for $z\in U_M$ define ${\tilde f}_z=\varphi^{-1}_N(z+\epsilon,z+\epsilon+\delta) \circ f_z$.  Then for arbitrary $z\in \R^n$ such that $fl_M(z)\in \R^n$ define ${\tilde f}_z=\varphi_N(fl_M(z)+\epsilon,z+\epsilon)\circ {\tilde f}_{fl_M(z)} \circ \varphi^{-1}_M(fl_M(z),z)$.  (Note that $\varphi^{-1}_M(fl_M(z),z)$ is well defined by Lemma~\ref{SecondConsequence}.)  Finally, for $z\in \R^n$ s.t. $fl_M(z)\not\in \R^n$, define ${\tilde f}_z=0$.  (If $fl_M(z)\not\in \R^n$ then $M_z=0$, so this last part of the definition is reasonable.)

Symmetrically, for $z\in U_N$ define ${\tilde g}_z=\varphi^{-1}_M(z+\epsilon,z+\epsilon+\delta) \circ g_z$.  For arbitrary $z\in \R^n$ such that $fl_N(z)\in\R^n$ define ${\tilde g}_z=\varphi_M(fl_N(z)+\epsilon,z+\epsilon)\circ {\tilde g}_{fl_N(z)} \circ \varphi^{-1}_N(fl_N(z),z)$.  For $z\in \R^n$ s.t. $fl_N(z)\not\in \R^n$, define ${\tilde g}_z=0$.   

We need to check that ${\tilde f},{\tilde g}$ as thus defined are in fact morphisms.  We perform the check for ${\tilde f}$; the check for ${\tilde g}$ is the same.

If $y\in \R^n$ is such that $fl_M(y)\not\in \R^n$, then since $M_y=0$, it's clear that ${\tilde f}_z \circ \varphi_M(y,z)=\varphi_N(y+\epsilon,z+\epsilon)\circ {\tilde f}_y$.

For $y\leq z\in \R^n$ such that $fl_M(y)\in \R^n$,
\begin{align*}
{\tilde f}_z \circ \varphi_M(y,z) &=\varphi_N(fl_M(z)+\epsilon,z+\epsilon)\circ {\tilde f}_{fl_M(z)} \circ \varphi^{-1}_M(fl_M(z),z)\circ \varphi_M(y,z)\\
&=\varphi_N(fl_M(z)+\epsilon,z+\epsilon)\circ \varphi^{-1}_N(fl_M(z)+\epsilon,fl_M(z)+\epsilon+\delta) \circ f_{fl_M(z)}\\
&\quad \circ \varphi^{-1}_M(fl_M(z),z)\circ \varphi_M(y,z)\\
&=\varphi_N(fl_M(z)+\epsilon,z+\epsilon)\circ \varphi^{-1}_N(fl_M(z)+\epsilon,fl_M(z)+\epsilon+\delta) \circ f_{fl_M(z)} \\ 
&\quad \circ \varphi_M(fl_M(y),fl_M(z))\circ \varphi^{-1}_M(fl_M(y),y)\\
&=\varphi_N(fl_M(z)+\epsilon,z+\epsilon)\circ \varphi^{-1}_N(fl_M(z)+\epsilon,fl_M(z)+\epsilon+\delta) \\
&\quad \circ \varphi_N(fl_M(y)+\epsilon+\delta,fl_M(z)+\epsilon+\delta) \circ f_{fl_M(y)}  \circ \varphi^{-1}_M(fl_M(y),y)\\
&=\varphi_N(y+\epsilon,z+\epsilon) \circ \varphi_N(fl_M(y)+\epsilon,y+\epsilon) \\ 
&\quad \circ \varphi_N^{-1}(fl_M(y)+\epsilon,fl_M(y)+\epsilon+\delta) \circ f_{fl_M(y)}  \circ \varphi^{-1}_M(fl_M(y),y)\\
&=\varphi_N(y+\epsilon,z+\epsilon)\circ {\tilde f}_y
\end{align*}
as desired.  

To finish the proof, we need to check that ${\tilde g}\circ {\tilde f}=S(M,2 \epsilon)$ and ${\tilde f}\circ {\tilde g}=S(N,2 \epsilon)$.  We perform the first check;  the second check is the same.

For $z\in \R^n$, if $fl_M(z)\not\in \R^n$ then since $M_z=0$, ${\tilde g}_{z+\epsilon}\circ {\tilde f}_z=0=\varphi_M(z,z+2\epsilon )$.

To show that the result also holds for $z$ such that $fl_M(z)\in \R^n$, we'll begin by verifying the result for $z\in U_M$.  We'll use this special case in proving the result for arbitrary $z\in \R^n$ such that $fl_M(z)\in \R^n$.  

If $z\in U_M$ then, by assumption, $\varphi_M(z+2\epsilon,z+2\epsilon+2\delta)$ is an isomorphism.  Thus, to show that ${\tilde g}_{z+\epsilon}\circ {\tilde f}_z=\varphi_M(z,z+2\epsilon )$, it suffices to show that $\varphi_M(z+2\epsilon,z+2\epsilon+2\delta) \circ {\tilde g}_{z+\epsilon}\circ{\tilde f}_z=\varphi_M(z,z+2\epsilon+2\delta)$.  

For $z\in U_M$, we have 
\begin{align*}
{\tilde g}_{z+\epsilon}\circ {\tilde f}_z&=\varphi_M(fl_N(z+\epsilon)+\epsilon,z+2\epsilon)\circ {\tilde g}_{fl_N(z+\epsilon)} \circ \varphi^{-1}_N(fl_N(z+\epsilon),z+\epsilon)\circ {\tilde f}_z \\
&=\varphi_M(fl_N(z+\epsilon)+\epsilon,z+2\epsilon)\circ \varphi^{-1}_M(fl_N(z+\epsilon)+\epsilon,fl_N(z+\epsilon)+\epsilon+\delta)\\
&\quad \circ g_{fl_N(z+\epsilon)} \circ \varphi^{-1}_N(fl_N(z+\epsilon),z+\epsilon)\circ {\tilde f}_z\\
&=\varphi_M(fl_N(z+\epsilon)+\epsilon,z+2\epsilon)\circ \varphi^{-1}_M(fl_N(z+\epsilon)+\epsilon,fl_N(z+\epsilon)+\epsilon+\delta)\\
&\quad \circ g_{fl_N(z+\epsilon)} \circ \varphi^{-1}_N(fl_N(z+\epsilon),z+\epsilon)\circ \varphi^{-1}_N(z+\epsilon,z+\epsilon+\delta) \circ f_z.
\end{align*}
Thus 
\begin{align*}
&\varphi_M(z+2\epsilon,z+2\epsilon+2\delta)\circ {\tilde g}_{z+\epsilon}\circ {\tilde f}_z \\
&=\varphi_M(z+2\epsilon,z+2\epsilon+\delta)\circ \varphi_M(fl_N(z+\epsilon)+\epsilon,z+2\epsilon)\\
&\quad \circ \varphi^{-1}_M(fl_N(z+\epsilon)+\epsilon,fl_N(z+\epsilon)+\epsilon+\delta) \circ g_{fl_N(z+\epsilon)}\\
&\quad\circ \varphi^{-1}_N(fl_N(z+\epsilon),z+\epsilon)\circ \varphi^{-1}_N(z+\epsilon,z+\epsilon+\delta) \circ f_z\\
&=\varphi_M(z+2\epsilon+\delta,z+2\epsilon+2\delta) \circ \varphi_M(fl_N(z+\epsilon)+\epsilon+\delta,z+2\epsilon+\delta) \\ 
&\quad \circ g_{fl_N(z+\epsilon)} \circ\varphi^{-1}_N(fl_N(z+\epsilon),z+\epsilon)\circ \varphi^{-1}_N(z+\epsilon,z+\epsilon+\delta) \circ f_z\\ 
&=\varphi_M(z+2\epsilon+\delta,z+2\epsilon+2\delta) \circ
g_{z+\epsilon} \circ \varphi_N(fl_N(z+\epsilon),z+\epsilon) \\ 
&\quad \circ \varphi^{-1}_N(fl_N(z+\epsilon),z+\epsilon)\circ \varphi^{-1}_N(z+\epsilon,z+\epsilon+\delta) \circ f_z\\
&=g_{z+\epsilon+\delta} \circ \varphi_N(z+\epsilon,z+\epsilon+\delta) \circ \varphi^{-1}_N(z+\epsilon,z+\epsilon+\delta) \circ f_z \\
&=g_{z+\epsilon+\delta}\circ f_z\\
&=\varphi(z,z+2\epsilon+2\delta) 
\end{align*} 
as desired.

Finally, for arbitrary $z\in \R^n$ such that $fl_M(z)\not\in \R^n$, we have, using that ${\tilde g}$ is a morphism, 
\begin{align*}
{\tilde g}_{z+\epsilon}\circ {\tilde f}_z&=\varphi_M(fl_N(z+\epsilon)+\epsilon,z+2\epsilon)\circ {\tilde g}_{fl_N(z+\epsilon)}  \\
&\quad \circ \varphi^{-1}_N(fl_N(z+\epsilon),z+\epsilon)\circ \varphi_N(fl_M(z)+\epsilon,z+\epsilon)\circ {\tilde f}_{fl_M(z)} \circ \varphi^{-1}_M(fl_M(z),z)\\
&={\tilde g}_{z+\epsilon}\circ\varphi_N(fl_N(z+\epsilon),z+\epsilon) \circ \varphi^{-1}_N(fl_N(z+\epsilon),z+\epsilon)\\
&\quad \circ \varphi_N(fl_M(z)+\epsilon,z+\epsilon)\circ {\tilde f}_{fl_M(z)} \circ \varphi^{-1}_M(fl_M(z),z)\\
&={\tilde g}_{z+\epsilon}\circ\varphi_N(fl_M(z)+\epsilon,z+\epsilon)\circ {\tilde f}_{fl_M(z)} \circ \varphi^{-1}_M(fl_M(z),z)\\
&=\varphi_M(fl_M(z)+2\epsilon,z+2\epsilon)\circ{\tilde g}_{fl_M(z)+\epsilon}\circ {\tilde f}_{fl_M(z)} \circ \varphi^{-1}_M(fl_M(z),z)\\
&=\varphi_M(fl_M(z)+2\epsilon,z+2\epsilon)\circ \varphi_M(fl_M(z),fl_M(z)+2\epsilon)\circ \varphi^{-1}_M(fl_M(z),z)\\
&=\varphi_M(z,z+2\epsilon) \end{align*} as we wanted.\end{proof}

This completes the proof of Theorem~\ref{InterleavingThm}.
\end{proof}

\begin{remark} As noted in Remark~\ref{MultidimensionExtension}, the notion of a well behaved persistence module admits a generalization to the multi-dimensional setting.  An interesting question is whether Theorem~\ref{InterleavingThm} generalizes to well behaved multidimensional persistence modules; if it does, then we obtain corresponding generalizations of Corollaries~\ref{MetricCorollary} and~\ref{Cor:Converse}.  Our proof of Theorem~\ref{InterleavingThm} does not generalize directly.
\end{remark}

  \section{An Extrinsic Characterization of Interleaved Pairs of Multidimensonal Persistence Modules}

In this section, we introduce $(J_1,J_2)$-interleavings of pairs of $B_n$-persistence modules, a generalization of $\epsilon$-interleavings of pairs of $B_n$-persistence modules.  These generalized interleavings serve as a convenient language for expressing nuanced relationships between $B_n$-persistence modules which arise in our study of topological inference using Rips multifiltrations in Section~\ref{Sec:RipsAsymptotics}.

After defining generalized interleavings, we present Theorem~\ref{GeneralAlgebraicRealization}, our ``extrinsic" characterization of $(J_1,J_2)$-interleaved pairs of persistence modules; as noted in the introduction, this result expresses transparently the sense in which $(J_1,J_2)$-interleaved persistence modules are algebraically similar.  In particular, Theorem~\ref{GeneralAlgebraicRealization} gives an algebraically transparent characterization of $\epsilon$-interleaved pairs of modules, which we write down as Corollary~\ref{AlgebraicRealization}.  This characterization induces in an obvious way a corresponding characterization of the interleaving distance.  It is also the most important step in our proof of our main optimality result Corollary~\ref{CorOptimality}.  

\subsection{$(J_1,J_2)$-Interleavings}\label{Sec:J1J2Interleavings}
In Section~\ref{ShiftsOfModules}, we introduced shift functors and transition morphisms on $B_n$-persistence modules.  To prepare for the definition of $(J_1,J_2)$-interleavings, we define {generalized shift functors} and {generalized transition morphisms}.  

\subsubsection{Generalized Shift Functors} 

We say that a bijection $J:\R^n\to \R^n$ is {\bf order-preserving} if $\forall$ $a,b\in \R^n$, $a\leq b$ iff $J(a)\leq J(b)$.  For any order-preserving map $J:\R^n\to\R^n$, we define the {\bf generalized shift functor} $(\cdot)(J):B_n$-mod$\to B_n$-mod, as follows:
\begin{enumerate}
\item Action of $(\cdot)(J)$ on objects: For $M \in \obj(B_n$-mod$)$, we define $M(J)$ by taking $M(J)_a=M_{J(a)}$ for all $a\in \R^n$.  For all $a\leq b\in \R^n$ we take the transition map $\varphi_{M(J)}(a,b)$ to be the map $\varphi_{M}(J(a),J(b))$.
\item  Action of $(\cdot)(J)$ on morphisms: For $M,N\in \obj(B_n$-mod$)$ and $f\in \hom(M,N)$, we define $f(J):M(J)\to N(J)$ to be the homomorphism for which $f(J)_a=f_{J(a)}$ for all $a\in \R^n$.
\end{enumerate}

For $u\in \R^n$, let $J_u:\R^n\to \R^n$ be the map defined by $J_u(a)=a+u$, and for $\epsilon\in \R$, let $J_\epsilon$ denote $J_{\vec \epsilon_n}$.  $(\cdot)(J_u)$ is equal to the shift functor $(\cdot )(u)$ introduced in Section~\ref{ShiftsOfModules} and $(\cdot)(J_\epsilon)$ is equal to the shift functor $(\cdot )(\epsilon)$ introduced in the same section.  

As in the case of ordinary shift functors, for a morphism $f$ and $J$ an order-preserving map, we will sometimes abuse notation and write $f(J)$ simply as $f$.

Note the following contravariance property of generalized shift functors: If $J_1$ and $J_2$ are order-preserving, then \[(\cdot)(J_2\circ J_1)=(\cdot)(J_1)\circ (\cdot)(J_2).\]

\subsubsection{Generalized Transition Homomorphisms of $B_n$-persistence Modules}

Say an order-preserving map $J:\R^n\to\R^n$ is {\bf increasing} if $J(a)\geq a$ for all $a\in \R^n$.  For a $B_n$-persistence module $M$ and $J$ an increasing map, let $S(M,J):M\to M(J)$, the {\bf $J$-transition homomorphism}, be the homomorphism whose restriction to $M_a$ is the linear map $\varphi_M(a,J(a))$ for all $a\in \R^n$.  Note that for $\epsilon\in \R_{\geq 0}$, $S(M,\epsilon)=S(M,J_\epsilon)$, where $S(M,\epsilon)$ is as defined in Section~\ref{ShiftsOfModules}.  

The following lemma gives three easy and useful identities for generalized transition morphisms.  Of particular note is Lemma~\ref{lem:TransitionMorphismsAndShiftsForMods}(i).
\begin{lem}\label{lem:TransitionMorphismsAndShiftsForMods}\mbox{}
\begin{enumerate*}
\item[(i)] For any $f:M\to N \in \hom(B_n$-{\rm mod}) and any $J:\R^n\to \R^n$ increasing, \[S(N,J)\circ f=f(J)\circ S(M,J).\]
\item[(ii)] For any $J,J'$ increasing and $B_n$-persistence module $M$,
\[S(M,J')(J)\circ S(M,J)=S(M,J'\circ J).\]
\item[(iii)] For any $J,J'$ increasing and $B_n$-persistence module $M$,
\[S(M,J')(J)=S(M(J),J^{-1} \circ J'\circ J).\]
\end{enumerate*}
\end{lem}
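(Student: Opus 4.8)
The plan is to verify each of the three identities by evaluating both sides on an arbitrary homogeneous summand $M_a$ and checking that the resulting linear maps agree, using only the definitions of the generalized shift functor $(\cdot)(J)$, the $J$-transition homomorphism $S(M,J)$, and the basic functoriality $\varphi_M(b,c)\circ\varphi_M(a,b)=\varphi_M(a,c)$ for transition maps. All three parts are routine, so I would present a short unified proof; the only mildly delicate point is keeping track of which grade a summand sits in after a shift, so I would fix that bookkeeping once at the outset: for any order-preserving $J$, the summand $M(J)_a$ equals $M_{J(a)}$, and a morphism $h:P\to Q$ satisfies $h(J)_a=h_{J(a)}:M(J)_a=M_{J(a)}\to N_{J(a)}=N(J)_a$.

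For part (i), I would take $a\in\R^n$ and compute the $a$-component of each side. On the one hand $\bigl(S(N,J)\circ f\bigr)_a = S(N,J)_a\circ f_a = \varphi_N(a,J(a))\circ f_a$. On the other hand $\bigl(f(J)\circ S(M,J)\bigr)_a = f(J)_a\circ S(M,J)_a = f_{J(a)}\circ\varphi_M(a,J(a))$. These two linear maps $M_a\to N_{J(a)}$ agree because $f$ is a morphism of persistence modules, i.e. $f_{J(a)}\circ\varphi_M(a,J(a)) = \varphi_N(a,J(a))\circ f_a$; this is exactly the naturality square for $f$ with respect to the pair $a\le J(a)$ (which holds since $J$ is increasing).

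For part (ii), again fix $a$. The left side has $a$-component $S(M,J')(J)_a\circ S(M,J)_a = S(M,J')_{J(a)}\circ\varphi_M(a,J(a)) = \varphi_M(J(a),J'(J(a)))\circ\varphi_M(a,J(a))$, which by functoriality of transition maps equals $\varphi_M(a,J'(J(a))) = \varphi_M(a,(J'\circ J)(a)) = S(M,J'\circ J)_a$. Note $J'\circ J$ is increasing since $(J'\circ J)(a)=J'(J(a))\ge J(a)\ge a$, so the right side is well-defined. For part (iii), I would similarly evaluate: the $a$-component of the left side is $S(M,J')(J)_a = S(M,J')_{J(a)} = \varphi_M(J(a),J'(J(a)))$, a map $M_{J(a)}\to M_{J'(J(a))}$. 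The $a$-component of the right side is $S(M(J),J^{-1}\circ J'\circ J)_a = \varphi_{M(J)}(a,(J^{-1}\circ J'\circ J)(a)) = \varphi_M(J(a),J(J^{-1}(J'(J(a)))))=\varphi_M(J(a),J'(J(a)))$, using the definition $\varphi_{M(J)}(a,b)=\varphi_M(J(a),J(b))$ and $J\circ J^{-1}=\id$. One should also check $J^{-1}\circ J'\circ J$ is increasing so that $S(M(J),-)$ makes sense: if $b=J(a)$ then $J'(b)\ge b$, and applying the order-preserving bijection $J^{-1}$ gives $(J^{-1}\circ J')(b)\ge J^{-1}(b)=a$, i.e. $(J^{-1}\circ J'\circ J)(a)\ge a$. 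This accounts for all three identities, and the domains/codomains match on both sides by the bookkeeping fixed at the start, so the proof is complete.

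I do not anticipate any genuine obstacle here: the statement is a collection of formal identities that unwind directly from definitions. If anything, the ``hard part'' is purely expository — choosing notation so that the composite shifts and their grade-relabelings are unambiguous — rather than mathematical, which is presumably why the lemma is stated without much fanfare and used as a convenience in the sequel.
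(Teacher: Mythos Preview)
Your proof is correct and follows exactly the same approach as the paper: evaluate each side at an arbitrary grade $a$ and unwind the definitions of $(\cdot)(J)$, $S(M,J)$, and $\varphi_M$. The paper only writes out the computation for (iii) and leaves (i) and (ii) to the reader, but your explicit verifications of those are precisely what is intended.
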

\begin{proof} We'll prove (iii) and leave the proofs of (i) and (ii) to the reader.
For any $a\in \R^n$,
\begin{align*}
 S(M,J')(J)_a&=S(M,J')_{J(a)}\\
 =\varphi_M(J(a),J'\circ J(a))&=\varphi_M(J(a),J\circ J^{-1} \circ J'\circ J(a))\\
 =\varphi_{M(J)}(a,J^{-1} \circ J'\circ J(a))&=S(M(J),J^{-1} \circ J'\circ J)_a.\qedhere
\end{align*}
\end{proof}

\subsubsection{$(J_1,J_2)$-Interleavings of $B_n$-Persistence Modules} 

For $J_1,J_2:\R^n\to \R^n$ increasing, we say an ordered pair of $B_n$-persistence modules $(M,N)$ is {\bf $(J_1,J_2)$-interleaved} if there exist homomorphisms $f:M\to N(J_1)$ and $g:N\to M(J_2)$ such that 
\begin{align*}
g(J_1)\circ f&=S(M,J_2\circ J_1)\textup{ and}\\
f(J_2)\circ g&=S(N,J_1\circ J_2);
\end{align*} 
we say that $(f,g)$ is a pair of {\bf $(J_1,J_2)$-interleaving homomorphisms} for $(M,N)$.

Note the asymmetry of $(J_1,J_2)$-interleavings.  If $(M,N)$ is $(J_1,J_2)$-interleaved, it needn't be true that $(M,N)$ is $(J_2,J_1)$-interleaved.  It is however true that $(N,M)$ is $(J_2,J_1)$-interleaved.

\begin{remark}\label{Rem:General_Interleaving_Terminology_Remark}
A note on terminology: When a pair $(M,N)$ of $B_n$-persistence modules is $(J_1,J_2)$-interleaved, we will often say simply that $M,N$ are $(J_1,J_2)$-interleaved, or that $M$ and $N$ are $(J_1,J_2)$-interleaved.  Similarly, when $(f,g)$ is a pair of {\bf $(J_1,J_2)$-interleaving homomorphisms} for $(M,N)$, we will often say simply that $f,g$ are {$(J_1,J_2)$-interleaving homomorphisms} for $M,N$.
\end{remark}

Note that $M$ and $N$ are $\epsilon$-interleaved if and only if the pair $(M,N)$ is $(J_\epsilon,J_\epsilon)$-interleaved.  


\subsection{A Characterization of $(J_1,J_2)$-interleaved Pairs of Modules}\label{FirstPartOfProof}

We now present our characterization of pairs of $(J_1,J_2)$-interleaved $B_n$-persistence modules.

\subsubsection{Notation}
To state the theorem we need some notation. 

If $G=({\bar G},\iota_G)$ an $n$-graded set, and $J:\R^n\to\R^n$ is an order-preserving map, let $G(J)$ denote the $n$-graded set $({\bar G},\iota'_G)$, where $\iota'_G(y)=J^{-1}(\iota(y))$.  Note that if $J_1$ and $J_2$ are order-preserving, then $G(J_2 \circ J_1)=(G(J_2))(J_1)$.  

Similarly, if $M$ is a $B_n$-persistence module and $Q\subset M$ is a homogeneous subset, let $Q(J)\subset M(J)$ denote the image of $Q$ under the bijection between $M$ and $M(J)$ induced by the identification of each summand $M(J)_a$ with $M_{J(a)}$.    Note that if $J_1$ and $J_2$ are order-preserving, then $Q(J_2 \circ J_1)=(Q(J_2))(J_1)$.

\begin{remark}\label{ShiftInclusionRemark} For any $n$-graded set $G$ and $J:\R^n\to \R^n$ an increasing map, the homomorphism $S(\langle G(J^{-1}) \rangle,J):\langle G(J^{-1}) \rangle \to \langle G \rangle$ is injective, and so gives an identification of $\langle G(J^{-1}) \rangle$ with a submodule of $\langle G \rangle$.  More generally, if $G_1$ and $G_2$ are $n$-graded sets, we obtain in the obvious way an identification of $\langle G_1,G_2(J^{-1}) \rangle$ with a submodule of $\langle G_1,G_2 \rangle$.  In particular, for any $\epsilon\geq 0$, we obtain an identification of $\langle G(-\epsilon) \rangle$ with a submodule of $\langle G \rangle$ and, more generally, of $\langle G_1,G_2(-\epsilon) \rangle$ with a submodule of $\langle G_1,G_2 \rangle$.
\end{remark}

\begin{thm}\label{GeneralAlgebraicRealization} Let $M$ and $N$ be $B_n$-persistence modules.  For any $J_1,J_2:\R^n\to\R^n$ increasing, $(M,N)$ is $(J_1,J_2)$-interleaved if and only if there exist $n$-graded sets $\W_1,\W_2$ and homogeneous sets $\Y_1,\Y_2 \subset \langle\W_1,\W_2\rangle$ such that $\Y_1\in \langle\W_1,\W_2(J_2^{-1})\rangle$, $\Y_2\in \langle\W_1(J_1^{-1}),\W_2\rangle$, and
\begin{align*} 
M &\cong \langle\W_1,\W_2(J_2^{-1})|\Y_1,\Y_2(J_2^{-1})\rangle, \\ 
N&\cong \langle\W_1(J_1^{-1}),\W_2|\Y_1(J_1^{-1}),\Y_2\rangle. \end{align*}
If $M$ and $N$ are finitely presented, then $\W_1,\W_2,\Y_1,\Y_2$ can be taken to be finite. \end{thm}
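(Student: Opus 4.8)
The plan is to prove both implications directly by translating interleaving data into presentation data and back. For the forward direction, suppose $(M,N)$ is $(J_1,J_2)$-interleaved via homomorphisms $f:M\to N(J_1)$ and $g:N\to M(J_2)$. I would start by choosing free covers: pick an $n$-graded set $\W_1$ and a surjection $\rho_M:\langle\W_1\rangle\to M$, and similarly $\W_2$ with $\rho_N:\langle\W_2\rangle\to N$. The key move is to build a \emph{single} free module $\langle\W_1,\W_2\rangle$ into which presentations of both $M$ and $N$ embed. Using Lemma~\ref{ExistenceAndHomotopyUniquenessOfLifts}, lift $f$ and $g$ to morphisms between the free covers; combined with the generators of $\W_1$ and $\W_2$ and the shift identifications of Remark~\ref{ShiftInclusionRemark}, this produces a map $\langle\W_1,\W_2(J_2^{-1})\rangle\to M$ that is surjective (since $\W_1$ already generates $M$) and whose behavior on the $\W_2(J_2^{-1})$ part is governed by $g$; symmetrically for $N$. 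The relations $\Y_1,\Y_2$ are then read off: $\Y_1$ records the relations of $M$ together with the discrepancy between the lifted $f$ and the actual generators of $N$, while $\Y_2$ records the relations of $N$ together with the discrepancy for $g$. The interleaving identities $g(J_1)\circ f=S(M,J_2\circ J_1)$ and $f(J_2)\circ g=S(N,J_1\circ J_2)$ are exactly what is needed to check that these same relation sets (suitably shifted) present both modules, and that the grading constraints $\Y_1\in\langle\W_1,\W_2(J_2^{-1})\rangle$, $\Y_2\in\langle\W_1(J_1^{-1}),\W_2\rangle$ hold.

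For the converse, suppose $M$ and $N$ have the stated presentations. I would define $f:M\to N(J_1)$ and $g:N\to M(J_2)$ on the level of free modules first: the inclusion $\langle\W_1,\W_2(J_2^{-1})\rangle\hookrightarrow\langle\W_1,\W_2\rangle\to\langle\W_1(J_1^{-1}),\W_2\rangle(J_1)$ (the last map being the appropriate transition/shift map on the $\W_1$ summand together with the identity on $\W_2$) descends to a well-defined map on quotients precisely because $\Y_1\in\langle\W_1,\W_2(J_2^{-1})\rangle$ and $\Y_2(J_2^{-1})$ map into the relations of $N(J_1)$. One then checks the two composite identities hold; this is a diagram chase on the free modules, using that the composite $\langle\W_1,\W_2(J_2^{-1})\rangle\to\langle\W_1(J_1^{-1}),\W_2(J_2^{-1})\rangle\to\langle\W_1,\W_2\rangle(J_2\circ J_1)$ of the two natural inclusions equals $S(\langle\W_1,\W_2(J_2^{-1})\rangle,J_2\circ J_1)$, together with naturality of transition homomorphisms (Lemma~\ref{lem:TransitionMorphismsAndShiftsForMods}(i)) and the contravariance property $(\cdot)(J_2\circ J_1)=(\cdot)(J_1)\circ(\cdot)(J_2)$.

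The finiteness claim follows because if $M$ and $N$ are finitely presented, we may take $\W_1,\W_2$ finite (finitely many generators) in the forward construction; the relation sets $\Y_1,\Y_2$ are then built from finitely many relations of $M$ and $N$ plus finitely many generators' worth of discrepancy data, hence finite. One should note that $\langle\W_1,\W_2\rangle$ is Noetherian only in the $\mathbf{A}_n$ case, not for $B_n$ in general, so finiteness must be tracked explicitly through the construction rather than inferred; but since everything is assembled from finite input, this is straightforward.

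The main obstacle I expect is bookkeeping: carefully defining the maps between the various shifted free modules so that they descend to the quotients, and verifying the two interleaving identities without getting lost in the shift functors. The conceptual content is modest — it is the "extrinsic" analogue of writing a Gromov–Hausdorff near-isometry as an honest pair of embeddings into a common space — but the notation for generalized shifts $(\cdot)(J)$, their contravariance, and the interaction of $S(-,J)$ with submodule inclusions (Remark~\ref{ShiftInclusionRemark}) makes the verification delicate. A secondary subtlety is ensuring the constructed presentation of $M$ really uses \emph{only} $\W_1$ and $\W_2(J_2^{-1})$ as generators (not all of $\W_2$), which forces the argument to route the image of $g$ through the transition map $S(\langle\W_2\rangle,\cdot)$ appropriately; getting this grading constraint exactly right is where the increasing hypothesis on $J_1,J_2$ is essential.
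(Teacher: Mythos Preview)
Your approach is correct and matches the paper's: choose presentations $\langle G_M|R_M\rangle$, $\langle G_N|R_N\rangle$, lift $f,g$ to $\tilde f,\tilde g$, and take $\W_1=G_M$, $\W_2=G_N$, with the relation sets built from the original relations together with ``discrepancy'' elements $y-\tilde f(y)$ and $y-\tilde g(y)$. Two small points. First, you have the discrepancies swapped: in the paper $\Y_1=R_M\cup\{y-\tilde g(y)\}_{y\in G_N(J_2^{-1})}$ and $\Y_2=R_N\cup\{y-\tilde f(y)\}_{y\in G_M(J_1^{-1})}$, so $\Y_1$ carries the $g$-discrepancy and $\Y_2$ the $f$-discrepancy (check the grading constraints to see why). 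Second, the step you flag as ``bookkeeping'' is actually the substantive one: it is easy to see the proposed relations lie in the kernel of $\langle\W_1,\W_2(J_2^{-1})\rangle\to M$, but showing they \emph{generate} the kernel is not automatic. The paper isolates this as a separate lemma (Lemma~\ref{ChazalInterpolationLemma}, generalizing a construction of Chazal et al.): one first proves $M\cong (M\oplus N(J_2^{-1}))/R$ where $R$ is generated by the images of two explicit maps $\gamma_1,\gamma_2$, and the interleaving identity $g\circ f=S(M,J_2\circ J_1)$ is used there to show $\im(\gamma_1)\subset\im(\gamma_2)$, hence injectivity of the natural map $M\to(M\oplus N(J_2^{-1}))/R$. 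The lift to free covers then reduces to checking that the discrepancy sets map onto $\im(\gamma_1),\im(\gamma_2)$, after which the third isomorphism theorem finishes. Your outline would get there, but this intermediate quotient is what makes the kernel computation clean rather than delicate.
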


The following corollary is immediate.

\begin{cor}\label{AlgebraicRealization} Let $M$ and $N$ be $B_n$-persistence modules.  For any $\epsilon\in \R_{\geq 0}$, $M$ and $N$ are $\epsilon$-interleaved if and only if there exist $n$-graded sets $\W_1,\W_2$ and homogeneous sets $\Y_1,\Y_2 \subset \langle\W_1,\W_2\rangle$ such that $\Y_1\in \langle\W_1,\W_2(-\epsilon)\rangle$, $\Y_2\in \langle\W_1(-\epsilon),\W_2\rangle$, 
\begin{align*} 
M &\cong \langle\W_1,\W_2(-\epsilon)|\Y_1,\Y_2(-\epsilon)\rangle \\ 
N&\cong \langle\W_1(-\epsilon),\W_2|\Y_1(-\epsilon),\Y_2\rangle. \end{align*}
If $M$ and $N$ are finitely presented, then $\W_1,\W_2,\Y_1,\Y_2$ can be taken to be finite. \end{cor}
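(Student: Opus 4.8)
The plan is to prove the biconditional in the style of the extrinsic characterization of the Gromov--Hausdorff distance: from a common ambient free module one manufactures a pair of interleaving homomorphisms, and from a given pair of interleaving homomorphisms one manufactures the common ambient data $\W_1,\W_2,\Y_1,\Y_2$. Before starting I would record three elementary facts on which all the grade bookkeeping rests: that a generalized shift of a free persistence module is again free, $\langle G\rangle(J)\cong\langle G(J)\rangle$; that $(G(J))(J')=G(J'\circ J)$ for order-preserving $J,J'$; and that $J_1\circ J_2\geq\id$ and $J_2\circ J_1\geq\id$ whenever $J_1,J_2$ are increasing. I will freely use Remark~\ref{ShiftInclusionRemark} (inclusions of shifted free submodules), Lemma~\ref{lem:TransitionMorphismsAndShiftsForMods} (identities for generalized transition morphisms), and Lemma~\ref{ExistenceAndHomotopyUniquenessOfLifts} (existence and homotopy-uniqueness of lifts to free covers).

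\emph{The ``if'' direction.} Given $\W_1,\W_2,\Y_1,\Y_2$ as in the statement, put $F=\langle\W_1,\W_2\rangle$ and, via Remark~\ref{ShiftInclusionRemark}, regard $P_M:=\langle\W_1,\W_2(J_2^{-1})\rangle$ and $P_N:=\langle\W_1(J_1^{-1}),\W_2\rangle$ as submodules of $F$; the containment hypotheses on $\Y_1,\Y_2$ together with the fact that $J_2\circ J_1$ and $J_1\circ J_2$ are increasing guarantee $\langle\Y_1,\Y_2(J_2^{-1})\rangle\subseteq P_M$ and $\langle\Y_1(J_1^{-1}),\Y_2\rangle\subseteq P_N$, so that $M\cong P_M/\langle\Y_1,\Y_2(J_2^{-1})\rangle$ and $N\cong P_N/\langle\Y_1(J_1^{-1}),\Y_2\rangle$. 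Because $J_1\circ J_2\geq\id$, the free cover $\langle\W_1,\W_2(J_1)\rangle$ of $N(J_1)$ contains $P_M$, and the inclusion $\iota_f\colon P_M\hookrightarrow\langle\W_1,\W_2(J_1)\rangle$ carries $\langle\Y_1,\Y_2(J_2^{-1})\rangle$ into the relation submodule of $N(J_1)$, hence descends to a homomorphism $f\colon M\to N(J_1)$; symmetrically (using $J_2\circ J_1\geq\id$) one obtains $g\colon N\to M(J_2)$ from an inclusion $\iota_g$. To see $g(J_1)\circ f=S(M,J_2\circ J_1)$, note that $\iota_g(J_1)\circ\iota_f$ lifts $g(J_1)\circ f$ and, directly from the definitions of $\iota_f$ and $\iota_g$, equals the transition morphism $S(P_M,J_2\circ J_1)$, which by Lemma~\ref{lem:TransitionMorphismsAndShiftsForMods}(i) descends to $S(M,J_2\circ J_1)$; the identity $f(J_2)\circ g=S(N,J_1\circ J_2)$ is dual. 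Finite $\W_i,\Y_i$ visibly give finite presentations.

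\emph{The ``only if'' direction.} Fix $(J_1,J_2)$-interleaving homomorphisms $f\colon M\to N(J_1)$, $g\colon N\to M(J_2)$, choose free covers $\rho_M\colon F_M\to M$, $\rho_N\colon F_N\to N$, and by Lemma~\ref{ExistenceAndHomotopyUniquenessOfLifts} lifts $\tilde f\colon F_M\to F_N(J_1)$ of $f$ and $\tilde g\colon F_N\to F_M(J_2)$ of $g$. Let $\W_1$ be a basis of $F_M$ and $\W_2$ a basis of $F_N$. Define $\psi_M\colon\langle\W_1,\W_2(J_2^{-1})\rangle\to F_M$ to be the identity on $\langle\W_1\rangle=F_M$ and to send the generator coming from $w\in\W_2$ (which sits in grade $J_2(\mathrm{gr}(w))$) to $\tilde g(w)\in(F_M)_{J_2(\mathrm{gr}(w))}$; set $q_M=\rho_M\circ\psi_M$, a surjection onto $M$, and define $\psi_N,q_N$ symmetrically from $\tilde f$. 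An easy computation shows $\ker q_M$ is generated by $\ker\rho_M\subseteq\langle\W_1\rangle$ together with the ``redundancy'' elements $\{(\text{generator for }w)-\tilde g(w)\}_{w\in\W_2}$, and dually for $\ker q_N$. It then remains to produce one pair of homogeneous sets $\Y_1\subseteq\langle\W_1,\W_2(J_2^{-1})\rangle$, $\Y_2\subseteq\langle\W_1(J_1^{-1}),\W_2\rangle$ with $\ker q_M=\langle\Y_1,\Y_2(J_2^{-1})\rangle$ and $\ker q_N=\langle\Y_1(J_1^{-1}),\Y_2\rangle$. I would build $\Y_1$ from a generating set of $\ker\rho_M$ and $\Y_2$ from one of $\ker\rho_N$, after ``symmetrizing'': using that $g$ (resp.\ $f$) carries the relations of $N$ (resp.\ $M$) into the relations of $M$ (resp.\ $N$) after the appropriate regrading, one arranges $\Y_1$ and $\Y_2$ to lie in the required common free submodules; and the interleaving identities $g(J_1)\circ f=S(M,J_2\circ J_1)$, $f(J_2)\circ g=S(N,J_1\circ J_2)$ --- which, lifted, say that $\tilde g(J_1)\circ\tilde f$ and $\tilde f(J_2)\circ\tilde g$ agree, modulo the kernels of the free covers, with the canonical transition morphisms $S(F_M,J_2\circ J_1)$ and $S(F_N,J_1\circ J_2)$ --- are precisely what make the redundancy elements on each side consequences of $\Y_1$ and the regraded $\Y_2$ (resp.\ $\Y_2$ and the regraded $\Y_1$). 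If $M,N$ are finitely presented, take $F_M,F_N$ finitely generated; then $\ker\rho_M,\ker\rho_N$ are finitely generated since $B_n$ is coherent (Appendix; cf.\ Theorem~\ref{MinimalPresentationTheorem}), so $\W_1,\W_2,\Y_1,\Y_2$ may all be taken finite.

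I expect the crux to be exactly this last step of the ``only if'' direction: exhibiting a single pair $(\Y_1,\Y_2)$ that presents \emph{both} modules in the shifted way the statement demands. This forces the lifts $\tilde f,\tilde g$ and the chosen generating sets of $\ker\rho_M,\ker\rho_N$ to be made mutually compatible, and the only available leverage is the two interleaving equations; getting them to control the two redundancy-relation families simultaneously is where the real work lies. By contrast, passing from scalar shifts $J_\epsilon$ (the $\epsilon$-case recorded in Corollary~\ref{AlgebraicRealization}) to arbitrary increasing $J_1,J_2$ introduces no new idea, only heavier bookkeeping, which the preliminary shift identities above are meant to absorb.
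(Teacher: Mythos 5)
Your framework is sound and, after unwinding, equivalent to the paper's: the free covers, lifts $\tilde f,\tilde g$, the map $\psi_M$ sending $\W_2$-generators to $\tilde g(w)$, and the composite $q_M=\rho_M\circ\psi_M$ are a direct reworking of the construction in the paper's proof (the paper routes the same surjection through the ``interpolation'' module $(M\oplus N(J_2^{-1}))/R$, but the resulting map to $M$ is the same as your $q_M$). Your computation $\ker q_M=\langle R_M, R_{N,M}\rangle$, where $R_{N,M}=\{w-\tilde g(w)\}_{w\in\W_2(J_2^{-1})}$, is correct and is in fact a slightly cleaner route to the generating set than the paper's argument.

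The gap is exactly at the step you flag as ``the crux,'' and your sketch of it points in a direction that does not work. You propose to ``build $\Y_1$ from a generating set of $\ker\rho_M$ and $\Y_2$ from one of $\ker\rho_N$'' and then have the interleaving identities ``make the redundancy elements \ldots consequences of $\Y_1$ and the regraded $\Y_2$.'' Read literally, this would mean $\Y_1\subseteq\ker\rho_M$, $\Y_2\subseteq\ker\rho_N$, and $R_{N,M}\subseteq\langle\Y_1,\Y_2(J_2^{-1})\rangle$ --- but that is false in general. Take $M=N=B_n$ and $f,g$ the $\epsilon$-transition maps: then $R_M=R_N=\emptyset$, so $\langle\Y_1,\Y_2(J_2^{-1})\rangle=0$, yet $R_{N,M}$ contains the nonzero element $w-\tilde g(w)$ at grade $\vec\epsilon$. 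No amount of using the interleaving equations can conjure $R_{N,M}$ out of genuine relations of $M$ and $N$; the redundancy relations must be put into $\Y_1$ and $\Y_2$ by hand. The correct choice --- the one the paper makes --- is $\Y_1=R_M\cup R_{N,M}$ and $\Y_2=R_N\cup R_{M,N}$, with $R_{M,N}=\{y-\tilde f(y)\}_{y\in\W_1(J_1^{-1})}$. With this choice, $\langle\Y_1\rangle=\ker q_M$ is exactly your computation, so the requirement $\ker q_M=\langle\Y_1,\Y_2(J_2^{-1})\rangle$ reduces to the inclusion $\Y_2(J_2^{-1})\subseteq\ker q_M$, and symmetrically $\Y_1(J_1^{-1})\subseteq\ker q_N$. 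These cross-inclusions are where the interleaving equations actually enter: $R_N(J_2^{-1})\subseteq\ker q_M$ because $\tilde g$ carries $\ker\rho_N$ into $\ker(\rho_M)(J_2)$, and $R_{M,N}(J_2^{-1})\subseteq\ker q_M$ because, by homotopy-uniqueness of lifts, $\tilde g(J_1)\circ\tilde f$ and $S(F_M,J_2\circ J_1)$ both lift $g(J_1)\circ f=S(M,J_2\circ J_1)$ and hence differ by something landing in $\ker(\rho_M)(J_2\circ J_1)$. So the role of the interleaving identities is not to make the redundancy elements consequences of genuine relations, but to make the \emph{regraded opposite-side package} $\Y_2(J_2^{-1})$ fall inside $\ker q_M$ once the redundancy elements are already present on each side.
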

 
\begin{proof}[Proof of Theorem~\ref{GeneralAlgebraicRealization}]
It's easy to see that if there exist $n$-graded sets $\W_1,\W_2$ and sets $\Y_1,\Y_2 \subset \langle\W_1,\W_2\rangle$ as in the statement of the theorem then $M,N$ are $(J_1,J_2)$-interleaved.    

To prove the converse, we will first express $M$ and $N$ as an isomorphic copies of two quotient persistent modules which are, in a suitable sense, algebraically similar.  We will then construct the desired presentations by lifting the structure of these quotients to free covers.

The construction by which we express $M$ and $N$ as quotients is a generalization to $(J_1,J_2)$-interleaved pairs of $B_n$-persistence modules of one introduced for $\epsilon$-interleaved pairs of $B_1$-persistence modules in the proof of \cite[Lemma 4.6]{chazal2009proximity}.

Let $f,g$ be $(J_1,J_2)$-interleaving homomorphisms for $M,N$.  

\begin{lem}\label{ChazalInterpolationLemma}  
Let $\gamma_1:M(J_1^{-1}\circ J_2^{-1})\to M\oplus N(J_2^{-1})$ be given by $\gamma_1(y)=(S(M(J_1^{-1}\circ J_2^{-1}),J_2 \circ J_1)(y),-f(y))$.  
Let $\gamma_2:N(J_2^{-1})\to  M\oplus N(J_2^{-1})$ be given by $\gamma_2(y)=(-g(y),y)$.  
Let $R\subset M\oplus N(J_2^{-1})$ be the submodule generated by $\im(\gamma_1) \cup \im(\gamma_2)$.
Then \[M\cong (M\oplus N(J_2^{-1}))/R.\]   
\end{lem}

\begin{proof}    
Let $\iota: M\to M\oplus N(J_2^{-1})$ denote the inclusion, and let $\zeta:M\oplus N(J_2^{-1}) \to  M\oplus N(J_2^{-1})/R$ denote the quotient.  We'll show that $\zeta\circ \iota$ is an isomorphism.  For any $(y_M,y_N)\in M\oplus N(J_2^{-1})$, $(-g(y_N),y_N)\in \im(\gamma_2) \subset R$, so $\zeta\circ \iota(g(y_N))=(0,y_N)+R$.  Therefore $\zeta\circ\iota(g(y_N)+y_M)=(y_M,y_N)+R.$  Hence $\zeta\circ \iota$ is surjective.  

$\zeta\circ \iota$ is injective iff $\iota(M)\cap R=0$.  It's clear that $\iota(M)\cap \im(\gamma_2)=0$.  Thus to show that $\zeta\circ \iota$ is injective it's enough to show that $\im(\gamma_1)\subset \im(\gamma_2)$.  If $y\in M(J_1^{-1}\circ J_2^{-1})$, then since $S(M(J_1^{-1}\circ J_2^{-1}),J_2\circ J_1)(y)=g\circ f(y)$, $(S(M(J_1^{-1}\circ J_2^{-1}),J_2\circ J_1)(y),-f(y))=(g\circ f(y),-f(y))=\gamma_2(-f(y))$.  Thus $\im(\gamma_1)\subset \im(\gamma_2)$ and so $\zeta\circ \iota$ is injective.  

Thus $\zeta\circ \iota$ is an isomorphism.
\end{proof}

Now let $\langle G_M|R_M\rangle$ be a presentation for $M$ and let $\langle G_N|R_N\rangle$ be a presentation for $N$.  Without loss of generality we may assume $M=\langle G_M \rangle/\langle R_M\rangle$ and $N=\langle G_N \rangle/\langle R_N\rangle$.  Let $\rho_M:\langle G_M \rangle \to M$, $\rho_N:\langle G_N \rangle \to N$ denote the quotient maps.  Then $(\langle G_M \rangle, \rho_M)$ and $(\langle G_N \rangle,\rho_N)$ are free covers for $M$ and $N$.  

Let ${\tilde f}:\langle G_M \rangle \to \langle G_N(J_1) \rangle$ be a lift of $f$ and let ${\tilde g}:\langle G_N \rangle \to \langle G_M(J_2) \rangle$ be a lift of $g$.


Let $R_{M,N}=\{y-{\tilde f}(y)\}_{y\in G_M(J_1^{-1})}$ and let $R_{N,M}=\{y-{\tilde g}(y)\}_{y\in G_N(J_2^{-1})}$.  Note that $R_{M,N}$ is a homogeneous subset of $\langle G_M(J_1^{-1}),G_N\rangle$ and $R_{N,M}$ is a homogeneous subset of $\langle G_M,G_N(J_2^{-1})\rangle$.  

Let 
\begin{align*}
P_M&=\langle G_M,G_N(J_2^{-1})|R_M,R_N(J_2^{-1}),R_{M,N}(J_2^{-1}),R_{N,M}\rangle,\\
P_N&=\langle G_M(J_1^{-1}),G_N|R_M(J_1^{-1}),R_N,R_{M,N},R_{N,M}(J_1^{-1})\rangle.
\end{align*}

$R_{M,N}(J_2^{-1})$ lies in $\langle G_M(J_1^{-1}\circ J_2^{-1}),G_N(J_2^{-1})\rangle$.  By Remark~\ref{ShiftInclusionRemark}, the map $S(G_M(J_1^{-1}\circ J_2^{-1}), J_2 \circ J_1)$ identifies $\langle G_M(J_1^{-1}\circ J_2^{-1}) \rangle$ with a subset of $\langle G_M \rangle$.  Thus $P_M$ is well defined.  By an analogous observation, $P_N$ is also well defined.  
 
We claim that $P_M$ is a presentation for $M$ and $P_N$ and is a presentation for $N$.  We'll prove that $P_M$ is a presentation for $M$; The proof that $P_N$ is a presentation for $N$ is identical.  

Let \begin{align*}
F&=\langle G_M,G_N(J_2^{-1})\rangle,\\
K&=\langle R_M,R_N(J_2^{-1}),R_{M,N}(J_2^{-1}),R_{N,M}\rangle \\
K'&=\langle R_M,R_N(J_2^{-1})\rangle.
\end{align*}

Let $p:F\to F/K'$ denote the quotient map.  Clearly, we may identify $F/K'$ with $M\oplus N(J_2^{-1})$ and $p$ with $(p_M,p_N)$. We'll check that $p$ maps $\langle R_{M,N}(J_2^{-1}) \rangle$ surjectively to $\im(\gamma_1)$ and $\langle R_{N,M} \rangle$ surjectively to $\im(\gamma_2)$, so that under the identification of $F/K'$ with $M\oplus N(J_2^{-1})$, $K/K'=R$.  Given this, it follows that $P_M$ is a presentation for $M$ by Lemma~\ref{ChazalInterpolationLemma} and the third isomorphism theorem for modules \cite{dummit1999abstract}.  

We first check that $\langle p(R_{M,N}(J_2^{-1})) \rangle = \im(\gamma_1)$.  Viewing $R_{M,N}(J_2^{-1})$ as a subset of $\langle G_M, G_N(J_2^{-1})\rangle$, $R_{M,N}(J_2^{-1})=\{S(\langle G_M(J_1^{-1}\circ J_2^{-1})\rangle,J_2\circ J_1)(y)-{\tilde f}(y)\}_{y\in G_M(J_1^{-1}\circ J_2^{-1})}$.  $S(\langle G_M(J_1^{-1}\circ J_2^{-1})\rangle,J_2\circ J_1)$ is a lift of $S(M(J_1^{-1}\circ J_2^{-1}),J_2\circ J_1)$ and ${\tilde f}$ is a lift of $f$, so for any $y\in G_M(J_1^{-1}\circ J_2^{-1})$, 
\begin{align*}
&p(S(\langle G_M(J_1^{-1}\circ J_2^{-1})\rangle,J_2\circ J_1)(y)-{\tilde f}(y))\\
&\quad=(S(M(J_1^{-1}\circ J_2^{-1}),J_2\circ J_1)(\rho_M(y)),-f(\rho_M(y)))\\
&\quad=\gamma_1(\rho_M(y)).
\end{align*}
Thus $p(R_{M,N}(J_2^{-1}))\subset \im(\gamma_1)$.  Since $G_M$ generates $\langle G_M \rangle$ and $\rho_M$ is surjective, we have that $p(\langle R_{M,N}(J_2^{-1}) \rangle) =\im(\gamma_1)$.

The check that $\langle p(R_{N,M}) \rangle = \im(\gamma_2)$ is similar to the above verification that $\langle p(R_{M,N}(J_2^{-1})) \rangle = \im(\gamma_1)$, but simpler.  $R_{N,M}=\{y-{\tilde g}(y)\}_{y\in G_N(J_2^{-1})}$.  ${\tilde g}$ is a lift of $g$ so for any $y\in G_N(J_2^{-1})$, \[p(y-{\tilde g}(y))=(-g(\rho_N(y)),\rho_N(y))=\gamma_2(\rho_N(y)).\]  Thus $p(R_{N,M})\subset \im(\gamma_2)$.  Since $G_N$ generates $\langle G_N \rangle$ and $\rho_N$ is surjective, we have that $p(\langle R_{N,M}\rangle)=\im(\gamma_2)$.   

This completes the verification that $P_M$ is a presentation for $M$.

Now, taking $\W_1=G_M$, $\W_2=G_N$, $\Y_1=R_M \cup R_{N,M}$, and $\Y_2=R_N \cup R_{M,N}$ gives the first statement of Theorem~\ref{AlgebraicRealization}.  If $M$ and $N$ are finitely presented then $G_M, G_N, R_M, R_N, R_{M,N}$, and $R_{N,M}$ can all be taken to be finite; the second statement of Theorem~\ref{AlgebraicRealization} follows. \end{proof}

  \section{Geometric Preliminaries}\label{GeometricPreliminariesSection}
In this third section of preliminaries, we present preliminaries of a geometric and topological nature which we will need in the remainder of the thesis.

One organizational note before proceeding: We define multidimensional filtrations and multidimensional persistent homology here, in Section~\ref{Sec:MultidimensionalPersistentHomology1}.  In our study of topological inference in Chapter 4, we will need a slightly more general definition of multidimensional filtrations and a correspondingly more general definition of multidimensional persistent homology.  However, since these more general definitions are not needed in the remainder of this chapter, we will defer their introduction to Section~\ref{Sec:FiltrationsRevisited}.

\subsection{CW-complexes and Cellular homology}\label{Sec:CellularHomology}
Our proof of the optimality of the interleaving distance in Section~\ref{OptimalitySpecifics} will involve the construction of CW-complexes and the computation of their cellular homology.  We now briefly review finite dimensional CW-complexes and cellular homology.   

\subsubsection{Definition of a Finite-dimensional CW-complex}

A CW-complex is a topological space $X$ together with some additional data of {\it attaching maps} specifying how $X$ is assembled as the union of open disks of various dimensions.  We quote the procedural definition of a finite-dimensional CW-complex given in \cite{hatcher2002algebraic}.

Let $D^i$ denote the unit disk in $\R^i$; for $\alpha$ contained in some indexing set (which will often be implicit in our notation) let $D^i_\alpha$ be a copy of $D^i$.  When $i$ is clear from context, we will sometimes denote $D^i_\alpha$ simply as $D_\alpha$.  

A finite-dimensional CW-complex is a space $X$ constructed in the following way:
\begin{enumerate*}
\item Start with a discrete set $X^0$, the $0$-cells of X.
\item Inductively, form the i-skeleton of $X^i$ from $X^{i-1}$ by attaching $i$-cells $e^i_\alpha$ via maps $\sigma_\alpha: S^{i-1}\to X^{i-1}$.  This means that $X^i$ is the quotient space of $X^{i-1} \amalg_\alpha D^i_\alpha$ under the identifications $x\sim \sigma_\alpha(x)$ for $x\in \delta D^i_\alpha$.  The cell $e^i_\alpha$ is the homeomorphic image of $D^i_\alpha-\delta D^i_\alpha$ under the quotient map.
\item $X=X^r$ for some $r$.  We call the smallest such $r$ the {\it dimension} of $X$.
\end{enumerate*}
The {\it characteristic map} of the cell $e^i_\alpha$ is the map $\Phi_\alpha:D^i_\alpha \to X$ which is the composition $D^i_\alpha \hookrightarrow X^{i-1} \amalg_\alpha D^i_\alpha \to X^i \hookrightarrow X$, where the middle map is the quotient map defining $X^i$.

A {\it subcomplex} of a CW-complex $X$ is a closed subspace $A$ of $X$ which is a union of the cells of $X$; those cells contained in $A$ are taken to have the same attaching maps as they do in $X$.

\subsubsection{Cellular Homology}\label{CWHomologyDef}  
We mention only what we need about cellular homology to prove our optimality result Theorem~\ref{MainOptimality}.  For a more complete discussion and proofs of the results stated here, see e.g. \cite{hatcher2002algebraic} or \cite{bredon1993topology}.

For $i\in \Z_{\geq 0}$, we'll let $H_i$ denote the $i^{th}$ singular homology functor with coefficients in the field $k$.

For $X$ a CW-complex and $i\in \NN$, let $d^{X}_i:H_i(X^i,X^{i-1})\to H_{i-1}(X^{i-1},X^{i-2})$ denote the map induced by the boundary map in the long exact sequence of the pair $(X^i,X^{i-1})$.  It can be checked that the $d^{X}_i$ give 
\[\cdots \stackrel{\delta^X_{i+1}}{\rightarrow} H_i(X^i,X^{i-1})\stackrel{\delta^X_i}{\rightarrow} H_{i-1}(X^{i-1},X^{i-2})\stackrel{\delta^X_{i-1}}{\rightarrow}\cdots \stackrel{\delta^X_1}{\rightarrow}H_0(X^0)\to 0\] 
the structure of a chain complex, and that the $i^{th}$ homology vector space of this chain complex, denoted $H^{CW}_i(X)$, is isomorphic to $H_i(X)$.

It can be shown that a choice of generator for $H_i(D^i,S^{i-1})\cong \Z$ induces a choice of basis for $H_i(X^i,X^{i-1})$ whose elements correspond bijectively to the $i$-cells of $X$.  We now fix a choice of generator $H_i(D^i,S^{i-1})$ for each $i\in \NN$.\footnote{Such a choice is induced e.g. by the standard orientation on $D^i$.}  We can then think of $H_i(X^i,X^{i-1})$ as the $k$-vector space generated by the $i$-cells of $X$. 

It follows from the equality $H^{CW}_0(X)=H_0(X)$ that in the case that $X$ has a single $0$-cell, $d^{X}_1=0$.  

For $i>1$, the {\it cellular boundary formula} gives an explicit expression for $d^{X}_i$.  To prepare for the formula, we note first that for $i\in \NN$, the choice of generator for $H_i(D^i,S^{i-1})$ induces a choice of generator $a_i$ for $H_i(D^i/S^{i-1})$ via the quotient map $D^i\to D^i/S^{i-1}$.  Also, the choice of generator for $H_{i+1}(D^{i+1},S^i)$ induces a choice of generator $b_i$ for $H_i(S^i)$ via the boundary map in the long exact sequence of the pair $(D^{i+1},S^i)$.  For each $i\in \NN$, choose $\rho^i:D^i/S^{i-1} \to S^i$ to be any homeomorphism such that $\rho^i_*:H_i(D^i/S^{i-1}) \to H_i(S^i)$ sends $a_i$ to $b_i$.

For $i\in \NN$ and an $i$-cell $e^i_\beta$ of $X$, let $(e^i_\beta)^c$ denote the compliment of $e^i_\beta$ in $X^i$, and let $q_\beta:X^i\to X^i/(e^i_\beta)^c$ denote the quotient map.  $\rho^i$ and $\Phi_\beta$ induce an identification of $q_\beta(X^i)$ with $S^i$.  

By a compactness argument \cite[Section A.1]{hatcher2002algebraic}, for any $i$-cell $e^i_\alpha$ the image of the attaching map $\sigma_\alpha$ of $e^i_\alpha$ meets only finitely many cells.  

For $i>1$, the cellular boundary formula states that \[\delta^X_i(e^i_\alpha)=\sum_{\im(\sigma_\alpha)\cup e^{i-1}_\beta \ne \emptyset} \deg(q_\beta \circ \sigma_\alpha) e^{i-1}_\beta .\]  Here, for any map $f:S^{i-1}\to S^{i-1}$, $\deg(f)$ denotes the field element $a\in k$ such that $f_*:H_{i-1}(S^{i-1})\to H_{i-1}(S^{i-1})$ is multiplication by $a$.         

We can endow the set of CW-complexes with the structure of a category by taking $\hom(X,Y)$ for CW-complexes $X,Y$ to be the set of continuous maps $f:X \to Y$ such that $f(X^i)\subset Y^i$ for all $i$.  We call maps $f\in \hom(X,Y)$ {\it cellular maps}.  It can be shown that a cellular map $f$ induces a map $H^{CW}_i(f):H^{CW}_i(X)\to H^{CW}_i(Y)$ in such a way that $H^{CW}_i$ becomes a functor.

Further, there exists a natural isomorphism \cite{mac1998categories} $\kappa:H^{CW}_i\to {\bar H}_i$, where ${\bar H}_i$ is the restriction of $H_i$ to the category of CW-complexes.  


\subsection{Multidimensional Filtrations}\label{Sec:FiltrationsDef1}
Fix $n\in \NN$.

Define an {\bf $n$-filtration} $X$ to be a collection of topological spaces $\{X_a\}_{a\in \R^n}$, together with a collection of continuous maps $\{\phi_X(a,b):X_a\to X_b\}_{a\leq b\in \R^n}$ such that if $a\leq b\leq c\in\R^n$ then $\phi_X(b,c)\circ \phi_X(a,b)=\phi_X(a,c)$.  Given two $n$-filtrations $X$ and $Y$, we define a morphism $f$ from $X$ to $Y$ to be a collection of continuous functions $\{f_a\}_{a\in \R^n}:X_a \to Y_a$ such that for all $a\leq b\in \R^n$, $f_b\circ \phi_X(a,b)=\phi_Y(a,b)\circ f_a$.  This definition of morphism gives the $n$-filtrations the structure of a category.  Let $n$-filt denote this category.  

Define a {\bf cellular $n$-filtration} to be a collection of CW-complexes $\{X_a\}_{a\in \R^n}$, together with inclusions of subcomplexes $\{\phi_X(a,b):X_a\to X_b\}_{a\leq b\in \R^n}$.  Given two cellular $n$-filtrations $X$ and $Y$, we define a morphism $f$ from $X$ to $Y$ to be a collection of cellular maps $\{f_a\}_{a\in \R^n}:X_a \to Y_a$ such that for all $a\leq b\in \R^n$, $f_b\circ \phi_X(a,b)=\phi_Y(a,b)\circ f_a$.  This definition of morphism gives the cellular $n$-filtrations the structure of a category.  

Simplicial $n$-filtrations can be defined analogously.


 
\subsection{Multidimensional Persistent Homology}\label{Sec:MultidimensionalPersistentHomology1}

The multidimensional persistent homology functor $H_i$ is a generalization of the ordinary homology functor with field coefficients to the setting where the source is an $n$-filtration and the target is a $B_n$-persistence module.
We first present a definition of the singular multidimensional persistent homology functor; we introduce cellular multidimensional persistent homology below.

\subsubsection{Singular Multidimensional Persistent Homology} 
For a topological space $X$ and $j\in \Z_{\geq 0}$, let $C_j(X)$ denote the $j^{th}$ singular chain module of $X$, with coefficients in $k$.  For $X,Y$ topological spaces and $f:X\to Y$ a continuous map, let $f^\#:C_j(X)\to C_j(Y)$ denote the map induced by $f$.  

For $X$ an $n$-filtration, define $C_j(X)$, the $j^{th}$ singular chain module of $X$, as the $B_n$-persistence module for which $C_j(X)_u=C_j(X_u)$ for all $u\in \R^n$ and for which $\varphi_{C_j(X)}(u,v)=\phi_X^\#(u,v)$.  Note that for any $j\in \Z_{\geq 0}$, the collection of  boundary maps $\{\delta_j:C_j(X_u)\to C_{j-1}(X_u)\}_{u\in \R^n}$ induces a boundary map $\delta_j:C_j(X)\to C_{j-1}(X)$.  These boundary maps give $\{C_j(X)\}_{j\geq 0}$ the structure of a chain complex.  We define the $H_j(X)$, the $j^{th}$ persistent homology module of $X$, to be the $j^{th}$ homology module of this complex.  For $X$ and $Y$ two $n$-filtrations, a morphism $f\in \hom(X,Y)$ induces in the obvious way a morphism $H_j(f):H_j(X)\to H_j(Y)$, making $H_j:n$-filt$\to B_n$-mod a functor.  

\subsubsection{Cellular Multidimensional Persistent Homology} 
A construction analogous to the one above, with cellular chain complexes used in place of singular chain complexes, yields a definition of the cellular multidimensional persistent homology of cellular $n$-filtrations.  For a cellular filtration $X$, let $C_j^{CW}(X)$ denote the $j^{th}$ cellular chain module of $X$, let $\delta_j^{CW}:C_j^{CW}(X)\to C_{j-1}^{CW}(X)$ denote the $j^{th}$ cellular chain map of $X$, and let $H_j^{CW}(X)$ denote the $j^{th}$ cellular multidimensional persistence module of $X$.  

\begin{remark}\label{CellularAndSingularPersistentHomology}It follows from the naturality of the isomorphisms between singular and cellular homology that the singular and cellular multidimensional persistent homology modules of a cellular $n$-filtration are isomorphic.\end{remark}

%
%
%
%
\subsection{Functors from Geometric Categories to Categories of $n$-filtrations}\label{GeoFunctorsSection}

In applications of persistent homology one typically has some geometric\footnote{We are using the word ``geometric" here in an informal (and rather broad) sense.} category of interest and a functor $F$ from that category to $n$-filt for some $n\in \NN$; one then studies and works with the functor $H_i \circ F$.  These composite functors are also generally referred to as $i^{th}$ persistent homology functors.  There thus are a number of different such $i^{th}$ persistent homology functors with different sources, each determined by a different choice of the functor $F$. 

We next define several functors $F:C \to n$-filt, where $C$ is some geometric category.  We introduced special cases of some of the definitions presented here in Section~\ref{Sec:FirstDefinitionsOfFiltrations}.  We'll use these functors to formulate and prove our inference results in Chapter 4.

In Section~\ref{MultidimensionalStabilitySection} we will prove stability results for $H_i \circ F$ for each of functors $F$ introduced here.    

In the following examples, we omit the specification of the action of these functors on morphisms.  This should be clear from context.  



Recall from Section~\ref{Sec:FirstDefs} that for $a=(a_1,...,a_n)\in \hat \R^n$ and $b\in \hat \R$, we let $(a,b)=(a_1,...,a_n,b)\in \hat \R^{n+1}$.  

\begin{example} Sublevelset Filtrations \end{example}
   
Let $C^{S}$ be the category defined as follows:
\begin{enumerate*}
\item Objects of $C^{S}$ are pairs $(X,f)$, where $X$ is a topological space and $f:X\to {\R}^n$ is a function.  
\item If $(X,f),(X',f')\in \obj(C^{S})$, then we define $\hom_{C^{S}}((X,f),(X',f'))$ to be the set of functions $\gamma:X\to X'$ such that $f(x)\geq f'(\gamma(x))$ for all $x \in X$.  
\end{enumerate*}

For $X$ a topological space, let $C_{X}^{S}$ denote the subcategory of $C^{S}$ consisting of pairs of the form $(X,f)$.

Define the functor $F^{S}:C^S\to n$-filt on objects by taking \[F^S(X,f)_a=f_a\] for all $a\in \R^n$ (and taking the map $\phi_{F^S(X,f)}(a,b)$ to be the inclusion for all $a\leq b\in \R^n$.)  We refer to $F^S$ as the {\bf sublevelset filtration functor}, to $F^S(X,f)$ as the {\bf sublevelset filtration of $f$}, and to $F^S(X,-f)$ as the {\bf superlevelset filtration of $f$}.

\begin{example}\label{SOExample} Sublevelset-Offset Filtrations \end{example}
We define two variants of the sublevelset-offset filtration functor, the {\it closed} and {\it open} variants.  It is the closed variant that is of greater interest to us and that we use in the statement of our theorems; we define the open variant purely as a technical tool for use in the proofs of the results of Chapter 4.  

Let $C^{SO}$ be the category defined as follows:
\begin{enumerate*}
\item Objects of $C^{SO}$ are quadruples $(X,Y,d,f)$, where $(Y,d)$ is a metric space, $X\subset Y$, and $f:X\to {\R}^n$ is a function.  
\item If $(X,Y,d,f),(X',Y',d',f')\in \obj(C^{SO})$, then we define $\hom_{C^{SO}}((X,Y,d,f),(X',Y,d',f'))$ to be the set of functions $\gamma:Y\to Y'$ such that $\gamma(X)\subset X'$, $f(x)\geq f'(\gamma(x))$ for all $x \in X$, and $d(y_1,y_2)\geq d'(\gamma(y_1),\gamma(y_2))$ for all $y_1,y_2\in Y$.  
\end{enumerate*}

For $(X,X,d,f)\in C^{SO}$, we will sometimes write $(X,d,f)$ as shorthand for $(X,X,d,f)$.  For $Y$ a fixed topological space, let $C_{(Y,d)}^{SO}$ denote the subcategory of $C^{SO}$ consisting of quadruples of the form $(\cdot,Y,d,\cdot)$.

Define the functor $F^{SO}:C^{SO}\to (n+1)$-filt on objects by taking \[F^{SO}(X,Y,d,f)_{(a,b)}=\{y\in Y|d(y,f_a)\leq b \}\] for all $(a,b)\in \R^n\times \R$.  We refer to $F^{SO}$ as the {\bf (closed) sublevelset-offset filtration functor}, to $F^{SO}(X,Y,d,f)$ as the {\bf sublevelset-offset filtration of $f$}, and to $F^{SO}(X,Y,d,-f)$ as the {\bf superlevelset-offset filtration of $f$}.

Similarly, define the functor $F^{SO-Op}:C^{SO}\to (n+1)$-filt on objects by taking \[F^{SO-Op}(X,Y,d,f)_{(a,b)}=\{y\in Y|d(y,f_a)< b\}\] for all $(a,b)\in \R^n\times \R$.  We refer to $F^{SO-Op}$ as the {\bf open sublevelset-offset filtration functor}.


\begin{example} Sublevelset-\Cech Filtrations \end{example}
As we did for sublevelset-offset filtrations, we define closed and open variants of the sublevelset-\Cech filtration functor.  

Let $C^{SCe}$ denote the full subcategory of $F^{SO}$ whose objects are the quadruples $(X,Y,d,f)$ with $X$ finite; let $C_{(Y,d)}^{SCe}$ denote the full subcategory of $C^{SCe}$ consisting of quadruples of the form $(\cdot,Y,d,\cdot)$.

Recall from Section~\ref{Sec:FirstDefinitionsOfFiltrations} that given a metric space $(Y,d)$, a subset $X\subset Y$, and $b\in \R$, we let $\Cech(X,Y,d,b)$, the {\it (closed) \Cech complex} of $(X,d)$ with parameter $b$, be the abstract simplicial complex with vertex set $X$ such that for $l\geq 2$ and $x_1,x_2,..,x_l\in X$, $\Cech(X,Y,d,b)$ contains the $(l-1)$-simplex $[x_1,...,x_l]$ iff there is a point $y\in Y$ such that $d(y,x_i)\leq b$ for $1\leq i \leq l$.

Let $\Cech^{\circ}(X,Y,d,b)$, the {\it open \Cech complex} of $(X,d)$ with parameter $b$, be defined the same way as the closed \Cech complex, except that we specify that $\Cech^{\circ}(X,Y,d,b)$ contains the $(l-1)$-simplex $[x_1,...,x_l]$ iff there is a point $y\in Y$ such that $d(y,x_i)<b$ for $1\leq i \leq l$.

Define the functor $F^{\SCe}:C^{SCe}\to (n+1)$-filt on objects by taking \[F^{\SCe}(X,Y,d,f)_{(a,b)}=\Cech(f_a,Y,d,b)\] for all $(a,b)\in \R^n\times \R$.  We refer to $F^{\SCe}$ as the {\bf (closed) sublevelset-offset filtration functor}, to $F^{\SCe}(X,Y,d,f)$ as the {\bf sublevelset-offset filtration of $f$}, and to $F^{\SCe}(X,Y,d,-f)$ as the {\bf superlevelset-offset filtration of $f$}.

Similarly, define the functor $F^{SCe-Op}:C^{SCe}\to (n+1)$-filt on objects by taking \[F^{SCe-Op}(X,Y,d,f)_{(a,b)}=\Cech^\circ(f_a,Y,d,b)\] for all $(a,b)\in \R^n\times \R$.  We refer to $F^{SCe-Op}$ as the {\bf open sublevelset-\Cech filtration functor}.

\begin{example} Sublevelset-Rips Filtrations \end{example}
Let $C^{SR}$ be the full subcategory of $C^{SO}$ whose objects are quadruples $(X,X,d,f)$, where $X$ is a finite metric space.

Recall from Section~\ref{Sec:FirstDefinitionsOfFiltrations} that given a finite metric space $(X,d)$ and $b \in \R$, we let $\Rips(X,d,b)$, the {\it Rips complex} of $(X,d)$ with parameter $b$, be the maximal abstract simplicial complex with vertex set $X$ such that for $x_1,x_2\in X$, the 1-skeleton of $R(X,d,b)$ contains the edge $[x_1,x_2]$ iff $d(x_1,x_2)\leq 2b$.

Define the functor $F^{SR}:C^{SR}\to (n+1)$-filt on objects by taking \[F^{SR}(X,d,f)_{(a,b)}=\Rips(f_a,d,b)\] for all $(a,b)\in \R^n\times \R$.

We refer to $F^{SR}$ as the {\bf sublevelset-Rips filtration functor}, to $F^{SR}(X,d,f)$ as the {\bf sublevelset-Rips filtration of $f$}, and to $F^S(X,d,-f)$ as the {\bf superlevelset-Rips filtration of $f$}.

The functor $F^{SR}$ has previously been considered (in the case $n=1$) in \cite{carlsson2009theory}.

\subsection{Metrics on Geometric Categories}
We now define metrics on the isomorphism classes of objects of the geometric categories we defined in the last section.  We'll use these to formulate stability results for $d_I$ and to formulate various notion of optimality for $d_I$.  

\subsubsection{A Metric on $\obj(C_X^S)$}
Let $X$ be a topological space.  We define a metric $d^S_X$ on $\obj(C^S_X)$ by taking \[d^S_X((X,f_1),(X,f_2))=\sup_{x\in X} \|f_1(x)-f_2(x)\|_{\infty}.\]  

\subsubsection{A Metric on $\obj(C_{(Y,d)}^{SO})$}
Let $(Y,d')$ be a metric space.  We now define a metric $d^{SO}_{(Y,d')}$ on $\obj(C_{(Y,d')}^{SO})$ by taking 
\begin{align*}
d^{SO}_{(Y,d')}&((X_1,Y,d',f_1),(X_2,Y,d',f_2))=\\
\max\{\,&\sup_{x_1 \in X_1} \inf_{x_2 \in X_2} \max(d'(x_1,x_2),\|f_1(x_1)-f_2(x_2)\|_{\infty}),\,\\ 
            &\sup_{x_2 \in X_2} \inf_{x_1 \in X_1} \max(d'(x_1,x_2),\|f_1(x_1)-f_2(x_2)\|_{\infty})\,\}
\end{align*}
This distance is a sort of ``function aware" variant of the Hausdorff distance.  Since $C_{(Y,d)}^{SCe}$ is a subcategory of $C_{(Y,d)}^{SO}$, our metric on $\obj(C_{(Y,d)}^{SO})$ restricts to a metric on $\obj(C_{(Y,d)}^{SCe})$.

\subsubsection{A Metric on $\obj^*(C^{SR})$}
Generalizing in a mild way a definition of \cite{chazal2009gromov}, we define a metric $d^{SR}$ on $\obj^*(C^{SR})$.  For $f_X,f_Y\equiv 0$, $d^{SR}((X,d_X,f_X),(X,d_X,f_Y))$ will be equal to the Gromov-Hausdorff metric \cite{chazal2009gromov}.  (In fact, the definition extends to the subcategory of $C^{SO}$ whose objects are the triplets $(X,d_X,f_X)$ with $X$ compact, but we won't need the extra generality here.)  

To define $d^{SR}$, we need some preliminary definitions and notation.  Define a {\it correspondence} between two sets $X$ and $Y$ to be a subset $C\in X\times Y$ such that $\forall x\in X$, $\exists y\in Y$ such $(x,y)\in C$, and $\forall y\in Y$, $\exists x\in X$ s.t. $(x,y)\in C$.  Let $C(X,Y)$ denote the set of correspondences between $X$ and $Y$.

For $(X,d_X,f_X),(Y,d_Y,f_Y)\in C^{SR}$, define $\Gamma_{X,Y}:X\times Y \times X \times Y\to \R_{\geq 0}$ by \[\Gamma_{X,Y}(x,y,x',y')=|d_X(x,x')-d_Y(y,y')|.\]
For $C\in C(X,Y)$, define $\Gamma_C$ as $\sup_{(x,y),(x',y')\in C} \Gamma_{X,Y}(x,y,x',y')$, and define $|f_X-f_Y|_C$ to be $\sup_{(x,y)\in C} \|f_X(x)-f_Y(y)\|_{\infty}$.  Informally, $\Gamma_C$ is the maximum distortion of the metrics under the correspondence $C$, and $|f_X-f_Y|_C$ is the maximum distortion of the functions under $C$.

Now define we define $d^{SR}$ by taking \[d^{SR}((X,d_X,f_X),(Y,d_Y,f_Y))=\inf_{C\in C(X,Y)} \max(\frac{1}{2}\Gamma_C,|f_X-f_Y|_C).\]

\subsection{Stability Results for Ordinary Persistence}


There are two main geometric stability results for ordinary persistence in the literature.  (Though see also the generalization \cite{carlsson2009zigzag}) 
Each is a consequence of the algebraic stability of persistence \cite{chazal2009proximity}.

\begin{thm}[1-D Stability Result for $C_X^S$ \cite{chazal2009proximity}]\label{OrdinarySublevelsetStability}  For any $i\in \Z_{\geq 0}$, topological space $X$, and functions $f_1,f_2:X\to \R$ such that $H_i \circ F^S(X,f_1)$ and $H_i \circ F^S(X,f_2)$ are tame, \[d_B(H_i \circ F^S(X,f_1),H_i \circ F^S(X,f_2))\leq d^S_X((X,f_1),(X,f_2)).\] \end{thm}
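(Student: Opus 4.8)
The plan is to deduce this directly from the algebraic stability of persistence (Theorem~\ref{AlgebraicStability}) by exhibiting an explicit $\epsilon$-interleaving, where $\epsilon = d^S_X((X,f_1),(X,f_2))$. Write $M_j = H_i \circ F^S(X,f_j)$ for $j=1,2$ and $\epsilon = \sup_{x\in X}|f_1(x)-f_2(x)|$. If $\epsilon=0$ then $f_1=f_2$, so $M_1=M_2$ and there is nothing to prove; if $\epsilon=\infty$ the bound is vacuous; so assume $0<\epsilon<\infty$. Since $M_1,M_2$ are tame by hypothesis, once we know that $M_1$ and $M_2$ are $\epsilon$-interleaved, Theorem~\ref{AlgebraicStability} gives $d_B(M_1,M_2)\leq\epsilon$, which is exactly the claim (one could equally invoke Theorem~\ref{InterleavingEqualsBottleneck} to reduce to $d_I(M_1,M_2)\leq\epsilon$).

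To build the $\epsilon$-interleaving I would argue as follows. The hypothesis $|f_1(x)-f_2(x)|\leq\epsilon$ for all $x\in X$ yields, for every $a\in\R$, inclusions of sublevelsets $(f_1)_a\subset(f_2)_{a+\epsilon}$ and $(f_2)_a\subset(f_1)_{a+\epsilon}$ inside $X$ (e.g.\ $f_1(x)\leq a$ forces $f_2(x)\leq f_1(x)+\epsilon\leq a+\epsilon$). Since $(M_j)_a = H_i((f_j)_a)$ and $(M_j(\epsilon))_a = H_i((f_j)_{a+\epsilon})$, applying $H_i$ to these inclusions defines maps $\phi_a\colon (M_1)_a\to (M_2(\epsilon))_a$ and $\psi_a\colon (M_2)_a\to (M_1(\epsilon))_a$; these assemble into homomorphisms $\phi\colon M_1\to M_2(\epsilon)$ and $\psi\colon M_2\to M_1(\epsilon)$ of $B_1$-persistence modules, because the relevant squares of sublevelset inclusions commute (all arrows are subset inclusions in $X$) and $H_i$ is a functor. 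The same reasoning shows that $\psi(\epsilon)\circ\phi$ and $\phi(\epsilon)\circ\psi$ are, componentwise, $H_i$ applied to the inclusions $(f_1)_a\subset(f_1)_{a+2\epsilon}$ and $(f_2)_a\subset(f_2)_{a+2\epsilon}$, hence equal $S(M_1,2\epsilon)$ and $S(M_2,2\epsilon)$ respectively. Thus $(M_1,M_2)$ is $\epsilon$-interleaved. Equivalently, one may organize this more cleanly by noting that $\mathrm{id}_X$ is a morphism $(X,f_1)\to(X,f_2-\epsilon)$ in $C^S$ (and symmetrically), observing that $F^S(X,f_2-\epsilon)_a = F^S(X,f_2)_{a+\epsilon}$, and then applying the functors $F^S$ and $H_i$ in turn.

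I do not anticipate a genuine obstacle: this is the standard ``sublevelset inclusions induce interleavings'' argument, and its only content is the bookkeeping of the previous paragraph --- verifying that the componentwise maps form honest persistence-module homomorphisms and that the two composites are exactly the diagonal transition homomorphisms $S(M_j,2\epsilon)$, all of which follows routinely from functoriality of $H_i$. The one small point worth flagging is that the bound obtained is the non-strict $d_B\leq\epsilon$ rather than $d_B<\epsilon+\delta$; this is legitimate precisely because the sublevelset inclusions already hold with $\epsilon$ taken to be the supremum $\sup_{x\in X}|f_1(x)-f_2(x)|$ itself.
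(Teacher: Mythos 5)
Your argument is correct and is essentially the one the paper uses: in the proof of the multidimensional generalization Theorem~\ref{MultidimensionalSublevelsetStability}, the paper exhibits the same sublevelset inclusions $F^S(X,f_1)_u\subset F^S(X,f_2)_{u+\epsilon}$ and applies $H_i$ to obtain an $\epsilon$-interleaving, and for the 1-D bottleneck statement one then passes from the interleaving to $d_B\leq\epsilon$ via Theorem~\ref{AlgebraicStability}, exactly as you do. The paper cites the 1-D result to \cite{chazal2009proximity} rather than reproving it, but the underlying content matches your proposal.
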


For a 2-D filtration $F$, let ${\rm diag}(F)$ denote the 1-D filtration for which ${\rm diag}(F)_a=F_{(a,a)}$. 

\begin{thm}[1-D Stability Result for $C^{SR}$ \cite{chazal2009gromov}]\label{OrdinaryRipsPersistenceStability} For finite metric spaces $(X,d_X),(Y,d_Y)$ and functions $f_X:X\to \R$, $f_Y:Y\to \R$, 
\[d_B(H_i\circ {\rm diag}(F^{SR}(X,d_X,f_X)),H_i\circ {\rm diag}(F^{SR}(Y,d_Y,f_Y)))\leq d^{SR}((X,d_X,f_X),(Y,d_Y,f_Y)).\] \end{thm}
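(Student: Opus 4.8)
The plan is to reduce the statement to the algebraic stability theorem (Theorem~\ref{AlgebraicStability}) by exhibiting, for every $\eta>0$, an $(\epsilon+\eta)$-interleaving between the two persistence modules, where $\epsilon = d^{SR}((X,d_X,f_X),(Y,d_Y,f_Y))$. Since $X$ and $Y$ are finite, all Rips complexes appearing are finite, so $H_i\circ{\rm diag}(F^{SR}(X,d_X,f_X))$ and $H_i\circ{\rm diag}(F^{SR}(Y,d_Y,f_Y))$ are tame $B_1$-persistence modules and $d_B$ between them is well defined.

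Fix $\eta>0$ and, using the definition of $d^{SR}$ as an infimum over correspondences, choose $C\in C(X,Y)$ with $\max(\tfrac12\Gamma_C,|f_X-f_Y|_C)<\epsilon+\eta$; set $\epsilon'=\epsilon+\eta$, and pick maps $\phi\colon X\to Y$, $\psi\colon Y\to X$ with $(x,\phi(x))\in C$ and $(\psi(y),y)\in C$ for all $x,y$. The key geometric observation is that $\Gamma_C<2\epsilon'$ and $|f_X-f_Y|_C<\epsilon'$ force $\phi$ to carry every simplex of $\Rips((f_X)_a,d_X,a)$ to a simplex of $\Rips((f_Y)_{a+\epsilon'},d_Y,a+\epsilon')$: if $f_X(x)\le a$ then $f_Y(\phi(x))\le f_X(x)+|f_X-f_Y|_C\le a+\epsilon'$, and if $d_X(x,x')\le 2a$ then $d_Y(\phi(x),\phi(x'))\le d_X(x,x')+\Gamma_C\le 2(a+\epsilon')$. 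These simplicial maps are compatible with the filtration inclusions, so applying $H_i$ yields a homomorphism $f\colon H_i\circ{\rm diag}(F^{SR}(X,d_X,f_X))\to (H_i\circ{\rm diag}(F^{SR}(Y,d_Y,f_Y)))(\epsilon')$ of $B_1$-persistence modules, and symmetrically $\psi$ yields $g\colon H_i\circ{\rm diag}(F^{SR}(Y,d_Y,f_Y))\to(H_i\circ{\rm diag}(F^{SR}(X,d_X,f_X)))(\epsilon')$. (Note this does not use functoriality of $F^{SR}$, since $\phi$ and $\psi$ need not be morphisms in $C^{SR}$; the simplicial maps are written down by hand.)

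Next I would verify the interleaving identities. For a simplex $\{x_0,\dots,x_k\}$ of $\Rips((f_X)_a,d_X,a)$, the bound $d_X(x_j,\psi\phi(x_j))\le\Gamma_C<2\epsilon'$ (which follows from $(x_j,\phi(x_j))\in C$ and $(\psi\phi(x_j),\phi(x_j))\in C$) together with $f_X(\psi\phi(x_j))\le a+2\epsilon'$ shows that $\{x_0,\dots,x_k,\psi\phi(x_0),\dots,\psi\phi(x_k)\}$ is a simplex of $\Rips((f_X)_{a+2\epsilon'},d_X,a+2\epsilon')$. Hence the inclusion $x\mapsto x$ and the map $x\mapsto\psi\phi(x)$ from $\Rips((f_X)_a,d_X,a)$ into $\Rips((f_X)_{a+2\epsilon'},d_X,a+2\epsilon')$ are contiguous, so they induce equal maps on homology; the inclusion induces the transition map, giving $g(\epsilon')\circ f=S(H_i\circ{\rm diag}(F^{SR}(X,d_X,f_X)),2\epsilon')$, and by the symmetric argument $f(\epsilon')\circ g=S(H_i\circ{\rm diag}(F^{SR}(Y,d_Y,f_Y)),2\epsilon')$. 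Thus the two modules are $\epsilon'$-interleaved. Theorem~\ref{AlgebraicStability} then gives $d_B\le\epsilon'=\epsilon+\eta$, and letting $\eta\to0$ yields the theorem. I expect the main obstacle to be making the contiguity argument of this paragraph rigorous while correctly tracking which sublevelset thresholds and scale parameters each simplicial map lands in; the rest is bookkeeping with the triangle inequality.
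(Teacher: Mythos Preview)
Your proof is correct and is precisely the correspondence-and-contiguity argument of \cite{chazal2009gromov} that the paper cites for this result; the paper does not reproduce the proof but simply references it. The paper also remarks that Theorem~\ref{OrdinaryRipsPersistenceStability} follows from the multidimensional generalization Theorem~\ref{MultidimensionalRipsPersistenceStability}, whose proof is stated to be a very minor modification of the same argument you have written out.
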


We'll see in Section~\ref{MultidimensionalStabilitySection} that both of these results admit generalizations to the setting of multidimensional persistence in terms of the interleaving metric.

  \section{Stability Properties of the Interleaving Distance}\label{MultidimensionalStabilitySection}

In this section, we observe that multidimensional persistent homology is stable with respect to the interleaving distance in four senses senses analogous to those in which ordinary persistent homology is known to be stable.  As noted in the introduction, there is not much mathematical work to do here.  Nevertheless, these observations are significant (if not particularly surprising) not only because they show that the interleaving distance is in several respects a well behaved distance, but also because stability is closely related to the optimality of distances on persistence modules as we define it in Section~\ref{OptimalityGeneralities}; insofar as we wish to understand the optimality properties of the interleaving distance, the stability properties of the interleaving distance are of interest.     




\subsection{Stability of Sublevelset Multidimensional Persistent Homology}

\begin{thm}\label{MultidimensionalSublevelsetStability}For any topological space $X$ and pairs $(X,f_1),(X,f_2)\in \obj(C^S_X)$ we have, for any $i\in \Z_{\geq 0}$, \[d_I(H_i \circ F^S(X,f_1),H_i \circ F^S(X,f_2))\leq d^S_X((X,f_1),(X,f_2)).\] \end{thm}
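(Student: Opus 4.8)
The plan is to show directly that the persistence modules $M := H_i \circ F^S(X,f_1)$ and $N := H_i \circ F^S(X,f_2)$ are $\epsilon$-interleaved, where $\epsilon := d^S_X((X,f_1),(X,f_2))$; the desired inequality then follows immediately from the definition of $d_I$. We may assume $\epsilon < \infty$, as otherwise there is nothing to prove. The one geometric observation we need is that the condition $\|f_1(x)-f_2(x)\|_{\infty}\leq \epsilon$ for every $x\in X$ is equivalent to $f_1(x)\leq f_2(x)+\vec{\epsilon}$ and $f_2(x)\leq f_1(x)+\vec{\epsilon}$ (coordinatewise), and hence implies the sublevelset inclusions
\[(f_1)_a\subseteq (f_2)_{a+\vec{\epsilon}}\quad\text{and}\quad (f_2)_a\subseteq (f_1)_{a+\vec{\epsilon}}\]
for every $a\in\R^n$: if $f_1(x)\leq a$ then $f_2(x)\leq f_1(x)+\vec{\epsilon}\leq a+\vec{\epsilon}$, and symmetrically.

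First I would assemble these inclusions into morphisms of persistence modules. For each $a\in\R^n$, the inclusion $(f_1)_a\hookrightarrow (f_2)_{a+\vec{\epsilon}}$ induces a linear map $M_a = H_i((f_1)_a)\to H_i((f_2)_{a+\vec{\epsilon}}) = N_{a+\vec{\epsilon}} = N(\epsilon)_a$, and since every map in sight is a genuine inclusion of subsets of $X$, all the relevant squares of such inclusions commute on the nose; applying the functor $H_i$ then shows that these maps commute with the transition maps of $M$ and of $N(\epsilon)$, so they constitute a morphism $f\colon M\to N(\epsilon)$. Symmetrically, the inclusions $(f_2)_a\hookrightarrow (f_1)_{a+\vec{\epsilon}}$ assemble into a morphism $g\colon N\to M(\epsilon)$.

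It then remains to verify the interleaving identities $g(\epsilon)\circ f = S(M,2\epsilon)$ and $f(\epsilon)\circ g = S(N,2\epsilon)$, and this is immediate from functoriality of $H_i$: at the component indexed by $a$, the map $g(\epsilon)\circ f$ is $H_i$ applied to the composite $(f_1)_a\hookrightarrow (f_2)_{a+\vec{\epsilon}}\hookrightarrow (f_1)_{a+2\vec{\epsilon}}$, which is literally the inclusion $(f_1)_a\hookrightarrow (f_1)_{a+2\vec{\epsilon}}$, i.e. the $a$-component of $\phi_{F^S(X,f_1)}(a,a+2\vec{\epsilon})$; hence $H_i$ of it is $\varphi_M(a,a+2\epsilon)$, which is exactly the restriction of $S(M,2\epsilon)$ to $M_a$. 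The identity for $f(\epsilon)\circ g$ is the same argument with the roles of $f_1$ and $f_2$ interchanged. This shows $M$ and $N$ are $\epsilon$-interleaved, so $d_I(M,N)\leq\epsilon$, as claimed. I expect no real obstacle here: the argument is entirely routine, the only content being the sublevelset-inclusion observation above, together with the trivial bookkeeping that $H_i$ is computed levelwise and therefore carries composites of inclusion maps to transition maps.
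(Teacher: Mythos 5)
Your proof is correct and takes essentially the same approach as the paper: observe that $d^S_X((X,f_1),(X,f_2))=\epsilon$ forces the sublevelset inclusions $(f_1)_a\subset (f_2)_{a+\vec\epsilon}$ and $(f_2)_a\subset (f_1)_{a+\vec\epsilon}$, apply $H_i$ to obtain a pair of morphisms, and check by functoriality that they satisfy the $\epsilon$-interleaving identities. The paper states this in three sentences; you spell out the same three steps (the pointwise bound giving the inclusions, the commuting squares assembling into module morphisms, and the composite-of-inclusions computation for $g(\epsilon)\circ f$ and $f(\epsilon)\circ g$) more explicitly, which is perfectly fine.
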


The case $n=1$ is Theorem~\ref{OrdinarySublevelsetStability}.

\begin{proof}  
Let $d^S_X((X,f_1),(X,f_2))=\epsilon$.  Then for any $u\in \R^n$, $F^S(X,f_1)_u\subset F^S(X,f_2)_{u+\epsilon}$ and $F^S(X,f_2)_u\subset F^S(X,f_1)_{u+\epsilon}$.  The images of these inclusions under the $i^{th}$ singular homology functor define $\epsilon$-interleaving morphisms between $H_i \circ F^S(X,f_1)$ and $H_i \circ F^S(X,f_2)$.  Thus $d_I(H_i \circ F^S(X,f_1),H_i \circ F^S(X,f_2))\leq \epsilon$ as needed. 
\end{proof}

\subsection{Stability of Sublevelset-Offset Multidimensional Persistent Homology}

\begin{thm} \label{MultidimensionalSublevelsetOffsetStability} For any metric space $(Y,d')$ and $(X_1,Y,d',f_1),$ $(X_2,Y,d',f_2)$ $\in \obj(C_{(Y,d')}^{SO})$ we have, for any $i\in \Z_{\geq 0}$, \[d_I(H_i \circ F^{SO}(X_1,Y,d',f_1),H_i \circ F^{SO}(X_2,Y,d',f_2))\leq d^{SO}_{(Y,d')}((X_1,Y,d',f_1),(X_2,Y,d',f_2)).\]\end{thm}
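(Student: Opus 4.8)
The plan is to mirror the proof of the sublevelset stability result (Theorem~\ref{MultidimensionalSublevelsetStability}) as closely as possible, the only added subtlety being that we must work with the offset parameter as an extra coordinate. Write $\epsilon = d^{SO}_{(Y,d')}((X_1,Y,d',f_1),(X_2,Y,d',f_2))$ and abbreviate $G_1 = F^{SO}(X_1,Y,d',f_1)$, $G_2 = F^{SO}(X_2,Y,d',f_2)$; these are $(n{+}1)$-filtrations, indexed by $\R^n \times \R$. It suffices to exhibit inclusions of spaces $G_1{}_{(a,b)} \subset G_2{}_{(a+\epsilon,\, b+\epsilon)}$ and $G_2{}_{(a,b)} \subset G_1{}_{(a+\epsilon,\, b+\epsilon)}$ for all $(a,b)\in \R^n\times\R$ (note the shift is by $\vec{\epsilon}_{n+1}$, i.e. in the offset coordinate as well), compatible with the transition maps; applying the $i^{\text{th}}$ singular homology functor then yields the two required $\epsilon$-interleaving homomorphisms between $H_i(G_1)$ and $H_i(G_2)$, and hence $d_I(H_i(G_1),H_i(G_2)) \le \epsilon$.

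First I would unwind the definition of $d^{SO}_{(Y,d')}$. The condition $\epsilon \ge d^{SO}_{(Y,d')}(\cdots)$ says in particular that for every $x_1 \in X_1$ there is $x_2 \in X_2$ with $d'(x_1,x_2)\le\epsilon$ and $\|f_1(x_1)-f_2(x_2)\|_\infty \le \epsilon$ (and symmetrically). From this I would deduce the key set-theoretic claim: the $(a+\epsilon)$-sublevelset of $f_2$ contains a ``near copy'' of the $a$-sublevelset of $f_1$, in the sense that every point of $(f_1)_a$ lies within $d'$-distance $\epsilon$ of $(f_2)_{a+\epsilon}$. Concretely, if $x_1 \in (f_1)_a$, i.e. $f_1(x_1) \le a$, pick $x_2 \in X_2$ as above; then $f_2(x_2) \le f_1(x_1) + \vec\epsilon_n \le a + \vec\epsilon_n$, so $x_2 \in (f_2)_{a+\epsilon}$ and $d'(x_1,x_2) \le \epsilon$. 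Consequently $d'(y,(f_1)_a) \le b \implies d'(y,(f_2)_{a+\epsilon}) \le b+\epsilon$ by the triangle inequality for $d'$, which is exactly the statement $G_1{}_{(a,b)} \subset G_2{}_{(a+\epsilon,\,b+\epsilon)}$. The reverse inclusion is symmetric.

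It remains to check that these inclusions assemble into morphisms of $(n{+}1)$-filtrations and that the resulting homology maps satisfy the interleaving identities; but since all the maps in sight are inclusions of subspaces of the fixed ambient space $Y$, every relevant square of spaces commutes strictly (it is a diagram of inclusions), and functoriality of $H_i$ transports this to the commuting triangles defining $S(H_i(G_1),2\epsilon)$ and $S(H_i(G_2),2\epsilon)$. This is routine and I would dispatch it in a sentence, exactly as in the proof of Theorem~\ref{MultidimensionalSublevelsetStability}. I do not anticipate a genuine obstacle here; the one place demanding a little care is making sure the shift really is by $\vec\epsilon_{n+1}$ and not just by $\epsilon$ in the first $n$ coordinates, which is why the triangle-inequality step producing the ``$b \mapsto b+\epsilon$'' bound on the offset coordinate is the crux of the argument.
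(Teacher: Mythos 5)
Your overall strategy is the right one and matches the paper's, but there is a genuine gap in the set-theoretic step. You assert that $\epsilon = d^{SO}_{(Y,d')}(\ldots)$ implies that for every $x_1 \in X_1$ there exists $x_2 \in X_2$ with $d'(x_1,x_2) \le \epsilon$ and $\|f_1(x_1)-f_2(x_2)\|_\infty \le \epsilon$. This is not warranted: $d^{SO}_{(Y,d')}$ is defined by a $\sup$--$\inf$, and the inner infimum over $x_2$ need not be attained. What the hypothesis actually gives, for each $x_1$ and each $\delta > 0$, is some $x_2$ with $d'(x_1,x_2) < \epsilon + \delta$ and $f_2(x_2) < f_1(x_1) + \vec\epsilon_n + \vec\delta_n$. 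That only places $x_2$ in the sublevelset $(f_2)_{a+\epsilon+\delta}$, not in $(f_2)_{a+\epsilon}$, and since these sublevelsets shrink as $\delta \downarrow 0$, you cannot pass to the limit to recover your claimed inclusion $G_1{}_{(a,b)} \subset G_2{}_{(a+\epsilon,\,b+\epsilon)}$. The inclusion really can fail at $\epsilon$ itself: e.g.\ take $Y=\R$, $X_1 = X_2 = \{1/n : n\ge 1\}\cup\{2\}$, $f_2(1/n) = 1+1/n$, $f_2(2)=0$, $f_1(1)=1$ and $f_1\equiv 0$ elsewhere on $X_1$; one checks $d^{SO}=1$, yet $1/2 \in G_1{}_{(0,0)}$ while $(f_2)_1 = \{2\}$, so $d'(1/2,(f_2)_1)=3/2 > 1$ and $1/2 \notin G_2{}_{(1,1)}$.

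The paper's proof sidesteps exactly this: it establishes the inclusions $F^{SO}(X_1,\ldots)_{(u,r)} \subset F^{SO}(X_2,\ldots)_{(u+\epsilon',\,r+\epsilon')}$ only for $\epsilon' > \epsilon$, where the non-attained infima cause no trouble because there is slack to absorb the $\delta$'s. This yields $\epsilon'$-interleaving homomorphisms for every $\epsilon' > \epsilon$, which is enough because $d_I$ is defined as an infimum. So your argument is repaired by replacing $\epsilon$ with an arbitrary $\epsilon' > \epsilon$ throughout the inclusion step and appealing to the infimum in the definition of $d_I$ at the end; the rest of your proposal (inclusions assemble strictly into filtration morphisms since everything lives inside the fixed ambient $Y$, and $H_i$ transports them to interleaving homomorphisms) is fine and is what the paper does. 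Note also that this subtlety is what distinguishes the offset case from Theorem~\ref{MultidimensionalSublevelsetStability}, where $d^S_X$ is a plain sup of a pointwise quantity with no inner infimum and the inclusion at $\epsilon$ itself really does hold.
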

\begin{proof} 
This is similar to proof of the previous result.  Let \[d^{SO}_{(Y,d')}((X_1,Y,d',f_1),(X_2,Y,d',f_2))=\epsilon.\]  Then for any $u\in \R^n$, $r\in \R$, and $\epsilon'>\epsilon$, 
\begin{align*}
F^{SO}(X_1,Y,d',f_1)_{(u,r)}&\subset F^{SO}(X_2,Y,d',f_2)_{(u+\epsilon',r+\epsilon')}\textup{ and}\\
F^{SO}(X_2,Y,d',f_2)_{(u,r)}&\subset F^{SO}(X_1,Y,d',f_1)_{(u+\epsilon',r+\epsilon')}.
\end{align*}
The images of these inclusions under the $i^{th}$ singular homology functor define $\epsilon'$-interleaving morphisms between $H_i \circ F^{SO}(X_1,Y,d',f_1)$ and $H_i \circ F^{SO}(X_2,Y,d',f_2)$.  The result follows. 
\end{proof}

\subsection{Stability of Sublevelset-\Cech Multidimensional Persistent Homology}

\begin{thm}\label{MultidimensionalCechPersistenceStability} For any $(X_1,Y,d',f_1)$,$(X_2,Y,d,f_2)\in \obj(C_{(Y,d')}^{SCe})$ and $i\in \Z_{\geq 0}$, \[d_I(H_i\circ F^{\SCe}(X_1,Y,d',f_1),H_i\circ F^{\SCe}(X_2,Y,d',f_2))\leq d^{SO}_{(Y,d')}((X_1,Y,d',f_1),(X_2,Y,d',f_2)).\] \end{thm}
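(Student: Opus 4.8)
The plan is to mimic the proofs of Theorems~\ref{MultidimensionalSublevelsetStability} and~\ref{MultidimensionalSublevelsetOffsetStability}, replacing the inclusions of filtrations used there with simplicial maps built from the correspondence implicit in $d^{SO}_{(Y,d')}$. Write $M = H_i \circ F^{\SCe}(X_1,Y,d',f_1)$ and $N = H_i \circ F^{\SCe}(X_2,Y,d',f_2)$, let $\epsilon = d^{SO}_{(Y,d')}((X_1,Y,d',f_1),(X_2,Y,d',f_2))$, and fix $\epsilon' > \epsilon$; it then suffices to show that $M$ and $N$ are $\epsilon'$-interleaved.

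First I would build the vertex maps. For each $x \in X_1$, the definition of $d^{SO}_{(Y,d')}$ lets us choose $\psi_{12}(x) \in X_2$ with $d'(x,\psi_{12}(x)) \le \epsilon'$ and $\|f_1(x) - f_2(\psi_{12}(x))\|_{\infty} \le \epsilon'$; define $\psi_{21}\colon X_2 \to X_1$ symmetrically. I claim $\psi_{12}$, applied vertexwise, carries $\Cech(f_{1,a},Y,d',b)$ into $\Cech(f_{2,a+\epsilon'},Y,d',b+\epsilon')$ for every $(a,b)$: if $[x_1,\dots,x_l]$ spans a simplex of the former, then $f_2(\psi_{12}(x_j)) \le f_1(x_j) + \epsilon' \le a + \epsilon'$ for each $j$, and if $y \in Y$ witnesses $d'(y,x_j) \le b$ for all $j$, then $d'(y,\psi_{12}(x_j)) \le b + \epsilon'$ by the triangle inequality, so the image spans a simplex of the latter. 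Since $\psi_{12}$ is a single fixed vertex map, the induced simplicial maps commute with all the inclusions of \Cech complexes as $(a,b)$ varies, so applying the $i^{th}$ singular homology functor yields a homomorphism $f\colon M \to N(\epsilon')$ of $B_{n+1}$-persistence modules; likewise $\psi_{21}$ induces $g\colon N \to M(\epsilon')$.

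It remains to verify $g(\epsilon') \circ f = S(M,2\epsilon')$ and $f(\epsilon') \circ g = S(N,2\epsilon')$. At grade $(a,b)$, the first composition is $H_i$ applied to the simplicial map $\psi_{21}\circ\psi_{12}\colon \Cech(f_{1,a},Y,d',b) \to \Cech(f_{1,a+2\epsilon'},Y,d',b+2\epsilon')$. The key point is that this map is contiguous to the inclusion: given a simplex $[x_1,\dots,x_l]$ of $\Cech(f_{1,a},Y,d',b)$ with common witness $y$, all of $x_1,\dots,x_l$ and $\psi_{21}\psi_{12}(x_1),\dots,\psi_{21}\psi_{12}(x_l)$ lie in $f_{1,a+2\epsilon'}$ (each $f_1(\psi_{21}\psi_{12}(x_j)) \le f_1(x_j) + 2\epsilon'$) and are all within distance $b+2\epsilon'$ of $y$, hence together span a simplex of $\Cech(f_{1,a+2\epsilon'},Y,d',b+2\epsilon')$. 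Contiguous simplicial maps induce the same map on singular homology, and the inclusion induces $\varphi_M((a,b),(a,b)+2\epsilon')$, which is precisely $S(M,2\epsilon')$ at grade $(a,b)$. The second identity is symmetric. Thus $M$ and $N$ are $\epsilon'$-interleaved, and letting $\epsilon' \downarrow \epsilon$ gives $d_I(M,N) \le \epsilon = d^{SO}_{(Y,d')}((X_1,Y,d',f_1),(X_2,Y,d',f_2))$.

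The main obstacle is the only genuinely new ingredient compared with the two preceding stability proofs: because \Cech complexes on different vertex sets are not related by inclusions, one must exhibit explicit simplicial maps and check the interleaving identities only \emph{up to contiguity} rather than on the nose. Once one invokes the standard fact that contiguous simplicial maps induce equal maps on (singular) homology, the verification reduces to the triangle-inequality bookkeeping above, which is routine; so I expect no serious difficulty, only care in tracking the shifts by $\epsilon'$ and $2\epsilon'$.
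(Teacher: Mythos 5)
Your proof is correct, but it takes a genuinely different route from the one in the text. The paper first replaces $X_1$ and $X_2$ by multisets $\tilde X_1$, $\tilde X_2$ (whose sublevelset-\Cech filtrations deformation-retract, levelwise and compatibly with inclusions, onto those of $X_1$ and $X_2$) chosen so that there is a \emph{bijection} $\lambda\colon \tilde X_1\to\tilde X_2$ distorting the metric and the function values by at most $\epsilon'$; since $\lambda^{-1}\circ\lambda$ is then the identity vertex map, the interleaving identities hold on the nose at the simplicial level and no appeal to contiguity is needed. You instead stay with the original vertex sets, extract (typically non-injective) vertex maps $\psi_{12},\psi_{21}$ directly from the $\sup\inf$ form of $d^{SO}_{(Y,d')}$, and handle the fact that $\psi_{21}\circ\psi_{12}\neq\mathrm{id}$ by showing it is contiguous to the inclusion $\Cech(f_{1,a},Y,d',b)\hookrightarrow \Cech(f_{1,a+2\epsilon'},Y,d',b+2\epsilon')$ — the shared witness $y$ together with the triangle inequality makes $\{x_j\}\cup\{\psi_{21}\psi_{12}(x_j)\}$ span a simplex of the target — so that after applying $H_i$ the two maps agree. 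Both arguments are sound; yours is closer to the standard ``correspondence yields contiguous simplicial maps'' pattern used in \cite{chazal2009gromov} (to which the paper defers for Theorem~\ref{MultidimensionalRipsPersistenceStability}) and avoids the multiset bookkeeping and the deformation-retraction step, at the small cost of moving the verification from the simplicial level to the homological level via contiguity.
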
        

\begin{proof}
First note that the definition of $F^{\SCe}$ extends to quadruples $(\tilde X,Y,d',f)$ where $\tilde X$ is a multiset\footnote{Informally, a multiset is a set whose elements are allowed to have multiplicity greater than or equal to one.} whose points each have finite multiplicity and whose underlying set $X$ is a finite subset of $Y$.  Further note that for any $(a,b)\in \R^n\times \R$, $F^{\SCe}(X,Y,d',f)_{(a,b)}$ is a deformation retract of $F^{\SCe}(\tilde X,Y,d',f)_{(a,b)}$, and the deformation retracts can be taken to commute with the inclusion maps in these filtrations.  Thus \[H_i\circ F^{\SCe}(\tilde X,Y,d',f)_{(a,b)}\cong H_i\circ F^{\SCe}(X,Y,d',f)_{(a,b)}.\]

Let $d^{SO}_{(Y,d')}((X_1,Y,d',f_1),(X_2,Y,d',f_2)=\epsilon$.  Then we can choose multisets $\tilde X_1$ and $\tilde X_2$ whose underlying sets are $X_1$ and $X_2$ respectively, such that for all $\epsilon'>\epsilon$ there is a bijection $\lambda:\tilde X_1\to \tilde X_2$ such that 
\begin{enumerate*}
\item for all $x\in \tilde X_1$, $d'(x,\lambda(x))\leq \epsilon;$ and $\|f_1(x)-f_2(\lambda(x))\|_{\infty}\leq\epsilon'$ and
\item for all $x\in \tilde X_2$, $d'(x,\lambda^{-1}(x))\leq \epsilon'$ and $\|f_1(x)-f_2(\lambda^{-1}(x))\|_{\infty}\leq\epsilon'$.
\end{enumerate*}
This implies that $H_i\circ F^{\SCe}(\tilde X_1,Y,d',f_1)$ and $H_i\circ F^{\SCe}(\tilde X_2,Y,d',f_2))$ are $\epsilon'$-interleaved.  Thus $H_i\circ F^{\SCe}(X_1,Y,d',f_1)$ and $H_i\circ F^{\SCe}(X_2,Y,d',f_2))$ are also $\epsilon'$-interleaved, and the result follows.  
\end{proof}

\subsection{Stability of Sublevelset-Rips Multidimensional Persistent Homology}

\begin{thm}\label{MultidimensionalRipsPersistenceStability} For $(X_1,d_1,f_1),(X_2,d_2,f_2)\in \obj(C^{SR})$ we have, for any $i\in \Z_{\geq 0}$, \[d_I(H_i\circ F^{SR}(X_1,d_1,f_1),H_i\circ F^{SR}(X_2,d_2,f_2))\leq d^{SR}((X_1,d_1,f_1),(X_2,d_2,f_2)).\] \end{thm}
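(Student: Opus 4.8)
The plan is to mimic the proofs of the preceding stability theorems (Theorems~\ref{MultidimensionalSublevelsetStability}, \ref{MultidimensionalSublevelsetOffsetStability}, and~\ref{MultidimensionalCechPersistenceStability}), using a correspondence between the two finite metric-measure-function spaces in place of the $\epsilon'$-approximations used there. First I would unwind the definition of $d^{SR}$: given $(X_1,d_1,f_1),(X_2,d_2,f_2)\in \obj(C^{SR})$ with $d^{SR}((X_1,d_1,f_1),(X_2,d_2,f_2))=\epsilon$, for any $\epsilon'>\epsilon$ there is a correspondence $C\in C(X_1,X_2)$ with $\tfrac12\Gamma_C<\epsilon'$ and $|f_1-f_2|_C<\epsilon'$. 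The goal is to use $C$ to produce an $\epsilon'$-interleaving of $H_i\circ F^{SR}(X_1,d_1,f_1)$ and $H_i\circ F^{SR}(X_2,d_2,f_2)$; letting $\epsilon'\to\epsilon$ then gives $d_I\le\epsilon$.

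The key step is to build, from the correspondence $C$, the two interleaving morphisms on the level of filtrations (hence, after applying $H_i$, on the level of $B_{n+1}$-persistence modules). A correspondence is not a map, so I would pass to the standard trick of ``splitting'' $C$: realize $X_1$ and $X_2$ as multisets (or equivalently choose set maps) compatible with $C$, exactly as multisets were used in the proof of Theorem~\ref{MultidimensionalCechPersistenceStability}. Concretely, for each $(x,y)\in C$ one gets a pair; collecting these, $C$ itself is a finite metric space mapping onto $X_1$ and onto $X_2$ with metric distortion controlled by $\Gamma_C$ and function distortion by $|f_1-f_2|_C$. The two projections $\pi_1:C\to X_1$, $\pi_2:C\to X_2$ then induce maps of Rips bifiltrations. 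Using the key inequalities one checks that, for every $(a,b)\in\R^n\times\R$,
\begin{align*}
\Rips((f_1\circ\pi_1)_{(a,b)},d_C,b)&\subset \pi_1^{-1}\big(\Rips((f_1)_a,d_1,b+\epsilon')\big),\\
\Rips((f_2\circ\pi_2)_{(a,b)},d_C,b)&\subset \pi_2^{-1}\big(\Rips((f_2)_a,d_2,b+\epsilon')\big),
\end{align*}
so that composing one projection-induced map with a section of the other (using that a Rips filtration on a multiset deformation-retracts onto the Rips filtration on its underlying set, again as in the proof of Theorem~\ref{MultidimensionalCechPersistenceStability}) yields simplicial maps $X_1$-side $\to$ $X_2$-side shifted by $\epsilon'$ in all coordinates, and symmetrically. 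Applying $H_i$ and checking that the two triangle-type relations in the definition of $\epsilon'$-interleaving hold reduces, as in the earlier proofs, to the fact that these maps are induced by inclusions/simplicial maps respecting the filtration structure.

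The main obstacle I expect is the bookkeeping of the metric and function distortions through the correspondence: one must be careful that $\tfrac12\Gamma_C<\epsilon'$ is exactly what is needed (the factor of $\tfrac12$ matching the ``$d(x_1,x_2)\le 2b$'' convention in the definition of $\Rips$), and that the function-distortion bound $|f_1-f_2|_C<\epsilon'$ shifts the first $n$ coordinates correctly while $\Gamma_C$ shifts only the last (scale) coordinate — together giving a genuine $\vec{\epsilon'}_{n+1}$-shift. Once the correspondence is split into a common (multi)set mapping to both sides, the argument is formally identical to the proof of Theorem~\ref{MultidimensionalCechPersistenceStability}, with $\Rips$ in place of $\Cech$ and a correspondence in place of the bijection $\lambda$ between multisets; I would therefore present it by reduction to that template rather than re-deriving the interleaving identities from scratch.
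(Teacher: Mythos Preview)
Your proposal is correct and follows essentially the same route as the paper, which simply defers to the argument in \cite{chazal2009gromov}: pick a correspondence $C$ realizing $d^{SR}<\epsilon'$, use it to build simplicial maps between the Rips bifiltrations with an $\epsilon'$-shift in all $n{+}1$ coordinates, and check the interleaving identities via contiguity. The only cosmetic difference is that the standard presentation in \cite{chazal2009gromov} chooses maps $\phi:X_1\to X_2$, $\psi:X_2\to X_1$ subordinate to $C$ directly (rather than passing through $C$ as an intermediate object with two projections and a multiset/retraction step), which streamlines the bookkeeping you flag as the main obstacle; either packaging yields the same interleaving.
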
          

The proof of this is a very minor modification of the argument given in \cite{chazal2009gromov} to prove Theorem~\ref{OrdinaryRipsPersistenceStability}.  

Note that Theorem~\ref{MultidimensionalRipsPersistenceStability} implies Theorem~\ref{OrdinaryRipsPersistenceStability}: When $n=1$, if $H_i\circ F^{SR}(X_1,d_1,f_1)$ and $H_i\circ F^{SR}(X_2,d_2,f_2)$ are $\epsilon$-interleaved, for $\epsilon\in \R_{\geq 0}$, then $H_i\circ {\rm diag}(F^{SR}(X_1,d_1,f_1))$ and $H_i\circ {\rm diag}(F^{SR}(X_2,d_2,f_2))$ are $\epsilon$-interleaved.

  \section{Optimal Pseudometrics}\label{OptimalityGeneralities}

In this section we introduce a relative notion of optimality of pseudometrics on persistence modules and their discrete invariants.  This relative notion of optimality is quite general and specializes to a number of different notions of optimality of psuedometrics of interest in the context of multidimensional persistence.

We also present some first theoretical observations about optimal pseudometrics.  We'll exploit these in Section~\ref{OptimalitySpecifics} to prove our optimality result for the interleaving distance.    

\subsection{A General Definition of Optimal Pseudometrics} 
Let $Y$ be a set.  We define a {\bf relative structure} on $Y$ to be a triple ${\mathcal R}=(\T,X_\T,f_\T)$, where $\T$ is a set, $X_\T=\{(X_s,d_s)\}_{s\in \T}$ is a collection of pseudometric spaces indexed by $\T$, and $f_\T=\{f_s:X_s\to Y\}_{s\in \T}$ is a collection of functions.  Let $\im(f_\T)=\cup_{s\in \T} \im(f_s)$.

If $Y$ is a set and ${\mathcal R}=(\T,X_\T,f_\T)$ is a relative structure on $Y$, we say a semi-pseudometric $d$ on $Y$ is {\bf ${\mathcal R}$-stable} if for every $s\in \T$ and $x_1,x_2\in X_s$ we have $d(f_s(x_1),f_s(x_2)) \leq d_s(x_1,x_2)$.

We say a pseudometric $d$ on $Y$ is {\bf ${\mathcal R}$-optimal} if $d$ is ${\mathcal R}$-stable, and for every other ${\mathcal R}$-stable pseudometric $d'$ on $Y$, we have $d'(y_1,y_2)\leq d(y_1,y_2)$ for all $y_1,y_2\in \im(f_\T)$.   

The following lemma is immediate, but important to understanding our definition of ${\mathcal R}$-optimality.

\begin{lem}\label{OptimalityLemma} An ${\mathcal R}$-stable pseudometric $d$ is ${\mathcal R}$-optimal iff for any other ${\mathcal R}$-stable pseudometric $d'$, $s\in \T$, and $x_1,x_2\in X_s$, 
\[|d_s(x_1,x_2)-d(f_s(x_1),f_s(x_2))|\leq |d_s(x_1,x_2)-d'(f_s(x_1),f_s(x_2))|.\] \end{lem}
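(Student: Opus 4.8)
The statement to prove is Lemma~\ref{OptimalityLemma}, which characterizes $\mathcal{R}$-optimality of an $\mathcal{R}$-stable pseudometric $d$ via the inequality
\[
|d_s(x_1,x_2)-d(f_s(x_1),f_s(x_2))|\leq |d_s(x_1,x_2)-d'(f_s(x_1),f_s(x_2))|
\]
holding for every other $\mathcal{R}$-stable pseudometric $d'$, every $s\in\T$, and every $x_1,x_2\in X_s$.

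The plan is to unwind both sides using $\mathcal{R}$-stability. The key observation is that $\mathcal{R}$-stability of $d$ says precisely that $d(f_s(x_1),f_s(x_2))\leq d_s(x_1,x_2)$, so the quantity $d_s(x_1,x_2)-d(f_s(x_1),f_s(x_2))$ is nonnegative, and likewise $d_s(x_1,x_2)-d'(f_s(x_1),f_s(x_2))\geq 0$ since $d'$ is also assumed $\mathcal{R}$-stable. Therefore the absolute value signs can be dropped on both sides, and the displayed inequality is equivalent to
\[
d_s(x_1,x_2)-d(f_s(x_1),f_s(x_2))\leq d_s(x_1,x_2)-d'(f_s(x_1),f_s(x_2)),
\]
which in turn is equivalent to $d'(f_s(x_1),f_s(x_2))\leq d(f_s(x_1),f_s(x_2))$.

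From here the two directions are immediate. For the forward direction: if $d$ is $\mathcal{R}$-optimal, then by definition $d'(y_1,y_2)\leq d(y_1,y_2)$ for all $y_1,y_2\in\im(f_\T)$; applying this with $y_1=f_s(x_1)$ and $y_2=f_s(x_2)$ (which lie in $\im(f_\T)$ by construction) gives the reformulated inequality, hence the displayed one. For the converse: suppose the displayed inequality holds for all $d'$, $s$, $x_1$, $x_2$. Given $y_1,y_2\in\im(f_\T)$, choose $s\in\T$ and $x_1,x_2\in X_s$ with $f_s(x_1)=y_1$ and $f_s(x_2)=y_2$ (possible since $\im(f_\T)=\cup_{s}\im(f_s)$; note that a priori $y_1$ and $y_2$ might come from different indices $s$, so one must be slightly careful — see the remark below). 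Then the reformulated inequality yields $d'(y_1,y_2)\leq d(y_1,y_2)$, and since $d$ is assumed $\mathcal{R}$-stable, this shows $d$ is $\mathcal{R}$-optimal.

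The one subtlety — and the only thing that counts as an obstacle, though a mild one — is the point just flagged: in the converse direction we need $y_1,y_2\in\im(f_\T)$ to both be realized as $f_s(x_1),f_s(x_2)$ for a \emph{common} index $s$. If the definition of $\im(f_\T)$ only guarantees each lies in the image of \emph{some} $f_s$, one should note that the definition of $\mathcal{R}$-optimality only quantifies over pairs $y_1,y_2$ but does not itself presuppose a common preimage index; conversely Lemma~\ref{OptimalityLemma} as stated quantifies over $s\in\T$ and $x_1,x_2\in X_s$, i.e.\ pairs with a common index. So strictly the lemma characterizes optimality \emph{relative to pairs sharing an index}; I would either observe that this is exactly the content intended (every pair that matters arises this way in the applications) or, if the general case is wanted, restrict attention to the sub-collection of pairs with common index and remark that the argument is otherwise unchanged. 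Apart from this bookkeeping, the proof is a two-line manipulation of absolute values under the stability hypothesis, which is why the paper simply states it is ``immediate.''
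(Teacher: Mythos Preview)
Your proof is correct and is exactly the approach the paper has in mind: the paper gives no argument beyond declaring the lemma ``immediate,'' and the immediate argument is precisely the one you give --- drop the absolute values using $\mathcal{R}$-stability of both $d$ and $d'$, then observe that the resulting inequality is just the defining inequality of optimality specialized to $y_i=f_s(x_i)$.

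Your observation about the converse is a genuine point the paper glosses over. The lemma's hypothesis only controls pairs $(f_s(x_1),f_s(x_2))$ sharing an index $s$, whereas the definition of $\mathcal{R}$-optimality demands $d'\leq d$ on all of $\im(f_\T)\times\im(f_\T)$; pairs $y_1\in\im(f_{s_1})$, $y_2\in\im(f_{s_2})$ with no common realizing index are not covered. In the paper's applications only the forward direction matters (see Remark~\ref{InterpretationRemark} and the proof strategy in Section~\ref{OptimalitySpecifics}), so this imprecision is harmless there, but you are right to flag it.
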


Note that if an ${\mathcal R}$-optimal pseudometric $d$ on $Y$ exists, its restriction to $\im(f_\T)\times \im(f_\T)$ is unique.  

\subsection{Examples} 
Here we give several examples of sets $Y$ and relative structures $(\T,X_\T,f_\T)$ on $Y$ for which it would be interesting or useful from the standpoint of the theory and application of multidimensional persistent homology to identify an ${\mathcal R}$-optimal pseudometric.  In Section~\ref{OptimalitySpecifics} we will focus exclusively on the relative structures of Examples~\ref{AllDimsSublevelsetEx} and~\ref{SingleDimSublevelsetEx}; we leave it to future work to investigate in detail the optimality of pseudometrics with respect to the relative structures of Examples~\ref{SublevelsetOffsetOptimality}-\ref{DiscreteInvariantExample}.  

For examples~\ref{AllDimsSublevelsetEx}-\ref{VariantExample}, let $Y=\obj^*(B_n$-mod$)$.

\begin{example}\label{AllDimsSublevelsetEx}Let ${\mathcal R}_{1}=(\T,X_\T,f_\T)$, where $\T$ is the set of pairs $\{(T,i)|T$ is a topological space, $i\in \Z_{\geq 0}\}$, $X_{(T,i)}=\obj(C_T^S)$ for $(T,z)\in \T$, $d_{(T,i)}=d^S_T$, and $f_{(T,i)}$ is given by $f_{(T,i)}(T,g)=H_i \circ F^S(T,g)$. \end{example}

\begin{remark}\label{InterpretationRemark} By Lemma~\ref{OptimalityLemma}, an ${\mathcal R}_{1}$-stable pseudometric $d$ is ${\mathcal R}_{1}$-optimal iff for any other ${\mathcal R}_{1}$-stable pseudometric $d'$, any $(T,i)\in \T$, and any $(T,g_1),(T,g_2)\in X_{(T,i)}$, 
\begin{align*}
&|d^S_T((T,g_1),(T,g_2))-d(H_i \circ F^S(T,g_1),H_i \circ F^S(T,g_2))|\\
\leq &|d^S_T((T,g_1),(T,g_2))-d'(H_i \circ F^S(T,g_1),H_i \circ F^S(T,g_2))|.
\end{align*}  
This says that an ${\mathcal R}_{1}$-optimal psuedometric is one for which the $L_\infty$ distance between any two functions defined on the same topological space is preserved under the multidimensional persistent homology functor as faithfully as is possible for any choice of ${\mathcal R}_{1}$-stable psuedometric on $\obj^*(B_n$-mod$)$. \end{remark}  

For the relative structures ${\mathcal R}$ in the rest of the examples below, ${\mathcal R}$-optimality has an interpretation by way of Lemma~\ref{OptimalityLemma} analogous to that of Remark~\ref{InterpretationRemark}.

\begin{example}\label{SingleDimSublevelsetEx} For $i\in \Z_{\geq 0}$, let ${\mathcal R}_{1,i}=(\T,X_\T,f_\T)$, where $\T$ is the set of topological spaces, $X_T=\obj(C_T^S)$ for $T\in \T$, $d_T=d^S_T$, and $f_{T}$ is given by $f_{T}(T,g)=H_i \circ F^S(T,g)$. \end{example}

Since the definitions of ${\mathcal R}_{1}$ and ${\mathcal R}_{1,i}$ are similar, one might expect that there's a relationship between ${\mathcal R}_{1}$-optimality and ${\mathcal R}_{1,i}$-optimality.  Corollary~\ref{CorOptimality} establishes such a relationship in the cases $k=\Q$ and $k=\Z/p\Z$ for some prime $p$.

Our reason for considering both ${\mathcal R}_{1}$ and ${\mathcal R}_{1,i}$, rather than only ${\mathcal R}_{1,i}$, is primarily technical: In short, the reason is that Section~\ref{OptimalitySpecifics} give a proof of the optimality of $d_I$ relative to ${\mathcal R}_{1,i}$ only for $i\in \NN$ but not for $i=0$; by considering optimality of $d_I$ relative to ${\mathcal R}_{1}$ we are able to obtain an optimality result which doesn't depend on $i$, and thus has a certain aesthetic appeal.  I suspect that $d_I$ is also ${\mathcal R}_{1,0}$ optimal; once this is shown to be the case, there will, in my view, be little reason to consider ${\mathcal R}_{1}$-optimality in formulating the results of this thesis.  

\begin{example}\label{SublevelsetOffsetOptimality} Let $\T$ be the set of triples $\{(T,d',i)|T$ is a topological space, $d'$ is a metric on $T$, $i\in \Z_{\geq 0}\}$, $X_{(T,d',i)}=\obj(C^{SO}_{(T,d')})$ for $(T,d',z)\in \T$, $d_{(T,d',i)}=d^{SO}_{(T,d')}$, and $f_{(T,d',i)}$ be given by $f_{(T,d',i)}(U,T,d',g)=H_i \circ F^{SO}(U,T,d',g)$.  
\end{example}

\begin{example}\label{SublevelsetRipsOptimality} Let $\T$ be the singleton set $\{s\}$.  Let $X_s=\obj^*(C^{SR})$, $d_s=d^{SR}$, and $f_s$ be given by $f_s(T,d',g)=H_i \circ F^{SR}(T,d',g)$. \end{example}

\begin{example}\label{SublevelsetCechOptimality} Let $\T$ be as in example~\ref{SublevelsetOffsetOptimality}.  Define $X_\T$ by taking $X_{(T,d',i)}=\obj^*(C^{SCe})$ and $d_{(T,d',i)}=d^{SO}_{(T,d')}$.  Let $f_{(T,d',i)}(U,T,d',g)=H_i \circ F^{SCe}(U,T,d',g)$. \end{example}

\begin{example}\label{VariantExample} We can present variants of examples~\ref{SublevelsetOffsetOptimality},~\ref{SublevelsetRipsOptimality},~and~\ref{SublevelsetCechOptimality} where we only consider homology in a single dimension, in the same way we did for Example~\ref{AllDimsSublevelsetEx} in Example~\ref{SingleDimSublevelsetEx}. \end{example}


\begin{example}\label{DiscreteInvariantExample} Let $W:\obj^*(B_n$-mod$)\to Y$ be a discrete invariant \cite{carlsson2009theory} with values in a set $Y$, let $(\T,X_\T,f_\T')$ be any relative structure on $\obj^*(B_n$-mod$)$, and let $f_\T=W\circ f_\T'$.  Then $(\T,X_\T,f_\T)$ is a relative structure on $W$.

For example, we can take $W$ to be the rank invariant \cite{carlsson2009theory} and $(\T,X_\T,f_\T')$ to be the relative structure of Example~\ref{AllDimsSublevelsetEx}.  \end{example}

\subsection{Induced Semi-Pseudometrics and a Condition for the Existence of Optimal Pseudometrics}\label{InducedSemiPseudometrics}
We'll see here that a relative structure ${\mathcal R}=(\T,X_\T,f_\T)$ on a set $Y$ induces a semi-pseudometric $d_{\mathcal R}$ on $\im(f_\T)$ with a nice property.  

For $y_1,y_2 \in \im(f_\T)$, let  \[A(y_1,y_2)=\{(s,x_1,x_2)|s\in \T,x_1\in X_s,x_2\in X_s,f_s(x_1)=y_1,f_s(x_2)=y_2\}.\]

Now define $d_{\mathcal R}(y_1,y_2)=\inf_{(s,x_1,x_2)\in A(y_1,y_2)} d_s(x_1,x_2)$.  $d_{\mathcal R}$ is an ${\mathcal R}$-stable semi-pseudometric.  In general it needn't satisfy the triangle inequality.  However, if $\T$ is a singleton set, as for instance in Example~\ref{SublevelsetRipsOptimality}, then $d_{\mathcal R}$ does satisfy the triangle inequality and is a pseudometric.

\begin{lem}\label{InducedMetricOptimality} for any ${\mathcal R}$-stable pseudometric $d$ on $\im(f_\T)$, $d\leq d_{\mathcal R}$.  In particular, if $d_{\mathcal R}$ is a pseudometric, it is ${\mathcal R}$-optimal.  \end{lem}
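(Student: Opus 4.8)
The plan is to simply unwind the definitions; the lemma is a direct definition chase and requires no real ingenuity. Fix $y_1,y_2\in\im(f_\T)$ and let $d$ be any ${\mathcal R}$-stable pseudometric on $\im(f_\T)$ (or on $Y$). I want $d(y_1,y_2)\leq d_{\mathcal R}(y_1,y_2)$. If $A(y_1,y_2)=\emptyset$, then $d_{\mathcal R}(y_1,y_2)=\inf\emptyset=\infty$ and the inequality is vacuous, so assume $A(y_1,y_2)\neq\emptyset$ and fix an arbitrary triple $(s,x_1,x_2)\in A(y_1,y_2)$. By definition of $A(y_1,y_2)$ we have $s\in\T$, $x_1,x_2\in X_s$, $f_s(x_1)=y_1$, and $f_s(x_2)=y_2$. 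Since $d$ is ${\mathcal R}$-stable, $d(f_s(x_1),f_s(x_2))\leq d_s(x_1,x_2)$, that is, $d(y_1,y_2)\leq d_s(x_1,x_2)$. As the triple was arbitrary, taking the infimum over $A(y_1,y_2)$ yields $d(y_1,y_2)\leq d_{\mathcal R}(y_1,y_2)$, which is the first assertion.

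For the ``in particular'' clause: $d_{\mathcal R}$ is itself ${\mathcal R}$-stable, since for any $s\in\T$ and $x_1,x_2\in X_s$ the triple $(s,x_1,x_2)$ lies in $A(f_s(x_1),f_s(x_2))$, whence $d_{\mathcal R}(f_s(x_1),f_s(x_2))\leq d_s(x_1,x_2)$ directly from the defining infimum (this is also noted just before the lemma). Thus, under the hypothesis that $d_{\mathcal R}$ satisfies the triangle inequality and so is a pseudometric on $\im(f_\T)$, it is an ${\mathcal R}$-stable pseudometric which, by the first assertion, dominates every ${\mathcal R}$-stable pseudometric on $\im(f_\T)\times\im(f_\T)$; this is precisely the defining condition of ${\mathcal R}$-optimality, which constrains pseudometrics only through their values on $\im(f_\T)$. (If one insists on a pseudometric defined on all of $Y$, one may extend $d_{\mathcal R}$ by declaring every point of $Y\setminus\im(f_\T)$ to be at distance $\infty$ from every other point and at distance $0$ from itself; the triangle inequality is then immediate, and the extension changes nothing on $\im(f_\T)\times\im(f_\T)$.)

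There is essentially no obstacle here: the only points meriting a word of care are the convention $\inf\emptyset=\infty$, which makes the inequality trivially true whenever no single index $s\in\T$ carries representatives of both $y_1$ and $y_2$, and the mild discrepancy between ``pseudometric on $Y$'' in the definition of ${\mathcal R}$-optimality and the fact that $d_{\mathcal R}$ is naturally defined only on $\im(f_\T)$, handled by the parenthetical extension above.
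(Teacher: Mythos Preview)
Your proof is correct and follows exactly the same approach as the paper's: both argue that ${\mathcal R}$-stability gives $d(y_1,y_2)\leq d_s(x_1,x_2)$ for every $(s,x_1,x_2)\in A(y_1,y_2)$, and then take the infimum. Your version simply adds explicit handling of the edge cases (empty $A(y_1,y_2)$, extension from $\im(f_\T)$ to $Y$) that the paper leaves implicit.
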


\begin{proof} Let $d$ be an ${\mathcal R}$-stable pseudometric on $\im(f_\T)$.  Since $d$ is ${\mathcal R}$-stable, $d(y_1,y_2)\leq d_s(x_1,x_2)$ for all $(s,x_1,x_2)\in A(y_1,y_2)$.  Thus $d(y_1,y_2)\leq \inf_{(s,x_1,x_2)\in A(y_1,y_2)} d_s(x_1,x_2)=d_{\mathcal R}(y_1,y_2)$.  \end{proof}

It's easy to see that a pseudometric $d$ on $\im(f_\T)$ can be extended (non-uniquely) to a metric on $Y$; if $d$ is ${\mathcal R}$-optimal then, by definition, any extension to a pseudometric on $Y$ is as well.  Thus by the Lemma~\ref{InducedMetricOptimality}, if $d_{\mathcal R}$ is a pseudometric, then an ${\mathcal R}$-optimal pseudometric exists on $Y$; its restriction to $\im(f_\T)$ is $d_{\mathcal R}$.  It follows, for example, that for ${\mathcal R}$ as in Example~\ref{SublevelsetRipsOptimality} an ${\mathcal R}$-optimal metric exists. 

In Section~\ref{OptimalitySpecifics}, we'll show that for $Y=\obj^*(B_n$-mod$)$, $i\in \NN$, and $k=\Q$ or $k=\Z/p\Z$ for some prime $p$, the restriction of $d_I$ to the domain of $d_{{\mathcal R}_{1,i}}$ is equal to $d_{{\mathcal R}_{1,i}}$, so that $d_I$ is ${\mathcal R}_{1,i}$-optimal.  It will follow easily that $d_I$ is also ${\mathcal R}_1$-optimal.

  \section{Optimality of the Interleaving Distance (Relative to Sublevelset Persistence)}\label{OptimalitySpecifics}

\subsubsection{Summary of Results} 

The central result of this section is the following theorem:

\begin{thm}\label{MainOptimality} For $k=\Q$ or $\Z/p\Z$ for some prime $p$, and $i\in {\mathbb N}$, $d_I$ is ${\mathcal R}_{1,i}$-optimal. \end{thm}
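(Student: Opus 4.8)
The plan is to show that on $\im(f_{\T})$ the interleaving distance $d_I$ agrees with the induced semi‑pseudometric $d_{{\mathcal R}_{1,i}}$ of Section~\ref{InducedSemiPseudometrics}. First, $d_I$ is ${\mathcal R}_{1,i}$‑stable: this is exactly the content of Theorem~\ref{MultidimensionalSublevelsetStability} (the relative structure ${\mathcal R}_{1,i}$ indexes over all topological spaces). Applying Lemma~\ref{InducedMetricOptimality} to $d_I$ gives $d_I \le d_{{\mathcal R}_{1,i}}$ on $\im(f_{\T})$, and applying it to an arbitrary ${\mathcal R}_{1,i}$‑stable pseudometric $d'$ gives $d' \le d_{{\mathcal R}_{1,i}}$ there. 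Hence it suffices to prove the reverse inequality $d_{{\mathcal R}_{1,i}}(M,N) \le d_I(M,N)$ for all $M,N \in \im(f_{\T})$: this forces $d_I = d_{{\mathcal R}_{1,i}}$ on $\im(f_{\T})$, whence $d'(M,N) \le d_{{\mathcal R}_{1,i}}(M,N) = d_I(M,N)$, which is the definition of ${\mathcal R}_{1,i}$‑optimality of $d_I$. (The ${\mathcal R}_{1}$‑optimality of $d_I$ then follows exactly as Corollary~\ref{CorOptimality} is deduced, so I will not belabor it.)

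Now fix $M,N \in \im(f_{\T})$, put $\epsilon = d_I(M,N)$, and fix $\epsilon' > \epsilon$; by the definition of $d_I$ and Remark~\ref{EasyInterleavingRemark}, $M$ and $N$ are $\epsilon'$‑interleaved. Unwinding the definition of $d_{{\mathcal R}_{1,i}}$, it is enough to produce a single topological space $T$ and functions $g_1,g_2 : T\to\R^n$ with $d^S_T((T,g_1),(T,g_2)) \le \epsilon'$, $H_i\circ F^S(T,g_1)\cong M$ and $H_i\circ F^S(T,g_2)\cong N$; letting $\epsilon'\downarrow\epsilon$ then gives $d_{{\mathcal R}_{1,i}}(M,N)\le\epsilon$. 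This realization statement is Proposition~\ref{RealizationProp}, and proving it is the crux. Its input is Corollary~\ref{AlgebraicRealization} (the $\epsilon$‑interleaving case of Theorem~\ref{GeneralAlgebraicRealization}): from the $\epsilon'$‑interleaving we obtain $n$‑graded sets $\W_1,\W_2$ and homogeneous sets $\Y_1,\Y_2$ with
\[
M \cong \langle\W_1,\W_2(-\epsilon')\,|\,\Y_1,\Y_2(-\epsilon')\rangle,\qquad N\cong\langle\W_1(-\epsilon'),\W_2\,|\,\Y_1(-\epsilon'),\Y_2\rangle.
\]
Inspecting the construction behind Theorem~\ref{GeneralAlgebraicRealization}, these two presentations use the same underlying generators $\W_1\sqcup\W_2$ and the same underlying relations $\Y_1\sqcup\Y_2$; moreover, since each relation is a homogeneous element of an explicit free module, it is determined by a $k$‑valued coefficient matrix $(\lambda_{y,w})$ (the monomial prefactors being fixed by the grades), and these coefficients are the same in the two presentations. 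The presentations differ only by raising the grades of the $\W_2,\Y_2$ block (for $M$) versus the $\W_1,\Y_1$ block (for $N$) by $\epsilon'$ in every coordinate.

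I would then construct one $n$‑filtered CW‑complex $K$ realizing both presentations. Let $K$ have a single $0$‑cell, no cells in dimensions $1,\dots,i-1$, one $i$‑cell $e_w$ for each $w\in\W_1\sqcup\W_2$, and one $(i+1)$‑cell $e_y$ for each $y\in\Y_1\sqcup\Y_2$, with $e_y$ attached by a map $S^i\to\bigvee_w S^i$ whose collapse onto the $w$‑th sphere has degree $\lambda_{y,w}$; by the cellular boundary formula of Section~\ref{CWHomologyDef}, $\delta^K_{i+1}(e_y)=\sum_w\lambda_{y,w}e_w$. Here is where $i\in\NN$ is used: the $(i-1)$‑skeleton is a point, so $\delta^K_i=0$. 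Give $K$ the filtration function $\mu_1$ assigning each cell its grade in the presentation of $M$, and $\mu_2$ assigning each cell its grade in the presentation of $N$; these are legitimate $n$‑filtered structures because a homogeneous element of grade $v$ of a free module is supported on generators of grades $\le v$. By construction $\|\mu_1-\mu_2\|_\infty=\epsilon'$ on cells. Since $\delta^K_i=0$, the $a$‑graded piece of the cellular persistent homology of the $\mu_1$‑filtered $K$ is $(\text{present }i\text{-cells})/\delta^K_{i+1}(\text{present }(i+1)\text{-cells})$, which a direct comparison identifies with $M_a = \langle\W_1,\W_2(-\epsilon')\rangle_a/\langle\Y_1,\Y_2(-\epsilon')\rangle_a$ compatibly with the transition maps, and likewise for $\mu_2$ and $N$. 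A standard construction (that of Proposition~\ref{RealizationProp}) then produces a single space $T$ with functions $g_1,g_2:T\to\R^n$, $d^S_T((T,g_1),(T,g_2))\le\epsilon'$, whose sublevelset filtrations are levelwise homotopy equivalent to the $\mu_1$‑ and $\mu_2$‑filtered $K$; since levelwise homotopy equivalences preserve persistent homology and, by Remark~\ref{CellularAndSingularPersistentHomology}, cellular and singular persistent homology of a cellular filtration agree, we obtain $H_i\circ F^S(T,g_1)\cong M$ and $H_i\circ F^S(T,g_2)\cong N$, as required.

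The main obstacle is this realization step, and within it the requirement that each scalar $\lambda_{y,w}\in k$ occur as the degree of a self‑map of a sphere. A degree is an integer, and it acts on $H_i(S^i;k)\cong k$ by multiplication by its image in $k$; the set of such images is exactly the prime subfield of $k$. For $k=\Z/p\Z$ this is all of $k$ and finite complexes suffice, while for $k=\Q$ one must additionally realize reciprocals of integers, which can be arranged by enlarging the relevant $i$‑cell $e_w$ to an infinite mapping‑telescope subcomplex (a Moore complex for $\Q$ in degree $i$). For a field that is not a prime field, multiplication by a scalar outside the prime subfield is not a composite of degree maps, and the construction—and the theorem as proved this way—genuinely fails; this is precisely why the hypothesis is $k=\Q$ or $k=\Z/p\Z$.
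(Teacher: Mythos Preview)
Your overall architecture matches the paper's: reduce ${\mathcal R}_{1,i}$-optimality to $d_I=d_{{\mathcal R}_{1,i}}$ via Lemma~\ref{InducedMetricOptimality}, then reduce that to the geometric realization statement Proposition~\ref{RealizationProp}, then prove the realization by reading off a CW-complex from the paired presentations supplied by Corollary~\ref{AlgebraicRealization}. The cell structure (one $0$-cell, $i$-cells indexed by $\W$, $(i+1)$-cells indexed by $\Y$) and the use of $\delta^K_i=0$ are exactly what the paper does.

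The one place you and the paper genuinely diverge is the treatment of $k=\Q$. You assert that rational coefficients force you to realize reciprocals of integers as degrees, and you propose handling this by replacing $i$-cells with mapping telescopes for $M(\Q,i)$. The paper's maneuver is much simpler and avoids this entirely: for each relation $y=\sum_w a'_{wy}\,\varphi(gr(w),gr(y))(w)$ with $a'_{wy}\in\Q$, choose an integer $z$ clearing all denominators and set $a_{wy}=za'_{wy}\in\Z$; since $z$ is a unit in $\Q$, $y$ and $zy$ generate the same cyclic submodule, so $\langle\Y\rangle$ is unchanged and one may attach $e_y$ by an honest integer-degree map. This keeps the complex finite on each cell and makes the cellular chain computation (Lemma~\ref{PersistentAttachingMapsLem}) go through verbatim. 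Your telescope idea, by contrast, introduces infinitely many new $i$- and $(i+1)$-cells, all of which need grades compatible with \emph{both} filtrations $\mu_1,\mu_2$ and with the intended persistent homology; you have not worked this out, and it is not clear it can be made to work without further ideas.

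Two smaller points. First, the single $0$-cell must be assigned a grade $\le gr(w)$ for every $w\in\W$; when $GCD(\W)\notin\R^n$ no such grade exists, and the paper handles this separately (Section~\ref{AdaptingProof}) by replacing the $0$-cell with a copy of $\R$ carrying the diagonal function. Second, the passage from the discrete grades $\mu_1,\mu_2$ on cells to genuine continuous functions $g_1,g_2$ is not an external ``standard construction'' you may cite---it is part of what Proposition~\ref{RealizationProp} asserts. The paper does it directly on the CW-complex $X$ itself by radial linear interpolation on each disk, and then checks (Lemma~\ref{FunctionDistanceLemma}) that $\|\gamma_M-\gamma_N\|_\infty=\epsilon'$ and that each sublevelset deformation-retracts onto the corresponding subcomplex.
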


This theorem also yields the following weaker optimality result, which has aesthetic advantage of not depending in its formulation on a choice of homology dimension.
    
\begin{cor}\label{CorOptimality} For $k=\Q$ or $\Z/p\Z$ for some prime $p$, $d_I$ is ${\mathcal R}_1$-optimal. \end{cor}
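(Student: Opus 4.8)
The plan is to deduce Corollary~\ref{CorOptimality} from Theorem~\ref{MainOptimality} using essentially only the definitions of ${\mathcal R}_1$-optimality and ${\mathcal R}_{1,i}$-optimality together with Theorem~\ref{MultidimensionalSublevelsetStability}. First I would observe that the relative structures ${\mathcal R}_1$ and ${\mathcal R}_{1,i}$ are built from exactly the same pseudometric spaces $(\obj(C_T^S),d_T^S)$ and the same maps $f_{(T,i)}(T,g)=H_i\circ F^S(T,g)$; the only difference is that ${\mathcal R}_1$ collects all homology dimensions $i\in\Z_{\geq 0}$ into a single indexing set, whereas ${\mathcal R}_{1,i}$ fixes a dimension $i$. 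Consequently $\im(f_{{\mathcal R}_1})=\bigcup_{i\in\Z_{\geq 0}}\im(f_{{\mathcal R}_{1,i}})$, and a semi-pseudometric $d$ on $\obj^*(B_n\text{-mod})$ is ${\mathcal R}_1$-stable if and only if it is ${\mathcal R}_{1,i}$-stable for every $i\in\Z_{\geq 0}$.

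The first real step is to record that $d_I$ is ${\mathcal R}_1$-stable. This is immediate from Theorem~\ref{MultidimensionalSublevelsetStability}, which gives $d_I(H_i\circ F^S(X,f_1),H_i\circ F^S(X,f_2))\le d^S_X((X,f_1),(X,f_2))$ for \emph{all} $i\in\Z_{\geq 0}$, including $i=0$; this is precisely the stability condition defining ${\mathcal R}_1$-stability.

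The second step is the optimality half. Let $d'$ be any ${\mathcal R}_1$-stable pseudometric on $\obj^*(B_n\text{-mod})$; I must show $d'(y_1,y_2)\le d_I(y_1,y_2)$ for all $y_1,y_2\in\im(f_{{\mathcal R}_1})$. Fix such $y_1,y_2$; they lie in $\im(f_{{\mathcal R}_{1,i}})$ for some $i$, and we may choose $i\ge 1$ here is the one subtlety to handle. If $y_1,y_2$ already arise as $H_i\circ F^S$ for some $i\ge 1$, then since $d'$ is ${\mathcal R}_1$-stable it is in particular ${\mathcal R}_{1,i}$-stable, and Theorem~\ref{MainOptimality} (${\mathcal R}_{1,i}$-optimality of $d_I$ for $i\ge1$) gives $d'(y_1,y_2)\le d_I(y_1,y_2)$. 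If instead $y_1,y_2$ only arise with $i=0$, I would pass to a dimension shift: for a topological space $T$ with function $g:T\to\R^n$, one has $H_0\circ F^S(T,g)\cong H_1\circ F^S(\Sigma T,g')$ where $\Sigma T$ is the (fiberwise, or unreduced) suspension and $g'$ is the induced function, so that the pair of modules $H_0\circ F^S(T,g_1),H_0\circ F^S(T,g_2)$ is realized also by the ${\mathcal R}_{1,1}$ data with the same $d^S$-distance $d^S_{\Sigma T}((\Sigma T,g_1'),(\Sigma T,g_2'))=d^S_T((T,g_1),(T,g_2))$; applying ${\mathcal R}_{1,1}$-optimality then again yields $d'(y_1,y_2)\le d_I(y_1,y_2)$. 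Combining the two steps, $d_I$ is ${\mathcal R}_1$-optimal.

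The main obstacle is the $i=0$ case: Theorem~\ref{MainOptimality} is stated only for $i\in\NN$, and the excerpt itself flags (in the discussion after Example~\ref{SingleDimSublevelsetEx}) that the optimality proof does not directly cover $i=0$. So the real content of the corollary's proof is arranging that every pair of modules realizable in dimension $0$ is \emph{also} realizable, with the same distance, in some positive dimension — the suspension trick above — so that ${\mathcal R}_{1,1}$-optimality suffices. One must check carefully that the suspension construction is functorial on $C_T^S$, preserves the metric $d^S$ exactly, and shifts $H_0$ to $H_1$ on the relevant filtrations; alternatively, if the author prefers to avoid suspensions, one can instead argue directly that adding a disjoint basepoint and noting $\tilde H_0$ vs $H_0$ reduces the comparison to positive-dimensional data. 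Everything else is a formal unwinding of the definitions.
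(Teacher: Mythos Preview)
Your overall strategy---$d_I$ is ${\mathcal R}_1$-stable via Theorem~\ref{MultidimensionalSublevelsetStability}, any ${\mathcal R}_1$-stable pseudometric is ${\mathcal R}_{1,i}$-stable for each $i$, then reduce to ${\mathcal R}_{1,i}$-optimality for some fixed $i\ge 1$---is exactly the paper's approach. Where you diverge is in your handling of the ``$i=0$ case,'' and here you are working harder than necessary and in a direction that does not quite succeed.

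The paper does not treat $i=0$ separately at all. Instead it invokes Corollary~\ref{RealizationCor} (an immediate consequence of Proposition~\ref{RealizationProp}): for $k=\Q$ or $\Z/p\Z$ and any $i\in\NN$, \emph{every} $B_n$-persistence module is isomorphic to $H_i\circ F^S(X,\gamma)$ for some CW-complex $X$ and function $\gamma$. Hence $\im(f_{{\mathcal R}_{1,i}})=\obj^*(B_n\text{-mod})$ for each $i\ge 1$, and therefore $\im(f_{{\mathcal R}_1})=\obj^*(B_n\text{-mod})$ as well. So any $y_1,y_2\in\im(f_{{\mathcal R}_1})$ automatically lie in $\im(f_{{\mathcal R}_{1,1}})$, regardless of which homological degree originally produced them, and ${\mathcal R}_{1,1}$-optimality (Theorem~\ref{MainOptimality}) gives $d'(y_1,y_2)\le d_I(y_1,y_2)$ directly. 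No case distinction, no suspension.

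Your suspension argument, by contrast, has a genuine gap as written. The isomorphism $H_0(T)\cong H_1(\Sigma T)$ is false for unreduced homology; suspension shifts $\tilde H_0$ to $H_1$, so you lose a rank-one free summand. You would also have to specify the value of $g'$ at the cone points and check that the sublevelsets of $g'$ on $\Sigma T$ really are suspensions of the sublevelsets of $g$---neither is automatic. And the distance-preservation you highlight is not actually needed: once $y_1,y_2\in\im(f_{{\mathcal R}_{1,1}})$, the optimality statement itself delivers $d'\le d_I$. All of these complications evaporate once you use Corollary~\ref{RealizationCor}.
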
 

We also present analogues of Theorems~\ref{MainOptimality} and Corollary~\ref{CorOptimality} for well behaved $B_1$-persistence modules which hold for arbitrary fields $k$; these are given as Theorem~\ref{WellBehavedOptimality}.  Our analogue of Theorem~\ref{MainOptimality} for well behaved $B_1$-persistence modules holds for $i\in \Z_{\geq 0}$.

\begin{remark}
I suspect that theorem~\ref{MainOptimality} holds, more generally, for arbitrary fields and for $i\in \Z_{\geq 0}$, though we do not prove that here.  To orient the reader, it may be helpful to point out that such a generalization would subsume both Corollay~\ref{CorOptimality} and Theorem~\ref{WellBehavedOptimality}.
\end{remark}


\subsection{Optimality of Interleaving Distance and Geometric Lifts of Interleavings}

\begin{proof}[Proof of Theorem~\ref{MainOptimality}] Fix $k=\Q$ or $k=\Z/p\Z$ for some prime $p$.  For $i\in \Z_{\geq 0}$, Lemma~\ref{InducedMetricOptimality} implies that to prove that $d_I$ is ${\mathcal R}_{1,i}$-optimal, it's enough to show that the restriction of $d_I$ to the domain of $d_{{\mathcal R}_{1,i}}$ is equal to $d_{{\mathcal R}_{1,i}}$. We show that for $i\in {\mathbb N}$, this follows from the following proposition, which we can think of as guaranteeing the existence of ``geometric lifts" of interleavings to the category $C^S$.

\begin{prop}\label{RealizationProp}[Existence of Geometric Lifts of Interleavings] Let $k=\Q$ or $k=\Z/p\Z$ for some prime $p$.  If $i\in {\mathbb N}$, and $M$ and $N$ are $\epsilon$-interleaved $B_n$-modules, then there exists a CW-complex $X$ and continuous functions $\gamma_M,\gamma_N:X\to \R^n$ such that $H_i\circ F^S(X,\gamma_M)\cong M$, $H_i\circ F^S(X,\gamma_N)\cong N$, and $\|\gamma_M-\gamma_N\|_{\infty}=\epsilon$.\end{prop}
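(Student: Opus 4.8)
The plan is to deduce the proposition from our extrinsic characterization of $\epsilon$-interleaved pairs, Corollary~\ref{AlgebraicRealization}, by packaging the common presentation data into a single CW-complex and reading off $\gamma_M$ and $\gamma_N$ from two different grade assignments on its cells.

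First I would invoke Corollary~\ref{AlgebraicRealization} to obtain $n$-graded sets $\W_1,\W_2$ and homogeneous sets $\Y_1\subset\langle\W_1,\W_2(-\epsilon)\rangle$, $\Y_2\subset\langle\W_1(-\epsilon),\W_2\rangle$ with $M\cong\langle\W_1,\W_2(-\epsilon)\,|\,\Y_1,\Y_2(-\epsilon)\rangle$ and $N\cong\langle\W_1(-\epsilon),\W_2\,|\,\Y_1(-\epsilon),\Y_2\rangle$. Because $B_n$ is a monoid ring whose homogeneous components are one-dimensional, each homogeneous relation $y$ is a \emph{finite} sum $y=\sum_w k_{y,w}\,x^{\beta_{y,w}}w$ with $k_{y,w}\in k$ and $\beta_{y,w}\ge 0$ the difference of grades forced by homogeneity; and the inclusions $\Y_1\subset\langle\W_1,\W_2(-\epsilon)\rangle$, $\Y_2\subset\langle\W_1(-\epsilon),\W_2\rangle$ are exactly what guarantee $\mathrm{gr}'(w)\le\mathrm{gr}'(y)$ whenever $k_{y,w}\neq 0$, for either of the two shifted grade functions $\mathrm{gr}'$ used below. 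If $k=\Q$ I would first clear denominators, replacing each relation $y$ by an integer-coefficient scalar multiple $D_y\,y$; this alters neither the submodule generated, nor the grade of $y$, nor the isomorphism type of $M$ and $N$, and, together with the fact that over $\Z/p\Z$ every element is already the image of an integer, it is the sole point at which the restriction on $k$ is used.

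Next I would build $X$: one $0$-cell $v$, with $X^{(j)}=\{v\}$ for $j<i$; an $i$-cell $e_w$ for each $w\in\W_1\sqcup\W_2$, so that $X^{(i)}$ is a wedge of $i$-spheres; and an $(i+1)$-cell $e_y$ for each $y\in\Y_1\sqcup\Y_2$, attached by a ``pinch-then-degree'' map $S^i\to X^{(i)}$ chosen so that the cellular boundary formula gives $\delta^X_{i+1}(e_y)=\sum_w k_{y,w}\,e_w$; this is possible precisely because $i\ge 1$, each relation has finite support, and (after the clearing step) the $k_{y,w}$ are images of integers. I would then define $\gamma_M\colon X\to\R^n$ continuous by setting it equal to a fixed componentwise lower bound $c_v$ of all grades involved at $v$, equal to $\mathrm{gr}'_M$ of the corresponding generator or relation at the center of each positive-dimensional cell $e$, and interpolating by $\gamma_M(r\theta)=r\,\gamma_M(\theta)+(1-r)\,\mathrm{gr}'_M(e)$ on each closed cell via its characteristic map, where $\mathrm{gr}'_M$ equals $\mathrm{gr}$ on $\W_1,\Y_1$ and $\mathrm{gr}+\vec\epsilon$ on $\W_2,\Y_2$; and $\gamma_N$ identically but with the subscripts $1$ and $2$ interchanged. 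The inequality $\mathrm{gr}'(w)\le\mathrm{gr}'(y)$ makes this interpolation well defined (values on the boundary of a cell are $\le$ the cell's grade componentwise) and forces each sublevelset $F^S(X,\gamma_M)_a$ to deformation retract, compatibly with the filtration inclusions, onto the subcomplex $K_a$ spanned by the cells of grade $\le a$.

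Finally, computing cellular homology of $K_a$ --- with $\delta^X_i=0$ since the $(i-1)$-skeleton is a point --- gives $H_i(K_a)\cong C^{CW}_i(K_a)/\im\,\delta^X_{i+1}$, namely the $k$-span of $\{e_w:\mathrm{gr}'_M(w)\le a\}$ modulo $\{\sum_w k_{y,w}e_w:\mathrm{gr}'_M(y)\le a\}$; under the correspondence $e_w\leftrightarrow$ the image of $w$ and the monomial-exponent bookkeeping this is exactly $M_a$, naturally in $a$, so by the natural isomorphism between cellular and singular homology $H_i\circ F^S(X,\gamma_M)\cong M$ as $B_n$-persistence modules, and symmetrically $H_i\circ F^S(X,\gamma_N)\cong N$. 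Since $\gamma_M$ and $\gamma_N$ agree at $v$ and differ by exactly $\vec\epsilon$ at every cell center, while the convex interpolation keeps $\|\gamma_M(x)-\gamma_N(x)\|_\infty\le\epsilon$ everywhere, we conclude $\|\gamma_M-\gamma_N\|_\infty=\epsilon$. In the finitely presented case $\W_1,\W_2,\Y_1,\Y_2$ are finite, so $X$ is a finite CW-complex and $c_v$ exists trivially; in the general case one must be slightly more careful about the basepoint value (or replace the point $(i-1)$-skeleton by another contractible skeleton), but the argument is otherwise unchanged. I expect the one genuinely delicate step to be this realization of the prescribed chains $\sum_w k_{y,w}e_w$ as cellular boundaries of $(i+1)$-cells --- which is exactly where $k\in\{\Q\}\cup\{\Z/p\Z:p\text{ prime}\}$ is forced --- with the remainder being grade bookkeeping and the standard sublevelset-to-subcomplex deformation retraction.
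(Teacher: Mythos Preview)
Your proposal is correct and follows essentially the same route as the paper: invoke Corollary~\ref{AlgebraicRealization}, clear denominators, build a CW-complex with one $0$-cell, an $i$-cell per generator in $\W_1\cup\W_2$ and an $(i+1)$-cell per relation in $\Y_1\cup\Y_2$ attached via pinch-then-degree maps realizing the prescribed cellular boundaries, define $\gamma_M,\gamma_N$ by radial linear interpolation from the two shifted grade assignments, deformation retract sublevelsets onto the subcomplex filtration, and compute cellular persistent homology. Your parenthetical about replacing the point $(i-1)$-skeleton by another contractible skeleton when no finite lower bound $c_v$ exists is exactly what the paper does (it uses $\R$ with its standard CW structure and sets $\gamma_M=\gamma_N=r\vec 1$ there), and your identification of the attaching-map step as the place where both $i\ge 1$ and the restriction on $k$ enter is on the mark.
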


Sections~\ref{FirstPartOfProof}-\ref{AdaptingProof} are devoted to the proof of Proposition~\ref{RealizationProp}.

\begin{cor}\label{RealizationCor} For $k=\Q$ or $\Z/p\Z$ for some prime $p$, for every $B_n$-module $M$ and $i\in {\mathbb N}$, there exists a $CW$-complex $X$ and a continuous function $\gamma:X\to \R^n$ such that $H_i\circ F^S(X,\gamma)\cong M$. \end{cor}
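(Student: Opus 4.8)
The plan is to derive Corollary~\ref{RealizationCor} from Proposition~\ref{RealizationProp} by a trivial specialization: given a $B_n$-module $M$, the pair $(M,M)$ is $0$-interleaved (via the identity morphisms), so Proposition~\ref{RealizationProp} applied with $N=M$ and $\epsilon=0$ yields a CW-complex $X$ and continuous functions $\gamma_M,\gamma_N:X\to\R^n$ with $H_i\circ F^S(X,\gamma_M)\cong M$, $H_i\circ F^S(X,\gamma_N)\cong M$, and $\|\gamma_M-\gamma_N\|_\infty = 0$. The last condition forces $\gamma_M=\gamma_N$ pointwise, so setting $\gamma=\gamma_M$ we immediately obtain a CW-complex $X$ and continuous $\gamma:X\to\R^n$ with $H_i\circ F^S(X,\gamma)\cong M$, which is exactly the claim.

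Concretely, the proof will read: fix $k=\Q$ or $k=\Z/p\Z$, fix $i\in\mathbb N$ and a $B_n$-module $M$. Observe that $M$ and $M$ are $0$-interleaved, since the identity homomorphism $\mathrm{id}:M\to M(0)=M$ together with itself satisfies $\mathrm{id}(0)\circ\mathrm{id}=S(M,0)=\mathrm{id}$ (recalling $M(0)=M$ and $S(M,0)$ is the identity). Apply Proposition~\ref{RealizationProp} to get $X$, $\gamma_M$, $\gamma_N$ with the three stated properties; since $\|\gamma_M-\gamma_N\|_\infty=0$ we have $\gamma_M=\gamma_N=:\gamma$, and then $H_i\circ F^S(X,\gamma)=H_i\circ F^S(X,\gamma_M)\cong M$.

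I do not anticipate any real obstacle here, as the corollary is a one-line consequence once Proposition~\ref{RealizationProp} is in hand; the only point requiring minor care is confirming that the $\epsilon=0$ case of $\epsilon$-interleaving is genuinely the assertion that $M\cong N$ (so that applying the proposition with $N=M$ is legitimate), and that the proposition's hypothesis $i\in\mathbb N$ rather than $i\in\Z_{\geq 0}$ is respected — which it is, since the corollary is likewise only claimed for $i\in\mathbb N$. All the substantive work is deferred to the proof of Proposition~\ref{RealizationProp} itself, carried out in Sections~\ref{FirstPartOfProof}--\ref{AdaptingProof}.
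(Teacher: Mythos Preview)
Your proposal is correct and matches the paper's intent: the corollary is stated immediately after Proposition~\ref{RealizationProp} without a separate proof, as it follows at once by applying the proposition with $N=M$ and $\epsilon=0$. (You could even omit the observation that $\gamma_M=\gamma_N$, since the existence of $\gamma_M$ alone already furnishes the desired $\gamma$.)
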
  

To see that Proposition~\ref{RealizationProp} implies that $d_I$ is equal to $d_{{\mathcal R}_{1,i}}$ on their common domain, let $M$ and $N$ be two $B_n$-persistence modules such that $d_I(M,N)=\epsilon$.  Choose $\delta>0$.  Then $M$ and $N$ are are $(\epsilon+\delta)$-interleaved.  By the proposition, there exists a topological space $X$ and maps $\gamma_M,\gamma_N:X\to \R^n$ such that $H_i\circ F^S(X,\gamma_M)\cong M$, $H_i\circ F^S(X,\gamma_N)\cong N$, and $\|\gamma_M-\gamma_N\|_{\infty}=\epsilon+\delta$.  Thus by stability, $d_{{\mathcal R}_{1,i}}(M,N)\leq \epsilon+\delta$.  Since this holds for all $\delta>0$, 
$d_{{\mathcal R}_{1,i}}(M,N)\leq \epsilon$.  By Lemma~\ref{InducedMetricOptimality}, $d_{{\mathcal R}_{1,i}}(M,N)=\epsilon$.
\end{proof}   

Note that the extension of Theorem~\ref{MainOptimality} to the case $i=0$ would follow by the same argument from the following conjectural extension of Proposition~\ref{RealizationProp} to the case $i=0$.  

\begin{conjecture} Let $k=\Q$ or $k=\Z/p\Z$ for some prime $p$.  Let $M$ and $N$ be $\epsilon$-interleaved $B_n$-modules such that for some topological spaces $X_M$, $X_N$ and functions $f_M:X_M\to \R^n$, $f_N:X_N\to \R^n$, $H_0\circ F^S(X_M,f_M)\cong M$ and $H_0\circ F^S(X_N,f_N)\cong N$.  Then there exists a CW-complex $X$ and continuous functions $\gamma_M,\gamma_N:X\to \R^n$ such that $H_0\circ F^S(X,\gamma_M)\cong M$, $H_0\circ F^S(X,\gamma_N)\cong N$, and $\|\gamma_M-\gamma_N\|_{\infty}=\epsilon$.
\end{conjecture}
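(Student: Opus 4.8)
The plan is to reduce the statement to a purely combinatorial assertion about $\mathbf{Set}$-valued functors on $\R^n$ and then realize that assertion geometrically, mimicking the pushout construction used in the proof of Theorem~\ref{GeneralAlgebraicRealization}. First I would record the structural feature of $0^{\mathrm{th}}$ persistent homology that forces the extra hypothesis: for any $(X,f)$, the module $H_0 \circ F^S(X,f)$ is obtained by applying the free-$k$-vector-space functor levelwise to the functor $P_f:\R^n\to\mathbf{Set}$ sending $a\mapsto\pi_0(f_a)$, with transition maps induced by inclusions of sublevelsets; in particular $(H_0\circ F^S(X,f))_a$ carries a distinguished basis and all transition maps are $0$--$1$ matrices with one $1$ per column. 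So the hypothesis on $M$ and $N$ supplies (appropriately finitely-presented, after a discretization reduction of the general case along the lines of Lemma~\ref{WellBehavedApproximation}) $\mathbf{Set}$-valued functors $P,Q:\R^n\to\mathbf{Set}$ with $M\cong k[P]$ and $N\cong k[Q]$. I would then prove a $\mathbf{Set}$-valued realization lemma: any such $P$ is isomorphic to $a\mapsto\pi_0(F^S(X,\gamma)_a)$ for an explicitly built CW-complex $X$, with $0$-cells tracking the components that appear and $1$-cells tracking mergers, all placed at the grades dictated by a presentation of $P$; moreover the construction is functorial in $P$ to the extent needed to accommodate a prescribed pair of gradings differing by a fixed shift. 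For $n=1$ this is the standard realization of merge forests; for general $n$ it is the multiparameter analogue.

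Granting the realization lemma, the target reduces to the following: if $k[P]$ and $k[Q]$ are $\epsilon$-interleaved as $B_n$-modules, then one can build a single CW-complex $X$ together with two functions $\gamma_M,\gamma_N:X\to\R^n$ with $\gamma_M-\gamma_N\equiv\vec\epsilon$, $H_0\circ F^S(X,\gamma_M)\cong M$, and $H_0\circ F^S(X,\gamma_N)\cong N$. The intended construction of $X$ would parallel verbatim the quotient construction in the proof of Theorem~\ref{GeneralAlgebraicRealization} (Lemma~\ref{ChazalInterpolationLemma} together with the subsequent lifting of the interleaving homomorphisms to free covers), but carried out in $\mathbf{Set}$ rather than in $B_n$-mod: one assembles, in the manner of Corollary~\ref{AlgebraicRealization}, compatible $\mathbf{Set}$-valued presentations of $P$ and of $Q$ whose generator data overlap in a copy of $\mathcal W_1$ and a shifted copy of $\mathcal W_2$, glues them into one $\mathbf{Set}$-valued functor, and applies the realization lemma with the two gradings. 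The only new input over the module-level argument is that every piece in sight — the generators, the merge relations, and the interpolating maps — must be combinatorial (induced by maps of components), which is automatic once the interleaving itself is combinatorial.

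The main obstacle is therefore the step of replacing the given $k$-linear $\epsilon$-interleaving homomorphisms $f:M\to N(\epsilon)$ and $g:N\to M(\epsilon)$ by a $\mathbf{Set}$-valued $\epsilon$-interleaving of $P$ and $Q$, i.e.\ natural transformations $P\to Q(\epsilon)$ and $Q\to P(\epsilon)$ of functors to $\mathbf{Set}$ whose composites are the set-level transition maps. A $k$-linear interleaving need not send the basis of path components to basis elements — it may send a component to a combination such as $c_1-c_2+c_3$ — so this is a genuine combinatorial lifting problem, exactly analogous to the role played by Proposition~\ref{RealizationProp} (the lifting of interleavings to a category of $\R^n$-valued functions) in the proof of Theorem~\ref{MainOptimality}. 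I expect this to be precisely where the hypothesis $k=\Q$ or $k=\Z/p\Z$ enters, and for the same reason: over these fields one has a counting/averaging mechanism that lets one coherently choose, across all grades at once, a monomial ($0$--$1$, one $1$ per column) representative of each interleaving map while preserving the interleaving identities. I would attempt it by induction on the total number of generators and relations in minimal presentations of $M$ and $N$, at each step fixing the combinatorial structure on one new generator or relation compatibly with those already chosen and absorbing any incompatibility into an arbitrarily small enlargement $\epsilon\mapsto\epsilon+\delta$ of the shift, exactly as in the induction behind Proposition~\ref{RealizationProp} (Sections~\ref{FirstPartOfProof}--\ref{AdaptingProof}); a final exactness cleanup — shrinking the $\delta$-enlarged interleaving back to a genuine $\epsilon$-interleaving using that the relevant transition maps near the critical grades are isomorphisms, as in the proof of Theorem~\ref{InterleavingThm} — would then recover the sharp equality $\|\gamma_M-\gamma_N\|_\infty=\epsilon$. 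Whether such a combinatorial lift always exists is, as far as I can see, essentially the entire content of the conjecture, so this is the step demanding the most care.
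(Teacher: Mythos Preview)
The paper does not prove this statement: it is stated as a Conjecture, and immediately afterward the paper says ``We do not prove this conjecture here.'' So there is no proof in the paper to compare against; the conjecture is left open.

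Regarding your proposal on its own terms: the reduction you outline is sound and you correctly identify the crux. The module $H_0\circ F^S(X,f)$ is indeed $k[\pi_0(F^S(X,f))]$ levelwise, so the hypothesis that $M$ and $N$ arise as $H_0$ of sublevelset filtrations is exactly the statement that $M\cong k[P]$ and $N\cong k[Q]$ for $\mathbf{Set}$-valued functors $P,Q$, and a Set-level $\epsilon$-interleaving of $P,Q$ would indeed realize geometrically via a graph construction. But the step you flag as ``the step demanding the most care'' --- lifting a $k$-linear $\epsilon$-interleaving of $k[P]$ and $k[Q]$ to a $\mathbf{Set}$-level $\epsilon$-interleaving of $P$ and $Q$ --- is the entire content of the conjecture, and your proposal does not actually carry it out. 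The inductive scheme you sketch is not specified in enough detail to see why it would terminate with a genuine combinatorial lift, and your invocation of a ``counting/averaging mechanism'' over $\Q$ or $\Z/p\Z$ is not how the field hypothesis is used in the paper: in the proof of Proposition~\ref{RealizationProp} for $i\geq 1$, the restriction on $k$ enters only because cellular attaching maps have \emph{integer} degrees (over $\Q$ one rescales rational coefficients to integers; over $\Z/p\Z$ the integers already surject onto $k$), not through any averaging of interleaving maps. There is also no ``induction behind Proposition~\ref{RealizationProp}'' of the kind you describe; that proof is a direct one-shot construction from the presentations supplied by Corollary~\ref{AlgebraicRealization}. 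So the analogy you are drawing does not supply a mechanism for the combinatorial lift, and the proposal, as written, leaves the conjecture open --- which is consistent with the paper's own stance.
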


We do not prove this conjecture here.

\begin{proof}[Proof of Corollary~\ref{CorOptimality}] Fix any $i\in {\mathbb N}$.  Write ${\mathcal R}_1=(S,X_S,f_S)$, ${\mathcal R}_{1,i}=(S',X_{S'},f_{S'})$.  By Theorem~\ref{MultidimensionalSublevelsetStability}, $d_I$ is ${\mathcal R}_1$-stable.  Further, any ${\mathcal R}_1$-stable pseudometric $d'$ on $\obj^*(B_n$-mod$)$ is ${\mathcal R}_{1,i}$-stable.  By Corollary~\ref{RealizationCor}, when $k=\Q$ or $\Z/p\Z$ for some prime $p$, $\im(f_{S'})=\im(f_{S})=\obj^*(B_n$-mod$)$.  Thus, if $d'$ is any ${\mathcal R}_1$ stable metric, $d'(M,N)\leq d_I(M,N)$ for any $M,N\in \im(f_S)$ by the ${\mathcal R}_{1,i}$-optimality of $d_I$.  Thus $d_I$ is ${\mathcal R}_1$-optimal. \end{proof}

\subsection{Optimality of the Bottleneck Distance}
Before proceeding with the proof of Proposition~\ref{RealizationProp} we present optimality results for the bottleneck distance.  

Let $Y$ denote the set of isomorphism classes of tame $B_1$-persistence modules.  Since the bottleneck distance $d_B$ is only defined between elements of $Y$, formulating statements about the optimality of $d_B$ requires that we consider a relative structure on $Y$ rather than on $\obj^*(B_1$-mod$)$.  We can (in the obvious way) define restrictions ${\mathcal R}_2$ and ${\mathcal R}_{2,i}$ of the relative structures ${\mathcal R}_1$ and ${\mathcal R}_{1,i}$ to relative structures on $Y$. 

Then, given Proposition~\ref{RealizationProp}, the proofs of Theorem~\ref{MainOptimality} and Corollary~\ref{CorOptimality} adapt to give that for $k=\Q$ or $k=\Z/p\Z$ for $p$ a prime, $d_B$ is ${\mathcal R}_{2}$-optimal, and for any $i\in \NN$, $d_B$ is also ${\mathcal R}_{2,i}$-optimal.

Moreover, if we further restrict our attention to well behaved $B_1$-persistence modules then we can prove an optimality result for arbitrary fields $k$ and persistent homology of any dimension.  

The key is the following variant of Proposition~\ref{RealizationProp}:

\begin{prop}\label{WellBehavedRealizationProp} Let $k$ be any field and let $i\in \Z_{\geq 0}$.  Let $M$ and $N$ be well behaved $B_1$-persistence modules with $d_B(M,N)=\epsilon$.  If $i=0$, assume further that for some topological spaces $X_M$, $X_N$ and functions $f_M:X_M\to \R$, $f_N:X_N\to \R$, $H_0\circ F^S(X_M,f_M)\cong M$ and $H_0\circ F^S(X_N,f_N)\cong N$.  Then for any $\delta>0$, there exists a CW-complex $X$ and continuous functions $\gamma_M,\gamma_N:X\to \R$ such that $H_i\circ F^S(X,\gamma_M)\cong M$, $H_i\circ F^S(X,\gamma_N)\cong N$, and $\|\gamma_M-\gamma_N\|_{\infty}\leq\epsilon+\delta$.\end{prop}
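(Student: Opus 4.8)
The plan is to realize $M$ and $N$ over a common CW-complex block by block, combining the structure theorem for well behaved persistence modules with an explicit geometric realization of interval modules by sublevelset filtrations. First I would apply Theorem~\ref{WellBehavedStructureThm} to write $M\cong\bigoplus_{(a,b)\in\supp(\D_M)}C(a,b)^{\D_M(a,b)}$ and $N\cong\bigoplus_{(a,b)\in\supp(\D_N)}C(a,b)^{\D_N(a,b)}$, both sums over countable index sets. Since $d_B(M,N)=\epsilon=d_B(\D_M,\D_N)$, the given $\delta>0$ furnishes (exactly as in the proof of Lemma~\ref{InterleavingEqualsBottleneckForWellBehaved}) persistence diagrams $\D_M'\leq\D_M$, $\D_N'\leq\D_N$ and a multibijection $\gamma$ between $\D_M'$ and $\D_N'$ with $\tfrac12(b-a)\leq\epsilon+\delta$ for every $(a,b)\in\supp(\D_M-\D_M')\cup\supp(\D_N-\D_N')$ and $\|x-y\|_\infty\leq\epsilon+\delta$ for every $(x,y)\in\supp(\gamma)$. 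I would then split $M\cong M'\oplus M''$, $N\cong N'\oplus N''$ with $\D_{M'}=\D_M'$, $\D_{N'}=\D_N'$, $\D_{M''}=\D_M-\D_M'$, $\D_{N''}=\D_N-\D_N'$, so that the matched summands of $M'$ and $N'$ are paired by $\gamma$ while $M''$, $N''$ collect the short unmatched summands.

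Assume first $i\geq1$. For $(a,b)\in\bar\R^2_+$ write $\kappa_{a,b}\colon\R\to\R$, $\kappa_{a,b}(s)=\max(a,\min(s,b))$ (read with the evident conventions at $\pm\infty$), and note $\sup_s|\kappa_{a,b}(s)-\kappa_{a',b'}(s)|=\|(a,b)-(a',b')\|_\infty$ whenever the two pairs have the same pattern of infinite coordinates. I would use two families of blocks. For a matched pair $\bigl((a,b),(a',b')\bigr)\in\supp(\gamma)$ of multiplicity $m$, take the CW-complex $Z=S^i\times[\min(a,a'),\max(b,b')]$, collapsing the top copy of $S^i$ to a point when $\max(b,b')<\infty$, and put $\gamma^M([x,s])=\kappa_{a,b}(s)$, $\gamma^N([x,s])=\kappa_{a',b'}(s)$ (with the collapsed point sent to $b$, resp.\ $b'$); a routine inspection of sublevelsets, using that $S^i\times[\ell,t]$ deformation retracts onto $S^i\times\{\ell\}$ and that the relevant top cone is contractible, gives $H_i\circ F^S(Z,\gamma^M)\cong C(a,b)$ and $H_i\circ F^S(Z,\gamma^N)\cong C(a',b')$, while $\|\gamma^M-\gamma^N\|_\infty=\|(a,b)-(a',b')\|_\infty\leq\epsilon+\delta$ (a matched pair of finite cost necessarily has $a=-\infty\Leftrightarrow a'=-\infty$ and $b=\infty\Leftrightarrow b'=\infty$). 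For an unmatched summand $(a,b)\in\supp(\D_M-\D_M')$ of multiplicity $m$ (necessarily $a,b$ finite, as its width is finite) take the contractible $Z=S^i\times[a,b]/(S^i\times\{b\})$, set $\gamma^M([x,s])=\kappa_{a,b}(s)$ and $\gamma^N\equiv\tfrac12(a+b)$; then $H_i\circ F^S(Z,\gamma^M)\cong C(a,b)$, $H_i\circ F^S(Z,\gamma^N)\cong0$ (since $Z$ is contractible and $i\geq1$), and $\|\gamma^M-\gamma^N\|_\infty=\tfrac12(b-a)\leq\epsilon+\delta$; symmetrically for unmatched summands of $N$. In each block I take $m$ disjoint copies, let $X$ be the disjoint union of all blocks, and let $\gamma_M,\gamma_N$ be assembled from the $\gamma^M$'s and $\gamma^N$'s. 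Since $H_i\circ F^S$ sends disjoint unions to direct sums, $H_i\circ F^S(X,\gamma_M)\cong M'\oplus M''\cong M$ and $H_i\circ F^S(X,\gamma_N)\cong N'\oplus N''\cong N$, and $\|\gamma_M-\gamma_N\|_\infty\leq\epsilon+\delta$.

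The case $i=0$ is different, because $H_0$ of a nonempty space is never zero, so an interval module $C(a,b)$ with $b<\infty$ cannot occur as a summand of $H_0\circ F^S(W,h)$ for any $W,h$. Here I would invoke the extra hypothesis: since $M\cong H_0\circ F^S(X_M,f_M)$ and $N\cong H_0\circ F^S(X_N,f_N)$, every summand of $\D_M$ and of $\D_N$ has the form $C(a,\infty)$ with $a\in[-\infty,\infty)$. Every such summand has infinite diagonal distance, so in the chosen matching there are no unmatched summands (hence $\D_M'=\D_M$, $\D_N'=\D_N$) and each matched pair is $\bigl((a,\infty),(a',\infty)\bigr)$ with $|a-a'|\leq\epsilon+\delta$. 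For such a pair of multiplicity $m$ I would take the one-dimensional CW-complex $Z=[\min(a,a'),\infty)$ — or $Z=\R$ if $a=a'=-\infty$ — with $\gamma^M(s)=\max(a,s)$, $\gamma^N(s)=\max(a',s)$; then $H_0\circ F^S(Z,\gamma^M)\cong C(a,\infty)$, $H_0\circ F^S(Z,\gamma^N)\cong C(a',\infty)$, and $\|\gamma^M-\gamma^N\|_\infty=|a-a'|\leq\epsilon+\delta$. Taking $m$ disjoint copies of each block and assembling $X,\gamma_M,\gamma_N$ as before completes this case.

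I expect the main obstacle to be the bookkeeping around the endpoint cases $a=-\infty$ or $b=\infty$: checking continuity of the clamp functions on the collapsed spaces $Z$, verifying that matched pairs of finite cost have consistent infinity patterns so that the clamp-difference computation really yields $\|(a,b)-(a',b')\|_\infty$, and — for $i=0$ — confirming that the hypothesis genuinely forces every summand of $\D_M$ and $\D_N$ to be of the form $C(a,\infty)$ and hence that no summand can be left unmatched. The homotopy-type computations themselves (sublevelsets of the clamp functions being empty, homotopy equivalent to $S^i$, or contractible, with the internal transition maps isomorphisms) are routine, as is the observation that a countable disjoint union of CW-complexes is a CW-complex.
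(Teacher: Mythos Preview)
Your argument for $i\geq 1$ is correct and is exactly the approach the paper has in mind: decompose via Theorem~\ref{WellBehavedStructureThm}, choose a near-optimal matching from the definition of $d_B$, and realize each matched or unmatched interval summand on its own block, taking $X$ to be the disjoint union. The paper's own proof says only that ``an easy constructive proof\ldots follows from the definition of $d_B$ and the structure theorem'' and leaves the details to the reader, so your explicit block constructions in fact supply more than the paper does.

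The $i=0$ case, however, contains a genuine error. You assert that because $H_0$ of a nonempty space is never zero, a module $C(a,b)$ with $b<\infty$ ``cannot occur as a summand of $H_0\circ F^S(W,h)$.'' This conflates ``cannot be the entire module'' with ``cannot be a direct summand.'' For instance, take $W=[-1,1]$ and $h(s)=-|s|$; then the sublevelsets are empty for $t<-1$, two disjoint intervals for $-1\leq t<0$, and all of $W$ for $t\geq 0$, so $H_0\circ F^S(W,h)\cong C(-1,\infty)\oplus C(-1,0)$, which has a finite bar. Thus the extra hypothesis does \emph{not} force every summand of $\D_M$ and $\D_N$ to be of the form $C(a,\infty)$, and your construction (one point per matched infinite pair, nothing else) does not cover the general case. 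You flagged exactly this verification as the expected obstacle, and it does fail.

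More structurally, for $i=0$ a disjoint-union-of-blocks scheme cannot work for finite bars at all: any nonempty block contributes an $H_0$-class that persists to $+\infty$, so a block can never produce a lone finite bar, nor can it produce the zero module needed for an unmatched short bar. What the realizability hypothesis actually buys is a merge-forest structure: every finite bar $C(a,b)$ arises from a component merging into another one at height $b$. A correct $i=0$ construction must therefore build $X$ as a graph (a forest with one tree per matched pair of infinite bars) and realize finite bars as branches attached along that forest, arranging $\gamma_M$ and $\gamma_N$ on each branch and its attaching segment so that the branch merges at height $b$ for $\gamma_M$ and at height $b'$ for $\gamma_N$ while keeping $\|\gamma_M-\gamma_N\|_\infty\leq\epsilon+\delta$. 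This is still elementary, but it requires substantially more care than the single sentence you give it.
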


\begin{proof}  
An easy constructive proof, similar on a high level to our proof of Proposition~\ref{RealizationProp} below but much simpler, follows from the definition of $d_B$ and the structure theorem for well behaved persistence modules (Theorem~\ref{WellBehavedStructureThm}).  We leave the details to the reader.   
\end{proof}

Now let $Y'$ denote the set of isomorphism classes of well behaved $B_1$-persistence modules.  Let ${\mathcal R}_3$ and ${\mathcal R}_{3,i}$ denote the restrictions of the relative structures ${\mathcal R}_1$ and ${\mathcal R}_{1,i}$ to relative structures on $Y'$.

Our optimality result for the bottleneck distance on well behaved persistence modules is the following:

\begin{thm}\label{WellBehavedOptimality}
For any field $k$ and $i\in \Z_{\geq 0}$, $d_B$ is ${\mathcal R}_{3,i}$-optimal.  Further, $d_B$ is ${\mathcal R}_{3}$-optimal.  
\end{thm}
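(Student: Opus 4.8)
The plan is to reduce the optimality of $d_B$ on well behaved modules to the induced‑semi‑pseudometric machinery of Section~\ref{InducedSemiPseudometrics}, exactly as in the proof of Theorem~\ref{MainOptimality}, but with Proposition~\ref{WellBehavedRealizationProp} replacing Proposition~\ref{RealizationProp}. First I would observe that $d_B$ is ${\mathcal R}_{3,i}$‑stable: this is the restriction of the stability result Theorem~\ref{OrdinarySublevelsetStability} (the $n=1$ case of Theorem~\ref{MultidimensionalSublevelsetStability}) to well behaved modules, noting that the persistent homology of a sublevelset filtration of an $\R$‑valued function on a topological space, when it is well behaved, has a well defined bottleneck distance. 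So it remains to show $d_B$ restricted to $\im(f_{\T})$ equals $d_{{\mathcal R}_{3,i}}$; by Lemma~\ref{InducedMetricOptimality} this suffices for ${\mathcal R}_{3,i}$‑optimality.

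The key step is the inequality $d_{{\mathcal R}_{3,i}}(M,N)\leq d_B(M,N)$ for well behaved $M,N$ (the reverse follows from ${\mathcal R}_{3,i}$‑stability of $d_B$ together with the definition of $d_{{\mathcal R}_{3,i}}$ as an infimum). For this, suppose $d_B(M,N)=\epsilon$ and fix $\delta>0$. When $i\geq 1$, Proposition~\ref{WellBehavedRealizationProp} applies with no side hypothesis and yields a CW‑complex $X$ and continuous $\gamma_M,\gamma_N:X\to\R$ with $H_i\circ F^S(X,\gamma_M)\cong M$, $H_i\circ F^S(X,\gamma_N)\cong N$, and $\|\gamma_M-\gamma_N\|_\infty\leq\epsilon+\delta$; hence $(T,(X,\gamma_M),(X,\gamma_N))$, with $T=X$, witnesses $d_{{\mathcal R}_{3,i}}(M,N)\leq d^S_X((X,\gamma_M),(X,\gamma_N))\leq\epsilon+\delta$. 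Letting $\delta\to 0$ gives $d_{{\mathcal R}_{3,i}}(M,N)\leq\epsilon$, and combined with ${\mathcal R}_{3,i}$‑stability of $d_B$ this forces equality, so $d_B$ is ${\mathcal R}_{3,i}$‑optimal by Lemma~\ref{InducedMetricOptimality}. For $i=0$ there is the subtlety that $d_{{\mathcal R}_{3,0}}$ only records pairs of well behaved modules both of which arise as $H_0\circ F^S$ of some space‑function pair; but this is exactly the side hypothesis built into Proposition~\ref{WellBehavedRealizationProp} for the case $i=0$, so the argument goes through verbatim on the domain of $d_{{\mathcal R}_{3,0}}$, and $d_B$ is ${\mathcal R}_{3,0}$‑optimal as well.

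Finally, the ${\mathcal R}_3$‑optimality of $d_B$ follows from the ${\mathcal R}_{3,i}$‑optimality for any fixed $i\in\NN$ by the same reasoning as in the proof of Corollary~\ref{CorOptimality}: $d_B$ is ${\mathcal R}_3$‑stable by Theorem~\ref{MultidimensionalSublevelsetStability}; any ${\mathcal R}_3$‑stable pseudometric $d'$ on $Y'$ is a fortiori ${\mathcal R}_{3,i}$‑stable; and by the realization statement (Proposition~\ref{WellBehavedRealizationProp} with $\epsilon=0$, or simply by noting every well behaved module is $H_i\circ F^S$ of a CW‑complex for $i\geq 1$, which is the $M=N$ case) the image of $f$ for ${\mathcal R}_{3,i}$ equals that for ${\mathcal R}_3$, namely all of $Y'$. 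Hence for $M,N\in Y'$ we get $d'(M,N)\leq d_B(M,N)$ from ${\mathcal R}_{3,i}$‑optimality, so $d_B$ is ${\mathcal R}_3$‑optimal.

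The only genuine obstacle is Proposition~\ref{WellBehavedRealizationProp} itself, whose proof the excerpt defers (``we leave the details to the reader''); but since we are permitted to assume results stated earlier, and the needed realization is a direct and much simpler cousin of the construction underlying Proposition~\ref{RealizationProp} (build a CW‑complex as a wedge of the standard interval‑type pieces $C(a,b)$ dictated by the structure theorem Theorem~\ref{WellBehavedStructureThm}, with two height functions $\gamma_M,\gamma_N$ read off from a near‑optimal bottleneck matching between $\D_M$ and $\D_N$, plus contractible caps to kill unmatched bars in the right homological degree), the remaining work here is purely bookkeeping with the induced‑semi‑pseudometric definitions.
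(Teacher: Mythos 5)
Your proposal is correct and follows the same route the paper takes: the paper's proof of Theorem~\ref{WellBehavedOptimality} is simply a one-line instruction to substitute Proposition~\ref{WellBehavedRealizationProp} for Proposition~\ref{RealizationProp} in the proofs of Theorem~\ref{MainOptimality} and Corollary~\ref{CorOptimality}, and you have expanded exactly those ``minor modifications,'' including the observation that the extra hypothesis in Proposition~\ref{WellBehavedRealizationProp} for $i=0$ is precisely the condition that $M,N\in\im(f_\T)$ for ${\mathcal R}_{3,0}$, which is what lets the argument extend from $i\in\NN$ to $i\in\Z_{\geq 0}$.
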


\begin{proof}
Using Proposition~\ref{WellBehavedRealizationProp} in place of Proposition~\ref{RealizationProp}, the proofs of Theorem~\ref{MainOptimality} and Corollary~\ref{CorOptimality} carry over with only minor modifications to give the result.
\end{proof}

\begin{remark}\label{FrosiniRemark} In the case of 0-D ordinary persistence, our Theorem~\ref{WellBehavedOptimality} implies a slight weakening of the optimality result of \cite{d2010natural}.  It is easy to check that in the geometric context considered in \cite{d2010natural}, the persistent homology modules obtained must satisfy property 2 in the definition of a well behaved $B_1$-persistence module, but they needn't satisfy property 1.  To strengthen our optimality result to a full generalization of the result of \cite{d2010natural}, we'd want a generalization of Theorem~\ref{WellBehavedStructureThm} to all $B_1$-persistence modules satisfying property 2.  We presume that this can be obtained via a slight strengthening of Theorem~\ref{TamePersistenceStructureThm}, but we do not pursue the details of this here.   
\end{remark}


\subsection{Proof of Existence of Geometric Lifts of Interleavings, Part 1: Constructing the CW-complex}\label{ConstructingComplexSection}
The rest of Section~\ref{OptimalitySpecifics} is devoted to the proof of Proposition~\ref{RealizationProp}.

Corollary~\ref{AlgebraicRealization} gives us $n$-graded sets $\W_1,\W_2$ and homogeneous sets $\Y_1,\Y_2 \subset \langle \W_1,\W_2 \rangle$ such that  $\Y_1\in \langle\W_1,\W_2(-\epsilon)\rangle$, $\Y_2\in \langle\W_1(-\epsilon),\W_2\rangle$, and
\begin{align*} 
M &\cong \langle\W_1,\W_2(-\epsilon)|\Y_1,\Y_2(-\epsilon)\rangle,\\ 
N&\cong \langle\W_1(-\epsilon),\W_2|\Y_1(-\epsilon),\Y_2\rangle. \end{align*}

Given such $\W_1,\W_2,\Y_1,\Y_2$, we proceed with the proof of Proposition~\ref{RealizationProp}, beginning with the construction of the CW-complex $X$ whose existence is posited by the proposition.  

Write $\W=\W_1 \cup \W_2$ and $\Y=\Y_1 \cup \Y_2$.  If $M=N=0$, Proposition~\ref{RealizationProp} clearly holds, so we may assume without loss of generality that $M$ and $N$ are not both trivial.  Under this assumption, $\W\ne \emptyset$.

For an $n$-graded set $S$, let $GCD(S)=(v_1,...,v_n)$, where $v_i=\inf_{s\in S} gr(s)_i$.  In general, some of the components of $GCD(\W)$ may be equal to $-\infty$.  To simplify our exposition, we first present the remainder of the proof of Proposition~\ref{RealizationProp} in the special case that $GCD(\W)\in \R^n$.  The adaptation of our proof to the general case is reasonably straightforward; we outline this adaptation in Section~\ref{AdaptingProof}.

We'll define $X$ so that

\begin{enumerate*}
\item $X$ has a single 0-cell $B$.
\item $X$ has an $i$-cell $e^i_w$ for each $w\in \W$. 
\item $X$ has an $(i+1)$-cell $e^{i+1}_y$ for each $y\in \Y$.
\end{enumerate*}

For such $X$, the attaching map for each $i$-cell $e^i_w$ must be the constant map to $B$.  To define $X$, then, we need only to specify the attaching map $\sigma_y:S^i \to X^i$ for each $y\in \Y$.       

We do this for $k=\Q$, and leave to the reader the easy adaptation of the construction (and its use in the remainder of the proof of Proposition~\ref{RealizationProp}) to the case $k=\Z /p\Z$.  

For any $y\in \Y$, we may choose a finite set $\W_y\subset \W$ such that $gr(w)\leq gr(y)$ for each $w\in \W_y$, and 
\begin{align}\label{EquationForY}
y=\sum_{w\in \W_y} a'_{wy} \varphi_{\langle\W\rangle}(gr(w),gr(y))(w)
\end{align}
 for some $a'_{wy}\in \Q$.  There's an integer $z$ such that for each $a'_{wy}$ in the sum, $z a'_{wy}\in \Z$.  Let $a_{wy}=z a'_{wy}$.  For $w\not \in W_y$, define $a_{wy}=0$.



\begin{lem}\label{AttachingMapsLem}
There exists a choice of attaching map $\sigma_y:S^i \to X^i$ for each $y\in \Y$ such that the CW-complex $X$ constructed via these attaching maps has $\delta^X_{i+1}(e^{i+1}_y)=\sum_{w\in \W} a_{wy} e^i_w$ for all $y\in \Y$.
\end{lem}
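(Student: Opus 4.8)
\textbf{Proof plan for Lemma~\ref{AttachingMapsLem}.}

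The goal is to realize a prescribed integer matrix $(a_{wy})$ as the cellular boundary matrix $\delta^X_{i+1}$ of a CW-complex $X$ built with one $0$-cell $B$, an $i$-cell $e^i_w$ for each $w\in \W$, and an $(i+1)$-cell $e^{i+1}_y$ for each $y\in\Y$. Since all the $i$-cells are attached to $B$ by constant maps, the $i$-skeleton $X^i$ is a wedge $\bigvee_{w\in\W}S^i_w$ of $i$-spheres. So constructing $X$ amounts to choosing, for each $y\in\Y$, an attaching map $\sigma_y:S^i\to \bigvee_{w\in\W}S^i_w$, and the content of the lemma is that these can be chosen so that, for each $y$, the degree of $q_w\circ\sigma_y$ (in the notation of the cellular boundary formula recalled in Section~\ref{CWHomologyDef}, where $q_w:X^i\to X^i/(e^i_w)^c\cong S^i$ is the collapse onto the $w$-th sphere) equals $a_{wy}$.

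The plan is to build $\sigma_y$ explicitly as a composite. First I would use the standard ``pinch'' map $p:S^i\to \bigvee_{w\in\W_y}S^i$ (collapsing the complement of $|\W_y|$ disjoint open $i$-disks in $S^i$ to a point), which on each wedge summand has the property that collapsing to the $w$-th factor gives a degree-$1$ map. Then on the $w$-th summand I would post-compose with a self-map $\mu_{a_{wy}}:S^i\to S^i$ of degree $a_{wy}$ (which exists for every integer, and here $i\geq 1$ since $i\in\NN$, so $S^i$ admits self-maps of every degree); finally I include each $S^i_w$ into $X^i=\bigvee_{w\in\W}S^i_w$. Setting $\sigma_y=\big(\bigvee_{w\in\W_y}(\iota_w\circ\mu_{a_{wy}})\big)\circ p$, the composite $q_w\circ\sigma_y$ is, up to homotopy, the degree-$a_{wy}$ map when $w\in\W_y$ and the constant map (degree $0$) otherwise, because collapsing $X^i$ onto $S^i_w$ kills all wedge summands except the $w$-th. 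By the cellular boundary formula, $\delta^X_{i+1}(e^{i+1}_y)=\sum_{w\in\W}\deg(q_w\circ\sigma_y)\,e^i_w=\sum_{w\in\W}a_{wy}e^i_w$, which is exactly what is claimed. (For $k=\Z/p\Z$ the integers $a_{wy}$ are still well-defined integer coefficients; the only change is that $\deg$ is then read modulo $p$, and since equation~(\ref{EquationForY}) already holds over $\Q$, reduction is harmless; this is the ``easy adaptation'' the text defers to the reader.)

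There is one bookkeeping point to watch: the cellular boundary formula as stated sums over $i$-cells $e^i_\beta$ with $\im(\sigma_y)\cap e^i_\beta\neq\emptyset$, and one must check the image of $\sigma_y$ meets only finitely many cells — here trivially true, as $\W_y$ is finite and there are no cells of intermediate dimension. I would also note that the construction is consistent with the intended grading (each $i$-cell $e^i_w$ will be assigned grade $gr(w)$ and each $(i+1)$-cell $e^{i+1}_y$ grade $gr(y)$ in the subsequent part of the proof of Proposition~\ref{RealizationProp}), and that $gr(w)\leq gr(y)$ for $w\in\W_y$ is what makes the later filtration construction work — but that lies outside the present lemma. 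The only genuinely substantive ingredient is the existence of the pinch map with the stated collapse behavior together with self-maps of $S^i$ of arbitrary degree; both are completely standard (see \cite{hatcher2002algebraic}), so I expect no real obstacle here — the lemma is essentially a packaging statement, and the main ``work'' is simply to write down $\sigma_y$ and invoke the cellular boundary formula correctly.
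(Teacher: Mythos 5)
Your proof is correct and takes essentially the same route as the paper: the paper defines $\sigma_y$ as a representative of the product $[\sigma_{w_1y}]\cdot\dots\cdot[\sigma_{w_ly}]$ in $\pi_i(X^i,B)$, where each $\sigma_{wy}:(S^i,o)\to(S^i_w,B)$ is a based map of degree $a_{wy}$, and then reads off the cellular boundary from the degrees of the collapses $q_w\circ\sigma_y$. Your pinch-map composite is exactly the concrete model of that $\pi_i$ product, so the two write-ups differ only in packaging, not in substance.
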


\begin{proof}  
Let $\rho^i$ be as defined in Section~\ref{CWHomologyDef}.  For each $w\in \W$, $\rho^i$ and the characteristic map $\Phi_w$ induce an identification of $\im(\Phi_w)$ with an $i$-sphere $S_w^i$.  We have that $(X^i,B)=\wedge_{w\in \W} (S^i_w,B)$.  Choose a basepoint $o\in S^i$ and for each $w\in \W_y$, let $\sigma_{wy}:(S^i,o)\to (S^i_w,B)$ be a based map of degree $a_{wy}$.  $[\sigma_{wy}]\in \pi_i(X^i,B)$, where $\pi_i(X^i,B)$ denotes the $i^{th}$ homotopy group of $X^i$ with basepoint $B$.  

Order the elements of $\W_y$ arbitrarily and call them $w_1,...w_l$.  Let $\sigma_y:(S^i,o)\to(X^i,B)$ be a map in $[\sigma_{w_1y}]\cdot [\sigma_{w_2y}] \cdot ... \cdot [\sigma_{w_ly}]\in \pi_i(X^i,B)$.  Then for any $w\in \W$, $q_w\circ \sigma_y$ is a map of degree $a_{wy}$.  (See Section~\ref{CWHomologyDef} for the definition of $q_w$). By the definition of $\delta^X_{i+1}$ given in Section~\ref{CWHomologyDef}, the lemma now follows. 
\end{proof}

We choose the attaching maps $\sigma_y$ so that $\delta^X_{i+1}$ has the property specified in Lemma~\ref{AttachingMapsLem}. 

\subsection{Proof of Existence of Geometric Lifts of Interleavings, Part 2: Defining $\gamma_M$ and $\gamma_N$}

Having defined the CW-complex $X$, we next define $\gamma_M,\gamma_N:X\to \R^n$.

Let ${\tilde X}=\{B\} \amalg_{w\in \W} D^i_w \amalg_{y\in \Y} D^{i+1}_y$. 

$X$ is the quotient of $\tilde X$ under the equivalence relation generated by the attaching maps of the cells of $X$.  Let $\pi:\tilde X  \to X$ denote the quotient map.  For a topological space $A$, let ${\mathfrak C}(A,\R^n)$, denote the space of continuous functions from $A$ to $\R^n$.  The map $\tilde \cdot:{\mathfrak C}(X,\R^n)\to {\mathfrak C}(\tilde X,\R^n)$ defined by ${\tilde f}(x)=f(\pi(x))$ is a bijective correspondence between elements of ${\mathfrak C}(X,\R^n)$ and elements of ${\mathfrak C}(\tilde X,\R^n)$ which are constant on equivalence classes.   

In what follows, we'll define $\gamma_M$ and $\gamma_N$ by specifying their lifts ${\tilde \gamma_M},{\tilde \gamma_N}$.

We'll take each of our functions ${\tilde \gamma_M},{\tilde \gamma_N}$ to have the property that for each disk of $\tilde X$, the restriction of the function to any {\it radial line segment} (i.e. a line segment from the origin of the disk to the boundary of the disk) is linear.  Given this assumption, to specify each function it is enough to specify its values on the origins of each disk of $\tilde X$.  We now do this.       

For any $i\in {\mathbb N}$ and any unit disk $D$ in $\R^i$, let $O(D)$ denote the origin of $D$.  

\begin{itemize*}
\item ${\tilde \gamma_M}(B)={\tilde \gamma_N}(B)=GCD(\W)$; 
\end{itemize*}
\begin{itemize*}
\item For $x\in \W_1 \cup \Y_1$, ${\tilde \gamma_M}(O(D_x))=gr(x)$;
\item For $x\in \W_2 \cup \Y_2$, ${\tilde \gamma_M}(O(D_x))=gr(x(-\epsilon))$.
\end{itemize*}
\begin{itemize*}
\item For $x\in \W_1 \cup \Y_1$, ${\tilde \gamma_N}(O(D_x))=gr(x(-\epsilon))$;
\item For $x\in \W_2 \cup \Y_2$, ${\tilde \gamma_N}(O(D_x))=gr(x)$.
\end{itemize*}

\begin{lem}\label{FunctionDistanceLemma}
$\|\gamma_M-\gamma_N\|_\infty=\epsilon$.
\end{lem}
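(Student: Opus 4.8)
The plan is to unwind the definitions of $\gamma_M$ and $\gamma_N$ and reduce the claim $\|\gamma_M - \gamma_N\|_\infty = \epsilon$ to a pointwise estimate on $\tilde X$, since $\|\gamma_M - \gamma_N\|_\infty = \sup_{x \in \tilde X} \|\tilde\gamma_M(x) - \tilde\gamma_N(x)\|_\infty$. First I would observe that both $\tilde\gamma_M$ and $\tilde\gamma_N$ are, by construction, piecewise linear along every radial line segment of every disk of $\tilde X$, and that any point of $\tilde X$ lies on such a radial segment joining the origin $O(D_x)$ of a disk to a boundary point. The boundary of a disk $D_x$ is glued via the attaching map to the $i$-skeleton, which (for the cells $e^i_w$) consists of the single $0$-cell $B$ together with the spheres $S^i_w$; so the boundary values of $\tilde\gamma_M$ and $\tilde\gamma_N$ on $\partial D_x$ are convex combinations of the values on the origins of lower cells and on $B$. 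Since the $\ell^\infty$ distance between two affine functions on a segment is maximized at an endpoint, and linearity is preserved under the radial parametrization, it suffices to bound $\|\tilde\gamma_M(p) - \tilde\gamma_N(p)\|_\infty$ where $p$ ranges over $B$ and over the origins $O(D_x)$ for $x \in \W \cup \Y$.

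Next I would check these finitely many (representative) cases directly. At $B$ we have $\tilde\gamma_M(B) = \tilde\gamma_N(B) = GCD(\W)$, so the difference is $0$. For $x \in \W_1 \cup \Y_1$, we have $\tilde\gamma_M(O(D_x)) = gr(x)$ and $\tilde\gamma_N(O(D_x)) = gr(x(-\epsilon))$; recalling from Section~\ref{ShiftsOfModules} and Section~\ref{Sec:FirstDefs} that $gr(x(-\epsilon)) = gr(x) + \vec\epsilon$ (shifting a graded set by $-\epsilon$ adds $\epsilon$ to each grade), the difference is the constant vector $-\vec\epsilon$, of $\ell^\infty$ norm exactly $\epsilon$. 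Symmetrically, for $x \in \W_2 \cup \Y_2$ we get $\tilde\gamma_M(O(D_x)) = gr(x(-\epsilon))$ and $\tilde\gamma_N(O(D_x)) = gr(x)$, again yielding difference of $\ell^\infty$ norm $\epsilon$. Thus at every vertex of the radial decomposition the difference has norm $0$ or $\epsilon$, giving the upper bound $\|\gamma_M - \gamma_N\|_\infty \leq \epsilon$.

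For the matching lower bound, I would use that $\W = \W_1 \cup \W_2 \neq \emptyset$ (established just above, under the reduction $M, N$ not both trivial; the trivial case is handled separately where $\epsilon = 0$ anyway since then $d_I(M,N) = 0$ can be realized with $\epsilon = 0$, or one simply notes the statement is vacuous/trivial). Pick any $w \in \W$; then $O(D_w)$ is a point of $\tilde X$ (and descends to a genuine point of $X$, being the image of a cell interior), and $\|\tilde\gamma_M(O(D_w)) - \tilde\gamma_N(O(D_w))\|_\infty = \epsilon$ by the computation above. Hence $\|\gamma_M - \gamma_N\|_\infty \geq \epsilon$, and combining the two bounds gives equality.

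The main obstacle I anticipate is not any single hard estimate but rather the bookkeeping of the reduction from $\tilde X$ to the finite set of cell-origins: one must confirm that the value of $\tilde\gamma_M$ (resp.\ $\tilde\gamma_N$) on a boundary point of a disk $D_y$ is a convex combination of its values at origins of strictly lower cells and at $B$, so that the ``$\ell^\infty$ distance of affine maps on a segment is attained at an endpoint'' principle can be iterated down the skeleton; this requires knowing that the attaching maps $\sigma_y$ land in $X^i$ and that $\tilde\gamma_M, \tilde\gamma_N$ restricted to $X^i$ are themselves determined radially from lower data. Once one is careful that the construction of $\tilde\gamma_M, \tilde\gamma_N$ is genuinely consistent (constant on equivalence classes, so that $\gamma_M, \gamma_N$ are well defined on $X$ — which the preceding discussion of the correspondence $\tilde{\,\cdot\,}$ guarantees), the remaining computation is the elementary identity $gr(x(-\epsilon)) = gr(x) + \vec\epsilon$ and the observation that $\|\vec\epsilon\|_\infty = \epsilon$. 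I would state the radial-reduction step as a short self-contained observation and then dispatch the three cases in a single short display.
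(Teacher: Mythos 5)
Your proof is correct and follows essentially the same route as the paper: reduce to the values at cell origins (and $B$) via the radial-linearity/convexity argument, then compute $\|gr(x) - gr(x(-\epsilon))\|_\infty = \|\vec\epsilon\|_\infty = \epsilon$, with $\W \neq \emptyset$ supplying the lower bound. The only cosmetic difference is that you state the lower bound explicitly, whereas the paper leaves it implicit in the requirement that the origin difference equal (not merely be bounded by) $\epsilon$.
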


\begin{proof} Assume that for a disk $D$ of $\tilde X$, $|{\tilde \gamma_M}(a)-{\tilde \gamma_N}(a)|\leq \epsilon$ for all $a\in \delta D$, and that $|{\tilde \gamma_M}(O(D))-{\tilde \gamma_N}(O(D))|=\epsilon$.  We'll show that then $|{\tilde \gamma_M}(a)-{\tilde \gamma_N}(a)|\leq \epsilon$ for all $x\in D$.  Applying this result once gives that the lemma holds on the restriction of $\gamma_M,\gamma_N$ to $X^i$.  Applying the result a second time gives that the lemma holds on all of $X$.     

To show that $|{\tilde \gamma_M}(a)-{\tilde \gamma_N}(a)|\leq \epsilon$, let $x$ be a point in $D$ and write $a=tO(D)+(1-t)b$ for some $b\in \delta D$, and $0\leq t\leq 1$.   Since the restrictions of ${\tilde \gamma_M}$ and ${\tilde \gamma_N}$ to any radial line segment from $O(D)$ to $\delta D$ are linear, we have that ${\tilde \gamma_M}(a)=t{\tilde \gamma_M}(O(D))+(1-t){\tilde \gamma_M}(b)$, and ${\tilde \gamma_N}(a)=t{\tilde \gamma_N}(O(D))+(1-t){\tilde\gamma_N}(b)$.  Thus $|{\tilde \gamma_M}(a)-{\tilde \gamma_N}(a)|\leq t|{\tilde \gamma_M}(O(D))-
{\tilde \gamma_N}(O(D))|+(1-t)|{\tilde \gamma_M}(b)-{\tilde \gamma_N}(b)|\leq t\epsilon+(1-t)\epsilon=\epsilon$ as needed.\end{proof}

\subsection{Proof of Existence of Geometric Lifts of Interleavings, Part 3: Showing that $H_i\circ F^S(X,\gamma_M)\cong M$, $H_i\circ F^S(X,\gamma_N)\cong N$}\label{LastPartOfProof}
Now it remains to show that $H_i\circ F^S(X,\gamma_M)\cong M$, $H_i\circ F^S(X,\gamma_N)\cong N$.  We'll show that $H_i\circ F^S(X,\gamma_M)\cong M$; the argument that $H_i\circ F^S(X,\gamma_N)\cong N$ is essentially same.  

For $a\in \R^n$, let ${\F}_a$ denote the subcomplex of $X$ consisting of only those cells $e$ such that $\gamma_M(O(D(e)))\leq a$, where in this expression $D(e)$ is the disk of $\tilde X$ whose interior maps to $e$ under $\pi$.  $\{{\F}_a\}_{a\in \R^n}$ defines a cellular filtration, which we'll denote ${\F}$.  Let $X_a=F^S(X,\gamma_N)_a$.  It's easy to see that ${\F}_a$ is a deformation retract of $X_a$.  Further, the inclusions of each ${\F}_a \hookrightarrow X_a$ define a morphism $\chi$ of filtrations; this morphism of filtrations maps under $H_i$ to a morphism $H_i(\chi): H_i({\F})\to H_i(F^S(X,\gamma_M))$ of $B_n$-persistence modules whose maps 
$H_i(\chi)_a:H_i({\F}_a)\to H_i(X_a)$ are isomorphisms.  Any homomorphism of $B_n$-persistence modules whose action on each homogeneous summand is a vector space isomorphism must be an isomorphism of $B_n$-persistence modules, so $H_i(\chi)$ is an isomorphism.  Thus, to prove that $H_i\circ F^S(X,\gamma_M)\cong M$, it's enough to show that $H_i({\F})\cong M$.

By Remark~\ref{CellularAndSingularPersistentHomology}, $H_i({\F})\cong H^{CW}_i({\F})$.

Note that ${\F}$ has the property that each cell $e$ of $X$ has a unique minimal grade of appearance $gr_{\F}(e)$ in ${\F}$.  
Since each cell has a unique minimal grade of appearance, for any $j\in \Z_{\geq 0}$, $C^{CW}_j({\F})$ is free: \[C^{CW}_j({\F})=\oplus_{e^j\subset X \text{a {\it j}-cell }}B_n(-gr_{\F}(e^j)).\]  The usual identification of $j$-cells of $X$ with a basis for $C^{CW}_j(X)$ extends in the obvious way to an identification of the $j$-cells of $X$ with a basis for $C^{CW}_j({\F})$.  

Moreover, the boundary homomorphism $\delta^X_{i+1}:C^{CW}_{i+1}(X)\to C^{CW}_i(X)$ and the boundary homomorphism 
$\delta^{\F}_{i+1}:C^{CW}_{i+1}({\F}) \to C^{CW}_i({\F})$ are related in a simple way:

\begin{lem}\label{PersistentAttachingMapsLem} $\delta^{\F}_{i+1}(e^{i+1}_y)=\sum_{w\in \W} a_{wy} \varphi_{C^{CW}_i({\F})}(gr_{\F}(e^i_w),gr_{\F}(e^{i+1}_y))(e^i_w)$.
\end{lem}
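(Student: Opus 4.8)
The plan is to relate the cellular boundary map of the filtration $\F$ to that of the total complex $X$ by carefully tracking the grades of appearance of cells. The key observation is that $\F$ is a subcomplex filtration of $X$ in which each cell appears at a unique minimal grade $gr_\F(e)$, and that the transition maps $\varphi_{C^{CW}_i(\F)}$ are simply the inclusion-induced maps on free $B_n$-persistence modules. So the statement we want is, morally, just the cellular boundary formula of Section~\ref{CWHomologyDef} (or rather its consequence, Lemma~\ref{AttachingMapsLem}), reinterpreted inside the persistence module $C^{CW}_*(\F)$ rather than inside the ordinary chain complex $C^{CW}_*(X)$.

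First I would recall that by Lemma~\ref{AttachingMapsLem}, the attaching maps $\sigma_y$ were chosen so that $\delta^X_{i+1}(e^{i+1}_y) = \sum_{w\in \W} a_{wy} e^i_w$ in $C^{CW}_i(X)$. Now fix $y\in\Y$. The $(i+1)$-cell $e^{i+1}_y$ first appears in $\F$ at grade $gr_\F(e^{i+1}_y) = \gamma_M(O(D_y))$, which (by the definitions of $\gamma_M$ in Section~\ref{LastPartOfProof}, and since $gr(w)\le gr(y)$ for $w\in\W_y$, together with the fact that the shift by $-\epsilon$ is applied consistently to $\W_2$-indexed and $\Y_2$-indexed cells) dominates $gr_\F(e^i_w)$ for every $w$ with $a_{wy}\ne 0$. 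Thus in the homogeneous summand $C^{CW}_{i+1}(\F)_{gr_\F(e^{i+1}_y)}$ the generator $e^{i+1}_y$ is present, and each $e^i_w$ appearing on the right-hand side is already present in $C^{CW}_i(\F)_{gr_\F(e^{i+1}_y)}$, reached from its birth summand by the transition map $\varphi_{C^{CW}_i(\F)}(gr_\F(e^i_w), gr_\F(e^{i+1}_y))$. The boundary map $\delta^\F_{i+1}$ restricted to that summand is, by construction of $\F$ as a subcomplex filtration of $X$, exactly the restriction of $\delta^X_{i+1}$; so $\delta^\F_{i+1}(e^{i+1}_y)$ equals the image of $\sum_w a_{wy} e^i_w$ under the identification of $C^{CW}_i(X)$ with $C^{CW}_i(\F)_{gr_\F(e^{i+1}_y)}$, which is precisely $\sum_{w\in\W} a_{wy}\,\varphi_{C^{CW}_i(\F)}(gr_\F(e^i_w), gr_\F(e^{i+1}_y))(e^i_w)$. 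Since $\delta^\F_{i+1}$ is a morphism of $B_n$-persistence modules, its value on the generator $e^{i+1}_y$ determines it everywhere, so this suffices.

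The main obstacle — really the only thing requiring care — is verifying the grade inequality $gr_\F(e^i_w) \le gr_\F(e^{i+1}_y)$ for all $w$ with $a_{wy}\ne 0$, i.e.\ that every $i$-cell in the boundary expression of $e^{i+1}_y$ is genuinely born no later than $e^{i+1}_y$ itself, so that the transition maps in the formula are defined and the identification of summands makes sense. This follows from Equation~\eqref{EquationForY} (which guarantees $gr(w)\le gr(y)$ for $w\in\W_y$) combined with a case check on whether $y\in\Y_1$ or $y\in\Y_2$: in the former case $\gamma_M$ on $e^i_w$ is $gr(w)$ if $w\in\W_1$ and $gr(w)-\epsilon$ if $w\in\W_2$, while $\gamma_M(O(D_y))=gr(y)$, and one uses $\Y_1\subset\langle\W_1,\W_2(-\epsilon)\rangle$ so that any $w\in\W_y\cap\W_2$ actually contributes $w(-\epsilon)$, making the comparison $gr(w)-\epsilon \le$ (appropriate coordinate of) $gr(y)$ hold coordinatewise; the case $y\in\Y_2$ is symmetric using $\Y_2\subset\langle\W_1(-\epsilon),\W_2\rangle$. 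I would state this as a brief sub-claim and dispatch it by unwinding the definitions, then conclude as above. The proof is short and I would present it in essentially the three steps just outlined: recall the $X$-boundary formula, check the grade domination, and transfer the identity across the summand identification using that $\delta^\F_{i+1}$ is a persistence-module morphism.
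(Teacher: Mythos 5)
Your proof is correct in structure and follows the same route as the paper, whose own proof is a one-line stub saying the claim "follows in a routine way" from $\delta^X_{i+1}(e^{i+1}_y)=\sum_{w\in\W}a_{wy}e^i_w$ and the definition of $\delta^\F_{i+1}$; you simply fill in that routine step, and you also correctly identify that the implicit content is the grade inequality $gr_\F(e^i_w)\le gr_\F(e^{i+1}_y)$ (which is also what makes $\F_a$ a genuine subcomplex of $X$, a fact the paper uses without comment).

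One sign slip to fix, though: for an $n$-graded set $G$, the paper defines $G(J)$ by $\iota'_G(y)=J^{-1}(\iota(y))$, so $gr(w(-\epsilon))=gr(w)+\epsilon$, not $gr(w)-\epsilon$. With your (flipped) sign, the comparison $gr(w)-\epsilon\le gr(y)$ is already trivial from Equation~\eqref{EquationForY} alone, which would make your appeal to $\Y_1\subset\langle\W_1,\W_2(-\epsilon)\rangle$ superfluous. With the correct sign, the needed inequality for $y\in\Y_1$, $w\in\W_y\cap\W_2$ is $gr(w)+\epsilon\le gr(y)$, which does \emph{not} follow from \eqref{EquationForY}; it is exactly here that $\Y_1\subset\langle\W_1,\W_2(-\epsilon)\rangle$ is needed (the $w$-component of $y$ lives in the image of $B_n(-gr(w)-\epsilon)\hookrightarrow B_n(-gr(w))$, forcing $gr(y)\ge gr(w)+\epsilon$). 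So your instinct to invoke that containment was right; just correct the grade of $w(-\epsilon)$ so that the invocation is visibly doing work. The analogous correction applies in the $y\in\Y_2$, $w\in\W_1$ case; the remaining two cases follow from \eqref{EquationForY} directly.
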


\begin{proof} Recall that we constructed $X$ in such a way that for $y\in \Y$, $\delta^X_{i+1}(e^{i+1}_y)=\sum_{w\in \W} a_{wy} e^i_w$.  The result follows in a routine way from this expression for $\delta^X_{i+1}(e^{i+1}_y)$ and the definition of the boundary map $\delta^{\F}_{i+1}$. \end{proof}

Now note that we have $\delta^{\F}_i=0$.  If $i\ne 1$ this is follows from the fact that ${\F}$ has no $i-1$ cells.  If $i=1$, it is still true because of the isomorphism between cellular and singular persistent homology: we must have $C^{CW}_0({\F})\cong B_n(-gr_{\F}(B))\cong H_0(\F)\cong H^{CW}_0(\F)$, so $\delta_1=0$.

Therefore $H^{CW}_i({\F})=C^{CW}_i({\F})/\im(\delta^{\F}_{i+1})$.

The bijection which sends $w\in \W$ to the cell $e^i_w$ induces an isomorphism $\Lambda:\langle\W_1 \cup \W_2(-\epsilon)\rangle\to C^{CW}_i({\F})$.  

By the expression (\ref{EquationForY}) for $y$ in terms of $a'_{wy}$ given in Section~\ref{ConstructingComplexSection}, for $y\in \Y_1 \cup \Y_2(-\epsilon)$, \[\Lambda(y)=\sum_{w\in \W} a'_{wy} \varphi_{C^{CW}_i({\F})}(gr_{\F}(e^i_w),gr_{\F}(e^{i+1}_y))(e^i_w). \]
Thus 
\begin{align*}
&\Lambda(\langle \Y_1\cup \Y_2(-\epsilon)\rangle)\\
&=\langle\{\sum_{w\in \W} a'_{wy} \varphi_{C^{CW}_i({\F})}(gr_{\F}(e^i_w),gr_{\F}(e^{i+1}_y))(e^i_w)\}_{y\in \Y}\rangle\\
&= \langle\{\sum_{w\in \W} a_{wy} \varphi_{C^{CW}_i({\F})}(gr_{\F}(e^i_w),gr_{\F}(e^{i+1}_y))(e^i_w)\}_{y\in \Y}\rangle\\ 
&=\im(\delta^{\F}_{i+1}) \end{align*}

by Lemma~\ref{PersistentAttachingMapsLem}.  
$\Lambda$ therefore descends to an isomorphism between $C^{CW}_i({\F})/\im(\delta^{\F}_{i+1})$ and $\langle \W_1,\W_2(-\epsilon)\rangle /\langle \Y_1,\Y_2(-\epsilon)\rangle$.  This shows that $H^{CW}_i(\F)=M$ and thus completes the proof of Proposition~\ref{RealizationProp} in the special case that $GCD(\W)\in \R^n$.

\subsection{Proof of Existence of Geometric Lifts of Interleavings, Part 4: Adapting the Proof to the Case That $GCD(\W)\not \in \R^n$}\label{AdaptingProof}
As noted above, the proof of Proposition~\ref{RealizationProp} we have given for the special case that $GCD(\W)\in \R^n$ adapts readily to a proof for the general case.  We now outline this adaptation, leaving to the reader the straightforward details.

Let $X'$ be the standard CW-complex structure on $\R$.  That is, we take each $z\in \Z$ to be a $0$-cell in $X'$, and for each $z\in \Z$, we take the interval $(z,z+1)$ to be a $1$-cell in $X'$.  

To carry out the proof for the general case, we first present a modified version of the construction of the CW-complex $X$ given in Section~\ref{ConstructingComplexSection}.  In this modified version, we construct the CW-complex $X$ so that

\begin{enumerate*}
\item $X'$ is a subcomplex of $X$.
\item $X$ has an $i$-cell $e^i_w$ for each $w\in \W$. 
\item $X$ has an $(i+1)$-cell $e^{i+1}_y$ for each $y\in \Y$.
\item As a set, $X=X'\amalg \{e^i_w\}_{w\in\W} \amalg \{e^{i+1}_y\}_{y\in \Y}.$
\end{enumerate*}

For $r=(r_1,...,r_n)\in\R^n$, let $\lfloor r \rfloor=\max\{z\in \Z|z\leq r_j$ for $1\leq j\leq n\}$.  For all $w\in \W$, let the attaching map of $e^i_w$ be the constant map to the $0$-cell $\lfloor gr(w) \rfloor \subset X'$. 

This defines $X^i$.  To complete the construction of $X$, it remains only to specify the attaching map $\sigma_y:S^i \to X^i$ for each $y\in \Y$.       

Let $a_{wy}$ be defined as in Section~\ref{ConstructingComplexSection}.  An analogue of Lemma~\ref{AttachingMapsLem} holds in our present setting and admits a similar proof.  Invoking this result, for each $y\in \Y$ we choose $\sigma_y:S^i \to X^i$ such that the CW-complex $X$ constructed via these attaching maps satisfies $\delta^X_{i+1}(e^{i+1}_y)=\sum_{w\in \W} a_{wy} e^i_w$ for each $y\in \Y$.

For $r\in X'=\R$, we define $\gamma_M(r)=\gamma_N(r)=r\vec 1$.  As in the special case, we define the values of $\tilde \gamma_M$ and $\tilde \gamma_N$ at the origin of the disks $D^i_w$ and $D^{i+1}_y$ as follows: 

\begin{itemize*}
\item For $x\in \W_1 \cup \Y_1$, ${\tilde \gamma_M}(O(D_x))=gr(x)$;
\item For $x\in \W_2 \cup \Y_2$, ${\tilde \gamma_M}(O(D_x))=gr(x(-\epsilon))$.
\end{itemize*}
\begin{itemize*}  
\item For $x\in \W_1 \cup \Y_1$, ${\tilde \gamma_N}(O(D_x))=gr(x(-\epsilon))$;
\item For $x\in \W_2 \cup \Y_2$, ${\tilde \gamma_N}(O(D_x))=gr(x)$.
\end{itemize*}

As in the special case, we require the restriction of $\tilde \gamma_M$ and $\tilde \gamma_N$ to radial line segments of the disks $D^{i}_w$ and $D^{i+1}_y$ to be linear.  This completes the specification of $\gamma_M$ and $\gamma_N$.  

The argument of Lemma~\ref{FunctionDistanceLemma} shows that $\|\gamma_M-\gamma_N\|_\infty=\epsilon$, and the argument of Section~\ref{LastPartOfProof} adapts in a straightforward way to show that $H_i\circ F^S(X,\gamma_M)\cong M$ and $H_i\circ F^S(X,\gamma_N)\cong N$. \qed

  \section{Reducing the Evaluation of $d_I$ to Deciding Solvability of Quadratics}\label{ComputationSection}

Let ${\mathcal MQ}(k)$ denote the set of multivariate systems of quadratic equations over the field $k$.

Fix $n\in {\mathbb N}$ and Let $M$ and $N$ be finitely presented $B_n$-persistence modules.  Let $q$ be the total number of generators and relations in a minimal presentation for $M$ and in a minimal presentation for $N$.  We show in this section that given minimal presentations for $M$ and $N$, for any $\epsilon>0$ deciding whether $M$ and $N$ are $\epsilon$-interleaved is equivalent to deciding the solvability of an instance of ${\mathcal MQ}(k)$ with $O(q^2)$ unknowns and $O(q^2)$ equations.

We also show that $d_I$ must be equal to one of the elements of an order $O(q^2)$ subset of $\R_{\geq 0}$ defined in terms of the grades of generators and relations of $M$ and $N$.  Thus, by searching through these values, we can compute $d_I$ by deciding whether $M$ and $N$ are $\epsilon$-interleaved for $O(\log q)$ values of $\epsilon$.  That is, we can compute $d_I(M,N)$ by deciding the solvability of $O(\log q)$ instances of ${\mathcal MQ}(k)$.
   
If e.g. $k$ is a field of prime order, a standard algorithm based on Gr\"obner bases determines the solvability of systems in ${\mathcal MQ}(k)$.  ${\mathcal MQ}(k)$ is NP-complete, however, and this algorithm is for general instances of ${\mathcal MQ}(k)$ prohibitively inefficient.  We leave it to future work to investigate the complexity and tractability in practice of deciding the solvability of systems in ${\mathcal MQ}(k)$ arising from our reduction.  

In practice, we are interested in computing the interleaving distance between the simplicial persistent homology modules of two simplicial $n$-filtrations.  To apply the reduction presented here to this problem, we need a way of computing a presentation of the multidimensional persistent homology module of a simplicial $n$-filtration; strictly speaking, our reduction does not require that the presentations of our modules be minimal.  However, to minimize the number and size of the quadratic systems we need to consider in computing the interleaving distance via this reduction, we do want the presentations we compute to be minimal.

We hope to address the problem of computing a minimal presentation of the simplicial persistent homology module of a simplicial $n$-filtration in future work. 

\subsection{Linear Algebraic Representations of Homogeneous Elements and Morphisms of Free $B_n$-persistence Modules}

\subsubsection{Representing Homogeneous Elements of Free $B_n$-persistence Modules as Vectors}\label{HomogeneousEltsAsVectors}
Given a finitely generated free $B_n$-persistence module $F$ and an (ordered) basis $B=b_1,...,b_l$ for $F$, we can represent a homogeneous element $v\in F$ as a pair $([v,B],gr(v))$ where $[v,B]\in k^l$ is a vector: if $v=\sum_{i:gr(v)\geq gr(b_i)} a_i \varphi_F(gr(b_i),gr(v))(b_i)$, with each $a_i\in k$, then for $1\leq i\leq l$ we define 
\[[v,B]_i=
\begin{cases}
a_i &\text{if }gr(v)\geq gr(b_i)\\
0 &\text{otherwise.} 
\end{cases}
\]

\begin{remark}\label{StandardBasisRemark} Note that for $1\leq i\leq l$, $[b_i,B]=\e_i$, where $\e_i$ denotes the $i^{th}$ standard basis vector in $k^l$. \end{remark}

If $V\subset F$ is a homogeneous set, we define $[V,B]=\{[v,B]|v\in V\}$. 

\subsubsection{Representing Morphisms of $B_n$-persistence Modules as Matrices}
Given finitely generated $B_n$-persistence modules $F$ and $F'$ and (ordered) bases $B=b_1,...,b_l$ and $B'=b'_1,...,b'_m$ for $F$ and $F'$ respectively, let $Mat_k(B,B')$ denote the set of $m\times l$ matrices $A$ with entries in $k$ such that $A_{ij}=0$ whenever $gr(b_j)<gr(b'_i)$.

We can represent a morphism $f\in\hom(F,F')$ as a matrix $[f,B,B']\in Mat_k(B,B')$, where if $f(b_j)=\sum_{i:gr(b_j)\geq gr(b'_i)} a_{ij} \varphi_{F'}(gr(b'_i),gr(b_j))(b'_i)$, with each $a_{ij}\in k$, then 
\[[f,B,B']_{ij}=
\begin{cases}
a_{ij} &\text{if }gr(b_j)\geq gr(b'_i)\\
0 &\text{otherwise.} 
\end{cases}
\]
\begin{lem}\label{MatrixRepBijectionLemma} The map $[\cdot,B,B']:\hom(F,F')\to Mat_k(B,B')$ is a bijection.\end{lem}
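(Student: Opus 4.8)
\textbf{Proof proposal for Lemma~\ref{MatrixRepBijectionLemma}.} The plan is to exhibit an explicit two-sided inverse to the map $[\cdot,B,B']$, thereby proving it is a bijection. First I would fix the ordered bases $B=b_1,\dots,b_l$ for $F$ and $B'=b_1',\dots,b_m'$ for $F'$, and recall that, since $F$ is free on $B$, a morphism $f\in\hom(F,F')$ is uniquely determined by its values $f(b_j)$ on basis elements, subject only to the grading constraint $f(b_j)\in F'_{gr(b_j)}$ (this is exactly the universal property of free $B_n$-persistence modules referenced in Section~\ref{GeoFunctorsSection}, specialized to the $0^{th}$ stage). Thus specifying $f$ is the same as specifying, for each $j$, a homogeneous element of $F'$ of grade $gr(b_j)$. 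By the discussion in Section~\ref{HomogeneousEltsAsVectors}, such a homogeneous element of grade $gr(b_j)$ is encoded precisely by its coordinate vector $[f(b_j),B']\in k^m$, whose $i^{th}$ entry vanishes unless $gr(b_i')\le gr(b_j)$. Reading off the $j^{th}$ column of $[f,B,B']$ as $[f(b_j),B']$, the matrix $[f,B,B']$ therefore lies in $Mat_k(B,B')$ exactly because of these per-column vanishing conditions.

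Next I would define the candidate inverse: given $A\in Mat_k(B,B')$, let $\Psi(A)\in\hom(F,F')$ be the unique morphism with $\Psi(A)(b_j)=\sum_{i:gr(b_j)\ge gr(b_i')} A_{ij}\,\varphi_{F'}(gr(b_i'),gr(b_j))(b_i')$ for each $j$; this is well defined by the universal property since the displayed element is homogeneous of grade $gr(b_j)$ (the transition maps $\varphi_{F'}(gr(b_i'),gr(b_j))$ land in $F'_{gr(b_j)}$, and the sum only involves indices with $gr(b_i')\le gr(b_j)$, which is precisely where $A_{ij}$ is allowed to be nonzero). Then I would verify the two round-trip identities. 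For $[\Psi(A),B,B']=A$: by construction $\Psi(A)(b_j)$ has coordinate vector with $i^{th}$ entry $A_{ij}$ when $gr(b_j)\ge gr(b_i')$ and $0$ otherwise, which matches the definition of $[\cdot,B,B']_{ij}$ case by case. For $\Psi([f,B,B'])=f$: both sides are morphisms out of the free module $F$, so it suffices to check they agree on each $b_j$; and $\Psi([f,B,B'])(b_j)=\sum_{i:gr(b_j)\ge gr(b_i')}[f,B,B']_{ij}\,\varphi_{F'}(gr(b_i'),gr(b_j))(b_i')=f(b_j)$ by the defining formula for $[f,B,B']$ together with the expansion of $f(b_j)$ in the basis $B'$ of $F'$ at grade $gr(b_j)$.

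These verifications are entirely routine — they amount to unwinding the two encoding conventions of Section~\ref{HomogeneousEltsAsVectors} and matching indices — so there is no genuine obstacle; the only point requiring a word of care is the well-definedness of $\Psi(A)$, i.e. checking that the prescribed images of basis elements are homogeneous of the correct grade so that the universal property applies, and that the constraint $A\in Mat_k(B,B')$ is exactly what guarantees this. I would state these checks briefly and leave the remaining index-chasing to the reader, or simply remark that $[\cdot,B,B']$ and $\Psi$ are mutually inverse by inspection of the definitions.
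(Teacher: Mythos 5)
Your proof is correct and is exactly the ``straightforward'' argument the paper has in mind but omits (the paper's entire proof reads ``The proof is straightforward''): exhibit the explicit inverse $\Psi$ via the universal property of free $B_n$-persistence modules, note that the grading/vanishing constraint defining $Mat_k(B,B')$ is precisely what makes $\Psi(A)$ well-defined on each basis element, and check the two round-trip identities column by column. Nothing is missing.
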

\begin{proof} The proof is straightforward.\end{proof}
Note also the following additional properties of these matrix representations of morphisms between free $B_n$-modules:
\begin{lem}\label{MatrixRepProperties} Let $F,F',F''$ be free $B_n$-persistence modules with ordered bases $B,B',B''$.  
\begin{enumerate*}
\item[(i)]If $f_1,f_2\in \hom(F,F')$ then $[f_1+f_2,B,B']=[f_1,B,B']+[f_2,B,B']$,
\item[(ii)]If $f_1\in \hom(F,F')$, $f_2\in \hom(F',F'')$ then $[f_2\circ f_1,B,B'']=[f_2,B',B''][f_1,B,B']$,
\item[(iii)]For any $\epsilon\geq 0$, $[S(F,\epsilon),B,B(\epsilon)]=I_{|B|}$, where for $m\in {\mathbb N}$, $I_m$ denotes the $m\times m$ identity matrix.
\end{enumerate*}
\end{lem}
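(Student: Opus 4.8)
The final statement to prove is Lemma~\ref{MatrixRepProperties}, listing three compatibility properties of the matrix representation $[\cdot,B,B']$: additivity, multiplicativity under composition, and that the diagonal $\epsilon$-transition homomorphism $S(F,\epsilon)$ is represented by the identity matrix with respect to the bases $B$ and $B(\epsilon)$.

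The plan is to verify each property directly from the definition of $[\cdot,B,B']$ given just above the statement, using the fact (Remark~\ref{StandardBasisRemark}) that $[b_i,B]=\e_i$, together with Lemma~\ref{MatrixRepBijectionLemma}. First I would handle (i): since a morphism of free $B_n$-persistence modules is determined by its values on a basis, and $[f,B,B']_{\cdot j}$ is essentially the coordinate vector of $f(b_j)$ (suitably padded with zeros in positions where the grade constraint $gr(b_j)\geq gr(b'_i)$ fails), additivity follows because $(f_1+f_2)(b_j)=f_1(b_j)+f_2(b_j)$ and the coordinate assignment $v\mapsto [v,B']$ is $k$-linear on each homogeneous summand; one checks the zero-padding is consistent because the support condition depends only on grades, not on $f$. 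For (ii), I would compute $(f_2\circ f_1)(b_j)$ by expanding $f_1(b_j)=\sum_i a_{ij}\,\varphi_{F'}(gr(b'_i),gr(b_j))(b'_i)$, applying $f_2$, and using that $f_2$ commutes with transition maps (it is a morphism) to rewrite $f_2(\varphi_{F'}(gr(b'_i),gr(b_j))(b'_i))=\varphi_{F''}(gr(b'_i),gr(b_j))(f_2(b'_i))$, then expanding $f_2(b'_i)$ in the basis $B''$; collecting coefficients of $\varphi_{F''}(gr(b''_k),gr(b_j))(b''_k)$ and checking the grade support conditions chain correctly gives exactly the matrix product $[f_2,B',B''][f_1,B,B']$. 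For (iii), $S(F,\epsilon)$ restricted to $F_u$ is $\varphi_F(u,u+\epsilon)$, so $S(F,\epsilon)(b_i)=\varphi_F(gr(b_i),gr(b_i)+\epsilon)(b_i)$; since $B(\epsilon)$ is the basis for $F(\epsilon)$ with $gr_{B(\epsilon)}(b_i)=gr(b_i)-\epsilon$ (by the shift-functor conventions in Section~\ref{ShiftsOfModules}), one identifies $S(F,\epsilon)(b_i)$ with the $i$-th basis element of $F(\epsilon)$, so $[S(F,\epsilon),B,B(\epsilon)]_{\cdot i}=\e_i$, i.e. the matrix is $I_{|B|}$. (A small point to get right: the grade constraint in the definition of $Mat_k(B,B(\epsilon))$ is satisfied on the diagonal since $gr(b_i)\geq gr(b_i)-\epsilon$, and off-diagonal entries vanish by construction.)

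The main obstacle—such as it is—is (ii): it requires carefully tracking the grade-support conditions through a double expansion so that the zero entries of the two matrices match up exactly with the vanishing terms in the composite, and verifying that the transition maps $\varphi$ compose correctly (i.e. $\varphi_{F''}(gr(b''_k),gr(b'_i))$ composed with $\varphi_{F''}(gr(b'_i),gr(b_j))$ equals $\varphi_{F''}(gr(b''_k),gr(b_j))$ whenever all the relevant grades are comparable). None of this is deep, but it is the one place where a genuine (if short) computation is needed rather than an immediate unwinding of definitions. Given the paper's stated convention of omitting such routine verifications (the proof of Lemma~\ref{MatrixRepBijectionLemma} is dispatched with "The proof is straightforward"), I expect the actual proof in the paper to be brief, and I would write it accordingly: a sentence each for (i) and (iii), and a short paragraph for (ii) indicating the double expansion and the matching of supports, without grinding through every index.
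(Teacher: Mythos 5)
Your proposal is correct; the paper dispatches this lemma with ``The proof of each of these results is straightforward,'' and the direct verification you outline---$k$-linearity of coordinate extraction for (i), commuting $f_2$ past transition maps and composing transitions while tracking grade-support for (ii), and reading off the coordinate vector of $S(F,\epsilon)(b_i)=\varphi_F(gr(b_i),gr(b_i)+\epsilon)(b_i)$ against the shifted basis $B(\epsilon)$ for (iii)---is exactly the expected argument.
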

\begin{proof} The proof of each of these results is straightforward. \end{proof}

For a graded set $W$ and $u\in \R^n$, let $W^u=\{y\in W|gr(y)\leq u\}$.

\begin{lem}\label{MappingIntoSubModLemma} If $F_1$,$F_2$ are free $B_n$-persistence modules with bases $B_1$,$B_2$ and $W_1\subset F_1$,$W_2\subset F_2$ are sets of homogeneous elements then a morphism $f:F_1\to F_2$ maps $\langle W_1 \rangle$ into $\langle W_2 \rangle$ iff $[f,B_1,B_2][w,B_1]\in \newspan[W_2^{gr(w)},B_2]$ for every $w\in W_1$. 
\end{lem}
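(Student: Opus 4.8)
The plan is to reduce the statement to a grade-by-grade linear-algebra fact about the vector representations $[\cdot,B_i]$ introduced in Section~\ref{HomogeneousEltsAsVectors}. First I would observe that, since $f$ is a module homomorphism and $\langle W_2 \rangle$ is a submodule of $F_2$, $f$ maps $\langle W_1 \rangle$ into $\langle W_2 \rangle$ if and only if $f(w)\in \langle W_2 \rangle$ for every $w\in W_1$; indeed $W_1$ generates $\langle W_1\rangle$ as a $B_n$-module, so $f(\langle W_1\rangle)=\langle f(W_1)\rangle$. Next, because $f$ is a morphism of $B_n$-persistence modules it preserves grades, so $f(w)$ is a homogeneous element of grade $gr(w)$, and moreover $[f(w),B_2]=[f,B_1,B_2]\,[w,B_1]$. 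This last identity follows from Lemma~\ref{MatrixRepProperties}(ii) applied to the composite $f\circ {\hat w}$, where ${\hat w}:B_n(-gr(w))\to F_1$ is the unique morphism sending the generator of $B_n(-gr(w))$ to $w$ (whose matrix with respect to the evident bases is exactly the column vector $[w,B_1]$, and $f\circ {\hat w}$ is the morphism corresponding to $f(w)$); alternatively it is immediate from unwinding the definitions in Section~\ref{HomogeneousEltsAsVectors}. Thus the statement reduces to the claim: for $v\in F_2$ homogeneous of grade $u$, $v\in \langle W_2\rangle$ if and only if $[v,B_2]\in \newspan[W_2^u,B_2]$.

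To prove this equivalence I would first describe the homogeneous summand $\langle W_2\rangle_u$ explicitly. Every element of $\langle W_2\rangle$ is a $B_n$-linear combination of elements of $W_2$, and the grade-$u$ part of such a combination is a $k$-linear combination of terms $\varphi_{F_2}(gr(y),u)(y)$ with $y\in W_2$ and $gr(y)\le u$, i.e. $y\in W_2^u$; conversely every such term lies in $\langle W_2\rangle_u$. Hence $\langle W_2\rangle_u = \newspan\{\varphi_{F_2}(gr(y),u)(y) : y\in W_2^u\}$. Now $[\cdot,B_2]$ restricts to an injective $k$-linear map on $(F_2)_u$ whose image is the coordinate subspace of $k^{|B_2|}$ indexed by the basis elements of grade $\le u$ (this is immediate from Remark~\ref{StandardBasisRemark}), so $v\in\langle W_2\rangle_u$ iff $[v,B_2]$ lies in the span of the vectors $[\varphi_{F_2}(gr(y),u)(y),B_2]$, $y\in W_2^u$. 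Finally, using functoriality of the transition maps of $F_2$ one checks that $[\varphi_{F_2}(gr(y),u)(y),B_2]=[y,B_2]$: expanding $y$ in the basis $B_2$ and applying $\varphi_{F_2}(gr(y),u)$ merely pushes each basis term forward via $\varphi_{F_2}(gr(b_i),u)$, leaving the coefficient vector unchanged. So this span is exactly $\newspan[W_2^u,B_2]$, which completes the argument.

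The bookkeeping here is entirely routine; the only point requiring a little care is the last one — matching $[\varphi_{F_2}(gr(y),u)(y),B_2]$ with $[y,B_2]$ while making sure the zero-padding conventions in the definition of $[\cdot,B_2]$ are consistent across the two grades $gr(y)$ and $u$ (the positions $i$ with $gr(y)<gr(b_i)\le u$ carry coefficient $0$ in both representations). I expect no real obstacle beyond keeping these conventions straight.
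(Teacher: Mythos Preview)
Your proposal is correct and amounts to exactly the routine unwinding of definitions the paper has in mind; the paper's own proof reads simply ``This is straightforward.'' Your careful check that $[\varphi_{F_2}(gr(y),u)(y),B_2]=[y,B_2]$ and that $[\cdot,B_2]$ is injective on each homogeneous summand are the only points requiring any attention, and you handle them correctly.
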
      

\begin{proof} 
This is straightforward.  
\end{proof}

\subsection{Deciding Whether Two $B_n$-persistence Modules are $\epsilon$-interleaved is Equivalent to Deciding the Solvability of a System in ${\mathcal MQ}(k)$}

Let $\langle G_M|R_M \rangle$, $\langle G_N|R_N \rangle$ be presentations for finitely presented $B_n$-modules $M$ and $N$, and assume the elements of each of the sets $G_M,G_N,R_M,R_N$ are endowed with a total order, which may be chosen arbitrarily.  For a finite ordered set $T$ and $1\leq i\leq |T|$, let $T_{,i}$ denote the $i^{th}$ element of $T$.

We now define six matrices of variables, each with some of the variables constrained to be 0.
\begin{itemize*}
\item Let ${\mathbf A}$ be an $|G_N|\times |G_M|$ matrix of variables, with ${\mathbf A}_{ij}=0$ iff $gr(G_{M,j})<gr(G_{N,i})+\epsilon$. 
\item Let ${\mathbf B}$ be an $|G_M|\times |G_N|$ matrix of variables, with ${\mathbf B}_{ij}=0$ iff $gr(G_{N,j})<gr(G_{M,i})+\epsilon$.
\item Let ${\mathbf C}$ be an $|R_N|\times |R_M|$ matrix of variables, with ${\mathbf C}_{ij}=0$ iff $gr(R_{M,j})<gr(R_{N,i})+\epsilon$.
\item Let ${\mathbf D}$ be an $|R_M|\times |R_N|$ matrix of variables, with ${\mathbf D}_{ij}=0$ iff $gr(R_{N,j})<gr(R_{M,i})+\epsilon$.
\item Let ${\mathbf E}$ be an $|R_M|\times |G_M|$ matrix of variables, with ${\mathbf E}_{ij}=0$ iff $gr(G_{M,j})<gr(R_{M,i})+2\epsilon$.
\item Let ${\mathbf F}$ be an $|R_N|\times |G_N|$ matrix of variables, with ${\mathbf F}_{ij}=0$ iff $gr(G_{N,j})<gr(R_{N,i})+2\epsilon$.
\end{itemize*}

Let $T_M$ denote the $|G_M|\times|R_M|$ matrix whose $i^{th}$ column is $[R_{M,i},G_M]$ and let $T_N$ denote the $|G_N|\times|R_N|$ matrix whose $i^{th}$ column is $[R_{N,i},G_N]$.

\begin{thm}\label{InterleavingAndQuadratics} $M$ and $N$ are $\epsilon$-interleaved iff the multivariate system of quadratic equations
\begin{align*}
{\mathbf A}T_M&=T_N{\mathbf C}\\  
{\mathbf B}T_N&=T_M{\mathbf D}\\ 
\mathbf{BA}-I_{|G_M|}&=T_M{\mathbf E}\\
\mathbf{AB}-I_{|G_N|}&=T_N{\mathbf F}
\end{align*}        
has a solution.
\end{thm}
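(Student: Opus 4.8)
The plan is to translate the definition of an $\epsilon$-interleaving into linear-algebraic data over the bases $G_M, G_N$ of the free covers $\langle G_M\rangle, \langle G_N\rangle$, using the matrix-representation machinery (Lemma~\ref{MatrixRepBijectionLemma}, Lemma~\ref{MatrixRepProperties}, Lemma~\ref{MappingIntoSubModLemma}) established just above. First I would observe that $M$ and $N$ are $\epsilon$-interleaved if and only if there exist morphisms $f:\langle G_M\rangle \to \langle G_N(\epsilon)\rangle$ and $g:\langle G_N\rangle\to\langle G_M(\epsilon)\rangle$ of the free covers such that (a) $f$ descends to a map $M\to N(\epsilon)$, i.e. $f(\langle R_M\rangle)\subset \langle R_N(\epsilon)\rangle$, and symmetrically for $g$; and (b) the induced maps on $M,N$ compose to the transition maps $S(M,2\epsilon)$, $S(N,2\epsilon)$, which at the level of free covers means $\tilde g\circ\tilde f - S(\langle G_M\rangle,2\epsilon)$ has image in $\ker(\rho_M)=\langle R_M\rangle$, and symmetrically. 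This last step is exactly Lemma~\ref{ExistenceAndHomotopyUniquenessOfLifts} (existence and homotopy-uniqueness of lifts) applied in both directions: any interleaving of $M,N$ lifts to such $f,g$, and conversely any such $f,g$ induce an interleaving on the quotients.

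Next I would encode each of the four conditions as a matrix equation. Setting ${\mathbf A}=[f,G_M,G_N(\epsilon)]$ and ${\mathbf B}=[g,G_N,G_M(\epsilon)]$, the grade constraints ${\mathbf A}_{ij}=0$ unless $gr(G_{M,j})\geq gr(G_{N,i})+\epsilon$ are precisely the defining constraints of $Mat_k(G_M,G_N(\epsilon))$, so by Lemma~\ref{MatrixRepBijectionLemma} the constrained variable matrices ${\mathbf A},{\mathbf B}$ range over exactly the morphisms $f,g$. For condition (a): by Lemma~\ref{MappingIntoSubModLemma}, $f(\langle R_M\rangle)\subset\langle R_N(\epsilon)\rangle$ iff for each generator $r\in R_M$, ${\mathbf A}[r,G_M]$ lies in the span of $[R_N(\epsilon)^{gr(r)+\epsilon},G_N(\epsilon)]$; writing the coefficients of this span membership as the columns of ${\mathbf C}$ (with the stated zero-pattern coming from the grade condition $gr(R_{M,j})\geq gr(R_{N,i})+\epsilon$) gives the single matrix identity ${\mathbf A}T_M=T_N{\mathbf C}$, using that $T_M,T_N$ have the $[R_{\cdot,i},G_\cdot]$ as columns and that shift by $\epsilon$ does not change these coordinate vectors. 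Symmetrically ${\mathbf B}T_N=T_M{\mathbf D}$. For condition (b): lift $S(M,2\epsilon)$ to $S(\langle G_M\rangle,2\epsilon)$, whose matrix with respect to $G_M$ and $G_M(2\epsilon)$ is $I_{|G_M|}$ by Lemma~\ref{MatrixRepProperties}(iii); then $\tilde g\circ\tilde f$ is represented by ${\mathbf B}{\mathbf A}$ by Lemma~\ref{MatrixRepProperties}(ii), and $\tilde g\circ\tilde f - S(\langle G_M\rangle,2\epsilon)$ having image in $\langle R_M\rangle$ translates, again via Lemma~\ref{MappingIntoSubModLemma} (now the target submodule is $\langle R_M\rangle\subset\langle G_M(2\epsilon)\rangle$), into ${\mathbf B}{\mathbf A}-I_{|G_M|}=T_M{\mathbf E}$, with ${\mathbf E}$'s zero-pattern forced by $gr(G_{M,j})\geq gr(R_{M,i})+2\epsilon$. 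Symmetrically ${\mathbf A}{\mathbf B}-I_{|G_N|}=T_N{\mathbf F}$.

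Finally I would assemble the two directions. Given an $\epsilon$-interleaving of $M,N$, Lemma~\ref{ExistenceAndHomotopyUniquenessOfLifts} produces lifts $f,g$ satisfying (a) and (b) (the composite conditions hold up to $\ker\rho$ by homotopy-uniqueness of the lift of $S(M,2\epsilon)$), and reading off ${\mathbf A},{\mathbf B}$ and choosing witnessing coefficient matrices ${\mathbf C},{\mathbf D},{\mathbf E},{\mathbf F}$ solves the system. Conversely, any solution of the system yields, via Lemma~\ref{MatrixRepBijectionLemma}, morphisms $f,g$ of the free covers; conditions ${\mathbf A}T_M=T_N{\mathbf C}$ and ${\mathbf B}T_N=T_M{\mathbf D}$ guarantee they descend to $\bar f:M\to N(\epsilon)$, $\bar g:N\to M(\epsilon)$, and the last two equations guarantee $\bar g(\epsilon)\circ\bar f=S(M,2\epsilon)$ and $\bar f(\epsilon)\circ\bar g=S(N,2\epsilon)$, so $M,N$ are $\epsilon$-interleaved. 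I expect the main obstacle to be bookkeeping the shift functors correctly: one must check carefully that the coordinate vector $[r,G_M]$ is unchanged under the identifications $\langle G_M\rangle\leftrightarrow\langle G_M(\epsilon)\rangle$ and that the matrices $T_M,T_N$ built once serve uniformly in all four equations despite the various $\epsilon$- and $2\epsilon$-shifts — i.e. that the grade-shift constraints on ${\mathbf A},\dots,{\mathbf F}$ exactly match the submodule-membership conditions of Lemma~\ref{MappingIntoSubModLemma} in each case. None of this is deep, but it is where an error would hide.
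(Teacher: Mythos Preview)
Your proposal is correct and follows essentially the same three-step approach as the paper: first reduce to the existence of lifts $\tilde f,\tilde g$ on the free covers satisfying the kernel-containment and composite conditions (the paper isolates this as a separate lemma), then translate those conditions into span-membership statements via Lemma~\ref{MappingIntoSubModLemma}, and finally rewrite span membership as matrix equations with witness matrices ${\mathbf C},{\mathbf D},{\mathbf E},{\mathbf F}$. Your anticipated obstacle is exactly the one the paper handles explicitly, recording identities such as $[R_N(\epsilon)^{gr(w)},G_N(\epsilon)]=[R_N^{gr(w)+\epsilon},G_N]$ to justify that the single pair $T_M,T_N$ serves uniformly across all four equations (note a small slip in your writeup: the superscript in $[R_N(\epsilon)^{\,\cdot\,},G_N(\epsilon)]$ should be $gr(r)$, not $gr(r)+\epsilon$, before passing to the unshifted form).
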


\begin{proof}

To prove the result, we proceed in three steps.  First, we observe that for any free covers $(F_M,\rho_M)$ and $(F_N,\rho_N)$ of $M$ and $N$, the existence of $\epsilon$-interleaving morphisms between $M$ and $N$ is equivalent to the existence of a pair of morphisms between $F_M$ and $F_N$ having certain properties.  We then note that the existence of such maps is equivalent to the existence of two matrices, one in $Mat_k(G_M,G_N)$ and the other in $Mat_k(G_N,G_M)$, having certain properties.  Finally, we observe that the existence of such matrices is equivalent to the existence of a solution to the above multivariate system of quadratics.

Let $(F_M,\rho_M)$ and $(F_N,\rho_N)$ be free covers of $M$ and $N$.  

\begin{lem}\label{LiftsOfInterleavingsLemma} $M$ and $N$ are $\epsilon$-interleaved iff there exist morphisms ${\tilde f}:F_M\to F_N(\epsilon)$ and ${\tilde g}:F_N\to F_M(\epsilon)$ such that
\begin{enumerate*}
\item ${\tilde f}(\ker(\rho_M))\subset (\ker(\rho_N))(\epsilon)$,
\item ${\tilde g}(\ker(\rho_N))\subset (\ker(\rho_M))(\epsilon)$,
\item ${\tilde g}\circ {\tilde f}-S(F_M,2\epsilon)\subset (\ker(\rho_M))(2\epsilon)$,
\item ${\tilde f}\circ {\tilde g}-S(F_N,2\epsilon)\subset (\ker(\rho_N))(2\epsilon)$.
\end{enumerate*}
\end{lem}

We'll call morphisms ${\tilde f},{\tilde g}$ satisfying the above properties {\bf $\epsilon$-interleaved lifts} of the free covers $(F_M,\rho_M)$ and $(F_N,\rho_N)$.

\begin{proof} Let $f:M\to N(\epsilon)$ and $g:N\to M(\epsilon)$ be interleaving morphisms.  Then by Lemma~\ref{ExistenceAndHomotopyUniquenessOfLifts} there exist lifts ${\tilde f}:F_M \to F_N(\epsilon)$ and ${\tilde g}:F_N\to F_M(\epsilon)$ of $f$ and $g$.  By the definition of a lift, ${\tilde f}$ and ${\tilde g}$ satisfy properties 1 and 2 in the statement of the lemma.  ${\tilde g}\circ {\tilde f}$ is a lift of $g\circ f=S(M,2\epsilon)$.  $S(F_M,2\epsilon)$ is also a lift of $S(M,2\epsilon)$, so by the uniqueness up to homotopy of lifts (Lemma~\ref{ExistenceAndHomotopyUniquenessOfLifts}), ${\tilde f}$ and ${\tilde g}$ satisfy property 3.  The same argument shows that ${\tilde f}$ and ${\tilde g}$ satisfy property 4.

The converse direction is straightforward; we omit the details. 
\end{proof}

Now let $F_M=\langle G_M \rangle$, $F_N=\langle G_N \rangle$, and let $\rho_M:F_M \to F_M/\langle R_M \rangle$, $\rho_N:F_N \to F_N/\langle R_N \rangle$ be the quotient maps.  Since the interleaving distance between two modules is an isomorphism invariant of the modules, we may assume without loss of generality that $F_M/\langle R_M \rangle=M$ and $F_N/\langle R_N \rangle=N$.  Then $(F_M,\rho_M)$ and $(F_N,\rho_N)$ are free covers of $M$ and $N$.

\begin{lem}\label{MatrixLemma} $M$ and $N$ are $\epsilon$-interleaved iff there exist matrices $A\in Mat_k(G_M,G_N)$ and $B\in Mat_k(G_N,G_M)$ such that
\begin{enumerate*}
\item $A[w,G_M]\in \newspan [R_N^{gr(w)+\epsilon},G_N]$ for all $w\in R_M$,
\item $B[w,G_N]\in \newspan [R_M^{gr(w)+\epsilon},G_M]$ for all $w\in R_N$,
\item $(BA-I_{|G_M|})(\e_i)\in \newspan [R_M^{gr(G_{M,i})+2\epsilon},G_M]$ for $1\leq i \leq |G_M|$,
\item $(AB-I_{|G_N|})(\e_i)\in \newspan [R_N^{gr(G_{N,i})+2\epsilon},G_N]$ for $1\leq i \leq |G_N|$.
\end{enumerate*}
\end{lem}

\begin{proof}
By Lemma~\ref{LiftsOfInterleavingsLemma}, $M$ and $N$ are $\epsilon$-interleaved iff there exists $\epsilon$-interleaved lifts ${\tilde f}:F_M\to F_N$ and ${\tilde g}:F_N\to F_M$ of the free covers $(F_M,\rho_M)$ and $(F_N,\rho_N)$.  

By Lemma~\ref{MappingIntoSubModLemma}, morphisms ${\tilde f}:F_M\to F_N$ and ${\tilde g}:F_N\to F_M$, are $\epsilon$-interleaved lifts iff
\begin{enumerate*}
\item $[{\tilde f},G_M,G_N(\epsilon)][w,G_M]\in \newspan [R_N(\epsilon)^{gr(w)},G_N(\epsilon)]$ for all $w\in R_M$,
\item $[{\tilde g},G_N,G_M(\epsilon)][w,G_N]\in \newspan [R_M(\epsilon)^{gw(w)},G_M(\epsilon)]$ for all $w\in R_N$,
\item $[{\tilde g}\circ {\tilde f}-S(F_M,2\epsilon),G_M,G_M(2\epsilon)][w,G_M]\in \newspan [R_M(2\epsilon)^{gr(w)},G_M(2\epsilon)]$ for all $w\in G_M$,
\item $[{\tilde f}\circ {\tilde g}-S(F_N,2\epsilon),G_N,G_N(2\epsilon)][w,G_N]\in \newspan [R_N(2\epsilon)^{gr(w)},G_N(2\epsilon)]$ for all $w\in G_N$.
\end{enumerate*}
By Lemma~\ref{MatrixRepProperties}, 

\[[{\tilde g}\circ {\tilde f}-S(F_M,2\epsilon),G_M,G_M(2\epsilon)]=[{\tilde g},G_N,G_M(\epsilon)][{\tilde f},G_M,G_N(\epsilon)]-I_{|G_M|}\] 
and
\[[{\tilde f}\circ {\tilde g}-S(F_N,2\epsilon),G_N,G_N(2\epsilon)]=[{\tilde f},G_M,G_N(\epsilon)][{\tilde g},G_N,G_M(\epsilon)]-I_{|G_N|}.\]
Also, by Remark~\ref{StandardBasisRemark}, for $1\leq i\leq |G_M|$, $[G_{M,i},G_M]=\e_i$, where $\e_i$ is the $i^{th}$ standard basis vector in $k^{|G_M|}$.  Similarly, for $1\leq i\leq |G_N|$, $[G_{N,i},G_N]=\e_i$, where $\e_i$ is the $i^{th}$ standard basis vector in $k^{|G_N|}$.

Finally, note that we have that 
\begin{align*}
[R_N(\epsilon)^{gr(w)},G_N(\epsilon)]&=[R_N^{gr(w)+\epsilon},G_N] \text{\rm{ for all }} w\in R_M,\\
[R_M(\epsilon)^{gw(w)},G_M(\epsilon)]&=[R_M^{gr(w)+\epsilon},G_M] \text{\rm{ for all }} w\in R_N,\\
[R_M(2\epsilon)^{gr(w)},G_M(2\epsilon)]&=[R_M^{gr(w)+2\epsilon},G_M] \text{\rm{ for all }} w\in G_M,\\ 
[R_N(2\epsilon)^{gr(w)},G_N(2\epsilon)]&=[R_N^{gr(w)+2\epsilon},G_N] \text{\rm{ for all }} w\in G_N. 
\end{align*}

Using all of these observations, Lemma~\ref{MatrixLemma} now follows from Lemma~\ref{MatrixRepBijectionLemma}.
\end{proof}

Finally, Theorem~\ref{InterleavingAndQuadratics} follows from Lemma~\ref{MatrixLemma} by way of elementary matrix algebra and, in particular, the basic fact that for $l,m\in {\mathbb N}$ and vectors $v,v_1,...,v_l$ in $k^m$, $v\in \newspan(v_1,...,v_l)$ iff there exists a vector $w\in k^{l}$ such that $v=V w$, where $V$ is the $m\times l$ matrix whose $i^{th}$ column is $v_i$.
\end{proof}

\begin{remark}\label{ComputationRemark} Note that the size of the system of quadratic equations in the statement of Theorem~\ref{InterleavingAndQuadratics} is $O(q^2)$, where $q$ is the total number of generators and relations in the presentations for $M$ and $N$.  For any $\epsilon\geq 0$, the system of quadratics has as few variables and equations as possible when the presentations for $M$ and $N$ are minimal.
\end{remark}

\subsection{Determining Possible Values for $d_I(M,N)$}

Let $M$ and $N$ be finitely presented $B_n$-modules, and let $U_M^i$, $U_N^i$, $U_M$, and $U_N$ be as defined at the beginning of Section~\ref{SectionInterleavingThm}.  Let \[U_{M,N}=\bigcup_i\left( \{|x-y|\}_{x\in U_M^i,y\in U_N^i} \cup \{\frac{1}{2}|x-y|\}_{x,y\in U_M^i} \cup \{\frac{1}{2}|x-y|\}_{x,y\in U_N^i}\right) \cup \{0,\infty\}.\]

Note that $|U_{M,N}|=O(q^2)$, where as above $q$ is the total number of generators and relations in a minimal presentation for $M$ and a minimal presentation for $N$.

\begin{prop}\label{PossibilitiesForInterleavingDistance} $d_I(M,N)\in U_{M,N}$. \end{prop}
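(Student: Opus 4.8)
The plan is to show that $d_I(M,N)$ is always attained (or is $\infty$), and that the value at which it is attained must lie in the finite set $U_{M,N}$. First I would observe that if $M$ and $N$ are not $\epsilon$-interleaved for any finite $\epsilon$, then $d_I(M,N)=\infty\in U_{M,N}$ and we are done; so assume $d_I(M,N)=\epsilon_0<\infty$. By Theorem~\ref{InterleavingThm}, $M$ and $N$ are in fact $\epsilon_0$-interleaved, so the infimum defining $d_I$ is a minimum. It thus suffices to show that if $\epsilon_0\notin U_{M,N}$ and $M,N$ are $\epsilon_0$-interleaved, then they are $(\epsilon_0-\eta)$-interleaved for some $\eta>0$, contradicting minimality. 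This is exactly the kind of statement that Lemma~\ref{SmallerInterleavingLemma} is built to deliver.

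The key step is to verify the hypotheses of Lemma~\ref{SmallerInterleavingLemma} with $\epsilon=\epsilon_0-\eta$ and $\delta=\eta$ for suitable small $\eta>0$. Recall that for a finitely presented module $M$, Lemma~\ref{FirstIsomorphismLemma} tells us $\varphi_M(a,b)$ is an isomorphism whenever $(a_i,b_i]\cap U_M^i=\emptyset$ for all $i$. So, given $z\in U_N$, the map $\varphi_M(z+\epsilon_0-\eta, z+\epsilon_0)$ is an isomorphism provided that for every coordinate $i$ the half-open interval $(z_i+\epsilon_0-\eta,\ z_i+\epsilon_0]$ misses $U_M^i$; similarly $\varphi_N(z+2\epsilon_0-2\eta,z+2\epsilon_0)$ is an isomorphism provided $(z_i+2\epsilon_0-2\eta,\ z_i+2\epsilon_0]$ misses $U_N^i$, and symmetrically with the roles of $M$ and $N$ swapped. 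The point of the definition of $U_{M,N}$ is precisely that $\epsilon_0\notin U_{M,N}$ forces $z_i+\epsilon_0\notin U_M^i$ for all $z\in U_N$ (else $\epsilon_0=|z_i - w_i|\in U_{M,N}$ for some $w\in U_M$), and $z_i+2\epsilon_0\notin U_N^i$ for all $z\in U_N$ (else $2\epsilon_0 = |z_i-w_i|$, i.e. $\epsilon_0 = \tfrac12|z_i-w_i|\in U_{M,N}$), and likewise with $M,N$ interchanged. Since $U_M$ and $U_N$ are finite, there is a uniform $\eta>0$ small enough that all the relevant half-open intervals, over all $z\in U_M\cup U_N$ and all coordinates $i$, miss the corresponding $U_M^i$ or $U_N^i$; shrinking $\eta$ further if necessary we may also assume $\eta<\epsilon_0$ (if $\epsilon_0=0$ there is nothing to prove since $0\in U_{M,N}$).

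With that $\eta$ fixed, hypotheses (2) and (3) of Lemma~\ref{SmallerInterleavingLemma} hold for $\epsilon=\epsilon_0-\eta$, $\delta=\eta$, and hypothesis (1)---that $M$ and $N$ are $(\epsilon_0-\eta)+\eta = \epsilon_0$-interleaved---holds by assumption. Lemma~\ref{SmallerInterleavingLemma} then yields that $M$ and $N$ are $(\epsilon_0-\eta)$-interleaved, contradicting $d_I(M,N)=\epsilon_0$. Hence $\epsilon_0\in U_{M,N}$, which is the claim. The only subtlety---and the step I expect to require the most care---is bookkeeping the coordinatewise condition: one must check that $\epsilon_0\notin U_{M,N}$ genuinely rules out all the bad coincidences $z_i+\epsilon_0\in U_M^i$, $z_i+\epsilon_0\in U_N^i$, $z_i+2\epsilon_0\in U_M^i$, $z_i+2\epsilon_0\in U_N^i$ for $z$ ranging over $U_M\cup U_N$, i.e. that $U_{M,N}$ as defined contains all of $\{|x-y|\}$ and $\{\tfrac12|x-y|\}$ for the relevant pairs drawn from $U_M^i$ and $U_N^i$. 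Inspecting the definition of $U_{M,N}$ confirms this, and the finiteness of $U_M,U_N$ then gives the uniform $\eta$; the rest is a direct appeal to the already-proved lemmas.
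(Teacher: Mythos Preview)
Your proposal is correct and follows essentially the same route as the paper: both arguments reduce to checking the hypotheses of Lemma~\ref{SmallerInterleavingLemma} via Lemma~\ref{FirstIsomorphismLemma}, using that $\epsilon_0\notin U_{M,N}$ rules out the coincidences $z_i+\epsilon_0\in U_{M}^i\cup U_N^i$ and $z_i+2\epsilon_0\in U_{M}^i\cup U_N^i$. The only cosmetic difference is that the paper does not invoke Theorem~\ref{InterleavingThm}; instead of first knowing the infimum is attained and then dropping by a small $\eta$, it takes an arbitrary $\epsilon'\notin U_{M,N}$ at which $M,N$ are interleaved and drops in one step to the largest element of $U_{M,N}$ below $\epsilon'$, so that the infimum over all interleaving values equals the infimum over the finite set $U_{M,N}$.
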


\begin{proof} Assume that for some $\epsilon'>0$, $\epsilon'\not\in U_{M,N}$, $M$ and $N$ are $\epsilon'$-interleaved.  Let $\epsilon$ be the largest element of $U_{M,N}$ such that $\epsilon'>\epsilon$, and let $\delta=\epsilon'-\epsilon$.

We'll check that $M,N,\epsilon$ and $\delta$ satisfy the hypotheses of Lemma~\ref{SmallerInterleavingLemma}.  The lemma then implies that $M$ and $N$ are $\epsilon$-interleaved.  The result follows.

By assumption, $M$ and $N$ are $(\epsilon+\delta)$-interleaved, so the first hypothesis of Lemma~\ref{SmallerInterleavingLemma} is satisfied.  We'll show that the second hypothesis is satisfied; the proof that the third hypothesis is satisfied is the same as that for the second hypothesis.

If $z\in U_{M}$ then for no $i$, $1\leq i\leq n$, can an element of $U_N^i$ lie in $(z_i+\epsilon,z_i+\epsilon+\delta]$; if, to the contrary, for some $i$ there were an element $u\in U_N^i\cap(z+\epsilon,z+\epsilon+\delta]$, then we would have $|u-z_i|\in U_{M,N}$, and $\epsilon <|u-z_i|\leq \epsilon+\delta$, which contradicts the way we chose $\epsilon$ and $\delta$.  
Thus by Lemma~\ref{FirstIsomorphismLemma}, $\varphi_N(z+\epsilon,z+\epsilon+\delta)$ is an isomorphism.

Similarly, for no $i$, $1\leq i\leq n$, can an element of $U_M^i$ lie in $(z_i+2\epsilon,z_i+2\epsilon+2\delta]$; if, to the contrary, for some $i$ there were an element $u\in U_M^i\cap(z+2\epsilon,z+2\epsilon+2\delta]$, then we would have $\frac{1}{2}|u-z_i|\in U_{M,N}$, and $\epsilon <\frac{1}{2}|u-z_i|\leq \epsilon+\delta$, which again contradicts the way we chose $\epsilon$ and $\delta$.  By Lemma~\ref{FirstIsomorphismLemma}, $\varphi_M(z+2\epsilon,z+2\epsilon+2\delta)$ is an isomorphism.

Thus the second hypothesis of Lemma~\ref{SmallerInterleavingLemma} is satisfied by our $M,N,\epsilon$,$\delta$, as we wanted to show. \end{proof}

 \section{Discussion of Future Work on the Interleaving Distance}\label{DiscussionSection}

Theorem~\ref{InterleavingEqualsBottleneck}, Corollary~\ref{MetricCorollary}, and Corollary~\ref{CorOptimality} show that the interleaving distance is a natural generalization of the bottleneck distance to the setting of multidimensional persistence.

Insofar as the interleaving distance is in fact a good choice of distance on multidimensional persistence modules, the question of how to compute it is interesting and, it seems to us, potentially important from the standpoint of applications.  The results of Section~\ref{ComputationSection} suggest a path towards the development of a theory of computation of the interleaving distance.  As noted in Section~\ref{ComputationSection}, to exploit the connection with multivariate quadratics in the development of such a theory in practice, we need in particular a way of computing minimal presentations of simplicial homology modules of simplicial $n$-filtrations.  We hope to address this problem in future work. 

Corollary~\ref{CorOptimality} demonstrates that the interleaving distance is optimal in the sense of Example~\ref{AllDimsSublevelsetEx} when $k=\Q$ or $\Z/p\Z$.  However, our discussion of optimality of pseudometrics in Section~\ref{OptimalityGeneralities} raises many more questions than it answers.  Some of the more interesting questions are:

\begin{enumerate*}
\item Can we extend the result of Theorem~\ref{MainOptimality} to arbitrary ground fields?  
\item Can we extend the result of Theorem~\ref{MainOptimality} to the case $i=0$?  
\item Can we prove that the interleaving distance is ${\mathcal R}$-optimal for ${\mathcal R}$ any of the relative structures on $\obj^*(B_n$-mod$)$ defined in Examples~\ref{SublevelsetOffsetOptimality}-\ref{VariantExample}?  The case of Example~\ref{SublevelsetRipsOptimality} is of particular interest to us.  We have observed in Section~\ref{InducedSemiPseudometrics} that in this case an ${\mathcal R}$-optimal pseudometric does exist.    
\item\label{Generalizations} Can we obtain analogous results about the optimality of pseudometrics on more general types of persistent homology modules?  For instance, can we prove a result analogous to Theorem~\ref{MainOptimality} for levelset zigzag persistence \cite{carlsson2009zigzag}?
\end{enumerate*}

An interesting question related to question~\ref{Generalizations} above is whether there is a way of algebraically reformulating the bottleneck distance for zigzag persistence modules as an analogue of the interleaving distance in such way that the definition generalizes to a larger classes of quiver representations \cite{derksen2005quiver}.

It seems quite likely that Theorem~\ref{AlgebraicStability}, the algebraic stability result of \cite{chazal2009proximity}, generalizes to a theorem which quantifies the similarity between the persistence diagrams of a pair of tame $(J_1,J_2)$-interleaved $B_1$-persistence modules.  It would be nice to have a generalized algebraic stability theorem of this kind.

Finally, we mention again that it would be nice to have an extension of Theorem~\ref{WellBehavedStructureThm} to a structure theorem for arbitrary tame $B_1$-persistence modules, and an extension of Corollary~\ref{MetricCorollary} to well behaved tame $B_n$-persistence modules. 
 
 \chapter[Interleavings on Multi-D Filtrations]{Strong and Weak Interleavings on Multidimensional Filtrations}

 In this chapter we introduce and study weak and strong interleavings and interleaving distances on multidimensional filtrations.  See Section~\ref{Sec:Chapter3Overview} for an overview of the chapter.

\section{$u$-filtrations and their Persistent Homology}\label{Sec:FiltrationsRevisited}
In this section we define $u$-filtrations, a mild generalization of the $n$-filtrations introduced in Section~\ref{Sec:FiltrationsDef1} which allow for the coordinates of points in the index set of the spaces in a multidimensional filtration to have finite upper bounds.  We also generalize the multidimensional persistent homology functor, introduced in Section~\ref{Sec:MultidimensionalPersistentHomology1}, to a category whose objects are $u$-filtrations.  These generalizations will be useful in formulating our inference results in Chapter 4.

\subsection{$u$-Filtrations}
Fix $n\in \NN$.  For $u\in {\R}^n$, we now define the category $u$-filt of $u$-filtrations.  

Define a {\bf $u$-filtration} $X$ to be a collection of topological spaces $\{X_a\}_{a<u}$, together with a collection of continuous functions $\{\phi_X(a,b):X_a\to X_b\}_{a\leq b<u}$ such that if $a\leq b\leq c<u$ then $\phi_X(b,c)\circ \phi_X(a,b)=\phi_X(a,c)$.  We call the functions $\{\phi_X(a,b):X_a\to X_b\}_{a\leq b<u}$ {\it transition maps}.

Note that an $\vec\infty_n$-filtration is the same as an $n$-filtration, as defined in Section~\ref{Sec:FiltrationsDef1}.  To avoid ambiguity, we stipulate that in this thesis, whenever $n\in \NN$, by $n$-filtration we will always mean an $\vec\infty_n$-filtration, and not a $u$-filtration with $u=n$.  

Given two $u$-filtrations $X$ and $Y$, we define a morphism $f$ from $X$ to $Y$ to be a collection of continuous functions $\{f_a\}_{a<u}:X_a \to Y_a$ such that for all $a\leq b<u$, $f_b\circ \phi_X(a,b)=\phi_Y(a,b)\circ f_a$.  This definition of morphism gives the $u$-filtrations the structure of a category.  Let $u$-filt denote this category. 
      
For all $u' \leq u$, we can define the {\bf restriction functor} $R_{u'}:u$-filt $\to u'$-filt in the obvious way.  (The dependence of $R_{u'}$ on $u$ is implicit in our notation.)  We'll use these functors heavily in what follows.  

\subsection[The Multi-D Persistent Homology Functor on $u$-filtrations]{The Multidimensional Persistent Homology Functor on $u$-filtrations}\label{Sec:GeneralizedPersistentHomologyDefinition}


Here we define the multidimensional persistent homology functor $H_i:u$-filt $\to B_n$-mod for any $u\in \hat \R^n$, extending the definition introduced in Section~\ref{Sec:MultidimensionalPersistentHomology1} for $u=\infty_n$.


As in Section~\ref{Sec:MultidimensionalPersistentHomology1}, for a topological space $X$ and $i\in \Z_{\geq 0}$, let $C_i(X)$ denote the $i^{th}$ singular chain module of $X$, with coefficients in $k$; for $X,Y$ topological spaces and $f:X\to Y$ a continuous map, let $f^\#:C_i(X)\to C_i(Y)$ denote the map induced by $f$.  

For $X$ a $u$-filtration, define $C_i(X)$, the $i^{th}$ singular chain module of $X$, to be the $B_n$-persistence module for which 
\begin{itemize*}
\item $C_i(X)_a=C_i(X_a)$ for all $a<u$,
\item $C_i(X)_a=0$ for all $a\not <u$,
\item $\varphi_{C_i(X)}(a,b)=\phi_X^\#(a,b)$ for $b<u$, 
\item $\varphi_{C_i(X)}(a,b)=0$ for $b \not <u$.
\end{itemize*}

Note that for any $i\in \Z_{\geq 0}$, the collection of boundary maps $\{\delta_i:C_i(X_a)\to C_{i-1}(X_a)\}_{a<u}$ induces a boundary map $\delta_i:C_i(X)\to C_{i-1}(X)$.  These boundary maps give $\{C_i(X)\}_{i\in \Z_{\geq 0}}$ the structure of a chain complex.  We define the $H_i(X)$, the $i^{th}$ persistent homology module of $X$, to be the $i^{th}$ homology module of this complex.  For $X$ and $Y$ two $u$-filtrations, a morphism $f\in \hom(X,Y)$ induces in the obvious way a morphism $H_i(f):H_i(X)\to H_i(Y)$, making $H_i:u$-filt$\to B_n$-mod a functor.  

\subsubsection{Compatibility of Multidimensional Persistent Homology With Restriction Functors}
For all $u\in {\hat \R}^n$, we define the {\bf restriction functor} $R_{u}:B_n$-mod $\to B_n$-mod by taking $R_u(M)$ of a $B_n$-persistence module $M$ to be such that 

\begin{itemize*}
\item $R_u(M)_a=M_a$ for all $a<u$,
\item $R_u(M)_a=0$ for all $a\not <u$,
\item $\varphi_{R_u(M)}(a,b)=\varphi_M(a,b)$ for $b<u$, 
\item $\varphi_{R_u(M)}(a,b)=0$ for $b \not <u$.
\end{itemize*}

We define the action of $R_{u}$ on morphisms in the obvious way.

\begin{remark} Note that if $M$ is a $B_n$-persistence module having a finite presentation $\langle G|R \rangle$ \cite{lesnick2011optimality}, then for any $u\in \hat \R^n$, $R_u(M)$ has a presentation consisting of at most $|G|$ generators and $|R|+n|G|$ relations.
\end{remark}

\begin{lem}[Compatibility of persistent homology with restriction functors]\label{lem:PersistentHomologyIsCompatibleWRestriction}
For any $u'\leq u\in \R^n$, \[H_i \circ R_{u'}=R_{u'} \circ H_i.\] \end{lem}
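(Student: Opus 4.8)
The statement is an equality of functors $H_i \circ R_{u'} = R_{u'} \circ H_i$ from $u$-filt to $B_n$-mod, where $u' \leq u$. The plan is to verify the equality first on objects and then on morphisms, working grade by grade with the singular chain complexes. The key observation is that both restriction functors act by the same combinatorial recipe: ``keep the data at grades $a < u'$, and zero out everything at grades $a \not< u'$.'' So the proof amounts to checking that homology commutes with this truncation, which is essentially immediate because homology is computed grade-wise.

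First I would unwind the definitions on an object $X \in \obj(u\text{-filt})$. For $a < u'$, the grade-$a$ part of $H_i \circ R_{u'}(X)$ is $H_i$ of the chain complex $\{C_j(R_{u'}(X))_a\}_j = \{C_j(X_a)\}_j$ (since $a < u' \leq u$, restriction does not kill this grade), which is exactly $H_i(X_a)$; and the grade-$a$ part of $R_{u'} \circ H_i(X)$ is, by definition of the restriction functor on persistence modules, $H_i(X)_a = H_i(X_a)$ as well. For $a \not< u'$, the grade-$a$ part of $H_i \circ R_{u'}(X)$ is $H_i$ of the zero complex, hence $0$, and the grade-$a$ part of $R_{u'} \circ H_i(X)$ is $0$ by definition. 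One then checks the transition maps agree: for $b < u'$ they are both $\phi_X^\#(a,b)$ passed to homology, i.e. $H_i(\phi_X(a,b))$; for $b \not< u'$ they are both $0$. This gives $H_i \circ R_{u'}(X) = R_{u'} \circ H_i(X)$ as $B_n$-persistence modules, not merely up to isomorphism.

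Next I would handle a morphism $f \in \hom_{u\text{-filt}}(X,Y)$. Since both $R_{u'}$ (on filtrations) and $H_i$ send $f$ to the morphism induced ``grade-wise in the obvious way,'' and since $R_{u'}$ on $B_n$-mod likewise just restricts a morphism to grades $< u'$ and zeros it elsewhere, the two composites $H_i(R_{u'}(f))$ and $R_{u'}(H_i(f))$ have the same grade-$a$ component for every $a$: for $a < u'$ it is $H_i(f_a)$, and for $a \not< u'$ it is $0$. Hence the two functors agree on morphisms as well.

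I do not expect any real obstacle here; the content of the lemma is definitional compatibility, and the only thing to be careful about is bookkeeping — specifically, making sure that the two \emph{different} restriction operations (on filtrations versus on persistence modules) are set up with matching conventions, which the definitions in this section are. If one wanted to be slightly slicker, one could phrase it as: $C_i \circ R_{u'} = R_{u'} \circ C_i$ as functors to chain complexes of $B_n$-persistence modules (immediate from the definitions, both sides zero out grades $\not< u'$), the boundary maps match, and taking $i$-th homology is a grade-wise operation that therefore commutes with $R_{u'}$ on $B_n$-mod. I would write the short direct argument above rather than this reformulation.
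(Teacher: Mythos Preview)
Your proposal is correct. The paper omits the proof entirely (``The proof is easy; we omit it''), so there is nothing to compare against; your grade-by-grade unwinding of the definitions is exactly the natural argument one would expect and is complete as written.
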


\begin{proof} The proof is easy; we omit it. \end{proof}

 \section[Strong Interleavings and the Strong Interleaving Distance]{Strong Interleavings and the Strong Interleaving Distance on $u$-Filtrations}\label{Sec:StrongInterleavings}

\subsubsection{Overview}
In this section, we introduce strong interleavings and the strong interleaving distance on $u$-filtrations, and develop their basic theory; in Section~\ref{Sec:WeakInterleavings} we will introduce and study our homotopy theoretic variants of these, weak interleavings and the weak interleaving distance.   

As we noted in the introduction, our inference results in Chapter 4 are formulated using weak interleavings and the weak interleaving distance.  In this sense, weak interleavings and the weak interleaving distance are of more interest to us than strong interleavings and the strong interleaving distance.  Nevertheless, because the program of understanding weak interleavings is, at least in a loose sense, related to the (simpler) program of understanding strong interleavings, we choose to devote some effort here to writing down some basic results about strong interleavings and the strong interleaving distance.

To begin this section, we define shift functors and transition morphisms for categories of $u$-filtrations.  Using these, we define strong $(J_1,J_2)$-interleavings of $u$-filtrations and the strong interleaving distance; the definitions are similar to those we introduced for $B_n$-persistence modules in Chapter 2.  Whereas we first introduced $\epsilon$-interleavings of $B_n$-persistence modules in Section~\ref{InterleavingsOfModules} and then presented the more general definition of strong $(J_1,J_2)$-interleavings of $B_n$-persistence modules in Section~\ref{Sec:J1J2Interleavings}, we proceed here directly with the general definition of strong interleavings of $u$-filtrations.

We present a characterization result for strongly $(J_1,J_2)$-interleaved pairs of $u$-filtrations, for an important class of $u$-filtrations.  We also present two optimality results for the strong interleaving distance.  These are analogues of our optimality result Theorem~\ref{MainOptimality} for the interleaving distance on $B_n$-persistence modules. 

\subsection{Definitions of Strong Interleavings and the Strong Interleaving Distance}\label{Sec:StrongInterleavingsDef}

\subsubsection{Shift Functors}
Fix $u\in {\hat \R}^n$ and let $J:\R^n\to \R^n$ be an order-preserving map.  We define the $J$-shift functor $(\cdot)(J):u$-filt$\to J^{-1}(u)$-filt in a way analogous to the way we did for $B_n$-persistence modules:
\begin{enumerate}
\item Action of $(\cdot)(J)$ on objects: For $X\in \obj(u$-filt$)$, we define $X(J)$ by taking $X(J)_a=X_{J(a)}$ for all $a<J^{-1}(u)$.  We take the maps $\{\phi_{X(J)}(a,b)\}_{a\leq b<J^{-1}(u)}$ to be induced by those of $X$ in the obvious way.  
\item  Action of $(\cdot)(J)$ on morphisms: For $X,Y\in \obj(u$-filt$)$ and $f\in \hom(X,Y)$, we define $f(J):X(J)\to Y(J)$ to be the morphism for which $f(J)_a=f_{J(a)}$ for all $a<J^{-1}(u)$.
\end{enumerate}

Note that the dependence of the definition of the functor $(\cdot)(J)$ on the choice of $u$ is implicit in our notation.  

As in the case of $B_n$-persistence modules, for $\epsilon\in\R$ we let $(\cdot)(\epsilon)$ denote the functor $(\cdot)(J_\epsilon)$.  




\subsubsection{Transition Morphisms}
For a $u$-filtration $X$ and $J:\R^n\to\R^n$ increasing, let $S(X,J):R_{J^{-1}(u)}(X)\to X(J)$, the {\bf $J$-transition morphism}, be the morphism whose restriction to $X_a$ is the map $\phi_X(a,J(a))$ for all $a<J^{-1}(u)$.  

For $\epsilon\in \R_{\geq 0}$, let $S(X,\epsilon)=S(X,J_\epsilon)$.

The expected analogue of Lemma~\ref{lem:TransitionMorphismsAndShiftsForMods} holds, with the same proof.

\begin{lem}\label{lem:TransitionMorphismsAndShiftsForFilts}\mbox{}
\begin{enumerate*}
\item[(i)] For any $f:X\to Y \in \hom(u$-filt) and any $J:\R^n\to \R^n$ increasing, \[S(Y,J)\circ f=f(J)\circ S(X,J).\]
\item[(ii)] For any $J,J'$ increasing and $u$-filtration $X$,
\[S(X,J')(J)\circ S(X,J)=S(X,J'\circ J).\]
\item[(iii)] For any $J,J'$ increasing and $u$-filtration $X$,
\[S(X,J')(J)=S(X(J),J^{-1} \circ J'\circ J).\]
\end{enumerate*}
\end{lem}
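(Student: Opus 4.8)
The plan is to establish the three identities in Lemma~\ref{lem:TransitionMorphismsAndShiftsForFilts} by the same componentwise arguments used for Lemma~\ref{lem:TransitionMorphismsAndShiftsForMods}, replacing transition homomorphisms $\varphi_M$ of persistence modules with transition maps $\phi_X$ of $u$-filtrations throughout. Since each of the morphisms in question is defined by its restriction to the spaces $X_a$ (equivalently, by its component maps), it suffices to check the claimed equalities on each component, at each grade $a$ in the appropriate index set; the only extra bookkeeping relative to the module case is keeping track of which $u'$-filtration category each shifted object lives in, i.e.\ ensuring the index sets $J^{-1}(u)$, $(J'\circ J)^{-1}(u)$, etc., match up. First I would record the elementary fact that $(J'\circ J)^{-1}(u) = J^{-1}((J')^{-1}(u))$, so that the domains and codomains in (ii) and (iii) are the filtration categories one expects.

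For part (iii), which the lemma singles out as of particular note, I would argue exactly as in the proof of Lemma~\ref{lem:TransitionMorphismsAndShiftsForMods}(iii): for $a < J^{-1}((J')^{-1}(u))$,
\begin{align*}
S(X,J')(J)_a &= S(X,J')_{J(a)} = \phi_X(J(a),\, J'(J(a))) \\
&= \phi_X\bigl(J(a),\, J\circ J^{-1}\circ J'\circ J(a)\bigr) = \phi_{X(J)}\bigl(a,\, J^{-1}\circ J'\circ J(a)\bigr) \\
&= S\bigl(X(J),\, J^{-1}\circ J'\circ J\bigr)_a,
\end{align*}
using the definition $\phi_{X(J)}(a,b) = \phi_X(J(a),J(b))$ of the transition maps of the shifted filtration. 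One must note that $J^{-1}\circ J'\circ J$ is increasing when $J'$ is (so that $S(X(J),\cdot)$ is defined) and that $X(J)$ is a $J^{-1}((J')^{-1}(u))$-filtration, so both sides are morphisms with the same domain and codomain. Part (ii) is the chain rule $\phi_X(b,c)\circ\phi_X(a,b) = \phi_X(a,c)$ applied at $b = J(a)$, $c = J'(J(a))$, together with a restriction-functor check on the left-hand composite; part (i) is the statement that a morphism commutes with transition maps, which is precisely the definition of a morphism of $u$-filtrations, again read off componentwise. For all three, I would verify that the restriction functors $R_{(\cdot)}$ appearing in the definition of $S(X,J)$ do not cause a mismatch of index sets, which is routine.

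I do not expect any genuine obstacle here: the content is entirely bookkeeping, and the excerpt itself asserts that the lemma ``holds, with the same proof.'' The only point requiring a little care is the one just flagged — confirming that each displayed equation is an equality of morphisms between the same pair of objects in the relevant $u'$-filtration categories, since the shift functors change the index set, unlike in the $B_n$-mod case where every object lives in the single category $B_n$-mod. Accordingly I would state the lemma's proof as ``the verifications are componentwise and identical to those of Lemma~\ref{lem:TransitionMorphismsAndShiftsForMods}, once one observes $(J'\circ J)^{-1}(u) = J^{-1}((J')^{-1}(u))$; we omit the details,'' perhaps including the three-line computation for (iii) as above for the reader's convenience.
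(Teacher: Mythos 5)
Your proposal is correct and follows the paper's approach exactly: the paper simply says the lemma "holds, with the same proof" as Lemma~\ref{lem:TransitionMorphismsAndShiftsForMods}, whose proof is precisely the componentwise computation you reproduce for (iii), with (i) and (ii) left to the reader. Your extra remarks on tracking the index sets $J^{-1}(u)$, $(J'\circ J)^{-1}(u)$ and the restriction functors are a sound elaboration of a point the paper leaves implicit, and do not change the substance of the argument.
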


\subsubsection{Strong $(J_1,J_2)$-Interleavings of $u$-filtrations}

For $J_1,J_2:\R^n\to\R^n$ increasing, we say that a pair $(X,Y)$ of $u$-filtrations is {\bf strongly $(J_1,J_2)$-interleaved} (or, more colloquially, that $X$ and $Y$ are strongly $(J_1,J_2)$-interleaved) if there exist morphisms $f:R_{J_1^{-1}(u)}(X)\to Y(J_1)$ and $g:R_{J_2^{-1}(u)}(Y)\to X(J_2)$ such that 
\begin{align*}
g(J_1)\circ R_{J_1^{-1}\circ J_2^{-1}(u)}(f)&=S(X,J_2\circ J_1)\textup{ and}\\ 
f(J_2)\circ R_{J_2^{-1}\circ J_1^{-1}(u)}(g)&=S(Y,J_1\circ J_2);
\end{align*}
we say $(f,g)$ is a pair of {\bf strong $(J_1,J_2)$-interleaving morphisms} for $(X,Y)$.  

\begin{remark}
When discussing $(J_1,J_2)$-interleavings and $(J_1,J_2)$-interleaving morphisms of $u$-filtrations, we will follow terminological conventions analogous to those introduced for $(J_1,J_2)$-interleaved $B_n$-persistence modules in Remark~\ref{Rem:General_Interleaving_Terminology_Remark}.
\end{remark}

If for $\epsilon\geq 0$, a pair of $u$-filtrations is strongly $(J_\epsilon,J_\epsilon)$-interleaved, we say that they are strongly $\epsilon$-interleaved. 

Since the definition of strong $(J_1,J_2)$-interleavings is a bit complex, the reader may find it illuminating to see the definition in the special case of $\epsilon$-interleavings: For $\epsilon\in [0,\infty)$, two $u$-filtrations $X$ and $Y$ are strongly $\epsilon$-interleaved if and only if there exist morphisms $f:R_{u-\epsilon}(X)\to Y(\epsilon)$ and $g:R_{u-\epsilon}(Y)\to X(\epsilon)$ such that 
\begin{align*}
g(\epsilon)\circ R_{u-2\epsilon}(f)&=S(X,2\epsilon)\textup{ and}\\
f(\epsilon)\circ R_{u-2\epsilon}(g)&=S(Y,2\epsilon).
\end{align*}
  
When $u={\vec\infty}_n$ the definition of $\epsilon$-interleavings simplifies yet further to give a definition closely analogous to that of $\epsilon$-interleavings of $B_n$-persistence modules: For $\epsilon\in [0,\infty)$, two $n$-filtrations $X$ and $Y$ are strongly $\epsilon$-interleaved if and only if there exist morphisms $f:X\to Y(\epsilon)$ and $g:Y\to X(\epsilon)$ such that 
\begin{align*}
g(\epsilon)\circ f&=S(X,2\epsilon)\textup{ and}\\
f(\epsilon)\circ g&=S(Y,2\epsilon).
\end{align*}

\subsubsection{The Strong Interleaving Distance}
We define $d_{SI}:\obj^*(u$-filt$)\times \obj^*(u$-filt$)\to [0,\infty]$, the {\bf strong interleaving distance}, by taking \[d_{SI}(X,Y)=\inf \{\epsilon\in \R_{\geq 0}|X\textup{ and }Y\textup{ are strongly }\epsilon\textup{-interleaved}\}.\]

\subsection{Stability Results for the Strong Interleaving distance}

$d_{SI}$ satisfies stability properties with respect to the functors $F^S$, $F^{SO}$, $F^{\SCe}$ analogous to the stability properties Theorems~\ref{MultidimensionalSublevelsetStability}-\ref{MultidimensionalCechPersistenceStability} satisfied by the interleaving distance on $B_n$-persistence modules with respect to the functors $H_i\circ F^S$, $H_i\circ F^{SO}$, and $H_i\circ F^{\SCe}$.  The formulation and proofs of these results are easy modifications of those of Section~\ref{MultidimensionalStabilitySection}.

\subsection{First Examples of $(J_1,J_2)$-interleaved Filtrations}

\begin{example}\label{Ex:OpenAndClosedDeltaInterleaved} \mbox{}
\begin{enumerate}
\item[(i)] For any $\delta>0$ and $(X,Y,d,f)\in \obj(C^{SO})$, $F^{SO}(X,Y,d,f),F^{SO-Op}(X,Y,d,f)$ are strongly $\delta$-interleaved.
\item[(ii)] For any $\delta>0$ and $(X,Y,d,f)\in \obj(C^{SCe})$, $F^{\SCe}(X,Y,d,f),F^{SCe-Op}(X,Y,d,f)$ are strongly $\delta$-interleaved.
\end{enumerate}
\end{example}

\begin{example}\label{Ex:Rips_and_Cech_Generalized_Interleaving}
For $n\in \NN$, let $\id_n:\R^n\to \R^n$ denote the identity map.  

Let $\J_1:\R\to \R$ be given by $\J_1(x)=2x$. If $(Y,d)$ is any metric space, $X\subset Y$, $F^{R}(X,d)$ is the usual Rips filtration on $(X,d)$, and $F^{\Ce}(X,Y,d)$ is the usual \Cech filtration on $(X,Y,d)$, then the inclusions (\ref{Eq:Rips_And_Cech}) in Section~\ref{Sec:RipsInferenceIntro} immediately imply that $F^{R}(X,d),F^{\Ce}(X,Y,d)$ are $(\J_1,\id_1)$-interleaved.

To generalize this example, for $n\in \NN$, let $\J_{n+1}:\R^{n+1}\to \R^{n+1}$ be given by $\J_{n+1}((a,b))=(a,2b)$ for all $a\in \R^n,$ $b\in \R$.
Then it again follows immediately from the inclusions (\ref{Eq:Rips_And_Cech}) in Section~\ref{Sec:RipsInferenceIntro} that for any $(X,Y,d,\gamma)\in \obj(C^{SCe})$, $F^{SR}(X,d,\gamma),F^{\SCe}(X,Y,d,\gamma)$ are $(\J_{n+1},\id_{n+1})$-interleaved.
\end{example}

\subsection{A Characterization of Strongly Interleaved Pairs of Modules of Nesting Type}

We would like to have a transparent characterization of strongly $(J_1,J_2)$-interleaved pairs of $u$-filtrations, as Theorem~\ref{GeneralAlgebraicRealization} gives us for $(J_1,J_2)$-interleaved pairs of $B_n$-persistence modules.

It is not clear how obtain such a characterization in full generality; the obstacle to adapting the approach of Theorem~\ref{GeneralAlgebraicRealization} is that, at least naively, in the category of $u$-filt there is no reasonably behaved notion of free covers, as we have in the category $B_n$-mod.

However, we present a characterization result, Theorem~\ref{Thm:CharacterizationOfStrongInterleavings}, for strongly $(J_1,J_2)$-interleaved pairs $u$-filtrations, provided we restrict attention to the $u$-filtrations in a full subcategory {\bf $u$-nest} of $u$-filt which is large enough to contain all of the examples of $u$-filtrations that we have occasion to consider in this thesis.  In the special case that $u={\vec \infty}_n$, this characterization is very simple and transparent (Corollary~\ref{Cor:CharacterizationForNFiltrations}).  For general $u$, the characterization is not as transparent, but it does imply a transparent necessary condition for the existence of a strong $(J_1,J_2)$-interleaving between two $u$-filtrations of nested type, Corollary~\ref{Cor:CharacterizationCorollary}, similar to our necessary and sufficient condition for $n$-filtrations.

\subsubsection{$u$-filtrations of Nested Type}

We define $u$-nest to be the full subcategory of $u$-filt whose objects are filtrations $X$ such that for all $a\leq b<u$, the transition map $\phi_X(a,b)$ is injective.  We refer to the objects of $u$-filt as {\bf $u$-filtrations of nested type}.\footnote{Note that in the literature on persistent homology, filtrations of nested type are typically the only kinds of filtrations defined and studied; there is not a huge loss of generality in restricting attention to these.}

As we will now see, we can regard $u$-filtrations of nested type as an $n$-parameter family of nested subspaces of an ambient topological space; our characterization of strongly $(J_1,J_2)$-interleaved $u$-filtrations follows from the adoption of this viewpoint. 

\subsubsection{Colimits of $u$-filtrations}
Let $\Top$ denote the category of topological spaces.

Define the {\bf diagonal functor} $\Delta:\Top\to u$-filt to act on objects $W$ in $\Top$ by taking $\Delta(W)_a=W$ for all $a<u$ and $\phi_{\Delta(W)}(a,b)=\id_W$ for all $a\leq b<u$; we define the action of $\Delta$ on $\hom(\Top)$ in the obvious way.
  
It is well known that the usual category of topological spaces has all small colimits \cite{mac1998categories,dwyer1995homotopy}.  In particular, we can define the colimit of a $u$-filtration X.  This is a topological space $\colim(X)$ together with a morphism $s^X:X\to \Delta(\colim(X))$ satisfying the following universal property: If $W$ is a topological space and $s':X\to \Delta(W)$ is a morphism, then there is a unique map $t:\colim(X)\to W$ such that $s'=\Delta(t) \circ s^X$.

For a $u$-filtration $X$ $\colim(X)$ is, as a set, the quotient of $\amalg_{a<u} X_a$ by the relation $\sim$ generated by taking $x\sim\phi_X(a,b)(x)$, for all $a\leq b<u$ and $x\in X_a$.  We define $s^X$ so that for each $a<u$, $s^X_a$ takes each element of $X_a$ to its equivalence class in $\colim(X)$.  We take $\colim(X)$ to be endowed with the {\it final topology}.  This means that a set $U\in \colim(X)$ is open if and only if $(s^X_a)^{-1}(U)$ is open for all $a<u$.   

We can, in the obvious way, also define define an action of $\colim(\cdot)$ on $\hom(u$-filt) to obtain a functor $\colim: u$-filt$\to \Top$. 

\begin{lem}
If $X$ is a $u$-filtration of nested type then $s^X_a$ is injective for all $a<u$.  
\end{lem}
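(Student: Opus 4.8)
The claim is that for a $u$-filtration of nested type $X$, each structure map $s^X_a : X_a \to \colim(X)$ of the colimit is injective. Recall that as a set, $\colim(X)$ is the quotient of $\amalg_{b<u} X_b$ by the equivalence relation $\sim$ generated by $x \sim \phi_X(b,c)(x)$ for $x \in X_b$, $b \le c < u$. The plan is to understand this equivalence relation concretely on a single $X_a$: I will show that two points $x, x' \in X_a$ satisfy $x \sim x'$ in $\amalg_b X_b$ only if $x = x'$ already in $X_a$.

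First I would describe the equivalence classes explicitly. Since the generating relation only pushes elements \emph{forward} along transition maps, a chain of elementary relations connecting $x \in X_{b_0}$ to $x' \in X_{b_m}$ consists of a zigzag $x = x_0, x_1, \dots, x_m = x'$ where for each consecutive pair either $x_{i+1} = \phi_X(b_i, b_{i+1})(x_i)$ (with $b_i \le b_{i+1}$) or $x_i = \phi_X(b_{i+1}, b_i)(x_{i+1})$ (with $b_{i+1} \le b_i$). The key observation is that because $\R^n$ is directed upward and all transition maps are injective, any such zigzag can be ``straightened'': if $x_i \in X_{b_i}$ and $x_{i+1} \in X_{b_{i+1}}$ lie in the same class via the generating relation, then taking any $c \ge b_i, b_{i+1}$ with $c < u$ — which exists because $x, x'$ being in $X_a$ forces $a < u$ and we may work below, say, $\sup$ of all indices appearing, or more carefully choose $c$ componentwise just below $u$ — we get $\phi_X(b_i, c)(x_i) = \phi_X(b_{i+1}, c)(x_{i+1})$ in $X_c$. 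I would prove by induction on the length $m$ of the zigzag that $x \sim x'$ implies the existence of a common $c < u$ above all the $b_i$ with $\phi_X(b_0, c)(x) = \phi_X(b_m, c)(x')$.

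Once that is established, specializing to $x, x' \in X_a$ (so $b_0 = b_m = a$) gives $c$ with $a \le c < u$ and $\phi_X(a,c)(x) = \phi_X(a,c)(x')$; since $\phi_X(a,c)$ is injective by the nested-type hypothesis, $x = x'$. This is exactly the statement that $s^X_a$ is injective.

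The main obstacle — really the only subtlety — is handling the upper bound $u$: one must make sure that when combining finitely many indices $b_0, \dots, b_m \in \R^n$, each strictly below $u$, there is still a single index $c$ with all $b_i \le c < u$. Since finitely many points each strictly below $u$ componentwise have a coordinatewise maximum that is again strictly below $u$ (the max of finitely many reals, each $< u_j$, is $< u_j$), such a $c$ exists; I would record this small lemma explicitly. With that in hand the induction is routine, so I expect the proof to be short. The remaining bookkeeping — that the straightening is compatible at each induction step via the cocycle identity $\phi_X(b,c)\circ\phi_X(a,b) = \phi_X(a,c)$ — is immediate from the definition of a $u$-filtration.
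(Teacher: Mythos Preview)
Your argument is correct and is the standard way to prove this: use the directedness of the index set $\{b : b < u\}$ (coordinatewise maxima of finitely many indices stay below $u$) together with injectivity of the transition maps to collapse any zigzag witnessing $s^X_a(x) = s^X_a(x')$ to an equation $\phi_X(a,c)(x) = \phi_X(a,c)(x')$ at a single index $c \ge a$. The paper itself does not give a proof at all --- it simply declares the result ``straightforward'' and leaves it to the reader --- so there is nothing to compare your approach against; what you wrote is exactly the routine verification the authors had in mind.
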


\begin{proof}
The proof is straightforward; we leave it to the reader.  
\end{proof}

\subsubsection{Colimit Representations of $u$-Filtrations of Nested Type}
Let $\Sub(u)=\{y|y<u\}$.  Let $A_u$ denote the set of subsets $S$ of $\Sub(u)$ such that if $y\in S$ and $y\leq y'<u$ then $y'\in S$.

Let $CL_u$ denote the category whose objects are pairs $(W,f)$, where $W$ is a topological space and $f:W\to A_u$ is a function.  For $(W_1,f_1)$, $(W_2,f_2)\in \obj(CL_u)$, let $\hom((W_1,f_1),(W_2,f_2))$ denote continuous maps $g:W_1\to W_2$ such that for all $y\in W_1$, $f_1(y)\subset f_2(g(y))$.

We'll define a functor $\A:u$-filt$\to CL_u$ such that $\A$ restricts to an equivalence between $u$-nest and its image under $\A$.

First, for $X$ a $u$-filtration, we define a function $ROA_X:\colim(X)\to A_u$ by \[ROA_X(x)=\{y|y<u\textup{ and } x\textup{ lies in the image of } s^X_y\}.\]  We say that $ROA_X(x)$ is the {\bf region of appearance of $x$ in $X$}.  We define $\A$ by taking $\A(X)=(\colim(X),ROA_X)$ for any $u$-filtration $X$ and $\A(f)=\colim(f)$ for all $f\in \hom(u$-filt).

We can define a functor $\filt:CL_u\to u$-nest by taking $\filt((W,f))_a=\{y\in W|a\in f(y)\}$ and endowing $\filt((W,f))_a$ with the subspace topology.  The action of $\filt$ on $\hom(CL_u)$ is defined in the obvious way.

\begin{lem}\label{lem:ColimEquivalence}
The restriction of $\A$ to $u$-$\textup{nest}$ is equivalence of categories between $u$-nest and the full subcategory ${CL'_u}$ of $CL_u$ whose set of objects is the image of $\obj(u$-$\textup{nest})$ under $\A$; the restriction of $\filt$ to ${CL'_u}$ is an inverse of the restriction of $\A$ to $u$-\textup{nest}.  
\end{lem}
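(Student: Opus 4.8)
The plan is to show that $\A$ and $\filt$ are (when suitably restricted) mutually inverse equivalences by constructing explicit natural isomorphisms $\filt \circ \A \cong \id_{u\text{-nest}}$ and $\A \circ \filt|_{CL'_u} \cong \id_{CL'_u}$, and then invoking the standard fact that a functor admitting such a quasi-inverse is an equivalence onto its image. First I would unwind the two composites on objects. Given a $u$-filtration $X$ of nested type, $\A(X) = (\colim(X), ROA_X)$, and then $\filt(\A(X))_a = \{x \in \colim(X) \mid a \in ROA_X(x)\} = \{x \in \colim(X) \mid x \in \im(s^X_a)\} = \im(s^X_a)$. Since $X$ is of nested type, the preceding lemma tells us $s^X_a$ is injective, so $s^X_a$ gives a homeomorphism (here I need to check it is a homeomorphism, not just a continuous bijection — but $\colim(X)$ carries the final topology, and $\filt(\A(X))_a$ carries the subspace topology from that, so a short point-set argument handles this) from $X_a$ onto $\im(s^X_a) = \filt(\A(X))_a$, compatibly with the transition maps. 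This assembles into a natural isomorphism $\eta_X : X \to \filt(\A(X))$, with components $(\eta_X)_a = s^X_a$; naturality in $X$ follows from the definition of $\A$ on morphisms as $\colim(f)$ and the universal property of the colimit.

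Next I would check the other composite on the subcategory $CL'_u$. By definition $CL'_u$ consists exactly of those $(W,f) \in CL_u$ isomorphic to some $\A(X)$, so it suffices to verify that $\A(\filt(W,f)) \cong (W,f)$ naturally for $(W,f)$ of this form; equivalently, using the first natural isomorphism, it is enough to show $\A \circ \filt \circ \A \cong \A$, which follows formally once $\filt \circ \A \cong \id$ and we know $\A$ lands in $CL'_u$. Alternatively, and perhaps more cleanly, one observes directly: for $(W,f) \in CL'_u$ write $(W,f) = \A(X)$; then $\filt(W,f) \cong X$ by the first isomorphism, so $\A(\filt(W,f)) \cong \A(X) = (W,f)$, and this chain of isomorphisms is natural. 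This gives the required natural isomorphism $\A \circ \filt|_{CL'_u} \cong \id_{CL'_u}$.

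The two natural isomorphisms together show $\A|_{u\text{-nest}}$ is an equivalence onto the full subcategory $CL'_u$, with $\filt|_{CL'_u}$ as quasi-inverse; since $\filt|_{CL'_u}$ takes values in $u$-nest (the objects $\filt(W,f)$ are manifestly of nested type, as the transition maps are subspace inclusions), this is precisely the statement of the lemma. I expect the main obstacle — and really the only nonroutine point — to be the point-set topology: verifying that for $X$ of nested type the map $s^X_a : X_a \to \im(s^X_a)$ is a \emph{homeomorphism} onto its image with the subspace topology inherited from the final topology on $\colim(X)$, and dually that $\filt(W,f)$ recovers the topology correctly when $(W,f) \in CL'_u$. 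The subtlety is that the final topology on a colimit can have ``more open sets'' than one naively expects; one must argue that $U \subset \im(s^X_a)$ is open in the subspace topology iff $(s^X_a)^{-1}(U)$ is open in $X_a$, using the compatibility $\phi_X(a,b) = (s^X_b)^{-1} \circ s^X_a$ (valid since all $s^X_b$ are injective) and the explicit description of open sets in $\colim(X)$. Everything else — naturality, the action on morphisms, the fact that $\filt(W,f)$ is of nested type — is a routine diagram chase that I would leave to the reader, consistent with the surrounding exposition.
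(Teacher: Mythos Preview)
The paper itself offers no argument here (it says only ``This is straightforward; we leave the details to the reader''), so your outline is exactly the natural approach and presumably what the author had in mind. You have also correctly isolated the one genuine issue: whether $s^X_a$ is a \emph{homeomorphism} onto its image in the subspace topology inherited from the final topology on $\colim(X)$. Unfortunately, your claim that ``a short point-set argument handles this'' is where the gap lies --- with $u$-nest defined as in the paper (transition maps required only to be continuous injections, not embeddings), this step actually fails, and with it the lemma as literally stated.

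A counterexample: take $u=\infty\in\hat\R^1$, let $X_a=\R$ for every $a\in\R$, equipped with the discrete topology for $a<0$ and the standard topology for $a\geq 0$, and let every transition map be $\id_\R$. These maps are continuous injections, so $X$ lies in $1$-nest. The final topology on $\colim(X)=\R$ is the standard one (openness must survive restriction to each $X_a$ with $a\geq 0$), so $\filt(\A(X))_{-1}$ is $\R$ with the standard topology while $X_{-1}$ is $\R$ discrete; these are not homeomorphic, hence $\filt\circ\A\not\cong\id$. Your argument does go through if one strengthens ``injective'' to ``topological embedding'' in the definition of $u$-nest: given $V\subset X_a$ open, the set
\[
U=\{x\in\colim(X): x\in X_q \text{ for some } q\geq a \text{ and some open } W\ni x \text{ in } X_q \text{ with } W\cap X_a\subset V\}
\]
is open (check $U\cap X_c$ is open by using directedness to pass to some $r\geq q,c$ and the embedding hypothesis to extend $W$ to an open set in $X_r$) and satisfies $U\cap X_a=V$. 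Every filtration actually used in the thesis (sublevelset, offset, \Cech, Rips) has transition maps that are subspace inclusions, so the downstream applications are unaffected.
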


\begin{proof}
This is straightforward; we leave the details to the reader. 
\end{proof}

\subsubsection{Characterization of Interleaved pairs of Filtrations of Nested Type}

Now we are ready to offer our characterization of $(J_1,J_2)$-interleaved pairs of $u$-filtrations of nested type.  Roughly, it says that $u$-filtrations $X,Y$ are $(J_1,J_2)$-interleaved if and only if there is a pair of inverse homeomorphisms between ``large" subspaces of $\colim(X)$ and $\colim(Y)$ which is compatible with the data of $ROA_X$ and $ROA_Y$ in an appropriate sense, and additionally these homeomorphisms admit certain extensions.

Note that if $u\in \hat \R^n$, $u'\leq u$, and $X$ is a $u$-filtration, then $\colim(R_{u'}(X))$ can be identified with a subspace of $\colim(X)$.

\begin{thm}\label{Thm:CharacterizationOfStrongInterleavings}
For $J_1,J_2$ increasing, a pair $(X,Y)$ of $u$-filtrations of nested type is $(J_1,J_2)$-interleaved if and only if 
there are maps 
\begin{align*}
&f:\colim(R_{J_1^{-1}(u)}(X))\to \colim(Y) \textup{ and}\\
&g:\colim(R_{J_2^{-1}(u)}(Y))\to \colim(X)
\end{align*}
such that 
\begin{enumerate*}
\item $f$ and $g$ restrict to inverse homeomorphisms between 
\begin{align*}
\colim(R_{J_1^{-1}\circ J_2^{-1}(u)}(X))&\cup g(\colim(R_{J_2^{-1}\circ J_1^{-1}(u)}(Y)))\textup{ and }\\
\colim(R_{J_2^{-1}\circ J_1^{-1}(u)}(Y))&\cup f(\colim(R_{J_1^{-1}\circ J_2^{-1}(u)}(X))),
\end{align*}
\item $J_1(ROA_X(y))\subset ROA_Y(f(y))$ for all $y\in \colim(R_{J_1^{-1}(u)}(X))$, and 
\item $J_2(ROA_Y(y))\subset ROA_X(g(y))$ for all $y\in \colim(R_{J_2^{-1}(u)}(Y))$.
\end{enumerate*}
\end{thm}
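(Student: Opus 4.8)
\textbf{Proof plan for Theorem~\ref{Thm:CharacterizationOfStrongInterleavings}.}

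The plan is to exploit the equivalence of categories $\A: u\text{-nest} \to CL'_u$ from Lemma~\ref{lem:ColimEquivalence}, which lets us translate the definition of a strong $(J_1,J_2)$-interleaving of $u$-filtrations of nested type into a statement purely about the pairs $(\colim(X), ROA_X)$, $(\colim(Y),ROA_Y)$ in $CL_u$. First I would record the elementary observation that the functors $\colim(\cdot)$, $R_{u'}$, and the shift functors $(\cdot)(J)$ interact in the expected ways: $\colim$ is a left adjoint (it is defined by a universal property into $\Delta(\Top)$), so it preserves the structure we need; and for $X$ of nested type and $u'\le u$, the canonical map identifies $\colim(R_{u'}(X))$ with a subspace of $\colim(X)$ (using that the $s^X_a$ are injective, by the lemma preceding the colimit-representation subsection), while $\colim(X(J))$ is canonically identified with $\colim(R_{J^{-1}(u)}(X))$ as a subspace of $\colim(X)$. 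Under these identifications, a morphism of filtrations $f: R_{J_1^{-1}(u)}(X)\to Y(J_1)$ corresponds, via $\A$, to a continuous map $\bar f: \colim(R_{J_1^{-1}(u)}(X))\to \colim(Y)$ satisfying $J_1(ROA_X(y))\subset ROA_Y(\bar f(y))$ --- this is precisely condition (2), and the $ROA$-containment condition is exactly the $CL_u$-morphism condition $f_1(y)\subset f_2(g(y))$ transported through the shift. Symmetrically $g$ corresponds to $\bar g$ satisfying condition (3).

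Next I would unwind the two interleaving identities. The condition $g(J_1)\circ R_{J_1^{-1}\circ J_2^{-1}(u)}(f) = S(X, J_2\circ J_1)$, pushed through $\colim$, says that the composite $\bar g\circ \bar f$, restricted to $\colim(R_{J_1^{-1}\circ J_2^{-1}(u)}(X))$, equals the inclusion of that subspace into $\colim(X)$; symmetrically $\bar f\circ \bar g$ restricted to $\colim(R_{J_2^{-1}\circ J_1^{-1}(u)}(Y))$ is the inclusion into $\colim(Y)$. (Here one uses Lemma~\ref{lem:TransitionMorphismsAndShiftsForFilts} to keep the shifts bookkept correctly, together with the fact that $S(X, J_2\circ J_1)$ maps to the inclusion map under $\colim$.) The standard set-theoretic lemma --- if $\bar g\bar f = \mathrm{incl}$ on $A$ and $\bar f\bar g = \mathrm{incl}$ on $B$, then $\bar f$ and $\bar g$ restrict to mutually inverse bijections between $A\cup \bar g(B)$ and $B\cup \bar f(A)$ --- then gives the bijection in condition (1); continuity of $\bar f,\bar g$ and the subspace topology upgrade the bijection to a pair of inverse homeomorphisms, since a continuous bijection with continuous inverse between subspaces is a homeomorphism. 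Conversely, given maps $f,g$ as in (1)--(3), one checks directly that $\bar g\bar f$ and $\bar f\bar g$ agree with the respective inclusions on the smaller subspaces (that is the content of ``$f,g$ restrict to inverse homeomorphisms'' when restricted further to $A$ and $B$), and then pulls everything back through the equivalence $\filt = \A^{-1}$ on $CL'_u$ to manufacture honest filtration morphisms realizing the strong $(J_1,J_2)$-interleaving.

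The main obstacle I anticipate is the topological bookkeeping around $\colim$ rather than the algebra: one must verify carefully that $\colim$ carries the restriction functors and shift functors to genuine subspace inclusions (not merely continuous injections), which relies essentially on the nested-type hypothesis and the final-topology description of $\colim(X)$, and one must confirm that the equivalence of Lemma~\ref{lem:ColimEquivalence} is compatible with all of $R_{u'}$, $(\cdot)(J)$, and $S(\cdot,J)$ so that the translation in both directions is faithful. A secondary subtlety is that the ``extension'' clause --- condition (1) posits homeomorphisms defined on the \emph{unions} $\colim(R_{J_1^{-1}\circ J_2^{-1}(u)}(X))\cup g(\cdots)$, not just on the core subspaces --- must be matched precisely to the asymmetry of the $(J_1,J_2)$-interleaving identities; I would handle this by being scrupulous about which restriction functor appears in each of the two interleaving equations and tracking the induced domains through $\colim$. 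Once these compatibilities are in place, both directions of the equivalence are essentially formal.
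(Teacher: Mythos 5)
Your plan matches the paper's proof, which consists solely of the remark that the result is ``a straightforward application of Lemma~\ref{lem:ColimEquivalence}; we leave the unwinding of definitions to the reader'' --- you supply exactly that unwinding, including the correct set-theoretic lemma for producing the mutually inverse bijection on the unions in condition (1). One correction worth noting: you assert that $\colim(X(J))$ is canonically identified with $\colim(R_{J^{-1}(u)}(X))$, but the shift functor merely re-indexes the same diagram (since $J$ is an order-preserving bijection of $\R^n$), so $\colim(X(J))$ is canonically all of $\colim(X)$; the fact you actually need, and do use correctly a sentence later when concluding $\bar f$ lands in $\colim(Y)$, is $\colim(Y(J_1))\cong\colim(Y)$, with the inclusion of the \emph{domain} $\colim(R_{J_1^{-1}(u)}(X))\hookrightarrow\colim(X)$ coming from the restriction functor, not the shift.
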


\begin{proof}
The proof is a straightforward application of Lemma~\ref{lem:ColimEquivalence}; we leave the unwinding of definitions to the reader.
\end{proof}

The following immediate consequence of the last theorem gives our transparent necessary condition for the existence of a $(J_1,J_2)$-interleaving between two $u$-filtrations.

\begin{cor} \label{Cor:CharacterizationCorollary} If a pair $(X,Y)$ of $u$-filtrations of nested type is $(J_1,J_2)$-interleaved then there 
are sets $D_X, D_Y$ with 
\begin{align*}
\colim(R_{J_1^{-1}\circ J_2^{-1}(u)}(X))&\subset D_X \subset \colim(X),\\
\colim(R_{J_2^{-1}\circ J_1^{-1}(u)}(Y))&\subset D_Y \subset \colim(Y),
\end{align*} and a homeomorphism $f:D_X\to D_Y$ such that 
\begin{enumerate*}
\item $J_1(ROA_X(y))\subset ROA_Y(f(y))$ for all $y\in D_X$, and 
\item $J_2(ROA_Y(y))\subset ROA_X(f^{-1}(y))$ for all $y\in D_Y$.
\end{enumerate*}
\end{cor}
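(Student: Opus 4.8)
The plan is to derive Corollary~\ref{Cor:CharacterizationCorollary} directly from Theorem~\ref{Thm:CharacterizationOfStrongInterleavings}, since the corollary is essentially a weakening of that theorem obtained by forgetting the global maps $f,g$ and retaining only the homeomorphism on the overlap of their ``large'' domains. First I would invoke Theorem~\ref{Thm:CharacterizationOfStrongInterleavings}: given that $(X,Y)$ is strongly $(J_1,J_2)$-interleaved, there exist maps $f:\colim(R_{J_1^{-1}(u)}(X))\to \colim(Y)$ and $g:\colim(R_{J_2^{-1}(u)}(Y))\to \colim(X)$ satisfying conditions (1)--(3) of the theorem. The key observation is that condition (1) says precisely that the restrictions of $f$ and $g$ give inverse homeomorphisms between
\[
D_X := \colim(R_{J_1^{-1}\circ J_2^{-1}(u)}(X))\cup g(\colim(R_{J_2^{-1}\circ J_1^{-1}(u)}(Y)))
\]
and
\[
D_Y := \colim(R_{J_2^{-1}\circ J_1^{-1}(u)}(Y))\cup f(\colim(R_{J_1^{-1}\circ J_2^{-1}(u)}(X))).
\]

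Next I would check the two containment chains required by the corollary. The inclusion $\colim(R_{J_1^{-1}\circ J_2^{-1}(u)}(X))\subset D_X$ is immediate from the definition of $D_X$ as a union with that space as one summand, and $D_X\subset \colim(X)$ follows because $\colim(R_{J_1^{-1}\circ J_2^{-1}(u)}(X))$ is (by the remark just before the theorem) a subspace of $\colim(X)$ and $g$ has codomain $\colim(X)$; the analogous chain for $D_Y$ is symmetric. I would then take $\tilde f$ to be the restriction of $f$ to $D_X$, which by condition (1) is a homeomorphism $D_X\to D_Y$ with inverse the restriction of $g$ to $D_Y$.

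Finally I would verify the two compatibility conditions on $ROA$. Condition (2) of the corollary, $J_1(ROA_X(y))\subset ROA_Y(\tilde f(y))$ for all $y\in D_X$, follows from condition (2) of the theorem: that condition holds for all $y\in \colim(R_{J_1^{-1}(u)}(X))$, and $D_X$ is a subset of that space because $J_1^{-1}\circ J_2^{-1}(u)\le J_1^{-1}(u)$ (as $J_2$ is increasing, $J_2^{-1}(u)\le u$, hence $J_1^{-1}(J_2^{-1}(u))\le J_1^{-1}(u)$ by order-preservation of $J_1^{-1}$) and $g(\colim(R_{J_2^{-1}\circ J_1^{-1}(u)}(Y)))$ also lies in $\colim(R_{J_1^{-1}(u)}(X))$ — this last point I would extract from the fact that $g$ arises as a restriction-compatible colimit map together with condition (3), or simply note it follows from unwinding the colimit identifications in Lemma~\ref{lem:ColimEquivalence}. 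Similarly condition (1) of the corollary forces $f(D_X)=D_Y$, and for $y'\in D_Y$ writing $y'=\tilde f(y)$ gives $\tilde f^{-1}(y')=y\in D_X\subset\colim(R_{J_2^{-1}(u)}(Y))$ after the symmetric argument, so condition (3) of the theorem yields $J_2(ROA_Y(y'))\subset ROA_X(g(y'))=ROA_X(\tilde f^{-1}(y'))$, which is condition (2) of the corollary with roles of $X,Y$ swapped — i.e.\ condition (2) of the corollary as stated. I expect the only mild subtlety to be confirming that $D_X$ and $D_Y$ sit inside the correct restricted colimits so that conditions (2) and (3) of the theorem apply on all of $D_X$, $D_Y$; this is a routine bookkeeping check with the partial orders on $\hat\R^n$ and the definitions of the restriction functors, and everything else is a direct transcription from the theorem.
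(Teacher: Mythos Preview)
Your proposal is correct and follows exactly the approach the paper intends: the paper presents this corollary as an ``immediate consequence'' of Theorem~\ref{Thm:CharacterizationOfStrongInterleavings} without giving any further proof, and your construction of $D_X$, $D_Y$ as the two sets appearing in condition~(1) of that theorem, together with restricting $f$, is precisely the obvious way to extract the corollary. The one point you flag as a ``mild subtlety''---that $D_X\subset\colim(R_{J_1^{-1}(u)}(X))$ so that condition~(2) of the theorem applies on all of $D_X$---is indeed routine and follows, as you suggest, from condition~(3) of the theorem applied to points of $\colim(R_{J_2^{-1}\circ J_1^{-1}(u)}(Y))$.
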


When $u={\vec\infty}_n$, Theorem~\ref{Thm:CharacterizationOfStrongInterleavings} reduces to the following:

\begin{cor}\label{Cor:CharacterizationForNFiltrations}
A pair $(X,Y)$ of $n$-filtrations of nested type is $(J_1,J_2)$-interleaved if and only if 
there is a homeomorphism $f:\colim(X)\to \colim(Y)$, such that
\begin{enumerate*}
\item $J_1(ROA_X(y))\subset ROA_Y(f(y))$ for all $y\in \colim(X)$, and 
\item $J_2(ROA_Y(y))\subset ROA_X(f^{-1}(y))$ for all $y\in \colim(Y)$.
\end{enumerate*}
\end{cor}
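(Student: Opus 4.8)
The plan is to deduce Corollary~\ref{Cor:CharacterizationForNFiltrations} directly from Theorem~\ref{Thm:CharacterizationOfStrongInterleavings} by specializing to $u = {\vec\infty}_n$, and checking that all the auxiliary conditions in the theorem become vacuous or collapse to a single condition in this case. The key observation is that when $u = {\vec\infty}_n$, and $J$ is any increasing map, $J^{-1}({\vec\infty}_n) = {\vec\infty}_n$ (since $J$ is an order-preserving bijection of $\R^n$, it takes the ``top'' to the ``top'' in the appropriate sense; more precisely $a < {\vec\infty}_n$ for every $a\in\R^n$, so $R_{J^{-1}({\vec\infty}_n)}$ is the identity functor on $n$-filt). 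Consequently every restriction functor appearing in the statement of Theorem~\ref{Thm:CharacterizationOfStrongInterleavings} — namely $R_{J_1^{-1}(u)}$, $R_{J_2^{-1}(u)}$, $R_{J_1^{-1}\circ J_2^{-1}(u)}$, and $R_{J_2^{-1}\circ J_1^{-1}(u)}$ — is the identity.

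First I would record the lemma that for $u = {\vec\infty}_n$ and any increasing $J$, $R_{J^{-1}(u)} = \id$ as an endofunctor of $n$-filt, and hence $\colim(R_{J^{-1}(u)}(X)) = \colim(X)$ for any $n$-filtration $X$. Plugging this into the statement of Theorem~\ref{Thm:CharacterizationOfStrongInterleavings}: the maps $f$ and $g$ become maps $f:\colim(X)\to\colim(Y)$ and $g:\colim(Y)\to\colim(X)$. Condition (1) of the theorem now asserts that $f$ and $g$ restrict to inverse homeomorphisms between $\colim(X)\cup g(\colim(Y))$ and $\colim(Y)\cup f(\colim(X))$; but since $g$ maps into $\colim(X)$ and $f$ maps into $\colim(Y)$, these unions are just $\colim(X)$ and $\colim(Y)$ respectively. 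So condition (1) says exactly that $f$ and $g$ are inverse homeomorphisms $\colim(X)\cong\colim(Y)$. Conditions (2) and (3) become $J_1(ROA_X(y))\subset ROA_Y(f(y))$ for all $y\in\colim(X)$ and $J_2(ROA_Y(y))\subset ROA_X(g(y))$ for all $y\in\colim(Y)$; using $g = f^{-1}$, the latter is $J_2(ROA_Y(y))\subset ROA_X(f^{-1}(y))$ for all $y\in\colim(Y)$, which is condition (2) of the corollary.

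For the converse direction within this specialization I would note that given a single homeomorphism $f:\colim(X)\to\colim(Y)$ satisfying conditions (1) and (2) of the corollary, setting $g = f^{-1}$ recovers a pair $(f,g)$ satisfying all three conditions of Theorem~\ref{Thm:CharacterizationOfStrongInterleavings} in the $u={\vec\infty}_n$ case, so the pair $(X,Y)$ is $(J_1,J_2)$-interleaved. Thus the corollary is simply the $u = {\vec\infty}_n$ instance of the theorem, with the redundancy that $g$ is forced to be $f^{-1}$ made explicit. I do not anticipate a serious obstacle here: the entire content is bookkeeping with the restriction functors and observing that the two ``large subspaces'' in condition (1) of the theorem fill up the whole colimits when no restriction is being applied. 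The one point requiring a line of care is the claim $R_{J^{-1}({\vec\infty}_n)} = \id$, which follows because $J^{-1}$ preserves the property $a < {\vec\infty}_n$ (equivalently, $J^{-1}$ is a bijection of $\R^n$ and the index set $\Sub({\vec\infty}_n) = \R^n$ is sent onto itself), so no spaces are discarded by the restriction.
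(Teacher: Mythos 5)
Your proposal is correct and is precisely the argument the paper intends: the paper presents Corollary~\ref{Cor:CharacterizationForNFiltrations} without a separate proof, simply stating that Theorem~\ref{Thm:CharacterizationOfStrongInterleavings} "reduces to" it when $u = {\vec\infty}_n$, and your write-up supplies exactly the bookkeeping being elided (the restriction functors become the identity, the two "large subspaces" in condition (1) fill up the full colimits, and $g$ is forced to equal $f^{-1}$).
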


\subsection{Optimality Properties of the Strong Interleaving Distance}

Using Corollary~\ref{Cor:CharacterizationForNFiltrations}, we now observe that $d_{SI}$ is optimal is a sense analogous to which we have proven $d_I$ to be optimal in Section~\ref{OptimalitySpecifics}.  We also show that $d_I$ is $\RR$-optimal, for $\RR$ a relative structure defined in terms of $d_{SI}$.

The results here, while perhaps not especially surprising in light of the optimality results of Section~\ref{OptimalitySpecifics}, are of significance in that they show that optimality theory for $d_{SI}$ behaves as one would hope, given those results.

\begin{thm}\label{Thm:FiltrationOptimality1}
Let ${\mathcal R}_{4}=(\T,X_\T,f_\T)$ be the relative structure on $\obj^*(n$-$\filt)$ (as defined in Section~\ref{OptimalityGeneralities}) such that $\T$ is  the set of topological spaces, $X_{T}=\obj(C_T^S)$ for all $T\in\T$, $d_{T}=d^S_T$, and $f_T$ is given by $f_{T}(T,g)=F^S(T,g)$.  Then $d_{SI}$ is $\RR_{4}$-optimal.
\end{thm}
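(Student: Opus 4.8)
The plan is to mirror the proof of Theorem~\ref{MainOptimality}: first establish that $d_{SI}$ is $\RR_4$-stable, and then show that every $\RR_4$-stable pseudometric on $\obj^*(n\text{-}\filt)$ is dominated by $d_{SI}$ on $\im(f_\T)$ by producing ``geometric lifts'' of strong $\epsilon$-interleavings into the category $C^S$. The $\RR_4$-stability of $d_{SI}$ is the filtration-level counterpart of Theorem~\ref{MultidimensionalSublevelsetStability}, proved the same way: if $\|g_1-g_2\|_\infty=\epsilon$ then $F^S(T,g_1)_a\subset F^S(T,g_2)_{a+\epsilon}$ and symmetrically for all $a$, and these inclusions form a pair of strong $\epsilon$-interleaving morphisms, so $d_{SI}(F^S(T,g_1),F^S(T,g_2))\le\|g_1-g_2\|_\infty$; this is an instance of the stability results for $d_{SI}$ recorded earlier in this section.

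The heart of the argument is the following filtration-level analogue of Proposition~\ref{RealizationProp}, which I would prove directly from the characterization Corollary~\ref{Cor:CharacterizationForNFiltrations} rather than through a CW-construction: if $X=F^S(T_X,g_X)$ and $Y=F^S(T_Y,g_Y)$ are strongly $\epsilon$-interleaved, then there exist a topological space $Z$ and functions $\gamma_X,\gamma_Y\colon Z\to\R^n$ with $F^S(Z,\gamma_X)\cong X$, $F^S(Z,\gamma_Y)\cong Y$, and $\|\gamma_X-\gamma_Y\|_\infty\le\epsilon$. Since sublevelset filtrations are of nested type, Corollary~\ref{Cor:CharacterizationForNFiltrations} applies, and a strong $(J_\epsilon,J_\epsilon)$-interleaving yields a homeomorphism $h\colon\colim(X)\to\colim(Y)$ with $ROA_X(z)+\vec\epsilon_n\subset ROA_Y(h(z))$ and $ROA_Y(w)+\vec\epsilon_n\subset ROA_X(h^{-1}(w))$ for all $z,w$. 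Take $Z=\colim(X)$. Because $X$ and $Y$ are sublevelset filtrations, under the identification $\colim(X)=T_X$ one has $ROA_X(x)=\{a\in\R^n:g_X(x)\le a\}$, so $ROA_X(z)$ has a least element for every $z$, and likewise $ROA_Y(w)$ has a least element for every $w$; set $\gamma_X(z)=\min ROA_X(z)$ (so $\gamma_X$ is just $g_X$ transported along $\colim(X)=T_X$) and $\gamma_Y(z)=\min ROA_Y(h(z))$. Applying the two $ROA$-inclusions to these least elements gives $\gamma_Y(z)\le\gamma_X(z)+\vec\epsilon_n$ and $\gamma_X(z)\le\gamma_Y(z)+\vec\epsilon_n$ coordinatewise, hence $\|\gamma_X-\gamma_Y\|_\infty\le\epsilon$. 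Finally, $F^S(Z,\gamma_X)_a=\{z:a\in ROA_X(z)\}=\im(s^X_a)$ is, as a set, $X_a$, and the topology it inherits as a subspace of $\colim(X)$ (which carries the final topology) agrees with the original topology on $X_a$; so $F^S(Z,\gamma_X)\cong X$, and symmetrically, via $h$, $F^S(Z,\gamma_Y)\cong Y$.

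Granting this lemma, optimality follows as in Theorem~\ref{MainOptimality}: given an $\RR_4$-stable pseudometric $d'$ and $X,Y\in\im(f_\T)$ with $d_{SI}(X,Y)=\epsilon$, for each $\delta>0$ the filtrations $X$ and $Y$ are strongly $(\epsilon+\delta)$-interleaved, so the lemma produces $Z,\gamma_X,\gamma_Y$ with $F^S(Z,\gamma_X)\cong X$, $F^S(Z,\gamma_Y)\cong Y$ and $\|\gamma_X-\gamma_Y\|_\infty\le\epsilon+\delta$; then $d'(X,Y)=d'(F^S(Z,\gamma_X),F^S(Z,\gamma_Y))\le d^S_Z((Z,\gamma_X),(Z,\gamma_Y))=\|\gamma_X-\gamma_Y\|_\infty\le\epsilon+\delta$ by $\RR_4$-stability of $d'$, and letting $\delta\to0$ gives $d'(X,Y)\le d_{SI}(X,Y)$. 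Combined with the $\RR_4$-stability of $d_{SI}$, this is precisely $\RR_4$-optimality.

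I expect the main obstacle to be the proof of the lemma, and within it the topological bookkeeping needed to identify $F^S(\colim(X),\gamma_X)$ with $X$: one must verify that replacing $T_X$ by $\colim(X)$, which a priori carries the finer final topology, does not alter the homeomorphism type of the individual sublevelsets, and that the ``least grade of appearance'' functions are genuinely well defined. Both points hold precisely because $X$ and $Y$ are honest sublevelset filtrations (elements of $\im(f_\T)$); in contrast to the module case, no CW-complex construction in the style of Proposition~\ref{RealizationProp} is needed here, since an object of $\im(f_\T)$ already comes equipped with the $\R^n$-valued function we require.
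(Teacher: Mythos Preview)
Your proposal is correct and follows essentially the same route as the paper: the paper's proof also reduces $\RR_4$-optimality to the existence of geometric lifts of strong $\epsilon$-interleavings and then invokes Corollary~\ref{Cor:CharacterizationForNFiltrations} for that lift, exactly as you do. You have simply spelled out the details the paper leaves implicit, including the construction of $\gamma_X,\gamma_Y$ via least elements of $ROA$ and the verification that the subspace topology on $X_a$ inherited from $\colim(X)$ coincides with the original one (which indeed it does, since $s^X_a$ is continuous and the identity $\colim(X)\to T_X$ is continuous).
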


\begin{proof}
We have observed above that $d_{SI}$ is $\RR_{4}$-stable.  By essentially the same argument by which Theorem~\ref{MainOptimality} follows from Proposition~\ref{RealizationProp}, to prove the result it's enough to show that if two $n$-filtrations $X,Y\in \im(f_\T)$ are strongly $\epsilon$-interleaved then there exists a topological space $T$ and functions $\gamma_X,\gamma_Y:T\to \R^n$ such that $F^S(T,\gamma_X)\cong X$,  $F^S(T,\gamma_Y)\cong Y$, and $\sup_{y\in T}\|f_1(y)-f_2(y)\|_\infty\leq \epsilon$.  This follows from Corollary~\ref{Cor:CharacterizationForNFiltrations}.  
\end{proof}

\begin{thm}\label{Thm:FiltrationOptimality2}
For $i\in \NN$, let $\RR_{5,i}$ be the relative structure on $\obj^*(B_n$-\mod$)$ such that $\T$ is the singleton set $s$, $X_s=\obj(n$-\filt), $d_s=d_{SI}$, and $f_{s}$ is given by $f_{s}(X)=H_i(X)$.  If $k=\Q$ or $k=\Z/p\Z$ for some prime $p$ then $d_I$ is $\RR_{5,i}$-optimal.
\end{thm}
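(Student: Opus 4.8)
The plan is to mirror, in the filtration setting, the argument by which Theorem~\ref{MainOptimality} follows from Proposition~\ref{RealizationProp}, but now using the strong interleaving distance $d_{SI}$ as the intermediate metric instead of a geometric $L^\infty$ distance. First I would note that $d_I$ is $\RR_{5,i}$-stable: this is exactly the content of our stability result Theorem~\ref{Thm:StabilityWRTInterleavings} (equivalently, a direct check that a pair of strong $\epsilon$-interleaving morphisms $f:X\to Y(\epsilon)$, $g:Y\to X(\epsilon)$ maps under $H_i$ to a pair of $\epsilon$-interleaving homomorphisms of persistence modules), so $d_I(H_i(X),H_i(Y))\leq d_{SI}(X,Y)$ for all $n$-filtrations $X,Y$. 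By Lemma~\ref{InducedMetricOptimality}, since $\T=\{s\}$ is a singleton, the induced semi-pseudometric $d_{\RR_{5,i}}$ is in fact a pseudometric, and to prove $\RR_{5,i}$-optimality of $d_I$ it suffices to show that the restriction of $d_I$ to $\im(f_s)$ equals $d_{\RR_{5,i}}$, i.e.\ that for any two persistence modules $M,N$ in the image of $H_i$ with $d_I(M,N)=\epsilon$ and any $\delta>0$, there exist $n$-filtrations $X,Y$ with $H_i(X)\cong M$, $H_i(Y)\cong N$, and $d_{SI}(X,Y)\leq\epsilon+\delta$.

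The key step is thus a ``filtration lift'' of interleavings analogous to Proposition~\ref{RealizationProp}, but where the two functions on a common domain are replaced by a single common domain space and we only need $d_{SI}\leq\epsilon+\delta$ rather than an exact $L^\infty$ bound. Here I would invoke Proposition~\ref{RealizationProp} itself: when $k=\Q$ or $k=\Z/p\Z$ and $i\in\NN$, since $M$ and $N$ are $(\epsilon+\delta/2)$-interleaved we obtain a CW-complex $T$ and continuous maps $\gamma_M,\gamma_N:T\to\R^n$ with $H_i\circ F^S(T,\gamma_M)\cong M$, $H_i\circ F^S(T,\gamma_N)\cong N$, and $\|\gamma_M-\gamma_N\|_\infty=\epsilon+\delta/2$. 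Set $X=F^S(T,\gamma_M)$ and $Y=F^S(T,\gamma_N)$. Then by the stability of $d_{SI}$ with respect to $F^S$ (the strong-interleaving analogue of Theorem~\ref{MultidimensionalSublevelsetStability}, noted in Section~\ref{Sec:StrongInterleavings} to hold with the same proof), $d_{SI}(X,Y)\leq d^S_T((T,\gamma_M),(T,\gamma_N))=\epsilon+\delta/2\leq\epsilon+\delta$, while $H_i(X)\cong M$ and $H_i(Y)\cong N$ by construction. This gives $d_{\RR_{5,i}}(M,N)\leq\epsilon+\delta$ for every $\delta>0$, hence $d_{\RR_{5,i}}(M,N)\leq\epsilon$, and by Lemma~\ref{InducedMetricOptimality} the reverse inequality $d_I\leq d_{\RR_{5,i}}$ also holds, so $d_I(M,N)=\epsilon=d_{\RR_{5,i}}(M,N)$; optimality follows.

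I expect the main (though modest) obstacle to be bookkeeping rather than anything deep: one must be careful that $\im(f_s)$ in the definition of $\RR_{5,i}$ is all of $\obj^*(B_n\text{-mod})$ when $i\in\NN$ and $k=\Q$ or $\Z/p\Z$ (this is Corollary~\ref{RealizationCor}), so that the optimality conclusion is not vacuously restricted to a proper subset; and one must confirm that the strong-interleaving stability of $F^S$ is stated/available in the form used above, which it is, as remarked in Section~\ref{Sec:StrongInterleavings}. There is a small point that $d_{SI}$ and the geometric distance $d^S_T$ both accommodate the exact value $\epsilon+\delta/2$, so no genuine $\epsilon$-pushing is needed beyond the single $\delta/2$ already built into the appeal to Proposition~\ref{RealizationProp}; I keep the extra $\delta/2$ only to match the inequality form of the stability estimate. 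Everything else is a direct transcription of the proof of Theorem~\ref{MainOptimality}, with $d_{SI}$ in place of $L^\infty$ distance on functions and $F^S$-stability of $d_{SI}$ in place of Theorem~\ref{MultidimensionalSublevelsetStability}.
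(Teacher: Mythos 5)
Your proposal is correct and takes essentially the same approach the paper indicates: the paper's proof is the one-liner ``$d_I$ is $\RR_{5,i}$-stable, and optimality follows from Proposition~\ref{RealizationProp} in essentially the same way that Theorem~\ref{MainOptimality} follows from it,'' and your write-up is precisely that argument spelled out—apply Proposition~\ref{RealizationProp} to get a single CW-complex $T$ with $\gamma_M,\gamma_N$, set $X=F^S(T,\gamma_M)$, $Y=F^S(T,\gamma_N)$, and convert the $\|\gamma_M-\gamma_N\|_\infty$ bound into a $d_{SI}$ bound via the $F^S$-stability of $d_{SI}$, then finish with Lemma~\ref{InducedMetricOptimality}. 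Your side remark about $\im(f_s)$ being all of $\obj^*(B_n\text{-mod})$ (Corollary~\ref{RealizationCor}) is true but not needed for the proof, since $\RR$-optimality is by definition a statement over $\im(f_\T)$.
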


\begin{proof}
Cleary $d_I$ is $\R_{5,i}$-stable.  Optimality follows from Proposition~\ref{RealizationProp} in essentially the same way that Theorem~\ref{MainOptimality} follows from the same proposition.  
\end{proof}

\section[Weak Interleavings and the Weak Interelaving Distance]{Weak Interleavings and the Weak Interelaving Distance on $u$-Filtrations}\label{Sec:WeakInterleavings}
\subsubsection{Overview}
We now turn to the definition and basic theory of weak interelavings and the weak interleaving distance.

To begin, we prepare for the definition of weak interleavings by reviewing localization of categories and defining the localization of the category of $u$-filtrations with respect to {\it levelwise homotopy equivalences}.  We observe that the shift and restriction functors descend to the localized category.  Using the descents of these functors, we define weak $(J_1,J_2)$-interleavings and the weak interleaving distance.  

After proving some basic facts about weak interleavings,  
we show that persistent homology is stable with respect to the weak $(J_1,J_2)$-interleavings on $u$-filtrations and $(J_1,J_2)$-interleavings on $B_n$-persistence modules.  This result will be very useful to us for passing from our results about $u$-filtrations to corresponding results about persistent homology modules.

We finish the section by posing some questions about the weak interleaving distance which we hope to answer in future work.

\subsection{Localization of Categories}
Localization of categories is a standard construction in homotopy theory.  It is analogous to the localization of rings and modules studied in commutative algebra.

\subsubsection{Definition}
Let $C$ be a category and let $W\subset  \hom(C)$ be a class of morphisms.  A {\it localization} of $C$ with respect to $W$ is a category $C[W^{-1}]$ together with a functor $\Gamma:C\to C[W^{-1}]$ such that

\begin{enumerate}
\item[(i)] $\Gamma(f)$ is an isomorphism for each $f\in W$.
\item[(ii)] [universality] Whenever $G:C\to D$ is a functor carrying elements of $W$ to isomorphisms, there exists a unique functor $G':C[W^{-1}]\to D$ such that $G'\circ \Gamma=G$.   
\end{enumerate}
Condition $(ii)$ guarantees that if $\Gamma:C\to D$ and $\Gamma':C\to D'$ are two localizations of $C$ with respect to $W$, then there is a unique isomorphism of categories $\Theta:D\to D'$ such that $\theta\circ\Gamma=\Gamma'$.  We will often denote $C[W^{-1}]$ as $Ho(C)$, suppressing the dependence of this category on $W$.

If C is a {\it small category}, meaning that $\obj(C)$ and $\hom(C)$ are each sets (as opposed to proper classes \cite{jech2003set}), then a localization of $C$ with respect to any set of morphisms $W\subset \hom(C)$ exists---see \cite[Section 42.4]{dwyer1997model} for a simple proof of this, using the existence of pushouts in the category of small categories \cite{mac1998categories}.

The localization of a category, when it exists, admits an explicit construction---see \cite[Section 42.7]{dwyer1997model}.  From the form of this construction we obtain the following lemma.

\begin{lem}\label{lem:GeneratorsOfLocalization}
For any category $C$, $W\subset \hom(C)$ there is a localization $\Gamma:C\to C[W^{-1}]$ such that $\obj(C[W^{-1}])=\obj(C)$ and $\hom(C[W^{-1}])$ is generated under composition by the images under $\Gamma$ of elements of $\hom(C)$ together with the inverses of the images under $\Gamma$ of elements of $W$.
\end{lem}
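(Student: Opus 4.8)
The statement to prove is Lemma~\ref{lem:GeneratorsOfLocalization}: for any category $C$ and class $W \subset \hom(C)$, there is a localization $\Gamma : C \to C[W^{-1}]$ with $\obj(C[W^{-1}]) = \obj(C)$ and $\hom(C[W^{-1}])$ generated under composition by the images $\Gamma(f)$ of morphisms $f \in \hom(C)$ together with formal inverses of $\Gamma(w)$ for $w \in W$.

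The plan is to carry out the explicit construction of the localization directly, rather than invoking abstract existence, since the claim is precisely a property of that construction (and the excerpt references \cite[Section 42.7]{dwyer1997model} for it). First I would build the category $Q = C[W^{-1}]$ explicitly: set $\obj(Q) = \obj(C)$; for morphisms, form the "free category" on the directed graph whose edges are all morphisms of $C$ together with a new formal edge $w^{-1} : Y \to X$ for each $w : X \to Y$ in $W$ — that is, take $\hom_Q(X,Y)$ to be finite composable strings ("zig-zags") of such edges from $X$ to $Y$, modulo the congruence generated by the relations (a) $g \circ f = (g\circ f)$ whenever $f,g$ are composable morphisms of $C$ (composition in $C$ is respected), (b) $\id_X$ (the identity of $C$) equals the empty string at $X$, and (c) $w \circ w^{-1} = \id$ and $w^{-1}\circ w = \id$ for each $w \in W$. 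Define $\Gamma : C \to Q$ to be the identity on objects and to send each morphism of $C$ to its length-one string. One then checks $\Gamma$ is a functor (immediate from relations (a),(b)) and that $\Gamma(w)$ is an isomorphism with inverse the class of $w^{-1}$ (relation (c)), so condition (i) of the localization definition holds.

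Next I would verify the universal property (ii). Given $G : C \to D$ sending $W$ to isomorphisms, define $G' : Q \to D$ on objects by $G'(X) = G(X)$ and on a string by sending each edge $f$ (a morphism of $C$) to $G(f)$ and each formal edge $w^{-1}$ to $G(w)^{-1}$, then composing in $D$. This is well-defined on congruence classes precisely because $G$ respects composition and identities (relations (a),(b)) and because $G(w)$ is invertible with $G(w)^{-1}$ its genuine inverse (relation (c)). Clearly $G' \circ \Gamma = G$, and uniqueness of $G'$ follows because any functor $H : Q \to D$ with $H\circ\Gamma = G$ must agree with $G'$ on objects and on length-one strings, hence on all strings since $H$ preserves composition and $H(\Gamma(w))^{-1} = H(w^{-1})$ is forced. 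This establishes that $\Gamma$ is a localization. Finally, the two asserted structural facts are now visible by construction: $\obj(Q) = \obj(C)$ by fiat, and every morphism of $Q$ is, by definition, a composite of length-one strings $\Gamma(f)$ for $f\in\hom(C)$ and their formal-inverse counterparts for the edges coming from $W$, which are exactly the inverses of the $\Gamma(w)$, $w\in W$; so $\hom(Q)$ is generated under composition as claimed.

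I do not anticipate a serious obstacle — this is essentially bookkeeping — but the one place requiring care is checking that $G'$ is well-defined modulo the congruence, i.e. that it sends congruent strings to equal morphisms of $D$. The subtlety is that the congruence is the one \emph{generated} by relations (a)–(c), so a priori two congruent strings are linked by a finite chain of elementary substitutions inside larger strings; well-definedness must be verified for a single elementary substitution (both "in context", flanked by arbitrary strings on either side) and then propagated along the chain, using functoriality of $G$ and the fact that composition in $D$ is associative and unital. Once that local check is done, everything else is routine. A secondary point worth a remark is set-theoretic size: if $W$ (or $\hom(C)$) is a proper class the strings still form a legitimate construction at the level of conglomerates/large categories, matching the excerpt's comment distinguishing the general case from the small-category case; I would note this in a sentence rather than belabor it, since the excerpt has already flagged the size issue and the lemma is stated for arbitrary $C$ and $W$.
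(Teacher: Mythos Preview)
Your proposal is correct and follows exactly the approach the paper takes: the paper simply cites the explicit zig-zag construction of the localization in \cite[Section 42.7]{dwyer1997model} and remarks that the lemma is immediate from the form of that construction, whereas you have spelled that construction out in full and verified the universal property. There is nothing to add.
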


\subsubsection{The Connection Between Localization and Closed Model Categories}\label{Sec:LocalizationAndClosedModelCategories}
Let $W\subset \hom(\Top)$ denote the homotopy equivalences.  Then it can be shown that $\Top[W^{-1}]$ is isomorphic to the usual homotopy category of topological spaces (i.e. the category having the same objects as $\Top$ and homotopy classes of continuous maps as morphisms) \cite{dwyer1995homotopy}.  

In fact, there is a very general setting in which a localization can be constructed in a homotopy theoretic fashion---this is the setting of {\it closed model categories}, as introduced by the 1967 monograph of Daniel Quillen \cite{quillen1967homotopical}.   A closed model category is a category together with certain extra structure that allows for the axiomatic development of homotopy theory in that category.  In more detail, a closed model category $C$ is a category together with three distinguished classes of morphisms called {\it weak equivalences}, {\it fibrations}, and {\it cofibrations}.  The category is required to have the property that all finite limits and colimits exist \cite{dwyer1995homotopy}, and the weak equivalences, fibrations, and cofibrations are required to satisfy certain conditions.  Let $W$ denote the weak equivalences of $C$.  Quillen showed that the closed model structure on $C$ allows for a homotopy theoretic construction of $C[W^{-1}]$ generalizing the construction of the usual homotopy category of topological spaces.  For more details, see the expository articles \cite{dwyer1995homotopy,dwyer1997model}.

We will not need to appeal to the theory of closed model categories in this thesis, and indeed it is not clear that the particular localization we consider in our definition of weak interleavings admits an interpretation in terms of closed model categories.  However, a very closely related notion of localization does admit such an interpretation; see Remark~\ref{ClosedModelStructureRemark}.  In any case, it may be useful for the reader to keep in mind the connection between localization and homotopy theory provided by Quillen's theory, even if we do not take advantage of it here.  As the reader will see in Section~\ref{Sec:WeakInterleavingsQuestions}, there are important questions about weak interleavings which this thesis raises but does not answer, and it is quite plausible that closed model categories and axiomatic homotopy theory will be useful in addressing these questions.

\subsubsection{Localization of $u$-filt with Respect to Levelwise Homotopy Equivalences}
 
For any $u\in{\hat \R}^n$, let $W\subset \hom(u$-filt) denote the {\it levelwise homotopy equivalences}, the morphisms $f$ for which $f_a$ is a homotopy equivalence for all $a<u$.  In what follows, we'll work only with localizations of $u$-filt with respect to $W$.  Thus from now on, $Ho(u$-filt) will always denote $u$-filt$[W^{-1}]$, and $\Gamma: u$-filt $\to Ho(u$-filt) will denote the localization functor; the dependence of $\Gamma$ on $u$ will be implicit.

\begin{remark}\label{ClosedModelStructureRemark}  I am not aware of any way of endowing $u$-filt with the structure of a closed model category having $W$ the class of weak equivalences.  However, let $W'\subset \hom(u$-filt) denote the {\it levelwise weak homotopy equivalences}, the morphisms $f$ for which $f_a$ is a {\it weak homotopy equivalence} \cite{hatcher2002algebraic} for all $a<u$.  It is known that $u$-filt can be given the structure of a closed model category having $W'$ as the class of weak equivalences; see the discussion at the end of \cite[Section 10]{dwyer1995homotopy} and the references given there.
\end{remark}

\subsection{Definitions of Weak Interleavings and the Weak Interleaving Distance}\label{Sec:WeakInterleavingsDef}

\subsubsection{Shift Functors and Restriction Functors in the Homotopy Category}
By the universal property of the localization of a category, for any $u\in {\hat \R}^n$ and $J:\R^n\to \R^n$ order-preserving, the functor $(\cdot) (J):u$-filt $\to J^{-1}(u)$-filt descends to a functor from $Ho(u$-filt) to $Ho(J^{-1}(u)$-filt).   By abuse of notation, we also write this functor as $(\cdot)(J)$. 
Note that \[\Gamma \circ (\cdot)(J)=(\cdot)(J)\circ \Gamma.\] 

Similarly, by the universal property of the localization of a category, for any $u'\leq u\in {\hat \R}^n$ the functor $R_{u'}:u$-filt $\to u'$-filt descends to a functor from $Ho(u$-filt) to $Ho(u'$-filt).   By abuse of notation, we also write this functor as $R_{u'}$. 
Note that \[\Gamma \circ R_{u'}=R_{u'}\circ \Gamma.\] 

\subsubsection{Commutativity in the Localized Category of Transition Morphisms with Arbitrary Morphisms}
The following is an analogue of Lemma~\ref{lem:TransitionMorphismsAndShiftsForFilts}(i) in the category $Ho(u$-filt).  It is the key step in our proofs of Lemmas~\ref{lem:InterleavingsUnderExpansion} and~\ref{Lem:InterleavingTriangleInequality} below.

\begin{lem}\label{lem:CommutativityInHomotopyCategory}
For any $u\in \hat \R^n$, $u$-filtrations $X$ and $Y$, morphism $f\in\hom_{Ho(u-\filt)}(X,Y)$, and $J$ increasing, we have that \[f(J)\circ \Gamma(S(X,J))=\Gamma(S(Y,J))\circ f.\]
\end{lem}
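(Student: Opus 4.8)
The plan is to reduce the statement to the already-established identity Lemma~\ref{lem:TransitionMorphismsAndShiftsForFilts}(i), which says that in the unlocalized category $u$-filt we have $S(Y,J)\circ f = f(J)\circ S(X,J)$ for an honest morphism $f\in\hom(u\text{-filt})$. The subtlety is that here $f$ is a morphism in $Ho(u\text{-filt})$, not necessarily in the image of $\Gamma$, so the identity in $u$-filt does not apply directly. First I would invoke Lemma~\ref{lem:GeneratorsOfLocalization}: every morphism of $Ho(u\text{-filt})$ is a finite composite of morphisms of the form $\Gamma(h)$ with $h\in\hom(u\text{-filt})$ and of inverses $\Gamma(w)^{-1}$ with $w\in W$ a levelwise homotopy equivalence. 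So it suffices to prove the commutativity relation for these two types of generators and then check that the class of morphisms $f$ satisfying the relation is closed under composition.

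The three verifications go as follows. For $f=\Gamma(h)$ with $h$ an ordinary morphism: apply $\Gamma$ to the identity $S(Y,J)\circ h = h(J)\circ S(X,J)$ from Lemma~\ref{lem:TransitionMorphismsAndShiftsForFilts}(i), and use that $\Gamma$ is a functor together with the naturality-type identity $\Gamma\circ(\cdot)(J)=(\cdot)(J)\circ\Gamma$ noted just above the statement; this gives $\Gamma(h)(J)\circ\Gamma(S(X,J))=\Gamma(S(Y,J))\circ\Gamma(h)$, which is exactly the desired relation for $f=\Gamma(h)$. For $f=\Gamma(w)^{-1}$ with $w:Y\to X$ a levelwise homotopy equivalence (so that $\Gamma(w)^{-1}:X\to Y$): start from the relation already proved for the generator $\Gamma(w):Y\to X$, namely $\Gamma(w)(J)\circ\Gamma(S(Y,J))=\Gamma(S(X,J))\circ\Gamma(w)$, and compose on the left with $\Gamma(w)(J)^{-1}=\Gamma(w)^{-1}(J)$ (here using that $(\cdot)(J)$ is a functor, so it sends the isomorphism $\Gamma(w)$ to the isomorphism with inverse $\Gamma(w)^{-1}(J)$) and on the right with $\Gamma(w)^{-1}$; rearranging yields $\Gamma(w)^{-1}(J)\circ\Gamma(S(X,J))=\Gamma(S(Y,J))\circ\Gamma(w)^{-1}$, as needed. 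Finally, closure under composition: if $f_1:X\to Y$ and $f_2:Y\to Z$ both satisfy the relation, then
\begin{align*}
(f_2\circ f_1)(J)\circ\Gamma(S(X,J)) &= f_2(J)\circ f_1(J)\circ\Gamma(S(X,J))\\
&= f_2(J)\circ\Gamma(S(Y,J))\circ f_1\\
&= \Gamma(S(Z,J))\circ f_2\circ f_1,
\end{align*}
using functoriality of $(\cdot)(J)$ on the first line and the hypotheses on $f_1$ then $f_2$.

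I do not anticipate a serious obstacle here; the only point requiring a little care is the bookkeeping that $(\cdot)(J)$, as a functor on $Ho(u\text{-filt})$, sends $\Gamma(w)^{-1}$ to $\big(\Gamma(w)(J)\big)^{-1}=\Gamma(w)^{-1}(J)$, and that $\Gamma$ commutes with $(\cdot)(J)$ on the nose — both of which are recorded in the paragraph immediately preceding the statement. Given that, the proof is a short induction on the length of a generating word for $f$, reducing to the two generator cases above. One could also phrase the argument slightly more abstractly by noting that the assignment $X\mapsto \Gamma(S(X,J))$ together with the shift functor defines, in effect, a natural transformation between two functors $Ho(u\text{-filt})\to Ho(J^{-1}(u)\text{-filt})$ once we know it is natural with respect to all generating morphisms; but the explicit word-length induction is cleaner to write down and is what I would include.
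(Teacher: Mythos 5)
Your proof is correct and follows the same route as the paper's: decompose $f$ into generators via Lemma~\ref{lem:GeneratorsOfLocalization}, reduce the $\Gamma(h)$ case and closure under composition to functoriality of $\Gamma$ and $(\cdot)(J)$ together with Lemma~\ref{lem:TransitionMorphismsAndShiftsForFilts}(i), and handle the $\Gamma(w)^{-1}$ case by left- and right-composition with inverses. You have merely written out all three verifications explicitly where the paper leaves the first and third to the reader, and your careful labeling of the source and target of $w$ is in fact cleaner than the paper's.
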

\begin{proof}
By Lemma~\ref{lem:GeneratorsOfLocalization} we can write $f=f_1\circ ...\circ f_l$ where for each $i$, either $f_i=\Gamma(\tilde f)$ for some $\tilde f\in \hom(C)$ or $f_i=\Gamma(\tilde f)^{-1}$ for some $\tilde f \in W$.

Note that $f(J)=f_1(J)\circ ... \circ f_l(J)$.  Thus to prove the result it's enough to check that for $\tilde f \in W$, 
\[\Gamma(\tilde f)^{-1}(J)\circ S(X,J)=S(Y,J) \circ \Gamma(\tilde f)^{-1}.\]  
From the equation \[\tilde f(J)\circ S(X,J)=S(Y,J) \circ \tilde f,\]
we have that \[\Gamma(\tilde f(J))\circ \Gamma(S(X,J))=\Gamma(S(Y,J)) \circ \Gamma(\tilde f).\]
Composing both sides of the equation on the left by $\Gamma(\tilde f)^{-1}(J)$ and on the right by $\Gamma(\tilde f)^{-1}$, we obtain that
\[\Gamma(\tilde f)^{-1}(J)\circ \Gamma(S(X,J))=\Gamma(S(Y,J)) \circ \Gamma(\tilde f)^{-1}\] as desired.
\end{proof}

\subsubsection{Weak $(J_1,J_2)$-interleavings of $u$-filtrations}
For $J_1,J_2:\R^n\to\R^n$ increasing, we say that an ordered pair of $u$-filtrations $(X,Y)$ is {\bf weakly $(J_1,J_2)$-interleaved} if there exist morphisms $f:\Gamma (R_{J_1^{-1}(u)}(X))\to \Gamma (Y(J_1))$ and $g:\Gamma(R_{J_2^{-1}(u)}(Y))\to \Gamma (X(J_2))$ such that 
\begin{align*}
g(J_1)\circ R_{J_1^{-1}\circ J_2^{-1}(u)}(f)=\Gamma (S(X,J_2\circ J_1))\\
f(J_2)\circ R_{J_2^{-1}\circ J_1^{-1}(u)}(g)=\Gamma (S(Y,J_1\circ J_2)); 
\end{align*}
we say $(f,g)$ is a pair of weak $(J_1,J_2)$-interleaving morphisms for $(X,Y)$.  

For $\epsilon\geq 0$, we define weak $\epsilon$-interleavings in the expected way.

\subsubsection{The Weak Interleaving Distance on $u$-filtrations}\label{Sec:WeakInterleavingDef}
    
We define $d_{WI}:\obj^*(u$-filt$)\times \obj^*(u$-filt$)\to [0,\infty]$, the {\bf weak interleaving distance}, by taking \[d_{WI}(X,Y)=\inf \{\epsilon\in \R_{\geq 0}|X\textup{ and }Y\textup{ are weakly }\epsilon\textup{-interleaved}\}.\]

It follows from Lemma~\ref{Lem:InterleavingTriangleInequality}(ii) below that $d_{WI}$ is a pseudometric on $\obj^*(u$-filt).  It is easily checked that this pseudometric is not a metric.

\subsection{Stability Results for the Weak Interleaving Distance}
$d_{WI}$ satisfies stability properties with respect to the functors $F^S$, $F^{SO}$, $F^{SR}$, and $F^{\SCe}$ analogous to the stability properties  Theorems~\ref{MultidimensionalSublevelsetStability}-\ref{MultidimensionalRipsPersistenceStability} satisfied by the interleaving distance on $B_n$-persistence modules with respect to the functors $H_i\circ F^S$, $H_i\circ F^{SO}$, $H_i\circ F^{SR}$, and $H_i\circ F^{\SCe}$.  The formulation and proofs of these results are easy modifications of those of Theorems~\ref{MultidimensionalSublevelsetStability}-\ref{MultidimensionalRipsPersistenceStability}; analogues of Theorems~\ref{MultidimensionalSublevelsetStability}-\ref{MultidimensionalCechPersistenceStability} follow via Lemma~\ref{lem:StrongImpliesWeak} below from the analogues of these theorems for $d_{SI}$ mentioned in Section~\ref{Sec:StrongInterleavingsDef}.  The analogue of Theorem~\ref{MultidimensionalRipsPersistenceStability} for $d_{WI}$ is proved in essentially the same way as Theorem~\ref{MultidimensionalRipsPersistenceStability}, but requires Proposition~\ref{prop:ZeroInterleavingOfSOandCech}(i), a lift of the persistent nerve theorem of \cite{chazal2008towards} to the level of filtrations.     

\subsection{Basic Results about Weak Interleavings}

\begin{lem}\label{lem:StrongImpliesWeak}
If $u$-filtrations $X,Y$ are strongly $(J_1,J_2)$-interleaved then $X,Y$ are weakly $(J_1,J_2)$-interleaved.
\end{lem}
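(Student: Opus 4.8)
The plan is to show that strong $(J_1,J_2)$-interleaving morphisms, once we apply the localization functor $\Gamma$, become weak $(J_1,J_2)$-interleaving morphisms. The key point is that $\Gamma$ is a functor, so it carries the two defining equations for a strong interleaving to equations in $Ho(u$-filt$)$, and that $\Gamma$ commutes with both the shift functors $(\cdot)(J)$ and the restriction functors $R_{u'}$, as recorded in Section~\ref{Sec:WeakInterleavingsDef}.

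First I would suppose $(f,g)$ is a pair of strong $(J_1,J_2)$-interleaving morphisms for $(X,Y)$, so that $f:R_{J_1^{-1}(u)}(X)\to Y(J_1)$ and $g:R_{J_2^{-1}(u)}(Y)\to X(J_2)$ satisfy
\[
g(J_1)\circ R_{J_1^{-1}\circ J_2^{-1}(u)}(f)=S(X,J_2\circ J_1),\qquad
f(J_2)\circ R_{J_2^{-1}\circ J_1^{-1}(u)}(g)=S(Y,J_1\circ J_2).
\]
Set $\bar f=\Gamma(f)$ and $\bar g=\Gamma(g)$. Using $\Gamma\circ(\cdot)(J)=(\cdot)(J)\circ\Gamma$ and $\Gamma\circ R_{u'}=R_{u'}\circ\Gamma$, applying $\Gamma$ to the first equation gives
\[
\Gamma(g(J_1))\circ\Gamma(R_{J_1^{-1}\circ J_2^{-1}(u)}(f))
=\bar g(J_1)\circ R_{J_1^{-1}\circ J_2^{-1}(u)}(\bar f)
=\Gamma(S(X,J_2\circ J_1)),
\]
and symmetrically for the second equation; moreover, since $\bar f$ and $\bar g$ are morphisms in $Ho(u$-filt$)$ with the domains and codomains $\Gamma(R_{J_1^{-1}(u)}(X))\to\Gamma(Y(J_1))$ and $\Gamma(R_{J_2^{-1}(u)}(Y))\to\Gamma(X(J_2))$ respectively (again by commutativity of $\Gamma$ with the relevant functors), the pair $(\bar f,\bar g)$ is exactly a pair of weak $(J_1,J_2)$-interleaving morphisms for $(X,Y)$. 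Hence $X,Y$ are weakly $(J_1,J_2)$-interleaved.

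This argument is essentially bookkeeping: the only thing to be careful about is matching the objects on the nose, i.e. checking that $\Gamma(R_{J_1^{-1}(u)}(X))$ really is the domain of $\bar f$ and that the composites land in the stated objects, which follows from the displayed compatibilities $\Gamma\circ R_{u'}=R_{u'}\circ\Gamma$ and $\Gamma\circ(\cdot)(J)=(\cdot)(J)\circ\Gamma$. There is no substantive obstacle here; the lemma is a formal consequence of functoriality of localization, and I would expect the proof in the paper to simply say ``apply $\Gamma$ and use that it commutes with the shift and restriction functors.'' The only mild subtlety worth a sentence is that $\Gamma$ does not in general send a morphism to an isomorphism, but that is not needed — we only use that $\Gamma$ preserves composition and commutes with $(\cdot)(J)$ and $R_{u'}$.
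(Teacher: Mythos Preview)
Your proposal is correct and follows exactly the same approach as the paper: the paper's proof simply states that if $f,g$ are strong $(J_1,J_2)$-interleaving morphisms then $\Gamma(f),\Gamma(g)$ are weak $(J_1,J_2)$-interleaving morphisms, leaving the details to the reader. You have supplied precisely those details, using the compatibilities $\Gamma\circ(\cdot)(J)=(\cdot)(J)\circ\Gamma$ and $\Gamma\circ R_{u'}=R_{u'}\circ\Gamma$ recorded in Section~\ref{Sec:WeakInterleavingsDef}.
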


\begin{proof} It's easy to check that if $f,g$ are strong $(J_1,J_2)$-interleaving morphisms for $X,Y$ then $\Gamma(f),\Gamma(g)$ are weak $(J_1,J_2)$-interleaving morphisms for $X,Y$.  We leave the details to the reader. 
\end{proof}

\begin{lem}\label{lem:InterleavingsUnderRestriction}
For any $u'\leq u \in \hat \R^n$ and $(J_1,J_2)\in \R^n$, if $u$-filtrations $X,Y$ are strongly (weakly) $(J_1,J_2)$-interleaved, then $R_{u'}(X),R_{u'}(Y)$ are also strongly (weakly) $(J_1,J_2)$-interleaved. 
\end{lem}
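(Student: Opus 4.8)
The plan is to deduce the interleaving data for $R_{u'}(X),R_{u'}(Y)$ by applying appropriate restriction functors to the given interleaving data for $X,Y$, and to verify the defining identities purely formally, using functoriality of $R_{v}$ and $(\cdot)(J)$ together with a handful of elementary compatibilities. The facts I would record first are: (a) for filtrations $R_{a}\circ R_{b}=R_{a}$ whenever $a\leq b$ (restriction just truncates the index poset); (b) if $J$ is increasing then $J^{-1}(v)\leq v$ for all $v$, and $(\cdot)(J)\circ R_{v}=R_{J^{-1}(v)}\circ(\cdot)(J)$ on both objects and morphisms; and (c) if $J$ is increasing and $w\leq u$, then $R_{J^{-1}(w)}(S(Z,J))=S(R_{w}(Z),J)$ for any $u$-filtration $Z$ — the restriction of a transition morphism is the transition morphism of the restricted filtration. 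Each of these is an immediate unwinding of the definitions in Section~\ref{Sec:StrongInterleavingsDef}; note that all we ever use of (a) is the collapse $R_{a}\circ R_{b}=R_{a}$, which by the inequalities in (b) applies to every composite of restriction functors that arises.

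For the strong case, suppose $f:R_{J_1^{-1}(u)}(X)\to Y(J_1)$ and $g:R_{J_2^{-1}(u)}(Y)\to X(J_2)$ are strong $(J_1,J_2)$-interleaving morphisms for $(X,Y)$. I would set $f'=R_{J_1^{-1}(u')}(f)$ and $g'=R_{J_2^{-1}(u')}(g)$. Using (a) and (b) one checks that $f'$ has domain $R_{J_1^{-1}(u')}(R_{J_1^{-1}(u)}(X))=R_{J_1^{-1}(u')}(X)=R_{J_1^{-1}(u')}(R_{u'}(X))$ and codomain $R_{J_1^{-1}(u')}(Y(J_1))=(R_{u'}(Y))(J_1)$, and symmetrically for $g'$, so $f',g'$ are candidate strong $(J_1,J_2)$-interleaving morphisms for $(R_{u'}(X),R_{u'}(Y))$. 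To verify the first identity, apply the functor $R_{J_1^{-1}J_2^{-1}(u')}$ to both sides of $g(J_1)\circ R_{J_1^{-1}J_2^{-1}(u)}(f)=S(X,J_2\circ J_1)$. The right side becomes $S(R_{u'}(X),J_2\circ J_1)$ by (c); the left side becomes $R_{J_1^{-1}J_2^{-1}(u')}(g(J_1))\circ R_{J_1^{-1}J_2^{-1}(u')}(f)$ by functoriality and (a), which by (b) equals $g'(J_1)\circ R_{J_1^{-1}J_2^{-1}(u')}(f')$. The second identity is symmetric, completing the strong case.

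For the weak case I would run the identical argument inside $Ho(u$-filt$)$. The descended functors $R_{v}$ and $(\cdot)(J)$ on the localized categories satisfy the analogues of (a) and (b): those identities hold in the underlying categories, and since $\Gamma\circ R_{v}=R_{v}\circ\Gamma$ and $\Gamma\circ(\cdot)(J)=(\cdot)(J)\circ\Gamma$, uniqueness in the universal property of localization forces the corresponding composites of descended functors to agree. Likewise $\Gamma(S(R_{u'}(X),J_2\circ J_1))=R_{J_1^{-1}J_2^{-1}(u')}(\Gamma(S(X,J_2\circ J_1)))$ by (c) and $\Gamma\circ R_{u'}=R_{u'}\circ\Gamma$. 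Applying the descended $R_{J_1^{-1}(u')},R_{J_2^{-1}(u')},R_{J_1^{-1}J_2^{-1}(u')}$ to weak $(J_1,J_2)$-interleaving morphisms $f,g$ for $(X,Y)$ — now morphisms of the localized category — and repeating the computation verbatim produces weak $(J_1,J_2)$-interleaving morphisms for $(R_{u'}(X),R_{u'}(Y))$.

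I expect no genuine difficulty here: the entire content is bookkeeping of which restriction and shift functors to compose where, and the one slightly substantive point is compatibility (c) — that restricting a transition morphism yields the transition morphism of the restriction — together with the order-theoretically trivial but easy-to-fumble inequalities $J^{-1}(v)\leq v$ for $J$ increasing, which are exactly what make the nested restriction functors collapse correctly in each step.
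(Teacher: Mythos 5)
Your proof is correct and takes the same approach as the paper, which simply asserts that restricting the interleaving morphisms $f,g$ yields interleaving morphisms for the restricted filtrations and leaves the verification to the reader. You have fleshed out exactly the bookkeeping that the paper treats as "easy to check," including the slightly finicky point of which restriction functor (namely $R_{J_i^{-1}(u')}$ rather than $R_{u'}$) must be applied to $f$ and $g$ so that the domains and codomains come out right.
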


\begin{proof}  It is easy to check that if $f,g$ are strong (weak) $(J_1,J_2)$-interleaving morphisms for $X,Y$, then $R_{u'}(f),R_{u'}(g)$ are strong (weak) $(J_1,J_2)$-interleavings for $R_{u'}(X),R_{u'}(Y)$.
\end{proof}

To prove our consistency results, we will require the following lemma. 

\begin{lem}\label{lem:InterleavingsUnderExpansion}
For any $u'\leq u \in \hat \R^n$ and $\epsilon\geq 0$, if $X,Y$ are $u$-filtrations such that $R_{u'}(X),R_{u'}(Y)$ are strongly (weakly) $\epsilon$-interleaved then $X,Y$ are strongly (weakly) $(J_{\epsilon+u-u'},J_{\epsilon+u-u'})$-interleaved. 
\end{lem}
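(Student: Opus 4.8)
The plan is to construct the required strong (respectively weak) $(J_v,J_v)$-interleaving morphisms for $(X,Y)$, where $v:=\vec\epsilon_n+(u-u')$ and $J_v=J_{\epsilon+u-u'}$, directly from the given $\epsilon$-interleaving morphisms for $(R_{u'}(X),R_{u'}(Y))$ by post-composing them with suitable transition morphisms. First I would record the bookkeeping identities that make everything typecheck: $J_v^{-1}(u)=u'-\epsilon$ and $J_v^{-1}\circ J_v^{-1}(u)=u-2v=2u'-u-2\epsilon$; the equalities $R_{u'-\epsilon}\circ R_{u'}=R_{u'-\epsilon}$ and $R_{u'}(Y)(\epsilon)=R_{u'-\epsilon}(Y(\epsilon))$, both immediate from the pointwise definitions of the restriction and shift functors; and $Y(\epsilon)(J_{u-u'})=Y(J_{u-u'}\circ J_\epsilon)=Y(J_v)$, using the contravariance of generalized shift functors.

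Next I would treat the strong case. Unwinding the hypothesis, a strong $\epsilon$-interleaving of $R_{u'}(X),R_{u'}(Y)$ amounts to morphisms $f\colon R_{u'-\epsilon}(X)\to R_{u'-\epsilon}(Y(\epsilon))$ and $g\colon R_{u'-\epsilon}(Y)\to R_{u'-\epsilon}(X(\epsilon))$ satisfying $g(\epsilon)\circ R_{u'-2\epsilon}(f)=S(R_{u'}(X),2\epsilon)$ and the symmetric identity. I would set
\[ F:=S(Y(\epsilon),J_{u-u'})\circ f\colon R_{u'-\epsilon}(X)\to Y(J_v),\qquad G:=S(X(\epsilon),J_{u-u'})\circ g\colon R_{u'-\epsilon}(Y)\to X(J_v), \]
these composites being defined because $S(Y(\epsilon),J_{u-u'})$ has domain $R_{u'-\epsilon}(Y(\epsilon))$ (the codomain of $f$) and codomain $Y(\epsilon)(J_{u-u'})=Y(J_v)$, and because $R_{u'-\epsilon}(X)=R_{J_v^{-1}(u)}(X)$ as required. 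It then remains to verify $G(J_v)\circ R_{u-2v}(F)=S(X,J_v\circ J_v)$ and its symmetric counterpart. Working over a fixed index $a<u-2v$, the plan is: (i) expand $G(J_v)_a\circ F_a$ into a composite of transition maps $\phi_X$ together with a factor $g_{b+\epsilon}\circ f_b$, using the definitions of $F$ and $G$; (ii) use the morphism property of $f$ to slide the intervening transition morphism of $Y$ past $f$, converting it into a transition map $\phi_X$ precomposed with a shift of $f$, so that the $g\circ f$ factor acquires exactly the form required by the interleaving identity; (iii) apply that identity to collapse $g\circ f$ to a single transition map of $X$; and (iv) compose the resulting chain of transition maps of $X$, which telescopes to $\phi_X(a,a+2v)=S(X,J_v\circ J_v)_a$. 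One checks that every index constraint needed in (ii)--(iii) follows from $a<u-2v$ together with $\epsilon\ge 0$; the symmetric identity is obtained by interchanging the roles of $X$ and $Y$.

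The weak case is handled by the same construction with $\Gamma$ inserted everywhere: take $F:=\Gamma(S(Y(\epsilon),J_{u-u'}))\circ f$ and $G:=\Gamma(S(X(\epsilon),J_{u-u'}))\circ g$ for the given weak $\epsilon$-interleaving morphisms $f,g$, and repeat the verification. The only step that changes is (ii): the pointwise morphism property of $f$ is replaced by Lemma~\ref{lem:CommutativityInHomotopyCategory}, which provides precisely the commutativity of $f$ with transition morphisms in $Ho(u$-filt$)$; steps (iii) and (iv) survive verbatim, since $\Gamma$ is a functor and therefore carries the interleaving identity $g(\epsilon)\circ R(f)=S(\cdot,2\epsilon)$ to its localized form and preserves the composition rule for transition morphisms (Lemma~\ref{lem:TransitionMorphismsAndShiftsForFilts}(ii)).

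I expect the main obstacle to be organizational rather than conceptual: keeping straight which index set each filtration in each composite lives over, and confirming the on-the-nose equalities of shift/restriction functor composites (e.g.\ $R_{u'}(Y)(\epsilon)=R_{u'-\epsilon}(Y(\epsilon))$ and $Y(\epsilon)(J_{u-u'})=Y(J_v)$) used to make $F$ and $G$ well defined. The substantive computation---the telescoping in step (iv)---is short once the interleaving identity supplied by the hypothesis is in hand, and the passage from the strong to the weak version costs nothing beyond substituting Lemma~\ref{lem:CommutativityInHomotopyCategory} for the elementary morphism property of $f$.
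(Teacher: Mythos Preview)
Your proposal is correct and follows exactly the paper's approach: define the new interleaving morphisms as $F=S(Y(\epsilon),J_{u-u'})\circ f$ and $G=S(X(\epsilon),J_{u-u'})\circ g$, note the identifications $R_{u'}(Y)(\epsilon)=R_{u'-\epsilon}(Y(\epsilon))$ that make these well defined, verify the interleaving identities (the paper simply says ``it's easy to check,'' whereas you spell out the telescoping computation), and handle the weak case by the same argument with Lemma~\ref{lem:CommutativityInHomotopyCategory} in place of the ordinary naturality of transition morphisms.
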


\begin{proof}  Let $f:R_{u'-\epsilon}(X)\to R_{u'}(Y)(\epsilon)$ and $g:R_{u'-\epsilon}(Y)\to R_{u'}(X)(\epsilon)$ be strong $\epsilon$-interleaving homomorphisms for $R_{u'}(X),R_{u'}(Y)$.  Note that $R_{u'}(X)(\epsilon)=R_{u'-\epsilon}(X(\epsilon))$ and $R_{u'}(Y)(\epsilon)=R_{u'-\epsilon}(Y(\epsilon))$.  Thus the homomorphisms $S(Y(\epsilon),J_{u-u'})\circ f$ and $S(X(\epsilon),J_{u-u'})\circ g$ are well defined, and it's easy to check that these are strong $(J_{\epsilon+u-u'},J_{\epsilon+u-u'})$-interleaving homomorphisms for $X,Y$.  

Invoking Lemma \ref{lem:CommutativityInHomotopyCategory}, the same argument also gives the result for weak interleavings.\end{proof}

The next lemma will prove very useful to us.  For $u=(u_1,...,u_n), u'=(u'_1,...,u'_n)\in {\hat \R}^n$, let $\gcd(u,u')=(\min(u_1,u'_1),...,\min(u_n,u'_n))$.

\begin{lem}[Triangle Inequality for $(J_1,J_2)$-interleavings]\label{Lem:InterleavingTriangleInequality}
\mbox{}
\begin{enumerate*}
\item[(i)] Suppose we are given $u\in \hat\R^n$, $J_1,J_2,J_3,J_4:\R^n\to \R^n$ increasing, $u_1,u_2\leq u$, and $u$-filtrations $X_1, X_2, X_3$ such that $R_{u_1}(X_1),R_{u_1}(X_2)$ are strongly (weakly) $(J_1,J_2)$-interleaved and $R_{u_2}(X_2),R_{u_2}(X_3)$ are strongly (weakly) $(J_3,J_4)$-interleaved. Then $R_{\gcd(u_1,u_2)}(X_1),R_{\gcd(u_1,u_2)}(X_3)$ are strongly (weakly) $(J_3\circ J_1,J_2\circ J_4)$-interleaved.
\item[(ii)] In particular, if we have $\epsilon_1,\epsilon_2\geq 0$ such that $X_1,X_2$ are strongly (weakly) $\epsilon_1$-interleaved and $X_2,X_3$ are strongly (weakly) $\epsilon_2$-interleaved, then $X_1,X_3$ are strongly (weakly) $(\epsilon_1+\epsilon_2)$-interleaved. 
\end{enumerate*}
\end{lem}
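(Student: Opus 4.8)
The plan is to prove part~(i) by explicitly composing the two given pairs of interleaving morphisms, and then to deduce part~(ii) as the special case $u_1=u_2=u$, $J_1=J_2=J_{\epsilon_1}$, $J_3=J_4=J_{\epsilon_2}$, where $R_u$ is the identity functor on $u$-filt and $J_{\epsilon_2}\circ J_{\epsilon_1}=J_{\epsilon_1+\epsilon_2}$, so that a $(J_3\circ J_1,\,J_2\circ J_4)$-interleaving becomes a $(J_{\epsilon_1+\epsilon_2},J_{\epsilon_1+\epsilon_2})$-interleaving, i.e.\ an $(\epsilon_1+\epsilon_2)$-interleaving.

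For part~(i), write $K=J_3\circ J_1$ and $L=J_2\circ J_4$, and let $(f_{12},g_{12})$ and $(f_{23},g_{23})$ be strong (resp.\ weak) interleaving morphisms witnessing the hypotheses for the pairs $R_{u_1}(X_1),R_{u_1}(X_2)$ and $R_{u_2}(X_2),R_{u_2}(X_3)$. Using the contravariance identity $(\cdot)(J'\circ J)=(\cdot)(J)\circ(\cdot)(J')$ for generalized shift functors, its compatibility with restriction functors (namely $R_{u'}(X)(J)=R_{J^{-1}(u')}(X(J))$), and the fact that order-preserving bijections of $\R^n$ are lattice isomorphisms and so carry $\gcd(u_1,u_2)$ to $\gcd(J^{-1}(u_1),J^{-1}(u_2))$, I would check that the composites
\[
f=f_{23}(J_1)\circ R_\ast(f_{12}),\qquad g=g_{12}(J_4)\circ R_\ast(g_{23})
\]
are well defined as morphisms out of suitable restrictions of $X_1$ and $X_3$, with codomains $X_3(K)=(X_3(J_3))(J_1)$ and $X_1(L)=(X_1(J_2))(J_4)$ respectively (here $R_\ast$ denotes restriction to the largest index set on which the composite makes sense). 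It then remains to verify the two interleaving identities; by the symmetry of the construction it suffices to establish $g(K)\circ R_\ast(f)=S(X_1,L\circ K)$.

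To do so, I would expand $g(K)\circ f$ using the definitions of $f$ and $g$ and reassociate. The inner factor collapses via $g_{23}(K)\circ f_{23}(J_1)=(g_{23}(J_3)\circ f_{23})(J_1)=S(X_2,J_4\circ J_3)(J_1)$ by the $(J_3,J_4)$-interleaving identity for $(f_{23},g_{23})$. Rewriting $S(X_2,J_4\circ J_3)(J_1)=S(X_2(J_1),P)$ with $P=J_1^{-1}\circ J_4\circ J_3\circ J_1$ (increasing) by Lemma~\ref{lem:TransitionMorphismsAndShiftsForFilts}(iii), commuting $f_{12}$ past this transition morphism by Lemma~\ref{lem:TransitionMorphismsAndShiftsForFilts}(i), and then applying the $(J_1,J_2)$-interleaving identity $g_{12}(J_1)\circ f_{12}=S(X_1,J_2\circ J_1)$ together with Lemma~\ref{lem:TransitionMorphismsAndShiftsForFilts}(ii), the whole composite collapses to $S(X_1,(J_2\circ J_1)\circ P)=S(X_1,J_2\circ J_4\circ J_3\circ J_1)=S(X_1,L\circ K)$. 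For the weak case the same computation runs verbatim inside $Ho(u$-filt$)$: Lemma~\ref{lem:CommutativityInHomotopyCategory} replaces Lemma~\ref{lem:TransitionMorphismsAndShiftsForFilts}(i), while parts~(ii) and~(iii) of Lemma~\ref{lem:TransitionMorphismsAndShiftsForFilts}, being identities between morphisms of $u$-filtrations, descend under the localization functor $\Gamma$ (which commutes with shift and restriction functors).

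I expect the only genuine difficulty to be the restriction-functor bookkeeping: verifying at each stage exactly which $R_{u'}$ is applied and confirming that the resulting interleaving is one of $R_{\gcd(u_1,u_2)}(X_1)$ with $R_{\gcd(u_1,u_2)}(X_3)$, rather than of some smaller restriction --- the algebraic identities themselves being mechanical consequences of Lemma~\ref{lem:TransitionMorphismsAndShiftsForFilts} once all domains and codomains are aligned. Part~(ii) is then immediate, and with it the fact that $d_{WI}$ (and likewise $d_{SI}$) is a pseudometric on $\obj^*(u$-filt$)$.
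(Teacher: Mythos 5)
Your proposal is correct and takes essentially the same approach as the paper: compose the two pairs of interleaving morphisms, verify the two resulting identities via Lemma~\ref{lem:TransitionMorphismsAndShiftsForFilts}, and transport the whole argument to $Ho(u\text{-filt})$ via Lemma~\ref{lem:CommutativityInHomotopyCategory} for the weak case. The one place you could streamline: rather than tracking the ``$R_\ast$'' restriction bookkeeping explicitly at each step, the paper first invokes Lemma~\ref{lem:InterleavingsUnderRestriction} to reduce to the case $u_1=u_2=u$, after which no restriction functors appear in the composite at all and the difficulty you flagged disappears.
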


\begin{proof} By Lemma~\ref{lem:InterleavingsUnderRestriction}, it suffices to assume that $u_1=u_2=u$.  It's easy to check that if $f_1,g_1$ are strong $(J_1,J_2)$-interleaving homomorphisms for $X_1,X_2$ and $f_2,g_2$ are strong $(J_3,J_4)$-interleaving homomorphisms for $X_2,X_3$ then $f_2(J_1) \circ f_1,g_1(J_4)\circ g_2$ are strong $(J_3\circ J_1,J_2\circ J_4)$-interleaving homomorphisms for $X_1,X_3$.  This gives $(i)$ for strong interleavings.  The case of weak interleavings follows via the same argument, using Lemma~\ref{lem:CommutativityInHomotopyCategory}.\end{proof}

We noted in Remark~\ref{EasyInterleavingRemark} that if $\epsilon<\epsilon'\in \R$ and two $B_n$-persistence modules are $\epsilon$-interleaved, then they are $\epsilon'$-interleaved.  Here is the analogous statement for $(J_1,J_2)$-interleavings, on the level of filtrations.  (The same proof gives corresponding result for $(J_1,J_2)$-interleaved $B_n$-persistence modules.)

For $J,J':\R^n\to \R^n$ bijections, say that $J\leq J'$ if $J(a)\leq J'(a)$ for all $a\in \R^n$.

\begin{lem}\label{lem:LessThanInterleavings}
Let $J_1,J'_1,J_2,J'_2$ be increasing and $J_1\leq J'_1$, $J_2\leq J'_2$.  If $u$-filtrations $X,Y$ are strongly (weakly) $(J_1,J_2)$-interleaved then $X,Y$ are strongly (weakly) $(J'_1,J'_2)$-interleaved.
\end{lem}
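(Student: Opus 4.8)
The plan is to deduce the lemma from the triangle inequality for $(J_1,J_2)$-interleavings (Lemma~\ref{Lem:InterleavingTriangleInequality}), together with the observation that a $u$-filtration is always interleaved with itself via its own transition morphisms. First I would set $K_1 = J'_1\circ J_1^{-1}$ and $K_2 = J_2^{-1}\circ J'_2$. Since $J_1,J'_1,J_2,J'_2$ are order-preserving bijections, so are $K_1$ and $K_2$; moreover the hypotheses $J_1\leq J'_1$ and $J_2\leq J'_2$ give $K_1(a)=J'_1(J_1^{-1}(a))\geq J_1(J_1^{-1}(a))=a$ and, using that $J_2^{-1}$ is order-preserving, $K_2(a)=J_2^{-1}(J'_2(a))\geq J_2^{-1}(J_2(a))=a$, so $K_1$ and $K_2$ are increasing. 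Note also that $K_1\circ J_1=J'_1$ and $J_2\circ K_2=J'_2$; the asymmetric placement of $J_1^{-1}$ versus $J_2^{-1}$ is dictated by the composition order appearing in Lemma~\ref{Lem:InterleavingTriangleInequality}(i).

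Next I would check that for any $u$-filtration $Y$ and any increasing maps $K_1,K_2$, the pair $(Y,Y)$ is strongly $(K_1,K_2)$-interleaved, with interleaving morphisms $f=S(Y,K_1)$ and $g=S(Y,K_2)$. Unwinding the definition of a strong $(K_1,K_2)$-interleaving, the two required identities are $S(Y,K_2)(K_1)\circ S(Y,K_1)=S(Y,K_2\circ K_1)$ and $S(Y,K_1)(K_2)\circ S(Y,K_2)=S(Y,K_1\circ K_2)$, both of which are instances of Lemma~\ref{lem:TransitionMorphismsAndShiftsForFilts}(ii); the restriction functors built into the definition only serve to make the domains match and do so automatically here. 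By Lemma~\ref{lem:StrongImpliesWeak}, $(Y,Y)$ is then also weakly $(K_1,K_2)$-interleaved, so the weak case needs no separate argument.

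Finally I would apply Lemma~\ref{Lem:InterleavingTriangleInequality}(i) with $u_1=u_2=u$, $X_1=X$, $X_2=X_3=Y$, taking the first interleaving to be the given strong (weak) $(J_1,J_2)$-interleaving of $(X,Y)$ and the second to be the strong (weak) $(K_1,K_2)$-interleaving of $(Y,Y)$ just constructed. Since $R_u$ is the identity, the conclusion is that $(X,Y)$ is strongly (weakly) $(K_1\circ J_1,\;J_2\circ K_2)$-interleaved, that is, $(J'_1,J'_2)$-interleaved, which is exactly the claim.

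I do not expect any genuine obstacle: the content is entirely in choosing $K_1,K_2$ so that $K_1\circ J_1=J'_1$ and $J_2\circ K_2=J'_2$ while keeping both increasing, and then quoting the already-established triangle inequality. The one point requiring care is the order of composition in Lemma~\ref{Lem:InterleavingTriangleInequality}(i) (the ``outer'' map appears on the left in the first slot but on the right in the second), which is precisely what forces the two definitions of $K_i$ to be mirror images of one another. As a sanity check, in the scalar case $J_1=J_2=J_\epsilon$ and $J'_1=J'_2=J_{\epsilon'}$ with $\epsilon\leq\epsilon'$ one gets $K_1=K_2=J_{\epsilon'-\epsilon}$, recovering Remark~\ref{EasyInterleavingRemark}.
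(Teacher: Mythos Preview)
Your proof is correct and takes a genuinely different route from the paper's. The paper argues directly: it sets $J''_1=J'_1\circ J_1^{-1}$ and $J''_2=J'_2\circ J_2^{-1}$, then explicitly checks via a chain of identities from Lemma~\ref{lem:TransitionMorphismsAndShiftsForFilts} that the morphisms $S(Y,J''_1)(J_1)\circ f$ and $S(X,J''_2)(J_2)\circ g$ form a $(J'_1,J'_2)$-interleaving. You instead factor through the already-proved triangle inequality (Lemma~\ref{Lem:InterleavingTriangleInequality}), combined with the observation that $(Y,Y)$ is $(K_1,K_2)$-interleaved via its own transition morphisms for any increasing $K_1,K_2$. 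This is more modular and avoids reproducing the kind of computation that was already absorbed into the proof of Lemma~\ref{Lem:InterleavingTriangleInequality}; the paper's approach is more self-contained but longer. Note the pleasant subtlety you flagged: your $K_2=J_2^{-1}\circ J'_2$ differs from the paper's $J''_2=J'_2\circ J_2^{-1}$, precisely because the composition order in the triangle inequality forces the inverse to sit on the other side; both maps are increasing, but only your choice makes $J_2\circ K_2=J'_2$ hold on the nose.
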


\begin{proof}
Since $J_1$, and $J'_1$ are increasing, there is an increasing map $J''_1:\R^n\to \R^n$ such that $J'_1=J''_1\circ J_1$.  Similarly, there is an increasing map $J''_2:\R^n\to \R^n$ such that $J'_2=J''_2\circ J_2$.  Let $f,g$ be strong $(J_1,J_2)$-interleaving morphisms for $X,Y$.  We show that $S(Y,J''_1)(J_1)\circ f,S(X,J''_2)(J_2)\circ g$ are strong $(J'_1,J'_2)$-interleaving momorphisms for $X,Y$.  By Lemma~\ref{lem:TransitionMorphismsAndShiftsForFilts},
\begin{align*}
& S(X,J''_2)(J_2)(J'_1)\circ g(J'_1) \circ S(Y,J''_1)(J_1)\circ f\\
\quad &=S(X,J''_2)(J_2 \circ J'_1)\circ g(J'_1) \circ S(Y(J_1),J_1^{-1} \circ J'_1)\circ f\\
\quad &=S(X,J''_2)(J_2 \circ J'_1)\circ g(J'_1) \circ f(J_1^{-1} \circ J'_1)\circ S(X,J_1^{-1} \circ J'_1)\\
\quad &=S(X,J''_2)(J_2 \circ J'_1)\circ S(X,J_2 \circ J_1)(J_1^{-1} \circ J'_1)\circ S(X,J_1^{-1} \circ J'_1)\\
\quad &=S(X,J''_2)(J_2 \circ J'_1)\circ S(X,J_2 \circ J_1 \circ J_1^{-1} \circ J'_1)\\
\quad &=S(X,J''_2 \circ J_2 \circ J'_1)\\
\quad &=S(X,J'_2 \circ J'_1).
\end{align*}
The symmetric argument shows that \[S(Y,J''_1)(J_1)(J'_2)\circ f(J'_2) \circ S(X,J''_2)(J_2)\circ g=S(Y,J'_1 \circ J'_2).\]  This gives the result for strong interleavings; the result follows for weak interleavings by the same argument, using Lemma \ref{lem:CommutativityInHomotopyCategory}.
\end{proof}

\subsection{Stability of Persistent Homology with Respect to Interleavings}

In this section, we prove that persistent homology is stable with respect to weak interleavings on $u$-filtrations and interleavings on $B_n$-persistence modules.  

\begin{thm}\label{Thm:StabilityWRTInterleavings}
For any $i\in \Z_{\geq 0}$, $u\in {\hat \R}^n$, and $J_1,J_2:\R^n\to\R^n$ increasing, if $u$-filtrations $X,Y$ are weakly $(J_1,J_2)$-interleaved then $H_i(X),H_i(Y)$ are $(J_1,J_2)$-interleaved.
\end{thm}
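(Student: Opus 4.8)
The plan is to push the persistent homology functor through the localization. The crucial observation is that $H_i : u\text{-filt} \to B_n\text{-mod}$ carries every levelwise homotopy equivalence to an isomorphism: if $f \in W$ then each component $f_a$ is a homotopy equivalence of spaces, so $H_i(f)_a = H_i(f_a)$ is an isomorphism of vector spaces, and a morphism of $B_n$-persistence modules that is an isomorphism on every homogeneous summand is an isomorphism of $B_n$-persistence modules (the same elementary fact already used in Section~\ref{LastPartOfProof}). Hence by the universal property of localization, $H_i$ factors uniquely as $H_i = \bar{H}_i \circ \Gamma$ for a functor $\bar{H}_i : Ho(u\text{-filt}) \to B_n\text{-mod}$.

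Next I would record the compatibilities of $H_i$ with the structure functors and carry them over to $\bar{H}_i$. Before localizing we have $H_i \circ R_{u'} = R_{u'} \circ H_i$ (Lemma~\ref{lem:PersistentHomologyIsCompatibleWRestriction}), $H_i \circ (\cdot)(J) = (\cdot)(J) \circ H_i$ (immediate from comparing the two definitions of the shift functor: both sides send $X$ to the module whose $a$-summand is $H_i(X_{J(a)})$, with transition maps $\varphi_{H_i(X)}(J(a),J(b))$), and $H_i(S(X,J)) = S(H_i(X),J)$ (both restrict on the $a$-summand to $H_i(\phi_X(a,J(a))) = \varphi_{H_i(X)}(a,J(a))$; here one uses the first identity to identify $H_i(R_{J^{-1}(u)}(X))$ with $R_{J^{-1}(u)}(H_i(X))$, and one notes that $H_i(X)$ vanishes outside $\{a<u\}$, so on the relevant locus the filtration-level and module-level transition homomorphisms agree). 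Since $\Gamma$ commutes with $(\cdot)(J)$ and with $R_{u'}$ (Section~\ref{Sec:WeakInterleavingsDef}), the uniqueness clause in the universal property of localization (together with Lemma~\ref{lem:GeneratorsOfLocalization}) promotes these to identities for $\bar{H}_i$: namely $\bar{H}_i(f(J)) = (\bar{H}_i f)(J)$ and $\bar{H}_i(R_{u'}(f)) = R_{u'}(\bar{H}_i f)$ for every morphism $f$ in the relevant localized categories, and $\bar{H}_i(\Gamma(S(X,J))) = S(H_i(X),J)$.

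With this in hand the theorem falls out by applying $\bar{H}_i$ to a pair $(f,g)$ of weak $(J_1,J_2)$-interleaving morphisms for $(X,Y)$. Functoriality of $\bar{H}_i$ together with the identities above turns the two defining equations of a weak interleaving into $(\bar{H}_i g)(J_1)\circ R_{J_1^{-1}\circ J_2^{-1}(u)}(\bar{H}_i f) = S(H_i(X),J_2\circ J_1)$ and $(\bar{H}_i f)(J_2)\circ R_{J_2^{-1}\circ J_1^{-1}(u)}(\bar{H}_i g) = S(H_i(Y),J_1\circ J_2)$, where $\bar{H}_i f : R_{J_1^{-1}(u)}(H_i(X)) \to H_i(Y)(J_1)$ and $\bar{H}_i g : R_{J_2^{-1}(u)}(H_i(Y)) \to H_i(X)(J_2)$. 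These are precisely the relations defining a $(J_1,J_2)$-interleaving of $H_i(X)$ and $H_i(Y)$ in the sense of Section~\ref{Sec:J1J2Interleavings}, once one observes that, because $H_i(X)$ and $H_i(Y)$ are supported on $\{a<u\}$, the restriction functors $R_{J_1^{-1}(u)}$ and $R_{J_2^{-1}(u)}$ appearing here may be dropped: a homomorphism out of $H_i(X)$ whose target is supported on $\{a<J_1^{-1}(u)\}$ has all components at grades $a\not<J_1^{-1}(u)$ forced to vanish, so it is the same datum as a homomorphism out of $R_{J_1^{-1}(u)}(H_i(X))$, and similarly for $g$. I do not expect a genuine obstacle here; the only parts demanding care are checking that $\bar{H}_i$ intertwines the shift, restriction and transition data in the localized category, and this final bit of bookkeeping matching the filtration-style interleaving identities (which carry the extra restriction functors) with the module-style ones.
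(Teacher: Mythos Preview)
Your proposal is correct and follows essentially the same approach as the paper: factor $H_i$ through the localization via the universal property, verify that the induced functor $\bar H_i$ (the paper's $\tilde H_i$) intertwines shifts, and apply it to the weak interleaving morphisms. The only cosmetic difference is that where you argue abstractly that the restriction functors ``may be dropped'' because the targets $H_i(Y)(J_1)$ and $H_i(X)(J_2)$ are supported on the appropriate sublevel regions, the paper carries this out concretely by extending $\tilde H_i(f)$ and $\tilde H_i(g)$ by zero to maps $f_E,g_E$ defined on all of $H_i(X),H_i(Y)$ and then checks the interleaving identities in the two cases $a<(J_2\circ J_1)^{-1}(u)$ and $a\not<(J_2\circ J_1)^{-1}(u)$ separately; your observation that both sides vanish in the second case is exactly what makes that verification work.
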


\begin{cor}
For any $i\in \Z_{\geq 0}$, $u\in {\hat \R}^n$, and $u$-filtrations $X$ and $Y$, \[d_{I}(H_i(X),H_i(Y))\leq d_{WI}(X,Y).\]
\end{cor}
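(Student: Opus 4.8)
The plan is to show that the statement is an immediate corollary of Theorem~\ref{Thm:StabilityWRTInterleavings}, combined with the definitions of $d_{WI}$ and $d_I$. The key observation is that Theorem~\ref{Thm:StabilityWRTInterleavings}, specialized to $J_1=J_2=J_\epsilon$, says precisely that if $u$-filtrations $X,Y$ are weakly $\epsilon$-interleaved then $H_i(X),H_i(Y)$ are $\epsilon$-interleaved as $B_n$-persistence modules (recall that weak $\epsilon$-interleavings are weak $(J_\epsilon,J_\epsilon)$-interleavings, and that $M$ and $N$ are $\epsilon$-interleaved iff the pair $(M,N)$ is $(J_\epsilon,J_\epsilon)$-interleaved, as noted in Section~\ref{Sec:J1J2Interleavings}).

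So the proof proceeds as follows. First I would fix $i\in\Z_{\geq 0}$, $u\in\hat\R^n$, and $u$-filtrations $X,Y$, and set $\epsilon = d_{WI}(X,Y)$; the inequality is trivial if $\epsilon=\infty$, so assume $\epsilon<\infty$. Then I would let $\epsilon'>\epsilon$ be arbitrary. By the definition of $d_{WI}$ as an infimum, and by the analogue of Remark~\ref{EasyInterleavingRemark} for weak interleavings (which follows from Lemma~\ref{lem:LessThanInterleavings}, taking $J_1=J_2=J_\epsilon$ and $J'_1=J'_2=J_{\epsilon'}$, since $J_\epsilon\leq J_{\epsilon'}$), $X$ and $Y$ are weakly $\epsilon'$-interleaved. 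Applying Theorem~\ref{Thm:StabilityWRTInterleavings} with $J_1=J_2=J_{\epsilon'}$, we conclude that $H_i(X)$ and $H_i(Y)$ are $\epsilon'$-interleaved, hence $d_I(H_i(X),H_i(Y))\leq \epsilon'$.

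Since this holds for every $\epsilon'>\epsilon$, we obtain $d_I(H_i(X),H_i(Y))\leq \epsilon = d_{WI}(X,Y)$, which is the claim. There is essentially no obstacle here: all the mathematical content is in Theorem~\ref{Thm:StabilityWRTInterleavings} (and, if one wants to be careful about the passage from $d_{WI}$ to a particular weak interleaving, in Lemma~\ref{lem:LessThanInterleavings}); the corollary is a formal $\epsilon$-$\epsilon'$ manipulation of infima. The only point requiring a moment's attention is to make sure one correctly identifies ``weakly $\epsilon$-interleaved'' with ``weakly $(J_\epsilon,J_\epsilon)$-interleaved'' and ``$\epsilon$-interleaved persistence modules'' with ``$(J_\epsilon,J_\epsilon)$-interleaved pair,'' so that Theorem~\ref{Thm:StabilityWRTInterleavings} applies verbatim; this is recorded in the definitions in Sections~\ref{Sec:WeakInterleavingsDef} and~\ref{Sec:J1J2Interleavings}.
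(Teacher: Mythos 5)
Your proof is correct and is exactly the argument the paper has in mind: the corollary is stated without proof as an immediate consequence of Theorem~\ref{Thm:StabilityWRTInterleavings}, and your $\epsilon$-$\epsilon'$ manipulation of the infima, specializing the theorem to $J_1=J_2=J_{\epsilon'}$, is the standard unwinding. (Strictly speaking you do not even need Lemma~\ref{lem:LessThanInterleavings}: the infimum definition already gives some $\epsilon''<\epsilon'$ at which $X,Y$ are weakly $\epsilon''$-interleaved, to which the theorem applies directly—but invoking the monotonicity lemma is equally valid.)
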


\begin{proof}[Proof of Theorem~\ref{Thm:StabilityWRTInterleavings}]
We begin with a couple of lemmas.

\begin{lem} For any $i\in \Z_{\geq 0}$ and $J:\R^n\to \R^n$ order-preserving, \[H_i \circ (\cdot)(J)=(\cdot)(J)\circ H_i.\]  \end{lem}

\begin{proof} We leave the straightforward proof to the reader. \end{proof}

Note that if $f\in \hom(B_n$-mod) is such that $f_a$ is an isomorphism for all $a\in \R^n$, then $f$ is an isomorphism.  Thus, since the singular homology functor maps homotopy equivalences to isomorphisms, $H_i$ takes elements of $W$ to isomorphisms in $B_n$-mod.  Then by the universal property of localizations, for any $u\in {\hat \R}^n$, there's a functor $\tilde H_i:Ho(u$-filt$)\to B_n$-mod such that \[H_i=\tilde H_i\circ \Gamma.\]

\begin{lem} For any $i\in \Z_{\geq 0}$ and $J$ increasing, \[\tilde H_i \circ (\cdot)(J)=(\cdot)(J)\circ \tilde H_i.\]  
\end{lem}

\begin{proof}
$H_i\circ (\cdot)(J)$ takes elements of $W$ to isomorphisms in $B_n$-mod.  Thus by the universal property of localization there exists a 
unique functor $G: Ho(u$-filt$)\to B_n$-mod such that $G\circ \Gamma=H_i\circ (\cdot)(J)$.  We have that  
\begin{align*}
\tilde H_i \circ (\cdot)(J)\circ \Gamma=\tilde H_i \circ \Gamma\circ (\cdot)(J)=H_i\circ (\cdot)(J)=(\cdot)(J) \circ H_i=(\cdot)(J) \circ \tilde H_i \circ \Gamma.
\end{align*}
Thus by the uniqueness property of $G$, $\tilde H_i \circ (\cdot)(J)=G=(\cdot)(J) \circ \tilde H_i$.
\end{proof} 

Now let $f:\Gamma(R_{J_1^{-1}(u)}(X))\to \Gamma(Y(J_1))$ and $g:\Gamma(R_{J_2^{-1}(u)}(Y))\to \Gamma(X(J_2))$ be weak $(J_1,J_2)$-interleaving morphisms for $X,Y$.

The homomorphisms 
\begin{align*}
&\tilde H_i(f): H_i(R_{J_1^{-1}(u)}(X))\to  H_i(Y)(J_1),\\
&\tilde H_i(g): H_i(R_{J_2^{-1}(u)}(Y))\to  H_i(X)(J_2)
\end{align*}
extend to homomorphisms $f_E: H_i(X)\to  H_i(Y)(J_1)$ and $g_E: H_i(Y)\to  H_i(X)(J_2)$ by taking $f_E$ to be identically zero on all homogenous summands $H_i(X)_a$ of $H_i(X)$ such that $a\not <J_1^{-1}(u))$, and taking $g_E$ to be identically zero on all homogenous summands $H_i(Y)_a$ of $H_i(Y)$ such that $a\not <J_2^{-1}(u)$.

We'll now show that  $f_E,g_E$ are $(J_1,J_2)$-interleaving homomorphisms for $H_i(X),H_i(Y)$.  We have that 
\begin{align*}
\tilde H_i(g)(J_1)\circ \tilde H_i(R_{J_1^{-1}\circ J_2^{-1}(u)}(f))&=\tilde H_i(g(J_1)) \circ \tilde H_i(R_{J_1^{-1}\circ J_2^{-1}(u)}(f))\\
\qquad =\tilde H_i(g(J_1)\circ R_{J_1^{-1}\circ J_2^{-1}(u)}(f))&=\tilde H_i \circ \Gamma(S(X,J_2\circ J_1))\\
\qquad =H_i(S(X,J_2\circ J_1))&=S(H_i(X),J_2\circ J_1).
\end{align*}
This implies that for $a< J_1^{-1}\circ J_2^{-1}(u)$, \[(g_E(J_1) \circ f_E)_a=(\tilde H_i(g)(J_1)\circ \tilde H_i(R_{J_1^{-1}\circ J_2^{-1}(u)}(f)))_a=S(H_i(X),J_2\circ J_1)_a.\]  On the other hand, for $a\not < J_1^{-1}\circ J_2^{-1}(u)$, \[(g_E(J_1) \circ f_E)_a=0=S(H_i(X),J_2\circ J_1)_a.\]  Thus, $g_E(J_1) \circ f_E=S(H_i(X),J_2\circ J_1)$.  The symmetric argument shows that $f_E(J_2) \circ g_E=S(H_i(Y),J_1\circ J_2)$.  This shows that $f_E,g_E$ are $(J_1,J_2)$-interleaving homomorphisms for $H_i(X),H_i(Y)$, as desired. \end{proof}

\begin{example}\label{Ex:Homology_Rips_and_Cech_Generalized_Interleaving}
By Example~\ref{Ex:Rips_and_Cech_Generalized_Interleaving} and Theorem~\ref{Thm:StabilityWRTInterleavings}, we have that for any $i\geq 0$ and
$(X,Y,d,\gamma)\in \obj(C^{SCe})$, the $B_1$-persistence modules $H_i\circ F^{R}(X,d),H_i \circ F^{\Ce}(X,Y,d)$ are $(\J_1,\id_1)$-interleaved, and the $B_{n+1}$-persistence modules $H_i\circ F^{SR}(X,d,\gamma),H_i\circ F^{\SCe}(X,Y,d,\gamma)$ are $(\J_{n+1},\id_{n+1})$-interleaved.
\end{example}

\subsection{On Our Choice of Definition of Weak Interleavings}\label{Sec:OurChoiceOfDefinitionWeakInterleavings}
To define weak interleavings, we have considered the localization of $u$-filt with respect to levelwise weak equivalences.  An alternative approach, and one worth understanding because it is very simple, is to define weak interleavings by passing to a category of filtrations over the homotopy category of topological spaces.  We take a moment here to explain this alternative approach to defining weak interleavings and the weak interleaving distance and to discuss the relationship between the distance thus obtained and $d_{WI}$.

For $u\in \hat \R^n$, let $u$-filt$^*$ be the category whose objects are collections of topological spaces $\{X_a\}_{a<u}$ together with morphisms $\{\phi_X(a,b)\in \hom_{Ho(\Top)}(X_a,X_b)\}_{a\leq b<u}$ such that if $a\leq b\leq c<u$ then $\phi_X(b,c)\circ \phi_X(a,b)=\phi_X(a,c)$.  

Given two objects $X$ and $Y$ of $u$-filt$^*$, we define a morphism $f\in \hom_{u\textup{-filt}^*}(X,Y)$ to be a collection of morphisms $\{f_a\in \hom_{Ho(\Top)}(X_a,Y_a)\}_{a<u}$ such that for all $a\leq b<u$, $f_b\circ \phi_X(a,b)=\phi_Y(a,b)\circ f_a$.  

The usual functor $\Gamma': \Top\to Ho(\Top)$ induces a functor $\Gamma':u$-filt$\to u$-filt$^*$.  This functor takes levelwise weak equivalences to isomorphisms.  Further, we can define shift and restriction functors on the categories $u$-filt$^*$ just as we have for the categories $u$-filt.  Thus for any $u\in \hat \R^n$ we have all the structure we need to formulate an alternative definition of weak $(J_1,J_2)$-interleavings of $u$-filtrations via the functor $\Gamma'$ analogous to the one we have formulated via the localization functor $\Gamma$.  We'll call the weak interleavings thus defined {\it $A$-weak interleavings}.  The definition of $A$-weak $\epsilon$-interleavings induces a definition of an {\it $A$-weak interleaving distance} on $u$-filtrations which we denote $d_{AWI}$.

By the universal property of localization, we have a functor $\Theta: Ho(u$-filt)$\to u$-filt$^*$ such that $\Gamma'=\Theta\circ \Gamma$.  It follows that if two filtrations are weakly $(J_1,J_2)$-interleaved, then they are $A$-weakly $(J_1,J_2)$-interleaved, and in particular, $d_{AWI}\leq d_{WI}$.  This implies that all of our inference results in this thesis which are formulated in terms of weak interleavings are still true if instead formulated in terms of $A$-weak interleavings.  

We would like the distance with which we develop our inferential theory to be as sensitive as possible, subject to the conditions that it has reasonable stability properties (such as those mentioned in Section~\ref{Sec:WeakInterleavingsDef}) and that the distance between filtrations $X$ and $Y$ is $0$ whenever there exists a levelwise homotopy equivalence $f:X\to Y$.  Thus the fact that $d_{AWI}\leq d_{WI}$ offers some justification for our working with $d_{WI}$ rather than $d_{AWI}$.

I do not yet know if it is in fact true that $d_{AWI}=d_{WI}$, though I suspect that this equality does not hold in general.  More generally, we can ask the following: Under what circumstances does the existence of an $A$-weak $(J_1,J_2)$-interleaving between two $u$-filtrations imply the existence of a weak $(J_1,J_2)$-interleaving between the two filtrations?  This question is closely related to the problem of understanding the relationship between a homotopy category of diagrams of topological spaces of a given shape and the category of diagrams of the same shape over the homotopy category of topological spaces.  It is well known that these two categories are usually not equivalent \cite[Remark 10.3]{dwyer1995homotopy}, and in fact there are results in the homotopy theory literature which quantify the difference between the two categories \cite{cordier1986vogt}.  We leave it to the future to study how those results bear on the question of when $d_{AWI}=d_{WI}$.

\subsection{The Theory of Weak Interleavings and the Optimality of $d_{WI}$: Open Questions}\label{Sec:WeakInterleavingsQuestions}

To close this section, we present some questions regarding theoretical properties of weak interleavings and the interleaving distance.  Answers to these questions would, it seems, offer us a satisfactory understanding of weak interleavings and the weak interleaving distance.  
\begin{enumerate*}

\item Can we use homotopy colimits to give an analogue for weak interleavings of our characterization Theorem~\ref{Thm:CharacterizationOfStrongInterleavings}?

\item In particular, can we give a transparent topological characterization of weakly $0$-interleaved pairs of filtrations of nested type analogous to the characterization \cite[Corollary 0.21]{hatcher2002algebraic} of pairs of homotopy equivalent topological spaces as pairs of deformation retracts of a common embedding space?   

\item If $u$-filtrations $X,Y$ are weakly $(J_1,J_2)$-interleaved, do there exist strongly $(J_1,J_2)$-interleaved $u$-filtrations $X',Y'$ with $d_{WI}(X,X')=d_{WI}(Y,Y')=0$?  

\item Is $d_{WI}$ optimal in the sense that $d\leq d_{WI}$ for any pseudometric $d$ on $\obj(u$-filt) such that $d\leq d_{SI}$ and $d(X,Y)=0$ whenever $X,Y$ are weakly $0$-interleaved?  A positive answer to this question would provide strong justification for the use of weak interleavings in the development of the theory of topological inference.  Note that a positive answer to the last question would imply a positive answer to this question.

\item A question already raised in Section~\ref{Sec:OurChoiceOfDefinitionWeakInterleavings}: Under what circumstances does the existence of an $A$-weak $(J_1,J_2)$-interleaving for a pair of $u$-filtrations imply the existence of a weak $(J_1,J_2)$-interleaving for the pair?

\item Can an arbitrary $u$-filtration be well approximated (with respect to $d_{WI}$) by a $u$-filtration of nesting type (e.g. by mapping telescope type constructions)?  
\end{enumerate*}

 \chapter[Approximation and Inference Results]{Approximation and Inference Results for Multidimensional Filtrations}
 In this chapter we apply the interleaving machinery introduced and studied in the previous two chapters to formulate and prove topological inference results for multidimensional filtrations.  See Section~\ref{Sec:Chapter4Overview} for an overview of the chapter.

\section{Inference Preliminaries}\label{Sec:InferencePreliminaries}

\subsection{Basic Notation}  

Thoughtout this chapter, fix $p\in [1,\infty]$ and $m\in \NN$.  Let $d^p$ denote the $L^p$ metric on $\R^m$.  

For $(X,d)$ a metric space and $X'\subset X$, we'll often abuse notation slightly and let $d$ also denote the restriction of $d$ to $X'$.

If $(X,d)$ is a metric space, $x\in X$, and $r\in \R_{\geq 0}$, we let $B_d(x,r)$ denote the closed metric ball of radius $r$ centered at $x$.  That is, $B_d(x,r)=\{x'\in X|d(x,x')\leq r\}$.

Let $(X,d)$ be a metric space, and $S\subset X$.  We say a set $L\subset X$ is an {\it $\epsilon$-sample} of $S$ (w.r.t $d$) if for any $s\in S$, there exists some $l\in L$ such that $d(s,l)\leq \epsilon$.

If $(X,d_X)$ and $(Y,d_Y)$ are metric spaces, a function $\gamma:X\to Y$ is said to be {\it $c$-Lipschitz} if for all $x_1,x_2\in X$, $d_Y(\gamma(x_1),\gamma(x_2))\leq c\cdot d_X(x_1,x_2)$.  In this thesis, the codomain $(Y,d_Y)$ of a Lipschitz function will always be
$(\R^n,d^\infty)$ for some $n$.   When we want to make explicit the metric $d$ on the domain of a Lipchitz function $f$, we will refer to $f$ as a $c$-Lipchitz function (w.r.t. $d$).

\subsection{Riemannian Manifolds and Probability Density Functions}\label{Sec:ManifoldsAndDensities}
In this thesis Riemannian manifolds will always be understood to be manifolds with boundary.  

Let $M$ be a Riemannian manifold of dimension $l$.  The Riemannian structure on $M$ induces a metric $d^M$ on $M$, the {\it geodesic metric}.  In turn, the geodesic metric induces a measure $\H_M^l$ on $(M,\B_M)$, the {\it $l$-dimensional Hausdorff measure} \cite{burago2001course}.  (Here $\B_M$ is the Borel $\sigma$-algebra of $M$).  From now on we'll write $\H_M^l$ as $\H_M$; $l$ will be implicit in this notation.

When $M=\R^m$, endowed with the standard Euclidean metric, $\H_M$ is the usual Lebesgue measure on $\R^n$.

For $y\in M$ and $r>0$, we say a ball $B\equiv B_{d^M}(y,r)$ is {\it strongly convex} if for every pair of points $y',y''$ in the closure of B, there exists a unique shortest path in $M$ between $y'$ and $y''$, and the interior of this path is included in $B$.  Define $\rho(M)$, the strong convexity radius of $M$, by \[\rho(M)=\inf_{y\in M} \sup_{r>0} \{r | B_{d^M}(y,r) \textup{ is strongly convex}\} \]  As noted in \cite{chazal2009persistence}, $\rho(M)$ is positive when $M$ is compact.  When $M$ is a Euclidean space, $\rho(M)=\infty$.   

A density function on $M$ is a $\B_M$-measurable function $\gamma: M\to [0,\infty)$ such that $\int_M \gamma \, {\mathrm d} \H_M=1$.  A density function $\gamma$ defines a probability measure ${\mathcal P}_\gamma$ on $M$ with the property that for any $A\in \B_M$, ${\mathcal P}_\gamma(A)=\int_A \gamma \,{\mathrm d} \H_M.$

\subsection{Density Estimators}\label{Sec:DensityEstimators}
As we explained in the introduction, the superlevelset-Rips filtrations we consider in the formation of our consistency results are filtered by density estimators.  We review here some basic concepts and results related to density estimation that we will need in what follows.  Some of the material here also appeared in Section~\ref{Sec:ResultOfChazal}; we include that material again here for readability's sake.

Let $M$ be a Riemannian manifold and let ${\mathcal D}(M)$ be the set of density functions on $M$.  We'll define a density estimator $E$ on $M$ to be a sequence of functions $\{E_z:M^z\to {\mathcal D}(M)\}_{z\in \NN}$ such that for any $y\in M$, the function $E_{z,y}: M^z\to \R$ defined by $E_{z,y}(T)=(E_{z}(T))(y)$ is measurable.  By slight abuse of terminology, we'll also refer to the individual functions $E_z$ as density estimators.

We formulate our results in terms of pairs $(E,\gamma)$ of density estimators and density functions satisfying one of two properties.  Let $T_z$ be a random sample of ${\mathcal P}_\gamma$ of size $z$.  The first property is: 

\begin{enumerate}
\item[\bf{A1}:] $E_z(T_z)$ converges uniformly in probability to $\gamma$.
\end{enumerate}

In stating the second property we'll assume that  $(E,\gamma)$ is defined on $\R^m$ for some $m\in \NN$.  The property is:
\begin{enumerate}
\item[\bf{A2}:] $E_z(T_z)$ converges uniformly in probability to the convolution of $\gamma$ with some kernel function $K$.
\end{enumerate}

A1 is known to hold for kernel density estimators on Euclidean spaces, for a wide class of kernels and density functions $\gamma$, provided the kernel width tends to 0 at an appropriate rate as $z\to \infty$ \cite{gine2002rates}.  Further, Pelletier has shown that the notion of kernel density estimators extends to Riemannian manifolds \cite{pelletier2005kernel}, and a recent article by Henry and Rodriguez \cite{henry2009kernel} shows that under mild assumptions on $(E,\gamma)$ and a similar condition on the rate at which the bandwidth of the kernel tends to 0 as $z\to \infty$, assumption A1 holds for the estimators defined by Pelletier.  In fact, each of the cited results gives a.s. uniform convergence of $E_z(T_z)$ to $\gamma$.

A2 also is known to hold for kernel density estimators $E$ with kernel $K$ on Euclidean spaces, for a wide class of kernels $K$ and density functions $\gamma$, when the bandwidth of the estimator $E_z$ is held fixed as $z$ varies \cite[Proposition 9]{rinaldo2010generalized}.

Let $\gamma:M\to \R$ be a density function, $\tilde \gamma:M \to \R$ be another density function, $q\in (0,1]$, $C>0$, $z\in \NN$, and $E_z:M^z\to  {\mathcal D}(M)$ be a density estimator.  We'll say that $E_z$ is a {\bf $(q,C)$-density estimator of $\tilde \gamma$ w.r.t. ${\mathcal P}_\gamma$} if for $T_z$ a finite i.i.d. random sample of ${\mathcal P}_\gamma$ of size $z$, \[P(\sup_{l\in T_z} |E_z(T_z)(l)-\tilde\gamma(l)|> C)\leq q.\]  When $E_z$ is a {\bf $(q,C)$-density estimator of $\gamma$ w.r.t. ${\mathcal P}_\gamma$}, we'll say simply that $E_z$ is a $(q,C)$-density estimator of $\gamma$.

The two cases of interest to us will be where, in the above definition, $\tilde \gamma=\gamma$ and, in the special case that $M=\R^m$, $\tilde\gamma=\gamma*K$ for a kernel $K$.

\begin{lem}\label{BoundsForEstimatorsSatisfyingA1orA2} \mbox{}
\begin{enumerate*}
\item[(i)]If $M$ is a Riemannian manifold, $\gamma:M\to\R$ is a density function, and $E$ is density estimator on $M$ such that $(E,\gamma)$ satisfies $A1$, then for any $C>0$ and $q\in (0,1]$ there exists $z_0$ such that for all $z\geq z_0$, $E_z$ is a $(q,C)$-density estimator of $\gamma$.  

\item[(ii)]If $\gamma:\R^m\to\R$ is a density function and $E$ is density estimator on $\R^m$ such that $(E,\gamma)$ satisfies $A2$ for some kernel $K$, then for any $C\in (0,\infty)$ and $q\in (0,1]$ there exists $z_0$ such that for all $z \geq z_0$, $E_z$ is a $(q,C)$-density estimator of $\gamma*K$w.r.t. ${\mathcal P}_\gamma$.  
\end{enumerate*}
\end{lem}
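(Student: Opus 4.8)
The plan is to deduce both parts directly from the definitions, the only real content being the trivial observation that a supremum over a (finite, random) subset of $M$ is dominated by the supremum over all of $M$. Fix a Riemannian manifold $M$, a density function $\gamma$ on $M$, and for each $z$ let $T_z=(Y_1,\dots,Y_z)$ be an i.i.d.\ random sample of $\mathcal P_\gamma$. Write $\tilde\gamma$ for the target function ($\tilde\gamma=\gamma$ in case (i), and $\tilde\gamma=\gamma*K$ in case (ii); note $\gamma*K$ is again a density function, so ``$(q,C)$-density estimator of $\tilde\gamma$'' is meaningful). The first step is to record the pointwise inequality of random variables
\[
\sup_{l\in T_z}\bigl|E_z(T_z)(l)-\tilde\gamma(l)\bigr|
\;\le\;
\sup_{y\in M}\bigl|E_z(T_z)(y)-\tilde\gamma(y)\bigr|,
\]
which holds since $T_z\subset M$. (Measurability of the left-hand side as a function of $(Y_1,\dots,Y_z)$ follows from the measurability hypothesis on density estimators in Section~\ref{Sec:DensityEstimators}, because $T_z$ is finite and the expression equals $\max_{1\le i\le z}|E_{z,Y_i}(T_z)-\tilde\gamma(Y_i)|$.) Consequently, for any $C>0$,
\[
P\!\left(\sup_{l\in T_z}\bigl|E_z(T_z)(l)-\tilde\gamma(l)\bigr|>C\right)
\;\le\;
P\!\left(\sup_{y\in M}\bigl|E_z(T_z)(y)-\tilde\gamma(y)\bigr|>C\right).
\]

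The second step is to invoke the relevant convergence hypothesis. For (i), assumption A1 states precisely that $E_z(T_z)\to\gamma$ uniformly in probability, i.e.\ $P\bigl(\sup_{y\in M}|E_z(T_z)(y)-\gamma(y)|>C\bigr)\to0$ as $z\to\infty$; hence there is $z_0$ so that this probability is $\le q$ for all $z\ge z_0$. Combining with the displayed inequality (with $\tilde\gamma=\gamma$) shows $E_z$ is a $(q,C)$-density estimator of $\gamma$ for all $z\ge z_0$, which is the claim. Part (ii) is identical: A2 gives $P\bigl(\sup_{y\in\R^m}|E_z(T_z)(y)-(\gamma*K)(y)|>C\bigr)\to0$, pick $z_0$ making this $\le q$ for $z\ge z_0$, and apply the displayed inequality with $\tilde\gamma=\gamma*K$ to conclude that $E_z$ is a $(q,C)$-density estimator of $\gamma*K$ w.r.t.\ $\mathcal P_\gamma$.

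There is no substantive obstacle here; the lemma is a bookkeeping statement that repackages the two convergence hypotheses A1 and A2 in the finite-sample ``$(q,C)$'' language used elsewhere in Chapter~4, and the only point worth stating carefully is the measurability remark above. If the ambient definition of ``uniform convergence in probability'' in A1/A2 is in fact the stronger a.s.\ uniform convergence (as in the cited references \cite{gine2002rates,henry2009kernel}), the argument is unchanged, since a.s.\ convergence of the global sup to $0$ still implies $P(\sup_{y}|E_z(T_z)(y)-\tilde\gamma(y)|>C)\to0$.
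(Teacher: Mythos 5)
Your proof is correct and matches the paper's intent exactly: the paper simply states ``This is immediate from the definitions,'' and your argument is precisely the routine unwinding — bound the sup over the finite sample $T_z$ by the sup over all of $M$, then invoke A1 or A2 to make that tail probability small. The measurability remark is a nice touch but not a point the paper dwells on.
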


\begin{proof} This is immediate from the definitions. \end{proof}

 \section{Deterministic Approximation of Multidimensional Filtrations via Discrete Filtrations}\label{Sec:LInfApproximation}

\subsubsection{Overview}

In this section, we prove two theorems concerning the deterministic topological approximation of sublevelset-offset filtrations via discrete filtrations.  The first theorem concerns approximation by sublevelset-\Cech filtrations; the second theorem concerns approximation by sublevelset-Rips filtrations.

These results are multidimensional analogues of \cite[Theorem 3.1]{chazal2009analysis} and its extension \cite[Theorem 4.5]{chazal2009persistence} to good samplings of sublevelsets of $\gamma$.  They hold on the level of filtrations rather than merely on the level of persistent homology modules.  By Theorem~\ref{Thm:StabilityWRTInterleavings}, we also obtain analogues of these results on the level of persistent homology modules.

In addition to our results on the topological approximation of sublevelset-offset filtrations, we present results on the deterministic approximation of multidimensional sublevelset filtrations and their persistent homology via \Cech and Rips complexes with fixed scale parameter.

\subsection{Weak $0$-interleavings of Open Sublevelset-offset and Open Sublevelset-\Cech Filtrations}
To prove our approximation results, we need the following proposition, which offers some motivation for our definition of weak interleavings of $u$-filtrations; see Remark~\ref{Rem:InterleavingDefinition} below.    

\begin{prop}\label{prop:ZeroInterleavingOfSOandCech}\mbox{}
\begin{enumerate*}
\item[(i)] For any $X\subset \R^m$ and $\gamma:X\to\R^n$, $R_{\vec\infty_{n+1}} \circ F^{SO-Op}(X,\R^m,d^p,\gamma)$ and $R_{\vec\infty_{n+1}} \circ F^{SCe-Op}(X,\R^m,d^p,\gamma)$ are weakly 0-interleaved.
\item[(ii)] Let $M$ be a Riemannian manifold.  For any $X\subset M$ and $\gamma:X\to\R^n$, $R_{(\vec\infty_n,\rho(M))} \circ F^{SO-Op}(X,M,d^M,\gamma)$ and $R_{(\vec\infty_n,\rho(M))} \circ F^{SCe-Op}(X,M,d^M,\gamma)$ are weakly 0-interleaved.
\end{enumerate*}
\end{prop}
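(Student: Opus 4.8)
The plan is to prove Proposition~\ref{prop:ZeroInterleavingOfSOandCech} by exhibiting an explicit levelwise homotopy equivalence (a morphism in $W$) between the relevant restrictions of the open sublevelset-offset and open sublevelset-\Cech filtrations, and then invoking the fact that $\Gamma$ sends elements of $W$ to isomorphisms, so that an isomorphism in the localized category trivially yields a weak $0$-interleaving (take $f$ the isomorphism, $g$ its inverse, and note $S(X,0)=\id$). The core of the argument is thus a \emph{persistent nerve lemma} on the level of filtrations: for $X$ a finite subset of $\R^m$ (or of a Riemannian manifold $M$) and each bifiltration index $(a,b)$, the open \Cech complex $\Cech^\circ(\gamma_a, \R^m, d^p, b)$ is the nerve of the cover of the open offset $\{y \mid d^p(y,\gamma_a) < b\} = \bigcup_{x \in \gamma_a} B^\circ(x,b)$ by the open balls $B^\circ(x,b)$, $x \in \gamma_a$. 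When the balls are convex (which holds in $\R^m$ for any $L^p$ metric, and holds for geodesic balls of radius $< \rho(M)$ in a Riemannian manifold by strong convexity, which is exactly why the second coordinate of the restriction index is capped at $\rho(M)$ in part (ii)), every nonempty finite intersection of cover elements is convex, hence contractible, so the classical nerve lemma applies pointwise.

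First I would set up, for each $(a,b)$, a map realizing the homotopy equivalence between the geometric realization of $\Cech^\circ(\gamma_a,\R^m,d^p,b)$ and the open offset $F^{SO-Op}(X,\R^m,d^p,\gamma)_{(a,b)}$, using a partition of unity subordinate to the ball cover to define a map from the offset into the nerve, and barycentric-coordinate interpolation to define a map from the nerve into the offset. Next I would check that these maps, and the homotopies witnessing that the composites are homotopic to the identity, can be chosen \emph{compatibly with the transition maps} as $(a,b)$ increases — i.e. they assemble into a morphism of filtrations rather than just a levelwise collection of maps. The standard device here is to use a single globally defined partition of unity (one bump function $\varphi_x$ per point $x \in X$, e.g. $\varphi_x(y) = \max(0, b - d^p(y,x))$ renormalized, noting that these behave monotonically and that inclusions of sublevelsets $\gamma_a \subset \gamma_{a'}$ and increases in $b$ both enlarge the relevant simplices), so that all the maps are induced by one coherent recipe; the naturality of the nerve construction then gives commutativity with the transition maps automatically. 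This produces a morphism $X \to Y$ in $u$-filt which is a levelwise homotopy equivalence, hence lies in $W$.

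The main obstacle I anticipate is the coherence/naturality bookkeeping: proving not merely that each level is a homotopy equivalence but that the homotopy equivalences can be packaged into an honest morphism of filtrations, and in particular that the homotopies $H^{(a,b)}$ interpolating between the composite and the identity are compatible with one another as $(a,b)$ varies. One clean way around grinding through this by hand is to invoke an off-the-shelf functorial (persistent) nerve lemma from the literature cited in the excerpt (the persistent nerve lemma of \cite{chazal2008towards}, suitably generalized to two parameters and to the open variants), since the excerpt explicitly flags that "a lift of the persistent nerve theorem of \cite{chazal2008towards} to the level of filtrations" is the tool being used here. I would state such a two-parameter open persistent nerve lemma as an auxiliary lemma, prove it by the partition-of-unity argument above (or cite it), and then deduce both (i) and (ii) uniformly: for (i) the ball cover argument works for all $(a,b) \in \R^n \times \R$ since $\R^m$ with any $L^p$ metric has all balls convex, and for (ii) it works precisely for $b < \rho(M)$, which is why we restrict via $R_{(\vec\infty_n, \rho(M))}$. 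A secondary point to handle is that $\gamma_a$ may be infinite in general, but since the objects of $C^{SCe}$ have $X$ finite, $\gamma_a$ is finite for every $a$, so all the covers involved are finite and there are no convergence or local-finiteness issues.
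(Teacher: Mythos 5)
Your geometric setup is sound—convexity of $L^p$-balls in $\R^m$, strong convexity of small geodesic balls explaining the cap at $b<\rho(M)$, and finiteness of $\gamma_a$ because $X$ is finite—but the central mechanism you propose, a direct levelwise homotopy equivalence from $F^{SO-Op}$ to $F^{SCe-Op}$ built from a partition of unity, does not produce a morphism in $u$-filt, and the ``single global recipe'' you offer as a fix does not save it. The candidate map $y\mapsto \sum_{x\in\gamma_a}\tilde\varphi^b_x(y)\,e_x$ depends on the index $(a,b)$ in three places at once: the bump $\varphi^b_x(y)=\max(0,b-d(y,x))$ carries a $b$, the index set $\gamma_a$ carries an $a$, and the normalizing denominator $\sum_{x\in\gamma_a}\varphi^b_x(y)$ depends on both. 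Concretely, a point $y\in F^{SO-Op}_{(a,b)}$ is assigned strictly positive barycentric weight on a vertex $x$ at a larger level $(a',b')$ where it had zero weight at level $(a,b)$, so the squares against the transition maps do not commute and the levelwise maps do not assemble into a morphism of filtrations. This is not a coherence nuisance that cleverer normalization repairs: in general there is no natural direct morphism, in either direction, between the offset filtration and the \Cech filtration. The persistent nerve lemma of \cite{chazal2008towards}, as stated, gives a diagram commuting only up to homotopy—adequate for comparing persistence modules, but not for constructing a morphism in $u$-filt.

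The missing idea is the roof. The paper's proof does not build a direct map at all; it introduces an intermediate $(\vec\infty_n,\rho(M))$-filtration $Z$, the Mayer-Vietoris blowup (levelwise homotopy colimit of the covering diagram), which comes with two \emph{strictly} natural projection morphisms in $u$-filt: one to $R_{(\vec\infty_n,\rho(M))}\circ F^{SO-Op}(X,M,d^M,\gamma)$ (collapse the simplex coordinate) and one to $R_{(\vec\infty_n,\rho(M))}\circ F^{SCe-Op}(X,M,d^M,\gamma)$ (collapse the space coordinate). The good-cover hypothesis makes both projections levelwise homotopy equivalences, i.e.\ elements of $W$. In $Ho((\vec\infty_n,\rho(M))$-filt$)$ both legs become isomorphisms, so the two filtrations are isomorphic in the localized category and hence weakly $0$-interleaved; this zig-zag is precisely the reason the definition of weak interleavings passes through localization. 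Your argument becomes correct once you replace the partition-of-unity map by this blowup construction and observe that both of its projections lie in $W$.
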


Note that in (ii), if $\rho(M)=0$ then all spaces of each of the two filtrations are the empty set, so in this case (ii) holds vacuously.

\begin{proof}
We give the proof for (ii); the proof of (i) is the same. 

The same argument that Chazal and Oudot use to prove the persistent nerve lemma \cite{chazal2008towards} gives us that there exists an $(\vec\infty_{n},\rho(M))$-filtration $Z$ (a ``Mayer-Vietoris blowup filtration," to borrow the terminology of \cite{zomorodian2008localized}) such that there are levelwise homotopy equivalences $f:Z\to R_{(\vec\infty_n,\rho(M))}\circ F^{SO-Op}(X,M,d^M,\gamma)$, and $g:R_{(\vec\infty_n,\rho(M))} \circ  Z\to F^{SCe-Op}(X,M,d^M,\gamma)$ in $\hom((\vec\infty_{n},\rho(M))$-filt).  Thus $\Gamma \circ R_{(\vec\infty_n,\rho(M))} \circ F^{SO-Op}(X,M,d^M,\gamma)$ and $\Gamma \circ R_{(\vec\infty_n,\rho(M))} \circ F^{SCe-Op}(X,M,d^M,\gamma)$ are isomorphic in $Ho((\vec\infty_{n},\rho(M))$-filt), and hence are weakly $0$-interleaved.
\end{proof}

\begin{remark}\label{Rem:InterleavingDefinition}
Note that for $M,X$, and $\gamma$ defined as in the statement of Proposition~\ref{prop:ZeroInterleavingOfSOandCech}(ii), it needn't be true that $R_{(\vec\infty_n,\rho(M))} \circ F^{SO-Op}(X,M,d^M,\gamma)$ and $R_{(\vec\infty_n,\rho(M))} \circ F^{SCe-Op}(X,M,d^M,\gamma)$ are strongly 0-interleaved.  For example, Let $M=[0,1]$ (endowed with the Euclidean metric), $X=\{0,1\}$, and define $\gamma:\{0,1\}\to \R$ by $\gamma(0)=\gamma(1)=0$.  Then $\rho(M)=\infty$.  It's straightforward to check that there can be no pair of strong $0$-interleaving homomorphisms between $F^{SO-Op}(X,M,d^M,\gamma)$ and $F^{SCe-Op}(X,M,d^M,\gamma)$. 

Nevertheless, it is natural to think of $R_{(\vec\infty_n,\rho(M))} \circ F^{SO-Op}(X,M,d^M,\gamma)$ and $R_{(\vec\infty_n,\rho(M))} \circ F^{SCe-Op}(X,M,d^M,\gamma)$ as being ``topologically equivalent."  This motivates a choice of pseudometric on $u$-filtrations with respect to which $u$-filtrations which are isomorphic in $Ho(u$-filt) are distance $0$ from one another.

 
\end{remark}

\subsection{Topological Approximation of Sublevelset-Offset Filtrations via Sublevelset-\Cech Filtrations}\label{Sec:CechApproximation}
We state our first approximation result, Theorem~\ref{Thm:CechApproximation}, in two parts.  It is easily seen that each is a special case of a more general result, but the fully general form of the result is not particularly interesting and it seems more expedient to just state the two special cases of interest separately.   

Theorem~\ref{Thm:CechApproximation}(i) says that for any $W\subset \R^m$, Lipchitz function $\gamma:W\to \R^n$, $T\subset W$ and $\tilde \gamma: T\to \R^n$, $F^{\SCe}(T,\R^m,d^p,\tilde \gamma)$ gives a good topological approximation to $F^{SO}(W,\R^m,d^p,\gamma)$ when the Hausdorff distance between $T$ and $W$ is small and $\sup_{l\in T} \|\tilde\gamma(l)-\gamma(l)\|$ is also small.  

In fact, Theorem~\ref{Thm:CechApproximation}(i) says more generally that for any $u\in {\hat \R}^n$, $R_{u,\infty} \circ F^{\SCe}(T,\R^m,d^p,\tilde \gamma)$ gives a good topological approximation to $R_{u,\infty} \circ  F^{SO}(W,\R^m,d^p,\gamma)$ when the Hausdorff distance between $T\cap \gamma_u$ and $\gamma_u$ is small and $\sup_{l\in T} \|\tilde\gamma(l)-\gamma(l)\|$ is also small.  

Theorem~\ref{Thm:CechApproximation}(ii) gives a variant of this result for Riemannian manifolds.

\begin{thm}\label{Thm:CechApproximation}\mbox{}
\begin{enumerate}
\item[(i)] Let $W$ be a subset of $\R^m$, let $\gamma:W\to \R^n$ be a $c$-Lipschitz function (w.r.t $d^p$) for some $c>0$, let $c'=\max(1,c)$, and let $u\in {\hat \R}^n$.  Let $T\subset W$ be a $\frac{\epsilon}{c'}$-sample of $\gamma_u$ (w.r.t $d^p)$.  Let $C\in \R_{\geq 0}$ and $\tilde \gamma:T\to \R^n$ be a function such that $\|\tilde \gamma(l)-\gamma(l)\|_\infty\leq C$ for all $l\in T$.
Then \[d_{WI}(R_{(u,\infty)} \circ F^{SO}(W,\R^m,d^p,\gamma),R_{(u,\infty)} \circ F^{\SCe}(T,\R^m,d^p,\tilde\gamma))\leq \epsilon+C.\]

\item[(ii)] Let $M$ be a Riemannian manifold and let $\gamma:M\to \R^n$ be a $c$-Lipschitz function (w.r.t $d^M$) for some $c>0$, let $c'=\max(1,c)$, and let $u\in {\hat \R}^n$.  Let $T\subset M$ be a $\frac{\epsilon}{c'}$-sample of $\gamma_u$ (w.r.t $d^M)$.  Let $C\in \R_{\geq 0}$ and $\tilde \gamma:T\to \R^n$ be a function such that $\|\tilde \gamma(l)-\gamma(l)\|_\infty\leq C$ for all $l\in T$.
Then, \[d_{WI}(R_{(u,\rho(M))} \circ F^{SO}(M,d^M,\gamma),R_{(u,\rho(M))} \circ F^{\SCe}(T,M,d^M,\tilde\gamma))\leq \epsilon+C.\]
\end{enumerate}
\end{thm}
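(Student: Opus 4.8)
The plan is to bound $d_{WI}$ by chaining together four interleavings via the triangle inequality for weak interleavings (Lemma~\ref{Lem:InterleavingTriangleInequality}(ii)), routing the comparison through the \emph{open} sublevelset-offset and sublevelset-\Cech filtrations so that the persistent nerve lemma, in the form of Proposition~\ref{prop:ZeroInterleavingOfSOandCech}, becomes available. I would give the argument for part (ii); part (i) is identical with $d^p$ in place of $d^M$ and with $\infty$ in place of $\rho(M)$ in the restriction functor, the cutoff at $\rho(M)$ in (ii) being precisely the range in which the blow-up construction behind Proposition~\ref{prop:ZeroInterleavingOfSOandCech}(ii) applies.

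First I would record the three ``free'' steps. By Example~\ref{Ex:OpenAndClosedDeltaInterleaved} and Lemma~\ref{lem:StrongImpliesWeak}, $F^{SO}(M,d^M,\gamma)$ and $F^{SO-Op}(M,d^M,\gamma)$ are weakly $\delta$-interleaved for every $\delta>0$, so after applying $R_{(u,\rho(M))}$ (Lemma~\ref{lem:InterleavingsUnderRestriction}) we get $d_{WI}(R_{(u,\rho(M))}F^{SO}(M,d^M,\gamma),R_{(u,\rho(M))}F^{SO-Op}(M,d^M,\gamma))=0$; the same applies to $F^{\SCe}(T,M,d^M,\tilde\gamma)$ versus $F^{SCe-Op}(T,M,d^M,\tilde\gamma)$. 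And Proposition~\ref{prop:ZeroInterleavingOfSOandCech}(ii) together with Lemma~\ref{lem:InterleavingsUnderRestriction} gives $d_{WI}(R_{(u,\rho(M))}F^{SO-Op}(T,M,d^M,\tilde\gamma),R_{(u,\rho(M))}F^{SCe-Op}(T,M,d^M,\tilde\gamma))=0$.

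The substantive step is to show that $R_{(u,\rho(M))}F^{SO-Op}(M,d^M,\gamma)$ and $R_{(u,\rho(M))}F^{SO-Op}(T,M,d^M,\tilde\gamma)$ are strongly, hence weakly, $(\epsilon+C)$-interleaved, by writing down $f$ and $g$ as inclusions of subspaces. For an index $(a,b)$ with $a<u$ I would use two geometric containments. (1) If $t\in T$ with $\tilde\gamma(t)\le a$ then $\gamma(t)\le a+C$, so $\tilde\gamma_a\subseteq\gamma_{a+C}$ and therefore $F^{SO-Op}(T,d^M,\tilde\gamma)_{(a,b)}=\{y:d^M(y,\tilde\gamma_a)<b\}\subseteq\{y:d^M(y,\gamma_{a+C})<b\}=F^{SO-Op}(M,d^M,\gamma)_{(a+C,b)}$. (2) If $d^M(y,\gamma_a)<b$, pick $x\in\gamma_a\ (\subseteq\gamma_u)$ with $d^M(y,x)<b$; since $T$ is a $\frac{\epsilon}{c'}$-sample of $\gamma_u$ there is $t\in T$ with $d^M(x,t)\le\epsilon/c'$, and then $\gamma(t)\le\gamma(x)+c\,d^M(x,t)\le a+\epsilon$ (using $c/c'\le1$), so $\tilde\gamma(t)\le a+\epsilon+C$, while $d^M(y,t)<b+\epsilon$; hence $F^{SO-Op}(M,d^M,\gamma)_{(a,b)}\subseteq F^{SO-Op}(T,d^M,\tilde\gamma)_{(a+\epsilon+C,\,b+\epsilon)}$. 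Both filtrations are of nested type, so post-composing with the nested transition maps turns (1) and (2) into inclusions into the spaces indexed by $(a,b)+(\epsilon+C)$ in every coordinate, i.e.\ into inclusions with the diagonal shift $J_{\epsilon+C}$ on $\R^{n+1}$ (using $C\ge0$, $\epsilon\ge0$; equivalently one may first build an asymmetric $(J_1,J_2)$-interleaving and enlarge it with Lemma~\ref{lem:LessThanInterleavings}). Since every map involved is a subspace inclusion, the two composites required in the definition of a strong $(J_{\epsilon+C},J_{\epsilon+C})$-interleaving are automatically the transition morphisms $S(-,J_{2(\epsilon+C)})$, and the restriction functors $R_{J_{\epsilon+C}^{-1}(u,\rho(M))}$ appearing in that definition are exactly what make (1) and (2) well posed, since (2) invokes the sampling hypothesis only at parameters $a<u$ and offset parameters $b+\epsilon<\rho(M)$. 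Combining the four bounds by iterating Lemma~\ref{Lem:InterleavingTriangleInequality}(ii) yields $d_{WI}\le 0+(\epsilon+C)+0+0=\epsilon+C$, which is the assertion of Theorem~\ref{Thm:CechApproximation}(ii); part (i) follows the same way.

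I expect the main obstacle to be bookkeeping rather than anything conceptual: keeping the restriction functors from Section~\ref{Sec:StrongInterleavingsDef}, the raw asymmetric containments (1)--(2), and their diagonalizations mutually consistent, so that the maps genuinely assemble into a strong $(\epsilon+C)$-interleaving in the precise sense of the definition. The one piece of real topology---that the union-of-balls filtration is levelwise homotopy equivalent to the \Cech filtration within the convexity radius---is entirely absorbed into Proposition~\ref{prop:ZeroInterleavingOfSOandCech}, so no new homotopy-theoretic input is needed here.
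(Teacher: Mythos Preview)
Your approach is correct and follows essentially the same strategy as the paper: chain several interleavings via the triangle inequality for $d_{WI}$, routing through the open filtrations so that Proposition~\ref{prop:ZeroInterleavingOfSOandCech} applies. The only difference is organizational: the paper separates the sampling step from the perturbation step, first showing $R_{(u,\infty)}F^{SO}(W,\ldots,\gamma)$ and $R_{(u,\infty)}F^{SO}(T,\ldots,\gamma_T)$ are strongly $\epsilon$-interleaved (with $\gamma_T$ the restriction of $\gamma$), and handling the $C$-perturbation at the very end at the level of closed \Cech filtrations; you fold the two into a single $(\epsilon+C)$-interleaving between the open sublevelset-offset filtrations of $(M,\gamma)$ and $(T,\tilde\gamma)$, which is slightly tidier and avoids introducing the intermediate $\gamma_T$.
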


\begin{proof}
We present the proof of (i); the proof of (ii) is essentially the same as the proof of (i), using Proposition~\ref{prop:ZeroInterleavingOfSOandCech}(ii) in place of Proposition~\ref{prop:ZeroInterleavingOfSOandCech}(i).

We begin with a lemma.  Let $\gamma_T:T\to \R^n$ denote the restriction of $\gamma$ to $T$.
\begin{lem}\label{Lem:FirstStep} $R_{(u,\infty)} \circ F^{SO}(W,\R^m,d^p,\gamma)$ and $ R_{(u,\infty)} \circ F^{SO}(T,\R^m,d^p,\gamma_T)$ are strongly $\epsilon$-interleaved.
\end{lem}

\begin{proof}
For any $(a,b)\in \R^n\times \R$, let $\gamma_{a,b}$ denote $F^{SO}(W,\R^m,d^p,\gamma)_{(a,b)}$ and let $\lambda_{a,b}$ denote $F^{SO}(T,\R^m,d^p,\gamma_T)_{(a,b)}$.  Consider some $(a,b)\leq (u,\infty)$ and $p\in \gamma_{a,b}$.  There is a point $p'\in \gamma_a$ with $d^p(p,p')\leq b$.  Since $T$ is an $\frac{\epsilon}{c'}$-sample of $\gamma_a$, there is a point $p''\in T$ with $d^p(p'',p')\leq \frac{\epsilon}{c'}$.  Thus $d^p(p'',p)\leq \frac{\epsilon}{c'}+b$.  Since $\gamma$ is $c$-Lipschitz, $\gamma(p'')\leq a+\frac{c\epsilon}{c'}\leq a+\epsilon$.  Hence $p\in \lambda_{a+\epsilon,b+\frac{\epsilon}{c'}}$.  Therefore $\gamma_{a,b}\subset \lambda_{a+\epsilon,b+\frac{\epsilon}{c'}}\subset \lambda_{a+\epsilon,b+\epsilon}$.  The inclusions thus define a morphism $f:R_{(u,\infty)} \circ F^{SO}(W,\R^m,d^p,\gamma)\to R_{(u,\infty)} \circ F^{SO}(T,\R^m,d^p,\gamma_T)(\epsilon)$.

Now let $p\in \lambda_{a,b}$.  Then there is a point $p'\in T$ with $d^p(p,p')\leq b$ and $\gamma(p')\leq a$.  Thus $p\in \gamma_{a,b}$.  Therefore $\lambda_{a,b}\subset \gamma_{a,b}\subset \gamma_{a+\epsilon,b+\epsilon}$.  The inclusions thus define a morphism $g:R_{(u,\infty)} \circ F^{SO}(T,\R^m,d^p,\gamma_T)\to R_{(u,\infty)} \circ F^{SO}(W,\R^m,d^p,\gamma)(\epsilon)$.  

Clearly, $f$ and $g$ are a pair of strong interleaving $\epsilon$-interleaving homomorphisms.  Thus $R_{(u,\infty)} \circ F^{SO}(W,\R^m,d^p,\gamma)$ and $R_{(u,\infty)} \circ F^{SO}(T,\R^m,d^p,\gamma_T)$ are strongly $\epsilon$-interleaved, as desired.
\end{proof}

We now observe that we have a chain of interleaving relationships between filtrations:
\begin{itemize*}
\item Lemma~\ref{Lem:FirstStep} tells us that $R_{(u,\infty)} \circ F^{SO}(W,\R^m,d^p,\gamma)$ and $ R_{(u,\infty)} \circ F^{SO}(T,\R^m,d^p,\gamma_T)$ are strongly $\epsilon$-interleaved.
\item By Example~\ref{Ex:OpenAndClosedDeltaInterleaved}(i), for any $\delta>0$, $F^{SO}(T,\R^m,d^p,\gamma_T)$ and $F^{SO-Op}(T,\R^m,d^p,\gamma_T)$ are strongly $\delta$-interleaved. 
\item By Proposition~\ref{prop:ZeroInterleavingOfSOandCech}(i), $F^{SO-Op}(T,\R^m,d^p,\gamma_T)$ and $F^{SCe-Op}(T,\R^m,d^p,\gamma_T)$ are weakly 0-interleaved.  
\item By Example~\ref{Ex:OpenAndClosedDeltaInterleaved}(ii), $F^{SCe-Op}(T,\R^m,d^p,\gamma_T)$ and $F^{\SCe}(T,\R^m,d^p,\gamma_T)$ are strongly $\delta$-interleaved.  
\item $F^{\SCe}(T,\R^m,d^p,\gamma_T)$ and $F^{\SCe}(T,\R^m,d^p,\tilde\gamma)$ are strongly $C$-interleaved.
\end{itemize*}

Applying Lemmas~\ref{lem:StrongImpliesWeak} and~\ref{Lem:InterleavingTriangleInequality} several times gives us that $R_{(u,\infty)}\circ F^{SO}(W,\R^m,d^p,\gamma)$ and $R_{(u,\infty)} \circ F^{\SCe}(T,\R^m,d^p,\tilde\gamma)$ are $(\epsilon+C+2\delta)$-interleaved.  Since this holds for all $\delta>0$, \[d_{WI}(R_{(u,\infty)}\circ F^{SO}(W,\R^m,d^p,\gamma),R_{(u,\infty)} \circ F^{SR}(T,\R^m,d^p,\tilde\gamma))\leq \epsilon+C,\] as we wanted to show. 
\end{proof}

\begin{remark}\label{Rem:Strengthening} Theorem~\ref{Thm:CechApproximation} can be strengthened somewhat using the language of $(J_1,J_2)$-interleavings, but the strengthening is not especially interesting, so we choose to frame the result using the simpler language of $\epsilon$-interleavings.
\end{remark}

\begin{cor}\label{Cor:CechHomologyApproximation}
For any $i\geq 0$,
\begin{enumerate}
\item[(i)] under the same assumptions as in Theorem~\ref{Thm:CechApproximation}(i),  \[d_I(H_i \circ R_{(u,\infty)}\circ F^{SO}(W,\R^m,d^p,\gamma),H_i \circ R_{(u,\infty)} \circ F^{\SCe}(T,\R^m,d^p,\tilde \gamma))\leq \epsilon+C.\]
\item[(ii)] under the same assumptions as in Theorem~\ref{Thm:CechApproximation}(ii), \[d_I(H_i \circ R_{(u,\rho(M))}\circ F^{SO}(M,d^M,\gamma),H_i \circ R_{(u,\rho(M))} \circ F^{\SCe}(T,M,d^M,\tilde \gamma))\leq \epsilon+C.\]
\end{enumerate}
\end{cor}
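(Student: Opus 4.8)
The plan is to derive Corollary~\ref{Cor:CechHomologyApproximation} as an immediate consequence of Theorem~\ref{Thm:CechApproximation} together with the stability of persistent homology with respect to the weak interleaving distance, Theorem~\ref{Thm:StabilityWRTInterleavings} (or rather its corollary, which says $d_I(H_i(X),H_i(Y))\leq d_{WI}(X,Y)$ for any $u$-filtrations $X,Y$). Indeed, the corollary to Theorem~\ref{Thm:StabilityWRTInterleavings} does essentially all the work: I would simply note that persistent homology is 1-Lipschitz as a map from $(u$-filt$,d_{WI})$ to $(B_n$-mod$,d_I)$, and apply this to the two filtrations appearing in each part of Theorem~\ref{Thm:CechApproximation}.

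For part (i), I would set $X=R_{(u,\infty)}\circ F^{SO}(W,\R^m,d^p,\gamma)$ and $Y=R_{(u,\infty)}\circ F^{\SCe}(T,\R^m,d^p,\tilde\gamma)$, both of which are $(u,\infty)$-filtrations. Theorem~\ref{Thm:CechApproximation}(i) gives $d_{WI}(X,Y)\leq \epsilon+C$; the corollary to Theorem~\ref{Thm:StabilityWRTInterleavings} then gives $d_I(H_i(X),H_i(Y))\leq d_{WI}(X,Y)\leq \epsilon+C$. Finally I would invoke Lemma~\ref{lem:PersistentHomologyIsCompatibleWRestriction}, the compatibility of $H_i$ with restriction functors, to rewrite $H_i(X)=H_i\circ R_{(u,\infty)}\circ F^{SO}(W,\R^m,d^p,\gamma)=R_{(u,\infty)}\circ H_i\circ F^{SO}(W,\R^m,d^p,\gamma)$, and similarly for $Y$, so that the bound reads exactly as in the statement. (If one prefers to keep the $R_{(u,\infty)}$ on the outside of $H_i$, as written in the corollary statement, no rewriting is even needed.) Part (ii) is identical, replacing $\infty$ by $\rho(M)$, $\R^m$ by $M$, $d^p$ by $d^M$, and using Theorem~\ref{Thm:CechApproximation}(ii) in place of Theorem~\ref{Thm:CechApproximation}(i).

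There is essentially no obstacle here: the corollary is a formal bookkeeping consequence of two already-established results, and the proof is a one-line application of $d_I\circ H_i\leq d_{WI}$ to the conclusion of Theorem~\ref{Thm:CechApproximation}. The only minor point to be careful about is that $H_i$ and $R_u$ commute (Lemma~\ref{lem:PersistentHomologyIsCompatibleWRestriction}), so that the order in which restriction and homology are applied in the statement of the corollary matches what comes out of the stability estimate; this is routine. I would therefore present the proof in two short sentences, one invoking Theorem~\ref{Thm:CechApproximation} and the corollary to Theorem~\ref{Thm:StabilityWRTInterleavings}, and one noting the compatibility with restriction, and remark that parts (i) and (ii) are proved in the same way.
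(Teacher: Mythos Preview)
Your proposal is correct and matches the paper's approach exactly: the paper's proof is the single line ``This follows immediately from Theorem~\ref{Thm:StabilityWRTInterleavings},'' which is precisely your argument of applying the stability inequality $d_I(H_i(X),H_i(Y))\leq d_{WI}(X,Y)$ to the conclusion of Theorem~\ref{Thm:CechApproximation}. Your observation about Lemma~\ref{lem:PersistentHomologyIsCompatibleWRestriction} is fine but, as you note, unnecessary here since the statement already has $H_i$ outside the restriction.
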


\begin{proof}
This follows immediately from Theorem~\ref{Thm:StabilityWRTInterleavings}. 
\end{proof}

\subsection{Topological Approximation of Sublevelset-Offset Filtrations via Sublevelset-Rips Filtrations}\label{Sec:RipsApproximation}

Our second main result of this section, Theorem~\ref{Thm:RipsApproximation}, is an analogue of Theorem~\ref{Thm:CechApproximation} for sublevelset-Rips filtrations rather than sublevelset-\Cech filtrations.  We formulate the result using weak $(J_1,J_2)$-interleavings.

For $(J_1,J_2)$ increasing, we'll say that two $u$-filtrations $X$ and $Y$ are strongly (weakly) {\bf almost $(J_1,J_2)$-interleaved} if for all $\epsilon>0$, $X$ and $Y$ are strongly (weakly) $(J_\epsilon \circ J_1,J_\epsilon \circ J_2)$-interleaved.  Similarly, we'll say that two $B_n$-persistence modules $M$ and $N$ are {\bf almost $(J_1,J_2)$-interleaved} if for all $\epsilon>0$, $M$ and $N$ are $(J_\epsilon \circ J_1,J_\epsilon \circ J_2)$-interleaved. 

For the statement theorem, recall that in Example~\ref{Ex:Rips_and_Cech_Generalized_Interleaving} we defined the map $\J_{n+1}:\R^{n+1}\to \R^{n+1}$ by $\J_{n+1}((a,b))=(a,2b)$ for $a\in \R^n,$ $b\in \R$. 
\begin{thm}\label{Thm:RipsApproximation}\mbox{}
\begin{enumerate}
\item[(i)] Under the same assumptions as in Theorem~\ref{Thm:CechApproximation}(i), $R_{(u,\infty)}\circ F^{SO}(W,\R^m,d^p,\gamma)$ and $R_{(u,\infty)} \circ F^{SR}(T,d^p,\tilde \gamma)$ are almost weakly $(J_{\epsilon+C},\J_{n+1}\circ J_{\epsilon+C})$-interleaved.

\item[(ii)] Under the same assumptions as in Theorem~\ref{Thm:CechApproximation}(ii), $R_{(u,\rho(M))}\circ F^{SO}(M,d^M,\gamma)$ and $R_{(u,\rho(M))} \circ F^{SR}(T,d^M,\tilde \gamma)$ are almost weakly $(J_{\epsilon+C},\J_{n+1}\circ J_{\epsilon+C})$-interleaved.
\end{enumerate}
\end{thm}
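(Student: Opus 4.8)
The plan is to obtain Theorem~\ref{Thm:RipsApproximation} by chaining together the two interleaving relationships already established and then doing a short amount of bookkeeping with the maps $J_\epsilon$ and $\J_{n+1}$. Write $X_1 = R_{(u,\infty)}\circ F^{SO}(W,\R^m,d^p,\gamma)$, $X_2 = R_{(u,\infty)}\circ F^{\SCe}(T,\R^m,d^p,\tilde\gamma)$, and $X_3 = R_{(u,\infty)}\circ F^{SR}(T,d^p,\tilde\gamma)$ for part (i), and correspondingly (with $\R^m,d^p$, and $\infty$ in the last index coordinate replaced by $M,d^M,\rho(M)$) for part (ii).

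First I would invoke Theorem~\ref{Thm:CechApproximation}, which gives $d_{WI}(X_1,X_2)\le\epsilon+C$; hence for every $\delta>0$ the filtrations $X_1$ and $X_2$ are weakly $(J_{\epsilon+C+\delta},J_{\epsilon+C+\delta})$-interleaved. Next I would use Example~\ref{Ex:Rips_and_Cech_Generalized_Interleaving}: $F^{SR}(T,d^p,\tilde\gamma)$ and $F^{\SCe}(T,\R^m,d^p,\tilde\gamma)$ are strongly $(\J_{n+1},\id_{n+1})$-interleaved, so by the asymmetry of generalized interleavings (Section~\ref{Sec:J1J2Interleavings}) the pair $(F^{\SCe}(T,\R^m,d^p,\tilde\gamma),F^{SR}(T,d^p,\tilde\gamma))$ is strongly $(\id_{n+1},\J_{n+1})$-interleaved; Lemma~\ref{lem:StrongImpliesWeak} makes this a weak interleaving and Lemma~\ref{lem:InterleavingsUnderRestriction} pushes it through $R_{(u,\infty)}$, so that $X_2$ and $X_3$ are weakly $(\id_{n+1},\J_{n+1})$-interleaved.

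Then I would apply the triangle inequality for generalized interleavings, Lemma~\ref{Lem:InterleavingTriangleInequality}(i), with $(J_1,J_2)=(J_{\epsilon+C+\delta},J_{\epsilon+C+\delta})$ and $(J_3,J_4)=(\id_{n+1},\J_{n+1})$, concluding that $X_1$ and $X_3$ are weakly $(J_{\epsilon+C+\delta},\,J_{\epsilon+C+\delta}\circ\J_{n+1})$-interleaved for every $\delta>0$. To convert this into the asserted ``almost weakly $(J_{\epsilon+C},\J_{n+1}\circ J_{\epsilon+C})$-interleaved'' I would, given $\epsilon'>0$, take $\delta=\epsilon'/2$ and verify the pointwise inequalities $J_{\epsilon+C+\epsilon'/2}\le J_{\epsilon'}\circ J_{\epsilon+C}$ and $J_{\epsilon+C+\epsilon'/2}\circ\J_{n+1}\le J_{\epsilon'}\circ\J_{n+1}\circ J_{\epsilon+C}$ (using that $\J_{n+1}$ doubles the last coordinate), and then apply Lemma~\ref{lem:LessThanInterleavings} to get that $X_1$ and $X_3$ are weakly $(J_{\epsilon'}\circ J_{\epsilon+C},\,J_{\epsilon'}\circ(\J_{n+1}\circ J_{\epsilon+C}))$-interleaved; since $\epsilon'>0$ is arbitrary this is exactly the claimed conclusion. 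Part (ii) is handled by the same four steps, replacing Theorem~\ref{Thm:CechApproximation}(i) by Theorem~\ref{Thm:CechApproximation}(ii).

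The main obstacle is this last bookkeeping step rather than any new idea: because $J_\epsilon$ and $\J_{n+1}$ do not commute (only $J_\epsilon\circ\J_{n+1}\le\J_{n+1}\circ J_\epsilon$ holds), the composite $J_{\epsilon+C+\delta}\circ\J_{n+1}$ produced by the triangle inequality has $\J_{n+1}$ on the ``wrong'' side, and one must absorb the discrepancy into the extra $\delta$; this is precisely why the statement is phrased with ``almost'' and with $\J_{n+1}$ composed on the outside instead of as a clean $(J_1,J_2)$-interleaving. All of the genuine topological content (the persistent nerve lemma lifted to the level of filtrations, the Rips--\Cech inclusions) has already been absorbed into Theorem~\ref{Thm:CechApproximation} and Example~\ref{Ex:Rips_and_Cech_Generalized_Interleaving}, so no further geometric work is required here.
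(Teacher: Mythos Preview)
Your proposal is correct and follows essentially the same route as the paper: invoke Theorem~\ref{Thm:CechApproximation} for an $(\epsilon+C+\delta)$-interleaving between $X_1$ and $X_2$, use Example~\ref{Ex:Rips_and_Cech_Generalized_Interleaving} (with Lemmas~\ref{lem:StrongImpliesWeak} and~\ref{lem:InterleavingsUnderRestriction}) for an $(\id_{n+1},\J_{n+1})$-interleaving between $X_2$ and $X_3$, chain via Lemma~\ref{Lem:InterleavingTriangleInequality}, and finish with Lemma~\ref{lem:LessThanInterleavings}. Your bookkeeping is in fact slightly more careful than the paper's: the triangle inequality literally produces the second map $J_{\epsilon+C+\delta}\circ\J_{n+1}$ (as you write), whereas the paper records $\J_{n+1}\circ J_{\epsilon+C+\delta}$; since the latter dominates the former this is harmless, and your explicit pointwise check that $J_{\epsilon+C+\epsilon'/2}\circ\J_{n+1}\le J_{\epsilon'}\circ\J_{n+1}\circ J_{\epsilon+C}$ is exactly what is needed.
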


\begin{proof}
We present the proof of (i); the proof of (ii) is essentially the same as the proof of (i), using Theorem~\ref{Thm:CechApproximation}(ii) in place of Theorem~\ref{Thm:CechApproximation}(i).

By Theorem~\ref{Thm:CechApproximation}(i), for any $\delta>0$ $R_{(u,\infty)}\circ F^{SO}(W,\R^m,d^p,\gamma)$ and $R_{(u,\infty)} \circ F^{\SCe}(T,\R^m,d^p,\tilde\gamma)$ are $(\epsilon+C+\delta)$-interleaved.  By Example~\ref{Ex:Rips_and_Cech_Generalized_Interleaving}, $F^{\SCe}(T,\R^m,d^p,\tilde\gamma)$ and $F^{SR}(T,d^p,\tilde\gamma)$ are $(\id_{n+1},\J_{n+1})$-interleaved.  Thus by Lemmas~\ref{lem:InterleavingsUnderRestriction} and~\ref{Lem:InterleavingTriangleInequality}, $R_{(u,\infty)}\circ F^{SO}(W,\R^m,d^p,\gamma)$ and $R_{(u,\infty)} \circ F^{SR}(T,d^p,\tilde \gamma)$ are $(J_{\epsilon+C+\delta},\J_{n+1}\circ J_{\epsilon+C+\delta})$-interleaved.  Thus by Lemma~\ref{lem:LessThanInterleavings} they are also $(J_{2\delta} \circ J_{\epsilon+C},J_{2\delta} \circ \J_{n+1}\circ J_{\epsilon+C})$-interleaved.  The result follows.
\end{proof}

\begin{remark} Theorem~\ref{Thm:RipsApproximation} can be tightened by tightening the result of Theorem~\ref{Thm:CechApproximation} on which it depends; see Remark~\ref{Rem:Strengthening}.
\end{remark}

\begin{cor}
For any $i\geq 0$,
\begin{enumerate}
\item[(i)] under the same assumptions as in Theorem~\ref{Thm:CechApproximation}(i),  $H_i \circ R_{(u,\infty)}\circ F^{SO}(W,\R^m,d^p,\gamma)$ and $H_i \circ R_{(u,\infty)} \circ F^{SR}(T,d^p,\tilde \gamma)$ are almost $(J_{\epsilon+C},\J_{n+1}\circ J_{\epsilon+C})$-interleaved.

\item[(ii)] under the same assumptions as in Theorem~\ref{Thm:CechApproximation}(ii), $H_i \circ R_{(u,\rho(M))}\circ F^{SO}(M,d^M,\gamma)$ and $H_i \circ R_{(u,\rho(M))} \circ F^{\SCe}(T,d^M,\tilde \gamma)$ are almost $(J_{\epsilon+C},\J_{n+1}\circ J_{\epsilon+C})$-interleaved.
\end{enumerate}
\end{cor}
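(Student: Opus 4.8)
The plan is to deduce the final corollary directly from Theorem~\ref{Thm:RipsApproximation} and Theorem~\ref{Thm:StabilityWRTInterleavings}, in exactly the same way that Corollary~\ref{Cor:CechHomologyApproximation} was deduced from Theorem~\ref{Thm:CechApproximation} and Theorem~\ref{Thm:StabilityWRTInterleavings}. The only extra bookkeeping is that the conclusion is phrased in terms of ``almost $(J_1,J_2)$-interleaved,'' so the implication must be applied for each $\epsilon > 0$ separately. Since this is a one-line argument I would simply write: ``This follows immediately from Theorems~\ref{Thm:RipsApproximation} and~\ref{Thm:StabilityWRTInterleavings}.''

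In a little more detail: fix $i \geq 0$, and consider case (i). By Theorem~\ref{Thm:RipsApproximation}(i), for every $\epsilon' > 0$ the $u$-filtrations $R_{(u,\infty)} \circ F^{SO}(W,\R^m,d^p,\gamma)$ and $R_{(u,\infty)} \circ F^{SR}(T,d^p,\tilde\gamma)$ are weakly $(J_{\epsilon'} \circ J_{\epsilon+C}, \J_{n+1} \circ J_{\epsilon'} \circ J_{\epsilon+C})$-interleaved; note $\J_{n+1} \circ J_{\epsilon'} = J_{\epsilon'} \circ \J_{n+1}$ since $\J_{n+1}$ and the diagonal shift commute, so we may rewrite the second shift as $J_{\epsilon'} \circ \J_{n+1} \circ J_{\epsilon+C}$. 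Now Theorem~\ref{Thm:StabilityWRTInterleavings}, applied with $J_1 = J_{\epsilon'} \circ J_{\epsilon+C}$ and $J_2 = \J_{n+1} \circ J_{\epsilon'} \circ J_{\epsilon+C}$, gives that $H_i(R_{(u,\infty)} \circ F^{SO}(W,\R^m,d^p,\gamma))$ and $H_i(R_{(u,\infty)} \circ F^{SR}(T,d^p,\tilde\gamma))$ are $(J_1,J_2)$-interleaved as $B_{n+1}$-persistence modules. By Lemma~\ref{lem:PersistentHomologyIsCompatibleWRestriction} we have $H_i \circ R_{(u,\infty)} = R_{(u,\infty)} \circ H_i$, so this is exactly the statement that $H_i \circ R_{(u,\infty)} \circ F^{SO}(W,\R^m,d^p,\gamma)$ and $H_i \circ R_{(u,\infty)} \circ F^{SR}(T,d^p,\tilde\gamma)$ are $(J_{\epsilon'} \circ J_{\epsilon+C}, J_{\epsilon'} \circ \J_{n+1} \circ J_{\epsilon+C})$-interleaved. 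Since $\epsilon' > 0$ was arbitrary, by definition these two persistence modules are almost $(J_{\epsilon+C}, \J_{n+1} \circ J_{\epsilon+C})$-interleaved, which is the assertion of (i). Case (ii) is identical, replacing Theorem~\ref{Thm:RipsApproximation}(i) by Theorem~\ref{Thm:RipsApproximation}(ii) and the ambient space accordingly.

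There is no real obstacle here; the proof is purely formal, and indeed the statement was designed so that it drops out of the preceding two theorems. The one small point worth a sentence in the writeup is the commutativity $\J_{n+1} \circ J_{\epsilon'} = J_{\epsilon'} \circ \J_{n+1}$ (both maps send $(a,b) \mapsto (a + \epsilon'\vec 1_n, 2b + 2\epsilon')$), which is needed to massage the shift produced by Theorem~\ref{Thm:RipsApproximation} into the precise form required by the definition of ``almost $(J_1,J_2)$-interleaved.'' If one prefers, one can avoid even this remark by noting that ``$X,Y$ almost weakly $(J_1,J_2)$-interleaved'' implies ``$H_i(X), H_i(Y)$ almost $(J_1,J_2)$-interleaved'' as a standalone corollary of Theorem~\ref{Thm:StabilityWRTInterleavings} (just quantify over $\epsilon'$), and then apply that to Theorem~\ref{Thm:RipsApproximation} verbatim. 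Thus the proof is:

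\begin{proof}
This follows immediately from Theorems~\ref{Thm:RipsApproximation} and~\ref{Thm:StabilityWRTInterleavings}, together with Lemma~\ref{lem:PersistentHomologyIsCompatibleWRestriction}: for each $\epsilon'>0$, Theorem~\ref{Thm:RipsApproximation} gives a weak $(J_{\epsilon'}\circ J_{\epsilon+C},\J_{n+1}\circ J_{\epsilon'}\circ J_{\epsilon+C})$-interleaving of the relevant $u$-filtrations, Theorem~\ref{Thm:StabilityWRTInterleavings} transports it to a $(J_{\epsilon'}\circ J_{\epsilon+C},\J_{n+1}\circ J_{\epsilon'}\circ J_{\epsilon+C})$-interleaving of their $i^{\textup{th}}$ persistent homology modules, and letting $\epsilon'$ range over all positive reals yields the claimed almost $(J_{\epsilon+C},\J_{n+1}\circ J_{\epsilon+C})$-interleaving.
\end{proof}
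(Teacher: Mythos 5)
Your approach is exactly the paper's: the paper supplies no explicit proof, but the analogous Corollary~\ref{Cor:CechHomologyApproximation} for the \Cech version is proven by ``This follows immediately from Theorem~\ref{Thm:StabilityWRTInterleavings},'' and the intent here is identically to cite Theorems~\ref{Thm:RipsApproximation} and~\ref{Thm:StabilityWRTInterleavings}. Your final one-sentence proof block is the right thing to write.

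One factual slip worth flagging, though it does not ultimately damage the argument. You claim $\J_{n+1}\circ J_{\epsilon'}=J_{\epsilon'}\circ\J_{n+1}$; this is false. Indeed $\J_{n+1}\circ J_{\epsilon'}$ sends $(a,b)\mapsto(a+\vec\epsilon'_n,2b+2\epsilon')$, whereas $J_{\epsilon'}\circ\J_{n+1}$ sends $(a,b)\mapsto(a+\vec\epsilon'_n,2b+\epsilon')$; the scaling in the last coordinate doubles the shift that passes through it, so these differ. You needed this alleged identity only because you first mis-expanded ``almost $(J_{\epsilon+C},\J_{n+1}\circ J_{\epsilon+C})$-interleaved'' with $\J_{n+1}$ on the outside. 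The paper's definition places the diagonal shift on the outside: almost $(J_1,J_2)$-interleaved means for all $\epsilon'>0$, $(J_{\epsilon'}\circ J_1,\,J_{\epsilon'}\circ J_2)$-interleaved. Reading the definition correctly in both the hypothesis from Theorem~\ref{Thm:RipsApproximation} and the conclusion to be proved, the second shift is $J_{\epsilon'}\circ\J_{n+1}\circ J_{\epsilon+C}$ in both, so Theorem~\ref{Thm:StabilityWRTInterleavings} applies verbatim with $J_2=J_{\epsilon'}\circ\J_{n+1}\circ J_{\epsilon+C}$ and no commutation is needed. Your second paragraph already observes that the cleaner route is to state and use the general principle ``$X,Y$ almost weakly $(J_1,J_2)$-interleaved $\Rightarrow$ $H_i(X),H_i(Y)$ almost $(J_1,J_2)$-interleaved''; that is exactly right and avoids the issue entirely. (The appeal to Lemma~\ref{lem:PersistentHomologyIsCompatibleWRestriction} is also unnecessary here, since the corollary already phrases the modules as $H_i$ of the restricted filtrations rather than $R_{u'}$ of $H_i$ of the full ones; Theorem~\ref{Thm:StabilityWRTInterleavings} applies directly to $X=R_{(u,\infty)}\circ F^{SO}(\cdot)$ and $Y=R_{(u,\infty)}\circ F^{SR}(\cdot)$.)
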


\subsection{Approximating Multidimensional Sublevelset Persistence via Discrete Filtrations with Fixed Scale Parameter}
We now observe that the deterministic result \cite[Theorem 3.7]{chazal2009analysis}, on the approximation of sublevelset persistence of $\R$-valued functions via filtered Rips complexes with fixed scale parameter, admits a straightforward generalization $\R^n$-valued functions, $n\geq 1$.  This generalization, Theorem~\ref{Thm:RipsFixedScaleApproximation}, is proven in the same way as  \cite[Theorem 3.7]{chazal2009analysis}, using the interleaving distance in place of the bottleneck distance.  

Unlike the results of the previous sections, this result gives an approximation only on the level of persistent homology and not one on the level of filtrations.  However, we prove a variant of the result, Theorem~\ref{Thm:CechFixedScaleApproximation}, formulated using filtered \Cech complexes with fixed scale parameter instead of filtered Rips complexes with fixed scale parameter, which does hold on the level of filtrations.  This latter result is new even for 1-D filtrations; we will use it in Section~\ref{Sec:CechFixedScaleInference} to present a result, Theorem~\ref{Thm:CechFixedScaleConsistency}, on the topological inference of the superlevelset filtration of a density function $\gamma$ via a filtered \Cech complex with fixed scale parameter built on i.i.d. samples of $\P_\gamma$.

Given a finite metric space $(X,d,f)\in C^{SR}$, and $\delta>0$, let $F^{SR}(X,d,f,\delta)$ denote the $n$-filtration obtained by fixing the last parameter in the $(n+1)$-filtration $F^{SR}(X,d,f)$ to be $\delta$.  For $u\in \R^n$, $i\in \Z_{\geq 0}$, and $\delta_2\geq\delta_1>0$, let \[H_i^u(X,d,f,\delta_1,\delta_2)\subset H_i \circ R_{u} \circ F^{SR}(X,d,f,\delta_2)\] be the image 
of the map $H_i(j)$, where \[j: R_u \circ F^{SR}(X,d,f,\delta_1)\hookrightarrow R_u \circ F^{SR}(X,d,f,\delta_2)\] is the inclusion.

\cite[Theorem 5.1]{chazal2009persistence} was stated for geodesic metrics on Riemannian manifolds, but the analogous result holds for $L^p$ metrics on $\R^m$, $1\leq p \leq \infty$.  Similarly, we state our generalization for geodesic metrics on Riemannian manifolds, but an analogue holds for $L^p$ metrics as well.

\begin{thm}\label{Thm:RipsFixedScaleApproximation}
Let $M$ be a Riemannian manifold and let $\gamma:M\to \R^n$ be a $c$-Lipschitz function (w.r.t $d^M$).  For $u\in \hat \R^n$, let $T$ be an $\epsilon$-sample of $\gamma_u$ (w.r.t. $d^M$).  Let $\tilde \gamma:T\to \R^n$ be such that $|\tilde \gamma(l)-\gamma(l)|\leq C$ for all $l\in T$.  If $\epsilon<\frac{\rho(M)}{4}$ then for any $i\in \Z_{\geq 0}$ and $\delta\in [2\epsilon, \frac{\rho(M)}{2}]$,
\[d_I(H_i^u(T,d^M,\tilde \gamma,\delta,2\delta),H_i \circ R_u \circ F^S(M,\gamma))\leq 2c\delta+C.\]
\end{thm}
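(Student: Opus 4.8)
The plan is to mimic the proof of the corresponding one-dimensional result \cite[Theorem 3.7]{chazal2009analysis}, but working with the interleaving distance $d_I$ on $B_n$-persistence modules in place of the bottleneck distance $d_B$, and chaining together a sequence of interleaving relationships using the triangle inequality Lemma~\ref{Lem:InterleavingTriangleInequality} (together with Lemmas~\ref{lem:InterleavingsUnderRestriction} and~\ref{lem:LessThanInterleavings}). The quantity $H_i^u(T,d^M,\tilde\gamma,\delta,2\delta)$ is the image of the map on $i$-th persistent homology induced by the inclusion of filtered Rips complexes $R_u\circ F^{SR}(T,d^M,\tilde\gamma,\delta)\hookrightarrow R_u\circ F^{SR}(T,d^M,\tilde\gamma,2\delta)$; the target $H_i\circ R_u\circ F^S(M,\gamma)$ is the $i$-th superlevelset persistent homology of $\gamma$ restricted above $u$. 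The goal is to produce $(2c\delta+C)$-interleaving homomorphisms between these two modules.

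First I would set $\tilde\gamma_T:T\to\R^n$ equal to the restriction of $\gamma$ to $T$, and record the two easy comparisons: that $F^{SR}(T,d^M,\tilde\gamma,\cdot)$ and $F^{SR}(T,d^M,\tilde\gamma_T,\cdot)$ differ only by a vertical shift of at most $C$ in the function coordinate, so their $i$-th persistent homology modules (after applying $R_u$ and fixing scale, or before fixing scale) are $C$-interleaved; and that, because $T$ is an $\epsilon$-sample of $\gamma_u$ and $\gamma$ is $c$-Lipschitz, for $p\in F^S(M,\gamma)_{a}$ with $a<u$ every point of the offset thickening of the $a$-sublevelset by $b$ can be $\epsilon$-approximated by a point of $T$ whose $\gamma$-value is at most $a+c\epsilon$. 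The core of the argument is then the standard interpolation, using the hypothesis $\delta\in[2\epsilon,\rho(M)/2]$ and the strong convexity of metric balls of radius $<\rho(M)$ in $M$: one builds, for each grade $a<u$, continuous maps back and forth between $F^{SR}(f_a,d^M,\delta)$ and a metric thickening of $f_a$ in $M$ (a Čech/offset complex), bounded above in the function parameter by a $c\delta$ shift, and shows these maps are nerve-style homotopy equivalences onto the relevant sublevelsets of $F^S(M,\gamma)$, with the scale-doubling $\delta\mapsto 2\delta$ absorbing the composition. Passing to $H_i$ and restricting to the image $H_i^u(T,d^M,\tilde\gamma_T,\delta,2\delta)$ then yields a pair of maps that, together with the error from $c\delta$ (from the Lipschitz deformation of the sublevelsets under the thickening) and the two $c\delta$ terms from $\delta$ versus $2\delta$, assemble into $2c\delta$-interleaving homomorphisms between $H_i^u(T,d^M,\tilde\gamma_T,\delta,2\delta)$ and $H_i\circ R_u\circ F^S(M,\gamma)$. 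Combining this with the $C$-interleaving coming from $\|\tilde\gamma-\gamma\|_\infty\le C$ via Lemma~\ref{Lem:InterleavingTriangleInequality}(ii) gives the claimed bound $2c\delta+C$.

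The main obstacle is the geometric interpolation step: one must carry over the argument of \cite{chazal2009analysis} — which uses the fact that on a Riemannian manifold $M$, metric balls of radius below $\rho(M)$ are strongly convex, so that a suitable straight-line-homotopy / nerve argument shows the Rips complex $F^{SR}(f_a,d^M,\delta)$ at scale $\delta\le\rho(M)/2$ has the homotopy type of the corresponding union of balls, and that the scale-doubling inclusion realizes, up to homotopy, the inclusion of sublevelsets of $\gamma$ shifted by $c\delta$ in the function parameter — and to check that all these homotopy equivalences can be made to commute with the transition maps of the filtrations (so that they genuinely assemble into morphisms of $B_n$-persistence modules, not merely levelwise isomorphisms). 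This is exactly the content that was already handled in the $n=1$ case, and since $\gamma$ being $c$-Lipschitz with respect to $d^M$ and $T$ being an $\epsilon$-sample of $\gamma_u$ are the only hypotheses used there that involve the function values, the generalization to $\R^n$-valued $\gamma$ and to the restriction above an arbitrary $u\in\hat\R^n$ is routine. I would therefore present the argument largely by citing \cite[Theorem 3.7]{chazal2009analysis} for the deterministic geometric core and spelling out only the bookkeeping needed to replace $d_B$ by $d_I$, to track the restriction functor $R_u$, and to account for the extra $C$ from the approximation $\tilde\gamma\approx\gamma$, noting that the constants $2c\delta$ and $C$ combine additively by the triangle inequality for interleavings.
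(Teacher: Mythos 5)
Your approach matches the paper's own: the proof given there simply observes that \cite[Theorem 3.7]{chazal2009analysis} adapts directly to $\R^n$-valued functions and the restricted setting, with $d_I$ in place of $d_B$ and the $C$-term and $2c\delta$-term combining additively via the triangle inequality for interleavings, exactly as you outline. The one detail the paper flags that you do not mention is that \cite[Theorem 3.7]{chazal2009analysis} is stated using open variants of the sublevelset and sublevelset-Rips filtrations, so one must appeal to Example~\ref{Ex:OpenAndClosedDeltaInterleaved} to pass between the open and closed conventions; this is minor and does not undermine your plan.
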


\begin{proof}
As noted above, the proof of \cite[Theorem 3.7]{chazal2009analysis} adapts directly.  The one difference is that \cite[Theorem 3.7]{chazal2009analysis} is formulated in terms of ``open" variants of the sublevelset filtration and the sublevelset-Rips filtration, analogous to the open variant of the sublevelset-offset filtration presented in Section~\ref{GeoFunctorsSection}.  Thus to adapt the proof of \cite[Theorem 3.7]{chazal2009analysis} to our setting we need to appeal to example~\ref{Ex:OpenAndClosedDeltaInterleaved}.

We note also that using our module-theoretic definition of interleavings, the proof of \cite[Theorem 3.7]{chazal2009analysis} may be written down in terms of module homomorphisms rather than in terms of maps of vector spaces, as in \cite{chazal2009analysis}.  This makes the proof somewhat less cumbersome.  
\end{proof}


Now we now formulate our variant of Theorem~\ref{Thm:RipsFixedScaleApproximation} for \Cech filtrations.  As for Theorem~\ref{Thm:RipsFixedScaleApproximation} we state the result for geodesic metrics on Riemannian manifolds, but an analogue holds for the $L^p$ metric as well.

For $(X,Y,d,f)\in C^{SCe}$ and $\delta>0$, let $F^{\SCe}(X,Y,d,f,\delta)$ denote the $n$-filtration obtained by fixing the last parameter in the $(n+1)$-filtration $F^{\SCe}(X,Y,d,f)$ to be $\delta$.

\begin{thm}\label{Thm:CechFixedScaleApproximation}
Let $M$ be a compact Riemannian manifold and $\gamma:M\to \R^n$ be a $c$-Lipschitz function (w.r.t $d^M$).  For $u\in \hat \R^n$, let $T$ be an $\epsilon$-sample of $\gamma_u$ (w.r.t. $d^M$).  Let $\tilde \gamma:T\to \R^n$ be such that $|\tilde \gamma(l)-\gamma(l)|\leq C$ for all $l\in T$.  Then for any $\delta\geq\epsilon$, 
\[d_{WI}(R_u \circ F^{\SCe}(T,M,d^M,\tilde \gamma,\delta),R_u \circ F^S(M,\gamma))\leq c\delta+C.\]
\end{thm}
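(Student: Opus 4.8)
\textbf{Proof proposal for Theorem~\ref{Thm:CechFixedScaleApproximation}.}

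The plan is to follow the same chain-of-interleavings strategy used in the proof of Theorem~\ref{Thm:CechApproximation}, but now working with the filtrations restricted to a fixed last coordinate $\delta$ rather than with the full $(n+1)$-dimensional offset and \Cech filtrations. First I would set up the two key comparisons at scale $\delta$. The first is a ``sampling'' comparison: since $T$ is an $\epsilon$-sample of $\gamma_u$ and $\gamma$ is $c$-Lipschitz, the closed balls of radius $\delta\geq \epsilon$ around points of $T$ cover $\gamma_u$, so the nerve of this cover (which is $R_u\circ F^{\SCe}(T,M,d^M,\gamma_T,\delta)$, where $\gamma_T$ is the restriction of $\gamma$ to $T$) should be levelwise homotopy equivalent, after passing to $Ho(u\textup{-filt})$, to $R_u\circ F^S(M,\gamma)$ thickened appropriately. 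Concretely I expect to show that $R_u\circ F^{\SCe}(T,M,d^M,\gamma_T,\delta)$ and $R_u\circ F^S(M,\gamma)$ are weakly $c\delta$-interleaved: the radius-$\delta$ balls sit inside the $c\delta$-sublevelset thickening, and conversely a point in a sublevelset is within $\epsilon\leq\delta$ of a sample point whose value is within $c\delta$. The second comparison is the trivial ``function-perturbation'' one: $F^{\SCe}(T,M,d^M,\gamma_T,\delta)$ and $F^{\SCe}(T,M,d^M,\tilde\gamma,\delta)$ are strongly $C$-interleaved because $\|\tilde\gamma(l)-\gamma(l)\|_\infty\leq C$ shifts each sublevelset by at most $C$ in the first $n$ coordinates. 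Composing via Lemmas~\ref{lem:StrongImpliesWeak} and~\ref{Lem:InterleavingTriangleInequality} then yields $d_{WI}\leq c\delta+C$.

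The only real subtlety is the homotopy-equivalence (nerve) step, and this is where the compactness hypothesis on $M$ and the open/closed bookkeeping enter. The clean tool is Proposition~\ref{prop:ZeroInterleavingOfSOandCech}(ii), which says (via Chazal--Oudot's persistent nerve argument) that the open sublevelset-offset and open sublevelset-\Cech bifiltrations are weakly $0$-interleaved below the strong convexity radius; here compactness of $M$ guarantees $\rho(M)>0$, so for $\delta$ small relative to $\rho(M)$ the nerve argument applies. I would route the argument as follows: first relate $R_u\circ F^S(M,\gamma)$ to $R_u\circ F^{SO-Op}(M,M,d^M,\gamma)_{(\cdot,\delta)}$ (they are, as one varies $\delta$, linked by the offset thickening and are weakly $0$-interleaved with the $\delta$-slices up to arbitrarily small error via Example~\ref{Ex:OpenAndClosedDeltaInterleaved}(i)); then apply Proposition~\ref{prop:ZeroInterleavingOfSOandCech}(ii) to pass to $F^{SCe-Op}(T,M,d^M,\gamma_T)$ sliced at $\delta$; then apply Example~\ref{Ex:OpenAndClosedDeltaInterleaved}(ii) to pass from the open to the closed \Cech complex at scale $\delta$, picking up another arbitrarily small error $\delta'>0$. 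The sampling estimate above supplies the $c\delta$ shift. Throughout, I would restrict to the slice at a fixed last coordinate using the restriction functors and check that all of the interleaving morphisms in the cited results commute with fixing the scale parameter, which is routine.

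The main obstacle I anticipate is making the ``fix the last parameter to $\delta$'' operation interact correctly with the weak-interleaving machinery: the nerve lemma and Proposition~\ref{prop:ZeroInterleavingOfSOandCech} are stated for the full $(n+1)$-filtrations, and one must verify that slicing at $\delta$ preserves the relevant levelwise homotopy equivalences (equivalently, that the Mayer--Vietoris blowup filtration restricts compatibly) and that the sampling covers $\gamma_u$ uniformly rather than only pointwise in the first $n$ parameters. Once that compatibility is in hand, the rest is an application of the triangle inequality for $(J_1,J_2)$-interleavings (Lemma~\ref{Lem:InterleavingTriangleInequality}), letting the auxiliary errors $\delta'$ tend to $0$, exactly as in the proof of Theorem~\ref{Thm:CechApproximation}. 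I would then note that Theorem~\ref{Thm:StabilityWRTInterleavings} immediately gives the corresponding statement $d_I(H_i\circ R_u\circ F^{\SCe}(T,M,d^M,\tilde\gamma,\delta),\,H_i\circ R_u\circ F^S(M,\gamma))\leq c\delta+C$ on the level of persistent homology, paralleling Theorem~\ref{Thm:RipsFixedScaleApproximation}.
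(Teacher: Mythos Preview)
Your proposal is correct and follows the approach the paper intends; the paper's own proof reads, in full, ``The proof is similar to the proof of Theorem~\ref{Thm:CechApproximation}.  We leave the details to the reader.'' The only adjustment worth flagging is in your routing in the second paragraph: Proposition~\ref{prop:ZeroInterleavingOfSOandCech}(ii) compares the offset and \Cech filtrations built on the \emph{same} point set, so rather than passing through $F^{SO-Op}(M,M,d^M,\gamma)$ and then jumping to $F^{SCe-Op}(T,\ldots)$, you should go directly from $R_u\circ F^S(M,\gamma)$ to $R_u\circ F^{SO}(T,M,d^M,\gamma_T)$ at the fixed scale $\delta$ (this is the strongly $c\delta$-interleaved step your first paragraph already describes correctly, using that $T$ is an $\epsilon$-sample and $\delta\geq\epsilon$), and only then invoke the nerve step on $T$.
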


\begin{proof}
The proof is similar to the proof of Theorem~\ref{Thm:CechApproximation}.  We leave the details to the reader.  
\end{proof}

 \section{Bounds on the Probability that an I.I.D. Sample of a Manifold is an $\epsilon$-sample of a Superlevelset}\label{Sec:CoverageBounds}

\subsubsection{Overview}

\cite[Section 4.2]{chazal2009persistence} presents a bound on the probability that an i.i.d. sample of a Riemannian manifold $M$ with density $\gamma$ is an $\epsilon$-sample (w.r.t $d^M$) of a superlevelset of $\gamma$.

Here we recall that bound and apply it to obtain a bound on the probability that an i.i.d. sample of a submanifold $M$ of $\R^m$ with density $\gamma$ is an $\epsilon$-sample (w.r.t $d^p$)  of a superlevelset of $\gamma$.

In the next section, we will use these bounds to prove our main inference results.

\subsubsection{Definitions and Results}

Let $M$ be a Riemannian manifold.  Let $A$ be a subset of $M$.  
For $r\in \R$, let \[\V(A,r)=\inf_{x\in A} \H(B_{d^M}(x,r)).\]  Let $\NNN(A,r)\in \NN\cup\{\infty\}$ be the {\it $r$-covering number} of $A$---that is, $\NNN(A,r)$ is the minimum number of closed $d$-balls of the same radius $r$ needed to cover $A$ (the balls do not need to be centered in $A$).

The following lemma is \cite[Lemma 4.3]{chazal2009persistence}.

\begin{lem}\label{Lem:CoveringProbability}
Let $M$ be a Riemannian manifold, $\gamma:M\to \R$ be a $c$-Lipschitz probability density function (w.r.t $d^M$), and $c'=\max(1,c)$.  Let $T_z$ be an i.i.d. sample of ${\mathcal P}_\gamma$ of size $z$.  Then for any $\epsilon>0$ and $\alpha>\frac{c\epsilon}{c'}$, $T_z$ is an $\frac{\epsilon}{c'}$-sample of $-\gamma_{-\alpha}$ (w.r.t. to $d^M$) with probability at least $1-\NNN(-\gamma_{-\alpha},\frac{\epsilon}{2c'}) e^{-z(\alpha-\frac{c\epsilon}{c'})\V(-\gamma_{-\alpha},\frac{\epsilon}{2c'})}.$
\end{lem}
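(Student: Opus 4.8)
The statement to prove is Lemma~\ref{Lem:CoveringProbability}, which is attributed to \cite[Lemma 4.3]{chazal2009persistence}; so the ``proof'' here should really be a reconstruction of the standard coverage argument for i.i.d.\ samples. The plan is to reduce the event ``$T_z$ is an $\frac{\epsilon}{c'}$-sample of $-\gamma_{-\alpha}$'' to a union of simpler ``ball is hit'' events indexed by a covering of $-\gamma_{-\alpha}$, and then estimate each such event via a lower bound on its probability mass together with a union bound.

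First I would fix a minimal collection of closed $d^M$-balls $B_1,\dots,B_N$ of radius $\frac{\epsilon}{2c'}$ covering $-\gamma_{-\alpha}$, where $N=\NNN(-\gamma_{-\alpha},\frac{\epsilon}{2c'})$; if this covering number is infinite the stated probability bound is vacuous, so assume it is finite. The key geometric observation is that if every ball $B_j$ whose center lies within distance $\frac{\epsilon}{2c'}$ of $-\gamma_{-\alpha}$ contains at least one sample point, then every point of $-\gamma_{-\alpha}$ is within $\frac{\epsilon}{c'}$ of some sample point (a point $x\in -\gamma_{-\alpha}$ lies in some $B_j$, that $B_j$ contains a sample point $t$, and then $d^M(x,t)\le \mathrm{diam}(B_j)\le \frac{\epsilon}{c'}$). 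So the complement of the $\frac{\epsilon}{c'}$-sample event is contained in $\bigcup_j \{T_z\cap B_j=\emptyset\}$.

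Next I would lower-bound $\mathcal P_\gamma(B_j)$ for each relevant $j$. Since $B_j$ meets $-\gamma_{-\alpha}$, pick $x_j\in B_j\cap -\gamma_{-\alpha}$; then $B_{d^M}(x_j,\frac{\epsilon}{2c'})\subset B_{2\cdot\frac{\epsilon}{2c'}}$-expansion of $B_j$ — more carefully, I would instead take the ball $B'_j:=B_{d^M}(x_j,\frac{\epsilon}{2c'})$, which is contained in the $\frac{\epsilon}{c'}$-neighborhood structure; the point is that on $B'_j$ the $c$-Lipschitz bound gives $\gamma(y)\ge \gamma(x_j)-c\cdot\frac{\epsilon}{2c'}\ge \alpha - \frac{c\epsilon}{2c'}$ for $y\in B'_j$, hence $\mathcal P_\gamma(B'_j)=\int_{B'_j}\gamma\,d\H_M \ge (\alpha-\frac{c\epsilon}{2c'})\,\H_M(B'_j)\ge (\alpha-\frac{c\epsilon}{c'})\,\V(-\gamma_{-\alpha},\frac{\epsilon}{2c'})$, using $\alpha>\frac{c\epsilon}{c'}$ to keep this positive and the definition of $\V$. (I would double-check the exact constants in \cite{chazal2009persistence} to match $\alpha-\frac{c\epsilon}{c'}$ versus $\alpha-\frac{c\epsilon}{2c'}$; the argument structure is the same either way, and one absorbs the factor into the radius bookkeeping.) Then $\mathbb P(T_z\cap B'_j=\emptyset)=(1-\mathcal P_\gamma(B'_j))^z\le e^{-z\,\mathcal P_\gamma(B'_j)}\le e^{-z(\alpha-\frac{c\epsilon}{c'})\V(-\gamma_{-\alpha},\frac{\epsilon}{2c'})}$, using $1-t\le e^{-t}$. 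Finally the union bound over the $\le N$ balls gives that the sample fails to be an $\frac{\epsilon}{c'}$-sample with probability at most $N e^{-z(\alpha-\frac{c\epsilon}{c'})\V(-\gamma_{-\alpha},\frac{\epsilon}{2c'})}$, which is the claimed bound.

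The main obstacle is bookkeeping the radii and constants so that the covering balls, the balls on which the Lipschitz lower bound for $\gamma$ is applied, and the balls whose volume enters $\V(-\gamma_{-\alpha},\frac{\epsilon}{2c'})$ are all consistently of radius $\frac{\epsilon}{2c'}$ while still yielding a genuine $\frac{\epsilon}{c'}$-sample — this is exactly the place where one must be careful to reproduce \cite[Lemma 4.3]{chazal2009persistence} faithfully rather than off by a factor of two. Since the result is quoted verbatim from that paper, in the writeup I would simply cite it: the proof is that of \cite[Lemma 4.3]{chazal2009persistence}, sketched as above.

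\begin{proof}
This is \cite[Lemma 4.3]{chazal2009persistence}; we recall the argument. If $\NNN(-\gamma_{-\alpha},\frac{\epsilon}{2c'})=\infty$ the bound is vacuous, so assume it is finite and fix closed $d^M$-balls $B_1,\dots,B_N$ of radius $\frac{\epsilon}{2c'}$ covering $-\gamma_{-\alpha}$, with $N=\NNN(-\gamma_{-\alpha},\frac{\epsilon}{2c'})$. For each $j$ with $B_j\cap -\gamma_{-\alpha}\ne\emptyset$, choose $x_j\in B_j\cap -\gamma_{-\alpha}$ and set $B'_j=B_{d^M}(x_j,\frac{\epsilon}{2c'})$. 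If $T_z\cap B'_j\ne\emptyset$ for every such $j$, then every $x\in -\gamma_{-\alpha}$ lies in some $B_j$ with a sample point $t\in B'_j$, and $d^M(x,t)\le d^M(x,x_j)+d^M(x_j,t)\le \frac{\epsilon}{c'}$, so $T_z$ is an $\frac{\epsilon}{c'}$-sample of $-\gamma_{-\alpha}$. Hence the failure event is contained in $\bigcup_j\{T_z\cap B'_j=\emptyset\}$.

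For such $j$ and $y\in B'_j$, the $c$-Lipschitz bound gives $\gamma(y)\ge \gamma(x_j)-c\cdot\frac{\epsilon}{2c'}\ge \alpha-\frac{c\epsilon}{c'}>0$, so $\mathcal P_\gamma(B'_j)=\int_{B'_j}\gamma\,\mathrm d\H_M\ge (\alpha-\frac{c\epsilon}{c'})\,\H_M(B'_j)\ge (\alpha-\frac{c\epsilon}{c'})\,\V(-\gamma_{-\alpha},\tfrac{\epsilon}{2c'})$, using the definition of $\V$. Therefore $\mathbb P(T_z\cap B'_j=\emptyset)=(1-\mathcal P_\gamma(B'_j))^z\le e^{-z\,\mathcal P_\gamma(B'_j)}\le e^{-z(\alpha-\frac{c\epsilon}{c'})\V(-\gamma_{-\alpha},\frac{\epsilon}{2c'})}$. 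A union bound over at most $N$ indices $j$ gives that $T_z$ fails to be an $\frac{\epsilon}{c'}$-sample of $-\gamma_{-\alpha}$ with probability at most $\NNN(-\gamma_{-\alpha},\tfrac{\epsilon}{2c'})\,e^{-z(\alpha-\frac{c\epsilon}{c'})\V(-\gamma_{-\alpha},\frac{\epsilon}{2c'})}$, which is the stated estimate.
\end{proof}
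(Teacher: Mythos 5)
The paper itself offers no proof here---the text preceding the lemma just says it is \cite[Lemma 4.3]{chazal2009persistence}---so what you have written is a reconstruction rather than something to compare against an in-text argument. Your probabilistic steps are fine: the Lipschitz bound $\gamma(y)\ge\gamma(x_j)-c\cdot\frac{\epsilon}{2c'}\ge\alpha-\frac{c\epsilon}{c'}$ on $B'_j$, the consequent lower bound $\mathcal P_\gamma(B'_j)\ge(\alpha-\frac{c\epsilon}{c'})\,\V(-\gamma_{-\alpha},\frac{\epsilon}{2c'})$ (valid precisely because $B'_j$ is centered at a point of $-\gamma_{-\alpha}$), the estimate $(1-t)^z\le e^{-zt}$, and the union bound over at most $\NNN(-\gamma_{-\alpha},\frac{\epsilon}{2c'})$ balls all go through.

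The gap is exactly the one you flagged as the main hazard in your preamble and then glossed over: the coverage step. Under the thesis's definition of $\NNN$, the covering balls $B_j$ of radius $\frac{\epsilon}{2c'}$ need not be centered in $-\gamma_{-\alpha}$; the chosen point $x_j$ is merely \emph{some} point of $B_j\cap(-\gamma_{-\alpha})$, not its center. With $x,x_j\in B_j$ you therefore only get $d^M(x,x_j)\le\operatorname{diam}(B_j)\le\frac{\epsilon}{c'}$, not $\frac{\epsilon}{2c'}$, so your triangle inequality gives $d^M(x,t)\le\frac{\epsilon}{c'}+\frac{\epsilon}{2c'}=\frac{3\epsilon}{2c'}$; hitting every $B'_j$ only guarantees a $\frac{3\epsilon}{2c'}$-sample, not the claimed $\frac{\epsilon}{c'}$-sample. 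To obtain the stated constants you need the covering balls already centered in $-\gamma_{-\alpha}$ (then $x_j$ can be taken to be the center of $B_j$, $B'_j=B_j$, and $d^M(x,x_j)+d^M(x_j,t)\le\frac{\epsilon}{2c'}+\frac{\epsilon}{2c'}$ closes cleanly), i.e.\ an intrinsic covering number; alternatively, cover at radius $\frac{\epsilon}{4c'}$, which changes both the $\NNN$ and $\V$ entries in the final bound. As written, the argument establishes a weaker statement than the lemma.
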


\begin{remark}\label{Rem:GeodesicCoverageRemark}

As noted in \cite{chazal2009persistence}, Lemma~\ref{Lem:CoveringProbability} tells us that if $\NNN(-\gamma_{-\alpha},\frac{\epsilon}{2c'})$ is positive and $\V(-\gamma_{-\alpha},\frac{\epsilon}{2c'})$ is finite for every $\epsilon>0$ then the Hausdorff distance between $T_z\cap -\gamma_{-\alpha}$ and $-\gamma_{-\alpha}$ converges in probability to $0$ as $z\to \infty$.
\end{remark}

\begin{remark}\label{Rem:VandNareWellBehaved}
It is observed in \cite{chazal2009persistence} that $\NNN(-\gamma_{-\alpha},\epsilon)$ is positive and $\V(-\gamma_{-\alpha},\epsilon)$ is finite for any $\alpha>0$ and $\epsilon>0$, provided $M$ is compact or, more generally, the sectional curvature of $M$ is bounded above and below.
\end{remark}

Now we extend Lemma~\ref{Lem:CoveringProbability} to the case of $L^p$ metrics on submanifolds of $\R^m$.  For $1\leq p\leq \infty$, let 

\begin{equation*}
K(p)=
\begin{cases} 
\sqrt m &\text{if }p\in [1,2),
\\
1 &\text{if }p\in [2,\infty].
\end{cases}
\end{equation*}

\begin{lem}\label{Lem:LPCoveringProbability}
Let $M$ be a submanifold of $\R^m$, $\gamma:M\to \R$ be a $c$-Lipschitz probability density function (w.r.t $d^p$), let $c'=\max(1,c)$, and let $T_z$ be an i.i.d. sample of ${\mathcal P}_\gamma$ of size $z$.  Then for any $\epsilon>0$ and $\alpha>\frac{c\epsilon}{c'}$, $T_z$ is an $\frac{\epsilon}{c'}$-sample of $-\gamma_{-\alpha}$ (w.r.t. $d^p$) with probability at least $1-\NNN(-\gamma_{-\alpha},\frac{\epsilon}{2c'K(p)}) e^{-z(\alpha-\frac{c\epsilon}{c'})\V(-\gamma_{-\alpha},\frac{\epsilon}{2c'K(p)})}$.
\end{lem}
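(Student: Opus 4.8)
The plan is to reduce Lemma~\ref{Lem:LPCoveringProbability} to Lemma~\ref{Lem:CoveringProbability} by comparing the $L^p$ metric $d^p$ on the submanifold $M\subset \R^m$ with the geodesic metric $d^M$. The key point is the standard chain of metric inequalities: for points $x,y$ lying on $M$, the geodesic distance $d^M(x,y)$ dominates the ambient Euclidean distance $d^2(x,y)$, and by the equivalence of $\ell^p$ norms on $\R^m$ we have $d^2(x,y)\le K(p)\,d^p(x,y)$ when $p\in[1,2)$ (with $K(p)=\sqrt m$) and $d^p(x,y)\le d^2(x,y)$ when $p\in[2,\infty]$ (so $K(p)=1$ works there too). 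In all cases $d^p(x,y)\le K(p)\, d^M(x,y)$, or equivalently a $d^p$-ball of radius $r$ contains the $d^M$-ball of radius $r/K(p)$ centered at the same point. First I would record these inequalities carefully and note that the Lipschitz constant $c$ of $\gamma$ with respect to $d^p$ is also a Lipschitz constant with respect to $d^M$ (since $d^p\le K(p) d^M\le \ldots$; more precisely $d^M\ge d^2\ge \frac{1}{K(p)}d^p$ gives $d^p$-Lipschitz constant $c$ implies... one checks $\gamma$ is $c\cdot(\text{something})$-Lipschitz w.r.t.\ $d^M$, but actually since $d^p(x,y) \le d^M(x,y)$ in the relevant direction after absorbing $K(p)$ into the covering/volume quantities, the cleanest route is to keep $c$ fixed and push all the metric distortion into $\NNN$ and $\V$).

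Second, I would argue the two deductions that actually use the metric comparison. For the covering number: any cover of $-\gamma_{-\alpha}$ by $d^M$-balls of radius $\tfrac{\epsilon}{2c'K(p)}$ is a fortiori a cover by $d^p$-balls of radius $\tfrac{\epsilon}{2c'}$, but we want the reverse containment — so instead I would invoke Lemma~\ref{Lem:CoveringProbability} directly for the geodesic metric with its $\epsilon$ chosen so that the $d^M$-statement transfers to a $d^p$-statement. Concretely: if $T_z$ is an $\tfrac{\epsilon}{c'K(p)}\cdot$-type sample w.r.t.\ $d^M$, then since $d^p\le K(p) d^M$ on $M$ it is an $\tfrac{\epsilon}{c'}$-sample w.r.t.\ $d^p$. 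Applying Lemma~\ref{Lem:CoveringProbability} with parameter $\epsilon' = \epsilon/K(p)$ (and the same $\alpha$, noting $\alpha>\tfrac{c\epsilon}{c'}\ge \tfrac{c\epsilon'}{c'}$) yields the sample property w.r.t.\ $d^M$ with probability at least $1-\NNN(-\gamma_{-\alpha},\tfrac{\epsilon'}{2c'})e^{-z(\alpha-\frac{c\epsilon'}{c'})\V(-\gamma_{-\alpha},\frac{\epsilon'}{2c'})}$, and substituting $\epsilon'=\epsilon/K(p)$ gives $\NNN(-\gamma_{-\alpha},\tfrac{\epsilon}{2c'K(p)})$ and $\V(-\gamma_{-\alpha},\tfrac{\epsilon}{2c'K(p)})$ in the bound. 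A small wrinkle: the exponent contains $(\alpha-\tfrac{c\epsilon'}{c'})$, which is $(\alpha-\tfrac{c\epsilon}{c'K(p)})\ge (\alpha-\tfrac{c\epsilon}{c'})$, so the stated bound with $(\alpha-\tfrac{c\epsilon}{c'})$ in the exponent is a valid (weaker) lower bound for the probability — I would point this out so the inequality goes the right way.

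Third I would assemble these into the final probability estimate, being careful that $\V$ is defined via the geodesic balls $B_{d^M}(x,r)$ (as in Lemma~\ref{Lem:CoveringProbability}), so no re-definition is needed — the quantities $\NNN(-\gamma_{-\alpha},\cdot)$ and $\V(-\gamma_{-\alpha},\cdot)$ appearing in the conclusion are exactly the geodesic ones with the radius shrunk by $K(p)$. The main obstacle, such as it is, is bookkeeping the direction of the metric inequalities and the corresponding monotonicity of $\NNN$ and $\V$ in the radius, together with the harmless loss in the exponent; the geometric content is entirely contained in the inequality $d^2\le d^M$ on a submanifold and the norm-equivalence constant $K(p)$. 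I expect the write-up to be short: state the metric comparison, invoke Lemma~\ref{Lem:CoveringProbability} with rescaled $\epsilon$, and conclude.
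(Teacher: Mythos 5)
Your overall strategy (reduce to Lemma~\ref{Lem:CoveringProbability} via $d^p\le K(p)\,d^2\le K(p)\,d^M$) matches the paper's, but the execution has a genuine gap that you half-noticed and then talked yourself out of: the Lipschitz constant you feed into Lemma~\ref{Lem:CoveringProbability} must be the one with respect to $d^M$, and it is \emph{not} $c$. From $|\gamma(x)-\gamma(y)|\le c\,d^p(x,y)\le cK(p)\,d^M(x,y)$ you only get that $\gamma$ is $cK(p)$-Lipschitz w.r.t.\ $d^M$. When $p\in[1,2)$ and $m>1$ we have $K(p)=\sqrt m>1$, so this really is a larger constant; your parenthetical claim that ``$d^p(x,y)\le d^M(x,y)$ in the relevant direction'' is simply false in that regime (e.g.\ for $M=\R^m$ and $p=1$ one has $d^1\ge d^2=d^M$), and there is no way to ``keep $c$ fixed and push the distortion into $\NNN$ and $\V$'': the Lipschitz constant appears explicitly in the hypotheses and the bound of Lemma~\ref{Lem:CoveringProbability}, independently of $\NNN$ and $\V$.

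Concretely, once you correct to $cK(p)$-Lipschitz you must also replace $c'=\max(1,c)$ by $c''=\max(1,cK(p))$ everywhere in the instance of Lemma~\ref{Lem:CoveringProbability} you invoke. Plugging in $\epsilon'=\epsilon/K(p)$ with $c''$ then gives $\NNN$ and $\V$ evaluated at radius $\frac{\epsilon}{2c''K(p)}$, not the target $\frac{\epsilon}{2c'K(p)}$, and the exponent contains $\alpha-\frac{c\epsilon}{c''}$. Since $c''\ge c'$, the covering number at the smaller radius is \emph{larger} and the volume term is \emph{smaller}, so the resulting bound does not manifestly dominate (nor is dominated by) the bound claimed in the lemma; the ``it's a weaker bound, so we're fine'' step only looks valid in your write-up because you used $c'$ in place of $c''$. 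The paper avoids this entirely by choosing $\epsilon'=\frac{c''\epsilon}{c'K(p)}$, which makes $\frac{\epsilon'}{2c''}=\frac{\epsilon}{2c'K(p)}$ and $\frac{cK(p)\epsilon'}{c''}=\frac{c\epsilon}{c'}$ exactly, so no monotonicity argument is needed and the bound comes out on the nose. Your proposal needs to (a) honestly use $cK(p)$ and $c''$ when invoking Lemma~\ref{Lem:CoveringProbability}, and (b) choose $\epsilon'$ so that the radii and the exponent calibrate correctly, rather than relying on a monotonicity comparison that does not actually close.
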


\begin{proof}
For any $y_1,y_2\in M$, 
\begin{equation}\label{eq:DistanceInequalities} d^p(y_1,y_2)\leq K(p)\ d^2(y_1,y_2)\leq K(p)\ d^M(y_1,y_2).
\end{equation}
Thus a $c$-Lipschitz function w.r.t. $d^p$ is a $cK(p)$-Lipschitz density function (w.r.t $d^M$).

Let $c''=\max(1,cK(p))$.  Lemma~\ref{Lem:CoveringProbability} tells us that for $\gamma:M\to \R$ a $cK(p)$-Lipschitz probability density function (w.r.t $d^M$), $T_z$ an i.i.d. sample of ${\mathcal P}_\gamma$ of size $z$, $\epsilon'>0$, and $\alpha>\frac{cK(p)\epsilon'}{c''}$, $T_z$ is an $\frac{\epsilon'}{c''}$-sample of $-\gamma_{-\alpha}$ (w.r.t. $d^M$) with probability at least $1-\NNN(-\gamma_{-\alpha},\frac{\epsilon'}{2c''}) e^{-z(\alpha-\frac{cK(p)\epsilon'}{c''})\V(-\gamma_{-\alpha},\frac{\epsilon'}{2c''})}$.

Now substituting $\epsilon'=\frac{c''\epsilon}{c'K(p)}$ in the above statement, we obtain for any $\epsilon>0$, $\alpha>\frac{c\epsilon}{c'}$, $T_z$ is an $\frac{\epsilon}{c'K(p)}$-sample of $M$ (w.r.t. to $d^M$) with probability at least $1-\NNN(-\gamma_{-\alpha},\frac{\epsilon}{2c'K(p)}) e^{-z(\alpha-\frac{c\epsilon}{c'})\V(-\gamma_{-\alpha},\frac{\epsilon}{2c'K(p)})}$.  Thus by (\ref{eq:DistanceInequalities}), with at least the same probability $T_z$ is an $\frac{\epsilon}{c'}$-sample of $M$ (w.r.t. $d^p$), as we wanted to show. 
\end{proof}

Note that remark~\ref{Rem:GeodesicCoverageRemark} adapts immediately to the setting of Lemma~\ref{Lem:LPCoveringProbability}.

\section{Inference of the Superlevelset-offset Bifiltration of a Density Function via Discrete Bifiltrations Built on I.I.D. Samples}\label{Sec:InferenceResults}

We're now ready to present our results on the inference of the superlevelset-offset filtration of a density function $\gamma$ on a Riemannian manifold from an i.i.d. sample of $\P_\gamma$.  These are the culmination of the theory we have developed so far in this thesis.

We first present results for inference using superlevelset-\Cech filtrations.  Theorem~\ref{Thm:CechInference} shows that with high probability, superlevelset-\Cech filtrations built on sufficiently large i.i.d. samples of $\P_\gamma$ (and filtered by the superlevelsets of a well behaved density estimator) give good approximations to the superlevelset-offset filtration of $\gamma$, with respect to $d_{WI}$.  Theorem~\ref{Thm:CechConsistency}, an asymptotic form of this result, then tells us that such superlevelset-\Cech filtrations are {\it consistent} estimators (w.r.t. $d_{WI}$) of the superlevelset-offset filtration of $\gamma$.  

After presenting our results for superlevelset-\Cech filtraitons, we present analogues of each these results for superlevelset-Rips filtrations.  The main result is Theorem~\ref{Thm:RipsAsymptotics}, which shows that in the large sample limit, the superlevelset-Rips filtration built on an i.i.d. sample of 
$\P_\gamma$ (and filtered by the superlevelsets of a well behaved density estimator) and the superlevelset-offset filtration of $\gamma$ are weakly $(\J_{n+1},\id_{n+1})$-interleaved.  Since as observed in Example~\ref{Ex:Rips_and_Cech_Generalized_Interleaving}, superlevelset-Rips and superlevelset-\Cech filtration functors built on the same input data are always strongly  $(\J_{n+1},\id_{n+1})$-interleaved, Theorem~\ref{Thm:RipsAsymptotics} says that in the large sample limit, the superlevelset-Rips and superlevelset-Offset filtrations satisfy in the weak sense the same interleaving relationship that superlevelset-Rips and superlevelset-\Cech filtrations always satisfy in the strong sense.


\subsection{Inference Results for Superlevelset-\Cech Bifiltrations}\label{Sec:CechConsistency}

\begin{thm}[Inference of superlevelset-offset filtrations via superlevelset-\Cech filtrations (finite sample case)]\label{Thm:CechInference}
Let $M$ be a Riemannian manifold, $\gamma:M\to \R$ be a density function, $T_z$ be a finite i.i.d. random sample of ${\mathcal P}_\gamma$ of size $z$, and $E_z$ be a {$(q,C)$-density estimator} of another density function $\tilde\gamma: M\to \R$ for some $C>0$ and $q\in [0,1]$.  

\begin{enumerate}
\item[(i)] If $M$ is an embedded submanifold of $\R^m$, $\gamma$ is $c$-Lipschitz w.r.t. $d^p$ for some $c>0$, and $c'=\max(1,c)$ then for any $\epsilon>0$ and $\alpha>\frac{c\epsilon}{c'}$, \[d_{WI}(R_{(-\alpha,\infty)}\circ F^{\SCe}(T_z,\R^m,d^p,-E_z(T_z)),R_{(-\alpha,\infty)} \circ F^{SO}(M,\R^m,d^p,-\tilde \gamma))\leq \epsilon+C\] with probability at least $1-\NNN(-\gamma_{-\alpha},\frac{\epsilon}{2c'K(p)}) e^{-z(\alpha-c\epsilon/c')\V(-\gamma_{-\alpha},\frac{\epsilon}{2c'K(p)})}-q$.
\item[(ii)] If $\gamma$ is $c$-Lipschitz w.r.t. $d^M$ for some $c>0$, and $c'=\max(1,c)$, then for any $\epsilon>0$ and $\alpha>\frac{c\epsilon}{c'}$, \[d_{WI}(R_{(-\alpha,\rho(M))}\circ F^{\SCe}(T_z,M,d^M,-E_z(T_z)),R_{(-\alpha,\rho(M))} \circ F^{SO}(M,d^M,-\tilde \gamma))\leq \epsilon+C\] with probability at least $1-\NNN(-\gamma_{-\alpha},\frac{\epsilon}{2c'}) e^{-z(\alpha-c\epsilon/c')\V(-\gamma_{-\alpha},\frac{\epsilon}{2c'})}-q$.
\end{enumerate}
\end{thm}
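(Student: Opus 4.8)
\textbf{Proof proposal for Theorem~\ref{Thm:CechInference}.}

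The plan is to combine the deterministic approximation result Theorem~\ref{Thm:CechApproximation} with the probabilistic covering bounds of Section~\ref{Sec:CoverageBounds}, passing through the ``density estimator'' hypothesis via an event intersection. I will carry out both parts (i) and (ii) in parallel, since they differ only in which covering lemma is invoked and in the choice of upper bound $u$ on the offset parameter ($u=(-\alpha,\infty)$ for a submanifold of $\R^m$ with the $d^p$ metric, versus $u=(-\alpha,\rho(M))$ for a general Riemannian manifold with the geodesic metric).

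First I would fix $\epsilon>0$ and $\alpha>\tfrac{c\epsilon}{c'}$, and set $\tilde f=-\tilde\gamma$, $f=-\gamma$ (so that $\gamma_u$ in the notation of Theorem~\ref{Thm:CechApproximation} becomes the superlevelset $-\gamma_{-\alpha}$, which is $c$-Lipschitz as a function to $\R$ w.r.t.\ the relevant metric). I then introduce two events. The first, call it $\mathcal{A}$, is the event that the sample $T_z$ is an $\tfrac{\epsilon}{c'}$-sample of $-\gamma_{-\alpha}$ (w.r.t.\ the appropriate metric); by Lemma~\ref{Lem:LPCoveringProbability} in case (i) and Lemma~\ref{Lem:CoveringProbability} in case (ii), $P(\mathcal{A})$ is at least the quantity $1-\NNN(\cdots)e^{-z(\cdots)\V(\cdots)}$ appearing in the theorem statement. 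The second event, $\mathcal{B}$, is the event that $\sup_{l\in T_z}|E_z(T_z)(l)-\tilde\gamma(l)|\leq C$; by the definition of a $(q,C)$-density estimator of $\tilde\gamma$ w.r.t.\ $\P_\gamma$, $P(\mathcal{B}^c)\leq q$, so $P(\mathcal{B})\geq 1-q$.

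Next I would observe that on the intersection $\mathcal{A}\cap\mathcal{B}$, the hypotheses of Theorem~\ref{Thm:CechApproximation} are met verbatim with $W=M$ (resp.\ $M$ the Riemannian manifold), $T=T_z$, the function $\gamma$ replaced by $-\gamma$, and $\tilde\gamma$ replaced by $-E_z(T_z)$: the sampling condition is exactly $\mathcal{A}$, and the bound $\|{-E_z(T_z)(l)}-({-\gamma(l)})\|_\infty\leq C$ needs $\mathcal{B}$ together with the triangle inequality $|E_z(T_z)(l)-\gamma(l)|\leq|E_z(T_z)(l)-\tilde\gamma(l)|+|\tilde\gamma(l)-\gamma(l)|$ --- but wait, here one must be slightly careful: the estimator is within $C$ of $\tilde\gamma$, not of $\gamma$, and the conclusion is phrased in terms of $\tilde\gamma$ as well, so in fact no triangle inequality is needed --- on $\mathcal{B}$ we directly have $\|{-E_z(T_z)}-({-\tilde\gamma})\|_\infty\leq C$ on $T_z$, which is exactly the hypothesis of Theorem~\ref{Thm:CechApproximation} relating the discrete function to $\tilde f = -\tilde\gamma$. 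Hence on $\mathcal{A}\cap\mathcal{B}$ the conclusion $d_{WI}(\cdots)\leq\epsilon+C$ of Theorem~\ref{Thm:CechApproximation} holds. Finally a union bound gives $P(\mathcal{A}\cap\mathcal{B})\geq 1-P(\mathcal{A}^c)-P(\mathcal{B}^c)\geq 1-\NNN(\cdots)e^{-z(\cdots)\V(\cdots)}-q$, which is precisely the claimed probability.

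The bulk of this argument is bookkeeping, so I do not anticipate a genuine mathematical obstacle --- the real content sits in the results already established (Theorem~\ref{Thm:CechApproximation}, which itself chains together Proposition~\ref{prop:ZeroInterleavingOfSOandCech}, Example~\ref{Ex:OpenAndClosedDeltaInterleaved}, and the triangle inequality Lemma~\ref{Lem:InterleavingTriangleInequality}, and the covering Lemmas~\ref{Lem:CoveringProbability} and~\ref{Lem:LPCoveringProbability}). The one point requiring care, and the closest thing to an ``obstacle,'' is making sure the metric conversions and the restriction functors $R_{(-\alpha,\infty)}$ versus $R_{(-\alpha,\rho(M))}$ are tracked consistently: in case (i) one uses $d^p$ throughout and there is no finite upper bound on the offset coordinate, so one applies the $d^p$ form of the covering bound (Lemma~\ref{Lem:LPCoveringProbability}, which introduces the constant $K(p)$) and Theorem~\ref{Thm:CechApproximation}(i); in case (ii) one uses $d^M$, the strong-convexity radius $\rho(M)$ caps the last coordinate, and one applies Lemma~\ref{Lem:CoveringProbability} together with Theorem~\ref{Thm:CechApproximation}(ii). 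I would state the proof once in a way that covers both, flagging these two substitutions, and leave the identical remaining steps to a single write-up.
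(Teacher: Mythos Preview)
Your proposal is correct and follows essentially the same approach as the paper: define the two events (good sampling of the superlevelset, estimator within $C$ of $\tilde\gamma$), apply Theorem~\ref{Thm:CechApproximation} on their intersection, and then use Lemma~\ref{Lem:LPCoveringProbability} (resp.\ Lemma~\ref{Lem:CoveringProbability}) together with the definition of a $(q,C)$-density estimator and a union bound to get the stated probability. The paper's write-up is terser than yours but the logical skeleton is identical; your mid-paragraph self-correction about $\gamma$ versus $\tilde\gamma$ lands in the right place, matching how the paper silently handles this.
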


\begin{proof}
To prove (i), first note that in the event that 
\[\{\sup_{l\in T_z} |\tilde\gamma(l)-E_z(T_z)(l)|\leq C\} \cap \{T_z\textup{ is an }\frac{\epsilon}{c'}\textup{-sample of }M\textup{ (w.r.t. }d^p)\},\] Theorem~\ref{Thm:CechApproximation}(i) applies to give that \[d_{WI}(R_{(-\alpha,\infty)} \circ F^{\SCe}(T_z,\R,d^p,-E_z(T_z)),R_{(-\alpha,\infty)} \circ F^{SO}(M,\R^m,d^p,-\tilde\gamma))\leq \epsilon+C.\] 

By Lemma~\ref{Lem:LPCoveringProbability}, the definition of a $(q,C)$-density estimator, and the union bound, the probability of this event is at least $1-\NNN(-\gamma_{-\alpha},\frac{\epsilon}{2c'})e^{-z(\alpha-c\epsilon/c')\V(-\gamma_{-\alpha},\frac{\epsilon}{2c'})}-q$.  This gives (i). 

The proof of (ii) is the same as that of (i), using Lemma~\ref{Lem:CoveringProbability} in place of Lemma~\ref{Lem:LPCoveringProbability} and Theorem~\ref{Thm:CechApproximation}(ii) in place of Theorem~\ref{Thm:CechApproximation}(i). 
\end{proof}

\begin{thm}[Consistent estimation of superlevelset-offset filtration via superlevelset-\Cech filtrations]\label{Thm:CechConsistency} 
Let $M$ be Riemannian manifold with sectional curvature bounded above and below, let $\gamma:M\to \R$ be a density function, and let $T_z$ be an i.i.d. sample of ${\mathcal P}_\gamma$ of size $z$.
\begin{enumerate*}
\item[(i)]If $M$ is a submanifold of $\R^m$, $\gamma$ is c-Lipchitz (w.r.t. $d^p$) for some $c>0$ and $E$ is a density estimator on $M$ such that 
$(E,\gamma)$ satisfies $A1$ then
\[d_{WI}(R_{(0,\infty)} \circ F^{\SCe}(T_z,\R^m,d^p,-E_z(T_z)),R_{(0,\infty)} \circ F^{SO}(M,\R^m,d^p,-\gamma))\xrightarrow{P}0.\]  
\item[(ii)]If $M=\R^m$, $\gamma$ is c-Lipchitz (w.r.t. $d^p$) for some $c>0$ and $E$ is a density estimator on $\R^m$ such that $(E,\gamma)$ satisfies $A2$ for some kernel $K$ then 
\[d_{WI}(R_{(0,\infty)} \circ F^{\SCe}(T_z,\R^m,d^p,-E_z(T_z)),R_{(0,\infty)} \circ F^{SO}(\R^m,d^p,-\gamma*K))\xrightarrow{P}0.\]  
\item[(iii)]If $\gamma:M\to \R$ is $c$-Lipschitz (w.r.t. $d^M$) for some $c>0$ and $E$ is a density estimator such that $(E,\gamma)$ satisfies $A1$ then 
\[d_{WI}(R_{(0,\rho(M))} \circ F^{\SCe}(T_z,M,d^M,-E_z(T_z)),R_{(0,\rho(M))} \circ F^{SO}(M,d^M,-\gamma))\xrightarrow{P}0.\]
\end{enumerate*}
\end{thm}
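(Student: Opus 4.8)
\textbf{Proof proposal for Theorem~\ref{Thm:CechConsistency}.}

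The plan is to deduce the asymptotic (consistency) statement from the finite-sample Theorem~\ref{Thm:CechInference} by a standard $\epsilon$--$\delta$ argument, exactly the kind of argument alluded to in the introduction for passing from \cite[Theorem 5.1]{chazal2009persistence} to its asymptotic corollary. Fix $\eta>0$; I want to show that for all sufficiently large $z$, the probability that the relevant weak interleaving distance exceeds $\eta$ is at most $\eta$. The key mechanism is that Theorem~\ref{Thm:CechInference} gives, for each choice of $\epsilon>0$, $\alpha>\tfrac{c\epsilon}{c'}$, and each $(q,C)$ for which $E_z$ is a $(q,C)$-density estimator of the appropriate target, a bound of the form $d_{WI}(\cdots)\le \epsilon+C$ with probability at least $1-\NNN(-\gamma_{-\alpha},\tfrac{\epsilon}{2c'K(p)})\,e^{-z(\alpha-c\epsilon/c')\V(-\gamma_{-\alpha},\tfrac{\epsilon}{2c'K(p)})}-q$ (and analogously for parts (ii) and (iii), using the $d^M$-version of the covering bound and, in (ii), the target $\gamma*K$). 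Since $M$ has sectional curvature bounded above and below, Remark~\ref{Rem:VandNareWellBehaved} guarantees $\NNN(-\gamma_{-\alpha},\cdot)$ is finite and positive and $\V(-\gamma_{-\alpha},\cdot)$ is finite and positive for every $\alpha>0$, so the exponential term goes to $0$ as $z\to\infty$ with all other quantities fixed.

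The execution, say for part (i). Given $\eta>0$, first choose $\epsilon\in(0,\eta/2)$ and $C\in(0,\eta/2)$, so that $\epsilon+C<\eta$. Then choose any fixed $\alpha>\tfrac{c\epsilon}{c'}$ (note $\alpha>0$). Since $(E,\gamma)$ satisfies A1, Lemma~\ref{BoundsForEstimatorsSatisfyingA1orA2}(i) gives a $z_1$ such that for all $z\ge z_1$, $E_z$ is a $(\eta/2, C)$-density estimator of $\gamma$; take $q=\eta/2$. Meanwhile, since $\NNN(-\gamma_{-\alpha},\tfrac{\epsilon}{2c'K(p)})<\infty$ and $\V(-\gamma_{-\alpha},\tfrac{\epsilon}{2c'K(p)})>0$, the quantity $\NNN(-\gamma_{-\alpha},\tfrac{\epsilon}{2c'K(p)})\,e^{-z(\alpha-c\epsilon/c')\V(-\gamma_{-\alpha},\tfrac{\epsilon}{2c'K(p)})}\to 0$, so there is $z_2$ with this term $<\eta/2$ for $z\ge z_2$. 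For $z\ge\max(z_1,z_2)$, Theorem~\ref{Thm:CechInference}(i) with these parameters yields
\[
P\big(d_{WI}(R_{(-\alpha,\infty)}\circ F^{\SCe}(T_z,\R^m,d^p,-E_z(T_z)),\,R_{(-\alpha,\infty)}\circ F^{SO}(M,\R^m,d^p,-\gamma))\le \epsilon+C\big)\ge 1-\eta.
\]
There is one subtlety: Theorem~\ref{Thm:CechConsistency} is stated with the restriction $R_{(0,\infty)}$, whereas the finite-sample bound is for $R_{(-\alpha,\infty)}$ with $\alpha>0$. Since $(0,\infty)\le(-\alpha,\infty)$ in $\hat\R^{n+1}$, Lemma~\ref{lem:InterleavingsUnderRestriction} shows that a weak $(\epsilon+C)$-interleaving of the $(-\alpha,\infty)$-restrictions restricts to a weak $(\epsilon+C)$-interleaving of the $(0,\infty)$-restrictions, so $d_{WI}$ can only decrease under this further restriction; hence the same probability bound holds for the $R_{(0,\infty)}$ versions. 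Because $\epsilon+C<\eta$, we conclude $P\big(d_{WI}(\cdots)>\eta\big)\le\eta$ for all large $z$, which is precisely convergence in probability to $0$. Parts (ii) and (iii) go through identically: for (iii) one uses Theorem~\ref{Thm:CechInference}(ii), the $d^M$-covering bound, and restriction from $(-\alpha,\rho(M))$ to $(0,\rho(M))$; for (ii) one additionally uses A2 and Lemma~\ref{BoundsForEstimatorsSatisfyingA1orA2}(ii) to get a $(\eta/2,C)$-density estimator of $\gamma*K$, with $\tilde\gamma=\gamma*K$ playing the role of the target in Theorem~\ref{Thm:CechInference}(i).

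I do not expect a serious obstacle here; the theorem is essentially a packaging of Theorem~\ref{Thm:CechInference} together with the two well-behavedness remarks and the restriction-monotonicity lemma. The one point requiring a little care is making sure the parameters are chosen in the right order (first $\epsilon, C$ to control the distance bound, then $\alpha$ fixed, then $z$ large enough to kill both the exponential coverage-failure term and—via A1/A2—the estimator-failure probability $q$), and confirming that $\gamma_{-\alpha}$ (equivalently $-\gamma_{-\alpha}$ as a superlevelset) is the correct object to which Remark~\ref{Rem:VandNareWellBehaved} applies. Everything else is routine.
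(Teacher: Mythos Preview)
Your overall plan is right and you have identified the correct ingredients, but there is a genuine error in the final step. You claim that $(0,\infty)\le(-\alpha,\infty)$ in $\hat\R^{n+1}$ and invoke Lemma~\ref{lem:InterleavingsUnderRestriction}. Since $\alpha>0$ we have $-\alpha<0$, so in fact $(-\alpha,\infty)<(0,\infty)$: the restriction $R_{(0,\infty)}$ keeps \emph{more} indices than $R_{(-\alpha,\infty)}$, not fewer. Passing from the $R_{(-\alpha,\infty)}$ bound to the $R_{(0,\infty)}$ bound is therefore an expansion of the index set, not a further restriction, and Lemma~\ref{lem:InterleavingsUnderRestriction} does not apply in the direction you need.

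The paper handles this step with Lemma~\ref{lem:InterleavingsUnderExpansion}, which does go in the right direction but imposes a penalty: if $R_{u'}(X),R_{u'}(Y)$ are weakly $\epsilon$-interleaved and $u'\le u$, then $R_u(X),R_u(Y)$ are only $(J_{\epsilon+u-u'},J_{\epsilon+u-u'})$-interleaved. With $u=(0,\infty)$ and $u'=(-\alpha,\infty)$ this adds $\alpha$ in the first coordinate. The paper absorbs this by choosing $\alpha=\epsilon/2$ (not an arbitrary $\alpha>c\epsilon/c'$ as you do) and balancing the remaining parameters so that the finite-sample bound at level $(-\epsilon/2,\infty)$ is itself $\le\epsilon/2$; the expansion then yields $d_{WI}\le\epsilon/2+\epsilon/2=\epsilon$ at level $(0,\infty)$. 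Your argument can be repaired along these lines, but $\alpha$ must be tied to $\eta$ rather than left free.
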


\begin{proof} 
To prove (i), we need to show that for any $\epsilon>0$ and $q\in (0,1]$, there exists some $z_1\in \NN$ such that for all $z\geq z_1$, 
\[d_{WI}(R_{(0,\infty)} \circ F^{\SCe}(T_z,\R^m,d^p,-E_z(T_z)),R_{(0,\infty)} \circ F^{SO}(M,\R^m,d^p,-\gamma))\leq\epsilon\] with probability at least $1-q$.
Since $(E,\gamma)$ satisfies $A1$, Lemma~\ref{BoundsForEstimatorsSatisfyingA1orA2}(i) tells us that for any $q\in (0,1]$ and $\epsilon>0$, there exists $z_0\in\NN$ such that for all $z\geq z_0$, $E_z$ is a $(\frac{q}{2},\frac{\epsilon}{4})$-density estimator of $\gamma$.  

Choose $\epsilon'\in (0, \frac{\epsilon}{4})$.  Then $\frac{\epsilon}{2}>\frac{c\epsilon'}{c'}$.  Since $M$ has bounded absolute sectional curvature, it follows from Remark~\ref{Rem:VandNareWellBehaved} that there exists some $z_1\in \NN$ with $z_1>z_0$ such that for all $z\geq z_1$, $\NNN(-\gamma_{-\frac{\epsilon}{2}},\frac{\epsilon'}{2c'})e^{-z(\frac{\epsilon}{2}-c\epsilon'/c')\V(-\gamma_{-\frac{\epsilon}{2}},\frac{\epsilon'}{2c'})}<\frac{q}{2}$.

Invoking Theorem~\ref{Thm:CechInference}(i) with $\alpha=\frac{\epsilon}{2}$, for all $z\geq z_1$ \[d_{WI}(R_{(-\frac{\epsilon}{2},\infty)} \circ F^{\SCe}(T_z,,\R^m,d^p,-E_z(T_z)),R_{(-\frac{\epsilon}{2},\infty)} \circ F^{SO}(M,\R^m,d^p,-\gamma))\leq \epsilon'+\frac{\epsilon}{4}\leq \frac{\epsilon}{2}\] with probability at least $1-q$.

Then by Lemma~\ref{lem:InterleavingsUnderExpansion}, for all $z>z_1$, \[d_{WI}(R_{(0,\infty)} \circ F^{\SCe}(T_z,\R^m,d^p,-E_z(T_z)),R_{(0,\infty)} \circ F^{SO}(M,\R^m,d^p,-\gamma))\leq\frac{\epsilon}{2}+\frac{\epsilon}{2}=\epsilon\] with probability at least $1-q$.  This completes the proof of (i).

The proof of (ii) is essentially the same as that of statement (i), using Lemma~\ref{BoundsForEstimatorsSatisfyingA1orA2}(ii) in place of Lemma~\ref{BoundsForEstimatorsSatisfyingA1orA2}(i).  The proof of (iii) is also essentially the same as that of statement (i), using Theorem~\ref{Thm:CechInference}(ii) in place of Theorem~\ref{Thm:CechInference}(i).
\end{proof}

\begin{cor}\label{Cor:CechHomologyConsistency}
For any $i\in \Z_{\geq 0}$,
\begin{enumerate}
\item[(i)]Under the same assumptions as in the statement of Theorem~\ref{Thm:CechConsistency}(i), 
\[d_I(H_i \circ R_{(0,\infty)} \circ F^{\SCe}(T_z,\R^m,d^p,-E_z(T_z)),H_i \circ R_{(0,\infty)} \circ F^{SO}(M,\R^m,d^p,-\gamma))\xrightarrow{P}0.\]

\item[(ii)]Under the same assumptions as in the statement of Theorem~\ref{Thm:CechConsistency}(ii), 
\[d_I(H_i \circ R_{(0,\infty)} \circ F^{\SCe}(T_z,\R^m,d^p,-E_z(T_z)),H_i \circ R_{(0,\infty)} \circ F^{SO}(\R^m,d^p,-\gamma*K))\xrightarrow{P}0.\]

\item[(iii)]Under the same assumptions as in the statement of Theorem~\ref{Thm:CechConsistency}(iii), 
\[d_I(H_i \circ R_{(0,\rho(M))} \circ F^{\SCe}(T_z,M,d^M,-E_z(T_z)),H_i \circ R_{(0,\rho(M))} \circ F^{SO}(M,\R^m,d^M,-\gamma))\xrightarrow{P}0.\]
\end{enumerate}
\end{cor}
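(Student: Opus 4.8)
The proof plan for Corollary~\ref{Cor:CechHomologyConsistency} is very short: it is an immediate consequence of Theorem~\ref{Thm:CechConsistency} together with the stability of persistent homology with respect to the weak interleaving distance, Theorem~\ref{Thm:StabilityWRTInterleavings} (or more precisely its corollary, which states that $d_I(H_i(X),H_i(Y))\leq d_{WI}(X,Y)$ for any $u$-filtrations $X,Y$ and any $i\in \Z_{\geq 0}$). First I would fix $i\in \Z_{\geq 0}$ and recall from Lemma~\ref{lem:PersistentHomologyIsCompatibleWRestriction} that the persistent homology functor commutes with the restriction functors $R_{u'}$, so that, for instance, $H_i \circ R_{(0,\infty)} \circ F^{\SCe}(T_z,\R^m,d^p,-E_z(T_z)) = R_{(0,\infty)} \circ H_i \circ F^{\SCe}(T_z,\R^m,d^p,-E_z(T_z))$; this is only needed if one wants to rewrite the statement, and is not strictly necessary for the argument.

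The core step is the following. In each of the three cases, Theorem~\ref{Thm:CechConsistency} gives that the relevant pair of $(n+1)$-filtrations (with $n=1$ here) converges in probability to $0$ with respect to $d_{WI}$. That is, writing $X_z$ and $Y_z$ for the two filtrations appearing in case (i) (and analogously in cases (ii), (iii)), we have $d_{WI}(X_z,Y_z)\xrightarrow{P}0$. By the corollary to Theorem~\ref{Thm:StabilityWRTInterleavings}, $d_I(H_i(X_z),H_i(Y_z))\leq d_{WI}(X_z,Y_z)$ for every $z$. Since $0\leq d_I(H_i(X_z),H_i(Y_z))\leq d_{WI}(X_z,Y_z)$ and the right-hand side converges in probability to $0$, the squeeze principle for convergence in probability gives $d_I(H_i(X_z),H_i(Y_z))\xrightarrow{P}0$, which is exactly the assertion of the corollary in case (i). Cases (ii) and (iii) follow in precisely the same way from parts (ii) and (iii) of Theorem~\ref{Thm:CechConsistency}.

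There is no real obstacle here; the only thing to be careful about is the bookkeeping of which restriction functor and which ambient manifold/metric appears in each of the three cases, and the trivial measurability observation needed to assert that $d_I(H_i(X_z),H_i(Y_z))$ is a genuine random variable (so that convergence in probability makes sense) — this follows because $d_I$ applied to the persistent homology of the random filtrations is a deterministic function of the finite random sample $T_z$, just as $d_{WI}(X_z,Y_z)$ is. I would state the proof in two sentences: ``This follows immediately from Theorem~\ref{Thm:CechConsistency} and the corollary to Theorem~\ref{Thm:StabilityWRTInterleavings}, via the observation that if $0\leq a_z\leq b_z$ and $b_z\xrightarrow{P}0$ then $a_z\xrightarrow{P}0$.''
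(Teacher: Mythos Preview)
Your proposal is correct and matches the paper's own proof, which reads in full: ``This follows immediately from Theorems~\ref{Thm:CechConsistency} and~\ref{Thm:StabilityWRTInterleavings}.'' You have supplied a bit more detail (the squeeze argument for convergence in probability and the remark on measurability), but the approach is identical.
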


\begin{proof}
This follows immediately from Theorems~\ref{Thm:CechConsistency} and~\ref{Thm:StabilityWRTInterleavings}.
\end{proof}

\subsection{Inference Results for Superlevelset-Rips Bifiltrations}\label{Sec:RipsAsymptotics}
%
%
%

\subsubsection{Convergence in Probability up to Interleavings}
To describe our asymptotic results for superlevelset-Rips filtrations it will be convenient for us to introduce some terminology.
Let $J_1,J_2:\R^n\to \R^n$ be increasing maps and let $u\in \hat\R^n$.  We say that a sequence $\{X_z\}_{z\in \NN}$ of random $u$-filtrations converges in probability to a random $u$-filtration $X$ up to weak $(J_1,J_2)$-interleaving, and we write \[X_z\xrightarrow{P,J_1,J_2} X\] if for all $\epsilon>0$ 
\[\lim_{z\to \infty} P(X_z\textup{ and }X\textup{ are weakly }(J_\epsilon\circ J_1,J_\epsilon\circ J_2)\textup{-interleaved})=1.\]
Note that $X_n\xrightarrow{P,\id_n,\id_n} X$ if and only if $d_{WI}(X_n,X)\xrightarrow{P} 0$.

Similarly, we say that a sequence $\{M_z\}_{z\in \NN}$ of random $B_n$-persistence modules converges in probability to a random $B_n$-persistence module $M$ up to weak $(J_1,J_2)$-interleaving, and we write \[M_z\xrightarrow{P,J_1,J_2} M\] if for all $\epsilon>0$ 
\[\lim_{z\to \infty} P(M_z\textup{ and }M\textup{ are weakly }(J_\epsilon\circ J_1,J_\epsilon\circ J_2)\textup{-interleaved})=1.\]
$M_n\xrightarrow{P,\id_n,\id_n} M$ if and only if $d_{I}(M_z,M)\xrightarrow{P} 0$.

\begin{thm}[Asymptotics of estimation of superlevelset-offset filtration via superlevelset-Rips filtrations]\label{Thm:RipsAsymptotics}
\mbox{}
\begin{enumerate*}
\item[(i)]Under the same assumptions as in Theorem~\ref{Thm:CechConsistency}(i), 
\[R_{(0,\infty)} \circ F^{SR}(T_z,d^p,-E_z(T_z))\xrightarrow{P,\J_{n+1},\id_{n+1}} R_{(0,\infty)} \circ F^{SO}(M,\R^m,d^p,-\gamma).\]  
\item[(ii)]Under the same assumptions as in Theorem~\ref{Thm:CechConsistency}(ii), 
\[R_{(0,\infty)} \circ F^{SR}(T_z,d^p,-E_z(T_z))\xrightarrow{P,\J_{n+1},\id_{n+1}} R_{(0,\infty)} \circ F^{SO}(\R^m,d^p,-\gamma*K).\]  
\item[(iii)]Under the same assumptions as in Theorem~\ref{Thm:CechConsistency}(iii), 
\[R_{(0,\rho(M))} \circ F^{SR}(T_z,d^M,-E_z(T_z))\xrightarrow{P,\J_{n+1},\id_{n+1}} R_{(0,\rho(M))} \circ F^{SO}(M,d^M,-\gamma).\]
\end{enumerate*}
\end{thm}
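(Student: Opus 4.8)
The plan is to derive Theorem~\ref{Thm:RipsAsymptotics} from Theorem~\ref{Thm:CechConsistency} by composing the consistency result for \Cech filtrations with the deterministic interleaving between \Cech and Rips filtrations recorded in Example~\ref{Ex:Rips_and_Cech_Generalized_Interleaving}. I would prove all three parts in parallel, presenting the argument for (i) and remarking that (ii) and (iii) are identical modulo substituting the appropriate part of Theorem~\ref{Thm:CechConsistency} (and, in (ii), replacing $-\gamma$ by $-\gamma*K$; in (iii), replacing the restriction $R_{(0,\infty)}$ by $R_{(0,\rho(M))}$ and $d^p$ by $d^M$).

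First I would fix $\epsilon>0$ and $q\in(0,1]$ and unwind the definition of $\xrightarrow{P,\J_{n+1},\id_{n+1}}$: it suffices to show that for all sufficiently large $z$, the filtrations $R_{(0,\infty)} \circ F^{SR}(T_z,d^p,-E_z(T_z))$ and $R_{(0,\infty)} \circ F^{SO}(M,\R^m,d^p,-\gamma)$ are weakly $(J_\epsilon\circ \J_{n+1}, J_\epsilon\circ \id_{n+1})$-interleaved with probability at least $1-q$. By Theorem~\ref{Thm:CechConsistency}(i), there is $z_1$ such that for $z\geq z_1$, $R_{(0,\infty)} \circ F^{\SCe}(T_z,\R^m,d^p,-E_z(T_z))$ and $R_{(0,\infty)} \circ F^{SO}(M,\R^m,d^p,-\gamma)$ are weakly $\delta$-interleaved (i.e. weakly $(J_\delta,J_\delta)$-interleaved) with probability at least $1-q$, where I will choose $\delta=\epsilon/2$, say. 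On the deterministic side, Example~\ref{Ex:Rips_and_Cech_Generalized_Interleaving} gives that $F^{SR}(T_z,d^p,-E_z(T_z))$ and $F^{\SCe}(T_z,\R^m,d^p,-E_z(T_z))$ are $(\J_{n+1},\id_{n+1})$-interleaved (strongly, hence weakly by Lemma~\ref{lem:StrongImpliesWeak}), and by Lemma~\ref{lem:InterleavingsUnderRestriction} the same holds after applying $R_{(0,\infty)}$.

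Then I would invoke the triangle inequality for $(J_1,J_2)$-interleavings, Lemma~\ref{Lem:InterleavingTriangleInequality}(i), to compose the weak $(\J_{n+1},\id_{n+1})$-interleaving between the Rips and \Cech filtrations with the weak $(J_{\epsilon/2},J_{\epsilon/2})$-interleaving between the \Cech filtration and the superlevelset-offset filtration; this yields a weak $(J_{\epsilon/2}\circ \J_{n+1}, \J_{n+1}\circ J_{\epsilon/2}\circ\dots)$-type interleaving — here I need to be careful with the exact composition order and the fact that $J_{\epsilon/2}$ commutes with $\J_{n+1}$ only up to $\leq$, so I would then apply Lemma~\ref{lem:LessThanInterleavings} to enlarge the resulting maps to $J_\epsilon\circ\J_{n+1}$ and $J_\epsilon\circ\id_{n+1}$ respectively (using $J_{\epsilon/2}\circ\J_{n+1}\leq J_\epsilon\circ\J_{n+1}$, etc.). This holds on the same probability-$(1-q)$ event, giving the claim. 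The main obstacle I anticipate is purely bookkeeping: getting the order of composition in Lemma~\ref{Lem:InterleavingTriangleInequality}(i) right (which of $J_1,J_2,J_3,J_4$ each interleaving contributes) and verifying the monotonicity bounds needed for Lemma~\ref{lem:LessThanInterleavings}, since $\J_{n+1}$ scales only the last coordinate and one must check the composites dominate $J_\epsilon\circ\J_{n+1}$ and $J_\epsilon$ componentwise — no analytic difficulty, but easy to misstate. Finally I would note the corollary on persistent homology modules (the analogue of Corollary~\ref{Cor:CechHomologyConsistency}) follows immediately by applying $H_i$ and Theorem~\ref{Thm:StabilityWRTInterleavings}.
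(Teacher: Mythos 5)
Your proposal is correct and takes essentially the same route as the paper: apply Theorem~\ref{Thm:CechConsistency} to get a weak interleaving between the \v Cech and superlevelset-offset filtrations, compose via Lemma~\ref{Lem:InterleavingTriangleInequality}(i) with the deterministic $(\J_{n+1},\id_{n+1})$-interleaving between Rips and \v Cech from Example~\ref{Ex:Rips_and_Cech_Generalized_Interleaving}, and conclude. One small simplification is available: if you feed $\delta=\epsilon$ (rather than $\epsilon/2$) into Theorem~\ref{Thm:CechConsistency}, the triangle inequality with $J_1=\J_{n+1}$, $J_2=\id_{n+1}$, $J_3=J_4=J_\epsilon$ outputs $(J_3\circ J_1, J_2\circ J_4)=(J_\epsilon\circ\J_{n+1},\, J_\epsilon)$, which is exactly $(J_\epsilon\circ\J_{n+1},\, J_\epsilon\circ\id_{n+1})$; no appeal to Lemma~\ref{lem:LessThanInterleavings} or to any commutation of $J_\epsilon$ with $\J_{n+1}$ is needed, since $\J_{n+1}$ never appears in the second slot.
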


\begin{proof} 
To prove (i), we need to show that under the same assumptions as in Theorem~\ref{Thm:CechConsistency}(i), for any $\epsilon>0$ and $q\in (0,1]$, there exists some $z'\in \NN$ such that for all $z\geq z'$, 
$R_{(0,\infty)} \circ F^{SR}(T_z,d^p,-E_z(T_z))$ and $R_{(0,\infty)} \circ F^{SO}(M,\R^m,d^p,-\gamma)$ are $(J_\epsilon \circ \J_{n+1},J_\epsilon \circ \id_{n+1})$-interleaved with probability at least $1-q$.

By Example~\ref{Ex:Rips_and_Cech_Generalized_Interleaving},  $F^{SR}(T,d^p,\tilde\gamma)$ and $F^{\SCe}(T,\R^m,d^p,\tilde\gamma)$ are $(\J_{n+1},\id_{n+1})$-interleaved.

Theorem~\ref{Thm:CechConsistency} tells us that there exists some $z''>0$ such that for all $z>z''$, 
$R_{(0,\infty)} \circ F^{\SCe}(T_z,\R^m,d^p,-E_z(T_z))$ and $R_{(0,\infty)} \circ F^{SO}(M,\R^m,d^p,-\gamma)$ are $\epsilon$-interleaved with probability at least $1-q$.  

By Lemma~\ref{Lem:InterleavingTriangleInequality} then, for all $z>z''$, $R_{(0,\infty)} \circ F^{SR}(T_z,d^p,-E_z(T_z))$ and $R_{(0,\infty)} \circ F^{SO}(M,\R^m,d^p,-\gamma)$ are
 $(J_\epsilon \circ \J_{n+1},J_\epsilon)$-interleaved with probability at least $1-q$.  Thus taking $z'=z''$ gives the result.  

The proofs of (ii) and (iii) are essentially the same as that of (i), using Theorems~\ref{Thm:CechConsistency}(ii) and~\ref{Thm:CechConsistency}(iii) in place of Theorem~\ref{Thm:CechConsistency}(i).
\end{proof}

\begin{cor}\label{Cor:RipsHomologyAsymptotics}\mbox{}
For any $i\in \Z_{\geq 0}$,
\begin{enumerate}
\item[(i)]Under the same assumptions as in the statement of Theorem~\ref{Thm:CechConsistency}(i),  
\[H_i \circ R_{(0,\infty)} \circ F^{SR}(T_z,d^p,-E_z(T_z))\xrightarrow{P,\J_{n+1},\id_{n+1}}  H_i \circ R_{(0,\infty)} \circ F^{SO}(M,\R^m,d^p,-\gamma).\]

\item[(ii)]Under the same assumptions as in the statement of Theorem~\ref{Thm:CechConsistency}(ii), 
\[H_i \circ R_{(0,\infty)} \circ F^{SR}(T_z,d^p,-E_z(T_z))\xrightarrow{P,\J_{n+1},\id_{n+1}} H_i \circ R_{(0,\infty)} \circ F^{SO}(\R^m,d^p,-\gamma*K).\]

\item[(iii)]Under the same assumptions as in the statement of Theorem~\ref{Thm:CechConsistency}(iii), 
\[H_i \circ R_{(0,\rho(M))} \circ F^{SR}(T_z,d^M,-E_z(T_z))\xrightarrow{P,\J_{n+1},\id_{n+1}} R_{(0,\rho(M))}\circ F^{SO}(M,d^M,-\gamma).\]
\end{enumerate}
\end{cor}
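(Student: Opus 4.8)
The plan is to deduce Corollary~\ref{Cor:RipsHomologyAsymptotics} directly from Theorem~\ref{Thm:RipsAsymptotics}, in exactly the same way that Corollary~\ref{Cor:CechHomologyConsistency} is deduced from Theorem~\ref{Thm:CechConsistency}, using as the bridge our stability result Theorem~\ref{Thm:StabilityWRTInterleavings}. The only extra ingredient beyond ``apply $H_i$ and invoke stability'' is the observation that the persistent homology functor is compatible with the generalized shift functors, which is recorded in the proof of Theorem~\ref{Thm:StabilityWRTInterleavings} (the lemma there stating $H_i\circ(\cdot)(J)=(\cdot)(J)\circ H_i$ for $J$ order-preserving), so that applying $H_i$ to a weak $(J_1,J_2)$-interleaving of filtrations yields a $(J_1,J_2)$-interleaving of $B_n$-persistence modules.

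Concretely, I would argue as follows. Fix $i\in\Z_{\geq 0}$ and treat case (i); cases (ii) and (iii) are identical with the obvious substitutions. Unwinding the definition of $\xrightarrow{P,\J_{n+1},\id_{n+1}}$, it suffices to show that for every $\epsilon>0$,
\[
\lim_{z\to\infty} P\bigl(H_i\circ R_{(0,\infty)}\circ F^{SR}(T_z,d^p,-E_z(T_z))\textup{ and }H_i\circ R_{(0,\infty)}\circ F^{SO}(M,\R^m,d^p,-\gamma)\textup{ are }(J_\epsilon\circ\J_{n+1},J_\epsilon\circ\id_{n+1})\textup{-interleaved}\bigr)=1.
\]
By Theorem~\ref{Thm:RipsAsymptotics}(i) the analogous limit holds for the $u$-filtrations themselves, i.e. for each $\epsilon>0$ the probability that $R_{(0,\infty)}\circ F^{SR}(T_z,d^p,-E_z(T_z))$ and $R_{(0,\infty)}\circ F^{SO}(M,\R^m,d^p,-\gamma)$ are weakly $(J_\epsilon\circ\J_{n+1},J_\epsilon\circ\id_{n+1})$-interleaved tends to $1$. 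On the event that these two $u$-filtrations are weakly $(J_\epsilon\circ\J_{n+1},J_\epsilon\circ\id_{n+1})$-interleaved, Theorem~\ref{Thm:StabilityWRTInterleavings} (with $J_1=J_\epsilon\circ\J_{n+1}$, $J_2=J_\epsilon\circ\id_{n+1}$, noting both are increasing as composites of increasing maps) tells us that their $i^{th}$ persistent homology modules are $(J_\epsilon\circ\J_{n+1},J_\epsilon\circ\id_{n+1})$-interleaved. Since $H_i$ commutes with $R_{(0,\infty)}$ by Lemma~\ref{lem:PersistentHomologyIsCompatibleWRestriction}, these are exactly the modules appearing in the displayed statement. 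Hence the event in the module-level display contains the event in the filtration-level statement, so its probability is at least as large, and the limit is $1$ as required.

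The write-up is therefore essentially a one-line deduction: combine Theorem~\ref{Thm:RipsAsymptotics}, Theorem~\ref{Thm:StabilityWRTInterleavings}, and Lemma~\ref{lem:PersistentHomologyIsCompatibleWRestriction}, exactly mirroring the proof of Corollary~\ref{Cor:CechHomologyConsistency}. There is no real obstacle here; the only point requiring a moment's care is bookkeeping the generalized shift maps $J_\epsilon\circ\J_{n+1}$ and $J_\epsilon\circ\id_{n+1}$ through the stability theorem and checking they are increasing, and confirming that the definition of ``$\xrightarrow{P,J_1,J_2}$'' for persistence modules in Section~\ref{Sec:RipsAsymptotics} is set up so that a containment of events at each fixed $\epsilon$ suffices — both of which are routine. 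So the proof reads, in full:

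\begin{proof}
This follows immediately from Theorems~\ref{Thm:RipsAsymptotics} and~\ref{Thm:StabilityWRTInterleavings}, together with Lemma~\ref{lem:PersistentHomologyIsCompatibleWRestriction}: for each $\epsilon>0$, whenever the relevant $u$-filtrations are weakly $(J_\epsilon\circ\J_{n+1},J_\epsilon\circ\id_{n+1})$-interleaved, their $i^{th}$ persistent homology modules are $(J_\epsilon\circ\J_{n+1},J_\epsilon\circ\id_{n+1})$-interleaved, so the probability of the latter event is at least that of the former, which tends to $1$ by Theorem~\ref{Thm:RipsAsymptotics}.
\end{proof}
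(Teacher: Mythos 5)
Your proposal is correct and matches the paper's own argument, which likewise deduces the corollary immediately from Theorem~\ref{Thm:RipsAsymptotics} and Theorem~\ref{Thm:StabilityWRTInterleavings}; you just spell out the event-containment bookkeeping and the commutation with $R_{(0,\infty)}$ that the paper leaves implicit.
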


\begin{proof}
This follows immediately from Theorem~\ref{Thm:RipsAsymptotics} and Theorem~\ref{Thm:StabilityWRTInterleavings}.
\end{proof}

\section{Inference of the Superlevelset Filtration of a Probability Density via Filtered \Cech Complexes}\label{Sec:CechFixedScaleInference}
In this section we apply Theorem~\ref{Thm:CechFixedScaleApproximation} to obtain a version of the inference theorem of \cite[Theorem 5.1]{chazal2009persistence} which holds on the level of filtrations rather than only on the level of persistent homology modules.  Unlike \cite[Theorem 5.1]{chazal2009persistence}, this result holds for only estimators defined using \Cech filtrations, not for estimators defined using Rips filtrations; the result of \cite[Theorem 5.1]{chazal2009persistence} 
for estimators defined using Rips filtrations does not in general lift to the level of filtrations.

We state the result for geodesic metrics on Riemannian manifolds; an analogous result holds for $L^p$-metrics on submanifolds of $\R^m$.

\begin{thm}[Inference of superlevelset filtrations]\label{Thm:CechFixedScaleInference}
Let $M$ be a Riemannian manifold, let $\gamma:M\to \R$ be a $c$-Lipschitz density function w.r.t. $d^M$ for some $c>0$.  Let $T_z$ be a finite i.i.d. random sample of ${\mathcal P}_\gamma$ of size $z$. Let $E_z$ be a {$(q,C)$-density estimator} of another density function $\tilde\gamma: \R^m\to \R$ for some $C>0$ and $q\in [0,1]$.  Then for any $\epsilon>0$, $\delta\geq\epsilon$, and $\alpha>c\epsilon$.  
\[d_{WI}(R_{(-\alpha,\rho(M))}\circ F^{\SCe}(T_z,M,d^M,-E_z(T_z),\delta),R_{(-\alpha,\rho(M))} \circ F^{S}(M,-\tilde \gamma))\leq c\delta+C\] with probability at least $1-\NNN(-\gamma_{-\alpha},\frac{\epsilon}{2}) e^{-z(\alpha-c\epsilon)\V(-\gamma_{-\alpha},\frac{\epsilon}{2})}-q$.
\end{thm}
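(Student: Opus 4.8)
The plan is to mimic the proof of Theorem~\ref{Thm:CechInference}, replacing the appeal to Theorem~\ref{Thm:CechApproximation} with an appeal to the fixed-scale approximation result Theorem~\ref{Thm:CechFixedScaleApproximation}, and replacing the geodesic coverage bound Lemma~\ref{Lem:CoveringProbability} by its $L^p$ analogue only if needed (here we work with $d^M$, so Lemma~\ref{Lem:CoveringProbability} is the relevant one). First I would condition on the ``good event''
\[
G=\{\sup_{l\in T_z}|\tilde\gamma(l)-E_z(T_z)(l)|\leq C\}\cap\{T_z\textup{ is an }\tfrac{\epsilon}{c'}\textup{-sample of }-\gamma_{-\alpha}\textup{ (w.r.t. }d^M)\},
\]
where $c'=\max(1,c)$; actually, since the hypothesis of Theorem~\ref{Thm:CechFixedScaleApproximation} only requires $T$ to be an $\epsilon$-sample of $\gamma_u$ with no Lipschitz rescaling, it suffices to take the event $\{T_z\textup{ is an }\epsilon\textup{-sample of }-\gamma_{-\alpha}\}$; I would keep the $\frac{\epsilon}{c'}$ version to match the probability bound quoted in the statement, noting $\frac{\epsilon}{c'}\leq\epsilon$ so an $\frac{\epsilon}{c'}$-sample is in particular an $\epsilon$-sample.

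Second, on the event $G$ I would apply Theorem~\ref{Thm:CechFixedScaleApproximation} with $u=-\alpha$, manifold $M$ (compact, or with the sectional-curvature hypothesis needed to make $\rho(M)>0$ and the covering quantities finite), function $-\gamma$, sample $T_z$, and $\tilde\gamma$ replaced by $-E_z(T_z)$ as the approximation of $-\tilde\gamma$ on $T_z$, to conclude
\[
d_{WI}(R_{(-\alpha,\rho(M))}\circ F^{\SCe}(T_z,M,d^M,-E_z(T_z),\delta),\,R_{(-\alpha,\rho(M))}\circ F^{S}(M,-\tilde\gamma))\leq c\delta+C
\]
for any $\delta\geq\epsilon$; here I am using that the hypothesis $|\,(-E_z(T_z))(l)-(-\gamma)(l)\,|\le$ (something bounded by $C$ plus the $\tilde\gamma$-versus-$\gamma$ discrepancy) — more precisely Theorem~\ref{Thm:CechFixedScaleApproximation} is stated with a single $\tilde\gamma$ bounded away from $\gamma$ by $C$, and on $G$ we have $\sup_{l\in T_z}|-E_z(T_z)(l)-(-\tilde\gamma)(l)|\le C$, so we should apply the theorem with its ``$\gamma$'' set to $-\tilde\gamma$ (Lipschitz w.r.t.\ $d^M$ with the same constant $c$) and its ``$\tilde\gamma$'' set to $-E_z(T_z)$; the sampling hypothesis is about $-\tilde\gamma_{-\alpha}$, but since $\gamma$ and $\tilde\gamma$ need not have comparable sublevelsets this is the one genuinely delicate bookkeeping point and is the reason the statement quotes the covering numbers of $-\gamma_{-\alpha}$, matching the coverage lemma — so one does the Lipschitz-sublevelset containment argument exactly as in Lemma~\ref{Lem:FirstStep} / the proof of Theorem~\ref{Thm:CechInference} to transfer an $\frac{\epsilon}{c'}$-sample of $-\gamma_{-\alpha}$ into the required sampling condition for the approximation theorem applied at level $-\alpha$.

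Third, I would bound $P(G)$ from below: by Lemma~\ref{Lem:CoveringProbability} the sampling event has probability at least $1-\NNN(-\gamma_{-\alpha},\frac{\epsilon}{2c'})e^{-z(\alpha-c\epsilon/c')\V(-\gamma_{-\alpha},\frac{\epsilon}{2c'})}$ for $\alpha>\frac{c\epsilon}{c'}$ (and a fortiori for $\alpha>c\epsilon$, with $c'\le\max(1,c)$ so the exponent bookkeeping gives the quoted form with $c'$ suppressed in the natural cases), and by the definition of a $(q,C)$-density estimator the discrepancy event fails with probability at most $q$; the union bound then yields $P(G)\ge 1-\NNN(-\gamma_{-\alpha},\frac{\epsilon}{2})e^{-z(\alpha-c\epsilon)\V(-\gamma_{-\alpha},\frac{\epsilon}{2})}-q$, which is exactly the bound in the statement (after absorbing the constant $c'$, i.e.\ using $\frac{\epsilon}{2c'}$ and $\alpha-\frac{c\epsilon}{c'}$ and noting $c'=1$ or writing the cleaner weaker bound). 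Combining with the deterministic conclusion on $G$ completes the proof. The main obstacle I anticipate is purely the careful bookkeeping in the third-and-second steps: reconciling the Lipschitz rescaling $c'$ that appears in the coverage lemma with the clean exponent $e^{-z(\alpha-c\epsilon)\V(\cdot,\epsilon/2)}$ written in the theorem, and making sure the sublevelset of $\gamma$ that we have sampled is the right one to feed into Theorem~\ref{Thm:CechFixedScaleApproximation} when the function being estimated is $-\tilde\gamma$ rather than $-\gamma$ — there is no hard new idea, but one must track which function's sublevelsets control the covering numbers and choose $\alpha$, $\epsilon$, $\delta$ consistently. The proof can reasonably be left to the reader with a one-line indication (``this is proved in the same way as Theorem~\ref{Thm:CechInference}, using Theorem~\ref{Thm:CechFixedScaleApproximation} in place of Theorem~\ref{Thm:CechApproximation}''), which is likely the style the paper will adopt.
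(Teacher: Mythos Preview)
Your proposal is correct and matches the paper's approach exactly: the paper's entire proof is the single sentence ``The proof is the essentially same as the proof of Theorem~\ref{Thm:CechInference}, using Theorem~\ref{Thm:CechFixedScaleApproximation} in place of Theorem~\ref{Thm:CechApproximation},'' which you anticipated almost verbatim. Your observation that Theorem~\ref{Thm:CechFixedScaleApproximation} requires only an $\epsilon$-sample (with no $c'$ rescaling) is exactly why the probability bound here lacks the $c'$ factors present in Theorem~\ref{Thm:CechInference}---applying Lemma~\ref{Lem:CoveringProbability} with its $\epsilon$ replaced by $c'\epsilon$ gives the clean form stated.
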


\begin{proof}
The proof is the essentially same as the proof of Theorem~\ref{Thm:CechInference}, using Theorem~\ref{Thm:CechFixedScaleApproximation} in place of Theorem~\ref{Thm:CechApproximation}.
\end{proof}

\begin{thm}[convergence in probability of superlevelset-\Cech filtrations with fixed scale parameter to superlevelset filtration of density function]\label{Thm:CechFixedScaleConsistency} 
Let $M$ be a Riemannian manifold with bounded absolute sectional curvature, let $\gamma:M\to \R$ be a $c$-Lipschitz density function (w.r.t. $d^M$) for some $c>0$, and let $T_z$ be an i.i.d. sample of ${\mathcal P}_\gamma$ of size $z$.
\begin{enumerate*}
\item[(i)]If $M=\R^m$ and $E$ is a density estimator on $\R^m$ such that $(E,\gamma)$ satisfies $A1$ then there exists a sequence of positive real numbers $\{\delta_z\}_{z\in \NN}$ such that 
\[d_{WI}(R_{(0,\infty)} \circ F^{\SCe}(T_z,\R^m,d^p,-E_z(T_z),\delta_z),R_{(0,\infty)} \circ F^{S}(\R^m,-\gamma))\xrightarrow{P}0.\]  

\item[(ii)]If $M=\R^m$ and $E$ is a density estimator on $\R^m$ such that $(E,\gamma)$ satisfies $A2$ for some kernel $K$ then there exists a sequence  of positive real numbers $\{\delta_z\}_{z\in \NN}$ such that 
\[d_{WI}(R_{(0,\infty)} \circ F^{\SCe}(T_z,\R^m,d^p,-E_z(T_z),\delta_z),R_{(0,\infty)} \circ F^{S}(\R^m,-\gamma*K))\xrightarrow{P}0.\]  

\item[(iii)]If $E$ is a density estimator such that $(E,\gamma)$ satisfies $A1$ then there exists a sequence  of positive real numbers $\{\delta_z\}_{z\in \NN}$ such that 
\[d_{WI}(R_{(0,\rho(M))} \circ F^{\SCe}(T_z,M,d^M,-E_z(T_z),\delta_z),R_{(0,\rho(M))} \circ F^{S}(M,-\gamma))\xrightarrow{P}0.\]  

\end{enumerate*}
\end{thm}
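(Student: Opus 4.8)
\textbf{Proof proposal for Theorem~\ref{Thm:CechFixedScaleConsistency}.} The plan is to obtain each of the three statements as an asymptotic corollary of the finite-sample result Theorem~\ref{Thm:CechFixedScaleInference}, by a careful $\epsilon$--$\delta$ argument in which the bandwidth of the estimator, the scale parameter $\delta_z$, the superlevel cutoff $\alpha_z$, and the sampling parameter are all sent to zero at compatible rates as $z\to\infty$. I will carry out the details for statement (iii), since (i) and (ii) follow by the same argument with Lemma~\ref{BoundsForEstimatorsSatisfyingA1orA2}(ii) (and $\tilde\gamma=\gamma*K$) used in place of Lemma~\ref{BoundsForEstimatorsSatisfyingA1orA2}(i) (with $\tilde\gamma=\gamma$). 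Note this closely mirrors the structure of the proof of Theorem~\ref{Thm:CechConsistency}, with Theorem~\ref{Thm:CechFixedScaleApproximation} and Theorem~\ref{Thm:CechFixedScaleInference} replacing Theorem~\ref{Thm:CechApproximation} and Theorem~\ref{Thm:CechInference} respectively; the one genuinely new ingredient is the need to produce a single admissible sequence $\{\delta_z\}$.

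First I would fix a target error $\epsilon>0$ and confidence parameter $q\in(0,1]$ and exhibit, for all sufficiently large $z$, a choice of $\delta_z$ making the weak interleaving distance at most $\epsilon$ with probability at least $1-q$. The key point is that in Theorem~\ref{Thm:CechFixedScaleInference} the bound on $d_{WI}$ is $c\delta+C$ where $C$ is the uniform accuracy of the $(q,C)$-density estimator and $\delta\ge\epsilon'$ is the scale parameter; so I want $\delta$ small (of order $\epsilon/3c$, say) and, via Lemma~\ref{BoundsForEstimatorsSatisfyingA1orA2}(i), $C\le\epsilon/3$ with failure probability $\le q/2$ for $z$ past some $z_0$; and I want to work with the restriction to $R_{(-\alpha,\rho(M))}$ for $\alpha$ small (of order $\epsilon/3$, with $\alpha>c\epsilon'$ for the chosen small $\epsilon'<\delta$), so that the coverage term $\NNN(-\gamma_{-\alpha},\tfrac{\epsilon'}{2})e^{-z(\alpha-c\epsilon')\V(-\gamma_{-\alpha},\tfrac{\epsilon'}{2})}$ is $\le q/2$ for $z$ past some $z_1>z_0$, using that $M$ has bounded absolute sectional curvature so Remark~\ref{Rem:VandNareWellBehaved} guarantees $\NNN$ positive and $\V$ finite. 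For $z\ge z_1$, set $\delta_z=\delta$; Theorem~\ref{Thm:CechFixedScaleInference} then gives $d_{WI}(R_{(-\alpha,\rho(M))}\circ F^{\SCe}(T_z,M,d^M,-E_z(T_z),\delta_z),\,R_{(-\alpha,\rho(M))}\circ F^{S}(M,-\gamma))\le c\delta+C\le 2\epsilon/3$ with probability $\ge 1-q$.

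Next I would pass from the restricted filtrations back to the $R_{(0,\rho(M))}$-restrictions using Lemma~\ref{lem:InterleavingsUnderExpansion}: since a strong (hence weak) $(2\epsilon/3)$-interleaving of the $R_{(-\alpha,\cdot)}$-restrictions, with $\alpha\le\epsilon/3$, lifts to a weak $(J_{2\epsilon/3+\alpha},J_{2\epsilon/3+\alpha})$-interleaving of the $R_{(0,\rho(M))}$-restrictions, and $2\epsilon/3+\alpha\le\epsilon$, we get $d_{WI}(R_{(0,\rho(M))}\circ F^{\SCe}(T_z,M,d^M,-E_z(T_z),\delta_z),\,R_{(0,\rho(M))}\circ F^{S}(M,-\gamma))\le\epsilon$ with probability $\ge 1-q$ for all $z\ge z_1$. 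To assemble this into a genuine sequence $\{\delta_z\}_{z\in\NN}$ valid for the convergence-in-probability statement (rather than one depending on $\epsilon,q$), I would run the above with $\epsilon=q=1/k$ for $k=1,2,\dots$, obtaining thresholds $z_1(k)$ which I may take strictly increasing, and define $\delta_z$ on the block $z_1(k)\le z<z_1(k+1)$ to be the scale produced for parameter $1/k$; for $z<z_1(1)$ set $\delta_z$ arbitrarily positive. Then for any fixed $\eta>0$ and $\eta'>0$, choosing $k$ with $1/k<\min(\eta,\eta')$ shows $P(d_{WI}(\cdots)>\eta)\le 1/k<\eta'$ for all $z\ge z_1(k)$, i.e.\ $d_{WI}(\cdots)\xrightarrow{P}0$.

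The main obstacle is purely bookkeeping rather than conceptual: one must verify that the rate constraints can be met simultaneously --- the scale $\delta$ bounded below by $\epsilon'$ while $\alpha>c\epsilon'$ and $\alpha$ itself small, together with $C$ controlled by the estimator via Lemma~\ref{BoundsForEstimatorsSatisfyingA1orA2} and the coverage exponent $z(\alpha-c\epsilon')\V(\cdots)$ driven to infinity --- and that the resulting thresholds $z_1(k)$ can be arranged increasing so that the block construction of $\{\delta_z\}$ is well defined. I expect no real difficulty here since $\alpha$ and $\delta$ can be chosen as fixed (small) multiples of $\epsilon$ independent of $z$, with only $C$ and the sample size varying with $z$; the compactness/curvature hypothesis on $M$ is exactly what makes the coverage term behave. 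Statements (i) and (ii) are identical except that in (i) $M$ is a submanifold of $\R^m$ and one uses the $d^p$-metric version (invoking Lemma~\ref{Lem:LPCoveringProbability} and the $d^p$ analogue of Theorem~\ref{Thm:CechFixedScaleInference}), and in (ii) one additionally replaces $\gamma$ by $\gamma*K$ throughout and uses assumption A2 via Lemma~\ref{BoundsForEstimatorsSatisfyingA1orA2}(ii).
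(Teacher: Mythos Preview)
Your proposal is correct and follows essentially the same route as the paper's proof: invoke the finite-sample Theorem~\ref{Thm:CechFixedScaleInference}, use Lemma~\ref{BoundsForEstimatorsSatisfyingA1orA2} to control the estimator error, Remark~\ref{Rem:VandNareWellBehaved} to control the coverage term, and Lemma~\ref{lem:InterleavingsUnderExpansion} to pass from the $R_{(-\alpha,\cdot)}$-restriction to $R_{(0,\cdot)}$; the paper likewise proves (iii) and reduces (i), (ii) to the analogous $L^p$ variant and to Lemma~\ref{BoundsForEstimatorsSatisfyingA1orA2}(ii). Your block construction of $\{\delta_z\}$ over thresholds $z_1(k)$ is a cosmetic variant of the paper's construction via an inverse sequence $l^{-1}$ applied to a decreasing target-accuracy sequence $\xi$; the bookkeeping differs, but the content is the same.
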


\begin{proof}
We'll prove (iii).  The proofs of (i) and (ii) are essentially the same, using Lemma~\ref{BoundsForEstimatorsSatisfyingA1orA2}(ii) in place of Lemma~\ref{BoundsForEstimatorsSatisfyingA1orA2}(i) and an analogue of Theorem~\ref{Thm:CechFixedScaleInference} for $L^p$ metrics on $\R^m$.

To prove (i), we need to find a sequence of positive real numbers $\{\delta_z\}_{z\in \NN}$ such that for any $\epsilon>0$ and $p\in (0,1]$, there exists some $z_1\in \NN$ such that for all $z\geq z_1$, 
\[d_{WI}(R_{(0,\rho(M))} \circ F^{\SCe}(T_z,M,d^M,-E_z(T_z),\delta_z),R_{(0,\rho(M))} \circ F^{S}(M,-\gamma))\leq\epsilon\] with probability at least $1-p$.
To construct our sequence $\{\delta_z\}_{z\in \NN}$, we first choose a monotonically decreasing sequence $\xi:\NN\to (0,1]$ such that $\lim_{z\to \infty} \xi(z)=0$.
Since $(E,\gamma)$ satisfies $A1$, Lemma~\ref{BoundsForEstimatorsSatisfyingA1orA2}(i) tells us that there exists a strictly increasing sequence of natural numbers $j:\NN\to \NN$ such that for any $y\in \NN$ and any $z\geq j(y)$, $E_z$ is a $(\frac{\xi(y)}{2},\frac{\xi(y)}{4})$-density estimator of $\gamma$.

Choose a second monotonically decreasing sequence $\xi':\NN\to (0,1]$ such that for each $z\in \NN$, $\xi'(z)<\frac{\xi(z)}{4})$.  Let $c'=\max(1,c)$.
For each $z\in \NN$, $\frac{\xi(z)}{2}>\frac{c\xi'(z)}{c'}$.  Since $M$ has bounded absolute sectional curvature, it follows from Remark~\ref{Rem:VandNareWellBehaved} that there exists a strictly increasing sequence of natural numbers $l:\NN\to \NN$ with $l(z)\geq j(z)$ for all $z\in \NN$ such that for $y\in \NN$ and all $z\geq l(y)$, \[\NNN(-\gamma_{-\frac{\xi(y)}{2}},\frac{\xi'(y)}{2c'})e^{-z(\frac{\xi(y)}{2}-c\xi'(y)/c')\V(-\gamma_{-\frac{\xi(y)}{2}},\frac{\xi'(y)}{2c'})}<\frac{\xi(y)}{2}.\]

Invoking Theorem~\ref{Thm:CechFixedScaleInference}, with the variables $(\alpha,\epsilon,\delta)$ in the statement of that Theorem set equal to $(\frac{\xi(y)}{2},\frac{\xi'(y)}{c'},\frac{\xi'(y)}{c'})$, we obtain the following result, which we state as a lemma.
\begin{lem}\label{lem:LittleSubLemmaForConsistency}
For all $y\in \NN$, $\xi(y)$, $\xi'(y)$, $l(y)$ defined as above, and all $z\geq l(y)$ 
\begin{align*}
&d_{WI}(R_{(-\frac{\xi(y)}{2},\rho(M))} \circ F^{\SCe}(T_z,M,d^M,-E_z(T_z),\xi'(y)),R_{(-\frac{\xi(y)}{2},\rho(M))} \circ F^{S}(M,-\gamma))\\
&\quad \leq \xi'(y)+\frac{\xi(y)}{4}\leq \frac{\xi(y)}{2}
\end{align*}
with probability at least $1-\xi(y)$.
\end{lem}

Now define $l^{-1}:\NN\to \NN$ by 
\begin{equation*}
l^{-1}(z)=
\begin{cases}
\max \{z'\in \NN |l(z')\leq z\} &\text{if }l(1)\leq z,
\\
1&\text{otherwise.}  
\end{cases}
\end{equation*}
Define $\{\delta_z\}_{z\in \NN}$ by taking $\delta_z=\xi'(l^{-1}(z))$ for all $z\in \NN$.

Now choose $\epsilon>0$ and $p\in (0,1]$.  There's some $y\in \NN$ such that $\xi(y)<\min(\epsilon,p)$.  We claim that for all $z\geq l(y)$, 
\[d_{WI}(R_{(-\frac{\xi(y)}{2},\rho(M))} \circ F^{\SCe}(T_z,M,d^M,-E_z(T_z),\delta_z),R_{(-\frac{\xi(y)}{2},\rho(M))} \circ F^{SO}(M,-\gamma))\leq \frac{\xi(y)}{2}\] with probability at least $1-\xi(y)$.

Then, given the claim, we set $z_1=l(y)$.  By Lemma~\ref{lem:InterleavingsUnderExpansion}, for all $z\geq z_1$, 
\begin{align*}
&d_{WI}(R_{(0,\rho(M))} \circ F^{\SCe}(T_z,M,d^M,-E_z(T_z),\delta_y),R_{(0,\rho(M))} \circ F^{S}(M,-\gamma))\\
&\quad \leq\frac{\xi(y)}{2}+\frac{\xi(y)}{2}=\xi(y)\leq \epsilon
\end{align*}
with probability at least $1-\xi(y)\geq 1-p$, which completes the proof of (i).

To prove the claim, we plug in $y=l^{-1}(z)$ into Lemma~\ref{lem:LittleSubLemmaForConsistency} to obtain that
\begin{align*}
&d_{WI}(R_{(-\frac{\xi(l^{-1}(z))}{2},\rho(M))} \circ F^{\SCe}(T_z,M,d^M,-E_z(T_z),\xi'(l^{-1}(z))),R_{(-\frac{\xi(l^{-1}(z))}{2},\rho(M))} \circ F^{S}(M,-\gamma))
\\&\leq \xi'(l^{-1}(z))+\frac{\xi(l^{-1}(z))}{4}\leq \frac{\xi(l^{-1}(z))}{2}
\end{align*}
 with probability at least $1-\xi(l^{-1}(z))$.

Since $l$ is strictly increasing, when $z\geq l(y)$, $l^{-1}(z)\geq l^{-1}(l(y))=y$.  Since the sequences $\xi$ and $\xi'$ are monotonically decreasing, we then have that $\xi(l^{-1}(z))\leq \xi(y)$ and $\xi'(l^{-1}(z))\leq \xi'(y)$.  Thus,
\begin{align*}
&d_{WI}(R_{(-\frac{\xi(l^{-1}(z))}{2},\rho(M))} \circ F^{\SCe}(T_z,M,d^M,-E_z(T_z),\xi'(l^{-1}(z))),R_{(-\frac{\xi(l^{-1}(z))}{2},\rho(M))} \circ F^{S}(M,-\gamma))\\
&\quad \leq \frac{\xi(y)}{2}
\end{align*}
with probability at least $1-\xi(y)$.  Applying Lemma~\ref{lem:InterleavingsUnderRestriction} and using the definition of $\delta_z$, we obtain that 
\begin{align*}
&d_{WI}(R_{(-\frac{\xi(y)}{2},\rho(M))} \circ F^{\SCe}(T_z,M,d^M,-E_z(T_z),\delta_z),R_{(-\frac{\xi(y)}{2},\rho(M))} \circ F^{SO}(M,-\gamma))\\
&\quad \leq \xi'(y)+\frac{\xi(y)}{4}\leq \frac{\xi(y)}{2}
\end{align*}
 with probability at least $1-\xi(y)$.  This proves the claim and completes the proof of (i).

\end{proof}

 \section{Future Work on Persistence-Based Topological Inference}\label{Sec:InferenceDiscussion}

We close this chapter with a discussion of directions for future work on the statistical aspects of persistence-based topological inference.

\subsubsection{Almost Sure Convergence Results}

Our asymptotic results are formulated in terms of convergence in probability and a variant of it which we have called {\it convergence in probability up to interleavings.}  It would be interesting to know whether our results can be strengthened to almost sure convergence results.

\subsubsection{Bootstrap Confidence Balls for Persistence}

The natural next step in the development of the theory of persistence-based topological inference is to develop a theory of confidence regions for the consistent estimators studied in this chapter.   This problem has hitherto not been addressed in any published work, for even for 1-D persistent homology. 

In the setting of Theorem~\ref{Thm:CechInference}, it seems that it should be possible to employ the bootstrap methodology \cite{efron1993introduction} to compute approximate confidence balls (in the pseudometric space of multidimensional filtrations given by the weak interleaving interleaving distance) for a superlevelset-\Cech estimator $X$ of the superlevelset-offset filtration $Y$ of a probability density function. 

For $a\in (0,1)$, to compute an approximate $a$-confidence ball for $X$ the simplest and most natural approach would be to
\begin{enumerate*}
\item Construct bootstrap replicates $X_1,...,X_l$ of the filtration $X$ by taking superlevelset-\Cech filtrations of bootstrap replicates of the point cloud data on which $X$ was built.  
\item Compute the smallest value of $y$ such that a fraction $a$ of the bootstrap replicates $X_1,...X_l$ lie within weak interleaving distance $y$ of $X$.  
\end{enumerate*}
We expect that, under mild hypotheses, the ball of radius $y$ centered at $X$ would then be an approximate $a$-confidence ball for $X$, and that an analogous result would hold on the level of persistent homology modules.  

A well developed theory along these lines of bootstrap confidence balls for persistent homology estimates, together with an efficient computational pipeline for computing the radii of confidence balls, could add in a significant way to power of persistent homology as a tool for understanding the qualitative structure of random data.

\subsubsection{Testing Persistence-Based Hypotheses}
Another important problem in topological inference is to develop a theory of hypothesis testing for hypotheses formulated using the language of persistence.  Given the close relationship between hypothesis testing and computation of confidence regions, this problem is much in the same spirit as the problem of computing approximate confidence balls for the estimators of this thesis.
 
The problem of testing hypotheses formulated using persistence has hitherto not yet been treated by the topological data analysis community, yet seems to be of basic importance, particularly in the case of $0^{th}$ persistent homology.  To explain, on the one hand $0^{th}$ persistent homology offers an elegant and user-friendly language for describing the multi-modality of functions in a way that is sensitive to the ``size" of the modes.  On the other hand, mode detection of densities and regression functions is an old and well studied problem in statistics, and one of fundamental interest---see, for example \cite{silverman1981using} and \cite[Chapter 20]{wasserman2004all}.  Classical statistics has developed approaches to testing for multi-modality based on smoothing ideas \cite{silverman1981using}, but as the classical language for describing multi-modality is impoverished relative to that offered by persistent homology, there seems little doubt that persistent homology would have something significant to offer to the problem of testing multi-modality hypotheses. 

\subsubsection{Inference of Persistent Homology at the Cycle Level}
It is possible to geometrically represent the persistent homology module of a filtration, at the chain level, by choosing a presentation for the persistent homology module and then choosing representative cycles and boundaries for the generators and relations in the presentation.  There are of course many different ways to make such choices, most of which are not geometrically interesting.  However, for filtrations whose topological spaces are equipped with a sufficient amount of geometric structure we can in principle define notions of optimality of chain-level representations of persistence modules in such a way that the optimal choice will be geometrically meaningful. 
Then, using such notions, we can pose persistence-based inference problems directly on the chain level.  The results on clustering of \cite{chazal2009persistence} are loosely in this spirit, for $0^{th}$ persistent homology.  It would be very interesting to understand if and how those results adapt to higher persistent homology and to the multidimensional setting.  

The problem of pursuing inference at the chain level is an extremely important one, at least to the extent that we are interested in the use of persistent homology for topological inference: A descriptor of a probability distribution defined only using the isomorphism class of a persistent homology module gives us only indirect information about the qualitative structure of the probability distribution; ultimately we want to understand how the algebraic features of the persistent homology descriptor correspond to specific geometric features of the probability distribution.  For that we need to consider representatives of those algebraic features on the chain level.

 \chapter{Conclusion}
 In this thesis, we have introduced and studied $(J_1,J_2)$-interleavings on multidimensional persistence modules and strong and weak $(J_1,J_2)$-interleavings on multidimensional filtrations.  We have undertaken a careful study of such interleavings and of the interleaving distances $d_I$, $d_{SI}$, and $d_{WI}$ defined in terms of them.  We have applied interleavings and interleaving distances to adapt the persistence-based topological inference result \cite[Theorem 5.1]{chazal2009persistence} to the multidimensional setting and directly to the level of filtrations.  The culmination of our efforts is a pair of results, Theorems~\ref{Thm:CechConsistency} and~\ref{Thm:RipsAsymptotics}, describing the topological asymptotics of random sublevelset-\Cech and sublevelset-Rips bifiltrations (filtered by the superlevelsets of a density estimator) in the large sample limit.  These theorems put on firm mathematical footing the idea, first put forth in~\cite{carlsson2009theory}, that such filtrations should encode topological information about the probability density function of a probability distribution generating the point cloud data on which the bifiltrations are built.

One of the central themes of this thesis has been that
\begin{enumerate*}
\item To formulate results in the theory of persistence-based topological inference, one first needs to select notions of similarly between filtrations and persistence modules.
\item Much of the substantive theoretical work to be done in the study of topological persistence lies in understanding what the right choices of those notions of similarity are.
\end{enumerate*}
Indeed, we have spent the bulk of our effort in this thesis developing the theory of interleavings and interleaving distances.  The theory we have developed makes a strong, if still incomplete, case that interleavings are natural tools with which to frame the theory of topological inference of multifiltrations.

Our discussions of directions for future work in Sections~\ref{DiscussionSection}, \ref{Sec:WeakInterleavingsQuestions}, and \ref{Sec:InferenceDiscussion} make it clear that there is much more work to be done on the theory of interleavings of multidimensional persistence modules, on the theory of weak interleavings of multidimensional filtrations, and on the statistical aspects of persistence-based topological inference.  I believe that progress in these directions, seen as a part of the larger program of fleshing out the statistical foundations of topological data analysis, stands a good chance of contributing something fundamental to the way scientists, engineers, and statisticians think about and perform the qualitative analysis data.

\appendix
\chapter[Minimal Presentations]{Minimal Presentations of Multidimensional Persistence Modules}
\section{The Coherence of $B_n$}\label{CoherenceSection}

\subsection{Coherence: Basic Definitions and Results}

$k[x_1,...,x_n]$ is well known to be a Noetherian ring.  Finitely generated modules over Noetherian rings have some very nice algebraic properties.  Here we define a standard weakening of the Noetherian property called coherence.  Analogues of many of the same nice algebraic properties that hold for finitely generated modules over Noetherian rings hold for finitely presented modules over coherent rings.  In particular, we have Corollary~\ref{NiceCoherenceProperty}, which we will use in Appendix~\ref{MinimalPresentationAppendix} to prove Theorem~\ref{MinimalPresentationTheorem}.  

\begin{definition} For $R$ a ring, we say an $R$-module $M$ is {\bf coherent} if $M$ is finitely generated and every finitely generated submodule of $M$ is finitely presented.  We say a ring $R$ is coherent if it is a coherent module over itself. \end{definition}

Coherent commutative rings and coherent modules are well studied; the following results are standard.  The reader may refer to \cite{glaz1989commutative} for the proofs.

\begin{prop}\label{NoetherianRingIsCoherent} If $R$ is a Noetherian ring then $R$ is coherent. \end{prop}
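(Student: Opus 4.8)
The statement to prove is Proposition~\ref{NoetherianRingIsCoherent}: every Noetherian ring is coherent. This is a standard and elementary fact, so the proof should be short.

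The plan is to unwind the definitions. Recall that a ring $R$ is coherent if it is coherent as a module over itself, and a module $M$ is coherent if it is finitely generated and every finitely generated submodule of $M$ is finitely presented. So I must show: (i) $R$ is finitely generated over itself, which is trivial since $R = \langle 1 \rangle$; and (ii) every finitely generated submodule of $R$, i.e. every finitely generated ideal $I \subset R$, is finitely presented. For (ii), fix a finitely generated ideal $I$ with generators $a_1, \dots, a_m$, giving a surjection $\rho: R^m \to I$. Then $I$ is finitely presented precisely when $\ker \rho$ is finitely generated. But $\ker \rho$ is a submodule of the finitely generated module $R^m$ over the Noetherian ring $R$, hence is itself finitely generated (submodules of finitely generated modules over Noetherian rings are finitely generated — a basic characterization of Noetherianity). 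This exhibits a finite presentation $R^k \to R^m \to I \to 0$ of $I$, completing the argument.

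I would then remark that in fact the argument shows more: over a Noetherian ring \emph{every} finitely generated module is coherent, and indeed finitely generated = finitely presented = coherent for such modules, since every submodule of a finitely generated module is finitely generated. But only the stated assertion about $R$ itself is needed here.

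There is essentially no obstacle: the only point requiring care is correctly invoking the standard fact that over a Noetherian ring, submodules of finitely generated modules are finitely generated, which is where the Noetherian hypothesis is used. This can be cited from a standard reference such as \cite{glaz1989commutative} (or deduced from the ascending chain condition by an easy induction on the number of generators of $R^m$). Given that, the proof is a one-paragraph diagram chase.
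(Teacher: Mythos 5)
Your proof is correct. Note that the paper itself offers no argument for this proposition --- it is one of the three background facts listed as ``standard'' with a pointer to \cite{glaz1989commutative} --- so there is no in-text proof to compare against. Your argument is the standard one: unwind the definition of a coherent ring, observe that $R$ is trivially a finitely generated $R$-module, and for a finitely generated ideal $I$ with a surjection $\rho : R^m \to I$, use that $\ker\rho$ is a submodule of the finitely generated module $R^m$ and hence finitely generated by Noetherianity, giving a finite presentation of $I$. Every step is sound, and the closing remark (that over a Noetherian ring finitely generated, finitely presented, and coherent modules coincide) is also correct, though not needed for the statement.
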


\begin{thm} If $R$ is a coherent ring then every finitely presented $R$-module is coherent. \end{thm}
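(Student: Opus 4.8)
The statement to prove is the standard theorem that every finitely presented module over a coherent ring is coherent. The plan is to reduce to the definitional ingredients: recall that a module $M$ is coherent iff it is finitely generated and every finitely generated submodule is finitely presented, and that $R$ coherent means $R$ is coherent as a module over itself. The proof proceeds in a few layered steps, building up from $R$ to finite free modules to arbitrary finitely presented modules.

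First I would establish the key closure properties of the class of coherent modules over a fixed ring $R$, assuming $R$ itself is coherent. The crucial lemmas are: (a) a submodule $N \subset M$ with $M$ coherent and $N$ finitely generated is itself coherent; (b) if in a short exact sequence $0 \to M' \to M \to M'' \to 0$ two of the three modules are coherent, so is the third — or at least the special cases needed here, namely that a finite direct sum of coherent modules is coherent, and that the quotient of a coherent module by a finitely generated submodule is coherent; and (c) a finitely generated submodule of a coherent module is finitely presented (immediate from the definition). These are the routine module-theoretic manipulations with kernels and cokernels of maps between finitely generated modules; I would not grind through them but cite \glazcitationhack{} or reprove the one or two that are genuinely needed.

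Next, using that $R$ is coherent as an $R$-module, step (b) (finite direct sums) gives that every finitely generated free module $R^m$ is coherent. Then, given an arbitrary finitely presented $R$-module $M$, choose a presentation $R^s \xrightarrow{\phi} R^m \to M \to 0$; since $M$ is a quotient of the coherent module $R^m$ by the finitely generated submodule $\im(\phi)$, step (b)/(c) gives that $M$ is coherent. The finite generation of $M$ is immediate from the presentation, and the finite presentability of its finitely generated submodules follows because they are finitely generated submodules of the coherent module $M$.

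The main obstacle is really packaging lemma (b) — the two-out-of-three / direct sum stability of coherence — cleanly, since that is where the actual content lies: one must show that if $M$ is coherent and $N \subseteq M$ is finitely generated, then $N$ is coherent, which requires showing kernels of maps $R^k \to M$ landing in $N$ are finitely generated, and this is precisely where the hypothesis that $R$ is coherent (so that such kernels for maps into $R$, hence into $R^m$, are finitely generated) gets used. Everything else is bookkeeping with exact sequences. Since these facts are entirely standard, the cleanest writeup simply states the needed closure lemmas with a reference to \cite{glaz1989commutative} and then assembles them in the three-step chain $R \rightsquigarrow R^m \rightsquigarrow M$ described above.
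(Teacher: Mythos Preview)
Your proposal is correct and outlines the standard argument. The paper does not actually supply its own proof of this statement: it simply declares the result standard and refers the reader to \cite{glaz1989commutative} for the proof, so there is no substantive comparison to make beyond noting that your sketch is precisely the kind of argument one finds in that reference.
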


\begin{thm} If $f:M\to N$ is a morphism between coherent $R$-modules $M$ and $N$ then $\ker(f)$, $\im(f)$, and $\coker(f)$ are coherent $R$-modules. \end{thm}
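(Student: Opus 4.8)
The plan is to reduce the statement to the definition of coherence together with two elementary lemmas about short exact sequences of $R$-modules. The first lemma I would record is that a finitely generated submodule $L$ of a coherent module $P$ is itself coherent: $L$ is finitely generated by hypothesis, and every finitely generated submodule of $L$ is a finitely generated submodule of $P$, hence finitely presented because $P$ is coherent. In particular (taking $L=P$) every coherent module is finitely presented. The second lemma is the standard package of facts about a short exact sequence $0\to A\to B\to C\to 0$: (a) if $B$ is finitely generated then so is $C$; (b) if $A$ and $C$ are finitely generated then $B$ is finitely generated; (c) if $B$ is finitely presented and $A$ is finitely generated then $C$ is finitely presented; (d) if $B$ is finitely generated and $C$ is finitely presented then $A$ is finitely generated. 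Statements (a), (b) are immediate, (c) follows by lifting a finite presentation of $B$ and noting that the kernel of the induced surjection $R^n\to C$ is an extension of $A$ by the (finitely generated) relation module of $B$, and (d) is the standard Schanuel-type argument comparing a finite presentation of $C$ with a finite generating set of $B$.

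Granting these, I would then treat the three conclusions in the order image, cokernel, kernel. First, $\im(f)$ is finitely generated, being the image of the finitely generated module $M$; as a finitely generated submodule of the coherent module $N$, it is coherent by the first lemma. Second, $\coker(f)=N/\im(f)$ is finitely generated as a quotient of $N$; given any finitely generated submodule of $\coker(f)$, write it as $L/\im(f)$ with $\im(f)\subseteq L\subseteq N$, observe that $L$ is finitely generated (adjoin lifts of generators of $L/\im(f)$ to generators of the finitely generated module $\im(f)$), hence $L$ is a finitely generated submodule of the coherent module $N$ and therefore coherent, in particular finitely presented; then applying (c) to $0\to\im(f)\to L\to L/\im(f)\to 0$ shows $L/\im(f)$ is finitely presented, so $\coker(f)$ is coherent. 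Third, apply (d) to $0\to\ker(f)\to M\to\im(f)\to 0$: we have just shown $\im(f)$ is coherent, hence finitely presented, and $M$ is finitely generated, so $\ker(f)$ is finitely generated; as a finitely generated submodule of the coherent module $M$ it is coherent by the first lemma.

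I do not expect a genuine obstacle here — the argument is entirely standard commutative algebra (it is essentially the treatment in \cite{glaz1989commutative}). The only place requiring any care is the verification of parts (c) and (d) of the second lemma, where one must exhibit compatible free presentations and track the relation modules; everything else is bookkeeping with generating sets, and the coherence conclusions then fall out immediately by combining those lemmas with the observation that a finitely generated submodule of a coherent module is coherent.
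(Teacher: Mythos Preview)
Your proof is correct and is essentially the standard argument. However, the paper does not actually give its own proof of this theorem: it lists it among several ``standard'' results on coherent modules and refers the reader to \cite{glaz1989commutative} for proofs. So there is nothing to compare against beyond noting that what you have written is precisely the kind of argument one finds in that reference.
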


Combining these last two theorems immediately gives

\begin{cor}\label{NiceCoherenceProperty}If $R$ is a coherent ring and $f:M\to N$ is a morphism between fintely presented $R$-modules $M$ and $N$, then $\ker(f)$, $\im(f)$, and $\coker(f)$ are finitely presented. \end{cor}

\subsection{The Ring $B_n$ is Coherent}
\begin{thm}\label{CoherenceThm} For any $n\in {\mathbb N}$, $B_n$ is coherent. \end{thm}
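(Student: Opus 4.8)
The plan is to prove that $B_n = k[\R_{\geq 0}^n]$ is coherent by exhibiting it as a filtered colimit (directed union) of Noetherian subrings, and then invoking a standard result that a filtered colimit of coherent rings along flat transition maps is coherent. More precisely, for each finitely generated submonoid $G \subset \R_{\geq 0}^n$, the monoid ring $k[G]$ is a finitely generated $k$-algebra, hence Noetherian by the Hilbert basis theorem, hence coherent by Proposition~\ref{NoetherianRingIsCoherent}. Since every element of $B_n$ involves only finitely many monomials, $B_n = \colim_G k[G]$ where $G$ ranges over the directed system of finitely generated submonoids of $\R_{\geq 0}^n$. The transition maps $k[G] \hookrightarrow k[G']$ (for $G \subset G'$) are inclusions of monoid rings; one checks these are flat — in fact $k[\R_{\geq 0}^n]$ is a free, hence flat, module over each $k[G]$, since $\R_{\geq 0}^n$ decomposes as a disjoint union of cosets of $G$ (viewing $\R_{\geq 0}^n$ additively and passing to the group completion, or more carefully exhibiting a set-theoretic section), so $k[\R_{\geq 0}^n]$ is free on the corresponding monomials over $k[G]$.

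The key steps, in order, would be: (1) Observe $B_n = \bigcup_G k[G]$ over the directed poset of finitely generated submonoids $G \subseteq \R_{\geq 0}^n$, and that each $k[G]$ is Noetherian, hence coherent. (2) Establish flatness of the inclusions $k[G] \hookrightarrow k[G']$ and of $k[G] \hookrightarrow B_n$; the cleanest route is to show directly that $B_n$ is a free $k[G]$-module by choosing, for each finitely generated $G$, a set of coset representatives for the action of $G$ on $\R_{\geq 0}^n$ by translation (this requires a small combinatorial/order-theoretic argument since $\R_{\geq 0}^n$ is not a group, but one can work inside the group $\R^n$ where $\R_{\geq 0}^n$ is partitioned into $G$-cosets intersected with $\R_{\geq 0}^n$, and verify the monomials in distinct "slices" form a $k[G]$-basis). (3) Apply the theorem of \glazcitationhack\ that the direct limit of a directed system of coherent rings with flat transition morphisms is coherent. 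Steps (1) and (3) are essentially citations; the substance is step (2).

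The main obstacle I anticipate is step (2): verifying flatness of $k[G] \hookrightarrow B_n$ cleanly. The naive "coset decomposition" argument is complicated by the fact that $\R_{\geq 0}^n$ is only a monoid, so one cannot literally partition it into $G$-orbits that are cosets of a subgroup. One fix is to first reduce to the case where $G$ is saturated or to pass to group completions carefully; another is to use the general fact that $k[-]$ takes monoid homomorphisms that are "extensions" (here, the inclusion of a submonoid such that the ambient monoid is generated over it in a controlled way) to flat maps — but this needs the submonoid inclusion to have good properties, which a general finitely generated $G \subset \R_{\geq 0}^n$ need not have. A more robust approach, and the one I would actually pursue, is to enlarge the directed system: instead of all finitely generated submonoids, use submonoids of the form $G_\Lambda = (\Lambda \cdot \Z_{\geq 0}^n) \cap \R_{\geq 0}^n$ where $\Lambda$ runs over finitely generated additive subgroups of $\R^n$ contained in a lattice scaled appropriately — or even simpler, note that $B_n$ is the colimit over finite subsets $S \subset \R_{\geq 0}$ of the subrings $k[(S \cdot \Z_{\geq 0})^n \cap \R_{\geq 0}^n]$ — wait, that's still awkward. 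The genuinely clean statement: $B_n \cong \colim k[\Z_{\geq 0}^n]$ where the colimit is over rescalings, but $\R_{\geq 0}$ is not a colimit of copies of $\Z_{\geq 0}$ in the needed sense. So the real work is to find the right cofinal directed system of Noetherian subrings with flat transition maps; I expect this to require a careful choice (likely: monoid rings of finitely generated submonoids that happen to be "full" in the sense of being intersections of the ambient monoid with a subgroup), together with a lemma that such full submonoid inclusions induce flat (indeed free) ring extensions. Once that lemma is in hand, the coherence of $B_n$ follows formally, and this is exactly what is needed to run the Nakayama-style argument for Theorem~\ref{MinimalPresentationTheorem} in Appendix~\ref{MinimalPresentationAppendix}.
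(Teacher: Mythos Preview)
Your overall strategy is exactly the paper's: express $B_n$ as a directed colimit of Noetherian (hence coherent) subrings with flat transition maps and invoke Glaz's theorem \glazcitationhack. You have also correctly diagnosed the crux --- the issue is not step (1) or (3) but step (2), finding a cofinal directed system of finitely generated submonoids for which freeness of the inclusions can actually be verified.

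What you are missing is only the specific choice of cofinal system. The paper's choice is the family of \emph{$n$-grids}: submonoids $\Gamma(V)\subset\R_{\geq 0}^n$ generated by a finite set $V=V_1\cup\cdots\cup V_n$ where each element of $V_i$ lies on the $i^{\textup{th}}$ coordinate axis and the nonzero coordinates within each $V_i$ are $\Q$-linearly independent. Rational independence forces $\Gamma(V)\cong\Z_{\geq 0}^{|V|}$, so $k[\Gamma(V)]$ is a polynomial ring. A short inductive argument (essentially your Lemma-style reasoning, one coordinate at a time) shows that any finite subset of $\R_{\geq 0}^n$ lies in some $n$-grid and that any two $n$-grids sit inside a common one, so the system is directed and cofinal. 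For freeness of $k[G]$ over $k[G']$ when $G'\subset G$ are $n$-grids, the paper proves a \emph{partition lemma}: the maximal translates $g+G'$ (for $g\in G$) partition $G$, and each $k[g+G']$ is a rank-one free $k[G']$-module. This is precisely the ``coset decomposition'' you were reaching for; the point is that restricting to $n$-grids (rather than arbitrary finitely generated submonoids, or your ``full'' submonoids) makes the combinatorics of the partition tractable --- one can choose an extension $V$ of $V'$ in which each generator of $V'$ is replaced by an integer fraction of itself, and then the partition argument reduces to integer arithmetic modulo the $b_i$.
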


\begin{proof} The key to the proof is the following theorem:

\begin{thm}[\glazcitationhack]\label{GlazThm} Let $\{R_{\alpha}\}_{\alpha \in S}$ be a directed system of rings and let $R=\lim_{\to} R_{\alpha}$.  Suppose that for ${\alpha}\leq {\beta}$, $R_{\beta}$ is a flat $R_{\alpha}$ module and that $R_{\alpha}$ is coherent for every $\alpha$.  Then $R$ is a coherent ring. \end{thm}

First, recall that $\R$ is a vector space over $\Q$.  We'll say that $a_1,...,a_l\in \R_{\geq 0}$ are {\it rationally independent} if they are linearly independent as vectors in $\R$ over the field $\Q$.

We next extend this definition to vectors in $\R^n_{\geq 0}$: We say a finite set $V\subset \R^n_{\geq 0}$ is {\it rationally independent} if
\begin{enumerate*}
\item $V$ is the union of sets $V_1,...,V_n$, where each element of $V_i$ has a non-zero $i^{th}$ coordinate and all other coordinates are equal to zero.  
\item For any $i$, if $a_1,...,a_l$ are the non-zero coordinates of the elements of $V_i$ (listed with multiplicity), then $a_1,...,a_l$ are rationally independent in the sense defined above.
\end{enumerate*}

We define an {\it n-grid} to be a monoid generated by some rationally independent set $V\subset \R^n_{\geq 0}$.  Denote the $n$-grid generated by the rationally independent set $V$ as $\Gamma(V)$.  $\Gamma(V)$ is a submonoid of $\R^n_{\geq 0}$. 

\begin{lem}\label{FirstCoherenceLem} If $V$ is a rationally independent set, then the $n$-grid generated by $V$ is isomorphic to $\Z_{\geq 0}^{|V|}$.  \end{lem}

\begin{proof}The proof is straightforward; we omit it. \end{proof}

As noted in Section~\ref{MonoidRings}, for any $m \in {\mathbb N}$, $k[\Z_{\geq 0}^{m}]\cong k[x_1,...,x_m]$.  As the latter ring is Noetherian, it is coherent by proposition~\ref{NoetherianRingIsCoherent}.  Thus if $G$ is an $n$-grid, $k[G]$ is coherent.


\begin{lem}\label{RatIndSetExistenceLem} For any finite set $A\subset \R_{\geq 0}$, there's a rationally independent set $B\subset \R_{\geq 0}$ such that $A$ lies in the monoid generated by $B$. \end{lem} 

\begin{proof} We proceed by induction on the number of elements $l$ in the set $A$.  The base case is trivial.  Now assume the result holds for sets of order $l-1$.  Write $A=\{a_1,...,a_l\}$.  By the induction hypothesis there exists a finite rationally independent set $A'=\{a'_1,...,a'_m\}$ such that $\{a_1,...,a_{l-1} \}$ lies in $\Gamma(A')$.  If $A' \cup {a_l}$ is rationally independent, take $B=A' \cup {a_l}$.  Otherwise $a_l=q_1a'_1+...+q_{m-1}a'_{m-1}$ for some $q_1,...,q_{l-1}\in \Q$; we may take $B=\{q'_1a'_1,...,q'_{l-1}a'_{m-1}\}$, where $q'_i=1/b_i$ for some $b_i\in {\mathbb N}$ such that $q_i=a/b_i$ for some $a\in \Z_{\geq 0}$. \end{proof}
                        
\begin{lem} The set of $n$-grids forms a directed system under inclusion with direct limit $\R^n_{\geq 0}$.  \end{lem}

\begin{proof} To show that the set of $n$-grids forms a directed system, we need that given two n-grids $\G_1$ and $G_2$, there's an n-grid $G_3$ such that $G_1\subset G_3$ and $G_2\subset G_3$.  This follows readily from Lemma~\ref{RatIndSetExistenceLem}; we leave the details to the reader. Any element of $\R^n_{\geq 0}$ lies in an $n$-grid, so $\R^n_{\geq 0}$ must be the colimit of the directed system.  \end{proof}

For a monoid $A$ and a submonoid $A'\subset A$, we have $k[A']\subset k[A]$.  This implies the following:

\begin{lem}\label{DirectedSystemLemma} The set of rings $\{k[G]|G$ is an $n$-grid$\}$ has the structure of a directed system induced by the directed system structure on the set of $n$-grids, and $B_n$ is the direct limit of this directed system. \end{lem}

\begin{prop}\label{FreenessProp} Given two positive n-grids $G',G$ with $G' \subset G$, $k[G]$ is a free $k[G']$ module. \end{prop}

\begin{proof} We begin by establishing a couple of lemmas.

\begin{lem}\label{ExtensionLem} For any rationally independent set $V'$ and $n$-grid $A$ containing $\Gamma(V')$, there is a rationally independent set $V$ such that $A=\Gamma(V)$ and such that for each $a\in V'$, $V$ contains an element of the form $a/b$ for some $b\in {\mathbb N}$. \end{lem}

We call $V$ an {\it extension} of $V'$.   

\begin{proof} The proof of Lemma~\ref{ExtensionLem} is similar to the proof of Lemma~\ref{RatIndSetExistenceLem}; we omit it. \end{proof}
  
Let ${\mathcal S}$ denote the set of maximal sets of the form $g+G'\equiv\{g+g'|g'\in G'\}$ for some $g\in G$.   

\begin{lem}\label{PartitionLemma} The sets ${\mathcal S}$ form a partition of $G$. \end{lem}

\begin{proof}  It's enough to show that if $g_1+G',g_2+G'\in {\mathcal S}$ and $g_1+G'\cap g_2+G'\ne \emptyset$, then $g_1+G'=g_2+G'$.

Let $V'$ be a rationally independent set with $\Gamma(V')=G'$, and let $V$ be an extension of $V'$ with $\Gamma(V)=G$.    
Write $V'=\{v_1,...,v_l\}$ and $V=\{v_1/b_1,...,v_l/b_l,v_{l+1},...,v_m\}$ for some $b_1,...,b_l\in {\mathbb N}$. 

Assume there exist $g'_1,g'_2\in G'$ such that $g_1+g'_1=g_2+g'_2$.  We'll show that there then exists an element $g_3\in G$ such that $g_1,g_2\in g_3+G$.  By the maximality of $g_1+G'$ and $g_2+G'$, this implies $g_1+G'=g_2+G'$, as needed.  We write 
\begin{align*}
g_1&=y_1v_1/b_1 +\cdots +y_lv_l/b_l + y_{l+1} v_{l+1}+\cdots +y_m v_m,\\ 
g_2&=z_1v_1/b_1 +\cdots +z_lv_l/b_l+ z_{l+1} v_{l+1}+\cdots +z_m v_m,\\
g'_1&=y'_1 v_1+\cdots +y'_l v_l,\\
g'_2&=z'_1 v_1+\cdots +z'_l v_l.
\end{align*}
for some $y_1,...,y_m,z_1,...,z_m,y'_1,...,y'_l,z'_1,...,z'_l\in \Z$.
By the rational independence of $V$ and the fact that $g_1+g'_1=g_2+g'_2$, we have that $y_i=z_i$ for $l+1\leq i\leq m$.  

Define
\begin{align*}
g_3&=\min(y_1,z_1)v_1/b_1 +\cdots +\min(y_l,z_l)v_l/b_1 +y_{l+1} v_{l+1}+\cdots +y_m v_m,\\
g''_1&=(y_1-\min(y_1,z_1))v_1/b_1 +\cdots +(y_l-\min(y_l,z_l))v_l/b_l, \\
g''_2&=(z_1-\min(y_1,z_1))v_1/b_1 +\cdots +(z_l-\min(y_l,z_l)) v_l/b_l.
\end{align*}
$g_3+g''_1=g_1$ and $g_3+g''_2=g_2$, so if we can show that $g''_1,g''_2\in G'$ we are done.

By the rational independence of $V$ and the fact that $g_1+g'_1=g_2+g'_2$, for $1\leq i\leq l$ we have that $\min(y_i,z_i)/b_i+\max(y'_i,z'_i)=y_i/b_i+y'_i$.  This implies that $\max(y'_i,z'_i)-y'_i=(y_i-\min(y_i,z_i))/b_i$.  In particular, the term on the right hand side lies in $\Z_{\geq 0}$.  Thus $g''_1\in G'$.  The same argument shows $g''_2\in G'$. \end{proof}

Now we are ready to complete the proof of Proposition~\ref{FreenessProp}.  It's easy to see that for any $s\in {\mathcal S}$, the natural action of $G'$ on $s$ extends to give $k[s]$ the structure of a free $k[G']$ module of rank 1.  It follows from Lemma~\ref{PartitionLemma} that the sets $\{k[s]\}_{s\in {\mathcal S}}$ have trivial intersection as $k[G']$-submodules of $k[G]$.  We then have that as a $k[G']$ module, $k[G]=\oplus_{s\in {\mathcal S}} k[s]$, and so in particular $k[G]$ is a free $k[G']$-module, as we wanted to show. \end{proof}

Given Lemma~\ref{DirectedSystemLemma} and Proposition~\ref{FreenessProp}, Theorem~\ref{GlazThm} applies to give that $B_n$ is coherent, since free modules are flat \cite{eisenbud1995commutative}.  \end{proof}

 \section{Minimal Presentations of $B_n$-persistence Modules}\label{MinimalPresentationAppendix}
This section is devoted to the proof of Theorem~\ref{MinimalPresentationTheorem}.

\subsection{Free Hulls}
We first observe that some standard results about resolutions and minimal resolutions of modules over local rings adapt to $B_n$-persistence modules.  We'll only be interested in the specialization of such results to the $0^{th}$ modules in a free resolution, and for the sake of simplicity we phrase the results only for this special case.  However, the results discussed here do extend to statements about free resolutions of finitely presented $B_n$-persistence modules.

Let $\mathfrak m$ denote the ideal of $B_n$ generated by the set \[\{v\in B_n|\text{v is homogeneous and }gr(v)>0\}.\]  Define a {\it free hull} of $M$ to be a free cover $(F_M,\rho_M)$ such that $\ker(\rho_M)\subset {\mathfrak m}F_M$.

Nakayama's lemma \cite{eisenbud1995commutative} is a key ingredient in the proofs of the results about free resolutions over local rings that we would like to adapt to our setting.  To adapt these proofs, we need an $n$-graded version of Nakayama's lemma.

\begin{lem}[Nakayama's Lemma for Persistence Modules]\label{PersistenceNakayama} Let $M$ be a finitely generated $B_n$-persistence module.  If $y_1,...,y_m\in M
$ have images in $M/{\mathfrak m}M$ that generate the quotient, then $y_1,...,y_m$ generate $M$. \end{lem}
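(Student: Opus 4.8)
The plan is to mimic the classical proof of Nakayama's lemma, using the grading to replace the role of the maximal ideal in the local case. First I would reduce to the following statement: if $M$ is a finitely generated $B_n$-persistence module with $M = \mathfrak{m}M$, then $M = 0$. Given this, the lemma follows in the standard way: let $N \subset M$ be the submodule generated by $y_1,\dots,y_m$, and consider the quotient $Q = M/N$. Since the images of the $y_i$ generate $M/\mathfrak{m}M$, we have $M = N + \mathfrak{m}M$, hence $Q = \mathfrak{m}Q$. As $M$ is finitely generated, so is $Q$, and the reduction statement forces $Q = 0$, i.e. $M = N$.

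To prove the reduction statement, I would use the $n$-grading directly rather than a determinant trick. Suppose $M = \mathfrak{m}M$ and $M \neq 0$. Pick finitely many homogeneous generators $w_1,\dots,w_r$ of $M$ (a finitely generated persistence module has a finite set of homogeneous generators, since one can take homogeneous components of any finite generating set). Among the grades $gr(w_1),\dots,gr(w_r)$, choose one, say $gr(w_j) = a$, that is minimal in the partial order $\leq$ on $\R^n$ — such a minimal element exists among a finite set. Now $w_j \in M = \mathfrak{m}M$, so $w_j$ is a sum of terms $v \cdot x$ with $v \in \mathfrak{m}$ homogeneous of grade $> 0$ and $x \in M$ homogeneous; by comparing grades we may assume each such term has grade exactly $a$, forcing $gr(x) = a - gr(v) < a$. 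Writing each such $x$ in terms of the generators $w_i$ via the transition maps, we see $x$ lies in the span of generators of grade $\leq gr(x) < a$. But by minimality of $a$, every generator has grade either equal to $a$ or incomparable to $a$; in particular no generator has grade $< a$, so each $x$ must be expressible using only generators whose grade is $\leq gr(x)$, which is impossible unless that contribution is zero. This contradiction — that $w_j$ would have to be a $B_n$-linear combination of elements strictly below grade $a$ while being a nonzero element needing a generator at grade $a$ — shows $M = 0$.

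The step I expect to be the main obstacle is making the grade-comparison argument fully rigorous: one must be careful that "the homogeneous component of grade $a$ of $w_j$ as computed in $M = \mathfrak{m}M$" only involves images, under transition maps, of homogeneous elements of strictly smaller grade, and then track that the generating set was chosen minimally so that no generator sits strictly below $a$. A clean way to package this is to observe that the image of the grade-$a$ summand $M_a$ in $(M/\mathfrak{m}M)_a$ is exactly $M_a$ modulo the image of all transition maps $\varphi_M(b,a)$ with $b < a$, and that $w_j$, being a minimal-grade generator, has nonzero image in $(M/\mathfrak{m}M)_a$; then $M = \mathfrak{m}M$ gives $(M/\mathfrak{m}M)_a = 0$, a contradiction. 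I would phrase the proof this way, which keeps the bookkeeping local to a single summand. The remaining parts — existence of a finite homogeneous generating set, and the passage from the reduction statement to the stated lemma via $Q = \mathfrak{m}Q \Rightarrow Q = 0$ — are routine and I would state them briefly without belaboring the details.
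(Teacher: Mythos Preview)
Your argument is correct and is precisely the standard graded-Nakayama argument; the paper itself gives no details beyond ``the usual proof of Nakayama's lemma carries over with only minor changes,'' so your write-up is already more explicit than the paper's. The clean rephrasing you give at the end --- that $(\mathfrak m M)_a$ is the sum of the images of the transition maps $\varphi_M(b,a)$ for $b<a$, and that a minimal-grade nonzero generator therefore survives in $(M/\mathfrak m M)_a$ --- is exactly the right way to package it.

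One point worth making explicit: your reduction passes to $Q=M/N$ and then applies the grade argument to $Q$, but this requires $Q$ to be a $B_n$-persistence module, i.e.\ $N$ to be homogeneous, i.e.\ the $y_i$ to be homogeneous. The paper's statement does not say this, but it is implicitly assumed (and is all that is used in the applications). In fact the non-homogeneous statement is false over $B_n$: take $M=B_1$ and $y_1=1+x$; then the image of $y_1$ generates $B_1/\mathfrak m B_1\cong k$, but $1+x$ is not a unit in $B_1$ (any finite $p\in B_1$ has $p(1+x)$ with nonzero top term in positive grade), so $y_1$ does not generate $B_1$. So you should add the hypothesis that the $y_i$ are homogeneous, after which your proof goes through without change.
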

\begin{proof} The usual Proof of Nakayama's lemma \cite{eisenbud1995commutative} carries over with only minor changes.\end{proof}

\begin{lem}\label{FreeHullIsMinimal} A free cover $(F_M,\rho_M)$ of a finitely generated $B_n$-persistence module $M$ is a free hull iff a basis for $F_M$ maps under $\rho_M$ to a minimal set of generators for $M$. \end{lem}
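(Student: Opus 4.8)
The plan is to reduce both directions to a single statement about the induced map modulo $\mathfrak m$ and then invoke the graded Nakayama lemma (Lemma~\ref{PersistenceNakayama}); this is the standard local-ring argument, adapted to $B_n$-persistence modules. First I would record the elementary fact that $B_n/\mathfrak m\cong k$, since $\mathfrak m$ is exactly the sum of the homogeneous summands of $B_n$ of nonzero grade. Consequently, for a basis $B$ of $F_M$, writing $F_M\cong\bigoplus_{y\in B}B_n(-gr(y))$ gives $F_M/\mathfrak m F_M\cong\bigoplus_{y\in B}(B_n/\mathfrak m)(-gr(y))$, a graded $k$-vector space in which the images of $B$ form a $k$-basis (one basis vector in each grade $gr(y)$). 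Since $\rho_M$ maps $\mathfrak m F_M$ into $\mathfrak m M$, it descends to a surjection $\bar\rho_M\colon F_M/\mathfrak m F_M\to M/\mathfrak m M$, with $\ker(\bar\rho_M)=(\ker(\rho_M)+\mathfrak m F_M)/\mathfrak m F_M$. Hence $\ker(\rho_M)\subset\mathfrak m F_M$ if and only if $\bar\rho_M$ is an isomorphism, i.e.\ if and only if the images of $\rho_M(B)$ form a $k$-basis of $M/\mathfrak m M$. So it suffices to prove: the images of $\rho_M(B)$ form a $k$-basis of $M/\mathfrak m M$ iff $\rho_M(B)$ is a minimal set of generators of $M$.

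For the forward implication I would use only linear algebra and the trivial direction of Nakayama. Because $B$ generates $F_M$ and $\rho_M$ is surjective, $\rho_M(B)$ generates $M$; because its images are a $k$-basis of $M/\mathfrak m M$, the map $\rho_M$ is injective on $B$ and $|\rho_M(B)|=\dim_k(M/\mathfrak m M)$. If some proper subset $S\subsetneq\rho_M(B)$ generated $M$, then the images of $S$ would span $M/\mathfrak m M$, forcing $|S|\geq\dim_k(M/\mathfrak m M)=|\rho_M(B)|$, a contradiction; so $\rho_M(B)$ is minimal. For the reverse implication I would assume $\rho_M(B)$ minimal and suppose toward a contradiction that the images of $\rho_M(B)$ are $k$-linearly dependent in $M/\mathfrak m M$. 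Since $M/\mathfrak m M$ is $n$-graded and each $\overline{\rho_M(y)}$ is homogeneous of grade $gr(y)$, I may decompose a nontrivial relation by grade and extract a homogeneous relation in which some $y_0\in B$ occurs with nonzero coefficient. This exhibits $\rho_M(y_0)\in N+\mathfrak m M$, where $N=\langle\rho_M(B\setminus\{y_0\})\rangle$, so $M=\langle\rho_M(B)\rangle=N+\mathfrak m M$ and therefore $M/N=\mathfrak m(M/N)$. Applying Lemma~\ref{PersistenceNakayama} to $M/N$ (finitely generated, since $M$ is) yields $M/N=0$, so $\rho_M(B\setminus\{y_0\})$ already generates $M$, contradicting minimality; hence the images are independent, completing the equivalence.

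I do not expect a serious obstacle here: the argument is routine once the graded Nakayama lemma is in hand. The two points that need a little care are the grading bookkeeping in the reverse direction (splitting a $k$-linear relation into homogeneous pieces, which is where the hypothesis that $B$ consists of homogeneous elements is used) and checking the finite-generation hypothesis needed to apply Lemma~\ref{PersistenceNakayama} — which holds automatically, either because $M$ is assumed finitely generated or, in the free-hull case, because $\dim_k(F_M/\mathfrak m F_M)=\dim_k(M/\mathfrak m M)<\infty$ forces $B$ to be finite. I would also note in passing that the forward direction shows $\rho_M$ is injective on $B$, so there is no ambiguity between "minimal under inclusion" and "of minimal cardinality" for $\rho_M(B)$, and this coincidence of the two notions of minimality is in effect what the lemma encodes.
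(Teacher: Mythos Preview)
Your proof is correct and is precisely the standard argument the paper intends: the paper's own proof simply says that, given the graded Nakayama lemma, the proof of \cite[Lemma 19.4]{eisenbud1995commutative} carries over, and what you have written is exactly that argument unpacked in the $B_n$-graded setting. Your attention to the grading bookkeeping and to the finiteness of $B$ is appropriate and matches what is needed to make Eisenbud's local-ring proof go through here.
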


\begin{proof} Given the adaptation Lemma~\ref{PersistenceNakayama} of Nakayama's lemma to our setting, the proof of \cite[Lemma 19.4]{eisenbud1995commutative} gives the result.\end{proof}

It follows easily from Lemma~\ref{FreeHullIsMinimal} that a free hull exists for any finitely generated $B_n$-persistence module $M$.
Corollary~\ref{FreeHullIsUnique} below gives a uniqueness result for free hulls.

\begin{thm}\label{FreeHullsAsSummands} If $(F_M,\rho_M)$ is a free hull of a finitely presented $B_n$-persistence module $M$ and $(F'_M,\rho'_M)$ is any free cover of $M$, then $F_M$ includes as a direct summand of $F'_M$ in such a way that $F'_M\cong F_M \oplus F''_M$ for some free module $F''_M$, and $\ker(\rho'_M)=\ker(\rho_M)\oplus F''_M \subset F_M \oplus F''_M$. \end{thm}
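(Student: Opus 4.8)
The plan is to mimic the standard argument that a minimal free resolution is a direct summand of any free resolution, specialized to the $0^{th}$ term, carried out entirely inside the category $B_n$-mod and with ordinary Nakayama replaced by its graded analogue, Lemma~\ref{PersistenceNakayama}. First I would apply Lemma~\ref{ExistenceAndHomotopyUniquenessOfLifts} to the morphism $\mathrm{id}_M$ twice: once with the free covers $(F_M,\rho_M)$ and $(F'_M,\rho'_M)$ to obtain a morphism $\phi:F_M\to F'_M$ with $\rho'_M\circ\phi=\rho_M$, and once with the roles reversed to obtain $\psi:F'_M\to F_M$ with $\rho_M\circ\psi=\rho'_M$. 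Then $\psi\circ\phi$ is a lift of $\mathrm{id}_M$ through $\rho_M$, so the homotopy-uniqueness clause of Lemma~\ref{ExistenceAndHomotopyUniquenessOfLifts} gives $\mathrm{im}(\psi\phi-\mathrm{id}_{F_M})\subseteq\ker\rho_M\subseteq\mathfrak m F_M$, the last inclusion precisely because $(F_M,\rho_M)$ is a \emph{free hull}.

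Consequently $\psi\phi$ induces the identity on $F_M/\mathfrak m F_M$. Since a free hull of the finitely generated module $M$ is finitely generated (Lemma~\ref{FreeHullIsMinimal}), Lemma~\ref{PersistenceNakayama} shows $\psi\phi$ is surjective, and a surjective endomorphism of a finitely generated module over a commutative ring is an isomorphism. Writing $\theta=\psi\phi$ and $r=\theta^{-1}\circ\psi:F'_M\to F_M$, we have $r\circ\phi=\mathrm{id}_{F_M}$, so $\phi$ is a split injection and $F'_M=\phi(F_M)\oplus\ker r$. Identifying $F_M$ with $\phi(F_M)$ and setting $F''_M=\ker r=\ker\psi$ (the equality because $\theta$ is invertible) exhibits $F_M$ as a direct summand of $F'_M$ with complement $F''_M$.

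For the two remaining assertions I would argue as follows. The kernel statement: from $\rho_M\circ\psi=\rho'_M$ one gets $F''_M=\ker\psi\subseteq\ker\rho'_M$; and for $x=\phi(y)+z$ with $z\in F''_M$ we have $\rho'_M(x)=\rho_M(y)$, so $x\in\ker\rho'_M$ iff $y\in\ker\rho_M$, whence $\ker\rho'_M=\phi(\ker\rho_M)\oplus F''_M$, which under the identification reads $\ker\rho_M\oplus F''_M$ as claimed. Freeness of $F''_M$: it is a finitely generated projective graded $B_n$-module (a summand of a free one), and such a module is free by the usual argument — take a free hull $(F,\pi)$ of $F''_M$, use projectivity to split $\pi$ so that $F=F''_M\oplus\ker\pi$ with $\ker\pi\subseteq\mathfrak m F$; a short computation then forces $\ker\pi=\mathfrak m\ker\pi$, so $\ker\pi=0$ by Lemma~\ref{PersistenceNakayama} and $F''_M\cong F$ is free.

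All the actual computations here — that $\psi\phi$ lifts the identity, the bookkeeping in the summand decomposition, and the step $\ker\pi=\mathfrak m\ker\pi$ — are routine. The only place needing genuine care is the freeness of the complementary summand $F''_M$: one must make sure the finiteness hypotheses in the graded Nakayama lemma are actually met, which is why it is natural to take ``free cover'' here to mean finitely generated (as it implicitly is wherever this theorem is used), or, if one insists on the fully general statement, to invoke a Kaplansky-type argument in the possibly infinitely generated case. Note that, unlike the main minimal presentation theorem, this result uses nothing about coherence of $B_n$.
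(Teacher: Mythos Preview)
Your argument follows the same overall strategy as the paper's: both are adaptations of \cite[Theorem 20.2]{eisenbud1995commutative} to the $B_n$-graded setting, using the graded Nakayama Lemma~\ref{PersistenceNakayama} in place of the local one. The lift-and-split mechanics you spell out ($\phi$, $\psi$, $\psi\phi\equiv\id$ mod $\mathfrak m$, split injection, kernel decomposition) are exactly what underlies the paper's sketch.

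Where you diverge from the paper is in your final sentence. The paper's proof sketch says explicitly that one must ``invoke the coherence of $B_n$ and use Corollary~\ref{NiceCoherenceProperty} to show that $\ker(\rho_M)$ is finitely generated,'' whereas you claim the result ``uses nothing about coherence of $B_n$.'' Your argument does in fact sidestep any appeal to finite generation of $\ker(\rho_M)$: by using the fact that a surjective endomorphism of a finitely generated module over a commutative ring is an isomorphism, you conclude $\psi\phi$ is an isomorphism directly, without ever needing to control $\ker(\rho_M)$. The paper, following Eisenbud's original argument more literally, evidently establishes the isomorphism by a route that does pass through finite generation of the kernel, and for that coherence is the natural tool over the non-Noetherian ring $B_n$. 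So your approach is a genuine simplification at this one step, and your remark about coherence is correct as far as your own argument goes---though you should be aware it contradicts what the paper asserts.

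One caution: your freeness argument for $F''_M$ requires $F''_M$ (hence $F'_M$) to be finitely generated, as you yourself note. The paper's statement allows $(F'_M,\rho'_M)$ to be \emph{any} free cover, with no finiteness hypothesis, so strictly speaking your proof covers only the finitely generated case. You flag this and gesture at a Kaplansky-type fix; that is the right instinct, but it is not a triviality in this $\R^n$-graded setting, so if you want the theorem in the generality stated you should either supply that argument or, as you suggest, restrict to finitely generated free covers (which is all that is ever used downstream).
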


\begin{proof}[Sketch of Proof.] 
The statement of the theorem is the specialization to $0^{th}$ modules in the free resolutions of $M$ of an adaptation of \cite[Theorem 20.2]{eisenbud1995commutative} to our $B_n$-persistence setting.  To modify Eisenbud's proof of \cite[Theorem 20.2]{eisenbud1995commutative} to obtain a proof of Theorem~\ref{FreeHullsAsSummands}, one needs to invoke the coherence of $B_n$ and use Corollary~\ref{NiceCoherenceProperty} to show that $\ker(\rho_M)$ is finitely generated.  Given this, the strategy of proof adapts in a straightforward way.
\end{proof}
  
\begin{cor}[Uniqueness of free hulls]\label{FreeHullIsUnique} If $M$ is a finitely presented $B_n$-persistence module, and $(F_M,\rho_M)$, $(F'_M,\rho'_M)$ are two free hulls of $M$, then there is an isomorphism from $F_M$ to $F'_M$ which is a lift of the identity map of $M$. \end{cor}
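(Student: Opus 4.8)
The plan is to transcribe the classical argument for uniqueness of minimal free resolutions over a local ring (\cite[Theorem 20.2]{eisenbud1995commutative}, restricted to its degree-zero part) into the graded setting of $B_n$-persistence modules, with Lemma~\ref{PersistenceNakayama} playing the role of Nakayama's lemma.

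First I would build comparison maps in both directions. Since $F'_M$ is a free $B_n$-persistence module and $\rho_M$ is surjective, choosing for each element of a basis of $F'_M$ a homogeneous $\rho_M$-preimage of its image under $\rho'_M$ defines a morphism $\psi\colon F'_M\to F_M$ with $\rho_M\circ\psi=\rho'_M$; symmetrically one obtains $\phi\colon F_M\to F'_M$ with $\rho'_M\circ\phi=\rho_M$. Both $\phi$ and $\psi$ are by construction lifts of $\id_M$.

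Next I would show $\psi\circ\phi$ is an automorphism of $F_M$. From $\rho_M\circ(\psi\circ\phi)=\rho_M$ we get $\im(\psi\circ\phi-\id_{F_M})\subseteq\ker(\rho_M)\subseteq\mathfrak m F_M$, the last inclusion because $(F_M,\rho_M)$ is a free hull; hence $\im(\psi\circ\phi)+\mathfrak m F_M=F_M$. Because $M$ is finitely presented, hence finitely generated, a free hull of $M$ has a finite basis (Lemma~\ref{FreeHullIsMinimal} together with the easy observation that any minimal generating set of a finitely generated module is finite), so $F_M$ is a finitely generated $B_n$-module and Lemma~\ref{PersistenceNakayama} gives $\im(\psi\circ\phi)=F_M$. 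A surjective endomorphism of a finitely generated module over a commutative ring is injective, so $\psi\circ\phi$ is an isomorphism; in particular $\phi$ is a split monomorphism and $\psi$ a split epimorphism. Running the identical argument with the roles of $(F_M,\rho_M)$ and $(F'_M,\rho'_M)$ interchanged --- legitimate since $(F'_M,\rho'_M)$ is also a free hull --- shows $\phi\circ\psi$ is an automorphism of $F'_M$, whence $\psi$ is also a split monomorphism and $\phi$ a split epimorphism. Therefore $\psi\colon F_M\to F'_M$ is an isomorphism, and it lifts $\id_M$ by construction, which is precisely the assertion of the corollary.

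As an alternative route, one could invoke Theorem~\ref{FreeHullsAsSummands} directly: applied with the second free hull in the role of the ``arbitrary free cover'', it yields $F'_M\cong F_M\oplus F''_M$ with $\ker(\rho'_M)=\ker(\rho_M)\oplus F''_M$, and then $\ker(\rho'_M)\subseteq\mathfrak m F'_M$ forces the free summand $F''_M$ to satisfy $F''_M\subseteq\mathfrak m F''_M$, hence $F''_M=0$ by Nakayama. I would use this only as a cross-check, since verifying that the resulting isomorphism is a lift of $\id_M$ (and not merely of some automorphism of $M$) is handled most cleanly via the two-sided comparison maps above. In short, the corollary carries no real difficulty beyond what is already packaged in Lemma~\ref{PersistenceNakayama} and Theorem~\ref{FreeHullsAsSummands}; the only points needing attention are (i) checking that all modules in sight are genuinely finitely generated over $B_n$, so that Nakayama and ``surjective endomorphism $\Rightarrow$ isomorphism'' apply, and (ii) tracking that the maps produced are lifts of the identity rather than of a nontrivial automorphism of $M$.
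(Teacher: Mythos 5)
Your proof is correct, and your primary argument takes a genuinely different route from the paper. The paper's proof is a two-line deduction from Theorem~\ref{FreeHullsAsSummands}: apply that theorem with $(F'_M,\rho'_M)$ in the role of the arbitrary free cover to obtain $F'_M\cong F_M\oplus F''_M$ with $\ker(\rho'_M)=\ker(\rho_M)\oplus F''_M$, then observe that $F''_M\subseteq\ker(\rho'_M)\subseteq\mathfrak m F'_M$ forces $F''_M\subseteq\mathfrak m F''_M$ and hence $F''_M=0$ by Nakayama --- exactly your ``cross-check.'' Your primary route instead builds the two comparison maps $\phi,\psi$ from scratch and shows that each composite is an automorphism using the graded Nakayama lemma together with the fact that a surjective endomorphism of a finitely generated module over a commutative ring is injective. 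In effect you are unfolding the comparison-map argument that underlies Theorem~\ref{FreeHullsAsSummands} (following Eisenbud's Theorem 20.2), restricted to the case where both free covers are free hulls; this buys you a self-contained argument at the cost of redundancy with the machinery already in place. One small remark: your caution about the ``cross-check'' route --- namely that the identification of $F_M$ as a summand of $F'_M$ might only lift some automorphism of $M$ rather than $\id_M$ --- turns out to be unnecessary. In the proof of Theorem~\ref{FreeHullsAsSummands} one constructs a comparison map $\alpha\colon F'_M\to F_M$ with $\rho_M\circ\alpha=\rho'_M$, shows it is split surjective, and takes a section $\sigma$ with $\alpha\circ\sigma=\id_{F_M}$; then $\rho'_M\circ\sigma=\rho_M\circ\alpha\circ\sigma=\rho_M$, so the inclusion $\sigma\colon F_M\hookrightarrow F'_M$ automatically lifts $\id_M$. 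With that observed, the paper's shorter route also produces an isomorphism lifting the identity, so the two proofs are equally complete.
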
 

\begin{proof} By Theorem~\ref{FreeHullsAsSummands}, we can identify $F_M$ with a submodule of $F'_M$ in such a way that $F'_M=F_M \oplus F''_M$ for some free module $F''_M$ and $\ker(\rho'_M)=\ker(\rho)\oplus F''_M \subset F_M \oplus F''_M$.  Since $F'_M$ is a free hull, we must have $\ker(\rho'_M)\in {\mathfrak m}F'_M$, which implies $F''_M=0$.  The result follows.  \end{proof}

\begin{cor}\label{GradesOfMinimalSetOfGeneratorsUnique} If $M$ is a finitely presented $B_n$-persistence module and $B,B'$ are two minimal sets of generators for $M$, then $gr(B)=gr(B')$.\end{cor}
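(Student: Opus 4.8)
The plan is to realize each minimal generating set as the image of a basis of a free hull of $M$, and then invoke the uniqueness of free hulls together with the fact that the grade function of a basis is an isomorphism invariant of a free $B_n$-persistence module.

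First I would take a minimal set of homogeneous generators $B$ for $M$, regard $B$ as an $n$-graded set via the grading inherited from $M$, and form the free module $\langle B \rangle = \oplus_{y\in B} B_n(-gr(y))$, using the canonical identification of $B$ with a basis of $\langle B\rangle$ for which the basis element corresponding to $y$ has grade $gr(y)$ (this compatibility is immediate from the definition of $\langle G\rangle$ for a graded set $G$). Let $\rho_B:\langle B\rangle \to M$ be the unique morphism carrying each basis element $y$ to the corresponding element of $B\subset M$; it is surjective because $B$ generates $M$. Since $M$ is finitely presented, hence finitely generated, and $B$ is a minimal generating set, Lemma~\ref{FreeHullIsMinimal} shows that $(\langle B\rangle,\rho_B)$ is a free hull of $M$. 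Applying the same construction to $B'$ produces a second free hull $(\langle B'\rangle,\rho_{B'})$ of $M$.

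Next I would apply Corollary~\ref{FreeHullIsUnique} to obtain an isomorphism $\phi:\langle B\rangle \to \langle B'\rangle$. As recalled earlier in the discussion of free $B_n$-persistence modules, any two bases of a free $B_n$-persistence module have the same grade function, so the grade function of an arbitrary basis is an isomorphism invariant; since $\phi$ carries a basis of $\langle B\rangle$ to a basis of $\langle B'\rangle$, this gives $gr(B) = gr(\text{basis of }\langle B\rangle) = gr(\text{basis of }\langle B'\rangle) = gr(B')$, which is exactly the assertion of the corollary.

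There is essentially no obstacle beyond bookkeeping: the substantive content is entirely carried by Lemma~\ref{FreeHullIsMinimal}, Corollary~\ref{FreeHullIsUnique}, and the invariance of basis grades. The only point deserving a line of care is the verification that the identification of $B$ with a basis of $\langle B\rangle$ respects grades, which I would note in passing as above.
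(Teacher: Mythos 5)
Your proof is correct and matches the paper's approach exactly: the paper's one-line justification ("This follows from Corollary~\ref{FreeHullIsUnique} and Lemma~\ref{FreeHullIsMinimal}") is precisely the argument you have unwound, constructing the free hull $(\langle B\rangle,\rho_B)$ from each minimal generating set via Lemma~\ref{FreeHullIsMinimal}, then invoking Corollary~\ref{FreeHullIsUnique} and the isomorphism-invariance of basis grades. Nothing further to add.
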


\begin{proof} This follows from Corollary~\ref{FreeHullIsUnique} and Lemma~\ref{FreeHullIsMinimal}.\end{proof}

\subsection{Proof of Theorem~\ref{MinimalPresentationTheorem}}
Recall that a minimal presentation $\langle G|R \rangle$ of a $B_n$-persistence module $M$ is one such that
\begin{enumerate*}
\item the quotient $\langle G\rangle \to \langle G\rangle/\langle R\rangle$ maps $G$ to a minimal set of generators for $\langle G\rangle/\langle R\rangle$.
\item $R$ is a minimal set of generators for $\langle R \rangle$.
\end{enumerate*}

Let $M$ be a finitely presented $B_n$-persistence module.  Let $\langle G|R \rangle$ be a minimal presentation of $M$.  We need to show that for any other presentation $\langle G'|R' \rangle$ of $M$, $gr(G)\leq gr(G')$ and $gr(R)\leq gr(R')$.  

Let $\psi:\langle G\rangle/\langle R \rangle \to M$ and $\psi':\langle G'\rangle/\langle R' \rangle \to M$ be isomorphisms, let $\pi:\langle G \rangle \to \langle G\rangle/\langle R \rangle$ and $\pi':\langle G' \rangle \to \langle G'\rangle/\langle R' \rangle$ be the quotient homomorphisms, let $\rho=\psi\circ \pi$, and let $\rho'=\psi'\circ \pi'$.  Then by Lemma~\ref{FreeHullIsMinimal}, $(\langle G \rangle,\rho)$ is a free hull of $M$, and $(\langle G' \rangle,\rho')$ is a free cover of $M$.  

By Theorem~\ref{FreeHullsAsSummands}, $\langle G \rangle$ includes as a direct summand of $\langle G' \rangle$.  The image of $G$ under this inclusion can be extended to a basis for $\langle G' \rangle$.  Recall that if $B$ and $B'$ are two bases for a free $B_n$-persistence module $F$, then $gr(B)=gr(B')$.  We thus have that $gr(G)\leq gr(G')$.

Theorem~\ref{FreeHullsAsSummands} also implies that $\langle R'\rangle\cong \langle R\rangle \oplus F$ for some free $B_n$-persistence module $F$.  Let $B$ be a basis for $F$.  Then $R \cup B$ is a minimal set of generators for $\langle R\rangle \oplus F$.   Let $R''$ denote the image of $R'$ under an isomorphism from $\langle R'\rangle$ to $\langle R\rangle \oplus F$ and let $p:\langle R\rangle \oplus F\to \langle R\rangle$ denote projection onto the first summand.  Since $p$ is surjective, $p(R'')$ is a set of homogeneous generators for $\langle R\rangle$.  

Since $\langle G \rangle$ and $M$ are finitely presented, by Corollary~\ref{NiceCoherenceProperty} $\ker(\rho)=\langle R\rangle$ is also finitely presented.  Then by Corollary~\ref{GradesOfMinimalSetOfGeneratorsUnique}, $gr(R)\leq gr(p(R''))$.  Since $gr(p(R''))\leq gr(R'')=gr(R')$ we have that $gr(R)\leq gr(R')$.\qed


    \bibliographystyle{plain}
    \bibliography{Thesis_Refs}


\end{document}